\numberwithin{equation}{section}
\let\f=\frac
\let\om=\omega
\let\Om=\Omega
\let\na=\nabla
\let\pa=\partial
\def\R{\mathbb R}
\def\T{\mathbb T}
\def\Z{\mathbb Z}
\newcommand{\tp}{\mathfrak{p}}
\newcommand{\beq}{\begin{equation}}
\newcommand{\eeq}{\end{equation}}
\newcommand{\ben}{\begin{eqnarray}}
\newcommand{\een}{\end{eqnarray}}
\newcommand{\beno}{\begin{eqnarray*}}
\newcommand{\eeno}{\end{eqnarray*}}
\newtheorem{theorem}{Theorem}[section]
\newtheorem{definition}[theorem]{Definition}
\newtheorem{lemma}[theorem]{Lemma}
\newtheorem{Theorem}{Theorem}[section]
\newtheorem{Proposition}[Theorem]{Proposition}
\newtheorem{Remark}[Theorem]{Remark}
\begin{document}

\title[Asymptotic stability of the Kolmogorov flow]
{Asymptotic stability of the Kolmogorov flow at high Reynolds numbers}  

\author[Chen]{Qi Chen}
\address{School of Mathematical Sciences, Zhejiang University, Zhejiang,  P. R. China}
\email{chenqi123@zju.edu.cn}

\author[Jia]{Hao Jia}
\address{School of Mathematics, University of Minnesota, Minneapolis, MN, USA}
\email{jia@umn.edu}

\author[Wei]{Dongyi Wei}
\address{School of Mathematical Sciences, Peking University, Beijing,  P. R. China}
\email{jnwdyi@pku.edu.cn}

\author[Zhang]{Zhifei Zhang}
\address{School of Mathematical Sciences, Peking University, Beijing, P. R. China}
\email{zfzhang@pku.edu.cn}

\date{\today}

\begin{abstract}
In this paper we prove the asymptotic stability of the Kolmogorov flow on a non-square torus for perturbations $\omega_0$ satisfying $\|\omega_0\|_{H^3}\ll\nu^{1/3}$, where $0<\nu\ll1$ is the viscosity. Kolmogorov flows are important metastable states to the two dimensional incompressible Navier Stokes equations in the high Reynolds number regime. 
Our result shows that the perturbed solution will rapidly converge to a shear flow close to the Kolmogorov flow, before settling down to the Kolmogorov flow and slowly decaying to $0$ as $t\to\infty$. In fact, our analysis reveals several interesting time scales and rich dynamical behavior of the perturbation in the transition period $0<t\leq 1/\nu$. 

The threshold $\nu^{1/3}$, which is the same as that for the Couette flow, is quite surprising since one of the key stability mechanisms, enhanced dissipation, becomes considerably weaker in the case of Kolmogorov flows due to the presence of critical points. To overcome this essential new difficulty, we establish sharp vorticity depletion estimates near the critical points to obtain improved decay rates for the vorticity and velocity fields that are comparable with those for Couette flows, at least for our purposes. We then combine these estimates (enhanced dissipation, inviscid damping and vorticity depletion) with a quasilinear approximation scheme and a multiple-timescale analysis naturally adapted to the dynamics of the perturbation, to obtain the $\nu^{1/3}$ threshold for dynamic stability of Kolmogorov flows. The threshold is expected to be sharp when the perturbation is considered in Sobolev spaces. This appears to be the first result that applies vorticity depletion estimates to improve thresholds for nonlinear asymptotic stability in incompressible fluid equations. Our method is very robust and seems to be useful for the analysis of hydrodynamic stability problems in many other physically relevant settings. 
\end{abstract}

\maketitle

\tableofcontents

\section{Introduction and main results}

In two-dimensional turbulence, it has been observed that certain solutions rapidly approach long-lived quasi-stationary states on timescales much shorter than the viscous timescale (see e.g. \cite{Mc1, Mc2}). These states then dominate the fluid dynamics for very long periods of time. Such quasi-stationary states are referred to as metastable states. To understand this dynamical phenomenon,  we consider the 2-D Navier-Stokes equations on a non-square torus $\mathbb{T}_{\mathfrak{p}}\times\mathbb{T}_{2\pi}$ with $\mathfrak{p}=2\pi \kappa \,\, (\kappa<1)$ in the high Reynolds number ($Re$) regime:
\begin{equation}\label{eq:NS}\left\{
  \begin{aligned}
  &\partial_tU-\nu\Delta U+U\cdot\nabla U+\nabla P=0,\\
  &\nabla\cdot U=0,\\
  &U|_{t=0}=U_0(x,y),
  \end{aligned}\right.
\end{equation}
where $U=(U^x,U^y)$ is the velocity, $P$ is the pressure, and the viscosity coefficient $\nu=1/Re\in (0,1)$.  Let $\Omega=\partial_xU^y-\partial_yU^x$ be the vorticity, which satisfies 
 \begin{align}\label{eq:NS-vorticity}
 \partial_t\Omega-\nu\Delta \Omega+U\cdot\nabla \Omega=0,\quad \Om|_{t=0}=\Om_0(x,y).
\end{align}

The long-lived quasi-stationary states of \eqref{eq:NS} are related to the stationary solutions of the Euler equations. It has been well documented in physics literature and numerical simulations that certain maximum entropy solutions of the Euler equations are the most probable quasi-stationary states that one would observe after an intermediate stage of evolution, see e.g. Yin, Montgomery and Clercx \cite{Yin} for a recent study and references therein for more related works. These states are referred to as bar states and dipole states. 
Bar states are described by the Kolmogorov flows, which may be represented, without loss of generality, by a simple cosine function in one spatial direction:
\beno
U_{bar}=\big(\mathrm{e}^{-\nu t}\cos y, 0).
\eeno
Kolmogorov flows are distinguished from other shear flows (which could also attract general solutions in the slightly viscous two dimensional flows for a long time), since they have the lowest frequency  and therefore decay most slowly under viscous effects.

These states are particularly relevant in systems where the aspect ratio of the domain is not equal to one ($\kappa\neq1$). When $\kappa=1$ the dipole states (also known as Taylor-Green vortices) are more commonly observed. Dipole states are characterized by a pair of counter-rotating vortices and can be represented by a combination of cosine functions in both spatial directions:
\beno
U_{dipole}=\mathrm{e}^{-\nu t}(\cos y, -\cos x).
\eeno

\subsection{Statement of our main result}
In this paper, we focus on the precise dynamics near Kolmogorov flows, when $0<\nu\ll1$. An important question, in view of numerical observations on the rapid convergence to bar states, is to understand the stability mechanisms near the Kolmogorov flows and determine the size of their ``basin of attraction". In an early important work, Beck and Wayne \cite{BW-SIAM} discovered for the  Burgers equation that each long-time asymptotic state corresponds to a one-dimensional, invariant manifold of viscous N-waves. These manifolds attract nearby solutions at a much faster rate than the viscous timescale and govern the behavior of solutions for a very long time before they reach their ultimate asymptotic state. Beck and Wayne also studied the linearized Navier Stokes equation around the Kolmogorov flow in \cite{BW} and obtained some significant results that we will describe below. However, there are many significant difficulties in the study of even the linearized equation, and the theory on the asymptotic stability of the Kolmogorov flow remains essentially incomplete. 

The goal of this paper is to essentially improve our understanding of this problem by providing a precise description of the metastable behavior and asymptotic stability of the Kolmogorov flow for the 2-D Navier-Stokes equations on a non-square torus. 

The main result of this paper is stated as follows.

\begin{theorem}\label{thm:main}
There exist $\epsilon,\epsilon_1>0$ such that for all $0 < \nu\leq \epsilon_1$, if the initial vorticity $\Omega_0$ satisfies 
\begin{equation}\label{th:eq0}
\|\Omega_0-\sin y\|_{H^3(\mathbb{T}_{\mathfrak{p}}\times\mathbb{T}_{2\pi})}\leq \epsilon_1\nu^{\f13},
\end{equation}
then the solution $\Omega(t)$ of \eqref{eq:NS-vorticity} is global in time and satisfies the following asymptotic stability estimates: for $t\geq 0$,
\begin{align}
 \label{th:eq1}  & \|\Omega_{\neq}(t)\|_{L^2(\mathbb{T}_{\mathfrak{p}}\times\mathbb{T}_{2\pi})}\leq C\big(\nu^{\f13}+\langle \nu t^3\rangle^{-1}\big)
   \mathrm{e}^{-\epsilon\nu^{\f12}\min(t,\nu^{-1})-\nu t}\|P_{\neq}\Omega_{0}\|_{H^3(\mathbb{T}_{\mathfrak{p}}\times\mathbb{T}_{2\pi})},\quad \\
\label{th:eq2}   &\|U_{\neq}(t)\|_{L^2(\mathbb{T}_{\mathfrak{p}}\times\mathbb{T}_{2\pi})}\leq C\big(\nu^{\f13}+\langle t\rangle^{-1}\big)
   \mathrm{e}^{-\epsilon\nu^{\f12}\min(t,\nu^{-1})-\nu t}\|P_{\neq}\Omega_{0}\|_{H^3(\mathbb{T}_{\mathfrak{p}}\times\mathbb{T}_{2\pi})},\\
    &\|\Omega(t)-\mathrm{e}^{-\nu t}\sin y\|_{L^2(\mathbb{T}_{\mathfrak{p}}\times\mathbb{T}_{2\pi})}\leq C\mathrm{e}^{-\nu t/2}\|\Omega_0-\sin y\|_{H^3(\mathbb{T}_{\mathfrak{p}}\times\mathbb{T}_{2\pi})}.
    \end{align}
In the above, $\langle a\rangle:=\sqrt{1+|a|^2}$ is the usual Japanese bracket, and 
$$h_{\neq}:= h(x,y)-\frac{1}{\tp}\int_{\T_{\tp}}h(x',y)dx'$$ is the projection of $h$ off the zero mode in $x$. 
\end{theorem}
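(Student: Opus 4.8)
The plan is to work directly with the vorticity equation \eqref{eq:NS-vorticity} and track the perturbation $\omega := \Omega - \mathrm{e}^{-\nu t}\sin y$ around the (exact, time-dependent) Kolmogorov shear flow. First I would set up the change of variables that removes the decaying shear: writing $\Omega = \mathrm{e}^{-\nu t}\sin y + \omega$, the perturbation $\omega$ satisfies a linear drift-diffusion equation with nonlocal forcing, $\partial_t\omega - \nu\Delta\omega + \mathrm{e}^{-\nu t}\cos y\,\partial_x\omega - \mathrm{e}^{-\nu t}(\partial_x\phi)\sin y = -u\cdot\nabla\omega$, where $u = \nabla^\perp\phi$ and $\Delta\phi = \omega$. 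It is convenient to further rescale time by $\tau = \int_0^t \mathrm{e}^{-\nu s}\,ds$ (so $\mathrm{e}^{-\nu t}\,dt = d\tau$) to make the transport coefficient time-independent at leading order, at the cost of a weakly time-dependent viscosity; since $\tau \to \nu^{-1}$ as $t\to\infty$ this is harmless for the long-time decay and matches the $\langle \nu t^3\rangle^{-1}$ and $\mathrm{e}^{-\nu t}$ factors appearing in \eqref{th:eq1}. The zero mode in $x$, $\omega_0^x := \frac{1}{\tp}\int \omega\,dx'$, decouples at the linear level and obeys a forced heat equation in $y$, giving the bound $\|\Omega - \mathrm{e}^{-\nu t}\sin y\|_{L^2} \lesssim \mathrm{e}^{-\nu t/2}\|\Omega_0 - \sin y\|_{H^3}$ once the nonlocal and nonlinear contributions from $\omega_{\neq}$ are shown to be integrable in $\tau$.

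The core of the argument is the nonzero-mode estimate. I would run a bootstrap/continuity argument on a norm that simultaneously encodes the three linear mechanisms available for the Kolmogorov linearized operator $L := \cos y\,\partial_x - \cos y\, \Delta^{-1}\sin y\,\partial_x\,(\cdot)$ plus $-\nu\Delta$: (i) enhanced dissipation at rate $\nu^{1/2}$ (with the $\min(t,\nu^{-1})$ cutoff reflecting the finite lifetime of the background shear), (ii) inviscid damping giving the $\langle t\rangle^{-1}$ decay of $U_{\neq}$ and the $\langle t\rangle^{-1}$-type gain (here $\langle\nu t^3\rangle^{-1}$, after the time rescaling, because the shear amplitude decays) of $\Omega_{\neq}$, and (iii) the sharp vorticity depletion estimate near the critical points $y = 0,\pi$ of $\cos y$, which is the ingredient that upgrades the decay rates to be comparable to the Couette case. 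These linear estimates are to be invoked as black boxes established earlier in the paper (spectral/resolvent analysis of $L$, or equivalently a Gevrey-type energy method with a well-chosen multiplier). The quasilinear scheme then treats $u\cdot\nabla\omega$ by splitting into $u_{\neq}\cdot\nabla\omega_0^x$, $u_0^x\cdot\nabla\omega_{\neq}$, and $(u\cdot\nabla\omega)_{\neq}$-quadratic-in-nonzero-mode terms; the first two are controlled using inviscid damping of $u_{\neq}$ against the slowly-varying zero mode, and the last by the enhanced-dissipation and depletion gains, each quadratic term costing one power of the small norm and thus closing provided that norm is $\lesssim \nu^{1/3}$.

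The reason $\nu^{1/3}$ is the right threshold, and where the multiple-timescale analysis enters, is the interplay of the three relevant timescales: the inviscid damping time $O(1)$ (in $\tau$), the enhanced dissipation time $O(\nu^{-1/2})$, and the viscous time $O(\nu^{-1})$. In the window $1 \lesssim t \lesssim \nu^{-1/2}$ the perturbation is not yet dissipated but inviscid damping has already driven $U_{\neq}$ down by $O(\nu^{1/3})$ relative to its initial $O(\nu^{1/3})$ size, so the worst nonlinear feedback into the zero mode accumulates a factor $O(\nu^{1/3})\cdot O(\nu^{-1/2}\cdot\nu^{1/3}) \sim O(\nu^{1/6})$... which must be reconciled with the depletion gain; carrying this out carefully is exactly what forces the $H^3$ regularity and the precise form of the bootstrap norm. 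I would organize the estimate as three nested bootstraps matched across these timescales, using the depletion estimate to absorb the critical-point loss that would otherwise degrade the $\nu^{1/3}$ to $\nu^{1/2}$. The main obstacle, as the abstract emphasizes, is precisely the weakness of enhanced dissipation near the critical points of $\cos y$: without the vorticity depletion estimate the $\nu^{1/2}$ enhanced dissipation rate is only achieved away from $y=0,\pi$, and a naive estimate loses a power of $\nu$; the technical heart of the proof is establishing and then correctly exploiting the depletion of $\omega_{\neq}$ near those points (an $O(\langle t\rangle^{-1})$ or better local decay, beating the global rate) so that the nonlinear terms localized there are still controlled at the $\nu^{1/3}$ level.
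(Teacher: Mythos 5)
Your outline correctly identifies the three stabilizing mechanisms and the general shape of the argument (time renormalization $t_*=\int_0^t e^{-\nu s}ds$, bootstrap on a norm encoding enhanced dissipation, inviscid damping and depletion, quadratic closure at size $\nu^{1/3}$), but as written it has two genuine gaps that would prevent the estimates from closing. First, the decay factor $\langle\nu t^3\rangle^{-1}$ in \eqref{th:eq1} does not come from the time rescaling or from the decay of the shear amplitude; it comes from constructing an explicit approximate solution $\omega_L$ whose $k$-th mode is the linearized Euler evolution multiplied by the viscous Gaussian $e^{-\nu k^2(\sin y)^2 t^3/3}$, and then combining the pointwise vorticity depletion bound $|\omega_k|\lesssim \langle t\rangle^{-1}+|\sin y|^2$ with the elementary inequality $(\sin y)^2 e^{-\nu k^2(\sin y)^2 t^3/3}\lesssim\langle \nu k^2t^3\rangle^{-1}$. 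Without this quasilinear approximation (solving the linearized equation exactly, or bootstrapping an abstract norm, does not produce this factor at the known level of linear theory), the vorticity decay you need in the window $1\ll t\ll \nu^{-4/9}$ is unavailable, and the relevant intermediate timescale is $T_1=\nu^{-4/9}$ (where $\langle\nu t^3\rangle^{-1}\sim\nu^{1/3}$), not $\nu^{-1/2}$; your heuristic "$O(\nu^{1/3})\cdot O(\nu^{-1/2}\cdot\nu^{1/3})\sim O(\nu^{1/6})$" does not identify the actual critical balance.

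Second, your splitting of $u\cdot\nabla\omega$ into zero-mode and nonzero-mode pieces misses the single hardest term: writing $u_e^y\partial_y\omega_L=u_e^y(\partial_y+t_*V'\partial_x)\omega_L+t_*V'\,u_e^y\partial_x\omega_L$, the second (reaction) term grows linearly in $t$ and cannot be absorbed by energy or spacetime estimates; it requires a separate non-resonance argument (a Fourier transform adapted to the variable $V(y)$, exploiting that the forcing oscillates like $e^{-i(k-l)t\cos y}$ with $l\neq0$ against the intrinsic oscillation $e^{-ikt\cos y}$), together with the vanishing of $\omega_L$ near the critical points to handle the degeneracy of that transform. Relatedly, the zero mode of the perturbation does not simply decouple into a forced heat equation: it shifts the background shear from $\cos y$ to $V=P_0U_0^x$, so all linear inputs (resolvent, spacetime, and depletion estimates) must be established for shears that are only $O(\nu^{1/3})$-close to $\cos y$, which in the paper requires a Morse-type normal form $V(y)=a\cos(\theta(y))+d$ to transfer the Kolmogorov estimates. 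These are not refinements but essential ingredients; the proposal as stated would stall at the reaction term and at producing the $\langle\nu t^3\rangle^{-1}$ decay.
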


 The threshold $\nu^{1/3}$ in \eqref{th:eq0}, the same as that of Couette flow, is surprising. Indeed, in view of the rate of enhanced dissipation $\mathrm{e}^{-c\nu^{1/2}t}$ for non-monotonic shear flows (rather than $\mathrm{e}^{-c\nu^{\f13}t}$ which holds for the Couette flow), it seems natural, at first glance, to conjecture that the optimal threshold should be $\nu^{1/2}$, which would be considerably smaller than the threshold we obtained. Interestingly, as far as we know, even the threshold $\nu^{1/2}$ for non-monotonic shear flows remained an open problem before our work.
 
 A detailed discussion on the proof can be found in section \ref{sec:ideas}, which requires sharp estimates on all three known stabilizing mechanisms in our setting: \textit{inviscid damping}, \textit{enhanced dissipation} and \textit{vorticity depletion}. We only highlight here that the (optimal) decay factor $\langle\nu t^3\rangle^{-1}$ in \eqref{th:eq1} implies strong decay for the vorticity field well before the usual enhanced dissipation plays any role. This factor is highly nontrivial and is a consequence of the deep interaction between vorticity depletion and enhanced dissipation phenomena.
 Estimates suggesting that the perturbed {vorticity} field could decay over the time scale $\nu^{-1/3}$ near the Kolmogorov flow first appeared in \cite{WZZ-AM}. More precisely it was proved in \cite{WZZ-AM} that for the linearized Navier Stokes equation near the Kolmogorov flow, the {velocity} field decays with a rate $\langle\nu t^3\rangle^{-1/2}$, in addition to the usual enhanced dissipation estimates. Upgrading this factor to the sharp $\langle\nu t^3\rangle^{-1}$ for the vorticity, although still far from being sufficient by itself, is an important ingredient in establishing the unexpected threshold $\nu^{1/3}$. 

We have the following further comments on Theorem \ref{thm:main}. 

\begin{itemize}

\item[1.] Our result shows that the Kolmogorov flow attracts nearby solutions at a much faster rate than that determined by diffusion alone, up to the viscous timescale $t\lesssim \f1 \nu$.  When time exceeds this scale, the $x$--dependent perturbations become exceedingly small and the viscous effect will dominate the dynamics of the solution. The decay estimates \eqref{th:eq1}-\eqref{th:eq2} are rich in their dynamical implications, and are a consequence of three essential stability mechanisms near the Kolmogorov flows: enhanced dissipation, inviscid damping and vorticity depletion, see subsection \ref{sec:ideas} for an in-depth discussion.


\item[2.] We derive uniform-in-$\nu$ estimates for both enhanced dissipation and inviscid damping to the nonlinear Navier-Stokes system, which are almost identical to those obtained for the linearized Navier-Stokes system in \cite{WZ-SCM}. Although vorticity depletion effect is not obvious in the bounds \eqref{th:eq1}-\eqref{th:eq2}, we shall see below that it plays a decisive role in obtaining the extra decay factor $\langle\nu t^3\rangle^{-1}$.

\item[3.] The threshold $\nu^{1/3}$ for asymptotic stability we obtained is likely sharp. On the technical level, this is quite clear from our proof, since in many of the steps we need estimates which are completely sharp. In addition, for the simpler case of Couette flows, where much more precise control on the solution can be achieved, see Masmoudi and Zhao \cite{MZ-AIHP}, the threshold is also $\nu^{1/3}$. Convincing arguments were given in \cite{MZ-AIHP} to suggest that $\nu^{1/3}$ should be optimal, as long as one works with Sobolev space perturbations. In Gevrey spaces, the situation might be different and it is not clear to us if one can establish better thresholds or even $\nu$-independent threshold as in nonlinear inviscid damping \cite{BM-IHES, IJ-Acta, MZ-AM}. It is also worth noting that the Sobolev space $H^3$ for the perturbation is of quite low regularity and seems optimal as well.

\item[4.] Our strategy to achieve the sharp threshold is very robust. The general method should be applicable to studying many other mathematical problems in the field of hydrodynamic stability at high Reynolds numbers.

\end{itemize}

\subsection{Background and related work} In this subsection we briefly introduce important background for our result, related work and the three main stability mechanisms. Due to the rapid developments in this area, we will not attempt to provide a comprehensive survey, instead, we will focus mostly on results directly related to our work.

\subsubsection{Transition threshold problems} The Kolmogorov flow was introduced by Kolmogorov in 1959 as a fundamental model for investigating the onset of turbulence (see, for example, \cite{AM, CP, VG}). The flow is globally stable on the torus $\mathbb{T}_{2\mathfrak{\kappa}\pi}\times\mathbb{T}_{2\pi}$ with $\kappa\le 1$ \cite{Mar, MS}. When $\kappa>1$, the flow 
is linearly unstable \cite{Yud}.  The main goal of this paper is to investigate the {\bf asymptotic stability threshold problem} for the Kolmogorov flow on $\mathbb{T}_{\mathfrak{2\kappa\pi}}\times\mathbb{T}_{2\pi}$ with $\kappa<1$. More precisely, we address the following problem, which was first formulated by Masmoudi and Zhao in the context of the 2D Couette flow \cite{MZ-AIHP}:  \smallskip

{\it Given a norm $\|\cdot\|_X\ (X\subset L^2)$, find a $\beta=\beta(X)\ge0$ as small as possible so that for the initial vorticity $\|\Omega_0-\sin y\|_X\ll \nu^{\beta} $ and for $t\lesssim 1/\nu$,
\beno
&&\|\Omega_{\neq}(t)\|_{L^2}\le C\mathrm{e}^{-c\nu^{1/2}t}\|\Omega_0-\sin y\|_X,\\
&&\|U_{\neq}(t)\|_{L^2}\le C(1+t)^{-1}\mathrm{e}^{-c\nu^{1/2}t}\|\Omega_0-\sin y\|_X.
\eeno
}

The smaller the value of $\beta$, the more crucial and challenging it becomes to control the energy cascade between different modes across various timescales.
Resolving this problem will therefore yield a comprehensive mathematical understanding of the metastable behavior of Kolmogorov flow and two-dimensional turbulence. \smallskip
  
Our result is also closely related to the larger effort in establishing the transition threshold in the dynamics of incompressible fluid equations for both two and three dimensions. The transition mechanism from laminar flow to turbulent flow is a classical yet unsolved problem in fluid mechanics \cite{Tre, Chap, BGM-BAMS, WZ-ICM}. To understand this transition mechanism, Trefethen et al. \cite{Tre} first formulated the {\bf transition threshold problem}: namely, to quantify the magnitude of perturbations required to trigger instability and to determine their scaling with the Reynolds number. More recently, Bedrossian, Germain, and Masmoudi \cite{BGM-BAMS} provided a rigorous mathematical framework for this problem, which is stated as follows.\smallskip\\
{\it Given a norm $\|\cdot\|_{X}$, find a $\beta=\beta(X)$ so that
	\begin{align*}
&\|u_0\|_{X} \leq Re^{-\beta} \to \textrm{stability},\\
&\|u_0\|_{X} \geq Re^{-\beta}\to \textrm{instability}.
		\end{align*}
}

This exponent $\beta$ is referred to as the transition threshold. Recent mathematical results show that for the 3-D Navier-Stokes equations, many physical effects, such as  lift-up effect, inviscid damping, enhanced dissipation, and boundary layer effect, play crucial roles in determining the transition threshold (usually $\beta\ge 1$), see \cite{BGM-AM, BGM-MAMS1, BGM-MAMS2, WZ-CPAM, CWZ-MAMS, LWZ-CPAM, CDLZ}.  Furthermore, these mathematical results also provide important insights into the transition process, especially the secondary instability mechanism induced by the transient energy growth of the streamwise streak due to the lift-up effect.    

\subsubsection{Related results and Methodology}

Let us  review the following important progress on the asymptotic stability threshold of the 2-D Couette flow.
 For the domain without a physical boundary, $\mathbb{T}\times \R$,  
\begin{itemize}

\item if $X$ is the Gevrey class $2+$, then $\beta=0$ \cite{BMV};

\item if $X$ is a Sobolev space, then $\beta\le \f12$ \cite{BWV, MZ-CPDE};

\item if $X$ is a Sobolev space, then $\beta\le \f13$ \cite{MZ-AIHP, WZ-TJM};

\item if $X$ is the Gevrey class $\f 1s$, $ s\in[0,\f12]$, then $\beta\le \f{1-2s}{3(1-s)}$ \cite{LMZ}.
\end{itemize}

The proof of these results is based on the Fourier multiplier method. The multiplier is constructed both to capture the enhanced dissipation and inviscid-damping effects and to control the worst energy cascade between different modes over various timescales. When applicable, the Fourier method can provide very precise control on the solution. See also \cite{AB, BH1, BH2, LMZ-CPAM} for recent relevant results. 

The Fourier multiplier method fails in the presence of a physical boundary.  For the 2-D Navier-Stokes equations in a finite channel $\mathbb{T}\times [-1,1]$, it was proved in \cite{CLWZ-ARMA} that $\beta\le \f12$ under the non-slip boundary condition by developing the space-time estimate method, which relies on resolvent estimates for the linearized NS operator. Under Navier-slip boundary condition, in \cite{WZ-pre} by the last two authors, the stability threshold $\beta\le \f13$ in the Sobolev space was proved via a quasilinear approximation method, which relies on the sharp inviscid damping estimates for the linearized Euler equations and resolvent estimates for the linearized Navier-Stokes equations. The work \cite{WZ-pre} not only provides an alternative proof of the results in Masmoudi and Zhao \cite{MZ-AIHP}, but more importantly, the general approach introduced there also plays an essential role in our current work.

The Fourier multiplier method also fails for non-monotone flows such as the Kolmogorov flow. By using the space-time estimate method, it was  proved in \cite{WZZ-AM} that $\beta\le \f23+$ for 2-D Kolmogorov flow, while \cite{DL-SCM} proved $\beta\le \f23$ for 2-D plane Poiseuille flow under Navier-silp boundary condition. We believe the result in \cite{DL-SCM} could be improved to $\beta\le \f13$ by applying the method in this paper.
 See also \cite{BeekieHe, CEW, Del, CL-JDE, CL-Non, LX, Chen} for more relevant results on transition thresholds in various settings.  

Finally, we mention some related results for the kinetic equations. Chaturvedi, Luk and Nguyen \cite{CLN} proved that for the Vlasov-Poisson-Landau system in the weakly collisional limit, if the initial data satisfies a certain smallness condition (i.e., the perturbation in the Sobolev space is of the order $O(\nu^{\f13})$), there exists a unique global smooth solution that converges to the global Maxwell distribution. See \cite{Bed, BZZ}  for the Vlasov–Poisson–Fokker–Planck system and \cite{BCD} for the non-cutoff Boltzmann equation in the weakly collisional limit.

\subsubsection{Enhanced dissipation, Inviscid damping and Vorticity depletion}

Let us recall the three key physical mechanisms underlying the metastable behavior of the Kolmogorov flow.

In \cite{BW}, Beck and Wayne proved that the solution to the linearized Navier-Stokes equation around the Kolmogorov flow but without the nonlocal term  will rapidly converge to zero. 
More precisely, consider the following linearized equation
\beno
\pa_t\Om-\nu\Delta \Om+\mathrm{e}^{-\nu t}\cos y \pa_x\Om=0,  \quad \Om(0,x,y)=\Om_0(x,y).
\eeno
For a class of initial data, by using the hypocercivity method developed in \cite{Vill}, they proved that  for $t\lesssim 1/\nu$,
\beno
\|\Om_{\neq}(t)\|_{L^2}\lesssim \mathrm{e}^{-\epsilon \nu^{\f 12}t}.
\eeno
Here and in what follows, we denote {the zero mode and nonzero mode as}
\beno
P_0f={\mathfrak{p}}^{-1}\int_{\mathbb{T}_{\mathfrak{p}}}f\mathrm{d}x,\quad P_{\neq}f=f_{\neq}=f-P_{0}f.
\eeno
Note that the decay rate $\nu^{1/2}$ is much faster than the viscous decay rate $\nu$. This phenomenon is called the {\bf enhanced dissipation}, which is due to the vorticity mixing induced by
the shear flow.  In \cite{BW}, Beck and Wayne also provided numerical evidence that the same result holds for the full linearized Navier-Stokes equation, namely, 
\beno
\pa_t\Om-\nu\Delta \Om+\mathrm{e}^{-\nu t}\cos y\big(1+(\Delta)^{-1}\big)\pa_x\Om=0.
\eeno
The introduction of the non-local term $\cos y\pa_x\Delta^{-1}$ poses significant challenges for mathematical analysis. In a series of works \cite{WZZ-AM, LWZ-CPAM, WZ-SCM}, the last two authors and their collaborators developed the wave operator method, the resolvent estimate method, and the time-weighted hypocoercivity method to address the Beck-Wayne conjecture (see \cite{IMM} for a different method). Specifically, the following enhanced dissipation result was established: for $\kappa<1$, if $P_0\Om_0=0$, then there exists $\epsilon>0$ independent of $\nu$ such that for $0\le t \lesssim \f 1\nu$,
\beno
\|\Om(t)\|_{L^2}\le C\mathrm{e}^{-\epsilon \nu^{\f 12}t}\|\Om_0\|_{L^2};
\eeno
and for $\kappa=1$, there holds 
\beno
\|(1-P_1)\Om(t)\|_{L^2}\le C\mathrm{e}^{-\epsilon \nu^{\f 12}t}\|(1-P_1)\Om_0\|_{L^2},
\eeno
where $P_1$ is the orthogonal projection to the space spanned by $\{\sin x, \cos x\}$. 

The enhanced dissipation phenomenon also occurs in general shear flows. For monotone shear flows such as Couette flow $(y,0)$,  the enhanced dissipation rate is $\nu^{\f13}$ \cite{CLWZ-ARMA, CWZ-CMP}; for non-monotone shear flows with non-degenerate critical points such as plane Poiseuille flow $(1-y^2,0)$, 
the enhanced dissipation rate is $\nu^\f12$ \cite{CEW, DL-JDE}.  Let us mention that the enhanced dissipation properties of dipole states on the square torus remain an important open question.

The second crucial physical mechanism is {\bf inviscid damping}, analogous to Landau damping in plasma physics \cite{MV}, which also arises from phase mixing.  This mechanism was first observed for Couette flow by Orr \cite{Orr} and later extended to monotone flows by Case \cite{Case}. For the linearized 2-D Euler equation around a class of monotone shear flows, Zhao and the last two authors \cite{WZZ-CPAM} established sharp inviscid damping rates:
\beno
\|U^x_{\neq}(t)\|_{L^2}\le C(1+t)^{-1},\quad  \|U^y(t)\|_{L^2}\le C(1+t)^{-2}.
\eeno 
The same decay rates were later obtained for Kolmogorov flow in \cite{WZZ-AM} and for general non-monotone flows in \cite{IIJ-2}. Viewed purely as a passive transport equation driven by Kolmogorov flow, these rates are surprising. For non-monotone profiles, a new phenomenon of {\bf vorticity depletion} at the stationary streamlines (first observed by Bouchet and Morita \cite{Bou} and subsequently proved by Zhao and the last two authors \cite{WZZ-APDE}) becomes decisive. More precisely, for Kolmogorov flow, we have
\beno
|\Om(t,x,y_c)|\le C(1+t)^{-1}\quad \text{whenever}\quad \sin y_c=0. 
\eeno

For more relevant results on inviscid damping, we refer to \cite{BM-IHES, DM, IJ-CMP, IJ-CPAM, IJ-Acta, MZ-AM, CWZZ-JEMS, Zhao, Z1, Z2, BCV, WZZ-CMP, IJ-ARMA, GNR, Ren, IIJ-1}. In particular, in a breakthrough work \cite{BM-IHES}, Bedrossian and Masmoudi established nonlinear inviscid damping for the 2-D Couette flow, see also \cite{IJ-CMP}. More recently, Ionescu  and the third author \cite{IJ-Acta}, and Masmoudi and Zhao \cite{MZ-AM} independently proved nonlinear inviscid damping for stable monotone shear flows.

\subsection{Main ideas and outline of the proof}\label{sec:ideas}
In this section we explain our main ideas and outline the key points in the proof of Theorem \ref{thm:main}. 

The fundamental new difficulty in proving asymptotic stability for Kolmogorov flows, in comparison with the case of monotone shear flows, is the significantly weaker rate of enhanced dissipation for the linearized Navier Stokes equation near the Kolmogorov flow. In particular, for a very long time $0<t<\nu^{-1/2}$ the usual enhanced dissipation provides no useful decay for the vorticity field. One might hope to leverage the inviscid damping estimates for the velocity fields to control the solution. However, by now it is well known that nonlinear inviscid damping without the help of diffusion requires the control on the ``profile" of the vorticity field in Gevrey spaces. It is not clear if one can obtain such strong control even for the linearized equation near the Kolmogorov flow. 

To overcome this essential difficulty, we need to establish  a large family of sharp linear and nonlinear estimates, and the argument is rather involved. In order to focus on the main ideas, we will make the following simplifying assumptions. \textit{Firstly}, we shall consider only the time scale $0<t<1/\nu$. This is the dynamic regime where inviscid damping, enhanced damping and vorticity depletion play a decisive role. For $t>1/\nu$, the $x$-dependent components of the solution is already exceedingly small, and the flow is essentially dominated by heat equation. 
\textit{Secondly}, we assume that the initial perturbation does not contain shear components, i.e., the average of $\Omega_0-\sin y$ in $x$ is zero for all $y\in \T_{2\pi}$. In general, the main background flow would be a small perturbation of the Kolmogorov flow and an additional (quite interesting but technical) argument is needed to transfer estimates for Kolmogorov flow to this perturbed flow. \textit{Thirdly}, we ignore the slow decay factor $\mathrm{e}^{-\nu t}$ of the Kolmogorov flow $\mathrm{e}^{-\nu t}(\cos y, 0)$. In general, we need another perturbation argument to incorporate the time-dependence of the coefficients in the linearized Navier Stokes equations. 
It is important to treat all the issues that we ignored in the above assumptions for applications to the full perturbed Navier Stokes equation, and some of the approximation schemes involved in resolving these issues are in fact interesting by themselves. 

Therefore, in this section we will work with the following simplified model for the perturbation velocity and vorticity fields $u$ and $\omega$: 
\begin{align}\label{eq:intro1}
\left\{
\begin{aligned}
   & \partial_t\omega-\nu\Delta \omega+\cos y\,\partial_x\omega+\cos y\,\partial_x\phi+u\cdot\nabla \omega=0,\\
   &u=(-\partial_y,\partial_x)\phi,\quad \phi=\Delta^{-1}\omega,\\
   &\om|_{t=0}=\om_0.
   \end{aligned}\right.
\end{align}
As discussed above, we assume that $\frac{1}{\tp}\int_{\T_{\tp}}\omega_0(x,y)\,dx\equiv0$ for $y\in\T_{2\pi}$. For initial data $\|\omega_0\|_{H^3}\ll \nu^{1/3}$, we need to obtain similar bounds as in \eqref{th:eq1}-\eqref{th:eq2} up to time $1/\nu$. More precisely,
for $0\leq t<1/\nu$,
\begin{align*}
   & \|\omega_{\neq}(t)\|_{L^2}\leq C\big(\nu^{\f13}+\langle \nu t^3\rangle^{-1}\big)
   \mathrm{e}^{-\epsilon\nu^{\f12}t}\|\omega_{0}\|_{H^3},\quad \\
   &\|u_{\neq}(t)\|_{L^2}\leq C\big(\nu^{\f13}+\langle t\rangle^{-1}\big)
   \mathrm{e}^{-\epsilon\nu^{\f12}t}\|\omega_{0}\|_{H^3}.
\end{align*}



\subsubsection{Quasilinear approximation method}
In the context of transition threshold problems, quasilinear approximation method was first introduced by the last two authors in \cite{WZ-pre}. We decompose the solution as 
\begin{equation}\label{eq:intro2}
    \omega = \omega_L + \omega_e, \quad u = u_L + u_e,\quad \phi = \phi_L + \phi_e,
\end{equation}
where $\omega_L$ (together with its associated velocity and stream functions $u_L, \phi_L$) captures the main linear part of the solution which can be controlled quite precisely, and $\omega_e$ is the nonlinear part (hence of higher order $O(\nu^{2/3})$). The task is then reduced to bound $\omega_e$ and the associated velocity field $u_e$, where $\omega_e$ solves the equation
\begin{equation}\label{eq:intro3}
  \left\{\begin{aligned}
  &\partial_t\omega_e-\nu\Delta\omega_e+\cos y\,\partial_x\omega_e+\cos y\,\partial_x\phi_e+ u\cdot\nabla \omega_e+u_e\cdot\nabla \omega_L+Er=0,\\
  &u_e=(-\partial_y,\partial_x)\phi_e,\quad \phi_e=\Delta^{-1}\omega_e,
  \quad\omega_e|_{t=0}=\om_0-\om_{L}(0).
  \end{aligned}
  \right.
\end{equation}
In the above, $Er$ is the error of the approximation for $\omega_L$ as a solution to \eqref{eq:intro1}. It turns out that we can construct $\omega_L$ such that, roughly speaking, 
\begin{equation}\label{eq:intro4}
    \|Er\|_{L^1_tL^2}\lesssim \nu^{2/3}. 
\end{equation}
See Proposition \ref{prop:error} for the full bounds on the error of the approximation. 

The key advantage of this approach is that the nonlinear terms are much smaller and consequently it is easier to use dissipation to close various estimates. 

\subsubsection{Construction of the approximate solution}
A natural choice for constructing $\omega_L$ is to solve the linearized equation of \eqref{eq:intro1} with initial data $\omega_0$. Very recently, uniform-in-viscosity (as $\nu\to0$) estimates have been obtained in various settings, see e.g. \cite{GNR,CWZ-CMP,BCJ2024}. In particular, uniform-in-viscosity inviscid damping and vorticity depletion estimates were proved in \cite{BCJ2024}, for a class of periodic shear flows including the Kolmogorov flow. However, the proof is quite complicated and needs significant adaptations in our setting. More crucially, the results still miss the endpoint cases which are essential for applications to get the sharp threshold. 

In our paper, we take a direct approach and construct the approximate solution $\omega_L$ through modifying the solution to the linearized Euler equation near the Kolmogorov flow. More precisely, let $\omega_E$ solve the linearized Euler equation with initial data $\omega_0$, and we then set 
\begin{equation}\label{eq:intro5}
    (\omega_L)_k(t,y) = \mathrm{e}^{-\nu k^2(\sin y)^2t^3/3}(\omega_E)_k(t,y),
\end{equation}
where $(h)_k$ denotes the $k-$th Fourier mode in $x$ of $h$, $k\in \frac{2\pi}{\tp}\Z$. This construction is motivated by the expectation that $(\omega_E)_k$ oscillates with the factor $\mathrm{e}^{-ikt\cos y }$, and as a result 
$$\nu\partial_y^2(\omega_E)_k\sim -\nu k^2 t^2(\sin y)^2(\omega_E)_k$$
at the leading order. 

In the proof, we need to take into consideration several nontrivial complications, including the time dependence of the background flow and the fact that the background flow has to be taken as a small perturbation of the Kolmogorov flow since the initial perturbation could contain zero mode. See section \ref{sec:quasi} for the full construction.

\subsubsection{Sharp estimates for the linearized Euler equation around Kolmogorov flows}
To get sharp estimates on $\omega_L$, we need to fully leverage the following inviscid damping estimates and vorticity depletion estimates (in the pointwise sense) for the linearized Euler equation around the Kolmogorov flow:
\begin{equation}\label{eq:LE}
    \partial_t\omega_k+\mathrm{i}k \cos y(\omega_k+\psi_k)=0,\quad \Delta_k\psi_k=\omega_k,\quad 
     \omega_k|_{t=0}=\omega_{0,k}.
\end{equation}
where $\Delta_k=\partial_y^2-k^2$ and  $ k\in\alpha \mathbb{Z}\setminus\{0\},\ \alpha:=2\pi/\mathfrak{p}>1$. {We define $\|f\|_{H_k^s}=\|(-\Delta_k)^{s}f\|_{L^2}$ for $s\in\R$. Then $\|f\|_{H_k^s}=\|(\partial_y,k)^{s}f\|_{L^2} $ for 
$s\in \Z_+$ and $\|f\|_{H_k^0}=\|f\|_{L^2}$.}

\begin{theorem}\label{thm:LE}
 Let $(\psi_k, \omega_k)$ solve \eqref{eq:LE}. Then it holds that
  \begin{align*}
     &|\psi_{k}(t,y)|\lesssim\langle t\rangle^{-2}|k|^{-5/2}M_k,\\
     &|\partial_y\psi_{k}(t,y)|\lesssim \big(\langle t\rangle^{-3/2}|k|^{-2}+\langle t\rangle^{-1}|b'||k|^{-3/2}\big)M_k,\\
     &|\omega_{k}(t,y)|\lesssim\min\big(\langle t\rangle^{-1}|k|^{-3/2}+|b'|^2|k|^{-1/2},|k|^{-5/2}\big)M_k,\\
     &|\partial_y(\mathrm{e}^{\mathrm{i}ktb}\omega_{k})(t,y)|\lesssim\min\big(\langle t\rangle^{-1/2}|k|^{-1}+|b'||k|^{-1/2},|k|^{-3/2}\big)M_k,\\
     &|\partial_y(\mathrm{e}^{\mathrm{i}ktb}\psi_{k})(t,y)|\lesssim \langle t\rangle^{-1.2}|k|^{-2.2}M_k,\\
     &\|(\partial_y,k)^2(\mathrm{e}^{\mathrm{i}ktb}\omega_{k})\|_{L^2}\lesssim|k|^{-1}M_k.
  \end{align*}
 Here $M_k=\|\omega_{0,k}\|_{H_k^3}$ and $b(y)=\cos y$.
\end{theorem}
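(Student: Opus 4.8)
\textbf{Proof strategy for Theorem \ref{thm:LE}.}

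The plan is to analyze \eqref{eq:LE} in the mixed Fourier variable, passing to the profile $f_k(t,y) := \mathrm{e}^{\mathrm{i}ktb(y)}\omega_k(t,y)$ which removes the leading oscillation from the transport term. In this variable the equation becomes a forced transport-free ODE in $t$ for each $y$, namely $\partial_t f_k = -\mathrm{i}k\cos y\, \mathrm{e}^{\mathrm{i}ktb}\psi_k$, coupled to the elliptic relation $\Delta_k\psi_k = \mathrm{e}^{-\mathrm{i}ktb}f_k$. The whole theorem is really a package of sharp bounds for the solution of the Rayleigh-type equation associated with the shear flow $b(y)=\cos y$, and the key analytic input is the spectral/resolvent analysis of the Rayleigh operator on $\mathbb{T}_{2\pi}$ together with the limiting absorption principle. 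First I would set up the Laplace transform in time: writing $\psi_k(t,y) = \frac{1}{2\pi\mathrm{i}}\int_{\Gamma} \mathrm{e}^{-\mathrm{i}k c t}\,\Psi_k(y,c)\,dc$ where $\Psi_k(\cdot,c)$ solves the inhomogeneous Rayleigh equation $(\cos y - c)\Delta_k\Psi_k - (\cos y - c)\Psi_k \cdot(\text{lower order}) = \text{data}$, more precisely the resolvent equation $(\cos y - c)(\Delta_k\Psi_k) - (\cos y)''\Psi_k \cdot 0 = \omega_{0,k}/(\mathrm{i}k)$ adjusted for the $(\omega_k+\psi_k)$ structure, and contour-shifting $\Gamma$ onto the spectrum (the range of $\cos y$, i.e. $[-1,1]$) to read off decay in $t$ from the regularity of the boundary values $\Psi_k(y,c\pm\mathrm{i}0)$.

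The core of the argument is then the construction and estimation of the boundary-value Rayleigh solutions $\Psi_k(y,c\pm \mathrm{i}0)$ for $c\in[-1,1]$. Here I would distinguish the generic spectral interior from the neighborhoods of the two critical values $c=\pm1$, which correspond to the critical points $y=0,\pi$ of $\cos y$ where $b'=-\sin y$ vanishes. Away from the critical values, standard Rayleigh ODE analysis (Frobenius expansion around the critical layer $y_c(c)$ with $\cos y_c = c$, using that $b'(y_c)\neq0$) gives $\Psi_k$ that is Lipschitz in $c$ with a logarithmic-derivative singularity, yielding the $\langle t\rangle^{-2}$ decay for $\psi_k$ and $\langle t\rangle^{-1}$ for $\omega_k$ after integrating by parts twice, resp. once, in $c$ — this is the mechanism behind the first, second and third displayed bounds. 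Near $c=\pm1$ the critical point is degenerate (non-degenerate as a critical point of $\cos$, so $b - c \sim \tfrac12 b''(y_c)(y-y_c)^2$), and this is exactly where vorticity depletion enters: the solution $\Psi_k$ picks up an extra vanishing factor at the critical streamline, so that $\omega_k(t,y)$ near $y=0,\pi$ is bounded by $|b'|^2|k|^{-1/2}M_k$ rather than $|b'|^0$; this requires the refined analysis of the degenerate critical layer from \cite{WZZ-APDE, WZZ-AM}, tracking how many powers of $b'(y)=-\sin y$ can be extracted. The factors $|k|^{-3/2}, |k|^{-5/2}$ and the $|k|^{-1}$ in the $H^2$ bound come from Bernstein-type $k$-scaling of the elliptic operator $\Delta_k^{-1}$ combined with the normalization $M_k = \|\omega_{0,k}\|_{H^3_k}$, and the unusual exponents $1.2$ and $2.2$ in the fifth bound reflect a real-interpolation between the generic $\langle t\rangle^{-3/2}$-type decay and a weaker bound valid uniformly down to the critical streamline — one cannot do better than a fractional rate because the depletion is only partial for $\partial_y\psi_k$.

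The main obstacle, and the place where the most care is needed, is the \emph{uniform} control of the Rayleigh boundary-value solutions as $c\to\pm1$, i.e. matching the regular critical-layer analysis with the degenerate one so that the constants do not blow up at the edge of the spectrum; concretely, one must show the limiting absorption principle holds uniformly up to and including the endpoints $c=\pm1$, which in turn rests on the absence of embedded eigenvalues and on quantitative lower bounds for the (modified) Rayleigh operator — these are available for $\kappa<1$, equivalently $\alpha=2\pi/\mathfrak{p}>1$, precisely because the domain is a non-square torus, so the condition $|k|\geq\alpha>1$ is used crucially to keep $\Delta_k = \partial_y^2 - k^2$ coercive enough. A secondary technical point is bookkeeping the $t$- vs $|b'|$-tradeoff in each estimate: the bounds are stated as minima/sums of a time-decaying term and a $|b'|$-weighted term, and obtaining them simultaneously requires splitting the $c$-integral at $|c\mp1|\sim \langle t\rangle^{-\theta}$ for the right $\theta$ in each case and optimizing — routine once the building-block estimates on $\Psi_k(y,c\pm\mathrm{i}0)$ and its $c$-derivatives are in hand, but easy to get wrong. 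Once the pointwise and $L^2$ bounds on $\Psi_k$ and $\partial_c\Psi_k$, $\partial_c^2\Psi_k$ (with the depletion-improved weights near the endpoints) are established, all six displayed estimates follow by contour integration and integration by parts in $c$, together with elliptic regularity for the last one.
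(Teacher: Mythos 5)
Your route is genuinely different from the paper's. The paper never passes to a spectral representation to prove Theorem \ref{thm:LE}: it writes $\omega_k=\mathrm{e}^{-\mathrm{i}ktB}\omega_{0,k}$ with $B=\cos y\,(1+\Delta_k^{-1})$ and runs a time-dependent vector-field/commutator argument, built on the almost conserved quantity $\omega_1=\big(\Delta_k+\mathrm{i}t\Lambda_1-t^2(1-B^2)\big)\omega$ with $(\partial_t+\mathrm{i}B)\omega_1=-4tk^2\Lambda_3\omega$, a new coercive operator inequality for fractional powers $(-\Delta_k)^s$, $s\in[0,0.4]$ (Proposition \ref{prop:coer}, proved by Fourier series in $y$), and explicit Green's-function ODE estimates for $L=(\partial_y+\mathrm{i}tb')\partial_y$ near the critical points (Lemmas \ref{lem6}--\ref{lem5a}). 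The only spectral input is the Rayleigh resolvent bound of Proposition \ref{lem:rayleigh}, used merely to get a spacetime $L^2$ bound for the semigroup (Lemma \ref{prop:timespace-B-0}). Your plan — Laplace transform in $t$, contour deformation onto $[-1,1]$, regularity in $c$ of the boundary values $\Psi_k(y,c\pm\mathrm{i}0)$, degenerate critical-layer analysis at $c=\pm1$ — is the classical route of \cite{WZZ-APDE,WZZ-AM,BCJ2024}, and it is a legitimate general strategy; but as a proof of \emph{this} theorem it has concrete gaps.

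First, the fifth estimate $|\partial_y(\mathrm{e}^{\mathrm{i}ktb}\psi_k)|\lesssim \langle t\rangle^{-1.2}|k|^{-2.2}M_k$ is not obtainable by the interpolation you invoke. You offer no endpoints and no mechanism for a pointwise "real interpolation between a $\langle t\rangle^{-3/2}$ bound and a weaker one"; in the paper the exponent $1.2$ arises from the fractional coercivity estimate applied with $s=0.4$ (Lemma \ref{lem:t2Lamom-om1} and \eqref{psi3}), which the paper identifies as going "well beyond all known linear estimates for the Kolmogorov flow". Nothing in \cite{WZZ-APDE,WZZ-AM} contains a profile-derivative bound of this type, so you cannot cite it in. Second, the weighted pointwise depletion bounds (the third and fourth estimates, with the precise weights $|b'|^2|k|^{-1/2}$, $|b'||k|^{-1/2}$ and $H^3_k$ data, uniformly in $y$ and up to the spectral endpoints $c=\pm1$) are exactly what the spectral literature does not provide: the known degenerate-critical-layer results give $|\omega(t,y_c)|\lesssim (1+t)^{-1}$ at the critical points, and the paper explicitly notes that even the uniform spectral treatment of \cite{BCJ2024} "still miss[es] the endpoint cases". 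Your proposal defers precisely this — the uniform limiting absorption principle with depletion weights across $c\to\pm1$, plus the $\partial_y\partial_c$ control of $\Psi_k(y,c\pm\mathrm{i}0)$ needed for the fourth and sixth estimates — to "matching" the regular and degenerate analyses, but that matching \emph{is} the theorem, not a technical afterthought. Finally, a smaller but real error: the resolvent equation you wrote suppresses the nonlocal term (you set the $b''\Psi$ contribution to zero); for $b=\cos y$ the correct inhomogeneous Rayleigh equation is $(\cos y-c)\Delta_k\Psi_k+\cos y\,\Psi_k=\omega_{0,k}/(\mathrm{i}k)$, and keeping that term is essential — it is the source of the depletion mechanism (cf. Definition \ref{gemeig} and the spectral analysis of Section \ref{subspec}), so any version of your argument that treats it as lower order would fail.
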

All the estimates in Theorem \ref{thm:LE} except for the fifth inequality are sharp, both in terms of the decay we obtained and the regularity we needed. See section \ref{sec:dimension} for a heuristic explanation. The fifth inequality which implies $\langle t\rangle^{-1.2}$ decay of one derivative of the ``profile" of the stream function is perhaps the most difficult, and goes well beyond all known linear estimates for the Kolmogorov flow. We conjecture that the sharp rate should be close to $t^{-3/2}$ instead of $t^{-1.2}$. 

From \eqref{eq:intro5} and the third inequality in Theorem \ref{thm:main}, it follows that $\omega_L$ enjoys the crucial decay rate $\langle\nu t^3\rangle^{-1}$, in view of the pointwise inequality 
\begin{equation}
    (\sin y)^2 \mathrm{e}^{-\nu k^2(\sin y)^2t^3/3}\lesssim \langle k^2\nu t^3\rangle^{-1}. 
\end{equation}

The proof of Theorem \ref{thm:LE} is one of the main innovations of our paper. The key is the discovery of the \textit{almost conserved} quantity 
\begin{equation}\label{eq:intro6}
    \omega_{1k}:=\big(\Delta_k+\mathrm{i}kt[\Delta_k,B]-k^2t^2(1-B^2)\big)\omega_k,
\end{equation}
and the remarkable coercive property
\begin{equation}\label{eq:intro7}
\begin{split}
      &\Lambda_1(-\Delta_k)^{s}\Lambda_1+2(1-B^2)(-\Delta_k)^{1+s}+2(-\Delta_k)^{1+s}(1-B^2)\\
      &\quad\geq 4k^2A(-\Delta_k)^{s}A+\delta(s)(-\Delta_k)^{s},
      \end{split}
   \end{equation}
  where $\delta(s)=4-\max((8s^2+7s+2)s/2,8s^2+8s-1)\in[0.52,4]$ and
  \beno
  \Delta_k=\pa_y^2-k^2,\quad  A=\sin y\,(1+\Delta_k^{-1}),\quad B= \cos y\,(1+\Delta_k^{-1}), \quad \Lambda_1=[\Delta_k, B].
    \eeno
 The quantity $\omega_{1k}$ which contains second order derivatives of $\omega_k$, in combination with \eqref{eq:intro7}, provides powerful control on the solution. See Lemma \ref{lem:t2Lamom-om1} for the details. 

 The fact that $\omega_{1k}$ is an almost conserved quantity can be seen from the equation
 \begin{equation}\label{eq:intro8}
     (\partial_t+ikB)\omega_{1k}=-4tk^4\Lambda_3\omega_k,
 \end{equation}
 where
 \begin{equation}\label{eq:intro9}
     \Lambda_3=(1-\Delta_k^{-1})\Delta_{k,-1}^{-1}\Delta_{k,1}^{-1},\quad \Delta_{k,s}=\mathrm{e}^{-\mathrm{i}s y}\Delta_k\mathrm{e}^{\mathrm{i}s y}= (\partial_y+\mathrm{i}s)^2-k^2. 
 \end{equation}
 Indeed, analogous to inviscid damping estimates (decay of velocity and stream functions), it can expected that $\Lambda_3\omega_k$ enjoys relatively fast decay, so that the right hand side of \eqref{eq:intro8} becomes integrable over time which implies the global boundedness of $\omega_{1k}$. 

 The proof of \eqref{eq:intro7} is quite involved, and is based on the observation that through a crucial decomposition (see Lemma \ref{lem8}) the coercive inequality can be transformed into a form amenable to Fourier analysis, which can then be deduced from a real analysis lemma on the Fourier coefficients, see Lemma \ref{lem9}. The proof of Lemma \ref{lem9}, which are essentially inequalities about two explicitly defined positive sequences, is nontrivial and interesting by itself. However, it seems worthwhile to further explore a more conceptual understanding of \eqref{eq:intro7}.
 

\subsubsection{Spacetime estimates}
Thanks to Theorem \ref{thm:LE} and \eqref{eq:intro5}, we have precise control on the approximate solution $\omega_L$. It remains to control $\omega_e$ which satisfies the equation \eqref{eq:intro3}. 
To control $\omega_e$, our primary tool is the following spacetime estimates for the linearized Navier Stokes system:
\begin{align}\label{eq:intro11}
  \partial_tf-\nu \Delta f+\cos y\,\partial_x(f+\Delta^{-1}f)=\partial_y g_1+\partial_xg_2+g_3+g_4.
\end{align}

For an interval $I$, we introduce the space-time norms as
 \begin{align}\label{eq:intro12}
    \|f\|_{X_I}=&\|f\|_{L^\infty(I;L^2)}+\nu^{\f12}\|\nabla f\|_{L^2(I;L^2)}+ \||D_x|^{\f12}\nabla \Delta^{-1}f\|_{L^2(I;L^2)} \\ \notag&+ \||\sin y|^{\f12}\partial_x\nabla \Delta^{-1}f\|_{L^2(I;L^2)}.
 \end{align}
Then we have  for $P_0g_3=0$ and $I= [t_1,t_2]$, 
\begin{equation}\label{eq:intro13}
\|f\|_{X_I}\leq C\big(\|f(t_1)\|_{L^2}+\nu^{-\f12}\|(g_1,g_2)\|_{L^2(I;L^2)}+\||D_x|^{-\f12}\nabla g_3\|_{L^2(I;L^2)}+\|g_4\|_{L^1(I;L^2)}\big).
\end{equation}
See Proposition \ref{prop:LNS-sp1} for the fuller version when the background shear is a small perturbation of Kolmogorov and is time dependent. 

The bounds \eqref{eq:intro13} provide sharp and robust estimates to treat the nonlinear terms in the equation \eqref{eq:intro3} for $\omega_e$ (with the important exception of the \textit{reaction term}, see discussions below). We remark that the norm $X_I$ is exactly at the energy level regularity, which combines effectively with energy estimates that also play a crucial role especially in an initial stage of evolution.  

The proof of \eqref{eq:intro13} is based on resolvent estimates. The connection between spacetime estimates and resolvent bounds may be seen most directly when we consider only $g_3$ term (while setting initial data as well as all other inhomogeneous terms as identically zero) and focus on the the last two components of the $X_I$ norm \eqref{eq:intro12}.  Take a Fourier transform in $t$ for the equation \eqref{eq:intro11}. Denoting $\widehat{f}(\tau), \widehat{\,g_3}(\tau)$ as the Fourier transform of $f$ and $g_3$ respectively,  then we have the equation on $\T_{2\pi}$,
\begin{equation}\label{eq:intro13.5}
  (i\tau   -\nu\Delta)\widehat{f}(\tau)+\cos y\,\partial_x (1+\Delta^{-1})\widehat{f}(\tau)=\widehat{\,g_3}(\tau).
\end{equation}

By the Plancherel identity, we see that in this special case \eqref{eq:intro13} is reduced to the following \textit{resolvent bounds} for all $\tau\in\R$,
\begin{equation}\label{eq:intro14}
    \begin{split}
&\||D_x|^{\f12}\nabla\Delta^{-1}\widehat{f}(\tau)\|_{L^2}+\||\sin y|^{\f12}\partial_x\nabla\Delta^{-1}\widehat{f}\|_{L^2}\leq   C\||D_x|^{-\f12}\nabla \widehat{\,g_3}(\tau)\|_{L^2}.
    \end{split}
\end{equation}
The proof of \eqref{eq:intro14} is  based on energy estimates and the limiting absorption principle. We remark that for \eqref{eq:intro14} to hold, the nonlocal term $\cos y\, \partial_x\Delta^{-1}\widehat{f}(\tau)$ is essential since it plays a decisive role in the vorticity depletion phenomenon. 

\subsubsection{The three time scales}
Our proof distinguishes three timescales $T_0=\nu^{-1/6},\,T_1=\nu^{-4/9}$ and $T_2=\nu^{-1}$, which are naturally adapted to the dynamics of the solution and are motivated by the following considerations. 

To treat the nonlinear terms in \eqref{eq:intro3} we use energy estimates and spacetime bounds \eqref{eq:intro13}. Notice that in applying \eqref{eq:intro13}, we treat all nonlinear terms essentially as perturbations, including the transport term $u_L\cdot\nabla \omega_e$ which involves one derivative of $\omega_e$. However, the transport effect is still significant at the initial stage of evolution for $t\in[0, T_0]$ and we need to take advantage of the transport structure via energy estimates. After $t=T_0$, due to the decay of $u_L$ and the viscous regularization effect, the term $u_L\cdot\nabla\omega_e$ becomes perturbative and can be bounded using the spacetime estimates \eqref{eq:intro13}. 

Over the time interval $t\in[T_0, T_1]$ we apply the spacetime estimates \eqref{eq:intro13} to control the evolution of $\omega_e$. The main difficulties are that there are many terms which require us to apply different components of the inequality \eqref{eq:intro13}, and we need to take advantage of several crucial cancellations between the nonlinear terms, including a delicate treatment of the reaction term. At $t=T_1$,  the solution reaches a \textit{milestone}, in that the approximate solution $\omega_L$ has decayed to the same order of magnitude as the nonlinear part $\omega_e$. Indeed, we have
$$\langle\nu t^3\rangle^{-1}\sim \nu^{1/3}, \quad{\rm when}\,\,t=T_1=\nu^{-4/9}.$$
Afterwards it is no longer meaningful to distinguish the two parts. 

For $t\in[T_1, T_2]$, as discussed above, we do not have to perform the quasilinear approximation and instead can use the spacetime estimates \eqref{eq:intro13} to directly bound $\omega$. An important new aspect of the analysis is to explicitly track the enhanced dissipation factor $\mathrm{e}^{-c\nu^{1/2}t}$ for $\omega$ which becomes significant for $t\ge\nu^{-1/2}$. For $t\ge T_2$, the background shear flow starts to substantially decay with the rate $\mathrm{e}^{-c\nu t}$, and the stabilizing effects (inviscid damping, vorticity depletion and enhanced dissipation) gradually become weaker. We note that the non-shear components of our solution is exceedingly small, less than $\mathrm{e}^{-c\nu^{-1/2}}$ after $t=T_2$. As a consequence, the flow is dominated by the slow viscous decay of the shear flow, which is attracted to the Kolmogorov flow over a time scale $t\sim \nu^{-1}$. 

\subsubsection{The reaction term}
While the spacetime estimates \eqref{eq:intro13} are sufficient to bound most of the nonlinear terms in equation \eqref{eq:intro3}, there is one important exception, namely the reaction term $t\sin y\,u_e^y\partial_x\omega_L$, which needs a considerably more subtle treatment and is a key difficulty of our argument. The same difficulty also appeared in \cite{WZ-pre} by the last two authors for the Couette flow and we follow the same strategy here, but the argument becomes significantly more complicated technically.

The reaction term $t\sin y \,u_e^y\partial_x\omega_L$ comes from the nonlinearity $u^y_e\partial_y\omega_L$ through the identity
\begin{equation}\label{eq:intro15}
\begin{split}
  u^y_e\partial_y\omega_L&=u_e^y(\partial_y-t\sin y\,\partial_x)\omega_L+t\sin y \,u_e^y\partial_x\omega_L. 
\end{split}    
\end{equation}
As is well known, for equation \eqref{eq:intro3} the derivative $\partial_y-t\sin y\,\partial_x$, which is adapted to the transport structure, can be considered as a ``good derivative" and is more manageable. In particular, for our purposes, $u_e^y(\partial_y-t\sin y\,\partial_x)\omega_L$ can be bounded through the spacetime estimates \eqref{eq:intro13}.  

The main difficulty is that $t\sin y\,u_e^y\partial_x\omega_L$ is growing over time, and spacetime estimates \eqref{eq:intro13} are not sufficient to control it. To overcome this difficulty, we take Fourier transform in $x$ in the linearized equation with the reaction term as the right hand side, and obtain that
\begin{equation}\label{eq:intro16}
    \partial_tf_k-\nu\Delta_kf_k+\mathrm{i}k\cos y\,(1+\Delta_k^{-1})f_k=\sum_{l\in\frac{2\pi}{\tp}\Z}(k-l)l t\sin y\,\phi_{e,l}\omega_{L,k-l}.
\end{equation}
The first important observation is that we can write
\begin{equation}\label{eq:intro17}
    \phi_{e,l}\omega_{L,k-l}(t,y):=g_{k,l}(t,y)\mathrm{e}^{-\mathrm{i}t(k-l)\cos y},
\end{equation}
where the ``profile" $g_{k,l}$ has bounded first derivative, see  Theorem \ref{thm:LE}. 

The heart of the matter is that the forcing term in \eqref{eq:intro16} is always \textit{non-resonant} with the intrinsic oscillation of the equation which is given by $\mathrm{e}^{-\mathrm{i}kt\cos y}$, since $l\neq0$. To see how this non-resonance property can be used to improve the estimates on $f_k$, let us consider the model equation for $t\ge0, y\in\R$,
\begin{equation}\label{eq:intro18}
    \partial_tf_k+\mathrm{i}kyf_k=g(t,y)\mathrm{e}^{-\mathrm{i}(k-l)yt},\quad f_k|_{t=0}\equiv0.
\end{equation}
For $l\neq0$, using transform in $y$, it follows from straightforward calculation that
\begin{equation}\label{eq:intro19}
    \begin{split}
        \|f_k(t,y)\|_{L^2(y\in\R)}&\lesssim \|\widehat{\,f_k}(t,\xi-kt)\|_{L^2(\xi\in\R)}\lesssim \Big\|\int_0^t\widehat{\,g\,\,}(s,\xi-ls)\,ds\Big\|_{L^2(\xi\in\R)}\\
        &\lesssim |l|^{-1/2}\|\langle\xi-ls\rangle\widehat{\,g\,\,}(s,\xi-ls)\|_{L^2([0,t]\times\R)}\lesssim \|(g,\partial_yg)\|_{L^2([0,t]\times\R)}. 
    \end{split}
\end{equation}
The key point is that we can use $L^2_t$ instead of $L^1_t$ type of norm on $g$ to bound $f_k$, albeit at the expense of using one extra derivative in $y$ of $g$ (which we can control).

To implement the above idea in our case where the main oscillatory factor $\cos y$ is a non-monotonic function, we need to take a slightly generalized form of Fourier transform (corresponding to the usual Fourier transform in a new coordinate where $\cos y$ becomes linear). The fact that this transform is ``degenerate" at the critical points of $\cos y$ causes many technical difficulties, which we are able to overcome using also the vanishing property of the reaction term coming from the vorticity depletion effect of $\omega_L$. We also need to show that the viscous term $\nu\Delta$ and the nonlocal term $\mathrm{i}k\cos y \,\Delta_k^{-1}$ can be bounded perturbatively for our purposes. We refer to section \ref{sec:reactionpart} for the details.

\subsubsection{Dimension counting, anisotropy and time dependent coefficients}\label{sec:dimension}
Our proof, although conceptually quite simple and robust, involves a large number of nontrivial estimates and significant amount of calculations. In the hope of making it more accessible, we share the following (non-rigorous) heuristics and additional comments on some important technical aspects not discussed above.

\begin{itemize}
    \item We note the following heuristics (when close to the critical point $y=0$) in terms of regularity
    \begin{equation}\label{eq:intro20}
        \partial_y\sim \partial_x\sim k\sim kt y, \quad \nu \partial_x^2\sim \partial_t. 
    \end{equation}
The first two and the last comparisons are natural; the third comparison is motivated by the fact that $\partial_y-\mathrm{i}kt\sin y\sim \partial_y-\mathrm{i}kty$ is a ``good derivative". Inspired by \eqref{eq:intro20}, we introduce the following dimension counting
\begin{equation}\label{eq:intro21}
    x\sim L,\quad y\sim L, \quad k\sim L^{-1},\quad t\sim L^{-1}, \quad \nu\sim L^3. 
\end{equation}
In the above, we may regard $L$ as certain base (length) unit. Adopting \eqref{eq:intro21}, it is easy to check that all the spacetime estimates \eqref{eq:intro13} and the linear estimates (except the fifth inequality) in Theorem \ref{thm:LE} (at the highest regularity of $M_k$) are  dimensionally consistent, which is an indication of their sharpness. 

\item From linear inviscid damping estimates where $u^x$ and $u^y$ have different rates of decay, we can expect certain anisotropy in the $x$ and $y$ directions, despite them having the same dimension. This anisotropy manifests itself in many of our estimates, including the linear estimates in Theorem \ref{thm:LE} and the bounds \eqref{eq:intro13}.

\item For the sake of clarity and simplicity, our discussion above have ignored two important complications in the linear estimates: (I) the background shear flow is not exactly the Kolmogorov flow due to the perturbation, and (II) the linearized
equation has coefficients that are time dependent. These issues need to be addressed to extend the spacetime estimates to the full linearized equation. Our main idea is, as expected, to treat the full linearized equation as perturbations of equation \eqref{eq:intro11}. To achieve high accuracy in various approximations, we modify our equations in a number of nontrivial ways, for example, through change of variables in \eqref{eq:changeofcoordinate} to bring the perturbed shear flow closer to the Kolmogorov flow; through renormalization of time in section \ref{sec:spacetime} to absorb the decay factor $\mathrm{e}^{-\nu t}$ in the Kolmogorov flow; and through dividing into smaller time intervals. A guiding principle in these approximation schemes is to prioritize extracting the precise oscillation of our solution, since small error in the phase can lead to nontrivial growth over large times.
\end{itemize}

\subsubsection{Organization of the paper}
The rest of our paper is organized as follows. In section \ref{sec:mainprop} we present the main propositions and give the proof of Theorem \ref{thm:main} assuming these propositions. In section \ref{sec:reseuler} - section \ref{sec:spacetime} we prove the required linear estimates. In section \ref{appro} we prove the main properties of the approximate solution used in section \ref{sec:mainprop}. In section \ref{sec:nonlinearenergy} we prove all the nonlinear estimates used in section \ref{sec:mainprop} except for the ones for the reaction term. In section \ref{sec:reactionpart} we prove the bounds on the reaction term and complete the proof of Theorem \ref{thm:main}. Lastly, the three appendices contain some technical results used in the proof.

\section{Main propositions and proof of Theorem \ref{thm:main}}\label{sec:mainprop}

First of all, we need to introduce a multiple timescale analysis based on different behaviors of the solution in different time regimes. Namely, we divide the whole time interval into multiple  regimes: 
short-time regime $[0,T_0]$ with $T_0=\nu^{-\f16}$, medium-time regime $[T_0,T_1]$ with $T_1=\nu^{-\f49}$,
and large-time regime $[T_1, T_2]$ with $T_2=\nu^{-1}$, and long-time regime $[T_2,+\infty)$.

Throughout this paper, we denote 
\beno
t_*=\int_0^t\mathrm{e}^{-\nu s}\mathrm{d}s=(1-\mathrm{e}^{-\nu t})/\nu,\quad b(y)=\cos y.
\eeno

\subsection{Quasilinear approximation} \label{sec:quasi}

Let  $V(y)=P_0U_0^x$. Thanks to our assumption on the initial condition, we have
\begin{align}
   &\|V-b\|_{C^3}\lesssim\|V-b\|_{H^4}\leq \|U_0^x-b\|_{H^4}\leq C\|\Omega_0{-\sin y}\|_{H^3}\leq C\epsilon_1\nu^{\f13},\label{ass:U-b}\\
   &\|V+V''\|_{C^1}+\|V'+V'''\|_{L^\infty}\lesssim \|V-b\|_{C^3}\lesssim \|U_0^x-b\|_{H^4}\lesssim \|\Omega_0{-\sin y}\|_{H^3}. \label{ass:U'-b'}
\end{align}
We then introduce the perturbation
\beno
u=U-(\mathrm{e}^{-\nu t}V(y),0):=(u^x,u^y),\quad \omega=\Omega+\mathrm{e}^{-\nu t}V'(y)=\partial_xu^y-\partial_yu^x. 
\eeno
It holds that {(here we normalize that $\int_{\mathbb{T}_{\mathfrak{p}}\times\mathbb{T}_{2\pi}}U=0 $, then
$\int_{\mathbb{T}_{\mathfrak{p}}\times\mathbb{T}_{2\pi}}u=0 $, $\int_{\mathbb{T}_{2\pi}}V(y)\mathrm{d}y=0 $)}
\begin{align}\label{eq:NS-vorticity-p}
\left\{
\begin{aligned}
   & \partial_t\omega-\nu\Delta \omega+\mathrm{e}^{-\nu t}V(y)\partial_x\omega-
   \mathrm{e}^{-\nu t}V''(y)\partial_x\phi+u\cdot\nabla \omega=-\nu\mathrm{e}^{-\nu t}(V'+V'''),\\
   &u=(-\partial_y,\partial_x)\phi,\quad \phi=\Delta^{-1}\omega,\\
   &\om|_{t=0}=\Om_0(x,y)+V'(y):=\om_0=P_{\neq}\Om_0.
   \end{aligned}\right.
\end{align}

The first key step of the quasilinear approximation method is to construct a good approximate solution to the linearized Navier-Stokes system. Compared with the case of Couette flow, this construction is far more subtle, and proving the associated properties is significantly more challenging.

In order to apply Theorem \ref{thm:LE} to derive the desired estimates for the approximate solution, we introduce a crucial transform for the flow $V(y)$.
Thanks to $\|V-b\|_{H^4}\le \epsilon_1\nu^{\f13}\le \epsilon_0$, by {Morse} type Lemma \ref{lem:genV} , there exist constants $a, d$ and the function $\theta(y):\T_{2\pi}\to \T_{2\pi}$ with $\|\theta(y)-y\|_{H^3}\le C\|V-b\|_{H^4}\ll1$  such that 
\begin{equation}\label{eq:changeofcoordinate}
V(y)=ab(\theta(y))+d.
\end{equation} 

 Let $\widetilde{w}_k$ solve the linearized Euler equation around Kolmogorov flow:
 \begin{align}\label{def:tildewk}
 \partial_t\widetilde{w}_k+\mathrm{i}k b(y)(\widetilde{w}_k+\widetilde{\psi}_k)=0,\quad \Delta_k\widetilde{\psi}_k=\widetilde{w}_k,\quad 
     \widetilde{w}_k|_{t=0}=\widetilde{w}_{0,k},\ k\neq 0,
 \end{align} 
 where
 \begin{align}\label{def:w0k}
       & w_{0,k}(y)=\dfrac{1}{\mathfrak{p}}\int_{\mathbb{T}_{\mathfrak{p}}}\omega_0(x,y)\mathrm{e}^{-\mathrm{i}kx}\mathrm{d}x,\quad
       w_{0,k}(y)=\widetilde{w}_{0,k}(\theta(y)).
    \end{align}

 Let
  \begin{align}
\gamma_1( t)=\int_{0}^{t}t_{*}^2dt=\dfrac{1}{\nu}\left[ t+\dfrac{1}{2\nu}(1-\mathrm{e}^{-2\nu t})-\dfrac{2}{\nu }(1-\mathrm{e}^{-\nu t})\right].\label{def:gamma1}
\end{align} 
 It is easy to verify that for $0\leq t\leq T_1=\nu^{\f49}$, $\nu$ small enough,
 \beno
  \f {t^3} 4\leq\gamma_1(t)\leq \f {t^3} 3.
 \eeno
 We introduce 
 \begin{align}
&w_{k,1}(t,y)=\widetilde{w}_{k}(at_*,\theta(y))\mathrm{e}^{-\mathrm{i}kdt_*},\quad
     \psi_{k,1}(t,y)=\widetilde{\psi}_{k}(at_*,\theta(y))\mathrm{e}^{-\mathrm{i}kdt_*},\label{def:wk1}\\
     &w_{k,2}=w_{k,1}\mathrm{e}^{-\nu k^2\gamma_1(t)|V'|^2},\quad \psi_{k,2}=\Delta_k^{-1}w_{k,2}.\label{def:wk2}
\end{align}
Now the approximate solution $\om_L$ is defined by 
\begin{align}
\omega_L(t,x,y)=\sum_{k\in \Lambda_*}w_{k,2}(t,y)\mathrm{e}^{\mathrm{i}kx}.\label{def:omL}
\end{align}
where
\begin{align}
&\Lambda_{*}=[-\nu^{-\f13},\nu^{-\f13}]\cap\alpha\mathbb{Z}\setminus\{0\},\quad \alpha=2\pi/\mathfrak{p}.\label{def:Lambda*}
\end{align}
Let $\phi_L=\Delta^{-1}\om_L$ and $u_L=(-\partial_y,\partial_x)\phi_L=\big(u^x_L, u^y_L\big)$ given by 
\begin{align}
u_L^x(t,x,y)=-\sum_{k\in \Lambda_*}\partial_y\psi_{k,2}(t,y)\mathrm{e}^{\mathrm{i}kx},\quad u_L^y(t,x,y)=\sum_{k\in \Lambda_*}\mathrm{i}k\psi_{k,2}(t,y)\mathrm{e}^{\mathrm{i}kx}.\label{def:u1L}
\end{align}
We denote
\begin{align}\label{def:Er-L}
  Er_{L}=\partial_t\omega_L-\nu \Delta\omega_L+\mathrm{e}^{-\nu t}V\partial_x(\omega_L+\phi_L).
   \end{align}

 Let $\eta$ be a smooth cut-off function such that $\eta(s)=1$ for $|s|\le 1$ and $\eta(s)=0$ for $|s|\ge 2$. We denote
\begin{align}
&w_{k*}=(1-\eta(\sqrt{t}V'))w_{k,2}=(1-\eta(\sqrt{t}V'))w_{k,1}\mathrm{e}^{-\nu k^2\gamma_1(t)|V'|^2},\label{def:wk*}\\
 &w_{k,3}=\eta(\sqrt{t}V')w_{k,2}=\eta(\sqrt{t}V')w_{k,1}\mathrm{e}^{-\nu k^2\gamma_1(t)|V'|^2},\label{def:wk3}
\end{align}
and 
\begin{align}
\widetilde{\omega}_L(t,x,y)=\sum_{k\in \Lambda_*}w_{k,3}(t,y)\mathrm{e}^{\mathrm{i}kx},\quad\omega_L^*(t,x,y)=\sum_{k\in \Lambda_*}w_{k*}(t,y)\mathrm{e}^{\mathrm{i}kx}.\label{def:tilomL}
\end{align}
Thus, $\om_L=\widetilde{\om}_L+\om_L^*$.

In Section 7,  based on various sharp inviscid damping and vorticity depletion estimates for the linearized Euler system(namely, Theorem \ref{thm:LE}), we will prove that the approximate solution $\om_L$ enjoys the following important properties, which will play a crucial role in the energy estimates. 

 \begin{Proposition}\label{prop:app}
 It holds that for $ t\leq  T_1=\nu^{-\f49}$, 
    \begin{align}
        &\langle \nu t^3\rangle\big(\|\partial_x\omega_L(t)\|_{L^\infty}+ \|\nabla u_L(t)\|_{L^\infty}\big)
        +\langle \nu t^3\rangle^{1/2}\|(\partial_y+t_*V'\partial_x)\omega_L(t)\|_{L^\infty}\leq C\|\omega_{0}\|_{H^3},\label{est:paromL-inf}\\
        &\|(u^x_L-t_*V'u^y_L)(t)\|_{L^{\infty}} \leq C\big(\langle t\rangle^{-1.2}+| \nu t^3|^{1/2}\langle t\rangle^{-1}\big)\|\omega_{0}\|_{H^3},\label{est:u2L2}\\
        &|u_L^x(t,x,y)|\leq C\big(\langle t\rangle^{-3/2}+\langle t\rangle^{-1}|V'|\big)\|\omega_0\|_{H^3},\label{est:u1Linf}\\
        &\|u^y_L(t)\|_{L^{\infty}} \leq C\langle \nu t^3\rangle^{1/2}\langle t\rangle^{-2}\|\omega_{0}\|_{H^3},\label{est:u2Linf}\\
        &\|u_L\cdot\nabla\omega_L\|_{{L^\infty}}\leq C\big(\langle t\rangle^{-1.2}+| \nu t^3|^{1/2}\langle \nu t^3\rangle^{-1}\langle t\rangle^{-1}\big)\|\omega_{0}\|_{H^3}^2,\label{est:uLnawL}\\
        &\|t_*V'\partial_x\widetilde{\omega}_L(t)\|_{L^\infty}\leq C\|\omega_{0}\|_{H^3},\quad\|t_*V'\partial_x\omega_L(t)\|_{L^\infty}\leq Ct\langle \nu t^3\rangle^{-1}\|\omega_0\|_{H^3}.\label{est:t*Vparw}
    \end{align}
\end{Proposition}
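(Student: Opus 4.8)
\textbf{Proof proposal for Proposition \ref{prop:app}.}

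The plan is to transfer all the estimates from the linearized Euler bounds of Theorem \ref{thm:LE} to the modified approximate solution $\omega_L$, carefully tracking the effect of the three modifications built into the construction: (i) the change of coordinate $\theta(y)$ sending $V$ to a cosine profile, (ii) the renormalized time $at_*$ together with the phase $\mathrm{e}^{-\mathrm{i}kdt_*}$, and (iii) the Gaussian-type damping factor $\mathrm{e}^{-\nu k^2\gamma_1(t)|V'|^2}$. First I would record the elementary facts $\|\theta-y\|_{H^3}\ll 1$, $\|V-b\|_{C^3}\lesssim\epsilon_1\nu^{1/3}$, $at_*\leq at\leq Ct$ on $[0,T_1]$, and $\tfrac14 t^3\leq\gamma_1(t)\leq\tfrac13 t^3$, so that $\nu k^2\gamma_1(t)|V'|^2\sim \nu k^2 t^3 (\sin y)^2$ up to bounded multiplicative errors; in particular $|V'|^2\mathrm{e}^{-\nu k^2\gamma_1(t)|V'|^2}\lesssim \langle k^2\nu t^3\rangle^{-1}\lesssim\langle\nu t^3\rangle^{-1}$ for $k\in\Lambda_*$. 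Since $\theta$ is a near-identity diffeomorphism of $\T_{2\pi}$, composition with $\theta$ is bounded on every $L^\infty$ and $H^s$ norm with constants close to $1$, and $\|w_{0,k}\|_{H_k^3}\lesssim\|\widetilde w_{0,k}\|_{H_k^3}=M_k$, so $\sum_{k\in\Lambda_*}M_k^2\lesssim\|\omega_0\|_{H^3}^2$ and in fact $\sum_k |k|^{a}M_k\lesssim\|\omega_0\|_{H^3}$ for any fixed $a<1$ (using $|\Lambda_*|\lesssim\nu^{-1/3}$ only where needed, but mostly Cauchy--Schwarz against $\langle k\rangle^{-3}$).

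Next I would prove each displayed bound by summing the corresponding pointwise Euler estimate over $k\in\Lambda_*$. For \eqref{est:paromL-inf}: $\partial_x\omega_L$ picks up a factor $|k|$ against the bound $|\omega_k|\lesssim|k|^{-5/2}M_k$, giving $\langle\nu t^3\rangle\|\partial_x\omega_L\|_{L^\infty}\lesssim\sum_k|k|^{-3/2}M_k\lesssim\|\omega_0\|_{H^3}$ after absorbing the $\langle\nu t^3\rangle$ against the $|V'|^2$ coming out of differentiating the Gaussian factor (the only place the $\langle\nu t^3\rangle$ weight is not free is when $\partial_x$ or $\partial_y$ hits $\mathrm{e}^{-\nu k^2\gamma_1|V'|^2}$, producing $\nu k^2 t^3 V'V''$ or similar, which is $\lesssim 1$ by the same Gaussian bound); the term $\|\nabla u_L\|_{L^\infty}$ uses $|\partial_y\psi_k|,|k\psi_k|\lesssim|k|^{-3/2}M_k$ and $|k\partial_y\psi_k|,|k^2\psi_k|\lesssim|k|^{-1}M_k$ type bounds from Theorem \ref{thm:LE}; and the good-derivative term $(\partial_y+t_*V'\partial_x)\omega_L$ is exactly $\mathrm{e}^{-\nu k^2\gamma_1|V'|^2}$ times (the $\theta$-composed, time-renormalized version of) $\partial_y(\mathrm{e}^{\mathrm{i}ktb}\omega_k)$ plus lower-order terms from $\partial_y$ hitting the Gaussian and from the mismatch between $t_*V'$ and $t\sin\theta$, so the fourth inequality of Theorem \ref{thm:LE} gives $\langle t\rangle^{-1/2}|k|^{-1}+|V'||k|^{-1/2}$, and the extra half power $\langle\nu t^3\rangle^{1/2}$ is bought from $|V'|^{}\mathrm{e}^{-\tfrac12\nu k^2\gamma_1|V'|^2}\lesssim\langle\nu t^3\rangle^{-1/2}$. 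Estimate \eqref{est:u1Linf} is a direct sum of $|\partial_y\psi_k|\lesssim\langle t\rangle^{-3/2}|k|^{-2}+\langle t\rangle^{-1}|V'||k|^{-3/2}$; \eqref{est:u2Linf} sums $|k\psi_k|\lesssim|k|\langle t\rangle^{-2}|k|^{-5/2}M_k$ and again uses $\langle\nu t^3\rangle^{1/2}$ to pay for nothing (it is an upper bound, so the weight only helps), though more honestly $u^y_L$ has no $V'$ and the $\langle\nu t^3\rangle^{1/2}$ factor is genuinely needed only when we later compare with the analogous quantity without the Gaussian — here it suffices to drop it. Estimate \eqref{est:t*Vparw} splits: on the support of $\eta(\sqrt t V')$ one has $t|V'|^2\lesssim 1$ so $\|t_*V'\partial_x\widetilde\omega_L\|_{L^\infty}\lesssim\sum_k (t|V'|^2)^{1/2}|k||\omega_k|\lesssim\sum_k|k|^{-3/2}M_k$; for the full $\omega_L$ one uses $t|V'|\mathrm{e}^{-\nu k^2\gamma_1|V'|^2}$ and the bound $|V'|^2\mathrm{e}^{-\nu k^2\gamma_1|V'|^2}\lesssim \langle\nu t^3\rangle^{-1}$ gives $t\langle\nu t^3\rangle^{-1}\sum_k|k|^{-1/2}M_k$ — but here one must be slightly careful with the square root and use instead $|V'|\mathrm{e}^{-\nu k^2\gamma_1|V'|^2}\lesssim (\nu t^3)^{-1/2}$, so $t|V'|\mathrm{e}^{\cdots}\lesssim t(\nu t^3)^{-1/2}=\nu^{-1/2}t^{-1/2}$, which for $t\lesssim\nu^{-4/9}$ is $\lesssim\nu^{-1/2}\cdot\nu^{2/9}$... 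I would instead just cite that $t|V'|\mathrm{e}^{-c\nu k^2 t^3|V'|^2}\lesssim t\langle\nu k^2 t^3\rangle^{-1}\cdot\langle\nu k^2 t^3\rangle^{1/2}|V'|^{-1}\mathrm{e}^{\cdots}$ — cleanest is: the function $s\mapsto ts\,\mathrm{e}^{-c\nu t^3 s^2}$ has maximum $\lesssim t(\nu t^3)^{-1/2}$, but we also have the trivial bound $\lesssim t\|V'\|_\infty$, and $t\langle\nu t^3\rangle^{-1}$ dominates the minimum of these two up to constants; this is the intended reading and I would present it that way.

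Estimates \eqref{est:u2L2} and \eqref{est:uLnawL} are the genuinely delicate ones, and I expect \eqref{est:u2L2} to be the main obstacle. The quantity $u^x_L-t_*V'u^y_L$ is, mode by mode, $-\partial_y\psi_{k,2}-t_*V'\,\mathrm{i}k\psi_{k,2}$, i.e. (up to the Gaussian factor and the $\theta$-composition) $-\mathrm{e}^{-\mathrm{i}kat_* b(\theta)}\partial_y\!\big(\mathrm{e}^{\mathrm{i}kat_* b(\theta)}\psi_k\big)\big|_{at_*,\theta(y)}$ plus the correction from $\partial_y$ hitting $\mathrm{e}^{-\nu k^2\gamma_1|V'|^2}$, which produces a term proportional to $\nu k^2\gamma_1 V'V''\,\psi_{k,2}$. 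The main term is exactly the "profile derivative of the stream function" and is controlled by the \emph{fifth} inequality of Theorem \ref{thm:LE}, giving $\langle t\rangle^{-1.2}|k|^{-2.2}M_k$ — this is where the mysterious $\langle t\rangle^{-1.2}$ in \eqref{est:u2L2} comes from, and it is why that exponent, rather than $3/2$, appears. The correction term is handled by $|\psi_k|\lesssim\langle t\rangle^{-2}|k|^{-5/2}M_k$ together with $\nu k^2\gamma_1|V'V''|\mathrm{e}^{-\nu k^2\gamma_1|V'|^2}\lesssim (\nu k^2 t^3)^{1/2}\mathrm{e}^{-\tfrac12(\cdots)}\cdot|V''|\lesssim (\nu t^3)^{1/2}$ (using that $s\mapsto s\mathrm{e}^{-cs}$ is bounded, now with $s=\nu k^2\gamma_1|V'|^2$ and an extra half power pulled out), which yields the $|\nu t^3|^{1/2}\langle t\rangle^{-1}$ piece; summing $|k|^{-2.2}M_k$ and $|k|^{-1/2}M_k$ over $\Lambda_*$ against $\|\omega_0\|_{H^3}$ closes it. Finally \eqref{est:uLnawL} is obtained by writing $u_L\cdot\nabla\omega_L = u^x_L\partial_x\omega_L + u^y_L\partial_y\omega_L$, expressing $u^y_L\partial_y\omega_L = u^y_L(\partial_y+t_*V'\partial_x)\omega_L - t_*V'u^y_L\partial_x\omega_L$ so that $u_L\cdot\nabla\omega_L = (u^x_L-t_*V'u^y_L)\partial_x\omega_L + u^y_L(\partial_y+t_*V'\partial_x)\omega_L$, and then combining \eqref{est:u2L2} with $\|\partial_x\omega_L\|_{L^\infty}\lesssim\langle\nu t^3\rangle^{-1}\|\omega_0\|_{H^3}$ and \eqref{est:u2Linf} with the good-derivative bound from \eqref{est:paromL-inf}; the first product gives $(\langle t\rangle^{-1.2}+|\nu t^3|^{1/2}\langle t\rangle^{-1})\langle\nu t^3\rangle^{-1}\lesssim \langle t\rangle^{-1.2}+|\nu t^3|^{1/2}\langle\nu t^3\rangle^{-1}\langle t\rangle^{-1}$ and the second gives $\langle\nu t^3\rangle^{1/2}\langle t\rangle^{-2}\cdot\langle\nu t^3\rangle^{-1/2}\langle t\rangle^{0} = \langle t\rangle^{-2}$, which is absorbed. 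The only real subtlety throughout is bookkeeping: every time a derivative falls on the Gaussian factor it must be compensated by a power of $\langle\nu t^3\rangle^{-1/2}$ extracted via the elementary inequality $s^{a/2}\mathrm{e}^{-s}\lesssim_a 1$, and every time we pass through $\theta$ we pick up harmless $O(\|V-b\|_{C^3})$ errors; I would isolate these two mechanisms as preliminary lemmas and then the proof of each display is a one-line summation.
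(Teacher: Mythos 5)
Your overall route is the same as the paper's: transfer the bounds of Theorem \ref{thm:LE} through the coordinate change $\theta$, the renormalized time $at_*$ with phase $\mathrm{e}^{-\mathrm{i}kdt_*}$, and the Gaussian factor, then sum over $k\in\Lambda_*$ by Cauchy--Schwarz. For the vorticity-level quantities in \eqref{est:paromL-inf} and \eqref{est:t*Vparw} (where the Gaussian acts as a pointwise multiplier and the loss from differentiating it is paid by $s^{a}\mathrm{e}^{-s}\lesssim 1$), your bookkeeping is essentially the paper's, and your decomposition of $u_L\cdot\nabla\omega_L$ into $(u^x_L-t_*V'u^y_L)\partial_x\omega_L+u^y_L(\partial_y+t_*V'\partial_x)\omega_L$ is exactly the one used.

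There is, however, a genuine gap in all the velocity/stream-function estimates (\eqref{est:u2L2}, \eqref{est:u1Linf}, \eqref{est:u2Linf}, the $\|\nabla u_L\|_{L^\infty}$ part of \eqref{est:paromL-inf}, and hence \eqref{est:uLnawL}): you treat $\psi_{k,2}$ as if it were the Gaussian factor times the ($\theta$-composed) Euler stream function, i.e. $\psi_{k,2}=g\,\psi_{k,1}$ with $g=\mathrm{e}^{-\nu k^2\gamma_1|V'|^2}$, and then differentiate that product. But in the construction $\psi_{k,2}=\Delta_k^{-1}w_{k,2}=\Delta_k^{-1}(g\,w_{k,1})$, and two non-trivial comparisons are needed before your identification makes sense: (a) $\Delta_k^{-1}w_{k,1}$ versus $\psi_{k,1}$, since $\psi_{k,1}$ solves $\widetilde\Delta_k\psi_{k,1}=w_{k,1}$ with the \emph{pulled-back} Laplacian, not the flat one (the paper's Lemma \ref{lem:psik*}, using the smallness $\|h_3\|_{H^2}\lesssim\|V-b\|_{H^4}\lesssim\nu^{1/3}$); and (b) $\Delta_k^{-1}(g\,w_{k,1})$ versus $g\,\Delta_k^{-1}w_{k,1}$, i.e. the commutator of the inverse Laplacian with the Gaussian (the paper's Lemma \ref{lem:comm-1} combined with $\|g'\|_{L^\infty}+\|g''\|_{L^1}\lesssim|\nu k^2t^3|^{1/2}$, which can be as large as $\nu^{-1/2}$ on the range $|k|\leq\nu^{-1/3}$, $t\leq\nu^{-4/9}$ --- so $g$ is far from a harmless smooth multiplier). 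This commutator is exactly the origin of the factors $\langle\nu t^3\rangle^{1/2}$ in \eqref{est:u2Linf} and $|\nu t^3|^{1/2}\langle t\rangle^{-1}$ in \eqref{est:u2L2}: the paper proves $|\psi_{k,2}|\lesssim\langle\nu t^3\rangle^{1/2}\langle t\rangle^{-2}|k|^{-5/2}M_k$, not the commutator-free bound $\langle t\rangle^{-2}|k|^{-5/2}M_k$ you assert. Your remark that the weight in \eqref{est:u2Linf} "pays for nothing" and could be dropped is a symptom of this miss: without the commutator lemmas your intermediate bounds on $\psi_{k,2}$, $\partial_y\psi_{k,2}$ and $\partial_y(\mathrm{e}^{\mathrm{i}kt_*V}\psi_{k,2})$ are unjustified, even though the final displayed estimates (which allow these losses) do survive once the commutators are controlled as in the paper. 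Two smaller points: the $\|\nabla u_L\|_{L^\infty}$ bound is most directly obtained from the elliptic sup estimate $\|(\partial_y,k)^2\Delta_k^{-1}h\|_{L^\infty}\lesssim\|h\|_{L^\infty}$ applied to $w_{k,2}$ (giving $\langle\nu t^3\rangle^{-1}$ straight from the vorticity bound), rather than from first-derivative bounds on $\psi_k$; and your claim $\sum_k|k|^aM_k\lesssim\|\omega_0\|_{H^3}$ "for any $a<1$" is false as stated (Cauchy--Schwarz against $\sum_kM_k^2\lesssim\|\omega_0\|_{H^3}^2$ requires $a<-1/2$), though this is harmless here since every sum you actually need has exponent at most $-2/3$.
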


Furthermore,  we have the following estimate for the error $Er_L$.

\begin{Proposition}\label{prop:error}
The error $Er_L$ can be decomposed as $Er_{L}=Er_{L1}+Er_{L2}+Er_{L3}$, where it holds that for $t\leq T_1$, 
\begin{align*}
&\|Er_{L1}(t)\|_{L^2}\leq C\nu\langle t\rangle\langle \nu t^3\rangle^{-1}\|\omega_{0}\|_{H^3},\\
& \|Er_{L2}(t)\|_{L^2}\leq C\nu^{1/3}\langle t\rangle^{-2}\|\omega_{0}\|_{H^3},\\
&\||D_x|^{\f12}\nabla Er_{L3}(t)\|_{L^2}\leq  C\langle t\rangle^{-2}|\nu t^3|^{1/2}\|\omega_{0}\|_{H^3}.
\end{align*}
\end{Proposition}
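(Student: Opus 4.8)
The plan is to compute $Er_L$ directly from its definition \eqref{def:Er-L} and the explicit formula \eqref{def:omL} for $\omega_L$, decomposing the result according to which factor of $w_{k,2} = w_{k,1}\,\mathrm{e}^{-\nu k^2\gamma_1(t)|V'|^2}$ the derivatives fall on. First I would reduce everything to the Fourier side: since $\omega_L = \sum_{k\in\Lambda_*} w_{k,2}\mathrm{e}^{\mathrm{i}kx}$ and $\phi_L = \Delta^{-1}\omega_L$, the error becomes $Er_L = \sum_{k\in\Lambda_*}\big(\partial_t w_{k,2} - \nu\Delta_k w_{k,2} + \mathrm{i}k\mathrm{e}^{-\nu t}V(y)(w_{k,2}+\psi_{k,2})\big)\mathrm{e}^{\mathrm{i}kx}$, so I need a mode-by-mode bound. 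The key algebraic input is that $w_{k,1}(t,y) = \widetilde w_k(at_*,\theta(y))\mathrm{e}^{-\mathrm{i}kdt_*}$ and $V(y) = ab(\theta(y))+d$ from \eqref{eq:changeofcoordinate}, so that $w_{k,1}$ almost solves a transport-type equation: using $\partial_t t_* = \mathrm{e}^{-\nu t}$ and the Euler equation \eqref{def:tildewk}, one gets $\partial_t w_{k,1} + \mathrm{i}k\mathrm{e}^{-\nu t}V(y)(w_{k,1}+\psi_{k,1}) = \mathrm{i}k\mathrm{e}^{-\nu t}V(y)(\psi_{k,1}-\widetilde\psi_k(at_*,\theta(y))\mathrm{e}^{-\mathrm{i}kdt_*})$ plus possibly a term from $\theta\neq\mathrm{id}$. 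Here $\psi_{k,1}$ is defined via $\Delta_k^{-1}$ in the physical $y$ variable while $\widetilde\psi_k$ solves $\Delta_k\widetilde\psi_k=\widetilde w_k$ in the $\theta$ variable, so the discrepancy between these two is controlled by $\|\theta-y\|_{H^3}\ll 1$ and hence by $\|V-b\|_{H^4}\lesssim \nu^{1/3}$; this produces $Er_{L2}$, with the $\langle t\rangle^{-2}$ decay coming from the inviscid damping estimate $|\psi_k|\lesssim\langle t\rangle^{-2}$ in Theorem \ref{thm:LE}.

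Next I would handle the exponential damping factor $e_k := \mathrm{e}^{-\nu k^2\gamma_1(t)|V'|^2}$. Since $w_{k,2} = w_{k,1}e_k$, Leibniz gives
\begin{align*}
\partial_t w_{k,2} &= (\partial_t w_{k,1})e_k + w_{k,1}(\partial_t e_k),\\
\Delta_k w_{k,2} &= (\Delta_k w_{k,1})e_k + 2\partial_y w_{k,1}\,\partial_y e_k + w_{k,1}\,\partial_y^2 e_k.
\end{align*}
Using $\gamma_1'(t) = t_*^2$, we have $\partial_t e_k = -\nu k^2 t_*^2 |V'|^2 e_k$, and this is designed precisely to cancel the leading order of $\nu\Delta_k w_{k,1}$: indeed $\widetilde w_k$ oscillates like $\mathrm{e}^{-\mathrm{i}k\,at_*\,b(\theta(y))}$ so $w_{k,1}$ oscillates like $\mathrm{e}^{-\mathrm{i}kt_* V(y)}$ (after absorbing $\mathrm{e}^{-\mathrm{i}kdt_*}$), hence $\partial_y w_{k,1}\approx -\mathrm{i}kt_*V'(y)w_{k,1}$ and $\partial_y^2 w_{k,1}\approx -k^2t_*^2(V')^2 w_{k,1}$ at leading order, so $-\nu\Delta_k w_{k,1} \approx \nu k^2 t_*^2 (V')^2 w_{k,1} = -\partial_t e_k \cdot w_{k,1}/e_k \cdot(\text{matching})$. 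The residual after this cancellation — namely $\nu$ times the difference between $\partial_y^2 w_{k,1}$ and its leading oscillatory approximation, i.e. $\nu$ times (derivatives of the profile $\mathrm{e}^{\mathrm{i}k t_* V}w_{k,1}$), together with $\nu k^2$ and the cross terms $\nu\partial_y w_{k,1}\partial_y e_k$ — gives $Er_{L1}$; the factor $\nu\langle t\rangle\langle\nu t^3\rangle^{-1}$ arises because $\nu k^2 e_k \lesssim \nu t^{-2}\langle\nu t^3\rangle^{-1}\cdot t^2 \sim$ after summing $|k|\lesssim\nu^{-1/3}$ one loses $\langle t\rangle$, and the profile-derivative bounds in Theorem \ref{thm:LE} (the $\langle t\rangle^{-1/2}$ and $|k|^{-1}$ estimates for $\partial_y(\mathrm{e}^{\mathrm{i}ktb}\omega_k)$ and $\|(\partial_y,k)^2(\mathrm{e}^{\mathrm{i}ktb}\omega_k)\|_{L^2}\lesssim|k|^{-1}M_k$) together with $(\sin\theta(y))^2 e_k \lesssim \langle k^2\nu t^3\rangle^{-1}$ supply the rest.

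Finally, $Er_{L3}$ collects the terms where we measure in the stronger norm $\||D_x|^{1/2}\nabla(\cdot)\|_{L^2}$; these are the pieces that behave like $\partial_y-t_*V'\partial_x$ acting on the damping factor, equivalently the terms proportional to $\partial_y e_k = -\nu k^2\gamma_1(t)(|V'|^2)' e_k$ paired against the profile $\mathrm{e}^{\mathrm{i}kt_*V}w_{k,1}$. The bound $\langle t\rangle^{-2}|\nu t^3|^{1/2}\|\omega_0\|_{H^3}$ comes from $\nu k^2\gamma_1(t)\sim \nu k^2 t^3\lesssim |\nu t^3|^{1/2}\cdot(\nu t^3)^{1/2}$ combined with the $\langle t\rangle^{-2}$-type decay of the relevant stream-function profile (fifth inequality of Theorem \ref{thm:LE}, or the $\psi_k$ estimate) and the extra $|D_x|^{1/2}$ costing $|k|^{1/2}\lesssim\nu^{-1/6}$ which is absorbed since we only need $t\leq T_1=\nu^{-4/9}$, so $\langle\nu t^3\rangle\lesssim \nu^{1/3}\cdot\nu^{-4/3}=\nu^{-1}$ stays controlled. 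The main obstacle I anticipate is the careful bookkeeping in the second step: the cancellation between $\nu\Delta_k w_{k,1}$ and $\partial_t e_k$ is only approximate because $V'$ is not constant and $w_{k,1}$ is not exactly $\mathrm{e}^{-\mathrm{i}kt_*V}$ times a slowly varying function, so one must repeatedly trade a derivative hitting the oscillation for a factor of $kt_*V'$ (absorbed into the exponent structure) versus a derivative hitting the genuine profile (which costs regularity of $\widetilde w_k$, i.e. uses up the $H^3$ budget in $M_k$), and one must verify uniformly over $|k|\lesssim\nu^{-1/3}$ and $t\leq\nu^{-4/9}$ that all the leftover terms genuinely carry the claimed powers of $\nu$ and $\langle t\rangle$. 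Ensuring the $\ell^2$-in-$k$ summation does not lose powers — using that $M_k=\|\omega_{0,k}\|_{H^3_k}$ and $\sum_k M_k^2\lesssim\|\omega_0\|_{H^3}^2$, with the $|k|^{-1}$, $|k|^{-3/2}$ weights in Theorem \ref{thm:LE} providing room — is where the estimates must be genuinely sharp.
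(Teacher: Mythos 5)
Your plan follows the paper's proof essentially verbatim: the paper derives the exact equation for $w_{k,1}$ (and then for $w_{k,2}=g\,w_{k,1}$ with $g=\mathrm{e}^{-\nu k^2\gamma_1(t)|V'|^2}$), splits the mode-wise error into the viscous residual acting on the profile $w_{k,4}=\mathrm{e}^{\mathrm{i}kt_*V}w_{k,1}$ (giving $Er_{L1}$), the coordinate-change/affine terms $\mathrm{i}k\mathrm{e}^{-\nu t}\big(V(\psi_{k,*}-\psi_{k,1})+d\psi_{k,1}\big)g$ of size $\nu^{1/3}\langle t\rangle^{-2}$ (giving $Er_{L2}$), and the commutator term $\mathrm{i}k\mathrm{e}^{-\nu t}V(\psi_{k,2}-g\psi_{k,*})$ controlled by $\|\Delta_k^{-1}w_{k,1}\|_{L^\infty}\|g''\|_{L^1}\lesssim \langle t\rangle^{-2}|k|^{-5/2}|\nu k^2t^3|^{1/2}M_k$ (giving $Er_{L3}$), then sums in $k$ by Plancherel using $\sum_k M_k^2\lesssim\|\omega_0\|_{H^3}^2$ — exactly the cancellation, three-way splitting, and summation you describe. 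Your only imprecisions are bookkeeping ones (the $\langle t\rangle$ in $Er_{L1}$ comes from the phase term $\mathrm{i}kt_*V''$ inside $\partial_y^2g_2$ paired with the $\langle\nu t^3\rangle^{-1}$ gain, not from the $k$-summation, and $Er_{L3}$ is precisely the $[\Delta_k^{-1},g]$ commutator estimated via the $\psi$-decay rather than generic ``$\partial_y e_k$ times profile'' terms), which do not change the structure or the claimed bounds.
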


Next, it is natural to introduce
\begin{align*}
   & \omega_e=\omega-\omega_L,\quad u_e=u-u_{L},\quad \phi_e=\phi-\phi_L.
\end{align*}
Then we have
\begin{equation}\label{eq:ome}
  \left\{\begin{aligned}
  &\partial_t\omega_e-\nu\Delta\omega_e+\mathrm{e}^{-\nu t}V\partial_x\omega_e-\mathrm{e}^{-\nu t}V''\partial_x\phi_e+ u\cdot\nabla \omega_e+u_e\cdot\nabla \omega_L+Er=0,\\
  &u_e=(-\partial_y,\partial_x)\phi_e,\quad \phi_e=\Delta^{-1}\omega_e,
  \quad\omega_e|_{t=0}=\om_0-\om_{L}(0),
  \end{aligned}
  \right.
\end{equation}
where
\begin{align}\label{def:Er}
 Er=Er_{L}-\mathrm{e}^{-\nu t}(V''+V)\partial_x\phi_L+u_L\cdot\nabla\omega_L+\nu\mathrm{e}^{-\nu t}(V'+V''').
\end{align}

In the small-time regime($t\in [0,T_0$)), we have the following energy estimate. 

\begin{Proposition}\label{prop:om-e-short}
  There exists $ \epsilon_1\in(0,1/4)$ such that for all $0<\nu \leq 1$, if the initial vorticity $\omega_0$ satisfies $\|\omega_0\|_{H^3}\leq \epsilon_1\nu^{\f13}$, then it holds that for $t\leq T_0$,    \begin{align*}
      & \|\omega_e(t)\|_{L^2}\leq C\nu^{\f13}\|\omega_{0}\|_{H^3}.
   \end{align*}
 \end{Proposition}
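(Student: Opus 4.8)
\textbf{Proof proposal for Proposition \ref{prop:om-e-short}.}

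The plan is to run a direct energy estimate on $\omega_e$ over the short-time regime $[0,T_0]$ with $T_0=\nu^{-1/6}$, treating this interval as the one in which the transport term $u_L\cdot\nabla\omega_e$ is \emph{not} yet perturbative, so that we must rely on the cancellation $\int u\cdot\nabla\omega_e\,\omega_e\,dxdy=0$ coming from incompressibility rather than on decay. Multiplying the equation \eqref{eq:ome} by $\omega_e$ and integrating over $\T_{\tp}\times\T_{2\pi}$, the terms $\mathrm{e}^{-\nu t}V\partial_x\omega_e$ and $u\cdot\nabla\omega_e$ drop out, the viscous term gives $\nu\|\nabla\omega_e\|_{L^2}^2\geq 0$, and we are left with
\begin{align*}
  \frac12\frac{d}{dt}\|\omega_e\|_{L^2}^2 \leq \big|\langle \mathrm{e}^{-\nu t}V''\partial_x\phi_e,\omega_e\rangle\big| + \big|\langle u_e\cdot\nabla\omega_L,\omega_e\rangle\big| + \big|\langle Er,\omega_e\rangle\big|.
\end{align*}
The first term is bounded by $C\|V''\|_{L^\infty}\|\partial_x\phi_e\|_{L^2}\|\omega_e\|_{L^2}\leq C\|\omega_e\|_{L^2}^2$ using $\|\partial_x\phi_e\|_{L^2}\leq \|\omega_e\|_{L^2}$ and $\|V''\|_{L^\infty}\lesssim 1$ from \eqref{ass:U-b}. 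For the transport-against-$\omega_L$ term, I would first use incompressibility to rewrite $\langle u_e\cdot\nabla\omega_L,\omega_e\rangle=-\langle u_e\cdot\nabla\omega_e,\omega_L\rangle$ and then bound it by $\|u_e\|_{L^2}\|\nabla\omega_e\|_{L^2}\|\omega_L\|_{L^\infty}$; absorbing a small multiple of $\nu\|\nabla\omega_e\|_{L^2}^2$ via Young's inequality leaves $C\nu^{-1}\|\omega_L\|_{L^\infty}^2\|u_e\|_{L^2}^2\lesssim \nu^{-1}\|\omega_L\|_{L^\infty}^2\|\omega_e\|_{L^2}^2$, and since $\|\omega_L\|_{L^\infty}\lesssim \|\partial_x\omega_L\|_{L^\infty}\lesssim \|\omega_0\|_{H^3}\lesssim\nu^{1/3}$ on $[0,T_1]$ by \eqref{est:paromL-inf}, the coefficient is $\lesssim \nu^{-1}\nu^{2/3}=\nu^{-1/3}$. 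Alternatively, and more cleanly, I would use the "good derivative'' splitting and the $L^\infty$ bound $\langle\nu t^3\rangle^{1/2}\|(\partial_y+t_*V'\partial_x)\omega_L\|_{L^\infty}\lesssim\|\omega_0\|_{H^3}$ together with the reaction-type bound from \eqref{est:t*Vparw}, pairing $u_e$ in $L^2$ with $\omega_e$ in $L^2$; on $[0,T_0]$ one has $\langle\nu t^3\rangle\sim 1$, so all these coefficients are $O(\|\omega_0\|_{H^3})=O(\nu^{1/3})$ and contribute $\lesssim \|\omega_0\|_{H^3}\|u_e\|_{L^2}\|\omega_e\|_{L^2}\lesssim \|\omega_0\|_{H^3}\|\omega_e\|_{L^2}^2$.

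For the error term, I would split $Er=Er_L+(\text{lower order pieces})$ as in \eqref{def:Er} and use Proposition \ref{prop:error} together with the bounds on $V$: $\|\mathrm{e}^{-\nu t}(V''+V)\partial_x\phi_L\|_{L^2}\lesssim \|V+V''\|_{C^1}\|\omega_L\|_{L^2}\lesssim \nu^{1/3}\|\omega_0\|_{H^3}$ using \eqref{ass:U'-b'}, $\|u_L\cdot\nabla\omega_L\|_{L^2}\lesssim \|u_L\cdot\nabla\omega_L\|_{L^\infty}\lesssim \|\omega_0\|_{H^3}^2\lesssim \nu^{1/3}\|\omega_0\|_{H^3}$ by \eqref{est:uLnawL}, $\|\nu\mathrm{e}^{-\nu t}(V'+V''')\|_{L^2}\lesssim \nu\|\omega_0\|_{H^3}$, and the $Er_{L1},Er_{L2}$ parts of $Er_L$ from Proposition \ref{prop:error} are $\lesssim \nu^{1/3}\|\omega_0\|_{H^3}$ on $[0,T_0]$ (since $\nu\langle t\rangle\langle\nu t^3\rangle^{-1}\lesssim \nu\nu^{-1/6}=\nu^{5/6}\leq \nu^{1/3}$ there). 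The $Er_{L3}$ piece only has a $\||D_x|^{1/2}\nabla Er_{L3}\|_{L^2}$ bound, so it is not directly pairable with $\omega_e$ in $L^2$; I would handle it either by integrating by parts to move $|D_x|^{1/2}\nabla$ onto $\Delta^{-1}\omega_e$-type factors (it only appears in the spacetime framework for $t\geq T_0$), or simply absorb it into a redefinition since on $[0,T_0]$ one can instead treat $Er_{L3}$ through the crude bound $\|Er_{L3}\|_{L^2}\lesssim \||D_x|^{1/2}\nabla Er_{L3}\|_{L^2}\lesssim |\nu t^3|^{1/2}\langle t\rangle^{-2}\|\omega_0\|_{H^3}\lesssim \nu^{1/2}\|\omega_0\|_{H^3}$ using $|\nu t^3|^{1/2}\lesssim \nu^{1/4}$ on $[0,T_0]$ (frequencies are localized to $|k|\lesssim\nu^{-1/3}$, so $|D_x|^{1/2}$ is comparable to a bounded quantity after including the gradient; more carefully, $\|Er_{L3}\|_{L^2}\lesssim \|\nabla Er_{L3}\|_{L^2}$ by Poincaré on nonzero modes). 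Collecting all contributions yields
\begin{align*}
  \frac{d}{dt}\|\omega_e\|_{L^2}^2 \leq C\|\omega_e\|_{L^2}^2 + C\nu^{1/3}\|\omega_0\|_{H^3}\|\omega_e\|_{L^2},
\end{align*}
equivalently $\frac{d}{dt}\|\omega_e\|_{L^2}\leq C\|\omega_e\|_{L^2}+C\nu^{1/3}\|\omega_0\|_{H^3}$. Since $\omega_e(0)=\omega_0-\omega_L(0)$ and the construction of $\omega_L$ gives $\|\omega_e(0)\|_{L^2}\lesssim \nu^{1/3}\|\omega_0\|_{H^3}$ (indeed $\omega_L(0)$ differs from $\omega_0$ only through the coordinate change $\theta$ and the cutoff $\Lambda_*$, each producing an $O(\nu^{1/3})$-relative error), Grönwall over $[0,T_0]$ gives $\|\omega_e(t)\|_{L^2}\lesssim \mathrm{e}^{CT_0}\nu^{1/3}\|\omega_0\|_{H^3}$.

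The main obstacle is that $\mathrm{e}^{CT_0}=\mathrm{e}^{C\nu^{-1/6}}$ is \emph{not} bounded as $\nu\to0$, so a naive Grönwall is fatal; the constant $C$ in front of $\|\omega_e\|_{L^2}^2$ must be shown to be small, i.e. $o(1)$ in $\nu$, or better still $O(\nu^{1/3})$ or $O(T_0^{-1})$. This forces the real work: the terms $\langle V''\partial_x\phi_e,\omega_e\rangle$ and the transport term must \emph{not} be estimated by brute $L^2$-$L^2$ pairing with an $O(1)$ constant. For the nonlocal term I would exploit that it is $\mathrm{i}k\cos y\,\Delta_k^{-1}$ acting modewise and is gaining two derivatives, so against $\omega_e$ it can be controlled by $\|\partial_x\nabla\Delta^{-1}\omega_e\|_{L^2}^2$ which — via the spacetime-norm structure \eqref{eq:intro12} already being built for $t\geq T_0$ but also usable on $[0,T_0]$ — carries a small coefficient, or one notes $\langle V''\partial_x\phi_e,\omega_e\rangle$ is purely imaginary-type and its real part, which is all that enters $\frac{d}{dt}\|\omega_e\|_{L^2}^2$, has additional structure; the honest route is to incorporate the $\||D_x|^{1/2}\nabla\Delta^{-1}\omega_e\|_{L^2(I;L^2)}$ and $\||\sin y|^{1/2}\partial_x\nabla\Delta^{-1}\omega_e\|_{L^2(I;L^2)}$ terms of the $X_I$-norm into the energy functional (a weighted energy $\|\omega_e\|_{L^2}^2$ plus these spacetime pieces) so that the inviscid-damping/depletion gain supplies the smallness. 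Similarly, the transport term $u_e\cdot\nabla\omega_L$ must be handled with the good-derivative decomposition and the decay/depletion bounds of Proposition \ref{prop:app} so its effective coefficient is $O(\|\omega_0\|_{H^3})=O(\nu^{1/3})$, not $O(1)$, after which $\int_0^{T_0}\nu^{1/3}\,dt=\nu^{1/3}T_0=\nu^{1/6}\to0$ and Grönwall closes with a uniform constant. In short: the scheme is standard energy estimate $+$ Grönwall, but its success hinges entirely on showing every coefficient that multiplies $\|\omega_e\|_{L^2}^2$ is integrably small over $[0,T_0]$, which is exactly where the sharp linear estimates (Theorem \ref{thm:LE}, Proposition \ref{prop:app}) and the smallness $\|\omega_0\|_{H^3}\leq\epsilon_1\nu^{1/3}$ must be used in an essential, non-wasteful way.
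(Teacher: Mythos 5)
Your framework (energy estimate plus Gr\"onwall, the error bound $\int_0^{T_0}\|Er\|_{L^2}\,\mathrm{d}s\lesssim\nu^{1/3}\|\omega_0\|_{H^3}$, and the initial-data bound) matches the paper, and you correctly diagnose that the naive $L^2$ pairing leaves the nonlocal term $\mathrm{e}^{-\nu t}V''\partial_x\phi_e$ with an $O(1)$ coefficient, which is fatal over $[0,T_0]=[0,\nu^{-1/6}]$. But the idea that actually closes this is missing from your proposal, and it is elementary rather than any of the routes you sketch: rewrite the linear terms as $\mathrm{e}^{-\nu t}V\partial_x(\omega_e+\phi_e)-\mathrm{e}^{-\nu t}(V+V'')\partial_x\phi_e$ and run the energy estimate in the modified inner product $\langle f,g\rangle_\star=\langle f,g+\Delta^{-1}P_{\neq}g\rangle$ of \eqref{def:norm-*}. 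Under $\langle\cdot,\cdot\rangle_\star$ the first piece pairs with $\omega_e$ to exactly zero (it is $\langle V\partial_xP_{\neq}(\omega_e+\phi_e),P_{\neq}(\omega_e+\phi_e)\rangle$), while the second carries the factor $\|V+V''\|_{L^\infty}\lesssim\|\Omega_0-\sin y\|_{H^3}\lesssim\nu^{1/3}$ from \eqref{ass:U'-b'}, i.e.\ the algebraic cancellation $b+b''=0$ for the Kolmogorov profile. The only price is that $\langle u\cdot\nabla\omega_e,\omega_e\rangle_\star$ no longer vanishes; it equals $-\langle u\,\omega_e,\nabla P_{\neq}\phi_e\rangle\lesssim\|u\|_{L^4}\|\omega_e\|_{L^2}^2$, and $\int_0^{T_0}\|u\|_{L^4}\,\mathrm{d}s$ is bounded (via $\|u_L\|_{H^{1/2}}\lesssim\|\nabla\omega_L\|_{L^\infty}$ and a bootstrap on $\|\omega_e\|_{L^\infty L^2}$, which you do not set up). The Gr\"onwall exponent is then uniformly bounded because $\int_0^{T_0}\big(\|V+V''\|_{L^\infty}+\|\nabla\omega_L\|_{L^\infty}+\|u\|_{L^4}\big)\mathrm{d}s\lesssim \nu^{1/3}T_0+T_0^2\|\omega_0\|_{H^3}+1\lesssim1$.

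Your proposed repairs do not substitute for this. Importing the $X_I$-norm pieces $\||D_x|^{1/2}\nabla\Delta^{-1}\omega_e\|_{L^2L^2}$, $\||V'|^{1/2}\partial_x\nabla\Delta^{-1}\omega_e\|_{L^2L^2}$ on $[0,T_0]$ is circular: Proposition \ref{prop:LNS-sp1} requires treating $u_L\cdot\nabla\omega_e$ perturbatively, which is exactly what fails for $t\leq T_0$ and is the reason this regime is handled by a pure energy argument in the first place; moreover $\langle V''\partial_x\phi_e,\omega_e\rangle$ cannot be bounded by velocity-level quantities alone. The suggestion that this pairing is ``purely imaginary-type'' is not correct: for the real-valued solution it is a genuine real term with no sign or cancellation unless one uses $V''=-(V+V'')+V$ together with the $\star$-antisymmetry. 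And the smallness here has nothing to do with inviscid damping or depletion; it is the structural identity $b+b''=0$, which you invoke for the $\phi_L$ error term but never for the linear term in the $\omega_e$ equation. Two smaller inaccuracies: the coefficient of the $u_e\cdot\nabla\omega_L$ term is not pointwise $O(\nu^{1/3})$ (because $\|t_*V'\partial_x\omega_L\|_{L^\infty}\lesssim t\langle\nu t^3\rangle^{-1}\|\omega_0\|_{H^3}$ grows in $t$ by \eqref{est:t*Vparw}); only its time integral over $[0,T_0]$ is $O(1)$, which is what Gr\"onwall needs. And at $t=0$ one has $\omega_L(0)=\mathrm{P}_{\text{low}}\omega_0$ exactly (the coordinate change $\theta$ composes away by \eqref{def:w0k}), so $\omega_e(0)=\mathrm{P}_{\text{high}}\omega_0$ and the bound $\|\omega_e(0)\|_{L^2}\lesssim\nu^{1/3}\|\omega_0\|_{H^3}$ comes from the frequency cutoff alone, not from an $O(\nu^{1/3})$ coordinate-change error.
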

 
 The proof relies on a direct energy estimate under a new inner product; see Section \ref{Energy-short-regime} for details.

 \subsection{Reaction equation and energy estimate in medium-time regime} 
To analyze the dynamics of the solution in the regime $t\in [T_0,T_1]$,  the second key step of the quasilinear approximation method is to introduce a reaction equation for controlling the reaction term$u_e\cdot\na \om_L$. 
Notice that
\begin{align*}
   &u_e\cdot\nabla\omega_L+t_*V'u^y_e\partial_x\omega_L= u^x_e\partial_x\omega_L +u^y_e(\partial_y+t_*V'\partial_x)\omega_L.
\end{align*}
We denote
\begin{align*}
   & \mathrm{P}_{\text{low}}f:=\sum_{0<|k|\leq\nu^{-1/3},k\in \alpha\mathbb{Z}}f_k(y)\mathrm{e}^{\mathrm{i}kx},\quad \quad \mathrm{P}_{\text{high}}f:=\sum_{|k|>\nu^{-1/3},k\in \alpha\mathbb{Z}}f_k(y)\mathrm{e}^{\mathrm{i}kx}.
\end{align*}
where  $\alpha=2\pi/\mathfrak{p}>1$.  
Then we introduce the reaction part $\omega_{re}$ for $T_0\leq t\leq T_1$, which solves 
\begin{align}\label{eq:reaction}
   & \partial_t\omega_{re}-\nu \Delta \omega_{re}+\mathrm{e}^{-\nu t}V\partial_x(\omega_{re}+\Delta^{-1}\omega_{re}) -t_*V'\mathrm{P}_{\text{low}}u^y_e\partial_x\omega_L^* =0, \quad\omega_{re}|_{t=T_0}=0.
\end{align}

Let $\omega_* =\omega_e-\omega_{re}$, which solves
\begin{align}\label{eq:om-star}
   &  \partial_t\omega_{*}-\nu \Delta \omega_{*}+\mathrm{e}^{-\nu t}V\partial_x(\omega_{*}+\Delta^{-1}\omega_{*}) +G=0, \quad\omega_{*}|_{t=T_0}=\omega_{e}|_{t=T_0},
\end{align}
where the source term $G$ is given by 
\begin{align}\label{def:G}
 G=&u\cdot\nabla \omega_e +{u^x_e\partial_x\omega_L}  +u^y_e(\partial_y+t_*V'\partial_x)\omega_L-\mathrm{e}^{-\nu t}(V+V'')\partial_x\phi_e\\ \notag&+Er-t_*V'P_{\text{high}}u^y_e\partial_x\omega_L-t_*V'P_{\text{low}}u^y_e\partial_x \widetilde{\omega}_L.
\end{align}

To control $\om_{*}$, we need to establish the inviscid damping and enhanced dissipation estimates for the following linearized Navier-Stokes system
 for {$t\le \nu^{-1}$}:
\begin{align}\label{eq:partf-g-1}
  \partial_tf-\nu \Delta f+\mathrm{e}^{-\nu t}V\partial_x(f+\Delta^{-1}f)=\partial_y g_1+\partial_xg_2+g_3+g_4.
\end{align}

For an interval $I$, we introduce the space-time norms as
 \begin{align}
     \label{def:XT}  \|f\|_{X_I}=\|f\|_{X_{I,\nu}}=&\|f\|_{L^\infty(I;L^2)}+\nu^{\f12}\|\nabla f\|_{L^2(I;L^2)}+ \||D_x|^{\f12}\nabla \Delta^{-1}f\|_{L^2(I;L^2)}
      \\ \notag&+ \||V'|^{\f12}\partial_x\nabla \Delta^{-1}f\|_{L^2(I;L^2)}.
 \end{align}
 \begin{Proposition}\label{prop:LNS-sp1}
Let $f$ solve  \eqref{eq:partf-g-1} with $P_0g_3=0$. There exists $\epsilon_0>0$ such that if $V=V(y)$ with $\|V-\cos y\|_{H^4}\leq \epsilon_0\nu^{\f13}$, then it holds that for $I= [t_1,t_2]\subset[0,1/\nu]$, 
\begin{align*}
&\|f\|_{X_I}\leq C\big(\|f(t_1)\|_{L^2}+\nu^{-\f12}\|(g_1,g_2)\|_{L^2(I;L^2)}+\||D_x|^{-\f12}\nabla g_3\|_{L^2(I;L^2)}+\|g_4\|_{L^1(I;L^2)}\big).
\end{align*}
\end{Proposition}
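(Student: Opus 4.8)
The plan is to prove Proposition \ref{prop:LNS-sp1} by reducing the spacetime estimate to a family of \emph{uniform resolvent bounds} for the stationary problem associated to \eqref{eq:partf-g-1}, and then assembling the four pieces of the $X_I$ norm via Plancherel in time and a Duhamel decomposition that separates the initial datum, the divergence-form forcings $(g_1,g_2)$, the ``good'' forcing $g_3$ (with a $|D_x|^{-1/2}\nabla$ loss), and the rough forcing $g_4$ (handled in $L^1_t L^2$). The first reduction is a time-renormalization: since the coefficient $\mathrm{e}^{-\nu t}V(y)$ is time dependent, I would first change the time variable to $t_*=(1-\mathrm{e}^{-\nu t})/\nu$ (equivalently, work with $\partial_{t_*}=\mathrm{e}^{\nu t}\partial_t$), which turns the transport coefficient into the stationary profile $V(y)$ at the cost of harmless lower-order terms coming from $\partial_{t_*}(\nu\Delta)$; on the interval $I\subset[0,1/\nu]$ one has $t_*\sim t$ and $\mathrm{e}^{-\nu t}\sim 1$, so all norms are comparable up to universal constants. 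After this, the operator is a genuine perturbation of the Kolmogorov linearized Navier--Stokes operator $\partial_t-\nu\Delta+\cos y\,\partial_x(1+\Delta^{-1})$, and the hypothesis $\|V-\cos y\|_{H^4}\le \epsilon_0\nu^{1/3}$ lets me absorb the difference.

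The core of the argument is the resolvent estimate. Fixing a nonzero horizontal frequency $k$ and taking the temporal Fourier transform, I reduce to bounding, uniformly in $\tau\in\R$ and in $k$, solutions of
\begin{align*}
(\mathrm{i}\tau-\nu\Delta_k)\widehat f_k+\mathrm{i}k V(y)(1+\Delta_k^{-1})\widehat f_k=\widehat{g}_k
\end{align*}
in the norms $\||D_x|^{1/2}\nabla\Delta^{-1}\widehat f_k\|_{L^2}$ and $\||V'|^{1/2}\partial_x\nabla\Delta^{-1}\widehat f_k\|_{L^2}$. Writing $\widehat\psi_k=\Delta_k^{-1}\widehat f_k$, the required bound reads $|k|^{1/2}\|\widehat\psi_k\|_{H_k^1}+\||V'|^{1/2}|k|\,\|\widehat\psi_k\|_{H_k^1}\lesssim |k|^{-1/2}\|\nabla \widehat g_k\|_{L^2}$, which is exactly a uniform version of the vorticity-depletion/inviscid-damping resolvent estimate: the weight $|V'|^{1/2}$ encodes the extra vanishing of $\widehat f_k$ near the critical points $\sin y=0$, and the nonlocal term $kV(y)\Delta_k^{-1}\widehat f_k$ is \emph{essential} for this gain. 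I would prove it by the limiting absorption principle: test the equation against $\overline{\widehat\psi_k}$ and against $\overline{(y-y_c)\partial_y\widehat\psi_k}$-type multipliers localized near each critical point, take real and imaginary parts to extract, respectively, the dissipation/enhanced-dissipation term $\nu\|\nabla\Delta_k^{-1}\widehat f_k\|^2$ and the depletion term $\int |V'|\,|k|^2|\widehat\psi_k|^2$, and use compactness plus the spectral results already available for the Kolmogorov profile (global stability on $\T_{2\pi\kappa}\times\T_{2\pi}$, $\kappa<1$) to rule out an embedded singularity as $\nu\to0$. This is where I expect the main difficulty to lie: controlling the resolvent \emph{uniformly} down to $\nu=0$ and uniformly in $k$, with the sharp $|k|^{-1/2}$ powers and the $|V'|^{1/2}$ weight, near the degenerate critical points where the standard Rayleigh-equation techniques break down; the nonlocal term must be treated with care since it couples all $y$, and one needs the explicit structure of $\cos y$ (via the conserved/coercive quantities of the type used in Theorem \ref{thm:LE}, or a direct ODE analysis of the Rayleigh equation with the depletion weight).

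Granting the resolvent bound, I assemble the proposition as follows. By Plancherel in $t$ (after time-renormalization, on the interval $I$ extended by a cutoff), the pieces of $\|f\|_{X_I}$ involving $\||D_x|^{1/2}\nabla\Delta^{-1}f\|_{L^2_tL^2}$, $\||V'|^{1/2}\partial_x\nabla\Delta^{-1}f\|_{L^2_tL^2}$ are controlled by the resolvent estimate applied mode-by-mode in $k$, with right-hand side $|D_x|^{-1/2}\nabla g_3$; the divergence-form terms $\partial_y g_1+\partial_x g_2$ fit the same framework after writing them as $\nabla\cdot(g_1,g_2)$ and paying a factor $\nu^{-1/2}$ against the dissipation (this is the standard way the $\nu^{-1/2}\|(g_1,g_2)\|_{L^2_tL^2}$ term enters, via the $\nu^{1/2}\|\nabla f\|_{L^2_tL^2}$ component of the norm). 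The initial-datum contribution and the $g_4$ term are handled by the semigroup bound that the same resolvent estimate yields after contour integration (or, more simply, by a Duhamel/energy argument: the $g_4$ term enters in $L^1_tL^2\to L^\infty_tL^2$ by the maximum principle combined with the enhanced-dissipation decay, and is subsumed by the resolvent framework once one writes $g_4$ as a forcing with zero extra derivatives). The $\nu^{1/2}\|\nabla f\|_{L^2_tL^2}$ component follows from the real part of the basic energy identity for \eqref{eq:partf-g-1}: multiply by $\overline f$, integrate in $x,y,t$, and bound the resulting terms using that $V''\partial_x\Delta^{-1}f$ and the perturbation $(V-\cos y)$ are lower order. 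Finally one removes the artificial extension of $I$ by a standard localization argument (splitting into $I$ and a transition layer, using the $L^\infty_tL^2$ control already obtained to pay for the cutoff derivative), and un-does the time renormalization, which only changes constants. The writing will mostly be bookkeeping; the single serious input is the uniform resolvent estimate, which is why I would state and prove it as a standalone lemma (the ``$\nu$-robust limiting absorption principle with depletion weight'') before deducing Proposition \ref{prop:LNS-sp1}.
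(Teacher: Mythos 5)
Your overall architecture is the same as the paper's: renormalize time via $t_*=(1-\mathrm{e}^{-\nu t})/\nu$ so that the coefficient becomes the stationary profile $V$ (with the variable-viscosity remainder absorbed into $g_1,g_2$), prove the $L^\infty_tL^2$ and $\nu^{1/2}\|\nabla f\|$ components by an energy estimate in the modified inner product, and obtain the two damping components by Fourier transform in time plus a uniform resolvent estimate with the $|V'|^{1/2}$ depletion weight (the paper's Proposition \ref{prop:res-LNS}, fed into Proposition \ref{prop:space-time-3} and Lemma \ref{lem2}). Two cautions on the resolvent step, though. First, the estimate is \emph{not} obtained by perturbing off the pure Kolmogorov operator using the $\epsilon_0\nu^{1/3}$ smallness: on an interval of length $\nu^{-1}$ the perturbation $(V-\cos y)\,\partial_x(f+\Delta^{-1}f)$ costs at least a factor $\nu^{-1/2}$ (e.g.\ as a $\partial_xg_2$ forcing it gives $\nu^{-1/2}\epsilon_0\nu^{1/3}\|f_{\neq}\|_{L^2_tL^2}\gtrsim\epsilon_0\nu^{-5/12}\|f\|_{X_I}$ by Lemma \ref{lem:norm-X}), so $\nu^{1/3}$ is nowhere near enough; the resolvent bound must be proved for the profile $V$ itself, using only $O(1)$ closeness to $\cos y$ — which is what your displayed resolvent equation implicitly does, and what the paper does via limiting absorption, compactness, and the absence of (generalized) embedded eigenvalues. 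Second, you need \emph{two} resolvent inequalities, not one: the bound by $\||D_x|^{-1/2}\nabla\widehat g\|$ serves $g_3$, but the damping components generated by $\partial_yg_1+\partial_xg_2$ require the companion bound by $\nu^{-1/2}\|\widehat g\|_{H^{-1}}$ (the second inequality in Proposition \ref{prop:res-LNS}); your heuristic of "paying $\nu^{-1/2}$ against the dissipation" only produces the energy components of $X_I$, not $\||D_x|^{1/2}\nabla\Delta^{-1}f\|_{L^2_tL^2}$ or $\||V'|^{1/2}\partial_x\nabla\Delta^{-1}f\|_{L^2_tL^2}$.

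The genuine gap is your treatment of the initial datum and of $g_4$ for these damping components. Neither "maximum principle combined with enhanced-dissipation decay" nor an unspecified contour integration gives the homogeneous spacetime bound $\||D_x|^{1/2}\nabla\Delta^{-1}\mathrm{e}^{t\mathcal{L}}h\|_{L^2_tL^2}+\||V'|^{1/2}\partial_x\nabla\Delta^{-1}\mathrm{e}^{t\mathcal{L}}h\|_{L^2_tL^2}\lesssim\|h\|_{L^2}$: energy bounds alone lose a factor $|I|^{1/2}\sim\nu^{-1/2}$ after time integration, and Plancherel requires a globally defined solution driven by an $L^2_t$ forcing, which an initial condition (or an $L^1_tL^2$ forcing such as $g_4$) is not. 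The paper's device (Lemma \ref{prop:space-time-g0}) is to extend the solution to $t<t_1$ by solving the time-reversed equation with the sign of the viscosity flipped; the extended function then solves a forced equation on all of $\R$ with forcing $-2\nu\Delta F$ supported in the past, and this forcing is admissible for the $\nu^{-1/2}\|\cdot\|_{H^{-1}}$ resolvent bound precisely because the (anti-)dissipative energy identity gives $\nu\|\nabla f\|_{L^2(\R;L^2)}^2\lesssim\|f(t_1)\|_{L^2}^2$. Once this semigroup bound is in hand, $g_4$ is handled by Duhamel plus Minkowski — it cannot be "subsumed by the resolvent framework with zero extra derivatives", since the $g_3$-type bound would demand $\||D_x|^{-1/2}\nabla g_4\|_{L^2_tL^2}$ and the $H^{-1}$-type bound would demand an $L^2_t$ structure, neither of which is available from $\|g_4\|_{L^1_tL^2}$. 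Finally, no cutoff/localization layer is needed at all: extend the forcing by zero beyond $t_2$ and solve forward from $t_1$, so that the half-line Fourier transform satisfies the resolvent equation exactly; with these repairs your outline coincides with the paper's proof.
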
  

The proof  relies on the resolvent estimates established in Proposition \ref{prop:res-LNS}, see Section 6.1 for the details.

The control of the reaction part $\omega_{re}$ is the most difficult part in this paper.  Prior to applying Proposition \ref{prop:LNS-sp1}, we still need to complete several key preprocessing steps, such as further decomposing the time scales and establishing weighted estimates and inviscid damping estimates for the following transport equation
\beno
\partial_t{\om}+\mathrm{i}k\mathrm{e}^{-\nu t}V {\om}+f=0.
\eeno
Since $V$ is non-monotone, the proof is rather involved. See Section {9} for the details. 
  
   \begin{Proposition}\label{prop:reaction}
 Let $I=[T_0,T]$, $T_0\leq T\leq T_1$ and $\om_{re}$ solve \eqref{eq:reaction}. Then we have
 \begin{align*}
 \|\omega_{re}\|_{X_I} \leq C\nu^{-\f13}\|\omega_e\|_{X_I}\|\omega_0\|_{H^3}.
\end{align*} 
 \end{Proposition}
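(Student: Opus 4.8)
\textbf{Proof proposal for Proposition \ref{prop:reaction}.}

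The plan is to estimate $\omega_{re}$ by applying the spacetime bound of Proposition \ref{prop:LNS-sp1} to equation \eqref{eq:reaction}, but the naive application fails because the forcing term $t_*V'\mathrm{P}_{\text{low}}u^y_e\partial_x\omega_L^*$ grows like $t$ in time and is therefore too large for either the $L^1(I;L^2)$ or the $|D_x|^{-1/2}\nabla$ version of the source estimate. The key, as announced in section \ref{sec:reactionpart}, is to exploit the \emph{non-resonance} structure: for each pair of frequencies $(k,l)$ with $l\ne 0$, the factor $\phi_{e,l}\omega_{L,k-l}^*$ carries the oscillation $\mathrm{e}^{-\mathrm{i}(k-l)t\cos y}$ coming from $\omega_L^*$, which is non-resonant with the intrinsic oscillation $\mathrm{e}^{-\mathrm{i}kt\cos y}$ of the $k$-th mode of the linearized operator. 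First I would take the Fourier transform in $x$ to reach the scalar equations \eqref{eq:intro16}, write $\phi_{e,l}\omega_{L,k-l}^*(t,y)=g_{k,l}(t,y)\mathrm{e}^{-\mathrm{i}t(k-l)\cos y}$ as in \eqref{eq:intro17}, and record from Theorem \ref{thm:LE} (together with the properties of $\omega_L^*$ in Proposition \ref{prop:app}, in particular \eqref{est:t*Vparw}) that the ``profile'' $g_{k,l}$ and its first $y$-derivative are controlled by $\|\omega_e\|_{X_I}\|\omega_0\|_{H^3}$, with the crucial gain that the vorticity depletion of $\omega_L^*$ (the cutoff $1-\eta(\sqrt t V')$ away from the critical points) tames the degeneracy of $\cos y$ at $V'=0$.

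Next I would implement the heuristic \eqref{eq:intro18}--\eqref{eq:intro19} rigorously. Since $\cos y$ is non-monotone, one cannot use the plain Fourier transform in $y$; instead I would introduce the generalized transform adapted to the new coordinate in which $\cos y$ (equivalently $V$, via \eqref{eq:changeofcoordinate}) becomes linear, apply it mode by mode, and track how the non-resonant phase $\mathrm{e}^{\mathrm{i}l t(\cdot)}$ produces, after integration by parts in the dual variable, a factor $|l|^{-1/2}\lesssim 1$ at the cost of one extra $y$-derivative on $g_{k,l}$. This converts the $L^1_t$ requirement into an $L^2_t$ requirement on $(g_{k,l},\partial_y g_{k,l})$, which is exactly what the $X_I$-norm of $\omega_e$ furnishes (the $|V'|^{1/2}\partial_x\nabla\Delta^{-1}$ and $|D_x|^{1/2}\nabla\Delta^{-1}$ components of $X_I$ give the needed control on $\phi_{e,l}$ and hence on $g_{k,l}$ after summation in $l$). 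Along the way I would need to verify that the viscous term $\nu\Delta_k$ and the nonlocal term $\mathrm{i}k\cos y\,\Delta_k^{-1}$ in \eqref{eq:intro16} can be absorbed perturbatively — for the former using the dissipation already present in $X_I$, for the latter using the smallness and decay of $\psi_{e,l}$ relative to $\omega_{e,l}$ — and then sum over $k$ and $l$ with Cauchy--Schwarz, the $\nu^{-1/3}$ arising from the cardinality of the frequency set $\Lambda_*$ (and the size $t_*\lesssim\nu^{-1}$ balanced against the decay of $\omega_L^*$), to land on $\|\omega_{re}\|_{X_I}\lesssim \nu^{-1/3}\|\omega_e\|_{X_I}\|\omega_0\|_{H^3}$.

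The main obstacle I expect is the degeneracy of the generalized Fourier transform at the critical points of $\cos y$: near $V'=0$ the change of variables that linearizes $\cos y$ is singular, so the integration-by-parts gain of $|l|^{-1/2}$ competes with a blow-up of the Jacobian, and the non-resonance argument in the form \eqref{eq:intro19} does not directly apply. Overcoming this requires combining the transform argument in the non-degenerate region with a separate treatment near the critical points that uses the vorticity depletion vanishing of $\omega_L^*$ encoded in the cutoff $1-\eta(\sqrt t V')$ — this is precisely why the reaction part was split using $\mathrm{P}_{\text{low}}$, $\omega_L^*$ versus $\widetilde\omega_L$, and why $\widetilde\omega_L$ (supported where $\sqrt t|V'|\lesssim 1$) is instead handled through the source term $G$ in \eqref{def:G} rather than through $\omega_{re}$. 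Managing the interface between these two regimes, and ensuring the error from localizing is itself $O(\nu^{-1/3})\|\omega_e\|_{X_I}\|\omega_0\|_{H^3}$, is the technical heart of the argument; the remaining steps, including the perturbative treatment of $\nu\Delta$ and the nonlocal term and the final summation, are comparatively routine given Proposition \ref{prop:LNS-sp1} and Theorem \ref{thm:LE}.
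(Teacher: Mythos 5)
Your plan follows the paper's strategy closely: the Fourier transform adapted to $V$ (i.e., $\int g\,\Psi_m^{\pm}\mathrm{e}^{-\mathrm{i}V(y)\eta}\,\mathrm{d}y$ with a dyadic partition $\Psi_m^{\pm}$ near the critical points), the non-resonance gain $|k-l|^{-1}$ from the mismatch between the forcing phase and the free transport phase, the use of the depletion weights $\min(|V'|^2|k|^{-1/2},|k|^{-5/2})\mathrm{e}^{-\nu k^2\gamma_1(t)|V'|^2}$ carried by $\omega_L^*$ to absorb the singular weights $|V'|^{-\gamma}$ produced by the degenerate change of variables, the perturbative absorption of $\nu\Delta$ and the nonlocal term via Proposition \ref{prop:LNS-sp1}, and the Cauchy--Schwarz summation in frequency are exactly the content of Lemmas \ref{lem:plancherel}, \ref{lem5.7} and \ref{lem:f_{1j}}.

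The one structural ingredient you omit, and which the argument needs, is the decomposition of $[T_0,T]$ into subintervals $I_j=[t_{j-1},t_j]$ of length $\nu^{-1/3}$, with the reaction forcing restricted to each $I_j$ and the resulting transport problem solved with zero data there. This serves two purposes that a global-in-time version of your sketch cannot replace. First, the non-resonance estimate controls only the profile derivative $\partial_y(\mathrm{e}^{\mathrm{i}lt_*V}\omega)$; to recover the component $\nu^{1/2}\|\nabla\omega_{re}\|_{L^2L^2}$ of the $X_I$-norm one must undo the conjugation at a cost of $|l|t_*|V'|$, and this is affordable only because the relevant time integral is taken over an interval of length $\nu^{-1/3}$ (see the last step of Lemma \ref{lem5.7}); over the full interval of length up to $\nu^{-4/9}$ the loss is too large to close. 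Second, the factor $\nu^{-1/3}$ in the conclusion does not come from the cardinality of $\Lambda_*$ — the frequency sums are $O(1)$ after Cauchy--Schwarz against the weights $|l|^{-1}$, $|k-l|^{-1}$ — but from $\sup_{t\in I_j}t\langle\nu t^3\rangle^{-0.7}\lesssim\nu^{-1/3}j^{-1.1}$, and the summability $\sum_j j^{-1.1}<\infty$ supplied by the time slicing combined with the decay of $\omega_L^*$ is what prevents a further loss. With this decomposition added, your outline matches the paper's proof.
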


In Section 8, based on Proposition \ref{prop:LNS-sp1},  we can prove the following energy estimate for $\om_*$. 
 \begin{Proposition}\label{prop:om-star}
   Let $I=[T_0,T]$ and $\om_*$ solve  \eqref{eq:om-star}. Then we have
     \begin{align*}
        \|\omega_{*}\|_{X_I}\leq & C(\nu^{-\f23}\|\omega_e\|_{X_I}^2+\nu^{-\f13}\|\omega_e\|_{X_I}\|\omega_0\|_{H^3}+
 \nu^{\f13}\|\omega_{0}\|_{H^3} +\|\omega_0\|_{H^3}^2)\\
 &+C\nu^{\f12}|\ln(\nu)|\|\omega_0\|_{H^3} +C\|\omega_*(T_0)\|_{L^2}.
     \end{align*}Here the energy functional $X_I$ is defined in \eqref{def:XT}.
 \end{Proposition}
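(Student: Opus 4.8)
The plan is to apply the spacetime estimate Proposition \ref{prop:LNS-sp1} directly to the equation \eqref{eq:om-star} for $\omega_*$, with $f=\omega_*$ and the forcing term $-G$ suitably decomposed according to which component of the $X_I$ norm each piece is naturally paired with. First I would split $G$ into the four types of inhomogeneous terms appearing on the right side of \eqref{eq:partf-g-1}: divergence-form terms $\partial_y g_1 + \partial_x g_2$ for the genuinely quadratic transport nonlinearities (writing $u\cdot\nabla\omega_e = \nabla\cdot(u\,\omega_e)$ since $u$ is divergence free, and similarly for $u^x_e\partial_x\omega_L$ and the good-derivative term $u^y_e(\partial_y+t_*V'\partial_x)\omega_L$, which by Proposition \ref{prop:app}, \eqref{est:paromL-inf}, has an $L^\infty$ bound with the favorable $\langle\nu t^3\rangle^{-1/2}$ factor); a $g_3$-type term requiring the $|D_x|^{-1/2}\nabla$ norm for the pieces where we must exploit the anisotropic inviscid-damping gain; an $L^1_tL^2$ term $g_4$ for the error $Er$, controlled via Proposition \ref{prop:error}; and the remaining two correction terms $t_*V'P_{\text{high}}u^y_e\partial_x\omega_L$ and $t_*V'P_{\text{low}}u^y_e\partial_x\widetilde{\omega}_L$, which are precisely the parts of the reaction term that are \emph{not} handled by the separate reaction equation \eqref{eq:reaction} and must be estimated here.

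The heart of the estimate is then a collection of bilinear bounds of the schematic form $\|u_e\cdot\text{(derivative of }\omega_e\text{ or }\omega_L)\|_{?}\lesssim \|\omega_e\|_{X_I}(\|\omega_e\|_{X_I}+\|\omega_0\|_{H^3})$, using the embedding of $X_I$ into the relevant mixed-norm spaces and the pointwise bounds on $\omega_L$ from Proposition \ref{prop:app}. For the purely nonlinear piece $u\cdot\nabla\omega_e = u_e\cdot\nabla\omega_e + u_L\cdot\nabla\omega_e$, the $u_e\cdot\nabla\omega_e$ contribution is the source of the $\nu^{-2/3}\|\omega_e\|_{X_I}^2$ term (two factors of $\omega_e$, with the $\nu^{-1/2}$ weight from the $g_1,g_2$ slot giving $\nu^{-1/2}\cdot\nu^{-1/6} = \nu^{-2/3}$ after the $L^2_tL^2$ Cauchy–Schwarz over the medium-time interval of length $\lesssim\nu^{-4/9}$ and the dissipative gain in $X_I$); the $u_L\cdot\nabla\omega_e$ contribution uses the decay $\|u_L\|_{L^\infty}\lesssim\langle t\rangle^{-1}$-type bounds from \eqref{est:u1Linf}–\eqref{est:u2Linf} to produce the $\nu^{-1/3}\|\omega_e\|_{X_I}\|\omega_0\|_{H^3}$ term. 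The terms linear in $\omega_e$ coupled to $\omega_L$ give similarly $\nu^{-1/3}\|\omega_e\|_{X_I}\|\omega_0\|_{H^3}$; the $\omega_L$-only error terms $Er$, the nonlocal correction $\mathrm{e}^{-\nu t}(V+V'')\partial_x\phi_e$, and the $V'+V'''$ forcing give the $\nu^{1/3}\|\omega_0\|_{H^3} + \|\omega_0\|_{H^3}^2$ and $\nu^{1/2}|\ln\nu|\|\omega_0\|_{H^3}$ contributions—the logarithm arising from a time integral $\int_{T_0}^{T_1}\langle t\rangle^{-2}\cdot(\text{something})\,dt$ or from the $|D_x|^{1/2}\nabla Er_{L3}$ bound integrated against the $g_3$ slot over the dyadic range of frequencies in $\Lambda_*$. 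Finally, $\|\omega_*(T_0)\|_{L^2}$ appears as the initial-data term from Proposition \ref{prop:LNS-sp1}, here evaluated at $t_1=T_0$.

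The main obstacle I anticipate is the two leftover reaction-type terms $t_*V'P_{\text{high}}u^y_e\partial_x\omega_L$ and $t_*V'P_{\text{low}}u^y_e\partial_x\widetilde{\omega}_L$. The growing factor $t_*V'$ is exactly what makes the reaction term resist the spacetime estimates in general (as emphasized in the introduction), so these pieces survive in $G$ only because the high-frequency projection $P_{\text{high}}$ restricts to $|k|>\nu^{-1/3}$ where the enhanced dissipation gain $\nu^{1/2}\|\nabla f\|_{L^2_tL^2}\gtrsim \nu^{1/2}|k|\|f\|$ together with $|k|\gtrsim\nu^{-1/3}$ beats the $t\lesssim\nu^{-4/9}$ growth, and because $\widetilde{\omega}_L = \eta(\sqrt{t}V')\omega_{L,2}$ is supported in the region $|V'|\lesssim t^{-1/2}$ where $t_*V'\lesssim t^{1/2}$ is much milder—and there one further exploits the vorticity-depletion-induced bound \eqref{est:t*Vparw}, $\|t_*V'\partial_x\widetilde\omega_L\|_{L^\infty}\lesssim\|\omega_0\|_{H^3}$, which kills the growth entirely. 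So the delicate point is to pair each of these against the correct component of $X_I$ — the $P_{\text{high}}$ term against $\nu^{1/2}\|\nabla(\cdot)\|_{L^2_tL^2}$ via $\nu^{-1/2}\|(g_1,g_2)\|_{L^2_tL^2}$ after writing it in divergence form and absorbing one derivative, the cutoff term against the $|V'|^{1/2}\partial_x\nabla\Delta^{-1}$ or $|D_x|^{1/2}\nabla$ slot using the $\eta$-support — and to verify the arithmetic of the $\nu$-powers closes with room to spare. Once all bilinear/linear pieces are summed with their respective weights and the time-interval length $T_1-T_0\lesssim\nu^{-4/9}$ is used where needed, collecting terms yields exactly the claimed bound.
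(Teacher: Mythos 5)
Your overall skeleton (apply Proposition \ref{prop:LNS-sp1} to \eqref{eq:om-star} with $G$ distributed over the $g_1,g_2,g_3,g_4$ slots, treat $Er$ via Proposition \ref{prop:error}, handle the two leftover reaction-type pieces with the Bernstein gain from $P_{\text{high}}$ and with \eqref{est:t*Vparw}) matches the paper, but there is a genuine gap in how you treat the transport terms, and it is exactly the step where the claimed exponents are decided. Writing $u\cdot\nabla\omega_e=\nabla\cdot(u\,\omega_e)$ and paying $\nu^{-1/2}\|u\,\omega_e\|_{L^2L^2}$ does \emph{not} give $\nu^{-2/3}\|\omega_e\|_{X_I}^2$: the piece $P_0u^x_e\,\partial_x\omega_e=\partial_x\big(P_0u^x_e\,\omega_{e,\neq}\big)$ forces you to use $\|\omega_{e,\neq}\|_{L^2L^2}\lesssim\nu^{-1/4}\|\omega_e\|_{X_I}$ (Lemma \ref{lem:norm-X}), so you get $\nu^{-1/2}\cdot\nu^{-1/4}=\nu^{-3/4}$, and with $\|\omega_e\|_{X_I}\sim\nu^{2/3}$ this produces $\nu^{-1/12}\|\omega_e\|_{X_I}$, which is neither the stated bound nor absorbable in the bootstrap of Proposition \ref{prop:om-e}. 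Similarly, estimating $u_L\cdot\nabla\omega_e$ (or $u_L\omega_e$ in divergence form) with only $\|u_L\|_{L^\infty}\lesssim\langle t\rangle^{-1}\|\omega_0\|_{H^3}$ yields $\nu^{-1/2}\,T_0^{-1/2}=\nu^{-5/12}$, not $\nu^{-1/3}$, because the $|V'|^{1/2}$ weight in the $X_I$ norm sits on the \emph{velocity} $\partial_x\nabla\Delta^{-1}\omega_e$, not on $\omega_e$ itself, so the anisotropic gain is unavailable at the vorticity level.

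The missing idea is the paper's curl cancellation: one decomposes $G=\big(G-Er_{L3}-\operatorname{curl}(u\cdot\nabla u_e)\big)+\operatorname{curl}(u\cdot\nabla u_e)+Er_{L3}$, using the exact identity $u_e\cdot\nabla\omega_e=\operatorname{curl}(u_e\cdot\nabla u_e)$ so that only the velocity-level quadratic term $u\cdot\nabla u_e$ enters the $g_1,g_2$ slots. There, $P_0u^x_e\partial_xu_e$ costs only $\|\partial_xu_e\|_{L^2L^2}\lesssim\nu^{-1/6}\|\omega_e\|_{X_I}$ (hence $\nu^{-1/2}\cdot\nu^{-1/6}=\nu^{-2/3}$), and $u_L\cdot\nabla u_e$ is estimated anisotropically as in \eqref{est:uLnaue}, pairing $|u^x_L|\lesssim(\langle t\rangle^{-3/2}+\langle t\rangle^{-1}|V'|)\|\omega_0\|_{H^3}$ with the $\||V'|^{1/2}\partial_xu_e\|_{L^2L^2}$ component of $X_I$ to get $\nu^{-1/3}\|\omega_e\|_{X_I}\|\omega_0\|_{H^3}$. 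The residue $u\cdot\nabla\omega_e-\operatorname{curl}(u\cdot\nabla u_e)=-\partial_xu_L\cdot\nabla u^y_e+\partial_yu_L\cdot\nabla u^x_e$ is lower order and is put in $L^1_tL^2$ using $\|\nabla u_L\|_{L^\infty}\lesssim\langle\nu t^3\rangle^{-1}\|\omega_0\|_{H^3}$, whose time integral is $\nu^{-1/3}$. Without this cancellation (one of the ``crucial cancellations'' flagged in Section \ref{sec:ideas}), the exponents in your scheme do not close, so the proposal as written does not prove the Proposition; the remaining parts of your plan (Steps for $u^x_e\partial_x\omega_L$, $u^y_e(\partial_y+t_*V'\partial_x)\omega_L$, the $P_{\text{high}}$/$\widetilde\omega_L$ reaction leftovers, $Er$, and the $\nu^{1/2}|\ln\nu|$ term from $Er_{L3}$ in the $g_3$ slot) are consistent with the paper's Lemma \ref{lem:G}.
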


With the hand of  Proposition \ref{prop:om-star} and Proposition \ref{prop:reaction},  it is easy to obtain the following energy estimate for the remainder $\om_e$. 
  \begin{Proposition}\label{prop:om-e}
There exists $\epsilon_1\in(0,1/4)$ such that for all $0<\nu \leq 1$, if the initial vorticity $\omega_0$ satisfies $\|\omega_0\|_{H^3}\leq \epsilon_1\nu^{\f13}$, 
then it holds that for $T_0\leq t\leq T_1$, 
\begin{align*}
   & \|\omega_e(t)\|_{L^2}\leq C\nu^{\f13}\|\omega_0\|_{H^3}.
\end{align*}  
\end{Proposition}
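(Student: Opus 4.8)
The plan is to combine Proposition \ref{prop:om-star} and Proposition \ref{prop:reaction} through the decomposition $\omega_e=\omega_*+\omega_{re}$, and then to close the resulting quadratic inequality by a continuity argument in the right endpoint of the time interval. Fix $T\in[T_0,T_1]$ and set $I=[T_0,T]$. By the triangle inequality for the norm $X_I$ defined in \eqref{def:XT} we have $\|\omega_e\|_{X_I}\leq\|\omega_*\|_{X_I}+\|\omega_{re}\|_{X_I}$. Since $\omega_{re}|_{t=T_0}=0$, it follows that $\omega_*|_{t=T_0}=\omega_e|_{t=T_0}$, so Proposition \ref{prop:om-e-short} gives $\|\omega_*(T_0)\|_{L^2}=\|\omega_e(T_0)\|_{L^2}\leq C\nu^{1/3}\|\omega_0\|_{H^3}$. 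Feeding this bound together with Proposition \ref{prop:om-star} and Proposition \ref{prop:reaction} into the triangle inequality, and abbreviating $M:=\|\omega_0\|_{H^3}$ and $X:=\|\omega_e\|_{X_I}$, we arrive at an estimate of the schematic form
\begin{align*}
X\leq C\big(\nu^{-2/3}X^2+\nu^{-1/3}XM+\nu^{1/3}M+M^2+\nu^{1/2}|\ln\nu|\,M\big).
\end{align*}

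Next I would simplify the right-hand side using the smallness hypothesis $M\leq\epsilon_1\nu^{1/3}$. Then $M^2\leq\epsilon_1\nu^{1/3}M$ and $\nu^{1/2}|\ln\nu|\leq\nu^{1/3}$ for $\nu$ small, so both of these terms are $\lesssim\nu^{1/3}M$; and $\nu^{-1/3}XM\leq C\epsilon_1X$, which is absorbed into the left-hand side once $\epsilon_1$ is chosen small enough that $C\epsilon_1\leq1/2$. What remains is
\begin{align*}
\tfrac12 X\leq C\nu^{-2/3}X^2+C\nu^{1/3}M.
\end{align*}

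To conclude I would run a standard bootstrap. Since the two-dimensional Navier--Stokes solution $\Omega$, hence $\omega_e$, is smooth on $[T_0,T_1]$, the function $T\mapsto\|\omega_e\|_{X_{[T_0,T]}}$ is continuous and non-decreasing, and at $T=T_0$ it equals $\|\omega_e(T_0)\|_{L^2}\leq C\nu^{1/3}M$. Assume the bootstrap hypothesis $X=\|\omega_e\|_{X_{[T_0,T]}}\leq 4C\nu^{1/3}M$. Then $\nu^{-2/3}X^2\leq16C^2\nu^{-2/3}\nu^{2/3}M^2=16C^2M^2\leq16C^2\epsilon_1\nu^{1/3}M$, so the displayed inequality gives $X\leq(2C+32C^3\epsilon_1)\nu^{1/3}M<4C\nu^{1/3}M$ provided $\epsilon_1<1/(16C^2)$. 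This strictly improves the hypothesis, so by continuity the bound $\|\omega_e\|_{X_{[T_0,T]}}\leq4C\nu^{1/3}M$ propagates to all $T\in[T_0,T_1]$. Since $\|\omega_e(t)\|_{L^2}\leq\|\omega_e\|_{X_{[T_0,t]}}$ by the definition of $X_I$, this yields $\|\omega_e(t)\|_{L^2}\leq C\nu^{1/3}\|\omega_0\|_{H^3}$ for all $t\in[T_0,T_1]$, as claimed.

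This step is genuinely ``easy'' given the earlier propositions, and I do not expect a serious obstacle. The only points needing care are: (i) verifying that the threshold $\epsilon_1$ forced here is compatible with the smallness constants demanded by Propositions \ref{prop:om-star}, \ref{prop:reaction} and \ref{prop:om-e-short} --- one simply takes the minimum of all these thresholds, together with $\epsilon_1<1/(16C^2)$ and $C\epsilon_1\leq1/2$; and (ii) justifying the continuity of $T\mapsto\|\omega_e\|_{X_{[T_0,T]}}$, which follows from the regularity of the solution on the compact interval $[T_0,T_1]$ together with dominated convergence for the $L^2_t$ components of the $X_I$ norm. The substantive work --- the space-time estimates of Proposition \ref{prop:LNS-sp1}, the approximation estimates of Propositions \ref{prop:app}--\ref{prop:error}, and the reaction-term analysis behind Proposition \ref{prop:reaction} --- has already been carried out before this point.
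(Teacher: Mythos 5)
Your proof is correct and follows essentially the same route as the paper: decompose $\omega_e=\omega_*+\omega_{re}$, feed Propositions \ref{prop:om-e-short}, \ref{prop:om-star} and \ref{prop:reaction} into the triangle inequality for the $X_I$ norm, and close the resulting quadratic inequality by a bootstrap in $T$ using the smallness $\|\omega_0\|_{H^3}\leq\epsilon_1\nu^{1/3}$. The only cosmetic difference is the form of the bootstrap hypothesis (you use $\|\omega_e\|_{X_I}\leq 4C\nu^{1/3}\|\omega_0\|_{H^3}$, the paper assumes $\|\omega_e\|_{X_I}\leq\varepsilon_0\nu^{2/3}$), which is equivalent under the smallness assumption.
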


\begin{proof}
The proof is based on a bootstrap argument. Recall that $X_I=X_{[T_0,T]}$.  We assume that $\|\omega_e\|_{X_{[T_0,T]}}\leq \varepsilon_0\nu^{\f23}$ for small $\varepsilon_0$ determined later, which holds for some $T>T_0$ by Proposition \ref{prop:om-e-short}(for $\epsilon_1$ small enough).

As $\omega_*(T_0)=\omega_e(T_0)$, we get by Proposition \ref{prop:om-e-short} that
\begin{align*}
   &\|\omega_*(T_0)\|_{L^2}=\|\omega_e(T_0)\|_{L^2}\leq C\nu^{\f13}\|\omega_0\|_{H^3}.
\end{align*}
Then it follows  from Proposition \ref{prop:om-star} that (for $I=[T_0,T]$)
\begin{align*}
\|\omega_*\|_{X_I}
\leq &C\big(\nu^{-\f23}\|\omega_e\|_{X_I}^2+\nu^{-\f13}\|\omega_e\|_{X_I}\|\omega_0\|_{H^3}+
\nu^{\f13}\|\omega_0\|_{H^3} +\|\omega_0\|_{H^3}^2\big)\\
&+C\nu^{\f12}|\ln(\nu)|\|\omega_0\|_{H^3}+ C\nu^{\f13}\|\omega_0\|_{H^3}\\
\leq &C\big(\nu^{-\f23}\|\omega_e\|_{X_I}^2+\nu^{\f13}\|\omega_0\|_{H^3}\big).
\end{align*}
By Proposition \ref{prop:reaction}, we have
\begin{align*}
   \|\omega_{re}\|_{X_I}\leq& C\nu^{-\f13}\|\omega_e\|_{X_I}\|\omega_0\|_{H^3}\leq C\nu^{\f13}\|\omega_0\|_{H^3}.
\end{align*}
As $\omega_e=\omega_{re}+\omega_{*}$, we infer that
 \begin{align}\nonumber
    \|\omega_e\|_{X_I} \leq &\|\omega_*\|_{X_I}+\|\omega_{re}\|_{X_I}\leq C\big(\nu^{-\f23}\|\omega_e\|_{X_I}^2+\nu^{\f13}\|\omega_0\|_{H^3}\big) .
 \end{align}
which implies    
 \begin{align}\nonumber
   \|\omega_e\|_{X_{[T_0,T]}}\leq C\nu^{\f13}\|\omega_0\|_{H^3}\leq  C\epsilon_1\nu^{\f23}.
 \end{align}
Taking $\epsilon_1=\varepsilon_0/(2C)$, we can conclude our result for $T=T_1$.
\end{proof}

\subsection{Energy estimate in large-time regime}

 For $t\ge T_1$, since $\|\om(T_1)\|_{L^2}$ is of order $\nu^{\f23}$, there is no need to perform the quasilinear approximation.
  
Let $V(t,y)$ solve $\partial_t(\mathrm{e}^{-\nu t}V)= \nu\partial_y^2(\mathrm{e}^{-\nu t}V)$ for $t\geq T_1$ with $V|_{t=T_1}=P_0U_0^x$. Then
\begin{align}
   &\|V(t,\cdot)-b\|_{H^4}\leq \|U_0^x-b\|_{H^4}\leq 2\|\Omega_0{-\sin y}\|_{H^3}\leq 2\epsilon_1\nu^{\f13},\label{ass:U-b1}\\
   &\partial_tV=\nu(\partial_y^2+1)V=\nu(\partial_y^2+1)(V-b), \nonumber\\
   &\|\partial_tV\|_{H^2}\leq\nu\|V-b\|_{H^4}\leq 2\epsilon_1\nu^{\f43}.\label{ass:U-b2}  
\end{align} 
{Here we used that $ \int_{\T_{2\pi}}V(T_1,y)\mathrm{d}y=0$, then $ \int_{\T_{2\pi}}V(t,y)\mathrm{d}y=0$.}
We then introduce the perturbation (for $t\in [T_1,+\infty)$, here $V'=\partial_yV$, $V''=\partial_y^2V$)
\beno
u=U-(\mathrm{e}^{-\nu t}V(t,y),0):=(u^x,u^y),\quad \omega=\Omega+\mathrm{e}^{-\nu t}V'(t,y)=\partial_xu^y-\partial_yu^x. 
\eeno
It holds that for $t\in [T_1,+\infty)$,
\begin{align}\label{eq:>T1}
\left\{
\begin{aligned}
   & \partial_t\omega-\nu\Delta \omega+\mathrm{e}^{-\nu t}V(t,y)\partial_x\omega-
   \mathrm{e}^{-\nu t}V''(t,y)\partial_x\phi+u\cdot\nabla \omega=0,\\
   &u=(-\partial_y,\partial_x)\phi,\quad \phi=\Delta^{-1}\omega.
   \end{aligned}\right.
\end{align}

  \begin{Proposition}\label{prop:omega-e-large}
There exists $\epsilon,\ \epsilon_2\in(0,1/4)$ such that for all $0<\nu \leq 1$, if $\|\omega(T_1)\|_{L^2}\leq \epsilon_2\nu^{\f23}$, then it holds that for $  t\in [T_1,1/\nu]$, 
\begin{align*}
   & \|\om_{\neq}(t)\|_{L^2}\leq C\mathrm{e}^{-\epsilon\nu^{\f12} t}\|\omega(T_1)\|_{L^2},\quad 
   \|\om(t)\|_{L^2}\leq C\|\omega(T_1)\|_{L^2}.
   \end{align*}  
\end{Proposition}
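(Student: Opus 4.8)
\textbf{Proof proposal for Proposition \ref{prop:omega-e-large}.}

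The plan is to run a bootstrap argument on the time interval $[T_1,1/\nu]$ in the spacetime norm $X_I$ of \eqref{def:XT}, using the linear estimate Proposition \ref{prop:LNS-sp1} with the time-dependent background flow $\mathrm{e}^{-\nu t}V(t,y)$, and to simultaneously track the enhanced dissipation factor $\mathrm{e}^{-\epsilon\nu^{1/2}t}$ for the nonzero modes. First I would set up the renormalized time variable $t_*=(1-\mathrm{e}^{-\nu t})/\nu$ so that the leading transport coefficient becomes $V(t,y)$ (a small perturbation of $b=\cos y$), and verify via \eqref{ass:U-b1}--\eqref{ass:U-b2} that the time dependence of $V$ produces only harmless lower-order commutator terms (the relevant bound is $\|\partial_t V\|_{H^2}\lesssim \nu^{4/3}$, which over an interval of length $\lesssim 1/\nu$ costs only a factor $\nu^{1/3}$). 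Writing the perturbation equation \eqref{eq:>T1} in vorticity form, the only nonlinear term is $u\cdot\nabla\omega$; since now $\omega$ is one single quantity of size $\lesssim\nu^{2/3}$ (no quasilinear splitting is needed), I would place $u\cdot\nabla\omega$ entirely in the $g_3$ slot of \eqref{eq:partf-g-1}, using the divergence-free structure to write it as $\partial_x(u^x\omega)+\partial_y(u^y\omega)$ and then exploiting inviscid damping for $u^y$ together with the $\||V'|^{1/2}\partial_x\nabla\Delta^{-1}f\|_{L^2_tL^2}$ and $\||D_x|^{1/2}\nabla\Delta^{-1}f\|_{L^2_tL^2}$ controls built into $X_I$.

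The core estimate is then the quadratic bound: assuming $\|\omega\|_{X_I}\le \varepsilon_0\nu^{2/3}$ on a subinterval $I=[T_1,T]$, show that Proposition \ref{prop:LNS-sp1} yields
\begin{align*}
\|\omega\|_{X_I}\le C\big(\|\omega(T_1)\|_{L^2}+\nu^{-1/3}\|\omega\|_{X_I}^2+\nu^{1/3}\|\omega(T_1)\|_{L^2}\big)\le C\|\omega(T_1)\|_{L^2}+C\nu^{-1/3}\|\omega\|_{X_I}^2,
\end{align*}
which, since $\|\omega(T_1)\|_{L^2}\le\epsilon_2\nu^{2/3}$ and $\nu^{-1/3}\|\omega\|_{X_I}\le\varepsilon_0\nu^{1/3}\ll 1$, closes the bootstrap and gives $\|\omega\|_{X_{[T_1,1/\nu]}}\le 2C\epsilon_2\nu^{2/3}$, hence in particular $\|\omega(t)\|_{L^2}\lesssim\|\omega(T_1)\|_{L^2}$. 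The nonlinear term $u\cdot\nabla\omega$ must be handled carefully: the transport part $\mathrm{e}^{-\nu t}V\partial_x\omega$ is already inside the linear operator, so the genuine nonlinearity is $u\cdot\nabla\omega$ with $u$ the full perturbation velocity, and I would estimate $\|u^x\omega\|_{L^2}+\|u^y\omega\|_{L^2}$ by Gagliardo--Nirenberg / Ladyzhenskaya, interpolating $\|u\|_{L^4}\lesssim\|u\|_{L^2}^{1/2}\|\nabla u\|_{L^2}^{1/2}\lesssim \|\nabla\Delta^{-1}\omega\|_{L^2}^{1/2}\|\omega\|_{L^2}^{1/2}$ against the $\nu^{1/2}\|\nabla\omega\|_{L^2_tL^2}$ and inviscid-damping pieces of $X_I$; the $\nu^{-1/2}$ loss from the $g_3\mapsto|D_x|^{-1/2}\nabla g_3$ normalization in \eqref{eq:partf-g-1} is what forces the $\nu^{-1/3}$ (rather than worse) constant, after using that all nonzero modes carry $|k|\ge\alpha>1$.

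For the enhanced dissipation statement $\|\omega_{\neq}(t)\|_{L^2}\le C\mathrm{e}^{-\epsilon\nu^{1/2}t}\|\omega(T_1)\|_{L^2}$, I would upgrade the above by inserting the exponential weight: apply Proposition \ref{prop:LNS-sp1} (or rather its enhanced-dissipation-tracking variant coming from the resolvent estimates of Proposition \ref{prop:res-LNS}) to $\mathrm{e}^{\epsilon\nu^{1/2}(t-T_1)}\omega_{\neq}$, noting that the commutator of the weight with $\partial_t-\nu\Delta$ contributes $\epsilon\nu^{1/2}\mathrm{e}^{\epsilon\nu^{1/2}(t-T_1)}\omega_{\neq}$, which is absorbed by the spectral gap $\gtrsim\nu^{1/2}$ of the linearized Navier--Stokes operator restricted to nonzero modes (this is exactly where the enhanced dissipation rate $\nu^{1/2}$ for Kolmogorov-type shear flows, already available from \cite{WZ-SCM}, enters). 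The main obstacle, I expect, is \emph{not} the nonlinear estimate — which is comfortable because $\omega$ is already $O(\nu^{2/3})$ and there is no delicate reaction term at this stage — but rather bookkeeping the time-dependent, slightly-perturbed background coefficient $\mathrm{e}^{-\nu t}V(t,y)$ over the long interval $[T_1,1/\nu]$ while keeping the enhanced dissipation constant $\epsilon$ uniform in $\nu$: one must either subdivide $[T_1,1/\nu]$ into $O(1)$-many subintervals on which $\mathrm{e}^{-\nu t}$ varies by a bounded factor and re-apply the fixed-coefficient resolvent bounds on each, or directly incorporate the slow time dependence into the hypocoercivity/resolvent argument; I would take the subdivision route since the number of pieces is bounded independently of $\nu$ and the accumulated constants stay controlled.
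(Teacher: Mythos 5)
Your overall architecture (bootstrap in the $X_I$ norm on $[T_1,1/\nu]$, a weighted linear spacetime estimate tracking $\mathrm{e}^{\epsilon\nu^{1/2}t}$ on the nonzero modes, then a quadratic bound closed by the smallness $\|\omega(T_1)\|_{L^2}\lesssim\nu^{2/3}$) matches the paper's proof, which simply applies Proposition~\ref{prop:LNS-sp2} with $\tilde\epsilon=\epsilon$ to \eqref{eq:>T1} and runs a continuity argument. The genuine gap is in your placement of the nonlinearity. Putting $u\cdot\nabla\omega$ in the $g_3$ slot is not viable, because that slot is paid for by $\||D_x|^{-1/2}\nabla g_3\|_{L^2L^2}$, which requires a full extra derivative of $u\cdot\nabla\omega$ (hence two derivatives of $\omega$), far beyond what $X_I$ controls; moreover Proposition~\ref{prop:LNS-sp2}, the only linear estimate here that carries the exponential weight for the time-dependent background, has no $g_3$ slot at all. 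Your alternative, the divergence form $g_1=u^y\omega$, $g_2=u^x\omega$, also fails: the worst piece is $P_0u^x\,\omega_{\neq}$, and the only $L^2_tL^2$ control on $\omega_{\neq}$ available from $X_I$ is $\nu^{-1/4}\|\omega_{\neq}\|_{X_I}$ (Lemma~\ref{lem:norm-X}), so the $\nu^{-1/2}\|(g_1,g_2)\|_{L^2L^2}$ price yields $\nu^{-3/4}\|\omega\|_{X_I}\|\omega_{\neq}\|_{X_I}\lesssim\varepsilon_0\nu^{-1/12}\|\omega_{\neq}\|_{X_I}$, which cannot be absorbed. A Ladyzhenskaya-type interpolation as you suggest does not rescue this either.

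What the paper does instead, and what you are missing, is to write the nonlinearity as a curl of the velocity nonlinearity, $u\cdot\nabla\omega=-\partial_y(u\cdot\nabla u^x)+\partial_x(u\cdot\nabla u^y)$, so that $g_1,g_2\sim u\cdot\nabla u$ involve one fewer derivative. Combined with the splitting $u\cdot\nabla u=P_{\neq}u\cdot\nabla u+P_0u^x\,\partial_xu$ and the bounds $\nu^{1/6}\|\partial_x\nabla\Delta^{-1}f\|_{L^2L^2}+\nu^{1/6}\|P_{\neq}\nabla\Delta^{-1}f\|_{L^2L^\infty}\le C\|f\|_{X_I}$ from Lemma~\ref{lem:norm-X} (note every product contains at least one nonzero-mode factor, to which the exponential weight attaches), this gives $\nu^{-1/2}\|\mathrm{e}^{\epsilon\nu^{1/2}t}u\cdot\nabla u\|_{L^2L^2}\le C\nu^{-2/3}\|\omega\|_{X_I}\|\mathrm{e}^{\epsilon\nu^{1/2}t}\omega_{\neq}\|_{X_I}$. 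Note also that the correct power is $\nu^{-2/3}$, not the $\nu^{-1/3}$ in your displayed inequality: with $\|\omega\|_{X_I}\le\varepsilon_0\nu^{2/3}$ the quadratic term is $\varepsilon_0\|\mathrm{e}^{\epsilon\nu^{1/2}t}\omega_{\neq}\|_{X_I}$ with no leftover positive power of $\nu$, so the bootstrap is borderline and closes only through the smallness of $\varepsilon_0$ (i.e.\ of $\epsilon_2$), not "comfortably" as you claim. Your remaining remarks (handling $\mathrm{e}^{-\nu t}V(t,y)$ by subdividing time, absorbing the weight's commutator into the $\nu^{1/2}$ spectral gap) are consistent with how Proposition~\ref{prop:LNS-sp2} is proved, except that the subdivision is into $O(\nu^{-1/2})$ intervals of length $\sim\nu^{-1/2}$ — needed for the enhanced-dissipation iteration — rather than $O(1)$ many.
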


The proof is based on the inviscid damping and enhanced dissipation estimates for the following linearized Navier-Stokes system
\begin{align}\label{eq2}
  \partial_tf-\nu \Delta f+\mathrm{e}^{-\nu t}(V\partial_xf-V''\partial_x\Delta^{-1}f)=\partial_y g_1+\partial_xg_2.
\end{align} 
 \begin{Proposition}\label{prop:LNS-sp2}
There exists $\epsilon$, $\epsilon_0>0$ such that if $V=V(t,y)$, $\|V-\cos y\|_{H^4}\leq \epsilon_0\nu^{\f13}$, 
$\|\partial_tV\|_{H^2}\leq \epsilon_0\nu^{\f43}$ for $t\in[0,1/\nu]$, $0<\nu \leq 1$, and $f$ satisfies \eqref{eq2}, then for $I=[t_1,T]\subseteq [0,1/\nu]$, 
$\tilde{\epsilon}\in [0,\epsilon]$,  it holds that 
\begin{align*}
&\|f\|_{X_I}+\|\mathrm{e}^{\tilde{\epsilon}\nu^{\f12}t}P_{\neq}f\|_{X_I}\leq C\big(\|f(T_1)\|_{L^2}+\nu^{-\f12}\|\mathrm{e}^{\tilde{\epsilon}\nu^{\f12}t}(g_1,g_2)\|_{L^2(I;L^2)}\big).
\end{align*}
\end{Proposition}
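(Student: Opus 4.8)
\textbf{Plan of proof for Proposition \ref{prop:LNS-sp2}.}
The strategy is to reduce the statement to the resolvent estimates for the time-frozen linearized Navier--Stokes operator and then to remove the two perturbative features of \eqref{eq2} that distinguish it from the model equation \eqref{eq:partf-g-1}, namely the slowly decaying factor $\mathrm{e}^{-\nu t}$ in front of the shear and the (very mild) time dependence of $V=V(t,y)$. Concretely, first I would perform the time renormalization $s=t_*=(1-\mathrm{e}^{-\nu t})/\nu$, which turns $\mathrm{e}^{-\nu t}V\,\partial_x$ into $V\,\partial_x$ at the cost of replacing $\nu\Delta$ by $\nu\mathrm{e}^{\nu t}\Delta$ and $\partial_t$ by $\mathrm{e}^{-\nu t}\partial_s$; since on $[0,1/\nu]$ we have $\mathrm{e}^{\nu t}\in[1,\mathrm e]$, the dissipation coefficient stays comparable to $\nu$ and one pays only universal constants. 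The time dependence of $V$ is handled by the bound $\|\partial_tV\|_{H^2}\le\epsilon_0\nu^{4/3}$: freezing $V$ at a reference time on each subinterval introduces an error term that is bounded by $\epsilon_0\nu^{4/3}$ times (lower order norms of) $f$, which over an interval of length $\lesssim\nu^{-1}$ contributes $\epsilon_0\nu^{1/3}\|f\|_{X_I}$, absorbable for $\epsilon_0$ small. Thus the heart of the matter is, as for Proposition \ref{prop:LNS-sp1}, the $\tau$-uniform resolvent estimate of Proposition \ref{prop:res-LNS} for the operator $i\tau-\nu\Delta+V\partial_x(1+\Delta^{-1})$ with $\|V-\cos y\|_{H^4}\le\epsilon_0\nu^{1/3}$; here, crucially, the extra nonlocal term $-V''\partial_x\Delta^{-1}$ present in \eqref{eq2} but not in \eqref{eq:partf-g-1} is itself perturbative, since $\|V''+V\|_{C^1}\lesssim\|V-\cos y\|_{H^4}\lesssim\epsilon_0\nu^{1/3}$, so $-V''\partial_x\Delta^{-1}f$ differs from the benign term $V\,\partial_x\Delta^{-1}f$ (already built into $V\partial_x(1+\Delta^{-1})$) by something controlled by $\epsilon_0\nu^{1/3}\||D_x|\nabla\Delta^{-1}f\|$, which is dominated by the $\||D_x|^{1/2}\nabla\Delta^{-1}f\|_{L^2(I;L^2)}$ component of $\|f\|_{X_I}$ after using $|k|\le\nu^{-1/3}$ on the range of frequencies that matters or, more robustly, by interpolation against the $\nu^{1/2}\|\nabla f\|_{L^2(I;L^2)}$ term.

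For the plain norm $\|f\|_{X_I}$ the argument is then word for word that of Proposition \ref{prop:LNS-sp1}: Fourier transform in (renormalized) time, apply Plancherel, invoke the resolvent bounds of Proposition \ref{prop:res-LNS} to control the last two components of $X_I$ by $\nu^{-1/2}\|(g_1,g_2)\|_{L^2(I;L^2)}$, combine with a standard energy estimate for the first two components (testing \eqref{eq2} against $f$ to get the $L^\infty_tL^2$ and $\nu^{1/2}\|\nabla f\|_{L^2_tL^2}$ pieces, where the nonlocal terms $\mathrm{e}^{-\nu t}(V\partial_x-V''\partial_x\Delta^{-1})f$ are skew up to the $O(\epsilon_0\nu^{1/3})$ commutator errors above), and close by absorption for $\epsilon_0$ small. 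The initial datum is carried by the inhomogeneous Duhamel term in the usual way, giving the $\|f(T_1)\|_{L^2}$ contribution. So far this produces the first half of the claimed inequality.

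The genuinely new ingredient is the weighted estimate $\|\mathrm{e}^{\tilde\epsilon\nu^{1/2}t}P_{\neq}f\|_{X_I}\lesssim\|f(T_1)\|_{L^2}+\nu^{-1/2}\|\mathrm{e}^{\tilde\epsilon\nu^{1/2}t}(g_1,g_2)\|_{L^2(I;L^2)}$, which encodes enhanced dissipation at rate $\nu^{1/2}$ on nonzero $x$-modes. The plan is to set $F:=\mathrm{e}^{\tilde\epsilon\nu^{1/2}t}P_{\neq}f$ and note that $F$ solves \eqref{eq2} with the shear operator unchanged (it commutes with $P_{\neq}$ and with multiplication by $\mathrm{e}^{\tilde\epsilon\nu^{1/2}t}$) but with an additional zeroth-order term $+\tilde\epsilon\nu^{1/2}F$ on the left and forcing $\mathrm{e}^{\tilde\epsilon\nu^{1/2}t}P_{\neq}(\partial_yg_1+\partial_xg_2)$ on the right. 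After the same time renormalization, taking the Fourier transform in renormalized time replaces $i\tau$ by $i\tau+\tilde\epsilon\nu^{1/2}$ (up to the harmless $\mathrm{e}^{\nu t}$ factor), i.e. it amounts to a resolvent estimate at spectral parameter shifted into the resolvent set by a distance $\sim\nu^{1/2}$; this is exactly the regime where the enhanced-dissipation resolvent bound for non-monotone shear flows with non-degenerate critical points applies, and it is already contained in Proposition \ref{prop:res-LNS}. One then repeats the Plancherel-plus-energy scheme: the energy estimate for $F$ now has a favorable coercive term $\tilde\epsilon\nu^{1/2}\|F\|_{L^2}^2$ from the extra zeroth-order term, and the resolvent bound at the shifted parameter controls the last two components of $\|F\|_{X_I}$; because $\tilde\epsilon\le\epsilon$ is small the exponential weight does not interact badly with the $O(\epsilon_0\nu^{1/3})$ perturbative errors (they remain smaller than the main terms) nor with the renormalization constants. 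I expect the main obstacle to be precisely this point: verifying that the resolvent estimates of Proposition \ref{prop:res-LNS} are uniform in the spectral shift $\tilde\epsilon\nu^{1/2}$ over the whole range $\tilde\epsilon\in[0,\epsilon]$ and for the perturbed, time-dependent $V$, so that the enhanced-dissipation weight can be inserted without losing the vorticity-depletion gain encoded in the $\||V'|^{1/2}\partial_x\nabla\Delta^{-1}f\|$ component of $X_I$ — all other steps (time renormalization, freezing $V$, treating $-V''\partial_x\Delta^{-1}$ and the commutators perturbatively, the energy estimate, and absorption) are routine once that uniform resolvent input is in hand.
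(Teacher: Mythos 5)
Your treatment of the unweighted estimate is essentially the paper's: freeze $V$ at the left endpoint of each time block using $\|\partial_tV\|_{H^2}\le\epsilon_0\nu^{4/3}$, absorb $-\widetilde V\partial_xf+V_1\partial_x\Delta^{-1}f$ as a $g_4$-type error of size $O(\epsilon_0)\|f\|_{X_I}$, and invoke Proposition \ref{prop:LNS-sp1} (which already contains the time renormalization and the resolvent input). That part is fine. The gap is in the weighted estimate. Setting $F=\mathrm{e}^{\tilde\epsilon\nu^{1/2}t}P_{\neq}f$ produces the equation $\partial_tF-\nu\Delta F+\mathrm{e}^{-\nu t}(V\partial_xF-V''\partial_x\Delta^{-1}F)=\tilde\epsilon\nu^{1/2}F+\mathrm{e}^{\tilde\epsilon\nu^{1/2}t}(\partial_yg_1+\partial_xg_2)$: the extra zeroth-order term sits on the \emph{right} with a \emph{destabilizing} sign, not as a "favorable coercive term" in the energy identity. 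To absorb it you need a genuine $\nu^{1/2}$ gain, and your proposed source for that gain — a resolvent bound at the spectral parameter shifted by $\tilde\epsilon\nu^{1/2}$ into the half-plane $\mathrm{Re}\,z>0$ — is \emph{not} contained in Proposition \ref{prop:res-LNS}, which is stated and proved only for purely imaginary spectral parameter $\mathrm{i}\lambda$, $\lambda\in\R$. A bound on $(\mathcal L-z)^{-1}$ uniformly for $\mathrm{Re}\,z\sim\nu^{1/2}$ is precisely a pseudospectral statement for the non-monotone shear, and the paper never proves it; so as written your argument for the exponential weight does not close.

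The paper takes a different, softer route that you should compare with. From the unweighted spacetime estimate one gets $\nu^{1/4}\|f\|_{L^2(I;L^2)}\le C\|f\|_{X_I}\le C\|f(t_1)\|_{L^2}$ for the homogeneous problem with $P_0f=0$ (Lemma \ref{lem:norm-X}), hence the quantitative decay $\|f(t_2)\|_{L^2}\le C_2\nu^{-1/4}(t_2-t_1)^{-1/2}\|f(t_1)\|_{L^2}$ (Lemma \ref{lem4}), using that $\|f(t)\|_\star$ is monotone. Choosing blocks of length $t_0=9C_2^2\nu^{-1/2}$ this gives decay by a fixed factor $1/3$ per block, improved to $1/2$ after adding back the forcing (Lemma \ref{lem3}); iterating over $N\sim\nu^{1/2}T$ blocks and summing the resulting geometric series $a_{j+1}\le a_j/2+C\nu^{-1/2}b_j$ against the weight $a^j=\mathrm{e}^{\tilde\epsilon\nu^{1/2}jt_0}$ — with $\epsilon=1/(18C_2^2)$ chosen exactly so that $a\le\mathrm{e}^{1/2}<2$ — yields the enhanced-dissipation factor with no shifted resolvent estimate at all. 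In short: the exponential weight is extracted \emph{a posteriori} from the unweighted estimate by a discrete iteration, not built into the resolvent. If you want to salvage your route you would have to prove the pseudospectral bound in the strip $0\le\mathrm{Re}\,z\lesssim\nu^{1/2}$ separately, which is a substantially harder piece of analysis than anything your plan budgets for.
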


Beyond the viscous timescale, namely $t\geq 1/\nu$, we have the following energy  bound.

\begin{Proposition}\label{prop2}
There exists $\epsilon_3,\nu_0\in(0,1/4)$ such that for all $0<\nu \leq \nu_0$, if $\|\omega(T_1)\|_{L^2}\leq \epsilon_3\nu^{\f23}$, then it holds that for $t\in [1/\nu,+\infty)$, 
\begin{align*}
   & \|\om_{\neq}(t)\|_{L^2}\leq C\mathrm{e}^{-\nu t}\|\omega_{\neq}|_{t=1/\nu}\|_{L^2},\quad 
   \|\om(t)\|_{L^2}\leq C\mathrm{e}^{-\nu t/2}\|\omega|_{t=1/\nu}\|_{L^2}.
   \end{align*}  
\end{Proposition}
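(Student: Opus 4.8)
\textbf{Proof proposal for Proposition \ref{prop2}.}
The plan is to work entirely in the long-time regime $t \geq 1/\nu$, where the non-shear part of the solution is already exceedingly small (of size at most $\epsilon_3\nu^{2/3}$ at $t=T_2=1/\nu$, but in fact much smaller than $\mathrm{e}^{-c\nu^{-1/2}}$ thanks to the enhanced dissipation from Proposition \ref{prop:omega-e-large} propagated to $t=1/\nu$), so that the dynamics decouples: the zero mode $P_0\omega$ is essentially governed by the heat semigroup, while the nonzero modes $\omega_{\neq}$ satisfy a forced drift-diffusion equation with small coefficients and can be absorbed by a crude viscous estimate. First I would derive the evolution equation for $\omega_{\neq}$ by projecting \eqref{eq:>T1} off the zero mode; the linear operator is $-\nu\Delta$ plus the (now very weak) transport and nonlocal terms $\mathrm{e}^{-\nu t}(V\partial_x - V''\partial_x\Delta^{-1})$, plus the nonlinear interaction $P_{\neq}(u\cdot\nabla\omega)$. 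On $\mathbb{T}_{\mathfrak p}\times\mathbb{T}_{2\pi}$ the nonzero modes satisfy a Poincar\'e inequality $\|\nabla\omega_{\neq}\|_{L^2}^2 \geq (\alpha^2+1)\|\omega_{\neq}\|_{L^2}^2$ with $\alpha=2\pi/\mathfrak{p}>1$, so the bare dissipation already gives a spectral gap strictly larger than $\nu$. The key point is that $\alpha^2 > 1$ (since $\kappa<1$), which gives the needed margin to absorb the transport/nonlocal perturbation and the nonlinearity.

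The main steps, in order: (i) Set up a bootstrap on $[1/\nu, T]$ assuming $\|\omega_{\neq}(t)\|_{L^2} \leq C\mathrm{e}^{-\nu t}\|\omega_{\neq}(1/\nu)\|_{L^2}$ and $\|\omega(t)\|_{L^2}\leq C\mathrm{e}^{-\nu t/2}\|\omega(1/\nu)\|_{L^2}$ for $t\in[1/\nu,T]$, with constants to be closed; the hypothesis $\|\omega(T_1)\|_{L^2}\leq \epsilon_3\nu^{2/3}$ combined with Proposition \ref{prop:omega-e-large} controls the data at $t=1/\nu$, in particular making $\|\omega_{\neq}(1/\nu)\|_{L^2}$ superexponentially small. (ii) For the zero mode, note $P_0(u\cdot\nabla\omega) = P_0(u^y_{\neq}\partial_y\omega_{\neq}) = \partial_y P_0(u^y_{\neq}\omega_{\neq})$ (using $\nabla\cdot u=0$ and $P_0u^y=0$), so the $L^2$ energy estimate for $P_0\omega$ reads $\frac12\frac{d}{dt}\|P_0\omega\|_{L^2}^2 = -\nu\|\partial_y P_0\omega\|_{L^2}^2 - \int \partial_y(P_0\omega)\,P_0(u^y_{\neq}\omega_{\neq})$, and the quadratic term is bounded by $\nu\|\partial_y P_0\omega\|_{L^2}^2$-absorbable pieces plus a forcing of size $\|\omega_{\neq}\|_{L^2}^2 \lesssim \mathrm{e}^{-2\nu t}(\text{tiny})$, which after time integration contributes only a negligible constant; this yields $\|\omega(t)\|_{L^2}\leq C\mathrm{e}^{-\nu t/2}\|\omega(1/\nu)\|_{L^2}$ (the rate $\nu/2$ rather than $\nu$ being the safe choice for the zero mode since its slowest decaying Fourier mode in $y$ is the $\cos y$ mode decaying like $\mathrm{e}^{-\nu t}$, but the constant and the harmless lower-order terms force us to state $\nu/2$). (iii) For the nonzero modes, the energy estimate gives $\frac12\frac{d}{dt}\|\omega_{\neq}\|_{L^2}^2 \leq -\nu\|\nabla\omega_{\neq}\|_{L^2}^2 + |\langle \mathrm{e}^{-\nu t}V''\partial_x\Delta^{-1}\omega_{\neq},\omega_{\neq}\rangle| + |\langle u\cdot\nabla\omega_{\neq},\omega_{\neq}\rangle|$; the transport term $\langle \mathrm{e}^{-\nu t}V\partial_x\omega_{\neq},\omega_{\neq}\rangle$ vanishes by antisymmetry, the nonlocal term is bounded by $\mathrm{e}^{-\nu t}\|V''\|_{L^\infty}\|\Delta^{-1}\partial_x\omega_{\neq}\|_{L^2}\|\omega_{\neq}\|_{L^2} \lesssim \mathrm{e}^{-\nu t}\|\omega_{\neq}\|_{L^2}^2$ which is $o(\nu)\|\omega_{\neq}\|_{L^2}^2$ for $t\geq 1/\nu$ (so that $\mathrm{e}^{-\nu t}\leq \mathrm{e}^{-1}$, and more carefully $\mathrm{e}^{-\nu t}\to 0$; actually one needs $\mathrm{e}^{-1}\|V''\|_{L^\infty}$ small compared with the Poincar\'e gap $\nu(\alpha^2+1)-\nu$, i.e. compared with $\nu\alpha^2$—here one uses $\|V''\|_{L^\infty}\leq \|b''\|_{L^\infty}+\epsilon_1\nu^{1/3}\lesssim 1$ and the fact that $\alpha^2-1$ is an order-one positive number, hence for $\nu$ small the nonlocal term is dominated; if $\alpha^2-1$ is not assumed bounded below this step needs the sharper stationary-phase/vorticity-depletion bound but at $t\geq 1/\nu$ the crude bound suffices because actually $\mathrm{e}^{-\nu t}$ at $t=1/\nu$ equals $\mathrm{e}^{-1}$, not small—so I would instead absorb it using $\|\Delta^{-1}\partial_x\omega_{\neq}\|_{L^2}\leq \alpha^{-1}(\alpha^2+1)^{-1/2}\|\nabla\omega_{\neq}\|_{L^2}$ and Young, leaving a residual $-\nu(1-\mathrm{e}^{-1}\|V''\|_{L^\infty}\alpha^{-1}(\alpha^2+1)^{-1/2}\nu^{-1}\cdots)$—this is the delicate bookkeeping point), and the nonlinear term is bounded by $\|u_{\neq}\|_{L^\infty}\|\nabla\omega_{\neq}\|_{L^2}\|\omega_{\neq}\|_{L^2}$ or split into $u^y P_0\partial_y\omega + \ldots$; using Gagliardo--Nirenberg and the smallness of $\|\omega\|_{L^2}\lesssim \nu^{2/3}$ one closes everything below the Poincar\'e gap. (iv) Integrate the resulting differential inequality $\frac{d}{dt}\|\omega_{\neq}\|_{L^2}^2 \leq -2\nu\|\omega_{\neq}\|_{L^2}^2$ (after all absorptions) to get $\|\omega_{\neq}(t)\|_{L^2}\leq \mathrm{e}^{-\nu(t-1/\nu)}\|\omega_{\neq}(1/\nu)\|_{L^2}$, giving the stated bound with $C$ absorbing $\mathrm{e}$. (v) Close the bootstrap by choosing $\epsilon_3$ and $\nu_0$ small enough that the nonlinear and nonlocal contributions are strictly dominated by the spectral gap, and extend to $T=\infty$.

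The main obstacle I anticipate is not the nonlinearity (which is genuinely tiny here, of size $\nu^{4/3}$ or smaller) but the \emph{bookkeeping of the spectral gap for the nonzero modes against the nonlocal term} $\mathrm{e}^{-\nu t}V''\partial_x\Delta^{-1}\omega_{\neq}$: at the entry time $t=1/\nu$ the factor $\mathrm{e}^{-\nu t}=\mathrm{e}^{-1}$ is an order-one constant, so one cannot simply say it is "small," and one must verify that $\mathrm{e}^{-1}\|V''\|_{L^\infty}$ times the best constant in $\|\Delta^{-1}\partial_x\omega_{\neq}\|_{L^2}\leq c\|\omega_{\neq}\|_{L^2}$ is strictly less than the gap, using crucially that the domain is a non-square torus with $\alpha=2\pi/\mathfrak{p}>1$ so that $\|\partial_x\Delta^{-1}\omega_{\neq}\|_{L^2}\leq (1+\alpha^{-2})^{-1}\alpha^{-1}\|\omega_{\neq}\|_{L^2}$ is already smaller than $\|\omega_{\neq}\|_{L^2}$, and that $V''$ is close to $b''=-\cos y$ in $L^\infty$. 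Once this constant inequality is checked, all remaining terms decay like $\mathrm{e}^{-\nu t}$ or faster and integrate to a bounded multiplicative constant, and the zero-mode estimate is routine heat-equation analysis with a negligible quadratic forcing; alternatively one may bypass the delicate constant by invoking Proposition \ref{prop:LNS-sp2} with $\tilde\epsilon=0$ on $[1/\nu,\infty)$, feeding in $g_1=g_2=0$ and treating $P_{\neq}(u\cdot\nabla\omega)$ as a small source, although this only directly gives the $\mathrm{e}^{-\nu t}$ bound if one first establishes that the true decay rate of \eqref{eq2} beyond $t=1/\nu$ is at least $\nu$, which again reduces to the same spectral-gap computation.
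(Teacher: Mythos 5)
There is a genuine gap, and it sits exactly at the point you flagged as "delicate bookkeeping": your claim that for $t\geq 1/\nu$ the nonzero modes can be controlled by the bare viscous Poincar\'e gap plus absorption of the nonlocal term is false on the intermediate range $1/\nu\leq t\lesssim \frac{1}{\nu}\ln(1/\nu)$. In a plain $L^2$ energy estimate the term $\mathrm{e}^{-\nu t}V''\partial_x\Delta^{-1}\omega_{\neq}$ carries a coefficient of size $\mathrm{e}^{-\nu t}\|V''\|_{L^\infty}\sim \mathrm{e}^{-1}$ at $t=1/\nu$, i.e.\ an order-one quantity, while the dissipation only supplies $\nu\alpha^2\|\omega_{\neq}\|_{L^2}^2$; no choice of $\alpha>1$ or of the constant in $\|\partial_x\Delta^{-1}\omega_{\neq}\|_{L^2}\leq \alpha^{-1}\|\omega_{\neq}\|_{L^2}$ lets an $O(1)$ bilinear term be absorbed by an $O(\nu)$ gap. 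Even if you symmetrize as the paper does, writing $V\partial_x f-V''\partial_x\Delta^{-1}f=V\partial_x(f+\Delta^{-1}f)-(V+V'')\partial_x\Delta^{-1}f$ so that the first piece is skew in the $\langle\cdot,\cdot\rangle_\star$ inner product, the residual coefficient is $\mathrm{e}^{-\nu t}\|V+V''\|_{L^\infty}\sim \nu^{1/3}$, still far larger than $\nu$; Gr\"onwall over the window $[1/\nu,\frac{2}{3\nu}\ln(1/\nu)]$ would then produce a factor like $\exp(C\nu^{-2/3}\ln(1/\nu))$, destroying the claimed $\mathrm{e}^{-\nu t}$ bound. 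Your fallback of invoking Proposition \ref{prop:LNS-sp2} directly on $[1/\nu,\infty)$ is also unavailable as stated, since that proposition is proved only for intervals contained in $[0,1/\nu]$.

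The paper's proof of Proposition \ref{prop2} instead rests on Proposition \ref{Prop1}, whose mechanism in the problematic window is enhanced dissipation, not a spectral gap: one rescales time so that on each block $[t_1,t_1+1/\nu]$ the equation has effective viscosity $\nu_*=\nu \mathrm{e}^{\nu t_1}\leq\nu^{1/3}$, applies the resolvent-based space-time estimate (Proposition \ref{prop:LNS-sp2}, with the enhanced-dissipation factor $\mathrm{e}^{-\epsilon\nu_*^{-1/2}}\leq \mathrm{e}^{-\epsilon\nu^{-1/6}}$), and concludes that the nonzero modes contract by a fixed factor ($1/4$) per block (Lemma \ref{lem3a}); iterating over blocks yields the $\mathrm{e}^{\nu t}$-weighted bound on $[1/\nu,T_3]$ with $T_3=\frac{2}{3\nu}\ln(1/\nu)$. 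Only for $t\geq T_3$, where $\mathrm{e}^{-\nu t}\|V+V''\|_{L^\infty}\lesssim \nu^{2/3}\cdot\nu^{1/3}=\nu$ is genuinely comparable to the gap, does the paper run the direct weighted energy estimate in the $\star$-inner product using $\alpha>1$ (second part of Lemma \ref{lem11}) — this is the only regime where your proposed argument is sound. Your zero-mode heat estimate and the final bootstrap against the nonlinearity (which is indeed tiny, handled in the paper via the $Y_I$ norm and $\nu^{-2/3}$ smallness) are broadly consistent with the paper, but the linear backbone for $\omega_{\neq}$ on $[1/\nu, \frac{2}{3\nu}\ln(1/\nu)]$ is missing, and it is precisely the content of Proposition \ref{Prop1} / Lemma \ref{lem3a} that your proposal would need to reproduce.
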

\def\YI{Y_I}
The proof relies on the following space-time estimate for the linearized Navier-Stokes system \eqref{eq2}.  

\begin{Proposition}\label{Prop1}
Let $f$ solve \eqref{eq2} with $\int_{\mathbb{T}_{\mathfrak{p}}\times\mathbb{T}_{2\pi}}f=0$.There exists $\epsilon_0,\nu_0\in(0,1/4)$ such that if  $0<\nu \leq \nu_0$, $V=V(t,y)$, $\|V-\cos y\|_{H^4}\leq \epsilon_0\nu^{\f13}$, 
$\|\partial_tV\|_{H^2}\leq \epsilon_0\nu^{\f43}$ for $t\in[1/\nu,+\infty)$, then it holds that for $I=[1/\nu,T]$, 
\begin{align*}
&\|P_{\neq}f\|_{\YI}\leq 
C\big(\|P_{\neq}f|_{t=1/\nu}\|_{L^2}+\nu^{-\f12}\|\mathrm{e}^{\nu t}(g_1,g_2)\|_{L^2(I;L^2)}\big),\\
&\|\mathrm{e}^{\nu t/2}f\|_{L^{\infty}(I;L^2)}\leq C\big(\|f|_{t=1/\nu}\|_{L^2}+\nu^{-\f12}\|\mathrm{e}^{\nu t}(g_1,g_2)\|_{L^2(I;L^2)}\big).
\end{align*}
Here we define 
\begin{align*}
&\|f\|_{\YI}=\|\mathrm{e}^{\nu t}f\|_{L^{\infty}(I;L^2)}+\nu^{\f16}\|\mathrm{e}^{\nu t/2}\partial_x \nabla\Delta^{-1}f\|_{L^2(I;L^2)}
+\nu^{\f16}\|\mathrm{e}^{\nu t/2}\nabla\Delta^{-1}f\|_{L^2(I;L^\infty)}.
\end{align*}

\end{Proposition}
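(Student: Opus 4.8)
\textbf{Proof strategy for Proposition \ref{Prop1}.} The plan is to establish the two space-time bounds by Fourier analysis in $x$, reducing to a family of one-dimensional problems indexed by $k$, and then treating the nonlocal and time-dependent terms perturbatively. First, for the zero mode $k=0$ one simply has the heat equation $\partial_tf_0-\nu\partial_y^2f_0=\partial_yg_{1,0}+\partial_xg_{2,0}$ (with $\partial_xg_{2,0}=0$), and the weighted bound $\|\mathrm{e}^{\nu t/2}f_0\|_{L^\infty(I;L^2)}$ follows from a direct energy estimate after noting that on $\mathbb{T}_{2\pi}$ the Poincar\'e inequality gives $\nu\|\partial_yf_0\|_{L^2}^2\geq \nu\|f_0\|_{L^2}^2$ once we have subtracted the $y$-average (which vanishes here since $\int f=0$ and the equation preserves $\int f_0\,\mathrm{d}y$). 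For the nonzero modes, one works with $f_k$ solving $\partial_tf_k-\nu\Delta_kf_k+\mathrm{i}k\mathrm{e}^{-\nu t}(V+V''\,\Delta_k^{-1}\cdot\text{sign adjustments})f_k = \partial_yg_{1,k}+\mathrm{i}kg_{2,k}$, and the key is a weighted energy identity for the renormalized quantity $\mathrm{e}^{\nu t}f_k$ combined with an enhanced-dissipation mechanism that now only needs to beat the slow rate $\mathrm{e}^{-\nu t}$ rather than $\mathrm{e}^{-\epsilon\nu^{1/2}t}$.

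\textbf{Reduction via time renormalization.} I would set $F_k=\mathrm{e}^{\nu t}f_k$, so that $F_k$ satisfies $\partial_tF_k-\nu\Delta_kF_k-\nu F_k+\mathrm{i}k\mathrm{e}^{-\nu t}V(1+\Delta_k^{-1})F_k=\mathrm{e}^{\nu t}(\partial_yg_{1,k}+\mathrm{i}kg_{2,k})$; the extra $-\nu F_k$ is harmless since for $t\geq 1/\nu$ it is dominated by the dissipation $\nu\|\partial_yF_k\|^2\geq \nu\|F_k\|^2$ on $\mathbb{T}_{2\pi}$ for $k\neq 0$ after accounting for the nonlocal term being a relatively compact perturbation. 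Taking the $L^2$ inner product with $F_k$, integrating in time over $I$, and using that $\|V-\cos y\|_{H^4}\leq\epsilon_0\nu^{1/3}$ makes $\mathrm{i}k\mathrm{e}^{-\nu t}(V-\cos y)(1+\Delta_k^{-1})$ a small antisymmetric-plus-small perturbation, gives the $L^\infty(I;L^2)$ control of $\mathrm{e}^{\nu t}f_k$ with the stated right-hand side, provided one can absorb $\mathrm{i}k\mathrm{e}^{-\nu t}\cos y\,\Delta_k^{-1}F_k$. The latter is precisely where the resolvent/space-time machinery of Proposition \ref{prop:LNS-sp2} (or rather its proof) enters: one invokes the inviscid-damping bound $\|\partial_x\nabla\Delta^{-1}f\|_{L^2(I;L^2)}$-type smallness to show the nonlocal term contributes only a lower-order amount. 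The $\nu^{1/6}$-weighted terms in $\|f\|_{\YI}$ come from interpolating the $L^\infty(I;L^2)$ bound on $\mathrm{e}^{\nu t}f$ against the dissipation $\nu\|\mathrm{e}^{\nu t/2}\nabla f\|_{L^2(I;L^2)}^2$ and using $\nabla\Delta^{-1}$ as a smoothing operator of order $1$ combined with the elliptic estimate $\|\nabla\Delta^{-1}f\|_{L^\infty}\lesssim \|\nabla\Delta^{-1}f\|_{H^{1+}}\lesssim \|f\|_{L^2}^{1/2}\|\nabla f\|_{L^2}^{1/2}$ on the nonzero modes; optimizing the powers of $\nu$ yields exactly the exponent $1/6$.

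\textbf{Handling the time-dependent coefficient.} Since $V=V(t,y)$ with $\|\partial_tV\|_{H^2}\leq\epsilon_0\nu^{4/3}$, the coefficient is essentially frozen on the relevant timescale: over an interval of length $O(\nu^{-1})$ the total variation of $\mathrm{e}^{-\nu t}V$ is $O(\epsilon_0\nu^{1/3})$, comparable to $\|V-\cos y\|_{H^4}$ itself. I would therefore either freeze $V$ at a reference time and treat $\partial_tV$ as a forcing-type error (contributing $\|\mathrm{e}^{\nu t}\partial_tV\cdot(\text{stuff})\|_{L^1(I;L^2)}$, which is summable against the $\epsilon_0\nu^{4/3}$ smallness), or carry $V(t,y)$ directly through the energy estimate noting that $\partial_t$ of the coefficient produces a term bounded by $\epsilon_0\nu^{1/3}$ times the main energy, hence absorbable. \textbf{The main obstacle} I expect is controlling the nonlocal term $\mathrm{i}k\mathrm{e}^{-\nu t}\cos y\,\Delta_k^{-1}f_k$ uniformly down to the borderline timescale $t\sim 1/\nu$ where the enhanced dissipation gain $\mathrm{e}^{-\epsilon\nu^{1/2}t}$ is no longer available and one is left only with the $\mathrm{e}^{-\nu t}$ decay: here one genuinely needs the vorticity-depletion-enhanced resolvent bounds (the $\||V'|^{1/2}\partial_x\nabla\Delta^{-1}f\|$ term in $X_I$) rather than a naive energy estimate, because $\cos y$ has critical points where $|V'|$ degenerates and the naive bound loses a power of $|k|$; threading the depletion estimate from Proposition \ref{prop:LNS-sp1}/\ref{prop:LNS-sp2} through while only paying the affordable $\nu^{1/6}$ loss is the delicate point, and it is exactly why the $\YI$ norm is calibrated with the weaker $\nu^{1/6}$ weight instead of the $\nu^{1/2}$ weight appearing in $X_I$.
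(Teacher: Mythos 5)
There is a genuine gap in the core of your argument, concentrated exactly in the regime $1/\nu\le t\le \frac{2}{3\nu}\ln(1/\nu)$. Your plan is to prove the weighted bound by a single energy estimate for $F_k=\mathrm{e}^{\nu t}f_k$, absorbing the nonlocal term either as a ``relatively compact perturbation'' or by invoking the space-time/resolvent machinery of Proposition \ref{prop:LNS-sp2}. Neither absorption works as stated. If you use the antisymmetric structure correctly (i.e.\ write the linear term as $\mathrm{e}^{-\nu t}V\partial_x(f+\Delta^{-1}f)$, which is skew with respect to $\langle\cdot,\cdot\rangle_\star$), the leftover error is $g_4=\mathrm{e}^{-\nu t}(V+V'')\partial_x\Delta^{-1}f$, whose size in $L^2$ is of order $\mathrm{e}^{-\nu t}\,\epsilon_0\nu^{1/3}\|f\|_{L^2}$ because $\|V+V''\|_{L^\infty}\lesssim\|V-\cos y\|_{H^4}\le\epsilon_0\nu^{1/3}$ (the cancellation $\cos y+(\cos y)''=0$). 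The margin available from plain dissipation against the weight $\mathrm{e}^{\nu t}$ is only $\nu(\alpha^2-1)\|f\|_{L^2}^2$ with $\alpha=2\pi/\mathfrak{p}$, and for $t$ only slightly larger than $1/\nu$ one has $\mathrm{e}^{-\nu t}\epsilon_0\nu^{1/3}\gg\nu$, so the naive weighted estimate does not close. Nor can you quote Proposition \ref{prop:LNS-sp2} directly, since it is formulated (and its contraction mechanism calibrated) for $t\in[0,1/\nu]$, where the background shear $\mathrm{e}^{-\nu t}V$ is of size one; for $t$ well beyond $1/\nu$ the shear has decayed and the enhanced-dissipation rate it provides is no longer the stated one. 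The paper's way out, which is absent from your proposal, is a time-renormalization: on each subinterval $[t_1,t_1+1/\nu]$ set $\widetilde f(t,y)=f(t_1+\mathrm{e}^{\nu t_1}t,y)$, which solves the same equation with effective viscosity $\nu_*=\nu\mathrm{e}^{\nu t_1}$; as long as $t_1\le T_3:=\frac{2}{3\nu}\ln(1/\nu)$ one has $\nu_*\le\nu^{1/3}\le1$, so Proposition \ref{prop:LNS-sp2} applies with $\nu$ replaced by $\nu_*$ and yields, per interval of length $1/\nu$, the contraction $\|f(t_2)\|_{L^2}\le\tfrac14\|f(t_1)\|_{L^2}+C\nu^{-1/2}\|(g_1,g_2)\|_{L^2L^2}$ (Lemma \ref{lem3a}); iterating with the weight $\mathrm{e}^{\nu t}\sim\mathrm{e}^{j}$ works precisely because $\mathrm{e}/4<1$ (Lemma \ref{lem11}). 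This rescaling is also where the $\nu^{1/6}$ and the split weights $\mathrm{e}^{\nu t}$ versus $\mathrm{e}^{\nu t/2}$ in $Y_I$ come from, via Lemma \ref{lem:norm-X} applied at viscosity $\nu_*$ and the identity $\nu^{1/6}\mathrm{e}^{-\nu t_1/3}\mathrm{e}^{\nu t_1/2}=\nu_*^{1/6}$ --- not from the interpolation heuristic you describe.

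Your diagnosis of the large-time regime is also inverted relative to what actually happens. You claim that for $t\gtrsim1/\nu$ one ``genuinely needs the vorticity-depletion-enhanced resolvent bounds rather than a naive energy estimate.'' In the paper the depletion/resolvent input enters only through the black-box use of Proposition \ref{prop:LNS-sp2} after rescaling on $[1/\nu,T_3]$; past $T_3$ the argument is exactly a naive weighted $\star$-energy estimate, which closes because $\mathrm{e}^{-\nu t}\le\nu^{2/3}$ there, so the error coefficient $\mathrm{e}^{-\nu t}\|V+V''\|_{L^\infty}\le C\epsilon_0\nu$ is finally dominated by the dissipation margin. Your zero-mode treatment (heat equation plus Poincar\'e, using $\int_{\T_{2\pi}}P_0f\,\mathrm{d}y=0$) does match the paper, and your observation that the time dependence of $V$ is harmless under $\|\partial_tV\|_{H^2}\le\epsilon_0\nu^{4/3}$ is in the right spirit, but without the interval-by-interval rescaling and the $T_3$ threshold with the $V+V''$ cancellation, the proposed proof cannot produce the $\mathrm{e}^{\nu t}$-weighted bound for $P_{\neq}f$.
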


    \subsection{Proof of Theorem \ref{thm:main}} 

Let us complete the proof of Theorem \ref{thm:main} by admitting the above propositions.

\begin{proof}[Proof of Theorem \ref{thm:main}]
First, we consider $t\in[0,T_1]$. As $T_1=\nu^{-\f49}\leq \nu^{-\f12}$, we have $\nu t^2\leq 1$, $\nu t^3\leq t $ for $t\in[0,T_1]$.
By \eqref{est:u1Linf} and \eqref{est:u2Linf}, we get
\begin{align*}
    \|u_L(t)\|_{L^\infty}&\leq \|u^x_{L}(t)\|_{L^\infty}+ \|u^y_{L}(t)\|_{L^\infty}\\
   &\leq C\langle t\rangle^{-1}\|\omega_0\|_{H^3}+ C\langle \nu t^3\rangle^{1/2}\langle t\rangle^{-2}\|\omega_{0}\|_{H^3}\\
   &\leq C\langle t\rangle^{-1}\|\omega_0\|_{H^3}.
\end{align*}
 By  \eqref{est:paromL-inf} and $P_0\omega_L=0$, we have 
  \begin{align*}
     &\|\omega_L(t)\|_{L^2}\leq \|\partial_x\omega_L\|_{L^2}\leq C\|\partial_x\omega_L\|_{L^\infty}\leq C\langle \nu t^3\rangle^{-1}\|\omega_{0}\|_{H^3}.
  \end{align*}
Then we infer from Proposition \ref{prop:om-e-short} and Proposition \ref{prop:om-e} that 
\begin{align*}
   & \|\omega_{\neq}(t)\|_{L^2}\leq {\|\omega(t)\|_{L^2}\leq} \|\omega_{e}(t)\|_{L^2}+ \|\omega_{L}(t)\|_{L^2}\leq 
   C(\nu^{\f13}+\langle \nu t^3\rangle^{-1})\|\omega_0\|_{H^3},
\end{align*} 
and 
\begin{align*}
   \|u_{\neq}(t)\|_{L^2}& \leq \|u_{e}(t)\|_{L^2}+ \|u_{L}(t)\|_{L^2}\leq C(\|\omega_{e}(t)\|_{L^2}+ \|u_{L}(t)\|_{L^\infty})\\
   &\leq C(\nu^{\f13}+\langle t\rangle^{-1})\|\omega_0\|_{H^3},\end{align*}
and (recall that $\omega=\Omega+\mathrm{e}^{-\nu t}V'(y) $, $b(y)=\cos y$ and using \eqref{ass:U-b})
\begin{align*}&\|\Omega(t)-\mathrm{e}^{-\nu t}\sin y\|_{L^2}=\|\omega(t)-\mathrm{e}^{-\nu t}V'(y)+\mathrm{e}^{-\nu t}b'( y)\|_{L^2}\\ &\leq \|\omega(t)\|_{L^2}+\mathrm{e}^{-\nu t}\|V'-b'\|_{L^2}\leq C(\nu^{\f13}+\langle \nu t^3\rangle^{-1})\|\omega_0\|_{H^3}+\|V-b\|_{H^4}\\
&\leq C\|\omega_0\|_{H^3}+C\|\Omega_0-\sin y\|_{H^3}\leq C\|\Omega_0-\sin y\|_{H^3}.
\end{align*}
Thus, the result follows from the facts that $\Omega_{\neq}=\omega_{\neq}, U_{\neq}=u_{\neq}, \omega_0=P_{\neq}\Omega_0$ and $\nu^{\f12}t\leq \nu^{\f12}T_1\leq1$, $\nu t\leq 1$ for $t\in[0,T_1]$. 

Next, we consider $t\in [T_1,1/\nu]$. As $T_1=\nu^{-\f49}$, we have $\nu T_1^3=\nu^{-\f13}$ and then
 \begin{align*}
   & {\|\omega(T_1)\|_{L^2}}\leq  
   C(\nu^{\f13}+\langle \nu T_1^3\rangle^{-1})\|\omega_0\|_{H^3}\leq  
   C\nu^{\f13}\|\omega_0\|_{H^3}=C\nu^{\f13}\|P_{\neq}\Omega_0\|_{H^3}\leq C\epsilon_1\nu^{\f23}.
\end{align*}
Taking $\epsilon_1 $ smaller if necessary, we have $\|\omega(T_1)\|_{L^2}\leq \epsilon_2\nu^{\f23} $. Then it follows from  Proposition \ref{prop:omega-e-large} that
\begin{align*}
   &\|U_{\neq}(t)\|_{L^2}\leq \|\Omega_{\neq}(t)\|_{L^2}=\|\omega_{\neq}(t)\|_{L^2}\leq C\mathrm{e}^{-\epsilon\nu^{\f12} t}\|\omega(T_1)\|_{L^2}\leq C\mathrm{e}^{-\epsilon\nu^{\f12} t}\nu^{\f13}\|P_{\neq}\Omega_0\|_{H^3},\\
   &{\|\omega(t)\|_{L^2}}\leq C\|\omega(T_1)\|_{L^2}{\leq C\nu^{\f13}\|P_{\neq}\Omega_0\|_{H^3}}\leq C\epsilon_1\nu^{\f23},
\end{align*}and by \eqref{ass:U-b1},
\begin{align*}&\|\Omega(t)-\mathrm{e}^{-\nu t}\sin y\|_{L^2}=\|\omega(t)-\mathrm{e}^{-\nu t}V'(t,y)+\mathrm{e}^{-\nu t}b'( y)\|_{L^2}\\ 
&\leq \|\omega(t)\|_{L^2}+\mathrm{e}^{-\nu t}\|V'-b'\|_{L^2}\leq C\nu^{\f13}\|P_{\neq}\Omega_0\|_{H^3}+\|V-b\|_{H^4}\leq C\|\Omega_0-\sin y\|_{H^3}.
\end{align*}
Thus, the result follows from the facts that $t=\min(t,\nu^{-1})$, $\nu t\leq 1$ for $t\in[T_1,1/\nu]$. 

Finally, we consider $t\geq1/\nu$. 
Taking $\epsilon_1$ smaller if necessary, we have $\|\omega|_{t=1/\nu}\|_{L^2}\leq C\epsilon_1\nu^{\f23}\leq \epsilon_3\nu^{\f23} $, 
$0< \nu\leq \epsilon_1\leq \nu_0$. Then it follows from  Proposition \ref{prop2} that
\begin{align*}
   &\|U_{\neq}(t)\|_{L^2}\leq \|\Omega_{\neq}(t)\|_{L^2}=\|\omega_{\neq}(t)\|_{L^2}\leq C\mathrm{e}^{-\nu t}\|{\omega_{\neq}}|_{t=1/\nu}\|_{L^2}\leq C\mathrm{e}^{-\epsilon\nu^{\f12} /\nu-\nu t}\nu^{\f13}\|P_{\neq}\Omega_0\|_{H^3},\\
    &\|\omega(t)\|_{L^2}\leq C\mathrm{e}^{-\nu t/2}\|{\omega}|_{t=1/\nu}\|_{L^2}\leq C\mathrm{e}^{-\nu t/2}\nu^{\f13}\|P_{\neq}\Omega_0\|_{H^3},
    \end{align*}
and then
\begin{align*}
&\|\Omega(t)-\mathrm{e}^{-\nu t}\sin y\|_{L^2}=\|\omega(t)-\mathrm{e}^{-\nu t}V'(t,y)+\mathrm{e}^{-\nu t}b'( y)\|_{L^2}\leq \|\omega(t)\|_{L^2}+\mathrm{e}^{-\nu t}\|V'-b'\|_{L^2}\\ &\leq C\mathrm{e}^{-\nu t/2}\nu^{\f13}\|P_{\neq}\Omega_0\|_{H^3}+\mathrm{e}^{-\nu t}\|V-b\|_{H^4}\leq C\mathrm{e}^{-\nu t/2}\|\Omega_0-\sin y\|_{H^3}.
\end{align*}
Thus, the result follows from the fact that $-\epsilon\nu^{\f12} /\nu=-\epsilon\nu^{\f12}\min(t,\nu^{-1})$ for $t\geq1/\nu$.

This completes the proof of Theorem \ref{thm:main}. 
\end{proof}

\section{Resolvent estimate for the linearized Euler}\label{sec:reseuler}

 Throughout this section, we assume that the real valued function V satisfies  $\|V-b\|_{H^4}\leq \epsilon_0$ with $\epsilon_0 > 0$ sufficiently small.
Thus, there exist $y_1, y_2$ with the following properties(see the proof of Lemma \ref{lem:genV}):
 \begin{align*}
&V'(y_1)=V'(y_2)=0,\ \text{where}\ |y_1|\leq 1/10,\ |y_2-\pi|\leq 1/10,\\
& M=\max_{y\in\mathbb{T}} V(y)= V(y_1),\ m=\min_{y\in\mathbb{T}} V(y)=V(y_2).
 \end{align*}
 We denote
 \begin{align*}
 \theta(k,\lambda):=1+|k|\lambda_0^{1/2}+|\lambda|,\quad
 \lambda_0:=\min\{|\lambda-m|,|\lambda-M|\}.      
  \end{align*}

The main result of this section is the following resolvent estimate for the linearized Euler equations, which plays a crucial role in both the inviscid damping estimate and the resolvent estimate for the linearized Navier-Stokes equations. 

\begin{Proposition}\label{lem:rayleigh}
  If $f$ satisfies 
  \begin{align}\label{eq:Ray-eps-0}
   & V(f+\Delta^{-1}_k f)-(\lambda+\mathrm{i}\epsilon) f=F,
\end{align}
here $\epsilon\in\R\setminus\{0\}$, $\lambda\in\mathbb{R}$.
Then we have\begin{align*}
   & \theta(k,\lambda)\|(\partial_y,k)\Delta_k^{-1}f\|_{L^2} \leq C\|(\partial_y,k)F\|_{L^2}.
\end{align*}Here the constant $C$ is independent with $k,\ \epsilon$ and $\lambda$.
\end{Proposition}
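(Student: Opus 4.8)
\textbf{Proof strategy for Proposition \ref{lem:rayleigh}.}
The plan is to treat \eqref{eq:Ray-eps-0} as a perturbation of the elementary Rayleigh-type identity obtained by dropping the nonlocal term $V\Delta_k^{-1}f$, and to bootstrap. Write $\phi=\Delta_k^{-1}f$, so that \eqref{eq:Ray-eps-0} becomes $(V-\lambda-\mathrm{i}\epsilon)f=F-V\phi$, hence
\begin{align*}
f=\frac{F-V\phi}{V-\lambda-\mathrm{i}\epsilon},\qquad \Delta_k\phi=\frac{F-V\phi}{V-\lambda-\mathrm{i}\epsilon}.
\end{align*}
The first step is the basic energy estimate: multiply the equation $\Delta_k\phi(V-\lambda-\mathrm{i}\epsilon)=F-V\phi$ by $\overline{\phi}$ (or better, test the self-adjoint form against suitable multipliers), integrate over $\T_{2\pi}$, and take real and imaginary parts. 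The real part controls $\|(\partial_y,k)\phi\|_{L^2}^2$ modulo the term $\int V|\phi|^2$ and the source; the imaginary part, carrying the factor $\epsilon$, is what lets one pass to the limit $\epsilon\to0$ cleanly (limiting absorption), and near the critical values $\lambda\approx m,M$ it is also where the enhancement by $\theta(k,\lambda)$ originates — this is the standard mechanism by which the spectral gap to $[m,M]$ produces the weight $1+|k|\lambda_0^{1/2}+|\lambda|$. I would first establish the estimate \emph{without} the $V\Delta_k^{-1}$ term, i.e.\ for $(V-\lambda-\mathrm{i}\epsilon)\Delta_k\phi=\tilde F$, obtaining $\theta(k,\lambda)\|(\partial_y,k)\phi\|_{L^2}\le C\|(\partial_y,k)\tilde F\|_{L^2}$; the three regimes $|\lambda|$ large, $|k|$ large, and $\lambda_0$ not small are each handled by a different choice of test function, while the delicate regime is $\lambda$ close to an endpoint with $k$ moderate, where one exploits $V'(y_j)=0$ and the nondegeneracy $V''(y_j)\ne0$ (a Morse-type expansion $V(y)-M\sim -c(y-y_1)^2$) to get the $\lambda_0^{1/2}$ gain.

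The second step is to absorb the nonlocal term. Apply the $\tilde F$-estimate with $\tilde F=F-V\phi$:
\begin{align*}
\theta(k,\lambda)\|(\partial_y,k)\phi\|_{L^2}\le C\|(\partial_y,k)(F-V\phi)\|_{L^2}\le C\|(\partial_y,k)F\|_{L^2}+C\|V\|_{C^1}\|(\partial_y,k)\phi\|_{L^2}.
\end{align*}
Since $\theta(k,\lambda)\ge1$ this does not close directly; the point is that $V\phi$ is a \emph{lower-order and spatially localized} perturbation relative to the weight. I would instead split $V=b+(V-b)$ and, more importantly, observe that the nonlocal term contributes a compact/relatively-bounded perturbation of the Rayleigh operator whose only possible obstruction is an embedded eigenvalue; the stability of the Kolmogorov flow for $\kappa<1$ (equivalently, the absence of such eigenvalues, which is where the hypothesis $\alpha=2\pi/\mathfrak p>1$ enters) rules this out uniformly in $\epsilon$, giving the uniform bound. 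Concretely, one runs a contradiction/compactness argument: if the claimed inequality failed there would be a sequence $(k_n,\lambda_n,\epsilon_n,F_n)$ with $\|(\partial_y,k)F_n\|\to0$ but $\theta(k_n,\lambda_n)\|(\partial_y,k)\phi_n\|=1$; after normalizing and extracting limits (treating separately the cases $k_n\to\infty$, $\lambda_n\to\infty$, and the bounded case where one lands on a genuine generalized eigenfunction of the Rayleigh equation), each limit contradicts either the endpoint analysis of step one or the spectral stability of the Kolmogorov flow on $\T_{\mathfrak p}\times\T_{2\pi}$ with $\mathfrak p<2\pi$.

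\textbf{Main obstacle.} The crux is the uniformity of the constant as $\epsilon\to0$ and simultaneously as $\lambda\to m$ or $\lambda\to M$ with $k$ in a bounded range — precisely the regime where the resolvent of the (local) Rayleigh operator degenerates and where the nonlocal term cannot be dismissed by size alone. Controlling this requires the sharp endpoint behavior of the Rayleigh problem at a nondegenerate critical point (the $\lambda_0^{1/2}$ weight) together with the exclusion of embedded modes; getting the two to cooperate with a single $k$-, $\lambda$- and $\epsilon$-independent constant is the heart of the proof, and is exactly the input that later feeds the vorticity-depletion and space-time estimates. The large-$|k|$ regime, by contrast, is soft: there the nonlocal term $k^{-2}$-decays relative to $\Delta_k$ and the estimate is essentially the one-dimensional elliptic bound with the weight $(V-\lambda-\mathrm{i}\epsilon)^{-1}$ handled by a standard commutator/integration-by-parts argument.
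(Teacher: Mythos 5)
Your overall architecture --- a local Rayleigh-type estimate with the weight $\theta(k,\lambda)$, a limiting absorption principle, and then a compactness/contradiction argument that uses the absence of embedded and generalized embedded eigenvalues to absorb the nonlocal term $V\Delta_k^{-1}f$ --- is essentially the route the paper takes (Lemma \ref{porb8}, Lemma \ref{monotone-11}, Lemma \ref{lem:porb331}, and the contradiction arguments of the non-degenerate and degenerate cases, with the spectral input \emph{proved}, not quoted, in subsection \ref{subspec}). Two corrections on mechanism: the $|k|\lambda_0^{1/2}$ gain near $\lambda\approx M$ does not originate from the imaginary part carrying the factor $\epsilon$ (that only yields the crude bound of Lemma \ref{lemb}, used when $|\epsilon|$ is bounded below); it comes from a real-variable estimate near the turning point $y_\lambda$, where $|V'|\gtrsim(M-\lambda)^{1/2}$, via the Hardy-type oscillatory bound of Lemma \ref{hardy-type-abp}, and it is only available (and only needed) when $|k|(M-\lambda)^{1/2}\gtrsim1$, see \eqref{monotone-11-est}. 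Also, ``spectral stability of the Kolmogorov flow for $\kappa<1$'' cannot be cited as a black box: what is needed is the absence of \emph{generalized} embedded eigenvalues in the sense of Definition \ref{gemeig} (the limit equation containing the principal value and delta terms), a strictly stronger statement that must be verified, as is done in subsection \ref{subspec}.

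The genuine gap is your claim that the large-$|k|$ regime is soft because the nonlocal term ``$k^{-2}$-decays relative to $\Delta_k$''. This is true only in the non-degenerate regime $|\lambda|\le 7/8$, where it is exactly the content of \eqref{porb8.1}; it fails when $\lambda$ approaches a critical value. Writing the equation as $(V-\lambda-\mathrm{i}\epsilon)\Delta_k\phi=F-V\phi$, the coefficient of $\Delta_k\phi$ degenerates on the critical layer, so $V\phi$ is never negligible there no matter how large $|k|$ is; after the natural rescaling $\widetilde\psi_j(y)=|k_j|^{1/2}\psi_j(y_c+y/|k_j|)$ the zeroth-order (nonlocal) contribution survives in the limit equation. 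The hardest case of the proposition is precisely $|k|\to\infty$ and $\lambda\to\pm1$ simultaneously with $\epsilon\to0$: there your step-one weight need not close (since $|k|\lambda_0^{1/2}$ may remain bounded), and a fixed-$k$ compactness argument does not apply. The paper handles this regime by combining the quantitative estimate of Lemma \ref{monotone-11} away from the critical point with the blow-up compactness Lemma \ref{critical-4} near it (including its three sub-cases according to the size of $k_j^2(c_j-u_j(0))$ and the correction $\psi_{j^*}$ in the resonant case). Without an argument covering this simultaneous limit, the constant uniform in $k$, $\lambda$, $\epsilon$ is not established.
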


We also need the following lemma.

\begin{lemma}\label{lemb}
  If $f$ satisfies \eqref{eq:Ray-eps-0}
with $\epsilon\neq 0$, then we have
\begin{align*}
   & |\epsilon||k|^2\|(\partial_y,k)\Delta_k^{-1}f\|_{L^2} \leq C\|(\partial_y,k)F\|_{L^2}.
\end{align*}
\end{lemma}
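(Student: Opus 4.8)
The plan is to run an energy estimate on equation \eqref{eq:Ray-eps-0} and extract the imaginary part, which is where the $\epsilon$-dependence lives. Write $\psi = \Delta_k^{-1} f$, so that $f = \Delta_k \psi = \psi'' - k^2\psi$ and the equation reads $V(\Delta_k\psi + \psi) - (\lambda + \mathrm{i}\epsilon)\Delta_k\psi = F$. First I would pair the equation against a suitable test function — the natural choice is $\overline{\psi}$ (or $\overline{f}$ depending on which norm one wants to land on) — integrate over $\T_{2\pi}$, and integrate by parts in $y$ to convert $\int V\,\Delta_k\psi\,\overline{\psi}$ into $-\int V|(\partial_y,k)\psi|^2 - \int V'\psi'\overline{\psi}$ plus lower-order pieces. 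Since $V$ is real, the term $\int V|(\partial_y,k)\psi|^2$ and (the real part of) $\int V|\psi|^2$ are real, so taking the imaginary part of the whole identity kills them and isolates
\[
\epsilon \int_{\T_{2\pi}} |(\partial_y,k)\psi|^2\,\mathrm{d}y = \mathrm{Im}\!\left(\int_{\T_{2\pi}} F\,\overline{\psi}\,\mathrm{d}y\right) + (\text{terms from }V'\psi'\overline\psi),
\]
where the $V'$ term also needs its imaginary part taken and is controlled by $\|V'\|_{L^\infty}\|\psi'\|_{L^2}\|\psi\|_{L^2}$.

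The subtlety is the factor $|k|^2$ demanded by the statement: a crude bound of $\mathrm{Im}\int F\overline\psi$ by $\|F\|_{L^2}\|\psi\|_{L^2}$ is not good enough, since we want $\|(\partial_y,k)\psi\|_{L^2}$ on the left (not just $\|\psi\|_{L^2}$) and a gain of $|k|^2$. So instead I would test against $\overline{f} = \overline{\Delta_k\psi}$. Then $\mathrm{Im}\int F\overline{f}$ can be bounded using $\|(\partial_y,k)F\|_{L^2}$ paired with $\|(\partial_y,k)^{-1} f\|_{L^2} = \|(\partial_y,k)\psi\|_{L^2}$ after an integration by parts, i.e. $|\mathrm{Im}\int F\overline f| = |\mathrm{Im}\int (\partial_y,k)F \cdot \overline{(\partial_y,k)\psi}| \lesssim \|(\partial_y,k)F\|_{L^2}\|(\partial_y,k)\psi\|_{L^2}$. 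Meanwhile, pairing $V(\Delta_k\psi+\psi) - (\lambda+\mathrm{i}\epsilon)\Delta_k\psi$ against $\overline{\Delta_k\psi}$ and taking imaginary parts: the $\lambda$-term is real, the $V\Delta_k\psi$ term is real (it's $\int V|\Delta_k\psi|^2$... wait, that is real only if we don't integrate by parts — yes $\int V|f|^2$ is real since $V$ real), the $V\psi$ term contributes $\mathrm{Im}\int V\psi\overline{f}$, and the $\epsilon$-term gives $\epsilon\int|f|^2 = \epsilon\|f\|_{L^2}^2$. One then relates $\|f\|_{L^2}^2 = \|\Delta_k\psi\|_{L^2}^2 \geq |k|^2\|(\partial_y,k)\psi\|_{L^2}^2$ (integrating by parts, $\|\Delta_k\psi\|_{L^2}^2 = \|\psi''\|^2 + 2k^2\|\psi'\|^2 + k^4\|\psi\|^2 \geq k^2(\|\psi'\|^2 + k^2\|\psi\|^2)$), which is exactly where the $|k|^2$ comes from.

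The main obstacle I anticipate is controlling the cross terms $\mathrm{Im}\int V\psi\overline{f}$ and $\mathrm{Im}\int V'\psi'\overline{\psi}$ (or $\mathrm{Im}\int V'\psi'\overline f$) without losing the gain: these must be absorbed into the left-hand side or bounded by the right-hand side $\|(\partial_y,k)F\|_{L^2}\|(\partial_y,k)\psi\|_{L^2}$, and a naive estimate produces $\|\psi\|_{L^2}^2$-type terms that are weaker than $\|(\partial_y,k)\psi\|_{L^2}^2$ only by a factor $k^{-2}$ — borderline. The clean way around this is to first invoke Proposition \ref{lem:rayleigh}, which already gives $\|(\partial_y,k)\psi\|_{L^2} \lesssim \theta(k,\lambda)^{-1}\|(\partial_y,k)F\|_{L^2} \leq \|(\partial_y,k)F\|_{L^2}$ unconditionally; using this a priori control on $\psi$, the cross terms become harmless lower-order contributions, and the imaginary-part identity upgrades it to the claimed $|\epsilon||k|^2\|(\partial_y,k)\psi\|_{L^2} \lesssim \|(\partial_y,k)F\|_{L^2}$. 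In other words, Lemma \ref{lemb} should be read as a corollary of Proposition \ref{lem:rayleigh} plus one integration-by-parts identity extracting the $\epsilon$-term, rather than as an independent estimate.
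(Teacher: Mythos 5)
Your core instinct---take the imaginary part of an energy pairing to isolate the $\epsilon$-term---is the right one, and you correctly identified that with your choice of test function the cross term $\mathrm{Im}\int V\psi\,\overline{f}$ (equivalently $-\mathrm{Im}\int V'\psi\,\overline{\psi'}$ after integrating by parts) carries no factor of $\epsilon$ and cannot be absorbed. But your fix is inadmissible: you cannot invoke Proposition \ref{lem:rayleigh} here, because Lemma \ref{lemb} is an ingredient of its proof --- it is used in the non-degenerate case to conclude $\epsilon_j\to 0$ in the compactness/contradiction argument, and again in Case 3 of the degenerate case. Reading Lemma \ref{lemb} as a corollary of Proposition \ref{lem:rayleigh} is therefore circular; the lemma must be proved by a self-contained energy estimate.

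The paper's way out is simply a better test function. Set $\phi=\Delta_k^{-1}f$ and $w=f+\Delta_k^{-1}f=(\Delta_k+1)\phi$, rewrite the equation as $(V-\lambda-\mathrm{i}\epsilon)w+(\lambda+\mathrm{i}\epsilon)\phi=F$, and pair with $\overline{w}$. Because $\Delta_k+1$ is self-adjoint, $\langle\phi,w\rangle=-\|\partial_y\phi\|_{L^2}^2-(k^2-1)\|\phi\|_{L^2}^2$ is real (and, since $|k|>1$, nonpositive), and $\langle(V-\lambda)w,w\rangle$ is real as well; so the imaginary part produces $\epsilon\big(\|w\|_{L^2}^2+\|(\partial_y,\tilde{k})\phi\|_{L^2}^2\big)=-\mathrm{Im}\langle F,w\rangle\le\|F\|_{L^2}\|w\|_{L^2}$ with \emph{no} cross term at all, hence $|\epsilon|\,\|w\|_{L^2}\le\|F\|_{L^2}$. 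The factors of $k$ you were trying to extract by a clever pairing then come for free from elementary multiplier bounds: $|k|^2\|(\partial_y,k)\Delta_k^{-1}f\|_{L^2}\le|k|\,\|f\|_{L^2}\le C|k|\,\|w\|_{L^2}$ (using $|k|>1$ to invert $1+\Delta_k^{-1}$) and $|k|\,\|F\|_{L^2}\le\|(\partial_y,k)F\|_{L^2}$; note in particular that only the $L^2$ norm of $F$ is needed in the pairing, so your integration by parts onto $(\partial_y,k)F$ is unnecessary and, as sketched, would in any case leave an uncontrolled $\epsilon^{1/2}|k|$ factor after Young's inequality.
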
\begin{proof}
Without lose of generality, we assume $ \epsilon>0$.
Let $w=f+\Delta^{-1}_k f$, $\phi=\Delta^{-1}_k f$. Then we have $w=(\partial_y^2-k^2+1)\phi$, $f=w-\phi$ and 
\begin{align*}
   & (V-\lambda-\mathrm{i}\epsilon)w+(\lambda+\mathrm{i}\epsilon)\phi=F,\quad
   \epsilon\|w\|_{L^2}^2-\epsilon\langle \phi,w\rangle=-\mathbf{Im}\langle F,w\rangle,\\
   &\epsilon\|w\|_{L^2}^2+\epsilon\|(\partial_y,\tilde{k})\phi\|_{L^2}^2=-\mathbf{Im}\langle F,w\rangle\leq 
      \| F\|_{L^2}\|w\|_{L^2},\quad \epsilon\|w\|_{L^2}\leq 
      \| F\|_{L^2},
      \end{align*}
      which implies that
      \begin{align*}
          & |\epsilon||k|^2\|(\partial_y,k)\Delta_k^{-1}f\|_{L^2}\leq |\epsilon||k|\|f\|_{L^2}\leq C|\epsilon||k|\|w\|_{L^2}\leq C|k|\|F\|_{L^2}
      \leq C\|F\|_{H_k^1}.
      \end{align*}
      Here $\tilde{k}=\sqrt{k^2-1}$. 
 \end{proof}

The following lemma gives the case of $\lambda\in\R\setminus[m,M]$ in Proposition \ref{lem:rayleigh}.
\begin{lemma}\label{lema}
  If $f$ satisfies \eqref{eq:Ray-eps-0}
with $\epsilon\neq 0$ and $\lambda\in\R\setminus[m,M]$, then we have 
\begin{align*}
   & \theta(k,\lambda)\|(\partial_y,k)\Delta_k^{-1}f\|_{L^2} \leq C\|(\partial_y,k)F\|_{L^2}.
\end{align*}Here the constant $C$ is independent of $k,\ \epsilon$ and $\lambda$.
\end{lemma}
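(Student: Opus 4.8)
The plan is to prove Lemma \ref{lema}, which handles the case $\lambda \in \R\setminus[m,M]$, by a direct energy estimate exploiting that $V-\lambda$ is bounded away from zero so that \eqref{eq:Ray-eps-0} can be inverted without any limiting absorption subtlety. First I would set $w = f + \Delta_k^{-1}f$ and $\phi = \Delta_k^{-1}f$, so that $f = w-\phi$ and $w = (\partial_y^2-k^2+1)\phi$, exactly as in the proof of Lemma \ref{lemb}. Rewriting \eqref{eq:Ray-eps-0} gives $(V-\lambda-\mathrm{i}\epsilon)w = F - (\lambda+\mathrm{i}\epsilon)\phi$, hence $w = (V-\lambda-\mathrm{i}\epsilon)^{-1}(F-(\lambda+\mathrm{i}\epsilon)\phi)$. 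Since $\lambda_0 = \min\{|\lambda-m|,|\lambda-M|\} = \mathrm{dist}(\lambda,[m,M])$ when $\lambda\notin[m,M]$, we have $|V(y)-\lambda| \geq \lambda_0$ for all $y$, and therefore $|V-\lambda-\mathrm{i}\epsilon|\geq\lambda_0$ pointwise. This is the structural gain that makes this regime easy compared with the general $\lambda\in[m,M]$ case.

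Next I would run the standard pairing. Testing \eqref{eq:Ray-eps-0} against $-\Delta_k^{-1}f = -\phi$ (or equivalently pairing the $w$-equation against $\phi$) and taking real/imaginary parts should produce two estimates: an energy identity controlling $\|(\partial_y,k)\phi\|_{L^2}^2 = \|(\partial_y,k)\Delta_k^{-1}f\|_{L^2}^2$ in terms of $\langle F,\phi\rangle$ plus a remainder, and a second relation controlling $\|w\|_{L^2}$. The key computation is that the real part of $\langle (V-\lambda)w - \mathrm{i}\epsilon w + (\lambda+\mathrm{i}\epsilon)\phi - F, \phi\rangle = 0$ yields, after integrating by parts in the $w$–$\phi$ coupling (using $\langle w,\phi\rangle = -\|(\partial_y,\tilde k)\phi\|_{L^2}^2$ with $\tilde k^2 = k^2-1$ as in Lemma \ref{lemb}), a coercive quadratic form in $(\partial_y,k)\phi$ with a definite sign coming from $(V-\lambda)$ being of one sign and bounded below by $\lambda_0$, against which $\langle F,\phi\rangle$ and the lower-order terms are absorbed via Cauchy–Schwarz and Poincaré-type inequalities $\|\phi\|_{L^2}\lesssim |k|^{-1}\|(\partial_y,k)\phi\|_{L^2}$. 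Summing the resulting bounds should give $(1 + |k|\lambda_0^{1/2} + |\lambda|)\|(\partial_y,k)\Delta_k^{-1}f\|_{L^2} \lesssim \|(\partial_y,k)F\|_{L^2}$, which is precisely $\theta(k,\lambda)\|(\partial_y,k)\Delta_k^{-1}f\|_{L^2}\lesssim\|(\partial_y,k)F\|_{L^2}$.

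To get the full strength of $\theta(k,\lambda)$ I would treat its three pieces separately. The $|\lambda|$-factor: when $|\lambda|$ is large, $|V-\lambda|\sim|\lambda|$ pointwise (since $V$ is bounded), so inverting directly gives $\|w\|_{L^2}\lesssim |\lambda|^{-1}(\|F\|_{L^2} + |\lambda|\|\phi\|_{L^2})$ and the elliptic estimate for $\phi$ in terms of $w-f$... one bootstraps to extract the $|\lambda|$ gain. The $|k|\lambda_0^{1/2}$-factor: this is the more delicate one and should come from combining the energy identity (which gives $\|(\partial_y,k)\phi\|_{L^2}^2$ controlled with a coefficient $\lambda_0$ in front, from $|V-\lambda|\geq\lambda_0$) with the $H^1$ pairing, so that one power of $|k|$ and one power of $\lambda_0^{1/2}$ emerge together; I expect to mimic the structure already visible in Lemma \ref{lemb} where $|\epsilon||k|^2$ appeared, replacing the role of $|\epsilon|$ by $\lambda_0$. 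The main obstacle will be bookkeeping the interplay of these three gains simultaneously — in particular ensuring the cross terms when $|k|$, $\lambda_0$, and $|\lambda|$ are all moderate do not destroy the product structure of $\theta$ — but since there is no zero of $V-\lambda$ in the domain, no genuine analytic difficulty (no need for the limiting absorption principle or critical-layer analysis) arises, and everything reduces to careful Cauchy–Schwarz and interpolation. I would conclude by noting the bound is uniform in $\epsilon$ and $\lambda$ as required, since every constant depends only on $\|V-b\|_{H^4}\leq\epsilon_0$.
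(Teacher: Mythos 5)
Your overall framing (the structural gain $|V-\lambda|\geq\lambda_0$, no critical layer, mimic Lemma \ref{lemb}) is reasonable, but the central step as you set it up does not close, and the paper's proof uses a different test function precisely to avoid the problem. You propose pairing the equation against $\phi=\Delta_k^{-1}f$ and extracting a coercive form in $(\partial_y,k)\phi$ from the sign of $V-\lambda$. Carrying this out (with $w=(\Delta_k+1)\phi$, integrating by parts and taking real parts, using $\mathrm{Re}\int V'\phi'\bar\phi=-\tfrac12\int V''|\phi|^2$) gives the identity
\begin{align*}
\int_{\T}(\lambda-V)\big(|\phi'|^2+k^2|\phi|^2\big)\,dy+\int_{\T}\Big(V+\tfrac{V''}{2}\Big)|\phi|^2\,dy=\mathrm{Re}\langle F,\phi\rangle .
\end{align*}
Here the zeroth-order potential $V+V''/2\approx\tfrac12\cos y$ is sign-indefinite and of size $O(1)$, while the coercive term only carries the prefactor $\lambda_0=\mathrm{dist}(\lambda,[m,M])$, which can be arbitrarily small for $\lambda$ just outside $[m,M]$. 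Your plan is to absorb the "lower-order" term via $\|\phi\|_{L^2}\lesssim|k|^{-1}\|(\partial_y,k)\phi\|_{L^2}$, but that requires $\lambda_0k^2\gtrsim1$; in the regime $\lambda_0k^2\ll1$ (where $\theta\sim1+|\lambda|$ and the estimate is still nontrivial and must be uniform in $\lambda,\epsilon$) nothing absorbs the $O(1)\,\|\phi\|_{L^2}^2$ term, and no bound follows. So the claim that "no genuine analytic difficulty arises, everything reduces to Cauchy–Schwarz" is exactly where the argument has a gap. (The paper does use this $\phi$-pairing identity later, in Proposition \ref{pro:res1-os-V'}, but only after the resolvent bound is already known, so the $\|\phi\|_{L^2}^2$ term can be controlled by it.)

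The missing idea is to test against $w=f+\Delta_k^{-1}f$ instead of $\phi$, as the paper does (and as Lemma \ref{lemb} already suggests, where the imaginary part of $\langle F,w\rangle$ was used; here one takes the real part). For, say, $\lambda>M$ one has
\begin{align*}
\mathrm{Re}\langle F,w\rangle=\int_{\T}(V-\lambda)|w|^2\,dy+\lambda\langle\phi,w\rangle,\qquad \langle\phi,w\rangle=-\|(\partial_y,\tilde k)\phi\|_{L^2}^2\le0,
\end{align*}
so both terms on the right have a definite sign and no derivative of $V$ ever appears. Dropping the first term gives $\|\phi\|_{H^1_k}^2\lesssim|\lambda|^{-1}|\langle F,w\rangle|\lesssim(1+|\lambda|)^{-1}\|F\|_{H^1_k}\|\phi\|_{H^1_k}$, i.e.\ the $(1+|\lambda|)$ gain uniformly (not only for $|\lambda|$ large, so no bootstrap is needed); dropping the second gives $\lambda_0\|w\|_{L^2}^2\le|\langle F,w\rangle|\le\|F\|_{L^2}\|w\|_{L^2}$, and with $\|\phi\|_{H^1_k}\lesssim|k|^{-1}\|w\|_{L^2}$ this yields the $\lambda_0k^2$ gain; together $(1+\lambda_0k^2+|\lambda|)\|\phi\|_{H^1_k}\lesssim\|F\|_{H^1_k}$, which dominates $\theta(k,\lambda)=1+|k|\lambda_0^{1/2}+|\lambda|$. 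Unless you switch your test function to $w$ (or supply a separate argument, e.g.\ compactness, for the regime $\lambda$ near the endpoints with $\lambda_0k^2$ small), the proof as proposed is incomplete.
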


\begin{proof}
Without lose of generality, we assume $ \lambda>M$, then $\lambda>M>1/2$.
Let $w=f+\Delta^{-1}_k f$, $\phi=\Delta^{-1}_k f$. Then $ w=(\partial_y^2-k^2+1)\phi$, $f=w-\phi$, 
$V-\lambda\leq m-\lambda\leq-\lambda_0$, and 
\begin{align*}
   & (V-\lambda-\mathrm{i}\epsilon)w+(\lambda+\mathrm{i}\epsilon)\phi=F,\\
   &\mathbf{Re}\langle F,w\rangle= \langle(V-\lambda)w,w\rangle+\lambda\langle\phi,w\rangle\leq
   -\lambda_0\|w\|_{L^2}^2+\lambda\langle\phi,w\rangle\leq
   \lambda\langle\phi,w\rangle\leq0,
 \end{align*}
 which gives  
   \begin{align*}
  C^{-1}\|\phi\|_{H_k^1}^2\leq&\|\partial_y\phi\|_{L^2}^2+(k^2-1)\|\phi\|_{L^2}^2=-\langle\phi,w\rangle\leq|\lambda|^{-1}|\langle F,w\rangle|\\ 
   \leq&|\lambda|^{-1}\|F\|_{H_k^1}\|w\|_{H_k^{-1}}\leq C|\lambda|^{-1}\|F\|_{H_k^1}\|\phi\|_{H_k^{1}}\\\leq& C(1+|\lambda|)^{-1}\|F\|_{H_k^1}\|\phi\|_{H_k^{1}}.
   \end{align*}   
  This shows that
  \beno
  \|\phi\|_{H_k^{1}}\leq C(1+|\lambda|)^{-1}\|F\|_{H_k^1}.
  \eeno
 We also have
 \begin{align*}
   &\lambda_0\|w\|_{L^2}^2\leq|\langle F,w\rangle|\leq \|F\|_{L^2}\|w\|_{L^2},\quad 
   \lambda_0\|w\|_{L^2}\leq\|F\|_{L^2}\leq|k|^{-1}\|F\|_{H_k^1},\\
   &\|\phi\|_{H_k^{1}}\leq C\|w\|_{H_k^{-1}}\leq C|k|^{-1}\|w\|_{L^2}\leq C\lambda_0^{-1}|k|^{-2}\|F\|_{H_k^1}.
\end{align*}
Thus, we obtain
\begin{align*}
  \|F\|_{H_k^1}\geq& C^{-1}(1+\lambda_0|k|^{2}+|\lambda|)\|\phi\|_{H_k^{1}}\\
  \geq& C^{-1}(1+\lambda_0^{1/2}|k|+|\lambda|)\|\phi\|_{H_k^{1}}=C^{-1}\theta(k,\lambda)\|\phi\|_{H_k^{1}}.
  \end{align*}
  
   This completes the proof.
\end{proof}
\subsection{Spectral properties}\label{subspec}
We observe first that $B_k: L^2(\T) \to L^2(\T)$ with $k\in\frac{2\pi}{\tp}\Z\backslash\{0\}$ has no eigenvalues $\lambda\in\mathbb{C}\backslash\R$, since $B_k$ is symmetric with respect to the inner product 
\begin{equation}\label{fg*}
    \langle f,g\rangle_\ast := \langle f, (1+\Delta_k^{-1})g\rangle_{L^2(\T)}.
\end{equation}
Therefore, the spectrum $\sigma(B_k)\subseteq\R$. 

Due to the non-monotonicity of $b(y) =\cos y$, we require the following definition of {``}generalized embedded eigenvalues" which was already introduced in \cite{WZZ-APDE}. The formulation below, applicable to general non-monotonic shear flows $(b(y), 0), y\in\T$ with non-degenerate critical points, can be found in \cite{BCJ2024}.

\begin{definition}\label{gemeig}
    $\lambda\in b(\T)$ is called a limiting singular value or generalized embedded eigenvalue of the operator for $k\in\frac{2\pi}{\tp}\Z\backslash\{0\}$, 
    $$L_k= b(y)-b''(y)\Delta_k^{-1}: L^2(\T) \to L^2(\T),$$
    if either (i) $\lambda$ is an embedded eigenvalue, or (ii) $\lambda$ is not a critical value of $b$ and there exists a nontrivial $\psi\in H^1(\T)$ such that on $\T$, 
    \begin{equation}\label{gemeig2}
        (k^2-\partial_y^2)\psi+{\rm P.V.}\,\frac{b''\psi}{b(y)-\lambda}+\mathrm{i}\pi\sum_{z\in\T,\, b(z)=\lambda} \frac{b''(z)}{|b'(z)|}\psi(z)\delta(y-z)=0.
    \end{equation}
\end{definition}

The significance of generalized embedded eigenvalues lies in their being the only possible accumulation points of the (approximate) eigenvalues of  $L_k$.

In this subsection, we prove that the Kolmogorov flow $b(y)=\cos y$ has neither eigenvalues nor generalized embedded eigenvalues. First, we note that for any $\lambda\in b(\T)$,  the second derivative $b''$ has the same sign on the set $\{z\in \T:\, b(z)=\lambda\}$. In addition, the two critical points are $y_{1}=0$, $y_{2}=\pi$,   with corresponding critical values $\lambda\in\{-1,1\}$. 

\begin{proof}
Suppose $\lambda$ is an eigenvalue or a generalized embedded eigenvalue. Using Definition \ref{gemeig}, we analyze several cases.\smallskip

{\it Case 1. $\lambda\in (-1,1)$}. Let $\psi\in H^1(\T)$ be a nontrivial solution to equation \eqref{gemeig2}. We multiply by $\overline{\psi}$ and integrate over $\T$. Then we obtain
\begin{equation}\label{exa1}
    \int_{\T} k^2|\psi|^2+|\partial_y\psi|^2+{\rm P.V.} \frac{b''\psi\overline{\psi}}{b(y)-\lambda}\,dy+\mathrm{i}\pi \sum_{z\in\T, b(z)=\lambda}\frac{b''(z)}{|b'(z)|}|\psi(z)|^2=0.
\end{equation}
Thus, \eqref{exa1} implies that $b''(z)\psi(z)=0$ by taking the imaginary part, since $b''=-b$ does not change sign on $\{z\in\T: \, b(z)=\lambda\}$. Equation \eqref{gemeig2} reduces to
 \begin{equation}\label{exa1.1}
        (k^2-\partial_y^2)\psi+\frac{b''\psi(y)}{b(y)-\lambda}=0,
    \end{equation}
    for $y\in\T$. 
If $\lambda=0$, then \eqref{exa1.1} implies $\psi\equiv 0$ since $|k|>1$, a contradiction. We can therefore assume that $\lambda\neq0$, which implies $\psi(z)=0$ on $\{z\in\T: \, b(z)=\lambda\}$. Assume without loss of generality that $\psi$ is real valued. By simple integration by parts, we have
\begin{equation}\label{exa2}
   \int_{\T}\frac{b''\psi^2(y)}{b(y)-\lambda}\,dy = -\int_{\T} \frac{2b'\psi\partial_y\psi}{b(y)-\lambda}\,dy+\int_{\T}\frac{(b')^2\psi^2}{(b(y)-\lambda)^2}\,dy. 
\end{equation}
It follows from \eqref{exa1.1}-\eqref{exa2} and the fact $\psi(z)=0$ on $\{z\in\T: \, b(z)=\lambda\}$ that
\begin{equation}\label{exa3}
    \int_{\T}k^2|\psi|^2+|\partial_y\psi|^2\,dy-\int_{\T} \frac{2b'\psi\partial_y\psi}{b(y)-\lambda}\,dy+\int_{\T}\frac{(b')^2\psi^2}{(b(y)-\lambda)^2}\,dy=0,
\end{equation}
which implies by Cauchy-Schwarz inequality that $\psi\equiv0$. Hence, $\lambda\in b(\T)\backslash\{-1, 1\}$ is not a generalized embedded eigenvalue. \smallskip

{\it Case 2. $\lambda\in\R\backslash [-1,1]$.} In this case, we can use directly the equation \eqref{exa1.1} to arrive at a contradiction.\smallskip

{\it Case 3. $\lambda\in\{-1,1\}$. } In this case, $\lambda$ is an embedded eigenvalue. For the sake of concreteness, we assume that $\lambda=1$. Then there exists a nontrivial $\psi\in H^2(\T)$ such that 
\begin{equation}\label{exa4}
    (b(y)-1)(\partial_y^2-k^2)\psi-b''\psi=0,
\end{equation}
which implies that $\psi(0)=\psi'(0)=0$ and 
\begin{equation}\label{exa5}
    (\partial_y^2-k^2)\psi-\frac{b''}{b(y)-1}\psi=0,
\end{equation}
for $y\in\T$. Similar integration by parts as in {\it Case 1} implies that $\psi\equiv0$. Hence, $\lambda$ is not an embedded eigenvalue. 

To summarize Cases 1 through 3, the operator $B_k$ has no eigenvalues or generalized embedded eigenvalues.
\end{proof}

\subsection{The non-degenerate case} 
When $|\lambda|\leq 7/8$, we have $\lambda_0\leq C $, $\theta(k,\lambda)\leq C|k| $. The desired bound in Proposition \ref{lem:rayleigh} follows from the following stronger bound  
\begin{equation}\label{porb701}
       \big\|f\big\|_{H_k^{-1}}\leq C |k|^{-1}\big\|F\big\|_{H_k^{1}},
\end{equation}
where $f$ and  $ F$ satisfy \eqref{eq:Ray-eps-0}.
We need the following lemma. 
\begin{lemma}\label{porb8}
For $k\in \frac{2\pi}{\tp}\Z\backslash\{0\}$, $\lambda\in[-7/8,7/8]$, and $\epsilon\in\R$ with $0<|\epsilon|<1/10$ and $|c_0|<1/10$, we have the following bound for any $h\in H^1(\T)$,
\begin{equation}\label{porb8.1}
    \bigg\|(-\Delta_k)^{-1}\Big[\frac{h}{V-\lambda-\mathrm{i}\epsilon}\Big]\bigg\|_{H^1_k}\lesssim|k|^{-1}  \|h\|_{H^1_k}. 
\end{equation}
Moreover, for any sequences $\lambda_j\in[-7/8,7/8]$, $k_j\in \frac{2\pi}{\tp}\Z\backslash\{0\}$, $\epsilon_j\to 0$ with $0<|\epsilon_j|<1$, $V_j\to \cos y$ as $j\to\infty$ in $C^3(\T)$, and functions $h_j\in H^1_{k_j}(\T)$ with $\|h_j\|_{H^1_{k_j}(\T)}\leq1$, the sequence $(-\Delta_{k_j})^{-1}\Big[\frac{h_j}{V_j-\lambda_j-\mathrm{i}\epsilon_j}\Big]$ has a convergent subsequence in $H^1(\T)$. 
\end{lemma}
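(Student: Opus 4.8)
\emph{Plan.} For the uniform bound \eqref{porb8.1}, I would argue by duality. Writing $\phi:=(-\Delta_k)^{-1}[\tfrac{h}{V-\lambda-\mathrm{i}\epsilon}]$, integration by parts gives $\|\phi\|_{H^1_k}^2=\langle(-\Delta_k)\phi,\phi\rangle_{L^2}=\int_\T\frac{h\overline{\phi}}{V-\lambda-\mathrm{i}\epsilon}\,dy$, so it suffices to prove the bilinear bound
\[
\Big|\int_\T\frac{h\overline{\zeta}}{V-\lambda-\mathrm{i}\epsilon}\,dy\Big|\lesssim |k|^{-1}\,\|h\|_{H^1_k}\,\|\zeta\|_{H^1_k},\qquad h,\zeta\in H^1(\T).
\]
The structural point is that, since $|\lambda|\le 7/8$ lies strictly between the critical values $m\approx-1$, $M\approx 1$ of $V$, the zero set $\{V=\lambda\}$ consists of at most two points, each nondegenerate with $|V'|\gtrsim 1$ there (and exactly two when $\lambda\in(m,M)$, as $\|V-\cos y\|_{H^4}$ is small). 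I would fix $\delta\sim|k|^{-1}$ and split $\T$ into the intervals $I_\delta$ of radius $\delta$ about the zeros of $V-\lambda$ and the complement. On $\T\setminus I_\delta$ one has $|V-\lambda-\mathrm{i}\epsilon|\gtrsim\delta$, so Cauchy--Schwarz gives a bound $\lesssim\delta^{-1}\|h\|_{L^2}\|\zeta\|_{L^2}\lesssim|k|\,\|h\|_{L^2}\|\zeta\|_{L^2}$, which is $\lesssim|k|^{-1}\|h\|_{H^1_k}\|\zeta\|_{H^1_k}$ since $\|h\|_{H^1_k}\ge|k|\|h\|_{L^2}$. On each $I_\delta$ centred at a zero $y_0$ I would split $h\overline{\zeta}=(h\overline{\zeta})(y_0)+\big(h\overline{\zeta}-(h\overline{\zeta})(y_0)\big)$: for the constant part use the uniform-in-$\delta$ bound $\big|\int_{I_\delta}\frac{dy}{V-\lambda-\mathrm{i}\epsilon}\big|\le C$ (change variables $s=V(y)-\lambda$; the endpoints $s_\pm$ are symmetric up to $O(\delta^2)$, so the resulting logarithm stays $O(1)$ uniformly) together with $|(h\overline{\zeta})(y_0)|\le\|h\|_{L^\infty}\|\zeta\|_{L^\infty}\lesssim|k|^{-1}\|h\|_{H^1_k}\|\zeta\|_{H^1_k}$ from the interpolation $\|h\|_{L^\infty}\lesssim|k|^{-1/2}\|h\|_{H^1_k}$; for the remainder, which vanishes at $y_0$, use $|V-\lambda-\mathrm{i}\epsilon|\gtrsim|y-y_0|$, the bound $|h\overline{\zeta}(y)-h\overline{\zeta}(y_0)|\le|y-y_0|^{1/2}\|(h\overline{\zeta})'\|_{L^2(I_\delta)}$, and the elementary estimate $\|\zeta\|_{L^\infty(I_\delta)}\lesssim\delta^{-1/2}\|\zeta\|_{L^2}+\delta^{1/2}\|\zeta'\|_{L^2}$, which after inserting $\delta\sim|k|^{-1}$ again reproduces the right-hand side.

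For the precompactness statement, set $\phi_j:=(-\Delta_{k_j})^{-1}[\tfrac{h_j}{V_j-\lambda_j-\mathrm{i}\epsilon_j}]$. By \eqref{porb8.1} and $|k_j|\ge\alpha>1$ the $\phi_j$ are bounded in $H^1(\T)$, with $\|\phi_j\|_{H^1(\T)}\lesssim|k_j|^{-1}$. Since $k_j$ ranges in a discrete set, either a subsequence has $|k_j|\to\infty$ --- along which $\phi_j\to 0$ in $H^1(\T)$, and we are done --- or, passing to a subsequence, $k_j\equiv k$, $\lambda_j\to\lambda\in[-7/8,7/8]$, $\epsilon_j\to0$, $h_j\rightharpoonup h$ and $\phi_j\rightharpoonup\phi_*$ weakly in $H^1(\T)$ (hence $h_j\to h$ and $\phi_j\to\phi_*$ strongly in $L^2$ and in $C^\gamma$, $\gamma<\tfrac12$). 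I would then run a norm-convergence argument: from $\|\phi_j\|_{H^1_k}^2=\int_\T\frac{h_j\overline{\phi_j}}{V_j-\lambda_j-\mathrm{i}\epsilon_j}\,dy$, a limiting-absorption (Sokhotski--Plemelj) computation --- splitting into neighbourhoods of the zeros of $V_j-\lambda_j$ (which tend to the two points of $\{\cos y=\lambda\}$) and the complement, and using the uniform convergence of $h_j\overline{\phi_j}$ --- yields
\[
\int_\T\frac{h_j\overline{\phi_j}}{V_j-\lambda_j-\mathrm{i}\epsilon_j}\,dy\ \longrightarrow\ \langle g_\infty,\phi_*\rangle,\qquad g_\infty:=\mathrm{P.V.}\,\frac{h}{\cos y-\lambda}+\mathrm{i}\pi\!\!\sum_{\cos z=\lambda}\frac{h(z)}{|\sin z|}\,\delta_z .
\]
The same computation with a fixed smooth test function in place of $\phi_j$ identifies the limiting equation $(-\Delta_k)\phi_*=g_\infty$, hence $\langle g_\infty,\phi_*\rangle=\|\phi_*\|_{H^1_k}^2$. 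Thus $\|\phi_j\|_{H^1_k}\to\|\phi_*\|_{H^1_k}$, and combined with $\phi_j\rightharpoonup\phi_*$ in $H^1_k$ this forces strong convergence $\phi_j\to\phi_*$ in $H^1_k$, and hence in $H^1(\T)$.

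The main obstacle in the first part is tracking the exact power $|k|^{-1}$ through the critical-layer analysis: the uniform-in-$\delta$ bound on $\int_{I_\delta}\frac{dy}{V-\lambda-\mathrm{i}\epsilon}$ hinges on the near-symmetry of the endpoints and genuinely fails without the hypothesis $|\lambda|\le7/8$, which keeps $\lambda$ away from the critical values and supplies the uniform lower bound $|V'|\gtrsim1$ on $\{V=\lambda\}$; and the weights in the two applications of one-dimensional interpolation must be arranged precisely so that no factor of $|k|^{1/2}$ is lost. The main obstacle in the second part is making the limiting-absorption limit rigorous when $V_j$, $\lambda_j$, $\epsilon_j$ all move simultaneously: one must follow the moving zeros $y_{0,j}\to z$, use the substitution $s=V_j(y)-\lambda_j$ uniformly in $j$, and combine dominated convergence for the absolutely convergent principal-value part with the explicit $H^{-1}$-convergence $\frac{1}{(y-a_j)-\mathrm{i}\epsilon_j}\to\mathrm{P.V.}\frac{1}{y-a}+\mathrm{i}\pi\delta_a$ for the delta part (itself a consequence of a Fourier-side identity together with the strong continuity of translation on $H^{-1}$).
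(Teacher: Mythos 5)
Your proposal is correct, but it executes both halves of the lemma differently from the paper, so a comparison is worth recording. For the sharp bound \eqref{porb8.1} the paper, like you, ultimately runs an energy identity $\|\phi\|_{H^1_k}^2=\int_{\T}h\overline{\phi}/(V-\lambda-\mathrm{i}\epsilon)\,dy$, but it controls the resulting bilinear form on the Fourier side: after a cutoff isolating the critical points of $V$ and an integration by parts against $\log(V-\lambda-\mathrm{i}\epsilon)$ (which also produces the weaker $|k|^{-1/3}$ bounds \eqref{porb8.5}--\eqref{porb8.6} used for compactness), the key input is \eqref{porb8.601}, i.e.\ a change of variables straightening $V$ plus the fact that $1/(y-\mu+\mathrm{i}\epsilon)$ has a uniformly bounded Fourier transform, giving $\lesssim\|\widehat{g_1g_2}\|_{L^1}\lesssim|k|^{-1}\|g_1\|_{H^1_k}\|g_2\|_{H^1_k}$. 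Your real-space substitute --- splitting at scale $\delta\sim|k|^{-1}$ around the two simple zeros of $V-\lambda$ (simple precisely because $|\lambda|\le 7/8$ keeps them away from the critical points), the uniform bound on $\int_{I_\delta}dy/(V-\lambda-\mathrm{i}\epsilon)$, and one-dimensional interpolation --- yields the same $|k|^{-1}$ and is self-contained. For the precompactness statement the routes genuinely diverge: the paper keeps the explicit $\log$-representation and upgrades $L^2$-compactness of the right-hand side to $H^1$-compactness via the elliptic identity \eqref{porb8.8}, whereas you identify the weak limit by a Sokhotski--Plemelj/limiting-absorption passage and then convert weak into strong convergence through convergence of the $H^1_k$ norms; your argument is softer and pre-identifies the limit equation, which the paper only extracts afterwards in \eqref{porb8.9}. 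Three points should be made explicit in a write-up: the constant in \eqref{porb8.1} must be uniform in $j$ when $V_j\to\cos y$ in $C^3$ (it is, since your proof only uses $|V_j'|\gtrsim1$ at the zeros of $V_j-\lambda_j$ together with $C^2$ bounds, and $|\epsilon_j|<1/10$ for large $j$); in the limit of $\int h_j\overline{\phi_j}/(V_j-\lambda_j-\mathrm{i}\epsilon_j)\,dy$ the principal-value part needs the uniform $C^{1/2}$ bound on $h_j\overline{\phi_j}$ coming from the $H^1$ bounds on both factors, not merely uniform convergence; and the sign of $\epsilon_j$ should be fixed along the subsequence, which only flips the sign of the $\mathrm{i}\pi$ contributions and does not affect the norm-convergence conclusion.
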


\begin{proof}
The proof is similar to that of the limiting-absorption principle in \cite{JiaG}; see Lemmas 3.1–3.2, and we shall therefore be brief.

Now we assume $\lambda\in[-7/8, 7/8]$. We may choose a smooth cutoff function $\varphi:\T\to[0,1]$ that is identically 1 in a small neighborhood of the critical points of $V$ and satisfies $|V-\lambda|\gtrsim1$ on the support of $\varphi$. In addition, we require that $|V'(y)|\gtrsim1$ on the support of $1-\varphi$. We decompose 
\begin{equation}\label{porb8.3}
\begin{split}
      \Delta_k^{-1}\bigg[\frac{h}{V-\lambda-\mathrm{i}\epsilon}\bigg]&=\Delta_k^{-1}\bigg[\frac{h\varphi}{V-\lambda-\mathrm{i}\epsilon}\bigg] + \Delta_k^{-1}\bigg[\frac{h(1-\varphi)}{V-\lambda-\mathrm{i}\epsilon}\bigg]\\
      &:= H_1+H_2. 
\end{split}      
\end{equation}
The desired conclusion for $H_1$ follows easily since $|V-\lambda|\gtrsim1$ on its support. We focus on the term $H_2$. We can compute
\begin{equation}\label{porb8.4}
\begin{split}
    H_2&=\Delta_k^{-1}\bigg[\frac{(1-\varphi)h}{V'(y)}\partial_y\log(V(y)-\lambda-\mathrm{i}\epsilon)\bigg]\\
    &=\partial_y\Delta_k^{-1}\bigg[\frac{(1-\varphi)h}{V'(y)}\log(V(y)-\lambda-\mathrm{i}\epsilon)\bigg]-
    \Delta_k^{-1}\bigg[\log(V(y)-\lambda-\mathrm{i}\epsilon)\partial_y\frac{(1-\varphi)h}{V'(y)}\bigg]. 
\end{split}
\end{equation}
   We have the preliminary bounds 
   \begin{equation}\label{porb8.5}
     \begin{split}
         &\bigg\|\Delta_k^{-1}\bigg[\log(V(y)-\lambda-\mathrm{i}\epsilon)\partial_y\frac{(1-\varphi)h}{V'(y)}\bigg]\bigg\|_{H^1_k(\T)}
         \lesssim|k|^{-1/3}\|h\|_{H^1_k(\T)},
     \end{split}  
   \end{equation}
   and 
 \begin{equation}\label{porb8.6}
     \begin{split}
         &\bigg\|\partial_y\Delta_k^{-1}\bigg[\frac{(1-\varphi)h}{V'(y)}\log(V(y)-\lambda-\mathrm{i}\epsilon)\bigg]\bigg\|_{H^1_k(\T)}\\
         &\lesssim\bigg\|\frac{(1-\varphi)h}{V'(y)}\log(V(y)-\lambda-\mathrm{i}\epsilon)\bigg\|_{L^{2}(\T)}\lesssim|k|^{-1/3}\|h\|_{H^1_k(\T)}.
     \end{split}  
   \end{equation}
The bounds \eqref{porb8.5}–\eqref{porb8.6}, although weaker than the desired bounds \eqref{porb8.1}, suffice to establish compactness in the second part of Lemma~\ref{porb8}.  The stronger bound \eqref{porb8.1} follows from an energy inequality, and through a change of coordinate, the inequality for $\mu\in \R$,
\begin{equation}\label{porb8.601}
\begin{split}
 &  \bigg| \int_{\R} \frac{g_1g_2(y)}{y-\mu+\mathrm{i}\epsilon}dy\bigg|\lesssim \int_{\T}|\widehat{\,g_1g_2}(\xi)|\,d\xi\\
 &\lesssim |k|^{-1/2}\|(k,\partial_y)(g_1g_2)\|_{L^2(\R)}\lesssim |k|^{-1}\|(k,\partial_y)g_1\|_{L^2(\R)}\|(k,\partial_y)g_2\|_{L^2(\R)}. 
   \end{split}
\end{equation}

To establish the second part of Lemma \ref{porb8}, by \eqref{porb8.1}, we may assume that $k_j= k_0\in  \frac{2\pi}{\tp}\Z\backslash\{0\}, \lambda_j\to \lambda_0\in[-7/8,7/8]$. Using the decomposition \eqref{porb8.3} and the bounds \eqref{porb8.5}-\eqref{porb8.6}, it suffices to show that a subsequence of 
\begin{equation}\label{porb8.7}
    \partial_y(-\Delta_{k_0})^{-1}\bigg[\frac{(1-\varphi)h_j}{V_j'(y)}\log(V_j(y)-\lambda_j-\mathrm{i}\epsilon_j)\bigg]
\end{equation}
   converges in $H^1(\T)$.  This follows from the identity
\begin{align}\label{porb8.8}
  & \partial^2_y(-\Delta_{k_0})^{-1}\bigg[\frac{(1-\varphi)h_j}{V_j'(y)}\log(V_j(y)-\lambda_j-\mathrm{i}\epsilon_j)\bigg]\\
 \notag  =&-\frac{(1-\varphi)h_j}{V_j'(y)}\log(V_j(y)-\lambda_j-\mathrm{i}\epsilon_j)+
   k_0^2(-\Delta_{k_0})^{-1}\bigg[\frac{(1-\varphi)h_j}{V_j'(y)}\log(V_j(y)-\lambda_j-\mathrm{i}\epsilon_j)\bigg], 
\end{align}    
and the fact that the right hand side has a convergent subsequence in $L^2(\T)$. 
\end{proof}
 If $f$ satisfies \eqref{eq:Ray-eps-0}, let $\phi=(-\Delta_k)^{-1}f,$ then $\phi$ satisfies for $y\in\T$,
\begin{equation}\label{porb6}
    (\partial_y^2-k^2)\phi+\frac{V(y)}{V(y)-\lambda-\mathrm{i}\epsilon}\phi = \frac{F}{V(y)-\lambda-\mathrm{i}\epsilon}.
\end{equation}

We now prove \eqref{porb701} in the case $\lambda\in [-7/8,7/8]$ for sufficiently small $\epsilon_0>0$. Suppose, for contradiction, that \eqref{porb701} fails to hold. Then we can find sequences $c_j\to0+$, $V_j$ satisfying $\|V_j-b\|_{H^4}\leq c_j$, $\lambda_j\in[-7/8,7/8]\to \lambda_*$,  $k_j=k_0\in \frac{2\pi}{\tp}\Z\backslash\{0\}$, 
and functions $\phi_j\in H^1_{k_0}(\T)$ with $\|\phi_j\|_{H^1_{k_0}(\T)}=1$, 
such that $\phi_j$ satisfies equation \eqref{porb6} with some $F_j\in H^1(\T)$ with $F_j\to 0$ in $H^1_k(\T)$. 
By Lemma \ref{lemb}, we have $\epsilon_j\to 0$.
Without loss of generality, assume $\epsilon_j\to 0+$. The case where $\epsilon_j\to0-$ is similar.

By Lemma \ref{porb8} and its proof, passing to a subsequence if necessary, we may assume that $\phi_j\to \phi\in H^1(\T)$ with $\|\phi_j\|_{H^1_{k_0}}=1$, and $\phi$ satisfies the equation on $\T$, 
\begin{equation}\label{porb8.9}
    (\partial_y^2-k_0^2)\phi{+}\lim_{j\to0+}\frac{\cos y}{\cos y-\lambda_0+\mathrm{i}\epsilon_j}\phi=0. 
\end{equation}
This contradicts the spectral properties of  $B_{k_0}$(see Subsection \ref{subspec}). The proof in the nondegenerate case is therefore complete.

\subsection{The degenerate case}
Now we turn to the case $\lambda\in[m,M]\setminus[-7/8,7/8]$.  Without loss of generality, we assume that $7/8\leq\lambda\leq M$.
The desired bound in Proposition \ref{lem:rayleigh} follows from the following bound  
\begin{equation}\label{porb331-0}
        (1+|k|(M-\lambda)^{\f12})\big\|f\big\|_{H_k^{-1}}\leq C \big\|F\big\|_{H_k^{1}},
\end{equation}
where $f$ and  $ F$ satisfy \eqref{eq:Ray-eps-0}. Let $V'(y_c)=0$ with $|y_c|<1/10$, and $V(y_c)=M$.

\begin{lemma}\label{critical-1}
Let $\psi_j,h_j \in H^1(a,d), u_j\in H^3(a,d)$ be sequences satisfying 
\begin{align*}
&\psi_j \rightharpoonup \psi, h_j \rightarrow h \ \ in \  \ H^1(a,d),\ u_j \to u_0\ \ in \ \ H^3(a,d),\\
 &u_j (\partial_y^2 \psi_j-k_j^2\psi_j)-u''_j\psi_j=h_j,
 \end{align*}
and $k_j\to k\in[1,+\infty), \, \mathrm{Im}\ u_j<0$. In addition, $\mathrm{Im}\ u_0=0, y\in [a,d], u_0(y_0)=0, y_0\in [a,d], u_0'(y)u'_0(y_0)>0, y\in [a,d]$. Then it holds that $\psi_j\to \psi$ in $H^1(a,d)$ and for any $\phi\in H^1_0(a,d)$, we have
\begin{align*}
\int_{a}^d (\psi'\phi'+k^2\psi\phi)\mathrm{d}y +\mathrm{P.V.}\int_{a}^d \frac{(u_0''\psi+h)\phi}{u_0}\mathrm{d}y+\mathrm{i}\pi \frac{((u_0''\psi+h)\phi)(y_0)}{|u_0'(y_0)|}=0.
 \end{align*}
\end{lemma}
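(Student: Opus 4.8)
The plan is to treat this as a compactness-plus-limiting-absorption argument, following the classical strategy for identifying weak limits of solutions to Rayleigh-type equations as the imaginary part of the coefficient degenerates to zero. First I would rewrite the equation in a form where the ODE operator has bounded coefficients away from the turning point $y_0$: since $\mathrm{Im}\,u_j<0$ the reciprocal $1/u_j$ is well-defined, and I would set $g_j := u_j''\psi_j+h_j$ and observe that $\psi_j'' - k_j^2\psi_j = g_j/u_j$ on $(a,d)$. The right-hand side is uniformly bounded in $L^p$ for any $p<2$ (this is the standard gain from the single turning point where $u_0$ vanishes simply, using $u_0'(y_0)u_0'(y)>0$ so $u_0$ behaves like $c(y-y_0)$ near $y_0$ and $|u_j|\gtrsim |y-y_0|-C\,\mathrm{dist}(u_j,u_0)$ in a suitable weighted sense), hence $\psi_j''$ is bounded in $L^p$, which together with $\psi_j\rightharpoonup\psi$ in $H^1$ upgrades to $\psi_j\to\psi$ strongly in $H^1(a,d)$ by Rellich and interpolation. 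This strong convergence is what lets me pass to the limit in all the nonlinear-looking products.

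Second, I would identify the limit of $g_j/u_j$ in the sense of distributions paired against a test function $\phi\in H^1_0(a,d)$. Writing $\int (g_j/u_j)\phi\,dy$ and using that $u_j\to u_0$ in $H^3\hookrightarrow C^2$ with $u_0$ real and vanishing simply at $y_0$, the classical Plemelj-type limit applies: away from $y_0$ the integrand converges pointwise with a dominating function, and in a neighborhood of $y_0$ one performs the substitution straightening $u_0$ (valid since $u_0'(y_0)\neq0$) and uses $\mathrm{Im}\,u_j<0$, i.e. the "$u_0 - \mathrm{i}0$" prescription, to get
\[
\lim_{j\to\infty}\int_a^d \frac{g_j\phi}{u_j}\,dy = \mathrm{P.V.}\int_a^d \frac{g\phi}{u_0}\,dy + \mathrm{i}\pi\,\frac{(g\phi)(y_0)}{|u_0'(y_0)|},
\]
where $g=u_0''\psi+h$; here I use $h_j\to h$ strongly in $H^1$, $u_j''\to u_0''$ in $C^1$, and the already-established $\psi_j\to\psi$ in $H^1\hookrightarrow C^0$ to justify $g_j\to g$ in $C^0$, and the sign of $u_0'(y_0)$ to get the $+\mathrm{i}\pi$ (rather than $-\mathrm{i}\pi$) with the correct $1/|u_0'(y_0)|$ normalization. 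Finally, I would pair the ODE against $\phi\in H^1_0(a,d)$, integrate by parts the $\psi_j''-k_j^2\psi_j$ term to produce $-\int(\psi_j'\phi'+k_j^2\psi_j\phi)$, pass to the limit using strong $H^1$ convergence on the left and the Plemelj limit on the right, and rearrange to obtain the stated identity.

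The main obstacle is making the degenerate limiting-absorption step fully rigorous near the turning point $y_0$, specifically: (i) justifying that $1/u_j$ does not blow up too badly — one needs a uniform lower bound of the form $|u_j(y)| \gtrsim |y-y_0| + \delta_j$ with $\delta_j$ controlled by $\|u_j-u_0\|_{C^1}$ and $\mathrm{Im}\,u_j$, which requires care since $\mathrm{Im}\,u_j$ is not quantified (only its sign is given), so the argument must extract the contour-shift structure purely from the sign; and (ii) ensuring the convergence $\psi_j\to\psi$ in $H^1$ is genuinely strong and not merely weak, which is delicate because the coefficient $1/u_j$ is only in $L^p_{loc}$ for $p<2$ — I expect to handle this by a bootstrap: weak $H^1$ limit gives $\psi_j\to\psi$ in $C^0$, hence $g_j$ is precompact in $C^0$, hence $g_j/u_j$ is precompact in $L^p$ for $p<2$, hence $\psi_j''$ is precompact there, hence $\psi_j$ is precompact in $H^1$, and the limit must be $\psi$. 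Everything else — the integration by parts, the $k_j\to k$ passage, and the algebra of rearranging into the quoted boundary-value-problem identity — is routine.
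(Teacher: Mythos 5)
There is a genuine gap at the heart of your argument: the claim that $g_j/u_j$ (with $g_j=u_j''\psi_j+h_j$) is uniformly bounded, let alone precompact, in $L^p$ for $p<2$ is false, and with it the whole bootstrap ``$\psi_j''$ precompact in $L^p$ $\Rightarrow$ $\psi_j$ precompact in $H^1$'' collapses. The only uniform lower bound available is $|u_j(y)|\ge |\mathrm{Re}\,u_j(y)|\gtrsim |y-y_j^*|$, where $y_j^*\to y_0$ is the zero of $\mathrm{Re}\,u_j$ (this comes from $u_j\to u_0$ in $C^1$ and $|u_0'|\ge c_0>0$, not from the unquantified $\mathrm{Im}\,u_j$), so $1/u_j$ is uniformly in $L^p$ only for $p<1$; in one dimension $|y-y_0|^{-1}\notin L^1$. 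Indeed no uniform $L^p$ bound with $p\ge 1$ on $\psi_j''$ can hold: generically $(u_0''\psi+h)(y_0)\neq 0$ (this is exactly why the $\mathrm{i}\pi$ term appears in the limiting identity), so in the limit $\psi''\sim (u_0''\psi+h)(y_0)/\big(u_0'(y_0)(y-y_0)\big)$ near $y_0$, which is not integrable. Hence neither Rellich in $W^{2,p}$ nor the fundamental theorem of calculus applied to $|\psi_j''|$ can give you the strong $H^1$ convergence, which is precisely the nontrivial conclusion of the lemma; your proposal identifies this as ``delicate'' but the proposed resolution does not work.

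The correct mechanism (this is the classical argument the paper invokes by citing Lemma 6.2 of \cite{WZZ-APDE}) exploits cancellation rather than absolute-value bounds. Away from $y_0$, $1/u_j$ is uniformly bounded on compact sets, so $\psi_j''$ is bounded there and $\psi_j'\to\psi'$ locally uniformly. Near $y_0$, integrate the equation once and split $g_j(z)=\big(g_j(z)-g_j(y_j^*)\big)+g_j(y_j^*)$: the first piece is handled by the uniform $C^{1/2}$ bound on $g_j$ (from the $H^1$ bounds), giving an integrand $\lesssim |z-y_j^*|^{-1/2}$, while for the second piece one must not take absolute values but instead use the primitive $\int \mathrm{d}z/u_j$, bounded by $C\big(1+\big|\log|y-y_j^*|\big|\big)$ via the complex logarithm, whose imaginary part stays bounded by $\pi$ precisely because $\mathrm{Im}\,u_j$ has a fixed sign. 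This yields $|\psi_j'(y)|\lesssim C_{\delta_0}+\big|\log|y-y_j^*|\big|$ near $y_0$, hence $\int_{|y-y_0|<\delta}|\psi_j'|^2$ is small uniformly in $j$ as $\delta\to0$, which combined with the locally uniform convergence gives $\psi_j\to\psi$ in $H^1$. The same Hölder-plus-logarithm splitting is what rigorously produces the $\mathrm{P.V.}$ and the $\mathrm{i}\pi\,(u_0''\psi+h)(y_0)\phi(y_0)/|u_0'(y_0)|$ term when you pass to the limit in $\int g_j\phi/u_j$; mere continuity of $g_j\phi$, which is all you invoke, is not enough to make sense of or to converge to the principal value.
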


\begin{proof}
This is a compactness lemma for the monotone case. The proof is classical; for example, see Lemma 6.2 in \cite{WZZ-APDE}.
\end{proof}

\begin{lemma}\label{hardy-type-abp}
If $f\in H^1_0(a,d), u\in C^2([a,d]), u(y)\in \mathbb{R}, |u'(y)|\geq c_0>0, |u''(y)|<C_0$ in $[a,d]$, then for $c\not\in \mathbb{R}$, $|\mathbf{Im}\ c|\leq \epsilon_2$, it holds that
 \begin{equation}\label{hardy-type-abp-0}
\begin{aligned}
\left|\int_a^d \frac{f(y)}{u(y)-c}\mathrm{d}y \right|\leq Cc_0^{-1}  |k|^{-\frac{1}{2}} (\|\partial_y f\|_{L^2}+|k|\|f\|_{L^2}+c_0^{-1}\|f\|_{L^2}) ,
\end{aligned}
\end{equation}
where $C>0$ is independent of $|k|,c,\epsilon_2$. Moreover, if $|k|\geq c_0^{-1}$, then it holds that
\begin{equation}\label{hardy-type-abp-1}
\begin{aligned}
\left|\int_a^d \frac{f(y)}{u(y)-c}\mathrm{d}y \right|\leq C c_0^{-1} |k|^{-\frac{1}{2}}(\|\partial_y f\|_{L^2}+|k|\|f\|_{L^2}),
 \end{aligned}
\end{equation}
where $C>0$ is independent of $|k|,c,\epsilon_2$.
\end{lemma}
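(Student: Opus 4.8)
The plan is to change coordinates so that the ``background profile'' $u$ becomes linear, thereby reducing the bound to the elementary Fourier-side estimate already recorded in \eqref{porb8.601}. First I would normalize: replacing $(u,c)$ by $(u/c_0,\,c/c_0)$ multiplies the integral by $c_0^{-1}$ and turns the hypotheses into $|u'|\ge1$, $|u''|\le C_0/c_0$, leaving the imaginary part of the (new) spectral parameter nonzero. Since the estimate I will prove in the case $|u'|\ge1$ is uniform in that imaginary part and of the form $C_{\mathrm{abs}}|k|^{-1/2}\big(\|\partial_y f\|_{L^2}+|k|\|f\|_{L^2}+(\text{bound for }|u''|)\,\|f\|_{L^2}\big)$, pushing $c_0^{-1}$ back out and absorbing the fixed constant $C_0$ recovers exactly the claimed inequality (with a constant depending only on $C_0$). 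So it suffices to treat $|u'|\ge1$, and after possibly replacing $(u,c)$ by $(-u,-c)$ we may assume $u'\ge1$ on $[a,d]$.

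Next, with $u'\ge1$ the map $z=u(y)$ is a $C^2$ diffeomorphism of $[a,d]$ onto $[u(a),u(d)]$; setting $\phi=u^{-1}$, $F(z):=f(\phi(z))\,\phi'(z)$ and extending $F$ by zero to $\R$, the assumption $f\in H^1_0(a,d)$ makes $F$ continuous and vanishing at the endpoints, hence $F\in H^1(\R)$ with compact support, and the substitution gives $\int_a^d\frac{f(y)}{u(y)-c}\,dy=\int_{\R}\frac{F(z)}{z-c}\,dz$. Using $\phi'=1/(u'\circ\phi)$ and $\phi''=-(u''\circ\phi)/(u'\circ\phi)^3$ together with $u'\ge1$ and $|u''|\le C_0$, a direct computation gives $\|F\|_{L^2(\R)}\le\|f\|_{L^2}$ and $\|\partial_zF\|_{L^2(\R)}\le C(\|\partial_y f\|_{L^2}+C_0\|f\|_{L^2})$ with $C$ absolute.

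Finally I would estimate $\int_{\R}\frac{F(z)}{z-c}\,dz$ as in \eqref{porb8.601}: by contour integration $\int_{\R}\frac{e^{iz\xi}}{z-c}\,dz$ equals, up to sign, $2\pi i\,e^{ic\xi}$ when $\xi\,\mathrm{Im}\,c>0$ and vanishes when $\xi\,\mathrm{Im}\,c<0$, so Fourier inversion together with $|e^{ic\xi}|\le1$ on the relevant half line yields $\big|\int_{\R}\frac{F}{z-c}\,dz\big|\le\int_{\R}|\widehat F(\xi)|\,d\xi$, uniformly in $c\notin\R$; then Cauchy--Schwarz against the weight $\langle k,\xi\rangle=(k^2+\xi^2)^{1/2}$, using $\|\langle k,\xi\rangle^{-1}\|_{L^2(\R)}=\sqrt{\pi/|k|}$, gives $\int_{\R}|\widehat F|\,d\xi\lesssim|k|^{-1/2}\big(|k|\|F\|_{L^2}+\|\partial_zF\|_{L^2}\big)$. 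Combining with the bounds on $F$ and undoing the normalization proves the first inequality, and the second follows at once since $|k|\ge c_0^{-1}$ lets one absorb $c_0^{-1}\|f\|_{L^2}$ into $|k|\|f\|_{L^2}$. The one genuinely delicate point is the change-of-variables bookkeeping --- in particular one must use $f\in H^1_0$ so that $F$ extends to an $H^1(\R)$ function, without which the reduction to \eqref{porb8.601} would break down --- whereas the Fourier estimate itself is elementary and, crucially, insensitive to the size of $|\mathrm{Im}\,c|$, which is exactly what makes the final constant independent of $c$ and $\epsilon_2$.
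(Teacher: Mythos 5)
Your proof is correct, and it uses precisely the technique the paper itself invokes for this type of bound: the paper gives no in-house proof of Lemma \ref{hardy-type-abp} (it only cites Lemma C.1 of \cite{CDLZ}), but your route --- reduce by scaling to $|u'|\ge 1$, straighten the profile via $z=u(y)$ using $f\in H^1_0$ to get a compactly supported $H^1(\R)$ function $F$, and then apply the uniform-in-$\mathrm{Im}\,c$ Fourier bound with the weight $(k^2+\xi^2)^{1/2}$ --- is exactly the mechanism behind \eqref{porb8.601}, and the normalization correctly produces the stated $c_0^{-1}$ and $c_0^{-2}$ factors. The only blemish is notational: after rescaling, the bound on the second derivative is $|u''|\le C_0/c_0$, so the estimate $\|\partial_z F\|_{L^2}\lesssim \|\partial_y f\|_{L^2}+ (C_0/c_0)\|f\|_{L^2}$ should carry $C_0/c_0$ rather than $C_0$; this is harmless since your first paragraph already tracks the factor as ``the bound for $|u''|$'' and the final constant is allowed to depend on $C_0$.
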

\begin{proof} See Lemma C.1 in \cite{CDLZ}.
\end{proof}

To proceed with the analysis, we first derive several important properties of the flow $V$. \smallskip

First of all, we know that there exist $\delta,c_0\in (0,1)$ such that
\begin{align}\label{b'-c0}\left\{
\begin{array}{l}
                            |b'(y)|>2c_0,\qquad y\in \Sigma_{\delta}\cap[-3\pi/4,3\pi/4]\\
                           |b''(y)|>2c_0,\qquad y\in B(0,\delta),
                        \end{array}
  \right.
\end{align} where $b(y)=\cos y$, $\Sigma_{\delta}=\mathbb{T}_{2\pi}\setminus B(0,\delta)$. Now we fix $\delta,c_0$ .

We assume $|y_c|\leq 1/10$ such that $V(y_c)=\max\{V\}$. Thus, 
    \begin{align*}
       &|b'(0)-b'(y_c)|=|\sin(0)-\sin(y_c)|\geq |y_c-0|/\cos(1/10)\geq C^{-1}|y_c|,
    \end{align*}
  which gives by $V'(y_c)=0$ that
    \begin{align}\label{est:yc-1}
       &|y_c|\leq C|b'(0)-b'(y_c)|= C|V'(y_c)-b'(y_c)|\leq C\|V'-b'\|_{L^\infty}\leq C\|V-b\|_{H^4}.
    \end{align}
    Thanks to $|V''(y)|\geq |b''(y)|-C\|V-b\|_{H^4}$ and  $|V'(y)|\geq |b'(y)|-C\|V-b\|_{H^4}$, if $C\|V-b\|_{H^4}\leq \min\{ c_0,\delta/10\}$, then $y_c\in B(0,\delta/10)$ and by  \eqref{b'-c0},
\begin{equation*}\left\{\begin{aligned}
                           & |V'(y)|>c_0,\qquad y\in \Sigma_{\delta}\cap[-3\pi/4,3\pi/4],\\
                           &|V''(y)|>c_0,\qquad y\in B(0,\delta),
                        \end{aligned}
  \right.
\end{equation*}  which gives 
\begin{equation}\label{est:yc-2}\left\{\begin{aligned}
                           & |V'(y)|>c_0,\qquad y\in \Sigma_{\delta}\cap[-3\pi/4,3\pi/4],\\
                           &|V'(y)|>c_0|y-y_c|,\qquad y\in B(0,\delta).
                        \end{aligned}
  \right.
\end{equation} 
Therefore, we conclude 
\begin{align}
& |V'(y)|\geq C^{-1}|y-y_c|,\quad y\in [-3\pi/4,3\pi/4]\label{est:yc-30}\\
   & \inf\limits_{\Sigma_{r,y_c}\cap[-3\pi/4,3\pi/4]} |V'|\geq c_0r,\quad \text{for}\quad 0<r<9\delta/10,\label{est:yc-3}
\end{align} 
here $\Sigma_{r,y_c} =\mathbb{T}_{2\pi}\setminus B(y_c,r)$, and $C>0$ only depends on $\|V-b\|_{H^4}$.

Let $y_\lambda\in[y_c,3\pi/4]$ such that $V(y_{\lambda})=\lambda$. Then we get by $V'(y_c)=0$ that 
\begin{align*}
   & M-\lambda=-\int_{y_c}^{y_{\lambda}}\int_{y_c}^{z}V''(z_1)\mathrm{d}z_1\mathrm{d}z\leq (y_\lambda-y_c)^2\|V''\|_{L^\infty}/2\leq C(y_\lambda-y_c)^2,
\end{align*}
which shows 
\begin{align}\label{est:yc-4}
        & (M-\lambda)^{\f12}\leq C|y_\lambda-y_c|,
     \end{align}
     here $C>0$ only depends on $\|V-b\|_{H^4}$. Let $y_{\lambda}\in[y_c,\pi/2]$ satisfy $V(y_{\lambda})=\lambda\in[7/8,M]$. For $y\in [y_\lambda-\f34(y_\lambda-y_c),y_\lambda+\f34(y_\lambda-y_c)]$, we get by \eqref{est:yc-3} and \eqref{est:yc-4} that 
     \begin{align}\label{est:yc-5}
        & |V'(y)|\geq c_0(y_\lambda-y_c)/4\geq C^{-1}c_0(M-\lambda)^{\f12},
     \end{align}
      here $C>0$ only depends on $\|V-b\|_{H^4}$.
\begin{lemma}\label{monotone-11}
Assume that $C\|V-b\|_{H^4}\leq \min\{ c_0,\delta/10\}$, $r\in (0,9\delta/10),  |k|>\frac{2}{c_0r},\lambda\in [7/8,M]$, $M=\max\{V\}=V(y_c)$. Let $\Phi$ be the solution to
$$(V(y)-\lambda-\mathrm{i}\epsilon)(\partial_y^2-|k|^2)\Phi=g_1.$$
Then it holds that
\begin{equation}\nonumber
\begin{aligned}
&
\|(\partial_y\Phi,k\Phi)\|_{L^2(\Sigma_{r,y_c})}\leq C\left((r|k|)^{-1}+|k|^{-2}\right) \left(\|(\partial_y g_1,k g_1)\|_{L^2}+|k|\|\Phi\|_{L^2}\right),
 \end{aligned}
\end{equation}
where $C>0$ is independent of $|k|,\lambda,\epsilon$.

Moreover, assume that $C\|V-b\|_{H^4}\leq \min\{ c_0,\delta/10\}$, $\lambda\in [7/8,M]$, $M=\max\{V\}$, $|k|(M-\lambda)^{\f12}\geq 1$. Then it holds that
\begin{align}\label{monotone-11-est}
   &|k|(M-\lambda)^{\f12} \|(\partial_y\Phi,k\Phi)\|_{L^2}\leq C\|(\partial_y g_1,k g_1)\|_{L^2}.
\end{align}
where $C>0$ is independent of $|k|,\lambda,\epsilon$.
\end{lemma}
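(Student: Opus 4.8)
The plan is to view both inequalities as weighted resolvent bounds for the scalar ODE $(\partial_y^2-k^2)\Phi=(V-\lambda-\mathrm{i}\epsilon)^{-1}g_1$ on $\T_{2\pi}$, proved by combining a (localized) energy identity with the limiting-absorption estimate of Lemma~\ref{hardy-type-abp}, which is what supplies the decisive $|k|^{-1/2}$ gain on the singular integral against $(V-\lambda-\mathrm{i}\epsilon)^{-1}$. Throughout I would use the elementary one-dimensional Sobolev bound $\|g\|_{L^\infty(\T)}\lesssim|k|^{-1/2}\|(\partial_y,k)g\|_{L^2}$ (valid for $|k|\ge1$), applied both to $g_1$ and to cutoff-localized copies of $\Phi$, to convert the output of Lemma~\ref{hardy-type-abp} into the stated estimates.

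For the first bound I would fix a smooth cutoff $\chi$ equal to $1$ on $\Sigma_{r,y_c}$, supported in $\Sigma_{r/2,y_c}$, with $|\chi'|\lesssim r^{-1}$, multiply the equation by $\chi^2\overline\Phi$ and integrate over $\T$. Integrating by parts and using $2\mathrm{Re}(\overline\Phi\,\partial_y\Phi)=\partial_y|\Phi|^2$ to move a derivative off the commutator term produces an identity of the shape $\|\chi(\partial_y,k)\Phi\|_{L^2}^2=-\mathrm{Re}\int(V-\lambda-\mathrm{i}\epsilon)^{-1}g_1\chi^2\overline\Phi+O(r^{-2})\|\Phi\|_{L^2}^2$, so that no uncontrolled derivative of $\Phi$ outside $\Sigma_{r,y_c}$ is generated and the error already matches $\big((r|k|)^{-1}|k|\|\Phi\|_{L^2}\big)^2$. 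On $\mathrm{supp}\,\chi$ I would split into the portion inside $[-3\pi/4,3\pi/4]$, where $|V'|\gtrsim c_0 r$ by \eqref{est:yc-3} and Lemma~\ref{hardy-type-abp} applies (with $c_0$ there replaced by $c_0 r/2$; the hypothesis $|k|>2/(c_0 r)$ is precisely what places us in the regime $|k|\ge c_0^{-1}$ of that lemma), and the portion outside, where $|V-\lambda|\gtrsim1$ and the integral is harmless. Applying Lemma~\ref{hardy-type-abp} to $f=g_1\chi^2\overline\Phi$ on each monotonicity interval and estimating $\|\partial_y f\|$ and $|k|\|f\|$ by the product rule and the Sobolev bound above yields $\big|\int(V-\lambda-\mathrm{i}\epsilon)^{-1}g_1\chi^2\overline\Phi\big|\lesssim(r|k|)^{-1}\|(\partial_y,k)g_1\|\big(\|\chi(\partial_y,k)\Phi\|+|k|\|\Phi\|\big)$; Cauchy--Schwarz and absorption of $\|\chi(\partial_y,k)\Phi\|^2$ into the left side (permissible since $r|k|>2/c_0$) give the first estimate.

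For the second bound the gain becomes global. Here $\lambda$ lies near $M$, and from \eqref{est:yc-4}--\eqref{est:yc-5} together with the matching lower bound $M-\lambda\gtrsim(y_\lambda-y_c)^2$ (from $|V''|\gtrsim c_0$ near $y_c$), I would decompose $\T$ through a cutoff $\chi$ concentrated near $\{V=\lambda\}$ into a region $\mathrm{supp}\,\chi$, consisting of two intervals of length $\sim(M-\lambda)^{1/2}$ around the two zeros of $V-\lambda$ near the maximum, on which $|V'|\gtrsim c_0(M-\lambda)^{1/2}$, and its complement, on which $|V-\lambda|\gtrsim M-\lambda$ --- using $\lambda\ge7/8$ near $y=\pi$ and the smallness of the support radius near $y_c$. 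Testing the equation against $\overline\Phi$ over $\T$ gives $\|(\partial_y,k)\Phi\|_{L^2}^2=-\mathrm{Re}\int(V-\lambda-\mathrm{i}\epsilon)^{-1}g_1\overline\Phi$; the contribution of $1-\chi^2$ is bounded pointwise by $(M-\lambda)^{-1}\|g_1\|_{L^2}\|\Phi\|_{L^2}\le(M-\lambda)^{-1}|k|^{-2}\|(\partial_y,k)g_1\|\,\|(\partial_y,k)\Phi\|$, and that of $\chi^2$ by Lemma~\ref{hardy-type-abp} with $c_0$ replaced by $\sim c_0(M-\lambda)^{1/2}$ (legitimate because $|k|(M-\lambda)^{1/2}\ge1$; its lower-order term, and the cutoff commutator of size $(M-\lambda)^{-1/2}$, are absorbed using the same inequality) by $\lesssim(M-\lambda)^{-1/2}|k|^{-1}\|(\partial_y,k)g_1\|\,\|(\partial_y,k)\Phi\|$. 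Dividing by $\|(\partial_y,k)\Phi\|$ and using $|k|(M-\lambda)^{1/2}\ge1$ to absorb the first contribution into the second yields \eqref{monotone-11-est}. Both estimates then feed Proposition~\ref{lem:rayleigh} via \eqref{porb331-0} after rewriting \eqref{porb6} as $(V-\lambda-\mathrm{i}\epsilon)(\partial_y^2-k^2)\Phi=F-V\Phi$, so that $g_1=F-V\Phi$ with $\|(\partial_y,k)g_1\|\lesssim\|F\|_{H^1_k}+\|\Phi\|_{H^1_k}$, and absorbing $\|\Phi\|_{H^1_k}$ once $|k|(M-\lambda)^{1/2}$ is large.

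\emph{Main obstacle.} The conceptual structure is routine, so the real work will lie in the bookkeeping: tracking the powers of $r$ (resp. $(M-\lambda)^{1/2}$) and $|k|$ through every cutoff commutator and every application of Lemma~\ref{hardy-type-abp}, and checking that each error term is either genuinely dominated by the claimed right-hand side or absorbable into the energy on the left. The more delicate point is organizing the geometry around the degenerate critical point $y_c$ so that at every $y\in\T$ one has either a pointwise lower bound on $|V-\lambda|$ or a lower bound on $|V'|$ of the right size, with all constants closing uniformly in $k$, $\lambda$, $\epsilon$; this is exactly where the preparatory estimates \eqref{est:yc-30}--\eqref{est:yc-5} are used.
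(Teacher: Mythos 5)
Your plan reproduces the paper's own argument: a cutoff energy identity on $\Sigma_{r,y_c}$ (resp.\ the global identity $\|(\partial_y\Phi,k\Phi)\|_{L^2}^2=-\mathrm{Re}\int g_1\overline{\Phi}/(V-\lambda-\mathrm{i}\epsilon)$), a splitting into the region near $\pm\pi$ where $|V-\lambda|\gtrsim1$, the region where $|V'|\gtrsim c_0r$ (resp.\ the two intervals of width $\sim y_\lambda-y_c\sim(M-\lambda)^{1/2}$ around the roots of $V-\lambda$, with $|V-\lambda|\gtrsim M-\lambda$ on their complement), an application of Lemma~\ref{hardy-type-abp} combined with the $|k|^{-1/2}$ Sobolev bound for the products $g_1\overline{\Phi}$, and absorption using $r|k|\gtrsim1$ resp.\ $|k|(M-\lambda)^{1/2}\geq1$ — exactly the steps in the paper's proof, up to cosmetic choices such as testing with $\chi^2\overline{\Phi}$ instead of $\eta_r\overline{\Phi}$ and invoking \eqref{hardy-type-abp-0} with its lower-order term absorbed rather than \eqref{hardy-type-abp-1}. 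So the proposal is correct and essentially identical in approach to the paper.
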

\begin{proof}
Let $\eta (y)\in C^\infty_0(\mathbb{R})$ be a cut-off function with $\eta (y)=1$ for $|y|\leq \frac{1}{2}$, $\eta(y)=0$ for $|y|\geq \frac{3}{4}$, and $\eta \in [0,1]$ for $y\in[-\pi,\pi]$. We then define $\eta_r(y)=1-\eta((y-y_c)/r)$ and take a periodic extension. It is easy to see that  $|\eta_r|\in [0,1], |\eta'_r|\lesssim r^{-1},  |\eta''_r|\lesssim r^{-2}$ for $y\in \mathbb{T}_{2\pi}$, $\eta_r=1$ for $y\in\Sigma_{r,y_c},$ $\eta_r=0$ for $y\in \mathbb{T}_{2\pi}\setminus \Sigma_{\frac{r}{2},y_c}$.

Let $\eta_0(y)=\eta(y/\pi)$. A direct calculation yields 
\begin{equation}\label{mono-1}
\begin{aligned}
\int_{\mathbb{T}_{2\pi}}\eta_r &(|\Phi'|^2+|k|^2|\Phi|^2)\mathrm{d}y=\int_{-\pi}^{\pi} \left(\eta_r''\frac{|\Phi|^2}{2}-\eta_r\mathrm{Re} [\Phi''-|k|^2\Phi ]\overline{\Phi} \right)\mathrm{d}y\\
&=\int_{-\pi}^{\pi} \eta_r''\frac{|\Phi|^2}{2}\mathrm{d}y-\mathrm{Re} \int_{ \Sigma_{\frac{r}{2},y_c}}\eta_r \frac{g_1\overline{\Phi}}{V(y)-\lambda-\mathrm{i}\epsilon}\mathrm{d}y\\
&\leq Cr^{-2} \|\Phi\|_{L^2}^2+\left| \int_{ \Sigma_{\frac{r}{2},y_c}}\eta_r \frac{g_1\overline{\Phi}}{V(y)-\lambda-\mathrm{i}\epsilon}\mathrm{d}y\right|\\
&\leq Cr^{-2} \|\Phi\|_{L^2}^2+\left| \int_{ \Sigma_{\frac{r}{2},y_c}\cap[-3\pi/4,3\pi/4]}\eta_0\eta_r \frac{g_1\overline{\Phi}}{V(y)-\lambda-\mathrm{i}\epsilon}\mathrm{d}y\right|\\
&\qquad\qquad\qquad+\left| \int_{ \Sigma_{\frac{r}{2},y_c}}(1-\eta_0)\eta_r \frac{g_1\overline{\Phi}}{V(y)-\lambda-\mathrm{i}\epsilon}\mathrm{d}y\right|.
 \end{aligned}
\end{equation}

Notice that $\inf\limits_{\Sigma_{\frac{r}{2},y_c}\cap[-3\pi/4,3\pi/4]} |V'|\geq \frac{1}{2}c_0r,\, |k|>\left(\frac{1}{2}c_0r\right)^{-1},\, \eta_0|_{y=\pm \pi}=0, \eta_r\left(\pm \frac{r}{2}\right)=0$. We then get by Lemma \ref{hardy-type-abp} that
\begin{equation}\label{mono-2}
\begin{aligned}
&\left| \int_{-\pi}^\pi \frac{\eta_0\eta_r g_1\overline{\Phi}}{V(y)-\lambda-\mathrm{i}\epsilon}\mathrm{d}y\right|\\
&\leq C\left(\frac{1}{2}c_0r\right)^{-1}|k|^{-\frac{1}{2}}\big(\|\partial_y (\eta_0\eta_rg_1\overline{\Phi})\|_{L^2 }+|k|\| (\eta_0\eta_rg_1\overline{\Phi})\|_{L^2}\big)\\
&\leq C\left(\frac{1}{2}c_0r\right)^{-1}|k|^{-\frac{1}{2}}\big(\|\partial_y (\eta_rg_1\overline{\Phi})\|_{L^2 }+|k|\| (\eta_rg_1\overline{\Phi})\|_{L^2}\big).
 \end{aligned}
\end{equation}

Thanks to $|V(y)-\lambda-\mathrm{i}\epsilon|\geq 7/8-C\|V-b\|_{H^4}\geq 3/4$ on the support of $1-\eta_0$, we have
\begin{equation}\label{mono-21}
\begin{aligned}
&\left| \int_{-\pi}^\pi \frac{(1-\eta_0)\eta_r g_1\overline{\Phi}}{V(y)-\lambda-\mathrm{i}\epsilon}\mathrm{d}y\right|
\leq C\|(1-\eta_0)\eta_r g_1\overline{\Phi}\|_{L^1}\leq C\|g_1\|_{L^2}\|\eta_r\Phi\|_{L^2}.
 \end{aligned}
\end{equation}

We introduce
\begin{align*}
&R_1=\|\eta_r\partial_y\Phi\|_{L^2}+|k|\|\eta_r\Phi\|_{L^2}, \quad R_2=\|\partial_y (\eta_r \Phi)\|_{L^2}+ |k|\| \eta_r \Phi\|_{L^2},\\
& R_3=\|\partial_y g_1\|_{L^2} +|k| \| g_1\|_{L^2}.
\end{align*}
Then $R_2\leq R_1+Cr^{-1}\|\Phi\|_{L^2}.$
By the Sobolev embedding, we get
\begin{align*}
\|g_1\|_{L^\infty}\leq C\|g_1\|_{L^2}^{\frac{1}{2}} \|g_1\|_{H^1}^{\frac{1}{2}} \leq C|k|^{-\frac{1}{2}}(\|\partial_y g_1\|_{L^2}+|k|\|g_1\|_{L^2})\leq C|k|^{-\frac{1}{2}} R_3,
 \end{align*}
and $ \|\eta_r\Phi\|_{L^\infty}\leq C|k|^{-\frac{1}{2}} R_2$. Thus, we obtain
\begin{align}\label{mono-3}
\begin{aligned}
&\|\partial_y (\eta_rg_1\overline{\Phi})\|_{L^2 }+|k|\| (\eta_rg_1\overline{\Phi})\|_{L^2}\\
&\leq \|\partial_y g_1\|_{L^2 }\| (\eta_r\overline{\Phi})\|_{L^\infty}+\|g_1\|_{L^\infty}\|\partial_y (\eta_r\overline{\Phi})\|_{L^2} +|k| \|g_1\|_{L^\infty} \| (\eta_r \overline{\Phi})\|_{L^2}\\
&\leq C (R_3\cdot |k|^{-\frac{1}{2}}R_2+|k|^{-\frac{1}{2}}R_3\cdot R_2+|k|\cdot |k|^{-\frac{1}{2}}R_3\cdot|k|^{-\frac{1}{2}}R_2 )\\
&\leq C |k|^{-\frac{1}{2}}R_2R_3.
 \end{aligned}
\end{align}
It follows from \eqref{mono-1}-\eqref{mono-3} that
\begin{align*}
\frac{R_1^2}{2}\leq &\int_{\mathbb{T}_{2\pi}}\eta_r (|\Phi'|^2+|k|^2|\Phi|^2)
\leq Cr^{-2} \|\Phi\|_{L^2}^2\\
&+\left| \int_{-\pi}^{\pi}\eta_0\eta_r \frac{g_1\overline{\Phi}}{V(y)-\lambda-\mathrm{i}\epsilon}\mathrm{d}y\right|+\left| \int_{-\pi}^{\pi}(1-\eta_0)\eta_r \frac{g_1\overline{\Phi}}{V(y)-\lambda-\mathrm{i}\epsilon}\mathrm{d}y\right|\\
\leq & Cr^{-2} \|\Phi\|_{L^2}^2+C\left(\frac{1}{2}c_0r\right)^{-1}|k|^{-1}R_2R_3+C|k|^{-2}R_1R_3\\
\leq & Cr^{-2} \|\Phi\|_{L^2}^2+C(c_0r)^{-1}(R_1+Cr^{-1}\|\Phi\|_{L^2})R_3+C|k|^{-2}R_1R_3,
 \end{align*}
which yields 
\begin{align*}
R_1
\leq  Cr^{-1} \|\Phi\|_{L^2}+C\left((c_0r)^{-1}+|k|^{-1}\right)|k|^{-1} R_3\leq C\left((r|k|)^{-1}+|k|^{-2}\right)(R_3+|k|\|\Phi\|_{L^2}).
 \end{align*}
Then the first inequality of this lemma follows from $\eta_r=1$ in $\Sigma_{r,y_c}$.

Now we turn to prove \eqref{monotone-11-est}. Let $y_{\lambda}\in[y_c,3\pi/4]$ satisfy $V(y_{\lambda})=\lambda$, $\eta_{\lambda}(y)=\eta(\f{y-y_{\lambda}}{y_{\lambda}-y_c})$. By \eqref{est:yc-5}, we know that $|V'(y)|\geq C^{-1}c_0(M-\lambda)^{\f12}$ on $\text{supp}\ \eta_{\lambda}(y)=  [y_\lambda-\f34(y_\lambda-y_c),y_\lambda+\f34(y_\lambda-y_c)]$. We then  have
\begin{align}\label{mono-20}
&\left| \int_{y_c}^\pi \frac{ \eta_{\lambda} g_1\overline{\Phi}}{V(y)-\lambda-\mathrm{i}\epsilon}\mathrm{d}y\right|\nonumber\\
&\leq C\left((M-\lambda)^{\f12}\right)^{-1}|k|^{-\frac{1}{2}}\big(\|\partial_y (\eta_{\lambda}g_1\overline{\Phi})\|_{L^2 }+|k|\| (\eta_\lambda g_1\overline{\Phi})\|_{L^2}\big)\nonumber\\
&\leq  C(M-\lambda)^{-\f12}|k|^{-1}\|(\partial_y g_1,kg_1)\|_{L^2 }\big(\|\partial_y\Phi\|_{L^2}+|k|\| \Phi\|_{L^2}+(M-\lambda)^{-\f12}\|\Phi\|_{L^2}\big)\nonumber\\
&\leq  C(M-\lambda)^{-\f12}|k|^{-1}\|(\partial_y g_1,kg_1)\|_{L^2 }\|(\partial_y \Phi,k\Phi)\|_{L^2 },
 \end{align}
 where we used  \eqref{est:yc-4} and 
 \begin{align*}
    & \|\partial_y (\eta_{\lambda}g_1\overline{\Phi})\|_{L^2 }+|k|\| (\eta_\lambda g_1\overline{\Phi})\|_{L^2}\\
     &\leq C|k|^{-\f12}\|(\partial_y g_1,kg_1)\|_{L^2 }\big(\|\partial_y(\eta_{\lambda}\Phi)\|_{L^2}+|k|\|\eta_{\lambda} \Phi\|_{L^2}\big)\\
     &\leq C|k|^{-\f12}\|(\partial_y g_1,kg_1)\|_{L^2 }\big(\|\partial_y\Phi\|_{L^2}+|k|\| \Phi\|_{L^2}+\|\partial_y\eta_{\lambda}\|_{L^\infty}\|\Phi\|_{L^2}\big)\\
     &\leq C|k|^{-\f12}\|(\partial_y g_1,kg_1)\|_{L^2 }\big(\|\partial_y\Phi\|_{L^2}+|k|\| \Phi\|_{L^2}+(y_{\lambda}-y_{c})^{-1}|\|\Phi\|_{L^2}\big).
 \end{align*}
 
 By \eqref{est:yc-30} and $V'(y)\leq 0$ for $y\in [y_c,\pi/2]$, we obtain
 \begin{align*}
 &V(y_\lambda)-V(y_{\lambda}-(y_{\lambda}-y_c)/2)\geq |V'(y_{\lambda}-(y_{\lambda}-y_c)/2)|(y_\lambda-y_c)/2\geq C^{-1}(y_\lambda-y_c)^2,\\
    & V(y_{\lambda}+(y_{\lambda}-y_c)/2)-V(y_\lambda)\geq |V'(y_{\lambda})|(y_\lambda-y_c)/2\geq C^{-1}(y_\lambda-y_c)^2.
 \end{align*}
 If $y\in [y_c,\pi]\setminus [y_{\lambda}-\f{(y_{\lambda}-y_c)}{2},y_{\lambda}+\f{(y_{\lambda}-y_c)}{2}]$, then we have (using  $V'(y)\leq 0$ for $y\in [y_c,\pi/2]$)
 \begin{align*}
    &|V(y)-\lambda|=|V(y)-V(y_\lambda)|\\
    &\geq \min\left(V(y_\lambda)-V(y_{\lambda}-(y_{\lambda}-y_c)/2), V(y_{\lambda}+(y_{\lambda}-y_c)/2)-V(y_\lambda)\right)\\
    &\geq C^{-1}(y_\lambda-y_c)^2,
 \end{align*} 
 which gives by noticing $[y_c,\pi] \cap\text{supp} (1-\eta_\lambda)\subseteq [y_c,\pi]\setminus [y_{\lambda}-\f{(y_{\lambda}-y_c)}{2},y_{\lambda}+\f{(y_{\lambda}-y_c)}{2}]$ and \eqref{est:yc-4} that
 \begin{align*}
    & \left\|\frac{ 1-\eta_{\lambda}}{V(y)-\lambda-\mathrm{i}\epsilon}\right\|_{L^\infty(y_c,\pi)}\leq C(y_\lambda-y_c)^{-2}\leq C(M-\lambda)^{-1}.
 \end{align*} 
 Then by $|k|(M-\lambda)^{\f12}\geq 1$, we have
 \begin{align}\label{mono-20-1}
\left| \int_{y_c}^\pi \frac{ (1-\eta_{\lambda}) g_1\overline{\Phi}}{V(y)-\lambda-\mathrm{i}\epsilon}\mathrm{d}y\right|
&\leq \left\|\frac{ 1-\eta_{\lambda}}{V(y)-\lambda-\mathrm{i}\epsilon}\right\|_{L^\infty(y_c,\pi)}\|g_1\overline{\Phi}\|_{L^1}
\leq C(M-\lambda)^{-1}\|g_1\|_{L^2}\|\Phi\|_{L^2}\nonumber\\
&\leq  C(M-\lambda)^{-\f12}|k|^{-1}\|(\partial_y g_1,kg_1)\|_{L^2 }\|(\partial_y \Phi,k\Phi)\|_{L^2 },
 \end{align}
which along with with \eqref{mono-20} and \eqref{mono-20-1} gives 
 \begin{align*}
    &\left| \int_{y_c}^\pi \frac{  g_1\overline{\Phi}}{V(y)-\lambda-\mathrm{i}\epsilon}\mathrm{d}y\right|\leq  C(M-\lambda)^{-\f12}|k|^{-1}\|(\partial_y g_1,kg_1)\|_{L^2 }\|(\partial_y \Phi,k\Phi)\|_{L^2 }.
 \end{align*} 
 Similarly, we have
 \begin{align*}
    &\left| \int_{-\pi}^{y_c} \frac{  g_1\overline{\Phi}}{V(y)-\lambda-\mathrm{i}\epsilon}\mathrm{d}y\right|\leq  C(M-\lambda)^{-\f12}|k|^{-1}\|(\partial_y g_1,kg_1)\|_{L^2 }\|(\partial_y \Phi,k\Phi)\|_{L^2 }.
 \end{align*}  
 
 Thus, we arrive at 
 \begin{align*}
    & \|(\partial_y\Phi,k\Phi)\|_{L^2}^2= -\int_{-\pi}^{\pi} \frac{  g_1\overline{\Phi}}{V(y)-\lambda-\mathrm{i}\epsilon}\mathrm{d}y\leq  C(M-\lambda)^{-\f12}|k|^{-1}\|(\partial_y g_1,kg_1)\|_{L^2 }\|(\partial_y \Phi,k\Phi)\|_{L^2 },
 \end{align*}
 which gives \eqref{monotone-11-est}. 
 \end{proof}

\begin{lemma}\label{lem:porb331}
  Let  $f$ satisfy \eqref{eq:Ray-eps-0} and $\phi=\Delta_k^{-1}f$. If
  \begin{equation}\label{porb331}
        \|\phi\|_{H_k^{1}(\mathbb{T}_{2\pi})}\leq C \big\|F\big\|_{H_k^{1}(\mathbb{T}_{2\pi})},
\end{equation}
and $C\|V-b\|_{H^4}\leq \min\{ c_0,\delta/10\}$, $\lambda\in [7/8,M]$, then \eqref{porb331-0} holds.
\end{lemma}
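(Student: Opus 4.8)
The plan is to obtain this lemma as a short deduction from the weighted resolvent bound \eqref{monotone-11-est} of Lemma \ref{monotone-11} combined with the hypothesis \eqref{porb331}; the genuinely hard analysis near the degenerate maximum of $V$ is already contained in Lemma \ref{monotone-11}. First I would record that, with $\phi=\Delta_k^{-1}f$, one has $\|f\|_{H_k^{-1}}=\|(\partial_y,k)\phi\|_{L^2}=\|\phi\|_{H_k^1}$, so the contribution of the ``$1$'' in the prefactor $1+|k|(M-\lambda)^{1/2}$ of \eqref{porb331-0} is exactly \eqref{porb331}. Hence it suffices to prove
\[
|k|(M-\lambda)^{1/2}\,\|\phi\|_{H_k^1}\leq C\|F\|_{H_k^1},
\]
and for this we may assume $|k|(M-\lambda)^{1/2}\geq 1$, since when $|k|(M-\lambda)^{1/2}<1$ the full bound \eqref{porb331-0} already follows from \eqref{porb331} with a doubled constant.

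Next I would recast the Rayleigh equation in stream-function form. Writing $f=\Delta_k\phi$ in \eqref{eq:Ray-eps-0} and multiplying through by $V-\lambda-\mathrm{i}\epsilon$ gives (this is \eqref{porb6})
\[
(V(y)-\lambda-\mathrm{i}\epsilon)\Delta_k\phi=F-V\phi=:g_1\qquad\text{on }\mathbb{T}_{2\pi},
\]
which is precisely the equation governed by Lemma \ref{monotone-11} with $\Phi=\phi$. Its structural hypotheses are met: $C\|V-b\|_{H^4}\leq\min\{c_0,\delta/10\}$ is assumed, $\lambda\in[7/8,M]$ with $M=\max V=V(y_c)$ as in the present setting, and $|k|(M-\lambda)^{1/2}\geq 1$ by the reduction above. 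Applying \eqref{monotone-11-est} therefore yields
\[
|k|(M-\lambda)^{1/2}\,\|\phi\|_{H_k^1}=|k|(M-\lambda)^{1/2}\,\|(\partial_y\phi,k\phi)\|_{L^2}\leq C\|(\partial_y,k)(F-V\phi)\|_{L^2}.
\]

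Finally I would absorb the commutator term on the right. Since $\|V\|_{C^1}\leq\|b\|_{C^1}+C\|V-b\|_{H^4}\leq C$ and $|k|>1$, the Leibniz rule gives $\|(\partial_y,k)(V\phi)\|_{L^2}\leq C\|\phi\|_{H_k^1}$, hence
\[
|k|(M-\lambda)^{1/2}\,\|\phi\|_{H_k^1}\leq C\|F\|_{H_k^1}+C\|\phi\|_{H_k^1}\leq C\|F\|_{H_k^1},
\]
where the last step uses \eqref{porb331}; adding \eqref{porb331} once more produces \eqref{porb331-0}. There is no serious obstacle here: the only point deserving a moment's care is checking that the qualitative bound \eqref{porb331} is strong enough to absorb the zeroth-order term $V\phi$ in $g_1$, which it is, and everything else is bookkeeping. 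The real work — the Hardy-type limiting-absorption estimate near the degenerate critical point $y_c$ of $V$, producing the gain $|k|(M-\lambda)^{1/2}$ — has already been carried out in Lemma \ref{monotone-11}.
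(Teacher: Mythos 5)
Your proposal is correct and follows essentially the same route as the paper: reduce to the case $|k|(M-\lambda)^{1/2}\geq 1$, rewrite \eqref{eq:Ray-eps-0} as $(V-\lambda-\mathrm{i}\epsilon)\Delta_k\phi=F-V\phi$, apply \eqref{monotone-11-est} of Lemma \ref{monotone-11}, and absorb the $V\phi$ term using \eqref{porb331}. Nothing is missing.
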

\begin{proof} If $|k|(M-\lambda)^{\f12}\leq 1$, then \eqref{porb331} implies \eqref{porb331-0} obviously. Now we assume $|k|(M-\lambda)^{\f12}\geq 1$.
  We rewrite  \eqref{eq:Ray-eps-0} as the form
  \begin{align*}
    &(V-\lambda-\mathrm{i}\epsilon)f+V\Delta_k^{-1} f=F ,\\
     &(V-\lambda-\mathrm{i}\epsilon)(\partial_y^2-k^2)\phi=F -V\phi.
  \end{align*}
  By \eqref{monotone-11-est} in Lemma \ref{monotone-11}, we have
  \begin{align*}
    |k|(M-\lambda)^{\f12}& \|(\partial_y\phi,k\phi)\|_{L^2}\leq C\|(\partial_y (F -V\phi),k (F -V\phi))\|_{L^2}\\
     \leq &C\|(\partial_y F,kF)\|_{L^2} +C\|(\partial_y\phi,k\phi)\|_{L^2},
  \end{align*}
  from which and \eqref{porb331}, we deduce {\eqref{porb331-0}}. 
\end{proof}

With Lemma \ref{lem:porb331} at our disposal, we only need to prove \eqref{porb331}. Recall  Lemma 6.3 in \cite{WZZ-APDE}.

\begin{lemma}\label{Lemma6.3APDE-1}
  Assume that $\psi,\ h\in H^1(a,d),$ $u\in H^3(a,d)$ satisfy
  \begin{align*}
     & (u-c)(\psi''-k^2\psi)-u''\psi=h,
  \end{align*}
  where $c\notin\mathbb{R}$, and $u$ is real-valued and $u'(y_0)=0,\ y_0=\f{a+d}{2}\in (a,d)$, $u''(y)u''(y_0)>0$ in $[a,d]$. Then we have
  \begin{align*}
     & |(\psi''-k^2\psi)(y_0)|\leq C\min \big(|u(y_0)-c|^{-\f34},|u(y_0)-c|^{-1}\big) \big(\|\psi\|_{H^1(a,d)}+\|h\|_{H^1(a,d)}\big),
  \end{align*}
  where the constant $C$ {depends only on $u,\ k$ and $\delta=y_0-a$}.
\end{lemma}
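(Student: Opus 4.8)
The plan is to localize to a short window around $y_0$ of length comparable to $|u(y_0)-c|^{1/2}$ and to read off the depletion exponent from a coercive lower bound for $\|\psi\|_{H^1}$, rather than by estimating $w:=\psi''-k^2\psi$ at $y_0$ directly. Write $\mu:=c-u(y_0)\in\mathbb{C}\setminus\mathbb{R}$, so that $|u(y_0)-c|=|\mu|$ and $u(y)-c=(u(y)-u(y_0))-\mu$; since $u'(y_0)=0$ and $u\in H^3(a,d)\hookrightarrow C^2([a,d])$, one has $|u(y)-u(y_0)|\le C_1|y-y_0|^2$ on $[a,d]$ with $C_1$ depending only on $\|u\|_{C^2}$. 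The hypothesis reads $(u-c)w=u''\psi+h$, hence at the critical point
\[
w(y_0)=-\mu^{-1}\bigl(u''(y_0)\psi(y_0)+h(y_0)\bigr),\qquad |u''(y_0)\psi(y_0)+h(y_0)|=|\mu|\,|w(y_0)|.
\]
Since $u\in H^3$ gives $u''\psi+h\in H^1(a,d)$ with $\|(u''\psi+h)'\|_{L^2}\le C(\|\psi\|_{H^1}+\|h\|_{H^1})$ ($C$ depending on $u$ and $\delta$), and $H^1(a,d)\hookrightarrow C^{1/2}([a,d])\cap L^\infty$, the trivial bound $|w(y_0)|\le C|\mu|^{-1}(\|\psi\|_{H^1}+\|h\|_{H^1})$ is immediate; it already yields the claim when $|\mu|\ge1$, so from now on $|\mu|\le1$ and it remains to prove $|w(y_0)|\le C|\mu|^{-3/4}(\|\psi\|_{H^1}+\|h\|_{H^1})$.

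Next I would fix the symmetric window $I_r:=[y_0-r,y_0+r]$ with $r:=\epsilon_*|\mu|^{1/2}$, where $\epsilon_*=\epsilon_*(\|u\|_{C^2},\delta)\in(0,1)$ is chosen small enough that $C_1r^2\le|\mu|/2$ and $r\le\delta$; then $|u(y)-c|\in[|\mu|/2,2|\mu|]$ on $I_r$. On $I_r$, split the numerator as $u''(y)\psi(y)+h(y)=\bigl(u''(y_0)\psi(y_0)+h(y_0)\bigr)+\bigl[(u''\psi+h)(y)-(u''\psi+h)(y_0)\bigr]$: the second piece is $O(|y-y_0|^{1/2})$ in the $H^1$–norms by $C^{1/2}$–regularity, so after dividing by $|u-c|\sim|\mu|$ it contributes $\lesssim\epsilon_*^{1/2}|\mu|^{-3/4}(\|\psi\|_{H^1}+\|h\|_{H^1})$ to $w(y)-w(y_0)$; the first (constant) piece times $\bigl(\tfrac1{u(y)-c}-\tfrac1{u(y_0)-c}\bigr)$ is controlled using $|u''(y_0)\psi(y_0)+h(y_0)|=|\mu|\,|w(y_0)|$ and $|u(y)-u(y_0)|\le C_1r^2$, contributing $\lesssim\epsilon_*^2|w(y_0)|$. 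Hence
\[
\|w-w(y_0)\|_{L^\infty(I_r)}\le C\epsilon_*^2|w(y_0)|+C\epsilon_*^{1/2}|\mu|^{-3/4}\bigl(\|\psi\|_{H^1}+\|h\|_{H^1}\bigr).
\]

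Then I would use the ODE $\psi''=w+k^2\psi$ on $I_r$: writing $\psi''(y)=w(y_0)+\rho(y)$ with $\rho=(w-w(y_0))+k^2\psi$, the previous display together with $\|\psi\|_{L^\infty}\lesssim\|\psi\|_{H^1}$ gives $\|\rho\|_{L^\infty(I_r)}\le C\epsilon_*^2|w(y_0)|+C\epsilon_*^{1/2}|\mu|^{-3/4}(\|\psi\|_{H^1}+\|h\|_{H^1})+Ck^2\|\psi\|_{H^1}$. Integrating, $\psi'(y)=\psi'(y_0)+w(y_0)(y-y_0)+R(y)$ with $|R(y)|\le r\|\rho\|_{L^\infty(I_r)}$ on $I_r$; using $|a+b|^2\ge\tfrac12|a|^2-|b|^2$ and the vanishing of the linear-in-$(y-y_0)$ cross term over the symmetric interval $I_r$,
\[
\|\psi\|_{H^1}^2 \ge \|\psi'\|_{L^2(I_r)}^2 \ge \tfrac12\cdot\tfrac{2r^3}{3}|w(y_0)|^2-2r^3\|\rho\|_{L^\infty(I_r)}^2 .
\]
Substituting $r^3=\epsilon_*^3|\mu|^{3/2}$ and the bound for $\|\rho\|_{L^\infty(I_r)}$: the term $2r^3\bigl(C\epsilon_*^2|w(y_0)|\bigr)^2=2C^2\epsilon_*^7|\mu|^{3/2}|w(y_0)|^2$ is, for $\epsilon_*$ small, at most a quarter of $\tfrac{r^3}{3}|w(y_0)|^2$ and is absorbed, while the remaining contributions to $2r^3\|\rho\|_{L^\infty(I_r)}^2$ are $\lesssim\epsilon_*^4(\|\psi\|_{H^1}+\|h\|_{H^1})^2+\epsilon_*^3k^4|\mu|^{3/2}\|\psi\|_{H^1}^2\le C(k,\delta)(\|\psi\|_{H^1}^2+\|h\|_{H^1}^2)$ (here $|\mu|^{3/2}\le1$). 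This leaves $\tfrac{r^3}{4}|w(y_0)|^2\le C(k,\delta)(\|\psi\|_{H^1}^2+\|h\|_{H^1}^2)$, i.e. $|w(y_0)|\le C(u,k,\delta)\,|\mu|^{-3/4}(\|\psi\|_{H^1}+\|h\|_{H^1})$; combined with the trivial bound this gives the $\min$ in the statement.

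The one step that is genuinely the point — and easy to miss — is the second inequality of the last display: rather than bounding $w(y_0)$ through $\psi(y_0)$ and $h(y_0)$ (which only produces $|\mu|^{-1}$), one observes that a large $w(y_0)$ forces $\psi''\approx w(y_0)$ throughout the critical layer $I_r$ of width $\sim|\mu|^{1/2}$, hence $\psi'$ to grow linearly across $I_r$, hence $\|\psi'\|_{L^2}\gtrsim|w(y_0)|\,|I_r|^{3/2}\sim|w(y_0)|\,|\mu|^{3/4}$; the exponent $3/4$ is exactly $|I_r|^{3/2}$. Everything else — the quadratic vanishing of $u-u(y_0)$ at $y_0$, the $C^{1/2}$–regularity of $u''\psi+h$, the perturbative treatment of $k^2\psi$ — is routine bookkeeping. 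The only care needed is to choose the window radius $\epsilon_*|\mu|^{1/2}$ with $\epsilon_*$ small, depending on $\|u\|_{C^2}$ and $\delta$, so that the ``constant part'' of $w-w(y_0)$ on $I_r$ is a genuine fraction of $|w(y_0)|$ rather than merely comparable to it; then all the error terms close.
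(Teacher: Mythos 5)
Your proposal is correct, but it takes a genuinely different route from the paper's. The paper proves this statement by citing Lemma 6.3 of \cite{WZZ-APDE}, and the method it actually displays (in the proof of the refined Lemma \ref{Lemma6.3APDE}) is a flux argument: one integrates the identity $\big((u-c)\psi'-u'\psi\big)'=k^2\psi(u-c)+h$ over a critical-layer window of width $\sim|u(y_0)-c|^{1/2}$, selects endpoints where $|\psi'|$ is controlled on average, and uses the nondegeneracy $|u''|\geq c_{01}>0$ (hence the linear behavior of $u'$ near $y_0$) to first extract the depletion bound $|\psi(y_0)|\lesssim |u(y_0)-c|^{1/4}$, and only then divides by $u(y_0)-c$. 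You instead estimate $w(y_0)=(\psi''-k^2\psi)(y_0)$ directly: the $C^{1/2}$-continuity of $u''\psi+h$ and the near-constancy of $u-c$ on the window $|y-y_0|\le \epsilon_*|u(y_0)-c|^{1/2}$ force $\psi''\approx w(y_0)$ there up to an error $\lesssim \epsilon_*^2|w(y_0)|+\epsilon_*^{1/2}|u(y_0)-c|^{-3/4}(\|\psi\|_{H^1}+\|h\|_{H^1})$, and the resulting linear growth of $\psi'$ across the symmetric window gives $\|\psi'\|_{L^2}\gtrsim |w(y_0)|\,|u(y_0)-c|^{3/4}$ once the self-interaction term is absorbed by taking $\epsilon_*$ small (a choice depending only on $\|u\|_{C^2}$ and $\delta$, so there is no circularity); the trivial bound covers $|u(y_0)-c|\ge1$ and produces the stated $\min$. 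Your argument is correct as written (the pointwise use of $\psi''$ is justified since $c\notin\mathbb{R}$ forces $\psi\in C^2$, which you use implicitly), and it is in one respect more elementary and more general: it never invokes the hypothesis $u''(y)u''(y_0)>0$, so no lower bound on $|u''|$ is needed, with constants depending only on $\|u\|_{H^3}$, $k$ and $\delta$. What the paper's route buys in exchange is the intermediate pointwise bound on $|\psi(y_0)|$ itself, which is exactly what the refinement in Lemma \ref{Lemma6.3APDE} combines with $h(y_0)=0$ to gain the additional factor $|k|^{-1/2}$; your coercivity argument yields only the bound on $w(y_0)$ and would need modification to recover that refinement.
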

Using a similar proof, we can refine this lemma as follows.
 
\begin{lemma}\label{Lemma6.3APDE}
  Assume that $\psi,\ h\in H^1(a,d),$ $u\in H^3(a,d)$ satisfy
  \begin{align*}
     & (u-c)(\psi''-k^2\psi)-u''\psi=h,
  \end{align*}
  where $c\notin\mathbb{R}$, and $u$ is real-valued and $u'(y_0)=0,\ y_0=\f{a+d}{2}\in (a,d)$, $u''(y)u''(y_0)>0$ in $[a,d]$. If $|k||u(y_0)-c|^{\f12}\leq 1$, $h(y_0)=0$, then we have
  \begin{align*}
     & |(\psi''-k^2\psi)(y_0)|\leq C\min \big(|u(y_0)-c|^{-\f34},|k|^{-\f12}|u(y_0)-c|^{-1}\big) \big(\|\psi\|_{H^1_k(a,d)}+\|h\|_{H^1_k(a,d)}\big),
  \end{align*}
  where the constant $C$ depends on $\inf_{y\in[a,d]}|u''|$, $\|u\|_{H^3(a,d)}$ and $\delta=y_0-a$
\end{lemma}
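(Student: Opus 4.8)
The plan is to mirror the proof of Lemma \ref{Lemma6.3APDE-1} (Lemma 6.3 in \cite{WZZ-APDE}), now keeping careful track of the powers of $k$ and exploiting the two additional hypotheses $h(y_0)=0$ and $|k|\,|u(y_0)-c|^{1/2}\le1$. Set $W:=\psi''-k^2\psi$ and $\mu:=u(y_0)-c$; since $u$ is real valued and $c\notin\R$ we have $|\mathbf{Im}\,\mu|=|\mathbf{Im}\,c|>0$. Evaluating the equation $(u-c)W=u''\psi+h$ at $y_0$ and using $u'(y_0)=0$ together with $h(y_0)=0$ gives $W(y_0)=u''(y_0)\psi(y_0)/\mu$, so the whole statement reduces to two bounds for $\psi(y_0)$, corresponding to the two terms in the minimum. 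Note that retaining $h(y_0)$ would only cost $|h(y_0)|/|\mu|\lesssim|k|^{-1/2}|\mu|^{-1}\|h\|_{H^1_k}$, which is harmless for the second term but of size $\sim|\mu|^{-1}$ for the first; hence the vanishing of $h$ at $y_0$ is precisely what is needed for the sharp $|\mu|^{-3/4}$ bound.

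The first bound is elementary: by the one-dimensional Agmon inequality $\|f\|_{L^\infty(a,d)}^2\lesssim\|f\|_{L^2}(\|f'\|_{L^2}+\|f\|_{L^2})\lesssim|k|^{-1}\|f\|_{H^1_k(a,d)}^2$ (valid since $|k|>1$), we get $|\psi(y_0)|\lesssim|k|^{-1/2}\|\psi\|_{H^1_k(a,d)}$, hence $|W(y_0)|\lesssim|k|^{-1/2}|\mu|^{-1}\|\psi\|_{H^1_k(a,d)}$, which accounts for the $|k|^{-1/2}|u(y_0)-c|^{-1}$ term in the min.

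For the sharper $|\mu|^{-3/4}$ bound we run the near-critical-point analysis of \cite{WZZ-APDE}. Multiplying $\psi''-k^2\psi=(u''\psi+h)/(u-c)$ by $u-c$ and noting that $(u-c)\psi''-u''\psi=\frac{d}{dy}[(u-c)\psi'-u'\psi]$, we obtain the identity $\frac{d}{dy}[(u-c)\psi'-u'\psi]=k^2(u-c)\psi+h$; a second integration, after dividing by $(u-c)^2$ and using $\frac{d}{dy}\frac{\psi}{u-c}=\frac{(u-c)\psi'-u'\psi}{(u-c)^2}$, expresses $\psi(y_0)$ as a combination of $\psi$ and $h$ integrated against the kernels $(u-c)^{-1}$ and $(u-c)^{-2}$ over a neighbourhood of $y_0$ of length $\ell:=\min(|\mu|^{1/2},\delta)$, plus a remainder from $|y-y_0|\gtrsim\ell$ where $|u-c|\gtrsim1$ and all terms are controlled trivially. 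Since $u'(y_0)=0$ and $\inf_{[a,d]}|u''|>0$, on $|y-y_0|\le\ell$ we have $|u(y)-c|\sim|\mu|+|y-y_0|^2$, so $\big\||u-c|^{-1}\big\|_{L^1}\lesssim|\mu|^{-1/2}$, $\big\||u-c|^{-1}\big\|_{L^2}\lesssim|\mu|^{-3/4}$ and $\big\||u-c|^{-2}\big\|_{L^1}\lesssim|\mu|^{-3/2}$; feeding these into the representation via Cauchy–Schwarz against $\|\psi\|_{H^1_k}+\|h\|_{H^1_k}$ yields $|\psi(y_0)|\lesssim|\mu|^{1/4}(\|\psi\|_{H^1_k(a,d)}+\|h\|_{H^1_k(a,d)})$, hence the claimed bound on $W(y_0)$. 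Here the hypothesis $|k|\,|\mu|^{1/2}\le1$ is exactly what makes the term $k^2(u-c)\psi$ lower order on the scale $\ell$, and $h(y_0)=0$ keeps the endpoint contributions at $y=y_0$ in these integrations by parts consistent with the $|\mu|^{-3/4}$ target rather than only $O(|\mu|^{-1})$. Taking the minimum of the two bounds finishes the proof.

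I expect the third step to be the main obstacle: the double-integration representation and the estimation of the singular kernels $(u-c)^{-1}$, $(u-c)^{-2}$ near $y_0$ demand careful bookkeeping — the cubic Taylor remainder of $u$ at $y_0$, the choice of cutoff at scale $\ell\le\delta$ so the argument stays inside $(a,d)$, and, since only $\psi\in H^1$ is available, the necessity of integrating the above identities against a smooth cutoff rather than evaluating $\psi'$ pointwise. All of this is already carried out, in a slightly coarser form, in \cite{WZZ-APDE}; the work here is to re-run it while retaining the sharp $k$-powers and invoking $h(y_0)=0$ at the points where it helps.
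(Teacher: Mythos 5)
Your reduction to estimating $\psi(y_0)$ (via $W(y_0)=u''(y_0)\psi(y_0)/(u(y_0)-c)$, which uses $h(y_0)=0$) and your first bound $|\psi(y_0)|\lesssim |k|^{-1/2}\|\psi\|_{H^1_k}$, giving the $|k|^{-1/2}|u(y_0)-c|^{-1}$ half of the minimum, are exactly the paper's first step, and you correctly identify where the hypotheses $h(y_0)=0$ and $|k||u(y_0)-c|^{1/2}\le 1$ must enter. The gap is in your third paragraph, i.e.\ in the only hard part. Write $\mu=u(y_0)-c$, $\delta_1=|\mu|^{1/2}$. The paper never divides by $(u-c)^2$: it integrates the exact derivative $h_1=\big((u-c)\psi'-u'\psi\big)'=k^2(u-c)\psi+h$ over a \emph{symmetric} interval $[y_1,y_2]=[y_0-\delta_2,y_0+\delta_2]$ with $\delta_2\in(\delta_1/2,\delta_1)$ chosen by pigeonhole so that $|\psi'(y_1)|^2+|\psi'(y_2)|^2\lesssim\delta_1^{-1}$; this gives $\big|\big((u-c)\psi'-u'\psi\big)\big|_{y_1}^{y_2}\big|\lesssim\delta_1^{3/2}$ (here $|h(y)|\lesssim\delta_1^{1/2}$ from $h(y_0)=0$ and $|k|\delta_1\le1$ enter), hence $\big|u'\psi\big|_{y_1}^{y_2}\big|\lesssim\delta_1^{3/2}$, and then $\psi(y_0)$ is extracted from $u'\psi\big|_{y_1}^{y_2}=\psi(y_0)\,u'\big|_{y_1}^{y_2}+u'(y_2)(\psi(y_2)-\psi(y_0))-u'(y_1)(\psi(y_1)-\psi(y_0))$ using $|u'\big|_{y_1}^{y_2}|\gtrsim\delta_1$ (the sign condition on $u''$) and $|\psi(y_{1,2})-\psi(y_0)|\lesssim\delta_2^{1/2}\|\psi'\|_{L^2}\lesssim\delta_1^{1/2}$. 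This symmetric cancellation is what yields $|\psi(y_0)|\lesssim\delta_1^{1/2}=|\mu|^{1/4}$.

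Your scheme — representing $\psi(y_0)$ through $\frac{d}{dy}\frac{\psi}{u-c}=\frac{(u-c)\psi'-u'\psi}{(u-c)^2}$ and then estimating with the kernel norms $\||u-c|^{-1}\|_{L^1},\||u-c|^{-1}\|_{L^2},\||u-c|^{-2}\|_{L^1}$ and Cauchy--Schwarz — does not close as stated, because absolute-value kernel bounds destroy precisely this cancellation. The quantity $G=(u-c)\psi'-u'\psi$ has to be anchored somewhere: if you anchor at a point at distance $\sim\delta_1$ from $y_0$, then there $|G|\lesssim\delta_1^2|\psi'|+\delta_1\|\psi\|_{L^\infty}\lesssim\delta_1\|\psi\|_{L^\infty}$ (the $u'\psi$ piece dominates), and pairing this with $\||u-c|^{-2}\|_{L^1}\sim\delta_1^{-3}$ gives only $|\psi(y_0)/\mu|\lesssim\delta_1^{-2}\|\psi\|_{L^\infty}$, i.e.\ the trivial bound $|W(y_0)|\lesssim|\mu|^{-1}$ (equivalently the circular statement $|\psi(y_0)|\lesssim\|\psi\|_{L^\infty}$), not $|\mu|^{-3/4}$; if instead you anchor at unit distance so that the boundary term $\psi/(u-c)$ is $O(1)$, then $|G|$ near $y_0$ picks up $\int|h_1|$ over a unit interval, of size up to $k^2\|\psi\|_{L^2}\sim|k|$, and $|k|\cdot\delta_1^{-3}$ is far too large. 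To recover $|\mu|^{1/4}$ one must use the identity $u'/(u-c)^2=-\partial_y\frac{1}{u-c}$ together with subtracting $\psi(y_0)$, which after the algebra is exactly the symmetric Wronskian-jump argument above; so this step should be replaced by (or rewritten as) the paper's argument. The remaining ingredients of your outline — the Hölder bound $|h(y)|\lesssim|y-y_0|^{1/2}\|h'\|_{L^2}$ from $h(y_0)=0$, the use of $|k|\delta_1\le1$ to make $k^2(u-c)\psi$ lower order, and the restriction to $|\mu|<\delta^2$ — are correct and are used in the same way in the paper.
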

\begin{proof}
  We may assume that $|u''|>c_{01}>0$ in $[a, d]$. Then we have
  \begin{align}
     &|u'(y)|\leq |y-y_0|,\ |u(y)-u(y_0)|\leq C|y-y_0|^2,\nonumber\\
     &|u'(y)-u'(y_1)|\geq c_{01}|y-y_1|\quad \text{for }\ y,y_1\in[a,d].\label{eat:Lemma6.3APDE-0}
  \end{align}
  It is easy to see that
  \begin{align*}
     & |(\psi''-k^2\psi)(y_0)|=\left|\dfrac{u''\psi-h}{u-c}\right|(y_0)\leq C|k|^{-\f12}\dfrac{\|\psi\|_{H^1_k}+\|h\|_{H^1_k}}{|u(y_0)-c|}.
  \end{align*} 
  Thus, it suffices to consider the case $|u(y_0)-c|< \delta^2$. Let $\delta_1=|u(y_0)-c|^{\f12}$. So, $\delta_1<\delta$.

 We normalize $\|\psi\|_{H^1_k(a,d)}+\|h\|_{H^1_k(a,d)}=1$. Then for $|y-y_0|\leq \delta_1$,
\begin{align*}
   & |h(y)|\leq C\delta_1^{\f12},\ |u(y)-c|\leq |u(y)-u(y_0)|+|u(y_0)-c|\leq C\delta_1^2. 
\end{align*}
Let $h_1=\big((u-c)\psi'-u'\psi\big)'=k^2\psi(u-c)+h$. Then for $|y-y_0|<\delta_1$(using $|k|\delta_1=|k||u(y_0)-c|^{\f12}\leq 1$), 
\begin{align*}
    |h_1(y)|\leq& |k|^2\|\psi\|_{L^\infty}|u(y)-c|+|h(y)|\\
   \leq& C|k|^{\f32}\|\psi\|_{H^1_k}|u(y)-c|+|h(y)|\leq C|k|^{\f32}\delta_1^2+C\delta_1^{\f12}\leq C\delta_1^{\f12},
\end{align*}
which leads to
\begin{align}\label{eat:Lemma6.3APDE-1}
   & \left|\big((u-c)\psi'-u'\psi\big)|_{y_1}^{y_2}\right|=\left|\int_{y_1}^{y_2}h_1(y)\mathrm{d}y\right| \leq C\delta^{\f32}_1.
\end{align} 

Choose $\delta_2\in(\delta_1/2,\delta_1),$ $y_1=y_0-\delta_2,$ $y_2=y_0+\delta_2$ such that
\begin{align*}
   & |\psi'(y_1)|^2+|\psi'(y_2)|^2\leq 2\delta_1^{-1}\|\psi'\|^2_{L^2(a,d)}\leq 2\delta_1^{-1},
\end{align*}
which gives
\begin{align*}
   &  \left|(u-c)\psi'|_{y_1}^{y_2}\right|\leq (|\psi'(y_1)|+|\psi'(y_2)|)\|u-c\|_{L^\infty([y_1,y_2])}\leq C\delta_1^{-\f12}\delta_1^2=C\delta_1^{\f32}.
\end{align*}
This along with \eqref{eat:Lemma6.3APDE-1} shows that $\left|u'\psi|_{y_1}^{y_2}\right|\leq C\delta_1^{\f32}$. Notice that
\begin{align*}
   & \psi u'|_{y_1}^{y_2}=\psi(y_0)u'|_{y_1}^{y_2}+u'(y_1)\psi|_{y_1}^{y_0}+u'(y_2)\psi|_{y_0}^{y_2}.
\end{align*}
Thus, we get by \eqref{eat:Lemma6.3APDE-0} that
\begin{align*}
   & c_{01}\delta_1|\psi(y_0)|\leq \left|\psi(y_0)u'|_{y_1}^{y_2}\right| \leq \left|\psi u'|_{y_1}^{y_2}\right| +C\|u'\|_{L^\infty([y_1,y_2])}\delta_2^{\f12}\|\psi'\|_{L^2(a,b)},
\end{align*}
which yields 
\begin{align*}
   & |\psi(y_0)|\leq C\delta_1^{\f12}= C|u(y_0)-c|^{\f12},
\end{align*}
and thus,
\begin{align*}
   & |(\psi''-k^2\psi)(y_0)|=\left|\dfrac{u''\psi(y_0)}{u(y_0)-c}\right|\leq C\delta_1^{-2}|\psi(y_0)|\leq C\delta_1^{-\f32}.
\end{align*}
\end{proof}

The proofs of the following two lemmas represent minor revisions of the corresponding lemmas in Appendix C \cite{CDLZ}. For the reader's convenience, we provide complete proofs here.

\begin{lemma}\label{critical-3}
Let $k_j\in\mathbb{R}, \psi_j,h_j\in H^1(a,d), u_j\in H^3(a,d)$ be  sequences satisfying 
\begin{equation}\nonumber
\begin{aligned}
&\|(\partial_y\psi_j,k_j\psi_j)\|_{L^2(a,d)} \leq 2,\quad \|(\partial_yh_j,k_jh_j)\|_{L^2(a,d)} \to 0,\\ &(u_j-c_j)(\psi_j''-k_j^2\psi_j)-u''_j\psi_j=h_j,\quad u_j\rightarrow  \cos (y-(a+d)/2)\ \in H^3(a,d)
 \end{aligned}
\end{equation}
and $|k_j|\to +\infty, 0<\mathrm{Im}\ c_j\rightarrow 0, k_j^2(c_j-u(y_0))\to 0, u'_j(y_0)=h_j(y_0)=0, y_0=\frac{a+d}{2}, \delta=y_0-a \in [0,1], u''_j(y) u''_j(y_0)>0, y\in [a,d]$. Then it holds that
$$\|\psi_j'\|_{L^2(y_0-\delta/2,y_0+\delta/2)}+|k_j|\|\psi_j\|_{L^2(y_0-\delta/2,y_0+\delta/2)} \to 0.$$
\end{lemma}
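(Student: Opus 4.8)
The plan is to argue by contradiction and blow up the solution near the critical point $y_0$ at the frequency scale $1/k_j$, reducing the problem to a limiting ODE with a critical inverse-square potential; this is the $|k_j|\to\infty$ analogue of Lemma~\ref{critical-1} (which requires $k_j$ bounded and cannot be used here, since the naive weak $H^1$-limit of $\psi_j$ is forced to vanish by $\|\psi_j\|_{L^2}\le2/|k_j|\to0$). Suppose the conclusion fails; after passing to a subsequence, assume $\|\psi_j'\|_{L^2(y_0-\delta/2,y_0+\delta/2)}+|k_j|\|\psi_j\|_{L^2(y_0-\delta/2,y_0+\delta/2)}\ge c_*>0$. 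Set $a_j:=u_j''(y_0)$ and $\mu_j:=c_j-u_j(y_0)$; from $u_j\to\cos(\cdot-y_0)$ in $H^3$ we get $a_j\to-1$ and $u_j''<0$ on $[a,d]$, and from $k_j^2(c_j-u(y_0))\to0$ we get $k_j^2\mu_j\to0$, so the critical layer has half-width $\lesssim|\mu_j|^{1/2}\ll|k_j|^{-1}$, far below the rescaling length --- this is the structural reason the blown-up problem has a clean limit. Note also $\|\psi_j\|_{H^1_{k_j}}\le2$, $\|h_j\|_{H^1_{k_j}}\to0$.

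The first ingredient I would use is pointwise smallness of $\psi_j$ at $y_0$. Since $h_j(y_0)=0$ and $|k_j|\,|u_j(y_0)-c_j|^{1/2}=(k_j^2|\mu_j|)^{1/2}\le1$, Lemma~\ref{Lemma6.3APDE} and the equation evaluated at $y_0$ give $|\psi_j(y_0)|\lesssim\min(|\mu_j|^{1/4},|k_j|^{-1/2})\big(\|\psi_j\|_{H^1_{k_j}}+\|h_j\|_{H^1_{k_j}}\big)\lesssim|\mu_j|^{1/4}$, and hence, through $|\psi_j(y)-\psi_j(y_0)|\le|y-y_0|^{1/2}\|\psi_j'\|_{L^2}$, an $L^\infty$-bound on $\psi_j$ near $y_0$. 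This is exactly the point at which the vorticity-depletion hypothesis $h_j(y_0)=0$ is used.

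Next I would rescale: put $\lambda_j:=1/k_j$, $w_j(s):=\psi_j(y_0+\lambda_j s)$ for $|s|\le\delta/\lambda_j$, and $W_j:=w_j/N_j$ with $N_j$ the $H^1$-norm of $w_j$ on $|s|\le\delta/(2\lambda_j)$. A change of variables gives $N_j=|k_j|^{-1/2}\big(\|\psi_j'\|_{L^2(y_0-\delta/2,y_0+\delta/2)}+|k_j|\|\psi_j\|_{L^2(y_0-\delta/2,y_0+\delta/2)}\big)$, so $N_j\sim|k_j|^{-1/2}$, while $\|W_j\|_{H^1(|s|\le\delta/\lambda_j)}\lesssim1$ and $\|W_j'\|_{L^2}+\|W_j\|_{L^2}=1$ on $|s|\le\delta/(2\lambda_j)$. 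Dividing the equation by $k_j^2$ turns it into $W_j''-W_j-\frac{\lambda_j^2u_j''(y_0+\lambda_js)}{u_j(y_0+\lambda_js)-c_j}W_j=N_j^{-1}\frac{\lambda_j^2h_j(y_0+\lambda_js)}{u_j(y_0+\lambda_js)-c_j}$. Because $u_j(y_0+\lambda_js)-c_j=\tfrac{a_j}{2}\lambda_j^2s^2-\mu_j+O(\lambda_j^3s^3)$ and $\mu_j/\lambda_j^2=k_j^2\mu_j\to0$, the coefficient of $W_j$ converges to $2/s^2$ locally uniformly on $\R\setminus\{0\}$, and using $h_j(y_0)=0$, $\|h_j\|_{H^1_{k_j}}\to0$ and $|u_j-c_j|\gtrsim\lambda_j^2s^2$ for $|s|\gtrsim1$, the right side tends to $0$ in $L^2_{\mathrm{loc}}$. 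Along a further subsequence, $W_j\rightharpoonup W_\infty$ in $H^1_{\mathrm{loc}}(\R)$ with $W_\infty\in H^1(\R)$ solving $W_\infty''-W_\infty-\tfrac2{s^2}W_\infty=0$ on $\R\setminus\{0\}$. The indicial exponents at $0$ are $2$ and $-1$, and $s^{-1}\notin H^1_{\mathrm{loc}}$, so $W_\infty$ vanishes to second order at $0$; the energy identity over $\R$ (boundary contributions at $0$ and at $\pm\infty$ vanishing by this behaviour and by the exponential decay forced by the $-W_\infty$ term) then forces $W_\infty\equiv0$.

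The final and hardest step is to upgrade the local convergence $W_j\to0$ to $\int_{|s|\le\delta/(2\lambda_j)}(|W_j'|^2+|W_j|^2)\to0$, which contradicts the normalization; equivalently, to rule out escape of mass to large $|s|$. I would pick cut-off radii $R_j^\pm\in[\delta/(2\lambda_j),\delta/\lambda_j]$ at which $|W_j|^2+|W_j'|^2\lesssim\lambda_j$ (pigeonholing the uniform $H^1$-bound over an interval of length $\sim\lambda_j^{-1}$) and pair the rescaled equation with $\overline{W_j}$ on $[-R_j^-,R_j^+]$. The boundary terms are then $o(1)$; the forcing term is controlled as in the previous step, now also using the $L^\infty$-bound near $y_0$; and in $-\mathrm{Re}\int\frac{\lambda_j^2u_j''}{u_j-c_j}|W_j|^2$ the contribution of $|s|\ge1$ is nonnegative (there $\mathrm{Re}\big(u_j(y_0+\lambda_js)-c_j\big)\sim-\lambda_j^2s^2<0$ and $u_j''<0$) and may be discarded, while the contribution of $|s|\le1$ --- which contains the shrinking critical layer $|s|\lesssim(k_j^2|\mu_j|)^{1/2}$ --- is estimated by inserting the bound $|W_j(s)|\lesssim N_j^{-1}\big(|\mu_j|^{1/4}+(\lambda_j|s|)^{1/2}\big)$ from the first step together with $|u_j-c_j|\ge\mathrm{Im}\,c_j$ and $|u_j-c_j|\gtrsim\lambda_j^2s^2$ off the layer. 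This critical-layer estimate is the delicate point, and it is precisely there that one must combine the refined pointwise bound of Lemma~\ref{Lemma6.3APDE} (hence the hypothesis $h_j(y_0)=0$) with the hypothesis $k_j^2(c_j-u(y_0))\to0$, which makes the layer negligibly thin after rescaling.
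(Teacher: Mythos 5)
Your setup, the pointwise bound $|\psi_j(y_0)|\lesssim|c_j-u_j(y_0)|^{1/4}$ via Lemma~\ref{Lemma6.3APDE} (this is indeed where $h_j(y_0)=0$ enters, and it is also the paper's first step), and the identification of the limit equation $W_\infty''-W_\infty-\tfrac{2}{s^2}W_\infty=0$ with the conclusion $W_\infty\equiv0$ are all fine. The genuine gap is in your final step, exactly at the point you call delicate. Because you blow up at the scale $1/k_j$ while the critical layer has width $r_j=|c_j-u_j(y_0)|^{1/2}\ll 1/|k_j|$, the layer collapses to the single point $s=0$, so the local convergence $W_j\to0$ in $H^1_{\mathrm{loc}}$ gives no information about the layer's contribution to $\mathrm{Re}\int\frac{\lambda_j^2u_j''}{u_j-c_j}|W_j|^2\,ds$, and the ingredients you list cannot control it. When $\mathrm{Re}\,c_j<u_j(y_0)$ the sign of $\mathrm{Re}\,\frac{u_j''}{u_j-c_j}$ is unfavorable on the set $\{u_j>\mathrm{Re}\,c_j\}$ inside the layer; using only $|W_j|^2\lesssim (k_j^2|c_j-u_j(y_0)|)^{1/2}$ there (which is all the first step gives) together with $|u_j-c_j|\ge\mathrm{Im}\,c_j$, the resulting bound for that piece is of order $|c_j-u_j(y_0)|/\mathrm{Im}\,c_j$, or with the sharper splitting $|u_j-c_j|\ge\max(|u_j-\mathrm{Re}\,c_j|,\mathrm{Im}\,c_j)$ of order $r_j|\mathrm{Re}(c_j-u_j(y_0))|^{-1/2}\log\big(|c_j-u_j(y_0)|/\mathrm{Im}\,c_j\big)$. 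Neither is $o(1)$; both can diverge, since no hypothesis relates $\mathrm{Im}\,c_j$ to $|c_j-u_j(y_0)|$ or to $\|h_j\|_{H^1_{k_j}}$. The forcing term over the layer suffers from the same uncontrolled $1/\mathrm{Im}\,c_j$. So the contradiction $1\le o(1)$ cannot be reached with the tools you invoke.

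What is missing is a second blow-up at the critical-layer scale itself, which is how the paper closes the argument: writing $c_j-u_j(y_0)=r_j^2e^{2\mathrm{i}\theta_j}$ and setting $\widetilde\psi_j(y)=r_j^{-1/2}\psi_j(y_0+r_jy)$, the rescaled equation converges (since $k_jr_j\to0$) to $(y^2/2+e^{2\mathrm{i}\theta_0})\widetilde\psi''=\widetilde\psi$, and the argument of Lemma 6.4 in \cite{WZZ-APDE} shows the weak limit vanishes and that $\widetilde\psi_j\to0$ in $H^1_{\mathrm{loc}}$ and in $C^1$ away from the possible singular points. This yields $\|\psi_j'\|_{L^2(y_0-3r_j,y_0+3r_j)}+|k_j|\|\psi_j\|_{L^2(y_0-3r_j,y_0+3r_j)}\to0$, i.e.\ genuine smallness of $\psi_j$ across the layer uniformly in $\mathrm{Im}\,c_j$, and it also kills the boundary terms $\psi_j'\overline{\psi_j}$ at $|y-y_0|=3r_j$. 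Only then is the energy identity run, and only on the outer region $3r_j\le|y-y_0|\le d_j$, where $\mathrm{Re}\,\frac{u_j''}{u_j-c_j}\gtrsim|y-y_0|^{-2}$ is coercive with no $\mathrm{Im}\,c_j$ dependence and the forcing is handled by Hardy's inequality $\|h_j/(y-y_0)\|_{L^2}\lesssim\|h_j'\|_{L^2}$ (again using $h_j(y_0)=0$). You could in principle keep your $1/k_j$ rescaling for the outer region, but without this inner, layer-scale compactness (or some substitute giving smallness of $\psi_j$ in $H^1$ across the layer uniform in $\mathrm{Im}\,c_j$) your proof does not close.
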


\begin{proof}
Without loss of generality, let us suppose that $y_0=0$, and then $[a,d]=[-\delta,\delta]$. Let $c_j-u_j(0)=r_j^2 \mathrm{e}^{2\mathrm{i}\theta_j}, r_j>0,\theta_j\in (0,\pi/2),$ then $|k_j| r_j\to 0,r_j\to 0$. By Lemma \ref{Lemma6.3APDE}, and noticing $|k_j||c_j-u_j(0)|^{\f12}=|k_j|r_j\leq 1$ for $j$ large,  we have 
\begin{equation}\nonumber
\begin{aligned}
&|(\psi''_j-k_j\psi_j)(0)|\leq Cr_j^{-\frac{3}{2}},\\
 & |(u''_j\psi_j+h_j)(0)|= |u_j(0)-c_j||(\psi''_j-k_j\psi_j)(0)|\leq C r_j^{\frac{1}{2}},\\
 & |\psi_j (0)|= |(u''_j\psi_j+h_j)(0)|/|u_j''(0)|\leq C r_j^{\frac{1}{2}}.
 \end{aligned}
\end{equation}
This motivates us to introduce
\begin{equation}\nonumber
\begin{aligned}
\widetilde{\psi}_j=r_j^{-\frac{1}{2}}\psi_j(r_j y),\quad \widetilde{h}_j(y)=r_j^{-\frac{1}{2}} h_j(r_jy),\quad \widetilde{ u}_j(y)=r_j^{-2}(u_j(r_j y)-u_j(0)).
 \end{aligned}
\end{equation}
It holds that
\begin{equation}\nonumber
\begin{aligned}
&(\widetilde{u}_j-\mathrm{e}^{2\mathrm{i}\theta_j})(\widetilde{\psi}_j''-(k_j r_j)^2 \widetilde{\psi}_j) -\widetilde{u}_j''  \widetilde{\psi}_j=\widetilde{h}_j(y),\\
&|\widetilde{\psi}_j(0)|=|r_j^{-\frac{1}{2}}\psi_j( 0)|\leq C, \ \|\widetilde{\psi}_j'\|_{L^2(a/r_j,d/r_j)}=\|{\psi}_j'\|_{L^2(a,d)}\leq C,\\
&|\widetilde{h}_j(y)(0)|=0,\ \|\widetilde{h}_j'\|_{L^2(a/r_j,d/r_j)}=\|{h}_j'\|_{L^2(a,d)}\to 0.
 \end{aligned}
\end{equation}
And $\widetilde{\psi}_j$ is bounded in $H^1_{loc}(\mathbb{R})$ and $\widetilde{h}_j(y)\to 0$ in $H^1_{loc}(\mathbb{R})$. Up to a subsequence, let us assume that $\widetilde{\psi}_j \rightharpoonup \widetilde{\psi}_0$ in $H^1_{loc}(\mathbb{R})$, $\theta_j\to \theta_0\in [0,\pi/2]$ with $\widetilde{\psi}_0' \in L^2(\mathbb{R})$.

Using the facts that ($\widetilde{ u}_j(0)=\widetilde{ u}_j'(0)=0$)
\begin{align*}
   & \widetilde{ u}_j(y)=y^2\int_{0}^{1}\int_{0}^{1}t{ u}''_j(r_jyts)\mathrm{d}s\mathrm{d}t,\quad   \widetilde{ u}_j'(y)=y\int_{0}^{1} { u}''_j(r_jyt)\mathrm{d}t,
\end{align*} 
and $\widetilde{u}_j''(y)=u_j''(r_jy)$, $\widetilde{u}_j'''(y)=r_ju_j'''(r_jy)$, $ u_j\rightarrow \cos y\in H^3(-\delta,\delta)$, we can deduce that $\widetilde{u}_j \to -y^2/2$ in $H^3_{loc}(\mathbb{R})$, $\widetilde{\psi}_0=0$ and $\widetilde{\psi}_j \to 0$ in $H^1_{loc}(\mathbb{R})\cap C^1_{loc}(\mathbb{R}\setminus \{\pm 1\})$(See the proof of Lemma 6.4 in \cite{WZZ-APDE}). Then we have
\begin{align}\label{est:critical-31}
&|k_j| \|\psi_j\|_{L^2(-3r_j,3r_j)}+ \|\psi_j'\|_{L^2(-3r_j,3r_j)}
=|k_j r_j| \|\widetilde{\psi}_j\|_{L^2(-3,3)}+ \|\widetilde{\psi}_j'\|_{L^2(-3,3)}\to 0.
 \end{align}
For $j>1$, choose $d_j\in [\delta/2,\delta]$ such that
\begin{align*}
&2|k_j \psi_j' \overline{\psi_j}(d_j)|\leq |k_j \psi_j' (d_j)|^2+|\overline{\psi_j}(d_j)|^2\\
&\leq 2\delta^{-1}(|k_j|\|\psi_j\|_{L^2(a,d)}^2+ \|\psi_j'\|_{L^2(a,d)}^2) \leq 8\delta^{-1},
 \end{align*}
which gives $| \psi_j' \overline{\psi_j}(d_j)|\leq 8(|k_j|\delta)^{-1}\to 0$. We get by integration by part  that
\begin{align}
&\int_{3r_j}^{d_j} \left(|\psi_j'|^2+k_j^2 |\psi_j|^2+\frac{u''_j|\psi_j|^2}{u_j-c_j}\right)\mathrm{d}y\nonumber\\
&\label{ap-int-1}=-\int_{3r_j}^{d_j}\frac{h_j\overline{\psi_j}}{u_j-c_j}\mathrm{d}y+\psi_j' \overline{\psi_j}|_{3r_j}^{d_j}.
 \end{align}
For $j$ large enough and $y\in [3r_j,d]$, we have $u_j(0)-u_j(y)\geq u_j(0)-u_j(3r_j)\geq 9r_j^2/4\geq 2|c_j-u_j(0)|$, and then
\begin{align*}
\mathrm{Re} \frac{1}{u_j(y)-c_j}\geq \frac{1}{Cy^2},\quad -u''_j(y)\geq C^{-1}, \quad \left|\frac{1}{u_j(y)-c_j}\right|\leq \frac{C}{y^2}.
 \end{align*}
Taking the real part of \eqref{ap-int-1}, we obtain
\begin{align*}
& \int_{3r_j}^{d_j} \left(|\psi_j'|^2+k_j^2 |\psi_j|^2\right)\mathrm{d}y+C^{-1}\left\|\frac{\psi_j}{y}\right\|_{L^2(3r_j,d_j)}^2\\
&\leq C \left\|\frac{\psi_j}{y}\right\|_{L^2(3r_j,d_j)}\left\|\frac{h_j}{y}\right\|_{L^2(3r_j,d_j)}
+\left|\psi_j' \overline{\psi_j}|_{3r_j}^{d_j}\right|,
 \end{align*}
which yields 
\begin{align*}
\|\psi_j'\|_{L^2(3r_j,d_j)}^2+k_j^2 \|\psi_j\|_{L^2(3r_j,d_j)}^2 \leq C\left\|\frac{h_j}{y}\right\|_{L^2(3r_j,d_j)}^2+\left|\psi_j' \overline{\psi_j}|_{3r_j}^{b_j}\right|.
 \end{align*}
Due to $\psi_j' \overline{\psi_j}|_{3r_j}^{d_j}=\psi_j' \overline{\psi_j}(d_j)-\widetilde{\psi}_j' \overline{\widetilde{\psi}_j}(3) \to 0$ and
$$\left\|\frac{h_j}{y}\right\|_{L^2(3r_j,d_j)}\leq C\|h'_j\|_{L^2(a,d)}\to 0,$$
we obtain
\begin{align}\label{est:critical-32}
&\|\psi_j'\|_{L^2(3r_j, \delta/2)}+ |k_j| \|\psi_j\|_{L^2(3r_j,\delta/2)}
\leq \|\psi_j'\|_{L^2(3r_j,d_j)}+|k_j| \|\psi_j\|_{L^2(3r_j,d_j)} \to 0.
 \end{align}

Similar arguments yield that $\|\psi_j'\|_{L^2(-\delta/2,-3r_j)}+ |k_j| \|\psi_j\|_{L^2(-\delta/2,-3r_j)}\to 0$, which along with \eqref{est:critical-31} and  \eqref{est:critical-32} completes the proof of the lemma.
\end{proof}

\begin{lemma}\label{critical-4}
Let $k_j\in \mathbb{R}, \psi_j,h_j \in H^1(a,d), u_j\in H^3(a,d)$ be  sequences satisfying
\begin{equation}\nonumber
\begin{aligned}
&\|(\partial_y\psi_j,k_j\psi_j)\|_{L^2(a,d)} \leq 1,\quad \|(\partial_yh_j,k_jh_j)\|_{L^2(a,d)} \to 0,\\ 
&(u_j-c_j)(\psi_j''-k_j^2\psi_j)-u_j''\psi_j=h_j,\quad u_j\rightarrow \cos (y-(a+d)/2)\in H^3(a,d),
 \end{aligned}
\end{equation}
and $|k_j|\to +\infty, 0<\mathrm{Im}\ c_j\rightarrow 0, u_j'(y_0)=0, y_0=\frac{a+d}{2}, \delta=y_0-a \in [0,1], u''_j(y) u''_j(y_0)>0, y\in [a,d]$. Then it holds that
$\widetilde{\psi}_j\to 0$ in $H^1_{loc}(\mathbb{R})$, where $\widetilde{\psi}_j(y)=|k_j|^{\frac{1}{2}} \psi_j (y_0+y/|k_j|).$
\end{lemma}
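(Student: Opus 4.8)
The plan is to pass to the self-similar scale $1/|k_j|$ and run a compactness argument, reducing the statement to the non-existence of nontrivial bounded solutions of a limiting (possibly singular) Rayleigh-type equation. So I would first set
$$\widetilde{u}_j(y)=|k_j|^2\big(u_j(y_0+y/|k_j|)-u_j(y_0)\big),\quad \widetilde{c}_j=|k_j|^2\big(c_j-u_j(y_0)\big),\quad \widetilde{h}_j(y)=|k_j|^{1/2}h_j(y_0+y/|k_j|),$$
together with $\widetilde{\psi}_j(y)=|k_j|^{1/2}\psi_j(y_0+y/|k_j|)$ as in the statement. A direct computation gives $\widetilde{u}_j''(y)=u_j''(y_0+y/|k_j|)$, so that the hypothesis becomes the Rayleigh form
$$(\widetilde{u}_j-\widetilde{c}_j)\big(\widetilde{\psi}_j''-\widetilde{\psi}_j\big)-(\widetilde{u}_j-\widetilde{c}_j)''\,\widetilde{\psi}_j=\widetilde{h}_j \quad\text{on }\mathbb{R},$$
and changes of variables give, for every fixed $R$ and all large $j$, $\|\widetilde{\psi}_j\|_{H^1(-R,R)}\le1$, $\|\widetilde{h}_j\|_{H^1(-R,R)}\to0$, $\widetilde{u}_j\to-y^2/2$ in $H^3_{loc}(\mathbb{R})$, $(\widetilde{u}_j-\widetilde{c}_j)''\to-1$ locally uniformly, and $\mathrm{Im}\,\widetilde{c}_j=|k_j|^2\mathrm{Im}\,c_j>0$. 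The claim is equivalent to $\widetilde{\psi}_j\to0$ in $H^1_{loc}(\mathbb{R})$. I argue by contradiction: if this fails, then along a subsequence $\|\widetilde{\psi}_j\|_{H^1(-R_0,R_0)}\ge\varepsilon_0>0$ for some fixed $R_0$; passing to a further subsequence, $\widetilde{\psi}_j\rightharpoonup\widetilde{\psi}_\infty$ weakly in $H^1_{loc}(\mathbb{R})$ with $\widetilde{\psi}_\infty\in H^1(\mathbb{R})$ (by lower semicontinuity), and either $|\widetilde{c}_j|\to\infty$ or $\widetilde{c}_j\to\widetilde{c}_\infty\in\mathbb{C}$ with $\mathrm{Im}\,\widetilde{c}_\infty\ge0$.

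Next I would dispatch the non-singular situations. If $|\widetilde{c}_j|\to\infty$, or $\widetilde{c}_j\to\widetilde{c}_\infty$ with $\mathrm{Im}\,\widetilde{c}_\infty>0$, or $\widetilde{c}_\infty\in\mathbb{R}$ with $\widetilde{c}_\infty>0$, the coefficient $\widetilde{u}_j-\widetilde{c}_j$ is bounded away from $0$ on compacts, so the equation makes $\widetilde{\psi}_j$ bounded in $H^2_{loc}$, the convergence is strong in $H^1_{loc}$, and $\widetilde{\psi}_\infty$ solves $\widetilde{\psi}_\infty''-\widetilde{\psi}_\infty=0$ (first case) or $\widetilde{\psi}_\infty''-\widetilde{\psi}_\infty=\widetilde{\psi}_\infty/(y^2/2+\widetilde{c}_\infty)$ on $\mathbb{R}$; pairing with $\overline{\widetilde{\psi}_\infty}$ over $\mathbb{R}$ and taking the imaginary part (when $\mathrm{Im}\,\widetilde{c}_\infty>0$) or using the sign of the real part (when $\widetilde{c}_\infty>0$) forces $\widetilde{\psi}_\infty\equiv0$, a contradiction. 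If instead $\widetilde{c}_\infty=\gamma<0$ is real, the limiting coefficient $-y^2/2-\gamma$ has the two simple zeros $\pm\sqrt{-2\gamma}$, which are regular points of $-y^2/2$; applying the monotone critical-layer compactness Lemma~\ref{critical-1} on small intervals around $\pm\sqrt{-2\gamma}$ (with the correct monotonicity and sign of the limit coefficient there) and elliptic estimates elsewhere gives strong $H^1_{loc}$ convergence and the limiting equation for $\widetilde{\psi}_\infty$ with a principal-value integral and two Dirac masses at $\pm\sqrt{-2\gamma}$; since $(\widetilde{u}_j-\widetilde{c}_j)''\to-1$ does not change sign, testing against $\overline{\widetilde{\psi}_\infty}$ and taking the imaginary part kills the Dirac terms (so $\widetilde{\psi}_\infty(\pm\sqrt{-2\gamma})=0$), after which the integration-by-parts and Cauchy--Schwarz computation of Subsection~\ref{subspec} (Case 1) forces $\widetilde{\psi}_\infty\equiv0$, again a contradiction.

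The remaining possibility, $\widetilde{c}_j\to0$, that is $|k_j|^2\big(c_j-u_j(y_0)\big)\to0$, is the crux of the argument and the step I expect to be the main obstacle: here the singularity of $(u_j-c_j)^{-1}$ collapses onto the critical point of $u_j$ at the \emph{finer} scale $r_j:=|c_j-u_j(y_0)|^{1/2}=o(|k_j|^{-1})$, and the coarse-scale limit $-y^2/2$ has a double zero at $0$, so one cannot pass to the limit at scale $1/|k_j|$ naively. The plan there is to rescale instead by $r_j$ and reproduce the mechanism behind Lemma~\ref{critical-3}: the pointwise bound at $y_0$ from Lemma~\ref{Lemma6.3APDE}/\ref{Lemma6.3APDE-1}, followed by propagation through integration by parts on $(3r_j,\delta/2)$ and $(-\delta/2,-3r_j)$ using the favorable sign of $\mathrm{Re}\,(u_j-c_j)^{-1}$, which yields $\|\psi_j'\|_{L^2(y_0-\delta/2,\,y_0+\delta/2)}+|k_j|\,\|\psi_j\|_{L^2(y_0-\delta/2,\,y_0+\delta/2)}\to0$; since $1/|k_j|\gg r_j$, a change of variables then gives $\widetilde{\psi}_j\to0$ in $H^1_{loc}(\mathbb{R})$, contradicting $\|\widetilde{\psi}_j\|_{H^1(-R_0,R_0)}\ge\varepsilon_0$. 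The one point needing extra care relative to Lemma~\ref{critical-3} is that $h_j(y_0)$ need not vanish here; but $\|h_j\|_{H^1}\to0$ together with the interpolation bound $|k_j|^{1/2}|h_j(y_0)|\to0$ supplies enough smallness, after separating off the part of $h_j$ localized near $y_0$, to carry out the same estimate. Since all four cases lead to a contradiction, $\widetilde{\psi}_j\to0$ in $H^1_{loc}(\mathbb{R})$, which is the assertion.
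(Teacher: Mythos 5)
Your proposal is correct and follows essentially the same route as the paper: rescaling at scale $1/|k_j|$, a case split according to the limit of $|k_j|^2(c_j-u_j(y_0))$, energy identities for the nonsingular limits, Lemma \ref{critical-1} plus the principal-value/Dirac limit equation for the negative real limit, and a reduction to the mechanism of Lemma \ref{critical-3} in the degenerate case. Your treatment of the degenerate case via the Gagliardo--Nirenberg bound $|k_j|^{1/2}|h_j(y_0)|\to 0$ and a localized correction of $h_j$ is precisely how the paper reduces to Lemma \ref{critical-3} (there by an explicit $\cosh$-cutoff modification of $\psi_j$ and $h_j$), so no genuinely different argument is involved.
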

\begin{proof}
Without loss of generality, let us assume that $y_0=0$ and then $[a,d]=[-\delta,\delta]$. Let  $c_j=r_j^2 \mathrm{e}^{2\mathrm{i}\theta_j}+u_j(0), r_j>0,\theta_j\in (0,\pi/2)$ and $\widetilde{h}_j=|k_j|^{\frac{1}{2}} h_j (y/|k_j|),\widetilde{ u}_j(y)=|k_j|^2 (u_j(y/|k_j|)-u_j(0))$. Then we have
\begin{equation}\label{tilde-critical-4}
\left\{
\begin{aligned}
&(\widetilde{ u}_j-(k_j r_j)^2 \mathrm{e}^{2\mathrm{i}\theta_j}) (\widetilde{\psi}_j''- \widetilde{\psi}_j) -\widetilde{u}_j''  \widetilde{\psi}_j=\widetilde{h}_j(y),\\
& \|(\widetilde{\psi}_j',\widetilde{\psi}_j)\|_{L^2 (|k_j|a,|k_j|d)}= \|({\psi}_j',k_j{\psi}_j)\|_{L^2 (a,d)}\leq 1,\\
& \|(\widetilde{h}_j',\widetilde{h}_j)\|_{L^2 (|k_j|a,|k_j|d)}= \|({h}_j',k_jh_j)\|_{L^2 (a,d)}\to 0,
 \end{aligned}\right.
\end{equation}
which implies that $\widetilde{\psi}_j$ is bounded in $H^1_{loc}(\mathbb{R})$ and $\widetilde{h}_j \to 0$ in $H^1_{loc}(\mathbb{R})$. Up to a subsequence, let us suppose that $\widetilde{\psi}_j \rightharpoonup \widetilde{\psi}_0$ in $H^1_{loc}(\mathbb{R})$, $\theta_j\to \theta_0\in [0,\pi/2], |k_j| r_j \to r_0\in [0,+\infty] $ with $\widetilde{\psi}_0 \in H^1(\mathbb{R})$. 
Using the facts that ($\widetilde{ u}_j(0)=\widetilde{ u}_j'(0)=0$)
\begin{align*}
   & \widetilde{ u}_j(y)=y^2\int_{0}^{1}\int_{0}^{1}t { u}''_j(yts/|k_j|)\mathrm{d}s\mathrm{d}t,\quad   \widetilde{ u}_j'(y)=y\int_{0}^{1} { u}''_j(yt/|k_j|)\mathrm{d}t,
\end{align*} 
and $\widetilde{u}_j''(y)=u_j''(y/|k_j|)$, $\widetilde{u}_j'''(y)=|k_j|^{-1}u_j'''(y/|k_j|)$, $ u_j\rightarrow \cos y\in H^3(-\delta,\delta)$, we can deduce that $\widetilde{u}_j \to -y^2/2$ in $H^3_{loc}(\mathbb{R})$.\smallskip

{\it Case 1.} $r_0=+\infty.$

In this case, it holds that $(|\widetilde{u}_j''|+1)/(\widetilde{u}_j-(|k_j|r_j)^2 e^{2\mathrm{i}\theta_j})\to 0$ in $L^\infty_{loc}(\mathbb{R})$ and $\widetilde{\psi}_j''- \widetilde{\psi}_j\to 0$ in $L^\infty_{loc}(\mathbb{R})$, thus $\widetilde{\psi}_j \to \widetilde{\psi}_0$ in $H^1_{loc}(\mathbb{R})$ with $\widetilde{\psi}_0''- \widetilde{\psi}_0=0$. Note that $\widetilde{\psi}_0 \in H^1(\mathbb{R})$, therefore $\widetilde{\psi}_0=0$ and $\widetilde{\psi}_j \to 0$ in $H^1_{loc}(\mathbb{R})$.\smallskip

{\it Case 2.}  $r_0\in (0,+\infty).$

Firstly, if $\theta_0\not=\pi/2$, then $\widetilde{\psi}_j''- \widetilde{\psi}_j$ is bounded in $L^\infty_{loc}(\mathbb{R})$ and $\widetilde{\psi}_j \to \widetilde{\psi}_0$ in $C^1_{loc}(\mathbb{R})$ with
\begin{align*}
(-y^2/2-r_0^2  \mathrm{e}^{2\mathrm{i}\theta_0})(\widetilde{\psi}_0''- \widetilde{\psi}_0) +  \widetilde{\psi}_0=0, \end{align*}
which implies 
\begin{align*}
\int_{\mathbb{R}}( |\widetilde{\psi}_0'|^2+|\widetilde{\psi}_0|^2)\mathrm{d}y=-\int_{\mathbb{R}}
\frac{2|\widetilde{\psi}_0|^2}{y^2+2r_0^2  \mathrm{e}^{2\mathrm{i}\theta_0}}\mathrm{d}y.
 \end{align*}
Multiplying both sides by $\mathrm{e}^{\mathrm{i}\theta_0}$ and taking the real part of the resulting expression, we obtain 
\begin{align*}
\cos \theta_0\int_{\mathbb{R}}( |\widetilde{\psi}_0'|^2+|\widetilde{\psi}_0|^2)\mathrm{d}y=-\cos \theta_0 \int_{\mathbb{R}}\frac{2(y^2+2r_0^2) |\widetilde{\psi}_0'|^2}{|y^2+2r_0^2  \mathrm{e}^{2\mathrm{i}\theta_0}|^2}\mathrm{d}y,
 \end{align*}
which implies that $\widetilde{\psi}_0=0$ and $\widetilde{\psi}_j \to 0$ in $H^1_{loc}(\mathbb{R})$.

If $\theta_0=\pi/2$, then $\widetilde{\psi}_j''- \widetilde{\psi}_j$ is bounded in $L^\infty_{loc}(\mathbb{R} \setminus \{\pm\sqrt{2} r_0\})$ and $\widetilde{\psi}_j \to \widetilde{\psi}_0$ in $C^1_{loc}(\mathbb{R} \setminus \{\pm\sqrt{2}  r_0\})$ with
\begin{align*}
&(y^2+2r_0^2 \mathrm{e}^{2\mathrm{i}\theta_0})(\widetilde{\psi}_0''- \widetilde{\psi}_0) -2  \widetilde{\psi}_0=0, \ y\in \mathbb{R}\setminus \{\pm \sqrt{2} r_0\},\\
&(y^2-2r_0^2  )(\widetilde{\psi}_0''- \widetilde{\psi}_0) -2  \widetilde{\psi}_0=0, \quad y\in \mathbb{R}\setminus \{\pm\sqrt{2}  r_0\},
\end{align*}
Recall \eqref{tilde-critical-4}, by Lemma \ref{critical-1}, $ \widetilde{\psi}_j \to  \widetilde{\psi}_0$ in $H^1(\sqrt{2} r_0 (1-\delta),\sqrt{2} r_0(1+\delta)) \cap H^1( -\sqrt{2} r_0(1+\delta),-\sqrt{2} r_0 (1-\delta))$, then $\widetilde{\psi}_j \to \widetilde{\psi}_0$ in $H^1_{loc}(\mathbb{R})\cap C^1_{loc}(\mathbb{R} \setminus \{\pm\sqrt{2}  r_0\})$. Moreover, for any $\phi \in H^1(\mathbb{R})$, it holds that
\begin{align*}
\int_{\mathbb{R}} (\widetilde{\psi}_0' \phi'+\widetilde{\psi}_0\phi)\mathrm{d}y+\mathrm{p.v.}\int_{\mathbb{R}} \frac{2\widetilde{\psi}_0\phi}{y^2-2r_0^2}\mathrm{d}y+\mathrm{i}\pi \sum_{y=\pm \sqrt{2} r_0}\frac{\widetilde{\psi}_0 \phi}{\sqrt{2}r_0}=0.
\end{align*}
Taking $\phi=\overline{\widetilde{\psi}_0}$ and taking the real part, we get $\widetilde{\psi}_0(\pm \sqrt{2}r_0)=0$. Thus, $\widetilde{\psi}_0\in H^2 (\mathbb{R})$ with
\begin{align*}
\int_{\mathbb{R}} \left(\left|\widetilde{\psi}_0'+\frac{-2y \widetilde{\psi}_0}{y^2-2r_0^2}\right|^2+|\widetilde{\psi}_0|^2\right) \mathrm{d}y=\int_{\mathbb{R}} \left( |\widetilde{\psi}_0'|^2+|\widetilde{\psi}_0|^2+\frac{2|\widetilde{\psi}_0|^2}{y^2-2r_0^2}\right)\mathrm{d}y=0,
\end{align*}
which yields that $\widetilde{\psi}_0=0$ and $\widetilde{\psi}_j \to 0$ in $H^1_{loc}(\mathbb{R})$.\smallskip

{\it Case 3.} $r_0=0.$

In this case, $|k_j|^2(c_j-u_j(0))\to 0$. Let us introduce a cut-off function $\eta\in C^\infty_0(\mathbb{R})$ with $\eta(y)=\cosh y$ for $|y|\leq 1$, $\eta=0$ for $|y|\geq 2$ and define $\eta_1=\eta''-\eta$. Obviously, $\eta_1=0$ provided that $|y|\leq 1$ or $|y|\geq 2$. Then we define
\begin{align*}
&\psi_{j^*}(y)=\psi_j(y) +\frac{h_j(0)}{u_j''(0)}\eta(|k_j|y),\\
& h_{j^*}(y)=h_j(y)-\frac{u''_j(y)h_j(0)}{u''_j(0)}\eta (|k_j|y)+|k_j|^2(u_j(y)-c_j)\frac{h_j(0)}{u''_j(0)} \eta_1 (|k_j|y).
\end{align*}
It {holds} that $\psi_{j^*},h_{j^*}\in H^1(a,d), h_{j^*}(0)=0$ with
\begin{align*}
(u_j(y)-c_j)(\psi_{j^*}''-k_j^2 \psi_{j^*})-u_j''\psi_{j^*}=h_{j^*}.
\end{align*}
Notice that $\widetilde{ u}_j(y)=|k_j|^2 \big(u_j(y/|k_j|)-u_j(0)\big), \widetilde{u}_j\to -y^2/2$ in $H^3_{loc}(\mathbb{R})$, then $\widetilde{u}''_j(y)=u''_j(y/|k_j|)$, $k_j^2 (u_j(y)-c_j)=\widetilde{u}_j (|k_j| y)-k_j^2 (c_j-u_j(0))$, $|k_j^2 (c_j-u_j(0))|\to r_0^2=0,$ and for $j$ large enough, it holds that $\|\widetilde{u}_j''\|_{H^1(-2,2)}+\|\widetilde{u}_j-k_j^2 (c_j-u_j(0))\|_{H^1(-2,2)}\leq C$. Then
\begin{align*}
& |k_j| \|\eta(|k_j| y )\|_{L^2(a,d)}+  \|\partial_y \eta(|k_j| y )\|_{L^2(a,d)}+|k_j| \|u''_j(y)\eta(|k_j| y )\|_{L^2(a,d)}\\
&\quad+\|\partial_y (u''_j(y)\eta(|k_j| y ))\|_{L^2(a,d)}+ |k_j| \| k_j^2(u_j(y)-c_j)\eta_1(|k_j| y )\|_{L^2(a,d)}\\&\quad+  \|\partial_y \left( k_j^2(u_j(y)-c_j)\eta_1(|k_j| y )\right)\|_{L^2(a,d)}\\
&\leq |k_j|^{\frac{1}{2}} \Big(\|\eta\|_{L^2 (a|k_j|,d|k_j|)}+\|\eta'\|_{L^2 (a|k_j|,d|k_j|)}+\|\widetilde{u}''_j\eta\|_{L^2 (a|k_j|,d|k_j|)}\\
&\quad+\|(\widetilde{u}''_j\eta)'\|_{L^2 (a|k_j|,d|k_j|)}+\| (\widetilde{u}_j- k_j^2(c_n-u_j(0)))\eta_1\|_{L^2 (a|k_j|,d|k_j|)}\\
&\quad +\|\partial_y((\widetilde{u}_j- k_j^2(c_n-u_j(0)))\eta_1)\|_{L^2(a|k_j|,d|k_j|)}\Big)\\
&\leq C|k_j|^{\frac{1}{2}} \Big((1+\|\widetilde{u}''_j\|_{H^1(-2,2)})\|\eta\|_{H^1(\mathbb{R})}+\|\widetilde{u}_j-k_j^2 (c_j-u_j(0))\|_{H^1(-2,2)}\|\eta_1\|_{H^1(\mathbb{R})}\Big)\\
&\leq C|k_j|^{\frac{1}{2}}.
\end{align*}
By Gagliardo-Nirenberg inequality, we have
$$|h_j(0)|\leq C|k_j|^{-\frac{1}{2}} (\|h_j'\|_{L^2 (a,d)}+|k_j|\|h_j\|_{L^2(a,d)}),$$
and then
\begin{align*}
& \|h_{j^*}'\|_{L^2 (a,d)}+|k_j| \|h_{j^*}\|_{L^2 (a,d)}\leq \|h_j'\|_{L^2 (a,d)}+|k_j|\|h_j\|_{L^2(a,d)}+C|k_j|^{\frac{1}{2}} |h_j(0)|\\
&\quad\leq C  (\|h_j''\|_{L^2 (a,d)}+|k_j|\|h_j\|_{L^2(a,d)})\to 0,\\
& |k_j| \|\psi_{j^*}-\psi_j \|_{L^2(a,d)}+ \|\partial_y (\psi_{j^*}-\psi_j )\|_{L^2(a,d)}\to 0.
\end{align*}
Note that $\|\psi_j'\|_{L^2(a,d)}+|k_j|\|\psi_j\|_{L^2(a,d)} \leq 1$; thus, for $j$ large, we have $$|k_j| \|\psi_{j^*} \|_{L^2(a,d)}+ \|\partial_y \psi_{j^*}\|_{L^2(a,d)}\leq 2.$$
By applying Lemma \ref{critical-3}, we obtain $$ |k_j| \|\psi_{j^*} \|_{L^2(-\delta/2,\delta/2)}+ \|\partial_y \psi_{j^*}\|_{L^2(-\delta/2,\delta/2)}\to 0,$$ which further gives
\begin{align*}
& |k_j| \|\psi_j \|_{L^2(-\delta/2,\delta/2)}+ \|\partial_y \psi_j \|_{L^2(-\delta/2,\delta/2)}\leq  |k_j| \|\psi_{j^*}-\psi_j \|_{L^2(a,d)}\\
&\quad+ \|\partial_y (\psi_{j^*}-\psi_j )\|_{L^2(a,d)}+ |k_j| \|\psi_{j^*} \|_{L^2(-\delta/2,\delta/2)}+ \|\partial_y \psi_{j^*}\|_{L^2(-\delta/2,\delta/2)} \to 0.
\end{align*}

Thanks to $|k_j| \to \infty$ and the definition of $\widetilde{\psi}_j$, we have
\begin{align*}
& \|\widetilde{\psi}_j \|_{L^2(-( |k_j|\delta)/2,( |k_j| \delta)/2)}+ \|\widetilde{ \psi}_j' \|_{L^2(-( |k_j|\delta)/2,( |k_j|\delta)/2)}\\
&= |k_j| \|\psi_j \|_{L^2(-\delta/2,\delta/2)}+ \|\partial_y \psi_j \|_{L^2(-\delta/2,\delta/2)}\to 0,
\end{align*}
and $-( |k_j|\delta)/2\to -\infty, ( |k_j|\delta)/2\to +\infty$, then $\widetilde{\psi}_j \to 0$ in $H^1_{loc}(\mathbb{R})$.
\end{proof}

For the case where $k$ is fixed, the proof is similar to that of Lemma 6.4 in \cite{WZZ-APDE}, so we omit the details.

\begin{lemma}\label{critical-5}
Let $k\in \mathbb{R},\ k>1$, $\psi_j,h_j \in H^1(a,d), u_j\in H^3(a,d)$ be sequences satisfying 
\begin{align*}
&\|\psi_j\|_{H^1(a,d)} \leq 1,\quad \|h_j\|_{H^1(a,d)} \to 0,\quad \psi_j\rightharpoonup 0 \ \text{in}\ H^1(\mathbb{T}_{2\pi}),\\ 
&(u_j-c_j)(\psi_j''-k^2\psi_j)-u_j''\psi_j=h_j,\quad u_j\rightarrow \cos (y-(a+d)/2)\in H^3(a,d),
 \end{align*}
and  $0<\mathrm{Im}\ c_j\rightarrow 0, u_j'(y_0)=0, y_0=\frac{a+d}{2}, \delta=y_0-a \in [0,1], u''_j(y) u''_j(y_0)>0, y\in [a,d]$. Then it holds that
${\psi}_j\to 0$ in $H^1(a,d)$.
\end{lemma}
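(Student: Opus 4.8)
The plan is to mimic the proof of Lemma~6.4 in \cite{WZZ-APDE}, which runs parallel to that of Lemma~\ref{critical-4} above with the divergent scale $|k_j|\to\infty$ replaced by the fixed length scale $k$, so I shall only describe the steps. After normalizing $y_0=0$ (so $[a,d]=[-\delta,\delta]$) I note that $u_j''<0$ on $[a,d]$ for $j$ large, since $u_j\to\cos(y-y_0)$ in $H^3$ and $\delta\le 1<\pi/2$. By the compact embedding $H^1(a,d)\hookrightarrow C^0([a,d])$ the hypotheses give $\psi_j\to0$ and $h_j\to0$ uniformly on $[a,d]$, and, passing to a subsequence, $c_j\to c_0\in\R$ (the case where $c_j$ is unbounded is immediate: then $|u_j-c_j|\to\infty$ uniformly, so $\psi_j''-k^2\psi_j=(u_j''\psi_j+h_j)/(u_j-c_j)\to0$ in $L^2(a,d)$, hence $\psi_j\to0$ in $H^2(a,d)$ and a fortiori in $H^1(a,d)$). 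If $c_0\ne\lim_j u_j(0)$, then $|u_j-c_j|\gtrsim1$ near $0$ and $\psi_j\to0$ in $H^1$ there by interior elliptic estimates, while away from $0$ the flow $u_j$ is strictly monotone and I would apply Lemma~\ref{critical-1} near the (at most two) simple zeros of $u_0-c_0$ and interior elliptic estimates elsewhere, the weak limit of $\psi_j$ being $0$; gluing (the endpoints $\pm\delta$ are regular or simple-crossing points of $u_0-c_0$, never critical points of $u_0$, so they are handled the same way) yields $\psi_j\to0$ in $H^1(a,d)$.

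The substantive case is $c_0=\lim_j u_j(0)$. As in the previous case $\psi_j\to0$ in $H^1$ on $\{\rho/2\le|y|\le\delta\}$ for every fixed $\rho>0$, so all the difficulty is concentrated on a shrinking neighborhood of $0$ and the plan there is a blow-up analysis. First I would subtract from $\psi_j$ a fixed-scale bump $\frac{h_j(0)}{u_j''(0)}\eta$, with $\eta(0)=1$ and $\eta=\cosh(ky)$ near $0$ so that the Rayleigh operator annihilates it there; this costs only $o(1)$ in $H^1(a,d)$ and reduces matters to $h_j(0)=0$. Then, with $r_j:=|u_j(0)-c_j|^{1/2}\to0$ and $|k|r_j\le1$ eventually, Lemma~\ref{Lemma6.3APDE} gives $|(\psi_j''-k^2\psi_j)(0)|\lesssim r_j^{-3/2}$, hence the crucial a priori bound $|\psi_j(0)|\lesssim r_j^{1/2}$; together with $\|\psi_j'\|_{L^2}\le1$ this makes the rescaled profile $\widetilde{\psi}_j(z):=r_j^{-1/2}\psi_j(r_j z)$ bounded in $H^1_{\mathrm{loc}}(\R)$. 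Writing $c_j-u_j(0)=r_j^2\mathrm{e}^{2\mathrm{i}\theta_j}$ with $\theta_j\to\theta_0\in[0,\pi/2]$, and using $u_j''(r_j z)\to-1$, $k^2r_j^2\to0$ and $r_j^{-1/2}h_j(r_j z)\to0$ locally uniformly (where $h_j(0)=0$ enters), any subsequential weak limit $\widetilde{\psi}_0\in H^1_{\mathrm{loc}}(\R)$ with $\widetilde{\psi}_0'\in L^2(\R)$ solves the model equation
\begin{equation*}
(z^2+2\mathrm{e}^{2\mathrm{i}\theta_0})\,\widetilde{\psi}_0''=2\,\widetilde{\psi}_0\quad\text{on }\R .
\end{equation*}
For $\theta_0\ne\pi/2$ I would multiply by $\overline{\widetilde{\psi}_0}$, integrate (the boundary term at $\infty$ vanishing since $\widetilde{\psi}_0'\in L^2$ forces the $z^{-1}$-decaying solution at $\pm\infty$), then multiply by $\mathrm{e}^{\mathrm{i}\theta_0}$ and take the real part to obtain $0\le\cos\theta_0\,\|\widetilde{\psi}_0'\|_{L^2}^2\le0$, hence $\widetilde{\psi}_0\equiv0$; for $\theta_0=\pi/2$ the coefficient $z^2-2$ has simple zeros at $\pm\sqrt2$ and I would argue exactly as in the corresponding sub-case of Lemma~\ref{critical-4} (Lemma~\ref{critical-1} near $\pm\sqrt2$ to set up the $\mathrm{P.V.}$-plus-$\delta$ weak form, then testing against $\overline{\widetilde{\psi}_0}$) to again get $\widetilde{\psi}_0\equiv0$. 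Consequently $\widetilde{\psi}_j\to0$ in $H^1_{\mathrm{loc}}(\R)$, i.e. $\|\psi_j'\|_{L^2(-Rr_j,Rr_j)}\to0$ for each fixed $R$.

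It then remains to propagate this to a fixed interval. On the annuli $Rr_j\le|y|\le\rho$ the flow is monotone with $|u_j-c_j|\sim y^2$, $\mathrm{Re}\,(c_j-u_j)^{-1}\gtrsim y^{-2}$ and $-u_j''\gtrsim1$, so the integration-by-parts estimate in the proof of Lemma~\ref{critical-3} applies: the Hardy inequality controls $\|h_j/y\|_{L^2}\lesssim\|h_j'\|_{L^2}\to0$ (using $h_j(0)=0$), the inner boundary term equals $\big(\widetilde{\psi}_j'\,\overline{\widetilde{\psi}_j}\big)(\pm R)\to0$, and the outer one is absorbed by a point near $\rho$ where $|\psi_j'|$ is bounded. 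This gives $\|\psi_j'\|_{L^2(-\rho,\rho)}\to0$, and combined with the convergence already established on $\{\rho/2\le|y|\le\delta\}$ (and near the endpoints) yields $\psi_j\to0$ in $H^1(a,d)$.

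The only step that is not routine is the blow-up analysis near $y_0$, namely ruling out concentration of $\|\psi_j'\|_{L^2}$ at the critical point: this is where one needs both the a priori bound $|\psi_j(y_0)|\lesssim|u_j(y_0)-c_j|^{1/2}$ (which fixes the correct rescaling $r_j$) and the coercivity furnished by the sign condition $u_j''(y)\,u_j''(y_0)>0$. Morally the argument works because, as shown in Subsection~\ref{subspec}, $b(y)=\cos y$ admits no generalized embedded eigenvalues, so the limiting model equation is non-resonant; all the remaining manipulations of boundary terms use only the standard device of choosing nearby points where $\psi_j'$ is bounded.
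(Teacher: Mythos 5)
Your proposal is correct and follows essentially the same route as the paper, which omits the details precisely because the argument is the fixed-$k$ analogue of Lemma 6.4 in \cite{WZZ-APDE}, i.e.\ the scheme of Lemmas \ref{critical-3} and \ref{critical-4} (reduction to $h_j(y_0)=0$, the bound $|\psi_j(y_0)|\lesssim r_j^{1/2}$ from Lemma \ref{Lemma6.3APDE}, blow-up at scale $r_j$, exclusion of the limit profile via the $\mathrm{e}^{\mathrm{i}\theta_0}$ trick and the P.V.\ weak form when $\theta_0=\pi/2$, then the annulus integration-by-parts estimate) that you reproduce faithfully. The only point to flag is that your appeal to Lemma \ref{critical-1} at the blow-up scale involves the rescaled wavenumber $k r_j\to 0$, formally outside that lemma's stated range $k_j\to k\in[1,+\infty)$, but the underlying compactness argument (Lemma 6.2 in \cite{WZZ-APDE}) is insensitive to this, so it is a cosmetic rather than a substantive issue.
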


Now we are in a position to prove \eqref{porb331}. 

\begin{proof}

\textit{ Case 1}. $|k|\geq K_0$ for some large  $K_0>0$ and $|\epsilon|\leq \epsilon_{00}$ for some small $\epsilon_{00}\neq 0$. \smallskip

Suppose that \eqref{porb331} fails to hold in Case 1. Then we can find sequences $|k_j|\rightarrow +\infty$, $\phi_j\in H^1_0(\mathbb{T}_{2\pi})$, $F_j\in H^1(\mathbb{T}_{2\pi})$, $0\neq|\epsilon_j|\rightarrow 0$
(we may assume $\epsilon_j>0$) and $\lambda_j\in\mathbb{R}$, such that 
\begin{align}\label{sec3.3-compact-11}
   &\|(\partial_y\phi_j,k_j\phi_j)\|_{L^2(\mathbb{T}_{2\pi})}=1,\quad\|F_j\|_{H^1_{k_j}(\mathbb{T}_{2\pi})}\rightarrow 0,\\
  & V_j\rightarrow b\in H^4(\mathbb{T}),\quad M_j=\max\{V_j\}\rightarrow 1,\quad \lambda_j\rightarrow \lambda_*\in [7/8,1],\label{sec3.3-compact-12}
\end{align}and 
\begin{align}\label{sec3.3-compact-13}
   & V_j(\phi_j''-(k_j^2-1)\phi_j)-(\lambda_j+\mathrm{i}\epsilon_j)\Delta_{k_j}\phi_j=F_j,
\end{align}
then
\begin{align}\label{sec3.3-compact-130}
   & (V_j-\lambda_j-\mathrm{i}\epsilon_j)(\phi_j''-k_j^2\phi_j)=F_j -V_j\phi_j.
\end{align}
Let $|y_{jc}|\leq 1/10$, such that $V_j(y_{jc})=M_j$. 
Taking $\delta,c_0\in (0,1)$ in \eqref{b'-c0},  for large $j$, it holds that $C\|V_j-b\|_{H^4}\leq \min\{c_0,\delta/10\}$, then \eqref{est:yc-2} and \eqref{est:yc-3} hold for $V_j$. For $0<r<9\delta/10, |k_j|\geq \frac{2}{c_0r}$, we get by Lemma \ref{monotone-11} that 
\begin{equation}\nonumber
\begin{aligned}
&
\|(\partial_y\phi_j,k_j\phi_j)\|_{L^2(\Sigma_{r,y_{jc}})}\\
&\leq C\big((r|k_j|)^{-1}+|k_j|^{-2}\big) \left(\|(\partial_y (F_j-V_j\phi_j),k_j( F_j-V_j\phi_j)) \|_{L^2}+|k_j|\|\phi_j\|_{L^2}\right)\\
&\leq C\big((r|k_j|)^{-1}+|k_j|^{-2}\big) \left(\|(\partial_y\phi_j,k_j\phi_j)\|_{L^2}+\|(\partial_yF_j,k_jF_j)\|_{L^2}\right)\\
&\leq C\big((r|k_j|)^{-1}+|k_j|^{-2}\big),
 \end{aligned}
\end{equation}
where $C>0$ is independent of $j,r$.

Set $r_{j}=R/|k_j|$. For any fixed $R>\frac{2}{c_0}$ and  $|k_j|>R/\delta$(for large  $j$),  it holds that $0<{r_{j}}<9\delta/10,\ k_j =R/r_j >\frac{2}{c_0r_j}$ and
\begin{align*}
\|(\partial_y\phi_j,k_j\phi_j)\|_{L^2(\Sigma_{R/|k_j|,y_{jc}})}
\leq C\big((r_j|k_j|)^{-1}+|k_j|^{-2}\big) =C(R^{-1}+|k_j|^{-2}),
 \end{align*}
which gives
\begin{align}\label{lim-sup}
&\limsup_{j\to +\infty} \|(\partial_y\phi_j,k_j\phi_j)\|_{L^2(\Sigma_{R/|k_j|,y_{jc}})}
\leq CR^{-1}.
 \end{align}

It remains to deal with the interval near the critical point $y_{jc}$. For any fixed $R>\frac{2}{c_0}$, let $g_j=F_j-V_j\phi_j$, $\widetilde{\phi}_j(y)=|k_j|^{\frac{1}{2}} \phi_j (y_{jc}+y/|k_j|)$.
To apply Lemma \ref{critical-4}, we denote 
\begin{align}\label{a1}\begin{aligned}
       & a=-\pi/2,\ d=\pi/2,\ y_0=0,\ c_j=\lambda_j+\mathrm{i}\epsilon_j,\\
       &u_j(y)=V_j(y+y_{jc}),\ \psi_j(y)=\phi_j(y+y_{jc}),\\
       &h_j(y)={F_j}(y+y_{jc})-(V_j(y+y_{jc})+V''_j(y+y_{jc}))\phi_{j}(y+y_{jc}) .
    \end{aligned}   \end{align}
    It hods that
    \begin{align*}
       & V_j(y+y_{jc})\rightarrow b(y)\ \text{in}\  H^3(-\pi/2,\pi/2),\ V''_j(y+y_{jc})\rightarrow -b(y)\ \text{in}\  H^1(-\pi/2,\pi/2),\\
       &(V_j(y+y_{jc})+V''_j(y+y_{jc}))\phi_{j}(y+y_{jc})\rightarrow 0\ \text{in}\  H^1(-\pi/2,\pi/2),\\&(u_j-c_j)(\psi_j''-k_j^2\psi_j)-u_j''\psi_j=h_j,\\
       &h_j(y)\rightarrow0\ \text{in}\  H^1(-\pi/2,\pi/2),\quad \|(\partial_y\psi_j,k_j\psi_j)\|_{L^2(-\pi/2,\pi/2)} \leq 1.
    \end{align*}
    Then Lemma \ref{critical-4}
 implies that $\widetilde{\psi}_j\to 0$ in $H^1_{loc}(\mathbb{R})$, then  $\widetilde{\phi}_j\to 0$ in $H^1_{loc}(\mathbb{R})$. Notice that (for large $j$ such that $|k_j|\geq R\pi$)
\begin{align*}
\|\partial_y\phi_j\|_{L^2(-R/|k_j|+y_{jc},R/|k_j|+y_{jc})}
+|k_j|\|\phi_j\|_{L^2(-R/|k_j|+y_{jc},R/|k_j|+y_{jc})}\\
=\|\partial_y\widetilde{\phi}_j\|_{L^2(-R,R)}+\|\widetilde{\phi}_j\|_{L^2(-R,R)},
 \end{align*}
which along with $\widetilde{\psi}_j\to 0$ in $H^1_{loc}(\mathbb{R})$ implies
\begin{align}\label{lim-2}
\lim_{j\to +\infty}& \big(\|\partial_y\phi_j\|_{L^2(-R/|k_j|+y_{jc},R/|k_j|+y_{jc})}
+|k_j|\|\phi_j\|_{L^2(-R/|k_j|+y_{jc},R/|k_j|+y_{jc})}\big)
=0.
 \end{align}
Therefore, we obtain
\begin{align*}
&1= \|(\partial_y\phi_j,k_j\phi_j)\|_{L^2}
\leq   \|(\partial_y\phi_j,k_j\phi_j)\|_{L^2(\Sigma_{R/|k_j|,y_{jc}})}
+ \|(\partial_y\phi_j,k_j\phi_j)\|_{L^2(-R/|k_j|+y_{jc},R/|k_j|+y_{jc})}.
 \end{align*}
This along with \eqref{lim-sup} and \eqref{lim-2} shows that $1\leq CR^{-1}$ for any $R>\frac{2}{c_0}$ with $C>0$ independent of $R$; however, this leads to a contradiction when $R$ is sufficiently large.\smallskip

\textit{ Case 2}. $|k|\leq K_0$ and $|\epsilon|\leq \epsilon_{01}$ for some small $\epsilon_{01}\neq 0$, where $K_0>0$ is decided in \textit{Case 1}. 
In this case, \eqref{porb331} is equivalent to 
\begin{equation}\label{porb332}
        \|\phi\|_{H^{1}(\mathbb{T}_{2\pi})}\leq C \|F\|_{H^{1}(\mathbb{T}_{2\pi})},\quad \phi=\Delta_k^{-1}f.
\end{equation}
Suppose that \eqref{porb332} fails to hold in Case 2. Then we can find sequences $|k_j|=k>1$, $\phi_j\in H^1_0(\mathbb{T}_{2\pi})$, $F_j\in H^1(\mathbb{T}_{2\pi})$, $0\neq|\epsilon_j|\rightarrow 0$
(we may assume $\epsilon_j>0$) and $\lambda_j\in\mathbb{R}$, such that 
\begin{align*}
   &\|\phi_j\|_{H^1(\mathbb{T}_{2\pi})}=1,\quad\|F_j\|_{H^1(\mathbb{T}_{2\pi})}\rightarrow 0,\\
  & V_j\rightarrow b\in H^4(\mathbb{T}),\quad M_j=\max\{V_j\}\rightarrow 1,\quad \lambda_j\rightarrow \lambda_*\in [7/8,1],\\
 &(V_j-\lambda_j-\mathrm{i}\epsilon_j)(\phi_j''-k^2\phi_j)=F_j -V_j\phi_j.
\end{align*}
We can assume 
$\lambda_*=1$; otherwise the proof follows from the argument in the non-degenerate case. Since $L_k$ has no embedded eigenvalue, we have $\phi_j\to 0 $ in $H_{loc}^1(\mathbb{T}_{2\pi}\setminus\{0\})$ and $\phi_j\rightharpoonup 0 $ in $H^1(\mathbb{T}_{2\pi})$. To apply Lemma \ref{critical-5}, we still use the notations from \eqref{a1}; then we have
    \begin{align*}
       & V_j(y+y_{jc})\rightarrow b(y)\ \text{in}\  H^3(-\pi/2,\pi/2),\ V''_j(y+y_{jc})\rightarrow -b(y)\ \text{in}\  H^1(-\pi/2,\pi/2),\\
       &(V_j(y+y_{jc})+V''_j(y+y_{jc}))\phi_{j}(y+y_{jc})\rightarrow 0\ \text{in}\  H^1(-\pi/2,\pi/2),\\&(u_j-c_j)(\psi_j''-k^2\psi_j)-u_j''\psi_j=h_j,\\
       &h_j(y)\rightarrow0,\ \phi_j\rightharpoonup 0\ \text{in}\  H^1(-\pi/2,\pi/2),\quad \|\psi_j\|_{H^1(-\pi/2,\pi/2)} \leq 1.
    \end{align*}
 Lemma \ref{critical-5} implies that ${\psi}_j\to 0$ in $H^1(-\pi/2,\pi/2)$, then  ${\phi}_j\to 0$ in $H^1(-\pi/3,\pi/3)$. Recall that $\phi_j\to 0 $ in $H_{loc}^1(\mathbb{T}_{2\pi}\setminus\{0\})$, then ${\phi}_j\to 0$ in $H^1(\mathbb{T}_{2\pi})$, which is a contradiction.\smallskip

\textit{ Case 3}. $|\epsilon|\geq \min\{\epsilon_{00},\epsilon_{01}\}$, where $\epsilon_{00}$ and  $\epsilon_{01}$ are decided in \textit{Case 1} and  \textit{Case 2}. By Lemma \ref{lemb} and $(1+|k|(M-\lambda)^{\f12})\leq C|k|\leq C|k|^2$, we deduce \eqref{porb331} for \textit{Case 3}.
\end{proof}

\section{Resolvent estimate for the linearized NS}\label{sec:resnavier}

 Throughout this section, we assume that the real valued function V satisfies  $\|V-b\|_{H^4}\leq \epsilon_0$ with $\epsilon_0 > 0$ sufficiently small. Recall the notations from Section 3:
 \begin{align*}
&V'(y_1)=V'(y_2)=0,\ \text{where}\ |y_1|\leq 1/10,\ |y_2-\pi|\leq 1/10,\\
& M=\max_{y\in\mathbb{T}} V(y)= V(y_1),\ m=\min_{y\in\mathbb{T}} V(y)=V(y_2),\\
& \theta(k,\lambda):=1+|k|\lambda_0^{1/2}+|\lambda|,\
 \lambda_0:=\min\{|\lambda-m|,|\lambda-M|\}.      
 \end{align*}
 We denote
  \begin{align*}
& \lambda_{0+}:=\max\{\min\{\lambda-m,M-\lambda\},0\},\ 0\leq \lambda_{0+}\leq\lambda_0, \\
 & \lambda_{0+}>0\Leftrightarrow\lambda\in(m,M)\Rightarrow\lambda_{0+}=\lambda_{0}.
       \end{align*}
       
In this section, we prove the following resolvent estimates, which will be used to establish the space-time estimates for the linearized Navier-Stokes system.
\begin{Proposition}\label{prop:res-LNS}
Let $\nu\in(0,1)$ and $\lambda \in\R $. If $f$ with $P_0f=0$ solves the inhomogeneous eigenvalue problem
\begin{align*}
   & -\nu\Delta f+V\partial_x(f+\Delta^{-1} f)-\mathrm{i}\lambda f=F,
\end{align*}
then we have 
\begin{align*}
   & \||D_x|^{\f12}\nabla\Delta^{-1}f\|_{L^2}+ \||V'|^{\f12}\partial_x\nabla \Delta^{-1}f\|_{L^2}\leq C\||D_x|^{-\f12}\nabla F\|_{L^2},\\
   &\||D_x|^{\f12}\nabla\Delta^{-1}f\|_{L^2}+ \||V'|^{\f12}\partial_x\nabla \Delta^{-1}f\|_{L^2}\leq C\nu^{-\f12}\| F\|_{H^{-1}}.
\end{align*}
\end{Proposition}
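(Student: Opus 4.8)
The plan is to take a Fourier transform in $x$ and reduce both estimates to one-dimensional resolvent bounds for each frequency $k\in\al\Z\setminus\{0\}$ (recall $\al=2\pi/\tp>1$, hence $k^2>1$), which we then establish by combining the inviscid resolvent estimate of Proposition \ref{lem:rayleigh} with energy estimates exploiting the viscous dissipation. Writing $f=\sum_{k\neq0}f_k(y)\mathrm{e}^{\mathrm{i}kx}$ (only $k\neq0$ occur since $P_0f=0$) and $\psi_k=\Delta_k^{-1}f_k$, the mode equation reads
\begin{align*}
-\nu\Delta_kf_k+\mathrm{i}kV\big(f_k+\Delta_k^{-1}f_k\big)-\mathrm{i}\la f_k=F_k,
\end{align*}
and by Plancherel it suffices to prove, uniformly in $k$, $\la$ and $\nu$,
\begin{align*}
|k|^{1/2}\|(\pa_y,k)\psi_k\|_{L^2}+|k|\,\big\||V'|^{1/2}(\pa_y,k)\psi_k\big\|_{L^2}\le C\min\big(|k|^{-1/2}\|(\pa_y,k)F_k\|_{L^2},\ \nu^{-1/2}\|F_k\|_{H^{-1}_k}\big).
\end{align*}
A useful preliminary observation is that the unweighted $L^2$-in-$F$ half of the first bound, i.e.\ $|k|\|(\pa_y,k)\psi_k\|_{L^2}\le C\|(\pa_y,k)F_k\|_{L^2}$, is exactly what Proposition \ref{lem:rayleigh} gives for the \emph{inviscid} equation once one uses only $\theta(k,\la/k)\ge1$; the whole difficulty is to recover this (and its $|V'|^{1/2}$-weighted refinement) in the presence of the viscous term $-\nu\Delta_k$, which cannot be moved to the right-hand side perturbatively because doing so would cost too many derivatives of $\psi_k$.

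The $\nu^{-1/2}$ bound is the more elementary one. Pairing the mode equation with $\overline{\psi_k}$ and with $\overline{f_k}$ and separating real and imaginary parts yields the basic energy identities controlling $\nu\|(\pa_y,k)f_k\|_{L^2}^2$, the inviscid-damping term $\la\|(\pa_y,k)\psi_k\|_{L^2}^2$, and the coercive contribution of $V(1+\Delta_k^{-1})$ coming from its symmetry with respect to the inner product $\langle\cdot,\cdot\rangle_\ast$ of Subsection \ref{subspec}; combining these (and using $k^2>1$) gives $\nu^{1/2}\|(\pa_y,k)\psi_k\|_{L^2}\lesssim|k|^{-1/2}\|F_k\|_{H^{-1}_k}$, while the $|V'|^{1/2}$-weighted piece follows by localizing near and away from the critical layer $\{y:V(y)=\la/k\}$, using that $|V-\la/k|$ is bounded below on the complement. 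The same identities already close the regime $\la/k\notin[m,M]$ (a spectral gap is present, exactly as in Lemma \ref{lema}) and the regime where $\nu$ is not too small, since there the dissipation dominates and one argues as in Case 3 of Section \ref{sec:reseuler} (via the analogue of Lemma \ref{lemb}).

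For the remaining regime, $\la/k\in[m,M]$ with $\nu|k|\ll1$, we keep the viscous term in the equation and run the limiting-absorption argument of Section \ref{sec:reseuler} with it present: factoring the operator (for $k>0$, say) as $\mathrm{i}k\big[V(1+\Delta_k^{-1})-\la/k-\tfrac{\mathrm{i}\nu}{|k|}(-\Delta_k)\big]$, the viscous term contributes a nonpositive operator-valued imaginary part, i.e.\ it plays the role of a strengthened $\mathrm{i}\e$ regularization. We decompose $\T$ into the viscous critical layer of width $\sim(\nu/|k|)^{1/3}$ around each point of $\{V=\la/k\}$ and its complement. Off the layer one has $|V-\la/k|$ large enough that the bound follows by integration by parts together with the Hardy-type inequality of Lemma \ref{hardy-type-abp} and monotone-flow estimates in the spirit of Lemma \ref{monotone-11}, with $\nu\Delta_k$ treated perturbatively; inside the layer the viscous regularization is used, and when the layer contains a critical point $y_c$ of $V$ — which happens precisely when $\la/k$ is close to $m$ or $M$ — one invokes the degenerate machinery of Section \ref{sec:reseuler}, namely Lemma \ref{Lemma6.3APDE} and the compactness Lemmas \ref{critical-3}--\ref{critical-5}, now carried out for the viscous equation. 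Throughout, the nonlocal term $V\Delta_k^{-1}f_k$ is retained, since it produces the vorticity depletion of $w_k=(1+\Delta_k^{-1})f_k$ at the critical points that makes the $|V'|^{1/2}$-weighted norm finite with the correct power of $|k|$.

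The main obstacle is exactly this degenerate critical-layer analysis. Near a critical point $y_c$, the inviscid resolvent weight $\theta(k,\la/k)$ is only of size $O(1)$ rather than $O(|k|)$ at the same time as the weight $|V'(y)|^{1/2}\sim|y-y_c|^{1/2}$ vanishes, so one must show that the viscous scale $(\nu/|k|)^{1/3}$ compensates exactly and, crucially, that the resulting constant is uniform as $\nu\to0$. We expect this to require a contradiction/compactness argument parallel to Section \ref{sec:reseuler}: from a putative sequence of counterexamples $(\nu_j,k_j,\la_j)$ one passes to a limit which, according to whether $\nu_j|k_j|$ and $|k_j|(M-\la_j/k_j)^{1/2}$ stay bounded, is either the inviscid Rayleigh equation — excluded by the absence of eigenvalues and generalized embedded eigenvalues established in Subsection \ref{subspec} — or a rescaled viscous profile equation at the critical layer, which is directly coercive; the case $\nu_j|k_j|\not\to0$ falls under the energy estimates of the second paragraph. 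Reassembling the per-mode bounds by Plancherel then yields the two claimed inequalities.
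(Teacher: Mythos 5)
Your reduction to per-mode bounds and your identification of the obstruction (the viscous term cannot be treated perturbatively, and $\theta\ge 1$ is the only inviscid gain available near the critical values) match the paper, but the core of your argument has a genuine gap. First, the claim that the $\nu^{-1/2}$ estimate is ``the more elementary one'' fails in the regime $|k|\lesssim\nu^{-1/3}$ with $\la/k$ in the interior of $[m,M]$: the bound $|k|^{1/2}\|(\pa_y,k)\psi_k\|_{L^2}\lesssim\nu^{-1/2}\|F_k\|_{H^{-1}_k}$ is an enhanced-dissipation statement at rate $(\nu|k|)^{1/2}$, whereas the pairings you list only give $\nu\|f_k\|_{H^1_k}\lesssim\|F_k\|_{H^{-1}_k}$, hence $\|\psi_k\|_{H^1_k}\lesssim\nu^{-1}|k|^{-2}\|F_k\|_{H^{-1}_k}$, which is weaker than what you claim precisely when $|k|\lesssim\nu^{-1/3}$. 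In the paper this inequality is not elementary at all: it is a corollary of the main resolvent bound $A(\nu,k,\la):=\|\mathcal{L}_{\la}^{-1}\|_{H^1_k\to H^{-1}_k}\le C|k\theta(k,\la)|^{-1}$ through the duality/energy identity $\|\mathcal{L}_{\la}^{-1}\|_{H^{-1}_k\to H^{-1}_k}=\|\mathcal{L}_{\la}^{-1}\|_{H^{1}_k\to H^{1}_k}\le|A/\nu|^{1/2}$ (Lemmas \ref{lem:res1-os-H-1-A}--\ref{lem:res1-os-H-1-A2}), so it sits at the same depth as the first inequality.

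Second, and more importantly, your treatment of the only hard regime ($\la/k\in[m,M]$, $\nu|k|\ll1$) is a programme rather than a proof, and its key step is exactly what needs to be established. You propose a compactness/contradiction argument on sequences $(\nu_j,k_j,\la_j)$ for the viscous equation, with the limit being either the Rayleigh problem (excluded by Subsection \ref{subspec}) or ``a rescaled viscous profile equation at the critical layer, which is directly coercive.'' But the viscous term is an operator-valued absorption $\frac{\nu}{|k|}(-\Delta_k)$, not a scalar $\mathrm{i}\e$, so the limiting object depends on how $\nu_jk_j^3$, $|k_j|\la_{0,j}^{1/2}$ and the layer width $(\nu_j/|k_j|)^{1/3}$ compare; in the regimes where the viscous contribution survives one is left with Airy-type profile problems coupled to the nonlocal term and to the $|V'|^{1/2}$-weight, whose uniform invertibility is asserted but nowhere justified --- and that assertion is essentially the content of the proposition. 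The paper deliberately avoids any compactness at the viscous level: it decomposes $w=w_1+w_2$, where $w_1$ solves the inviscid problem with a scalar absorption $\mathrm{i}\delta$ calibrated to the viscosity ($\delta=|\nu\theta/k|^{1/2}$, resp.\ $\delta=c_0|\nu\la_0/k|^{1/3}$) and is estimated by the Section \ref{sec:reseuler} bounds (Lemmas \ref{lem:rayleigh-1} and \ref{lem:eps-w}), while $w_2$ is estimated in terms of the unknown norm $A$ itself, together with the multiplier estimate using $\chi_1=\mathrm{Re}[(V-\la+\mathrm{i}\delta)^{-1}]$ of Lemma \ref{lem:SLNS-res} in the degenerate case; this produces a self-improving inequality that closes to $A\le C|k\theta|^{-1}$, and the weighted $|V'|^{1/2}$ bounds then follow from the stream-function energy identity combined with Lemma \ref{lem1} (no critical-layer localization is needed). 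To make your route rigorous you would have to supply exactly the uniform analysis of the viscous critical-layer limits that this bootstrap is designed to bypass.
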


We consider the following equation ($k\in2\pi\mathbb{Z}/\mathfrak{p}\setminus\{0\}$, $\lambda$ is replaced by $k\lambda$)
\begin{align*}
   & -\nu\Delta_k f+\mathrm{i}kV(1+\Delta^{-1}_k )f-\mathrm{i}k\lambda f=F.
\end{align*}
Thus, it is enough to show that
\begin{align*}
   &|k|\|(\partial_y,k)\Delta_k^{-1}f\|_{L^2}+|k|^{\f32}\||V'|^{\f12}(\partial_y,k)\Delta_k^{-1}f\|_{L^2} \leq C\|(\partial_y,k)F\|_{L^2},\\
   &|k|\|(\partial_y,k)\Delta_k^{-1}f\|_{L^2}+|k|^{\f32}\||V'|^{\f12}(\partial_y,k)\Delta_k^{-1}f\|_{L^2} \leq C\nu^{-\f12}|k|^{\f12}\|(\partial_y,k)\Delta_k^{-1}F\|_{L^2},
\end{align*}
which follow from the following Proposition \ref{pro:res1-os-V'} and $\theta(k,\lambda)\geq 1$.

\begin{Proposition}\label{pro:res1-os-V'}
If $\nu\in(0,1)$, $\lambda \in\R $, and (f, F) solves 
\begin{align*}
   & -\nu\Delta_k f+\mathrm{i}kV(1+\Delta_k^{-1})f-\mathrm{i}k\lambda f=F,
\end{align*}
then we have
\begin{align}\label{est:res1-os-1-V'}
   & |k|\theta(k,\lambda)\|(\partial_y,k)\Delta_k^{-1}f\|_{L^2}+ |k|^{\f32}\||V'|^{\f12}(\partial_y,k)\Delta_k^{-1}f\|_{L^2}\leq C\|(\partial_y,k) F\|_{L^2},\\
   &|\nu k\theta(k,\lambda)|^{\f12}\|(\partial_y,k)\Delta_k^{-1}f\|_{L^2}+ \nu^{\f12}|k|\||V'|^{\f12}(\partial_y,k)\Delta_k^{-1}f\|_{L^2}\leq C\|(\partial_y,k)\Delta_k^{-1} F\|_{L^2}.\label{est:res1-os-2-V'}
\end{align}\end{Proposition}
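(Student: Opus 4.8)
The strategy is to argue as in Section \ref{sec:reseuler}: reduce, as already done above, to the two fixed-mode estimates \eqref{est:res1-os-1-V'}, \eqref{est:res1-os-2-V'}, and prove each by a compactness--contradiction argument parallel to the proof of \eqref{porb331}, using direct energy estimates both to dispose of the large-viscosity regime and to furnish the a priori bounds needed to pass to the limit. Throughout I would put $\phi=\Delta_k^{-1}f$ and $w=(1+\Delta_k^{-1})f=(\Delta_k+1)\phi$, so that $f=w-\phi=\Delta_k\phi$; written for $\phi$, the equation becomes the Orr--Sommerfeld equation
$$(V-\lambda)(\phi''-k^2\phi)+V\phi+\frac{\mathrm{i}\nu}{k}\Delta_k^2\phi=\frac{1}{\mathrm{i}k}F,$$
i.e. (since $b+b''=0$) a perturbation of the Rayleigh equation $(V-\lambda)(\phi''-k^2\phi)-V''\phi=h$ treated in Lemmas \ref{critical-3}--\ref{critical-5}, with the extra viscous term $\tfrac{\mathrm{i}\nu}{k}\Delta_k^2\phi$.

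For the energy estimates, pairing the equation with $w$ and taking the real part gives, since $-\Delta_k\ge k^2>1$ on $\mathbb T$, that $\nu\|(\partial_y,k)f\|_{L^2}^2\lesssim|\langle F,w\rangle|\lesssim\|(\partial_y,k)\Delta_k^{-1}F\|_{L^2}\|(\partial_y,k)f\|_{L^2}$, hence $\nu|k|^2\|(\partial_y,k)\Delta_k^{-1}f\|_{L^2}\lesssim\|(\partial_y,k)\Delta_k^{-1}F\|_{L^2}$; this is the viscous analogue of Lemma \ref{lemb}, it already yields \eqref{est:res1-os-2-V'} when $\nu|k|^3\gtrsim\theta(k,\lambda)$ (the $|V'|^{1/2}$-weighted term being absorbed there via $\|V'\|_{L^\infty}\lesssim1$), and --- crucially --- it shows that along any contradicting sequence with $\nu_j|k_j|^3/\theta(k_j,\lambda_j)\to0$ the viscous term $\tfrac{\mathrm{i}\nu_j}{k_j}\Delta_{k_j}^2\phi_j$ tends to $0$ in the topology required by the ODE lemmas of Section \ref{sec:reseuler}, so it can be moved into their source term; pairing in addition with $-\Delta_k f$ gives a companion bound controlling $\nu\|\Delta_k f\|_{L^2}$, needed to make this precise near the critical point.

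For the remaining (sharp) range I then run the contradiction scheme of the proof of \eqref{porb331}: extract $\nu_j,k_j,\lambda_j,\phi_j,F_j$ with the left side of \eqref{est:res1-os-1-V'} (resp. \eqref{est:res1-os-2-V'}) normalized to $1$, $\|V_j-b\|_{H^4}\to0$, $\lambda_j\to\lambda_*\in[m,M]$, so that --- the viscous correction being now negligible --- the limiting equation is the Rayleigh equation, the regularization playing the role of a one-sided imaginary shift ($\mathrm{Im}\,c\to0$ with sign fixed by that of $\nu_j/k_j$), exactly the limiting-absorption situation of Lemma \ref{critical-1} and Subsection \ref{subspec}. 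Distinguishing $|k_j|\to\infty$ from $|k_j|$ bounded, and $\lambda_*\in\{m,M\}$ from $\lambda_*\notin\{m,M\}$: away from the critical point $y_{jc}$ of $V_j$, Lemma \ref{monotone-11} yields smallness on $\Sigma_{R/|k_j|,y_{jc}}$ uniformly in $R$; near $y_{jc}$, Lemma \ref{critical-4} (resp. Lemma \ref{critical-5} for fixed $k$) gives $\widetilde\phi_j(y)=|k_j|^{1/2}\phi_j(y_{jc}+y/|k_j|)\to0$ in $H^1_{loc}$; and $\lambda_*\notin\{m,M\}$ is excluded by the spectral properties of $B_k$. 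Together these contradict the normalization. The depletion weight $|V'|^{1/2}$ rides along at no cost: it vanishes near $y_{jc}$ (where one uses $|V'(y)|\gtrsim|y-y_{jc}|$, see \eqref{est:yc-30}) and is $O(1)$ away from it, where it is dominated by the unweighted term.

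The main obstacle is the degenerate regime $\lambda\to M$ (equivalently $\lambda\to m$) at a critical point of $V$, where the sharp factor $\theta(k,\lambda)=1+|k|\lambda_0^{1/2}+|\lambda|$ must be extracted with no loss --- this is exactly what forces the refined rescaled-ODE lemmas (Lemmas \ref{Lemma6.3APDE}, \ref{critical-3}, \ref{critical-4}) in place of their classical analogues --- and where the new difficulty relative to the inviscid case of Section \ref{sec:reseuler} is to verify that $\tfrac{\mathrm{i}\nu_j}{k_j}\Delta_{k_j}^2\phi_j$ is genuinely lower order in the required norm along a contradicting sequence; this is where the auxiliary higher-order energy bound on $\nu\|\Delta_k f\|_{L^2}$, together with a frequency-localized analysis near $y_{jc}$, enters. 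The other delicate point is carrying the $|V'|^{1/2}$ depletion weight through all the regimes while keeping \eqref{est:res1-os-1-V'} and \eqref{est:res1-os-2-V'} simultaneously sharp.
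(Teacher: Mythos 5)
There is a genuine gap at the heart of your scheme: you propose to run the compactness--contradiction argument of the proof of \eqref{porb331} directly on the viscous equation, treating $\frac{\mathrm{i}\nu}{k}\Delta_k^2\phi=\frac{\mathrm{i}\nu}{k}\Delta_k f$ as a source that vanishes along the contradicting sequence. But the compactness lemmas you invoke (Lemmas \ref{critical-1}, \ref{critical-3}--\ref{critical-5}, \ref{monotone-11}) require the equation to have the second--order Rayleigh form $(u_j-c_j)(\psi_j''-k_j^2\psi_j)-u_j''\psi_j=h_j$ with a \emph{constant} one--signed imaginary shift $c_j$ and with $\|h_j\|_{H^1_{k_j}}\to0$; to put the viscous term into $h_j$ you would need $\frac{\nu_j}{|k_j|}\|\Delta_{k_j}f_j\|_{H^1_{k_j}}\to0$, i.e.\ uniform control of three derivatives of $f_j$ along the normalized sequence. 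The basic energy estimate only yields $\nu^{1/2}\|(\partial_y,k)f\|_{L^2}$, and the ``companion bound'' you suggest by pairing with $-\Delta_kf$ is neither straightforward (the real part of $\mathrm{i}k\langle V(1+\Delta_k^{-1})f,-\Delta_kf\rangle$ produces uncontrolled commutator terms with $V'$) nor sufficient (it would at best give $\nu\|\Delta_kf\|_{L^2}$, still one derivative short and without the needed smallness in the sharp regime $\nu|k|^3\ll\theta$ with $\lambda_0\to0$). Replacing the constant shift by the frequency--dependent ``shift'' coming from $\frac{\mathrm{i}\nu}{k}\Delta_k^2$ is exactly the point that needs proof, and none of the quoted lemmas covers it; in addition, a contradiction argument in which $\nu_j$, $k_j$, $\lambda_{0,j}$ degenerate simultaneously does not obviously reproduce the precise factors $\theta(k,\lambda)$ and $\nu^{1/2}$, since these are not preserved under passing to limits of normalized sequences.

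The paper circumvents precisely this difficulty by keeping the viscous problem quantitative: compactness is used only for the inviscid resolvent (Proposition \ref{lem:rayleigh}, reformulated in Lemma \ref{lem:rayleigh-1} and quantified in $\epsilon$ by Lemma \ref{lem:eps-w}), and the viscous bound is obtained in Lemma \ref{lem:res1-os-H-1-A2} by the decomposition $w=w_1+w_2$ of \eqref{eq:w=w123}: $w_1$ solves the inviscid equation with an artificial shift $\mathrm{i}\delta$, $w_2$ solves the viscous equation with source $\nu\Delta_kw_1+k\delta(w_1-\mathcal{I}_kw_1)$ estimated through the unknown operator norm $A(\nu,k,\lambda)$, and a bootstrap in $A$ with the regime--dependent choices $\delta=|\nu\theta/k|^{1/2}$ or $\delta=c_0|\nu\lambda_0/k|^{1/3}$ (together with Lemma \ref{lem:SLNS-res} and the duality identities \eqref{L1}) yields $A\le C|k\theta|^{-1}$, hence Lemma \ref{lem:kthea-0}. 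Finally, the depletion weight does not ``ride along'' through a compactness argument: in the paper the $|V'|^{1/2}$--terms in \eqref{est:res1-os-1-V'}--\eqref{est:res1-os-2-V'} are produced afterwards by a separate elementary step, namely taking $\mathbf{Im}\langle F,\phi\rangle$, invoking Lemma \ref{lem1} (i.e.\ $|V'|^2\le C\min\{V-m,M-V\}$ plus the $\lambda_{0+}$--correction) and interpolating $\||V'|^{1/2}(\partial_y,k)\phi\|\le\||V'|(\partial_y,k)\phi\|^{1/2}\|(\partial_y,k)\phi\|^{1/2}$, with the unweighted bounds of Lemma \ref{lem:kthea-0} as input. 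If you want to salvage your route, you would at minimum have to prove a uniform higher--order a priori bound making $\frac{\nu}{k}\Delta_k f$ genuinely lower order in $H^1_k$ near the critical point, which is not supplied by the paper's lemmas and is, in effect, the statement being proved.
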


Let $w=f+\Delta_k^{-1}f=(\Delta_k+1)\Delta_k^{-1}f$. Then we have
\begin{align*}
&-\nu\Delta_k w+\mathrm{i}k(V-\lambda)w+\mathrm{i}k\lambda (\Delta_k+1)^{-1}w=F-\nu f,
\end{align*}
We denote $\mathcal{I}_k=(\Delta_k+1)^{-1}=\Delta_{\tilde{k}}^{-1}$, where $\tilde{k}=\sqrt{k^2-1}$.

\subsection{Some properties of the flow \texorpdfstring{$V(y)$}{V(y)}}
\begin{lemma}\label{lem:V'}
  It holds that
  \begin{align*}
&-V'(y)\sim\min\{|y-y_1|,|y-y_2|\}\quad \text{for}\,\, y\in[y_1,y_2],\\
& V'(y)\sim\min\{|y-y_1|,|y-y_2+2\pi|\}\quad \text{for}\,\,y\in[y_2-2\pi,y_1],\\
&C^{-1}|y-y_1|^2\leq M-V(y)\leq |y-y_1|^2\quad \text{for}\,\, y\in[y_2-2\pi,y_2],\\
&C^{-1}|y-y_2|^2\leq V(y)-m\leq |y-y_2|^2\quad \text{for}\,\, y\in[y_1,y_1+2\pi],\\
&C^{-1}|y-y_2+2\pi|^2\leq V(y)-m\leq |y-y_2+2\pi|^2\quad \text{for}\,\,y\in[y_1-2\pi,y_1],\\
    &|V'|^2\leq C\min\{V-m,M-V\}.
\end{align*} 
\end{lemma}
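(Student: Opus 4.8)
The plan is to treat $V$ as a small $H^4$-perturbation of $b(y)=\cos y$ and reduce every assertion to an elementary Taylor-expansion estimate anchored at the two critical points $y_1,y_2$. Recall from the discussion preceding Section \ref{subspec} that $b''=-b$, so $b$ has non-degenerate critical points at $0$ and $\pi$ with $b''(0)=-1$, $b''(\pi)=1$; by the Morse-type Lemma \ref{lem:genV} (and the bounds \eqref{est:yc-1}-\eqref{est:yc-3}), $V$ has exactly two critical points $y_1,y_2$ with $|y_1|\le 1/10$, $|y_2-\pi|\le 1/10$, and, for $\epsilon_0$ small, $|V''(y)|\ge c_0>0$ on a fixed neighborhood $B(y_1,\delta)$ and $|V'(y)|\ge c_0$ off $B(y_1,\delta)\cup B(y_2,\delta)$ (after relabeling, one gets the analogous statement near $y_2$ by symmetry $y\mapsto y+\pi$, since $|b''(\pi)|=1$). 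In particular $V''<0$ near $y_1$ and $V''>0$ near $y_2$, with $|V''|$ bounded above and below by positive constants on those neighborhoods.

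First I would prove the monotonicity/sign statements for $V'$. On the interval $[y_1,y_2]$, $V'$ vanishes only at the endpoints and $V''(y_1)<0$, so $V'<0$ on the open interval; for $y$ near $y_1$ we use $V'(y)=\int_{y_1}^y V''(s)\,ds$ together with $-V''\ge c_0>0$ near $y_1$ to get $-V'(y)\sim |y-y_1|$, and symmetrically $-V'(y)\sim|y-y_2|$ near $y_2$; in the compact middle region $|V'|\ge c_0 >0$ while $\min\{|y-y_1|,|y-y_2|\}$ is bounded above and below, so $-V'(y)\sim\min\{|y-y_1|,|y-y_2|\}$ there as well. Patching the three regions gives the first claim, and the second claim (on $[y_2-2\pi,y_1]$) is identical after noting $V'>0$ there and that the relevant critical point on the left is $y_2-2\pi$.

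Next, the quadratic bounds on $M-V$ and $V-m$: near $y_1$, since $V'(y_1)=0$ and $-V''\in[c_0, C_0]$ on $B(y_1,\delta)$, a double integration gives $\tfrac{c_0}{2}|y-y_1|^2\le M-V(y)\le \tfrac{C_0}{2}|y-y_1|^2$; away from $y_1$ the quantity $M-V(y)$ is bounded below by a positive constant (since $y_1$ is the unique maximizer and the flow is continuous) and $|y-y_1|^2$ is bounded above, giving the comparison on all of $[y_2-2\pi,y_2]$. The same argument at $y_2$ (where $V''>0$) yields the bounds for $V-m$, on $[y_1,y_1+2\pi]$ and on the shifted interval $[y_1-2\pi,y_1]$. (I would absorb the harmless constant $\tfrac12$ and the normalization $|y-y_1|\le \pi$, etc., into the constant $C$ as the statement does; the clean upper bound ``$\le |y-y_1|^2$'' is obtained by shrinking $\delta$ and using $|V''|\le 1+C\epsilon_0\le$ the stated constant, or simply by absorbing it — either way this is routine.) Finally, the bound $|V'|^2\le C\min\{V-m,\,M-V\}$ follows by combining the previous estimates: on $[y_1,y_2]$, $|V'(y)|^2\lesssim \min\{|y-y_1|,|y-y_2|\}^2 \le |y-y_1|^2\cdot|y-y_2|^2/(\text{lower bound})$, and more directly $|V'(y)|^2\lesssim |y-y_2|^2 \lesssim V(y)-m$ and $|V'(y)|^2\lesssim|y-y_1|^2\lesssim M-V(y)$, hence $|V'|^2\lesssim \min\{V-m,M-V\}$; the other period-interval is handled by the corresponding shifted estimates.

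The only mild subtlety — and the step I would write most carefully — is the passage from the \emph{local} comparisons near $y_1,y_2$ (where the non-degeneracy of $V''$ does the work) to the \emph{global} ones on a full period: one must check that $M-V$ (resp. $V-m$) stays bounded below by a fixed positive constant outside the two $\delta$-neighborhoods, uniformly in the admissible perturbations $V$. This is immediate from $\|V-b\|_{H^4}\le\epsilon_0\hookrightarrow \|V-b\|_{C^3}\lesssim\epsilon_0$ and the corresponding property of $b$: $1-\cos y\ge 1-\cos\delta>0$ for $\delta\le|y|\le\pi$, so $M-V(y)\ge (1-\cos\delta) - C\epsilon_0>0$ for $\epsilon_0$ small. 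Everything else is bookkeeping with Taylor's theorem with integral remainder and the two-sided bounds on $V''$ near the critical points.
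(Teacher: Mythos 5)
Your argument is correct and follows essentially the same route as the paper: Taylor expansion anchored at the critical points using $\|V-b\|_{H^4}\le\epsilon_0$, two-sided bounds on $V''$ near $y_1,y_2$, a direct lower bound on $|V'|$ in the intermediate region, and then the combination of the first four estimates to obtain $|V'|^2\le C\min\{V-m,\,M-V\}$. The one point to tighten is the clean upper bound $M-V(y)\le|y-y_1|^2$ on the whole interval: it does not come from shrinking $\delta$ (that is only local), but from the global identity $M-V(y)=-\int_{y_1}^{y}\int_{y_1}^{z}V''(\tau)\,\mathrm{d}\tau\,\mathrm{d}z$ together with $\|V''\|_{L^\infty}\le 1+C\epsilon_0\le 2$, which is exactly the paper's computation and is indeed routine.
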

\begin{proof}
  It is obvious that  $V : [y_1, y_2] \rightarrow \mathbb{R}$ is strictly decreasing. 
   If $z \in [y_1,\pi/3]$, we have
  \begin{align*}
     & -V''(z)\geq -b''+(b''-V'')\geq \cos z-C\epsilon_0\geq \cos (\pi/3)-C\epsilon_0\geq 1/3,
  \end{align*}
  then for $y\in[y_1,\pi/3]$, by noting $V'(y_1)=0$, we get
  \beno
     -V'(y)=-\int_{y_1}^{y}V''(z)\mathrm{d}z\geq (y-y_1)/3;
   \eeno
  meanwhile, for $y\in[\pi/3,(y_1+y_2)/2]\subseteq [\pi/3,2\pi/3]$, we have
  \begin{align*}
     -V'(y)=& -b'+(b'-V')\geq {\sin y}-C\epsilon_0\geq \sqrt{3}/2-C\epsilon_0\geq 1/2\\
     =&(y-y_1)/(y_2-y_1)\geq(y-y_1)/2.
  \end{align*} 
  Here we used $y_2-y_1\leq \pi/2+2/10\leq 2$. 
  On the other hand, for $y\in[y_1,(y_1+y_2)/2]$, it holds that
  \begin{align*}
     &  -V'(y)=-\int_{y_1}^{y}V''(z)\mathrm{d}z\leq  \|V''(z)\|_{L^\infty}(y-y_1)\leq 3(y-y_1)/2.
  \end{align*}
 Thus, we conclude for $y\in[y_1,(y_1+y_2)/2]$, $ (y-y_1)/{3}\leq -V'(y)\leq 3(y-y_1)/2.$ Similarly, it holds that for $y\in[(y_1+y_2)/2,y_2]$, $ (y_2-y)/{3}\leq -V'(y)\leq 3(y_2-y)/2$. This shows that 
  \begin{align*}
&-V'(y)\sim\min\{|y-y_1|,|y-y_2|\},\quad \forall\ y\in[y_1,y_2].
\end{align*} 

Similarly, we have 
\begin{align*}
& V'(y)\sim\min\{|y-y_1|,|y-y_2+2\pi|\},\quad \forall\ y\in[y_2-2\pi,y_1].
\end{align*} 
  
  As $V : [y_1, y_2] \rightarrow \mathbb{R}$ is strictly decreasing, for $y\in[y_1,y_2]$, we have
  \begin{align*}
     M-V(y)&=V(y_1)-V(y)=\int_{y_1}^{y}-V'(z)\mathrm{d}z= - \int_{y_1}^{y}\int_{y_1}^{ z}V''(\tau)\mathrm{d}\tau\mathrm{d}z\\
     &\leq \|V''\|_{L^\infty} \int_{y_1}^{y}\int_{y_1}^{ z}1\mathrm{d}\tau\mathrm{d}z\leq \|V''\|_{L^\infty}(y_1-y)^2/2\leq (y_1-y)^2.
  \end{align*}
  For $y\in[y_1,y_2]$, we have
  \begin{align*}
     &M-V(y)=\int_{y_1}^{y}-V'(z)\mathrm{d}z\geq C^{-1} \int_{y_1}^{y}\min\{|z-y_1|,|z-y_2|\}\mathrm{d}z\\
     &\geq C^{-1} \int_{y_1}^{\min\{y,\f{y_1+y_2}{2}\}}|z-y_1|\mathrm{d}z \geq C^{-1} \min\{|y-y_1|^2/2,|y_1-y_2|^2/8\}\geq C^{-1} |y-y_1|^2.
  \end{align*}
  This shows that for $y\in[y_1,y_2]$,
  \beno  
  C^{-1}|y -y_1|^2\leq M-V(y)\leq |y -y_1|^2.
  \eeno
  Similarly, we have $C^{-1}|y -y_1|^2\leq M-V(y)\leq |y -y_1|^2$, for $y\in[y_2-2\pi,y_1]$. In a similar way, we can prove the bounds for $V(y)-m$.
    
  Therefore, for $y\in [y_1,y_2]$, we have
  \begin{align*}
     |V'(y)|^2\leq& C\min\{|y-y_1|^2,|y-y_2|^2\}\leq C\min \{C(V(y)-m),C(M-V(y))\}\\
     \leq& C\min \{V(y)-m,M-V(y)\}.
  \end{align*}
  Similarly, for $y\in [y_2-2\pi,y_1]$, $|V'(y)|^2\leq C\min \{V(y)-m,M-V(y)\}.$
  \end{proof}

\begin{lemma}\label{lem1}
It holds that for  $\lambda\in\mathbb{R}$,
\begin{align*}
    \|V'f\|_{L^2}\leq C\left|\int_{\T}(V-\lambda)|f|^2\mathrm{d}y\right|
   +C\lambda_{0+}\|f\|_{L^2}^2.
\end{align*} \end{lemma}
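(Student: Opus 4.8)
The plan is to prove the pointwise bound $|V'(y)|^2 \le C(V(y)-m)$ and $|V'(y)|^2 \le C(M-V(y))$ (both already available from the last inequality of Lemma \ref{lem:V'}), and then use the elementary fact that for any real $\lambda$ one has $V(y)-m \le |V(y)-\lambda| + \lambda_{0+}$ and $M-V(y) \le |V(y)-\lambda| + \lambda_{0+}$ when $\lambda \in [m,M]$, while for $\lambda \notin [m,M]$ one of $V(y)-m, M-V(y)$ is already $\le |V(y)-\lambda|$. Concretely, I would first record that $\min\{V(y)-m, M-V(y)\} \le |V(y)-\lambda| + \lambda_{0+}$ for all $y\in\T$ and all $\lambda\in\R$; this is a case check on whether $\lambda$ lies below $m$, inside $[m,M]$, or above $M$, using that $\lambda_{0+} = \max\{\min\{\lambda-m, M-\lambda\},0\}$.

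Second, combine this with the pointwise bound from Lemma \ref{lem:V'} to get
\begin{align*}
|V'(y)|^2 \le C\min\{V(y)-m, M-V(y)\} \le C\big(|V(y)-\lambda| + \lambda_{0+}\big)
\end{align*}
for every $y$. Third, integrate against $|f|^2$:
\begin{align*}
\|V'f\|_{L^2}^2 = \int_{\T} |V'(y)|^2 |f|^2\,\mathrm{d}y \le C\int_{\T} |V(y)-\lambda|\,|f|^2\,\mathrm{d}y + C\lambda_{0+}\|f\|_{L^2}^2.
\end{align*}
Finally, since $V-\lambda$ is real valued, $\int_\T |V(y)-\lambda||f|^2\,\mathrm{d}y$ is comparable to $\big|\int_\T (V-\lambda)|f|^2\,\mathrm{d}y\big|$ only up to the subtlety of sign changes of $V-\lambda$; to handle this cleanly I would instead split $\T$ into the region where $V-\lambda \ge 0$ and where $V-\lambda < 0$. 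On the region $\{V \ge \lambda\}$ we need $V-\lambda \le C(M-V) + C\lambda_{0+}$ — indeed $V-\lambda \le M-\lambda$, and if $\lambda \le M$ then $M-\lambda = \lambda_{0+}$ (when $\lambda\ge m$) or $M - \lambda \le \lambda_0 \le C$ times something controlled (when $\lambda < m$, but then the region is empty or $V-\lambda$ small), so $V - \lambda \lesssim (M-V) + \lambda_{0+}$; similarly on $\{V < \lambda\}$ we bound $\lambda - V \lesssim (V-m) + \lambda_{0+}$. Then $|V'|^2 |f|^2 \le C(M-V)|f|^2 + C\lambda_{0+}|f|^2$ on the first region and $|V'|^2|f|^2 \le C(V-m)|f|^2 + C\lambda_{0+}|f|^2$ on the second, and in both cases $\pm(V-\lambda)|f|^2$ appears with the favorable sign, so that summing the two regional integrals reconstructs $\int_\T (V-\lambda)|f|^2\,\mathrm{d}y$ up to the $\lambda_{0+}\|f\|_{L^2}^2$ error.

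The only mildly delicate step is the algebraic comparison $\min\{V-m, M-V\} \le |V-\lambda| + \lambda_{0+}$ together with its signed refinement on the two regions $\{V\gtrless\lambda\}$; everything else is Lemma \ref{lem:V'} plus integration. I expect no real obstacle — this is a preparatory lemma whose content is entirely contained in the pointwise estimate $|V'|^2 \lesssim \min\{V-m, M-V\}$ and the definition of $\lambda_{0+}$; care is only needed to route the absolute value $|V-\lambda|$ through a sign-splitting of $\T$ so that the final bound genuinely involves $\big|\int_\T (V-\lambda)|f|^2\big|$ rather than $\int_\T |V-\lambda||f|^2$.
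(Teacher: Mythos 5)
Your first two steps are fine, and you are right to flag that $\int_{\T}|V-\lambda|\,|f|^2$ cannot simply be replaced by $\bigl|\int_{\T}(V-\lambda)|f|^2\bigr|$; the gap is in your repair. Splitting $\T$ by the sign of $V-\lambda$ and using $|V'|^2\le C(M-V)$ on $\{V\ge\lambda\}$ and $|V'|^2\le C(V-m)$ on $\{V<\lambda\}$ produces, after expanding around $\lambda$,
\begin{align*}
(M-\lambda)\int_{\{V\ge\lambda\}}|f|^2+(\lambda-m)\int_{\{V<\lambda\}}|f|^2-\int_{\T}|V-\lambda|\,|f|^2 .
\end{align*}
The quantity you ``reconstruct'' is $-\int_{\T}|V-\lambda||f|^2\le0$, not the signed integral, and after discarding it you are left with the first two terms, in which $(M-\lambda)$ and $(\lambda-m)$ both appear, whereas $\lambda_{0+}$ only controls their \emph{minimum}. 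This is fatal: take $\lambda=M-\epsilon$ and $f$ supported on the thin set $\{\lambda-\delta<V<\lambda\}$ near the maximum of $V$. Your bound then gives $(\lambda-m)\|f\|_{L^2}^2\approx2\|f\|_{L^2}^2$, while $\bigl|\int_{\T}(V-\lambda)|f|^2\bigr|\le\delta\|f\|_{L^2}^2$ and $\lambda_{0+}=\epsilon$, so the claimed intermediate inequality fails as $\epsilon,\delta\to0$ (the lemma itself survives because on that set the correct small bound is $|V'|^2\le C(M-V)\le C(\epsilon+\delta)$). The mistake is that which branch of $\min\{V-m,M-V\}$ is the useful one cannot be decided by the sign of $V-\lambda$: on $\{V<\lambda\}$ near the maximum, the relevant bound is $C(M-V)$, not $C(V-m)$.

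The fix is to not localize at all. Keep the minimum inside and use $\int_{\T}\min\{V-m,M-V\}|f|^2\le\min\bigl\{\int_{\T}(V-m)|f|^2,\ \int_{\T}(M-V)|f|^2\bigr\}$. Each of these whole-torus integrals is rewritten \emph{exactly} (no pointwise inequality) as $\pm\int_{\T}(V-\lambda)|f|^2$ plus $(\lambda-m)\|f\|_{L^2}^2$, respectively $(M-\lambda)\|f\|_{L^2}^2$; bounding the signed integral by its absolute value and then taking the minimum of the two resulting bounds replaces the constant by $\min\{\lambda-m,M-\lambda\}\le\lambda_{0+}$. This is the paper's three-line argument, and no sign-splitting of $\T$ is needed.
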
   

\begin{proof}
 As $|V'|^2\leq C\min\{V-m,M-V\}$ by Lemma \ref{lem:V'}, we infer that
\begin{align*}
   &\|V'f\|_{L^2}^2\leq C\int_{\T}\min\{V-m,M-V\}|f|^2
   \leq C\min\left\{\int_{\T}(V-m)|f|^2,\int_{\T}(M-V)|f|^2\right\},\\
   &\int_{\T}(V-m)|f|^2=\int_{\T}(V-\lambda)|f|^2+(\lambda-m)\|f\|_{L^2}^2
   \leq\left|\int_{\T}(V-\lambda)|f|^2\right|+(\lambda-m)\|f\|_{L^2}^2,\\
   &\int_{\T}(M-V)|f|^2=-\int_{\T}(V-\lambda)|f|^2+(M-\lambda)\|f\|_{L^2}^2
   \leq\left|\int_{\T}(V-\lambda)|f|^2\right|+(M-\lambda)\|f\|_{L^2}^2,\end{align*}
from which, we  conclude 
\begin{align*}
   &\|V'f\|_{L^2}^2\leq C\left|\int_{\T}(V-\lambda)|f|^2\right|+
   \min\{\lambda-m,M-\lambda\}\|f\|_{L^2}^2.
   \end{align*}
Then the result follows from $ \lambda_{0+}\geq\min\{\lambda-m,M-\lambda\}$.
\end{proof}

\begin{lemma}\label{lem:V3}
If $\nu\in(0,1)$, $\lambda \in\R $, $ \delta>0$, $ \chi_1=\mathrm{Re}[(V-\lambda+\mathrm{i}\delta)^{-1}]$, $\lambda_{0+}\geq |\nu/k|^{\f12}$, 
then\begin{align*} 
&\|\chi_1'\|_{L^{\infty}}\leq C(\lambda_0+\delta)^{\f12}\delta^{-2},\quad 
  \||V-\lambda+\mathrm{i}\delta|^{-1}\|_{L^{2}}^2\leq C\delta^{-1}\lambda_0^{-1/2}.\end{align*}
 Moreover, if $\lambda_{0}=M-\lambda$, then 
 \beno
 \|\chi_1|y-y_1|^{\f12}\|_{L^2(y_2-2\pi+1/2,y_2+1/2)}\leq C\delta^{-\f12}.
 \eeno
\end{lemma}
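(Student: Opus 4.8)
The plan is to exploit the Morse-type description of $V$ near its critical points provided by Lemma~\ref{lem:V'} and to reduce everything to one-variable integrals. Since $\lambda_{0+}\ge|\nu/k|^{1/2}>0$ we have $\lambda\in(m,M)$ and $\lambda_0=\lambda_{0+}=\min\{\lambda-m,M-\lambda\}>0$, so all the right-hand sides below are finite. Writing $s=s(y):=V(y)-\lambda$ we have $\chi_1=\frac{s}{s^2+\delta^2}$, hence $\chi_1'=V'\cdot\frac{\delta^2-s^2}{(s^2+\delta^2)^2}$, and therefore the pointwise bounds $|\chi_1'|\le\frac{|V'|}{s^2+\delta^2}$ and $\chi_1^2=\frac1{s^2+\delta^2}-\frac{\delta^2}{(s^2+\delta^2)^2}\le\frac1{s^2+\delta^2}$.

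For the first estimate I would combine $|\chi_1'|\le\frac{|V'|}{s^2+\delta^2}$ with the inequality $|V'(y)|^2\le C\min\{V(y)-m,M-V(y)\}$ from Lemma~\ref{lem:V'}. An elementary check (distinguishing $s\ge0$ from $s<0$ and using $\lambda_0=\min\{\lambda-m,M-\lambda\}$) gives $\min\{V(y)-m,M-V(y)\}\le\lambda_0+|s(y)|$, so that $|\chi_1'(y)|\le C\frac{(\lambda_0+|s|)^{1/2}}{s^2+\delta^2}\le C\sup_{r\ge0}\frac{(\lambda_0+r)^{1/2}}{r^2+\delta^2}$. One then estimates this scalar supremum by hand: for $r\le\lambda_0$ the numerator is $\lesssim\lambda_0^{1/2}$ and the denominator $\ge\delta^2$; for $r\ge\lambda_0$ one has $(\lambda_0+r)^{1/2}\lesssim r^{1/2}$ and $\frac{r^{1/2}}{r^2+\delta^2}\lesssim\delta^{-3/2}$ when $r\le\delta$, while $\frac{r^{1/2}}{r^2+\delta^2}\lesssim r^{-3/2}\le\max\{\lambda_0,\delta\}^{-3/2}$ when $r\ge\delta$. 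Since $(\lambda_0+\delta)^{1/2}\delta^{-2}$ dominates each of $\lambda_0^{1/2}\delta^{-2}$, $\delta^{-3/2}$ and $\max\{\lambda_0,\delta\}^{-3/2}$, the claimed $L^\infty$-bound follows.

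For the $L^2$ estimate, note $\||V-\lambda+\mathrm{i}\delta|^{-1}\|_{L^2}^2=\int_{\T}\frac{dy}{(V-\lambda)^2+\delta^2}$, and split $\T$ into the two monotone arcs of $V$, namely $[y_1,y_2]$ (on which $V$ decreases) and its complement (on which $V$ increases). On each arc I would change variables $s=V(y)-\lambda$; by Lemma~\ref{lem:V'} one has $|V'(y)|\sim(\min\{V(y)-m,M-V(y)\})^{1/2}$, so with $a=\lambda-m$, $b=M-\lambda$ (so that $\lambda_0=\min\{a,b\}$ and $a+b=M-m\sim1$) the contribution of each arc equals, up to a constant, $\int_{-a}^{b}\frac{ds}{(s^2+\delta^2)(\min\{a+s,b-s\})^{1/2}}$. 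This one-variable integral is controlled by cutting the $s$-range at $|s|\sim\lambda_0$ (where the weight is $\lesssim\lambda_0^{-1/2}$ and $\int\frac{ds}{s^2+\delta^2}=\pi/\delta$) and near the endpoints $s=-a$, $s=b$ (where the weight has an integrable square-root singularity while $s^2+\delta^2\gtrsim\max\{\lambda_0,\delta\}^2$, resp.\ $\gtrsim1$), together with a dichotomy $\delta\le\lambda_0$ versus $\delta>\lambda_0$; this yields the bound $C\delta^{-1}\lambda_0^{-1/2}$.

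For the last estimate we may assume $\lambda_0=M-\lambda$, so $\lambda-m=(M-m)-\lambda_0\gtrsim1$. By $\chi_1^2\le((V-\lambda)^2+\delta^2)^{-1}$ it suffices to show $\int_{\T}\frac{|y-y_1|^2}{(V-\lambda)^2+\delta^2}\,dy\lesssim\delta^{-1}$. I would partition $\T$ into a neighborhood of $y_1$, small neighborhoods of the two zeros of $V-\lambda$, and the remainder. Near $y_1$, Lemma~\ref{lem:V'} gives $|y-y_1|^2\sim M-V(y)=\lambda_0-s$ and $|V'(y)|\sim(M-V(y))^{1/2}$, so the change of variables turns this piece into $\int\frac{(\lambda_0-s)^{1/2}}{s^2+\delta^2}\,ds$ (with $s$ ranging near $\lambda_0$), which is $\lesssim\delta^{-1}$ by the same elementary case analysis as above. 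The two zeros $y_\lambda$ of $V-\lambda$ satisfy $M-V(y_\lambda)=\lambda_0\sim|y_\lambda-y_1|^2$, hence lie within distance $\sim\lambda_0^{1/2}$ of $y_1$; there $|y-y_1|^2\sim\lambda_0$ and $|V'|\sim\lambda_0^{1/2}$, so that piece contributes $\lesssim\lambda_0^{1/2}\delta^{-1}\lesssim\delta^{-1}$. On the remainder $|y-y_1|\lesssim1$ while $|V-\lambda|\ge\lambda-m\gtrsim1$ near $y_2$, so the integrand is $\lesssim1$ (and $\lesssim\delta^{-2}$ when $\delta\ge1$), giving $\lesssim\delta^{-1}$ again. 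Taking square roots yields $\|\chi_1|y-y_1|^{1/2}\|_{L^2(y_2-2\pi+1/2,\,y_2+1/2)}\le C\delta^{-1/2}$. The only genuine obstacle is bookkeeping: in the last two estimates one must carefully balance the square-root degeneracy of the change-of-variables Jacobian at the critical points of $V$ against the factor $(s^2+\delta^2)^{-1}$, which is precisely what forces the dichotomy $\delta\le\lambda_0$ / $\delta>\lambda_0$; no idea beyond Lemma~\ref{lem:V'} enters.
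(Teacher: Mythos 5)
Your overall strategy is the same as the paper's: everything is reduced, via Lemma \ref{lem:V'}, to the quadratic degeneracy of $V$ at its critical points and to a change of variables $s=V(y)-\lambda$ on the two monotone arcs. Your first bound is correct and essentially identical to the paper's argument (the paper assumes WLOG $\lambda_0=M-\lambda$ and uses only $|V'|^2\le C(M-V)$; your inequality $\min\{V-m,M-V\}\le\lambda_0+|V-\lambda|$ handles both cases at once, which is fine). Your second bound is also fine: the paper instead uses the identity $\int_{\T}|V'|\,|V-\lambda+\mathrm{i}\delta|^{-2}\,dy=2\int_m^M|z-\lambda+\mathrm{i}\delta|^{-2}\,dz\le C\delta^{-1}$ together with the pointwise comparison $\lambda_0^{1/2}+|y-y_1|\le C|V'|$ away from the critical points, proving one weighted estimate that yields the second and third inequalities simultaneously, but your arc-by-arc computation gives the same $\delta^{-1}\lambda_0^{-1/2}$ bound and I have checked the case analysis closes.

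There is, however, a concrete error in your third bound. Since $\|\chi_1|y-y_1|^{1/2}\|_{L^2}^2=\int\chi_1^2\,|y-y_1|\,dy$, the weight is $|y-y_1|$ to the \emph{first} power; your claim that it suffices to bound $\int|y-y_1|^2\big((V-\lambda)^2+\delta^2\big)^{-1}dy$ is not a valid reduction, because near $y_1$ one has $|y-y_1|^2\le|y-y_1|$, so the integral you estimate is smaller than the one you need exactly in the critical region, and your subsequent computations (the factor $(\lambda_0-s)^{1/2}$, the $\lambda_0^{1/2}\delta^{-1}$ contribution at the zeros) all refer to this smaller integral. The repair is immediate and in fact simplifies the argument: with the correct weight, near $y_1$ the Jacobian $|V'|^{-1}\sim|y-y_1|^{-1}$ exactly cancels $|y-y_1|$, so that piece is just $\int\frac{ds}{s^2+\delta^2}\le\pi\delta^{-1}$, while away from $y_1$ one has $|y-y_1|\lesssim 1$ and the estimate reduces to the one you already gave for the second bound (or to the paper's comparison $\lambda_0^{1/2}+|y-y_1|\le C|V'|$ outside small neighborhoods of $y_1$ and $y_2$). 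Relatedly, your ``remainder'' step is only justified near $y_2$: when $\lambda_0\sim1$ the zeros of $V-\lambda$ lie at distance $\sim1$ from $y_1$, and $|V-\lambda|\gtrsim1$ need not hold on the whole remainder for your choice of neighborhoods, so there too you should change variables using $|V'|\gtrsim1$ rather than bound the integrand pointwise. With these routine corrections your proof goes through and coincides in substance with the paper's.
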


\begin{proof}
As $\lambda_{0+}\geq |\nu/k|^{\f12}>0$, then $\lambda_{0}=\lambda_{0+}$, $\lambda\in(m,M) $. Thus,
 $\lambda_0=M-\lambda$ or $\lambda_0=\lambda-m$. Without lose of generality, assume $\lambda_0=M-\lambda$. Then 
 $\lambda-m\geq M-\lambda=\lambda_0 $, $\lambda-m\geq (M-m)/2>1/2 $, $\lambda_0\leq (M-m)/2<2$.

 By Lemma \ref{lem:V'}, we have 
\begin{align*} 
\chi_1'&=-\mathrm{Re}[V'(V-\lambda+\mathrm{i}\delta)^{-2}],\quad |V'|^2\leq C(M-V),
\end{align*}
which gives
\begin{align*} 
|\chi_1'|&\leq|V'||V-\lambda+\mathrm{i}\delta|^{-2}\leq C|M-V|^{\f12}|V-\lambda+\mathrm{i}\delta|^{-2}\\&= C|\lambda_0+\lambda-V|^{\f12}|V-\lambda+\mathrm{i}\delta|^{-2}
\leq C\lambda_0^{1/2}|V-\lambda+\mathrm{i}\delta|^{-2}+C|V-\lambda+\mathrm{i}\delta|^{-\f32}\\&\leq C\lambda_0^{1/2}\delta^{-2}+C\delta^{-\f32}\leq C(\lambda_0+\delta)^{\f12}\delta^{-2}.
\end{align*}
This shows the first inequality of the lemma.

Notice that
\begin{align*} 
\int_{\T}|V'||V-\lambda+\mathrm{i}\delta|^{-2}\mathrm{d}y&=
\int_{y_2-2\pi}^{y_1}|V'||V-\lambda+\mathrm{i}\delta|^{-2}\mathrm{d}y+
\int_{y_1}^{y_2}|V'||V-\lambda+\mathrm{i}\delta|^{-2}\mathrm{d}y\\
&=2\int_m^M|z-\lambda+\mathrm{i}\delta|^{-2}\mathrm{d}z\leq C\delta^{-1}.
\end{align*}
By Lemma \ref{lem:V'}, we deduce that if $y_1+\lambda_0^{1/2}/2\leq y\leq y_2-1/2$, then 
\beno
\lambda_0^{1/2}+|y-y_1|\leq 3|y-y_1|\leq C|y-y_2|\leq C|V'(y)|;
\eeno
if $y_2-2\pi+1/2\leq y \leq y_1-\lambda_0^{1/2}/2$, then 
\beno
\lambda_0^{1/2}+|y-y_1|\leq 3|y-y_1|\leq C|y-y_2+2\pi|\leq C|V'(y)|;
\eeno
if $y_1-\lambda_0^{1/2}/2\leq y\leq y_1+\lambda_0^{1/2}/2$, then $\lambda_0^{1/2}+|y-y_1|\leq 2\lambda_0^{1/2}$, and 
\begin{align*}
V(y)-\lambda\geq M-|y-y_1|^2-\lambda=\lambda_0-|y-y_1|^2\geq \lambda_0-\lambda_0/4\geq \lambda_0/2>0;
\end{align*}
if $y_2-1/2\leq y\leq y_2+1/2$, then $\lambda_0^{1/2}+|y-y_1|\leq C$, and (as $\lambda-m>1/2 $)
\begin{align*}V(y)-\lambda\leq m+|y-y_2|^2-\lambda\leq|y-y_2|^2-1/2\leq 1/4-1/2=-1/4<0.
\end{align*}
Thus, we conclude 
\begin{align*} 
&\int_{y_2-2\pi+1/2}^{y_2+1/2}(\lambda_0^{1/2}+|y-y_1|)|V-\lambda+\mathrm{i}\delta|^{-2}\mathrm{d}z\\
&=
\int_{y_2-2\pi+1/2}^{y_1-\lambda_0^{1/2}/2}(\lambda_0^{1/2}+|y-y_1|)
|V-\lambda+\mathrm{i}\delta|^{-2}+
\int_{y_1+\lambda_0^{1/2}/2}^{y_2-1/2}(\lambda_0^{1/2}+|y-y_1|)|V-\lambda+\mathrm{i}\delta|^{-2}\\&\quad+
\int_{y_1-\lambda_0^{1/2}/2}^{y_1+\lambda_0^{1/2}/2}(\lambda_0^{1/2}+|y-y_1|)|V-\lambda+\mathrm{i}\delta|^{-2}+
\int_{y_2-1/2}^{y_2+1/2}(\lambda_0^{1/2}+|y-y_1|)|V-\lambda+\mathrm{i}\delta|^{-2}\\
&\leq C\int_{\T}|V'||V-\lambda+\mathrm{i}\delta|^{-2}+\int_{y_1-\lambda_0^{1/2}/2}^{y_1+\lambda_0^{1/2}/2}
(2\lambda_0^{1/2})|\lambda_0/2+\mathrm{i}\delta|^{-2}+C\int_{y_2-1/2}^{y_2+1/2}|1/4+\mathrm{i}\delta|^{-2}\\
&\leq C\delta^{-1}+2\lambda_0|\lambda_0/2+\mathrm{i}\delta|^{-2}+C|1/4+\mathrm{i}\delta|^{-2}\leq C\delta^{-1}.
\end{align*}
This implies the second and third inequality of the lemma. 
\end{proof}

\subsection{Resolvent estimate for the linearized Euler}

First, Proposition \ref{lem:rayleigh} can be reformulated as follows. 

\begin{lemma}\label{lem:rayleigh-1}
    If $w$ satisfies 
  \begin{align}\label{eq:Ray-eps}
   & (V-\lambda-\mathrm{i}\epsilon)w+(\lambda+\mathrm{i}\epsilon) \mathcal{I}_kw=F,
\end{align}
here $\epsilon\in\R\setminus\{0\}$, then we have
\begin{align*}
   &\theta(k,\lambda)\|(\partial_y,k)\Delta_k^{-1}w\|_{L^2} \leq C\|(\partial_y,k)F\|_{L^2}.
\end{align*}
Here the constant $C$ is independent with $k,\ \epsilon$ and $\lambda$.
\end{lemma}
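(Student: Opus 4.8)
\textbf{Plan of proof for Lemma \ref{lem:rayleigh-1}.}

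The plan is to show that Lemma \ref{lem:rayleigh-1} is merely a restatement of Proposition \ref{lem:rayleigh} after the change of unknown $w = f + \Delta_k^{-1}f = (\Delta_k+1)\Delta_k^{-1}f$, i.e. $f = \mathcal{I}_k^{-1}w - \mathcal{I}_k^{-1}\Delta_k^{-1}f$ written more cleanly as $w = (1+\Delta_k^{-1})f$. First I would observe that the equation \eqref{eq:Ray-eps-0} of Proposition \ref{lem:rayleigh}, namely $V(f+\Delta_k^{-1}f) - (\lambda+\mathrm{i}\epsilon)f = F$, can be rewritten using $w = f+\Delta_k^{-1}f$ and $f = w - \Delta_k^{-1}f$; since $\Delta_k^{-1}f = \mathcal{I}_k w$ (because $w = (\Delta_k+1)\Delta_k^{-1}f$ gives $\Delta_k^{-1}f = (\Delta_k+1)^{-1}w = \mathcal{I}_k w$), we get $f = w - \mathcal{I}_k w$. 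Substituting into \eqref{eq:Ray-eps-0} yields
\begin{align*}
Vw - (\lambda+\mathrm{i}\epsilon)(w - \mathcal{I}_k w) = F,
\end{align*}
which is exactly \eqref{eq:Ray-eps}: $(V-\lambda-\mathrm{i}\epsilon)w + (\lambda+\mathrm{i}\epsilon)\mathcal{I}_k w = F$. Conversely, given $w$ solving \eqref{eq:Ray-eps}, setting $f := w - \mathcal{I}_k w = (1+\Delta_k^{-1})f$ \emph{provided} this $f$ is consistent — here one must check that $\mathcal{I}_k w = \Delta_k^{-1}f$, equivalently that $w = (\Delta_k+1)\Delta_k^{-1}f$, which follows since $f = (1+\Delta_k^{-1})f$ forces $w = f + \Delta_k^{-1}f$ by back-substitution. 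So the two formulations are literally equivalent under $w \leftrightarrow f$, and the conclusion of Proposition \ref{lem:rayleigh} is stated in terms of $f$ while Lemma \ref{lem:rayleigh-1} is stated in terms of $w$; but since $\Delta_k^{-1}w = \Delta_k^{-1}(1+\Delta_k^{-1})f = (1+\Delta_k^{-1})\Delta_k^{-1}f$, and $\|(\partial_y,k)\Delta_k^{-1}w\|_{L^2}$ versus $\|(\partial_y,k)\Delta_k^{-1}f\|_{L^2}$ differ only by the bounded, invertible operator $(1+\Delta_k^{-1})$ (note $|k|>1$, so $\mathrm{id}+\Delta_k^{-1}$ has spectrum bounded away from $0$ and $\infty$ on the relevant Sobolev scale), the estimate transfers directly.

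The key steps in order: (1) record the algebraic dictionary $w = (1+\Delta_k^{-1})f$, $f = (1-\mathcal{I}_k)w$, $\Delta_k^{-1}f = \mathcal{I}_k w$, valid since $k\in \frac{2\pi}{\tp}\Z\setminus\{0\}$ implies $|k|\geq\alpha>1$ so that $\Delta_k+1$ is invertible; (2) verify \eqref{eq:Ray-eps-0} $\Leftrightarrow$ \eqref{eq:Ray-eps} by direct substitution; (3) apply Proposition \ref{lem:rayleigh} to get $\theta(k,\lambda)\|(\partial_y,k)\Delta_k^{-1}f\|_{L^2} \le C\|(\partial_y,k)F\|_{L^2}$; (4) rewrite $\Delta_k^{-1}w = (1+\Delta_k^{-1})\Delta_k^{-1}f$ and note $\|(1+\Delta_k^{-1})g\|_{H_k^s}\sim \|g\|_{H_k^s}$ uniformly in $k$ with $|k|>1$, so $\|(\partial_y,k)\Delta_k^{-1}w\|_{L^2}\le C\|(\partial_y,k)\Delta_k^{-1}f\|_{L^2}$; (5) chain the inequalities. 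Since the constant $C$ in Proposition \ref{lem:rayleigh} is independent of $k,\epsilon,\lambda$, so is the one here.

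I do not expect a genuine obstacle: the only point requiring a line of care is that the operator $1+\Delta_k^{-1}$ (equivalently multiplication by $1 - (j^2+k^2)^{-1}$ on the $j$-th Fourier mode in $y$) has norm and inverse-norm bounded uniformly once $|k|\geq\alpha>1$, since then $1-(j^2+k^2)^{-1}\in[1-\alpha^{-2},\,1)$ is uniformly pinched in a compact subinterval of $(0,1)$; this is what makes the passage from the $f$-norm to the $w$-norm (and back) harmless. Everything else is formal rewriting. One should also note that the hypothesis $\epsilon \ne 0$ guarantees all inverses $(V-\lambda-\mathrm{i}\epsilon)^{-1}$ appearing implicitly are well defined, matching the hypothesis of Proposition \ref{lem:rayleigh}.
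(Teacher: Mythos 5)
Your proposal is correct and follows essentially the same route as the paper: the change of unknown $w=(1+\Delta_k^{-1})f$ (equivalently $f=\Delta_k\mathcal{I}_kw$, $\Delta_k^{-1}f=\mathcal{I}_kw$), direct verification that \eqref{eq:Ray-eps} becomes \eqref{eq:Ray-eps-0}, an application of Proposition \ref{lem:rayleigh}, and the norm equivalence $\|(\partial_y,k)\Delta_k^{-1}f\|_{L^2}\sim\|(\partial_y,k)\Delta_k^{-1}w\|_{L^2}$ coming from $|k|>1$. Aside from a couple of typographical slips in the dictionary (e.g.\ writing $f:=w-\mathcal{I}_kw=(1+\Delta_k^{-1})f$ where the last expression should be $w$), the argument matches the paper's proof.
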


\begin{proof}
Let $f_0 = \mathcal{I}_kw$, $f=\Delta_kf_0$.  Then $w=f+f_0=f+\Delta^{-1}_k f$, and $f$ solves 
\begin{align*}
   &  V(f+\Delta^{-1}_k f)-(\lambda+\mathrm{i}\epsilon) f=F.
\end{align*}
Thus, we  infer from Proposition \ref{lem:rayleigh} that 
\begin{align*}
   &\theta(k,\lambda)\|(\partial_y,k)\Delta_k^{-1}f\|_{L^2} \leq C\|(\partial_y,k)F\|_{L^2}.
\end{align*}
 Thanks to $k\in2\pi\mathbb{Z}/\mathfrak{p}\setminus\{0\}$, $|k|>1$, we have
\begin{align*}
   & \|(\partial_y,k)\Delta_k^{-1}f\|_{L^2}\sim \|(\partial_y,k)\Delta_k^{-1}w\|_{L^2},
\end{align*}
which yields 
\begin{align*}
   &\theta(k,\lambda)\|(\partial_y,k)\Delta_k^{-1}w\|_{L^2}\leq C\theta(k,\lambda)\|(\partial_y,k)\Delta_k^{-1}f\|_{L^2} \leq C\|(\partial_y,k)F\|_{L^2}.
\end{align*}
\end{proof}

\begin{lemma}\label{lem:eps-w}
If $w$ satisfies 
  \begin{align}\label{eq:Ray-eps-000}
   & (V-\lambda-\mathrm{i}\epsilon)w+(\lambda+\mathrm{i}\epsilon) \mathcal{I}_kw=F,
\end{align}
 then it holds that for $\epsilon\neq 0$, 
  \begin{align*}
   &|\epsilon \theta|^{\f12}\|w\|_{L^2}\leq C\|(\partial_y,k)F\|_{L^2},\\
   &\|V'w\|_{L^2}\leq C\theta^{-\f12}(\lambda_{0+}/|\epsilon|+1)^{\f12}\|(\partial_y,k) F\|_{L^2},\\
   & \|(\partial_y,k)w\|_{L^2}\leq C|\epsilon|^{-3/2}(\lambda_{0+}/\theta+|\epsilon|)^{1/2}\|(\partial_y,k)F\|_{L^2}.
\end{align*}
\end{lemma}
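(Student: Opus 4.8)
The plan is to combine the Rayleigh-type resolvent bound of Lemma~\ref{lem:rayleigh-1} with two elementary energy identities obtained by pairing \eqref{eq:Ray-eps-000} against $w$ in $L^2(\T)$. Set $\phi=\mathcal{I}_kw=(\Delta_k+1)^{-1}w$, so that $w=\Delta_{\tilde{k}}\phi$ with $\tilde{k}=\sqrt{k^2-1}$, and $f:=w-\phi=\Delta_k\phi$; substituting $w=f+\Delta_k^{-1}f$ into \eqref{eq:Ray-eps-000} shows that $f$ satisfies $(V-\lambda-\mathrm{i}\epsilon)f=F-V\phi$. Since $|k|\ge\alpha>1$, the Fourier multipliers $1+\Delta_k^{-1}$ and $(\partial_y,\tilde{k})(\partial_y,k)^{-1}$ are bounded above and below, hence $\|(\partial_y,k)\Delta_k^{-1}w\|_{L^2}\sim\|(\partial_y,k)\phi\|_{L^2}\sim\|(\partial_y,\tilde{k})\phi\|_{L^2}$ and $\|(\partial_y,k)w\|_{L^2}\le C\|(\partial_y,k)f\|_{L^2}$. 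In particular, Lemma~\ref{lem:rayleigh-1} yields
\[
\theta\,\|(\partial_y,k)\phi\|_{L^2}\le C\|(\partial_y,k)F\|_{L^2},
\]
and integrating $\langle F,w\rangle=\langle F,\Delta_{\tilde{k}}\phi\rangle$ by parts once gives
\[
|\langle F,w\rangle|\le C\|(\partial_y,k)F\|_{L^2}\,\|(\partial_y,k)\phi\|_{L^2}\le C\theta^{-1}\|(\partial_y,k)F\|_{L^2}^2 .
\]

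First I would prove the bound on $\|w\|_{L^2}$. Pairing \eqref{eq:Ray-eps-000} with $w$ and using that $\langle\phi,w\rangle=\langle\phi,\Delta_{\tilde{k}}\phi\rangle=-\|(\partial_y,\tilde{k})\phi\|_{L^2}^2$ is real, the imaginary part of the identity reads $|\epsilon|\big(\|w\|_{L^2}^2+\|(\partial_y,\tilde{k})\phi\|_{L^2}^2\big)=|\mathbf{Im}\langle F,w\rangle|\le C\theta^{-1}\|(\partial_y,k)F\|_{L^2}^2$, which is the first estimate. Next, for $\|V'w\|_{L^2}$, I would invoke Lemma~\ref{lem1} with $w$ in place of $f$, so that it suffices to bound $\big|\int_{\T}(V-\lambda)|w|^2\,\mathrm{d}y\big|$ and $\lambda_{0+}\|w\|_{L^2}^2$. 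The real part of the same pairing gives $\int_{\T}(V-\lambda)|w|^2\,\mathrm{d}y=\mathbf{Re}\langle F,w\rangle+\lambda\|(\partial_y,\tilde{k})\phi\|_{L^2}^2$, which is $\le C\theta^{-1}\|(\partial_y,k)F\|_{L^2}^2$ since $|\lambda|\le\theta$; combined with the first estimate and $\lambda_{0+}\ge 0$ this yields $\|V'w\|_{L^2}^2\le C\theta^{-1}\big(1+\lambda_{0+}/|\epsilon|\big)\|(\partial_y,k)F\|_{L^2}^2$, as claimed.

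For the last estimate I would work with $f$ and its algebraic equation $(V-\lambda-\mathrm{i}\epsilon)f=F-V\phi$: solving pointwise and differentiating,
\[
\partial_y f=\frac{\partial_y F-V'\phi-V\partial_y\phi-V'f}{V-\lambda-\mathrm{i}\epsilon},\qquad
k f=\frac{k(F-V\phi)}{V-\lambda-\mathrm{i}\epsilon},
\]
and using the pointwise bound $|V-\lambda-\mathrm{i}\epsilon|^{-1}\le|\epsilon|^{-1}$. Here it is essential to keep $k$ in the numerator, so that $\|k(F-V\phi)\|_{L^2}\le C\|(\partial_y,k)F\|_{L^2}$ directly (estimating $\|kf\|_{L^2}$ instead through $\|w\|_{L^2}$ and the first estimate would cost an unbounded factor of $|k|$). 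Bounding $\|V'f\|_{L^2}\le\|V'w\|_{L^2}+C\|\phi\|_{L^2}$ with the second estimate, and $\|\phi\|_{L^2},\|\partial_y\phi\|_{L^2}\le C\theta^{-1}\|(\partial_y,k)F\|_{L^2}$, one obtains $\|(\partial_y,k)f\|_{L^2}\le C|\epsilon|^{-1}\big(1+\theta^{-1/2}(1+\lambda_{0+}/|\epsilon|)^{1/2}\big)\|(\partial_y,k)F\|_{L^2}$; the elementary inequality $\theta^{-1/2}(1+\lambda_{0+}/|\epsilon|)^{1/2}=\big(\theta^{-1}+\lambda_{0+}/(\theta|\epsilon|)\big)^{1/2}\le\big(1+\lambda_{0+}/(\theta|\epsilon|)\big)^{1/2}$ (valid since $\theta\ge1$) then converts this into $\|(\partial_y,k)w\|_{L^2}\le C|\epsilon|^{-3/2}\big(\lambda_{0+}/\theta+|\epsilon|\big)^{1/2}\|(\partial_y,k)F\|_{L^2}$.

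The main obstacle is not an isolated hard inequality but the bookkeeping needed to keep every power of $\theta$, $|\epsilon|$ and $\lambda_{0+}$ sharp; the only place where a naive argument loses is the $k$-component of the third estimate, and the pointwise manipulation of the $f$-equation above is precisely what avoids that loss.
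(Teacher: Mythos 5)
Your argument is correct, and for the first two estimates it coincides with the paper's proof: both pair \eqref{eq:Ray-eps-000} with $w$, take imaginary and real parts, control $|\langle F,w\rangle|$ and the nonlocal term through Lemma \ref{lem:rayleigh-1}, and invoke Lemma \ref{lem1} to convert this into the bound on $\|V'w\|_{L^2}$. The only genuine difference is in the third bound. The paper differentiates the full equation in $y$, pairs with $\partial_y w$, and uses the favorable sign of $\langle\mathcal{I}_k\partial_yw,\partial_yw\rangle$ in the imaginary part to get $|\epsilon|\,\|\partial_yw\|_{L^2}\le \|\partial_yF\|_{L^2}+\|V'w\|_{L^2}$, while the $k$-component is handled by the crude bound $|\epsilon|\,\|w\|_{L^2}\le\|F\|_{L^2}$, giving $\|kw\|_{L^2}\le|\epsilon|^{-1}\|kF\|_{L^2}$. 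You instead pass to $f=\Delta_k\phi$, whose equation $(V-\lambda-\mathrm{i}\epsilon)f=F-V\phi$ is local, differentiate it and divide pointwise using $|V-\lambda-\mathrm{i}\epsilon|\ge|\epsilon|$, paying the extra terms $\|\phi\|_{L^2},\|\partial_y\phi\|_{L^2},\|k\phi\|_{L^2}\lesssim\theta^{-1}\|(\partial_y,k)F\|_{L^2}$, which Lemma \ref{lem:rayleigh-1} absorbs; since $1+\Delta_k^{-1}$ is a bounded multiplier for $|k|>1$, $\|(\partial_y,k)w\|_{L^2}\lesssim\|(\partial_y,k)f\|_{L^2}$, and your final conversion $1+\theta^{-1/2}(1+\lambda_{0+}/|\epsilon|)^{1/2}\lesssim(1+\lambda_{0+}/(\theta|\epsilon|))^{1/2}$ (using $\theta\ge1$) gives exactly the stated rate $|\epsilon|^{-3/2}(\lambda_{0+}/\theta+|\epsilon|)^{1/2}$. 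The two routes are of equal strength: the paper's is marginally shorter for the $k$-part (no $V\phi$ term to carry, and in fact it also loses no factor of $|k|$, contrary to what your aside suggests a "naive" route must), while yours avoids differentiating the nonlocal term and makes transparent that the only inputs beyond Lemma \ref{lem:rayleigh-1} and your second estimate are the pointwise lower bound on $|V-\lambda-\mathrm{i}\epsilon|$ and boundedness of $V$, $V'$.
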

\begin{proof}
  Assume $\epsilon>0$ for convenience.  Taking the inner product of \eqref{eq:Ray-eps-000} with $w$, we obtain
  \begin{align*}
     & \epsilon\|w\|_{L^2}^2-\epsilon\langle \mathcal{I}_kw,w\rangle=-\mathbf{Im}\langle F,w\rangle,\quad 
     \langle (V-\lambda)w,w\rangle+\lambda\langle \mathcal{I}_kw,w\rangle=\mathbf{Re}\langle F,w\rangle,\\
      &\epsilon\|w\|_{L^2}^2+\epsilon\|(\partial_y,\tilde{k})\Delta_{\tilde{k}}^{-1}w\|_{L^2}^2=-\mathbf{Im}\langle F,w\rangle\leq \|(\partial_y,k) F\|_{L^2}\|(\partial_y,k)\Delta_{k}^{-1}w\|_{L^2},
  \end{align*}
  By Lemma \ref{lem:rayleigh-1}, we have 
  \begin{align*}
   &|\epsilon \theta|^{\f12}\|w\|_{L^2}\leq C\|(\partial_y,k)F\|_{L^2}.
\end{align*}
This proves the first inequality of the lemma.  By Lemma \ref{lem1}, we get
\begin{align*}
   &\|V'w\|_{L^2}^2\leq C|\langle (V-\lambda)w,w\rangle|+C\lambda_{0+}\|w\|_{L^2}^2\leq C|\langle F,w\rangle|+C|\lambda\langle \mathcal{I}_kw,w\rangle|+C\lambda_{0+}\|w\|_{L^2}^2\\
   &\leq C\|(\partial_y,k) F\|_{L^2}\|(\partial_y,k)\Delta_{k}^{-1}w\|_{L^2}+
   C|\lambda|\|(\partial_y,\tilde{k})\Delta_{\tilde{k}}^{-1}w\|_{L^2}^2+C\lambda_{0+}\|w\|_{L^2}^2\\
   &\leq C\theta^{-1}\|(\partial_y,k) F\|_{L^2}^2+C|\lambda|\theta^{-2}\|(\partial_y,k) F\|_{L^2}^2+C\lambda_{0+}\|w\|_{L^2}^2\\
   &\leq C\theta^{-1}\|(\partial_y,k) F\|_{L^2}^2+C\lambda_{0+}|\epsilon \theta|^{-1}\|(\partial_y,k) F\|_{L^2}^2,
\end{align*}
which implies the second inequality of the lemma, here we used (see Lemma \ref{lem:rayleigh-1})
\begin{align*}
   &C^{-1}\|(\partial_y,\tilde{k})\Delta_{\tilde{k}}^{-1}w\|_{L^2}\leq\|(\partial_y,{k})\Delta_{{k}}^{-1}w\|_{L^2}\leq 
   C\theta^{-1}\|(\partial_y,k) F\|_{L^2}|.
\end{align*}

By taking $\partial_y$ to \eqref{eq:Ray-eps}, we get
\begin{align*}
   & (V-\lambda-\mathrm{i}\epsilon)\partial_yw+(\lambda+\mathrm{i}\epsilon) \mathcal{I}_k\partial_yw=\partial_yF-V'w.
\end{align*}
Taking the inner product with $\partial_yw$ gives
\begin{align*}
   &\epsilon \|\partial_yw\|_{L^2}^2-\epsilon\langle \mathcal{I}_k\partial_yw,\partial_yw\rangle \leq |\langle \partial_yF-V'w,\partial_yw\rangle|,\\
   &\epsilon \|\partial_yw\|_{L^2}^2+ \epsilon\|(\partial_y,\tilde{k})\Delta_{\tilde{k}}^{-1}\partial_yw\|_{L^2}^2 \leq (\|\partial_yF\|_{L^2}+\|V'w\|_{L^2})\|\partial_yw\|_{L^2},\\
   &\epsilon \|\partial_yw\|_{L^2}\leq C(\|\partial_yF\|_{L^2}+\|V'w\|_{L^2})\leq C(1+\theta^{-\f12}(\lambda_{0+}/\epsilon+1)^{-\f12})\|(\partial_y,k)F\|_{L^2}\\ &\leq  C\epsilon^{-\f12}((\lambda_{0+}+\epsilon)/\theta+\epsilon)^{\f12}\|(\partial_y,k)F\|_{L^2} \leq  C\epsilon^{-\f12}(\lambda_{0+}/\theta+\epsilon)^{\f12}\|(\partial_y,k)F\|_{L^2}.
\end{align*}
 This implies the third inequality of the lemma by noticing that 
  \begin{align*}
      &\epsilon\|w\|_{L^2}^2+\epsilon\|(\partial_y,\tilde{k})\Delta_{\tilde{k}}^{-1}w\|_{L^2}^2=-\mathbf{Im}\langle F,w\rangle\leq \| F\|_{L^2}\|w\|_{L^2},\quad\epsilon\|w\|_{L^2}\leq \| F\|_{L^2},\\
      &\|kw\|_{L^2}\leq \epsilon^{-1}\|k F\|_{L^2}\leq\epsilon^{-\f32}(\lambda_{0+}/\theta+\epsilon)^{\f12}\|(\partial_y,k)F\|_{L^2}.
  \end{align*}
  
  This completes the proof of the lemma.
    \end{proof}
    
 \subsection{Resolvent estimate for the simplified linearized operator}

\begin{lemma}\label{lem:SLNS-res}
If $\nu\in(0,1)$, $\lambda \in\R $, $ \delta>0$, $ \chi_1=\mathrm{Re}[(V-\lambda+\mathrm{i}\delta)^{-1}]$, $\lambda_{0+}\geq |\nu/k|^{\f12}$,  and
\begin{align}\label{eq1}
   & -\nu\Delta_k w+\mathrm{i}k(V-\lambda)w=F_1,
\end{align}
then we have
\begin{align*} 
&\|w\|_{L^{2}} \leq C\big(|k|^{-1}\nu(\lambda_0+\delta)^{\f12}\delta^{-2}+\delta\lambda_0^{-1/2}\big)\|w\|_{H_k^1}+C|k|^{-1}\|\chi_1F_1\|_{L^2},
\end{align*}
where
\begin{align*} 
&\|\chi_1F_1\|_{L^2}  \leq C\delta^{-\f12}\theta^{-\f12}
\big(\|F_1\|_{H_k^1}+|k|\|w\|_{H_k^{-1}}\big)+C\nu\lambda_0^{-1}\delta^{-\f12}\|w\|_{H_k^1}.\end{align*}

\end{lemma}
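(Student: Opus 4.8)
Both inequalities are obtained by using $\chi_1=\mathrm{Re}[(V-\lambda+\mathrm{i}\delta)^{-1}]$ as a multiplier, together with the pointwise and $L^2$ bounds on $\chi_1$ and $\chi_1'$ from Lemma~\ref{lem:V3}, the one--dimensional interpolation $\|w\|_{L^\infty}^2\lesssim\|w\|_{L^2}\|(\partial_y,k)w\|_{L^2}=\|w\|_{L^2}\|w\|_{H_k^1}$ (valid since $|k|>1$), and the algebraic identity
\begin{align*}
\chi_1(V-\lambda)=\frac{(V-\lambda)^2}{(V-\lambda)^2+\delta^2}=1-\delta^2|V-\lambda+\mathrm{i}\delta|^{-2}.
\end{align*}

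For the first inequality I would pair the equation $-\nu\Delta_k w+\mathrm{i}k(V-\lambda)w=F_1$ with $\chi_1 w$ in $L^2(\T)$ and take the imaginary part. Since $\chi_1$ is real, $\mathrm{i}k\langle(V-\lambda)w,\chi_1 w\rangle=\mathrm{i}k\big(\|w\|_{L^2}^2-\delta^2\||V-\lambda+\mathrm{i}\delta|^{-1}w\|_{L^2}^2\big)$ is purely imaginary, and integrating by parts in the viscous term leaves $\nu\,\mathrm{Im}\int_\T\chi_1'\,\partial_y w\,\bar w\,dy$ in the imaginary part, whence
\begin{align*}
k\|w\|_{L^2}^2=k\delta^2\||V-\lambda+\mathrm{i}\delta|^{-1}w\|_{L^2}^2-\nu\,\mathrm{Im}\!\int_\T\chi_1'\,\partial_y w\,\bar w\,dy+\mathrm{Im}\langle\chi_1 F_1,w\rangle.
\end{align*}
Dividing by $|k|\,\|w\|_{L^2}$: the last term gives $|k|^{-1}\|\chi_1 F_1\|_{L^2}$; the viscous term is $\le|k|^{-1}\nu\|\chi_1'\|_{L^\infty}\|w\|_{H_k^1}\lesssim|k|^{-1}\nu(\lambda_0+\delta)^{1/2}\delta^{-2}\|w\|_{H_k^1}$ by Lemma~\ref{lem:V3}; and the first term is $\le\delta^2\||V-\lambda+\mathrm{i}\delta|^{-1}\|_{L^2}^2\|w\|_{L^\infty}^2/\|w\|_{L^2}\lesssim\delta\lambda_0^{-1/2}\|w\|_{H_k^1}$, again by Lemma~\ref{lem:V3} and the interpolation. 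This is exactly the first claimed bound.

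For the second inequality I would estimate $\|\chi_1 F_1\|_{L^2}^2=\langle F_1,\chi_1^2F_1\rangle$ by substituting $F_1=-\nu\Delta_k w+\mathrm{i}k(V-\lambda)w$ into one factor. By the identity above the transport part yields the leading piece $\mathrm{i}k\langle w,\chi_1 F_1\rangle$ plus the benign piece $-\mathrm{i}k\delta^2\langle|V-\lambda+\mathrm{i}\delta|^{-2}w,\chi_1 F_1\rangle$; the leading piece is handled by the duality $|\langle w,\chi_1 F_1\rangle|\le\|w\|_{H_k^{-1}}\|(\partial_y,k)(\chi_1 F_1)\|_{L^2}$, where $\|(\partial_y,k)(\chi_1 F_1)\|_{L^2}$ is controlled through $\|\chi_1\|_{L^2}\lesssim\delta^{-1/2}\lambda_0^{-1/4}$, $\|\chi_1'\|_{L^\infty}\lesssim(\lambda_0+\delta)^{1/2}\delta^{-2}$ and $\|F_1\|_{L^\infty}\lesssim|k|^{-1/2}\|F_1\|_{H_k^1}$. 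The viscous part $-\nu\langle\Delta_k w,\chi_1^2F_1\rangle$ I would integrate by parts and bound using $\|\chi_1\|_{L^\infty}\lesssim\delta^{-1}$, which produces the $\nu\lambda_0^{-1}\delta^{-1/2}\|w\|_{H_k^1}$ term. The weaker factor $(|k|\lambda_0^{1/2})^{-1/2}$ coming from $\|\chi_1\|_{L^2}$ alone is upgraded to the sharp $\theta^{-1/2}$ by a split according to which of $1$, $|k|\lambda_0^{1/2}$, $|\lambda|$ dominates $\theta$: when $|\lambda|\ge 2(1+\epsilon_0)$ one has $|V-\lambda|\ge|\lambda|/2$, hence $\|\chi_1\|_{L^2}\lesssim|\lambda|^{-1}$; when $|k|\lambda_0^{1/2}\gtrsim1$ dominates, the $L^2$ bound on $\chi_1$ already gives $\delta^{-1/2}(|k|\lambda_0^{1/2})^{-1/2}$; and in the remaining regime the duality against $\|w\|_{H_k^{-1}}$ is used, exploiting $\lambda_{0+}\ge|\nu/k|^{1/2}$.

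The step I expect to be the real work is the second inequality. The decomposition $\chi_1 F_1=-\nu\chi_1\Delta_k w+\mathrm{i}kw-\mathrm{i}k\delta^2|V-\lambda+\mathrm{i}\delta|^{-2}w$ shows that $\|\chi_1 F_1\|_{L^2}$ is of order $|k|\|w\|_{L^2}$ modulo corrections, so the content of the estimate is to re-express it in the weaker norms $\|w\|_{H_k^{-1}}$ and $\|F_1\|_{H_k^1}$ with the extra factor $\delta^{-1/2}\theta^{-1/2}$ pulled out; because $\chi_1$ is genuinely singular (size $\delta^{-1}$ in $L^\infty$ but only $\delta^{-1/2}\lambda_0^{-1/4}$ in $L^2$) one cannot lose derivatives crudely, and getting the powers of $\delta$, $\lambda_0$ and $\theta$ right simultaneously forces the delicate bookkeeping with Lemma~\ref{lem:V3} and the case split above. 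The first inequality, by contrast, is essentially a one-line energy identity once $\chi_1$ is chosen as the multiplier.
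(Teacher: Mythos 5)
Your proof of the first inequality is essentially identical to the paper's: pair the equation with $\chi_1 w$, take the imaginary part, use $\chi_1(V-\lambda)=1-\delta^2|V-\lambda+\mathrm{i}\delta|^{-2}$, and invoke Lemma~\ref{lem:V3} together with $\|w\|_{L^\infty}^2\lesssim\|w\|_{H_k^1}\|w\|_{L^2}$. That part is correct.

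The second inequality is where your plan has a genuine gap. Under the hypothesis $\lambda_{0+}\geq|\nu/k|^{1/2}$ one has $\lambda\in(m,M)$, so $|\lambda|\leq C$ and $\theta\sim1+|k|\lambda_0^{1/2}$; your case ``$|\lambda|$ dominates'' is vacuous, and the relevant dichotomy is $\lambda_0\geq|k|^{-2}$ versus $\lambda_0\leq|k|^{-2}$. In the first regime your crude bound $\|\chi_1F_1\|_{L^2}\leq\|\chi_1\|_{L^2}\|F_1\|_{L^\infty}$ does give $C\delta^{-1/2}\theta^{-1/2}\|F_1\|_{H_k^1}$, exactly as in the paper. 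But in the degenerate regime $\lambda_0\leq|k|^{-2}$ (so $\theta\sim1$, $\lambda$ close to the critical value of $V$) this bound only yields the factor $\delta^{-1/2}(|k|\lambda_0^{1/2})^{-1/2}$, and $(|k|\lambda_0^{1/2})^{-1/2}$ is not $O(1)$: the hypothesis only forces $\lambda_0\geq|\nu/k|^{1/2}$, so it can be as large as $\nu^{-1/8}|k|^{-3/8}$. Your proposed rescue --- substituting $F_1=-\nu\Delta_kw+\mathrm{i}k(V-\lambda)w$ into one factor of $\|\chi_1F_1\|_{L^2}^2$ and estimating the leading piece by $|k|\,\|w\|_{H_k^{-1}}\|(\partial_y,k)(\chi_1F_1)\|_{L^2}$ --- does not close either, because $(\partial_y,k)(\chi_1F_1)$ brings in $\chi_1'$, for which the only available control is $\|\chi_1'\|_{L^\infty}\lesssim(\lambda_0+\delta)^{1/2}\delta^{-2}$; in the intended application $\delta=c_0|\nu\lambda_0/k|^{1/3}\leq\lambda_0$, and tracking the exponents shows the resulting term dominates $\delta^{-1/2}\theta^{-1/2}\big(\|F_1\|_{H_k^1}+|k|\|w\|_{H_k^{-1}}\big)$ unless $\lambda_0\lesssim|k|\nu^2$, which is incompatible with $\lambda_0\geq|\nu/k|^{1/2}$ for small $\nu$. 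The paper's argument in this regime rests on two ingredients absent from your plan: (i) the weighted bound $\|\chi_1|y-y_1|^{1/2}\|_{L^2}\leq C\delta^{-1/2}$ (the third estimate of Lemma~\ref{lem:V3}), used after splitting $\chi_1F_1=\chi_1\big(F_1-F_1(y_1)\big)+F_1(y_1)\chi_1$ with $|F_1(y)-F_1(y_1)|\leq|y-y_1|^{1/2}\|F_1'\|_{L^2}$; and (ii) an estimate of the point value $|F_1(y_1)|$ obtained by testing the equation against the bump $\Psi_1(y)=\Psi(\lambda_0^{-1/2}(y-y_1))$ localized at scale $\lambda_0^{1/2}$ about the critical point, which gives $|F_1(y_1)|\leq C\nu\lambda_0^{-3/4}\|w\|_{H_k^1}+C|k|\lambda_0^{1/4}\|w\|_{H_k^{-1}}+C\lambda_0^{1/4}\|F_1'\|_{L^2}$ and is precisely the source of the $|k|\,\|w\|_{H_k^{-1}}$ and $\nu\lambda_0^{-1}\delta^{-1/2}\|w\|_{H_k^1}$ terms in the stated bound. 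Without a substitute for this critical-point analysis, the second inequality does not follow from the estimates you list.
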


\begin{proof} 
Taking the inner product  of \eqref{eq1} with $\chi_1{w}$ and considering the imaginary part, we obtain
\begin{align*}
   & \mathrm{Im}\langle \chi_1F_1,w\rangle=
   -\nu\mathrm{Im}\langle \chi_1\Delta_k w,w\rangle+k\langle \chi_1(V-\lambda)w,w\rangle,\\
  & |\mathrm{Im}\langle \chi_1\Delta_k w,w\rangle|=|\mathrm{Im}\langle \chi_1' w',w\rangle|\leq \|\chi_1'\|_{L^{\infty}}\|w'\|_{L^{2}}\|w\|_{L^{2}},\quad |\langle \chi_1F_1,w\rangle|\leq\|\chi_1F_1\|_{L^2}\|w\|_{L^{2}},\\
  &|k||\langle \chi_1(V-\lambda)w,w\rangle|\leq \nu\|\chi_1'\|_{L^{\infty}}\|w'\|_{L^{2}}\|w\|_{L^{2}}+\|\chi_1F_1\|_{L^2}\|w\|_{L^{2}}.
  \end{align*}
  Together with the fact that
  \begin{align*}
  &\chi_1(V-\lambda)=\mathrm{Re}[(V-\lambda)/(V-\lambda+\mathrm{i}\delta)]=\mathrm{Re}[1-\mathrm{i}\delta/(V-\lambda+\mathrm{i}\delta)]
  =1-\delta^2|V-\lambda+\mathrm{i}\delta|^{-2},
  \end{align*}
we infer that
\begin{align*}
  \|w\|_{L^{2}}^2=&\langle \chi_1(V-\lambda)w,w\rangle+\|\delta|V-\lambda+\mathrm{i}\delta|^{-1}w\|_{L^{2}}^2\\ \leq&|k|^{-1}\big(\nu\|\chi_1'\|_{L^{\infty}}\|w'\|_{L^{2}}\|w\|_{L^{2}}+\|\chi_1F_1\|_{L^2}\|w\|_{L^{2}}\big)\\&+\|w\|_{L^{\infty}}^2
  \|\delta|V-\lambda+\mathrm{i}\delta|^{-1}\|_{L^{2}}^2.
  \end{align*}
  
  By Lemma \ref{lem:V3} and Gagliardo-Nirenberg inequality, we obtain
  \begin{align*}
  &\|\chi_1'\|_{L^{\infty}}\leq C(\lambda_0+\delta)^{\f12}\delta^{-2},\quad 
  \|\delta|V-\lambda+\mathrm{i}\delta|^{-1}\|_{L^{2}}^2\leq C\delta\lambda_0^{-1/2},\quad \|w\|_{L^{\infty}}^2\leq C\|w\|_{H_k^1}\|w\|_{L^2},\\
  &\|w\|_{L^{2}}^2 \leq C|k|^{-1}(\nu(\lambda_0+\delta)^{\f12}\delta^{-2}\|w'\|_{L^{2}}\|w\|_{L^{2}}+\|\chi_1F_1\|_{L^2}\|w\|_{L^{2}})+
  C\delta\lambda_0^{-1/2}\|w\|_{H_k^1}\|w\|_{L^2},\end{align*}
  which gives 
  \begin{align*}
  &\|w\|_{L^{2}} \leq C|k|^{-1}\big(\nu(\lambda_0+\delta)^{\f12}\delta^{-2}\|w'\|_{L^{2}}+\|\chi_1F_1\|_{L^2}\big)+
  C\delta\lambda_0^{-1/2}\|w\|_{H_k^1}\\ \leq& C\big(|k|^{-1}\nu(\lambda_0+\delta)^{\f12}\delta^{-2}+\delta\lambda_0^{-1/2}\big)\|w\|_{H_k^1}+C|k|^{-1}\|\chi_1F_1\|_{L^2}.
\end{align*}
This proves the first inequality of the lemma. For the second  inequality, we consider the following two cases.\smallskip

{\it Case 1.} $\lambda_0\geq |k|^{-2}$. In this case, we have $\theta=1+|k|\lambda_0^{1/2}+|\lambda|\sim |k|\lambda_0^{1/2}$ and by Lemma \ref{lem:V3}),
 \begin{align*}
    \|\chi_1F_1\|_{L^2}\leq&\|\chi_1\|_{L^2}\|F_1\|_{L^{\infty}}\leq\||V-\lambda+\mathrm{i}\delta|^{-1}\|_{L^2}\|F_1\|_{L^{\infty}}\leq C\delta^{-\f12}\lambda_0^{-1/4}\|F_1\|_{L^{\infty}}\\ \leq& C\delta^{-\f12}\lambda_0^{-1/4}|k|^{-\f12}\|F_1\|_{H_k^1}\leq C\delta^{-\f12}\theta^{-\f12}\|F_1\|_{H_k^1}.
 \end{align*}
 
 {\it Case 2.} $\lambda_0\leq |k|^{-2}$.  As $\lambda_{0+}\geq |\nu/k|^{\f12}$,  $\lambda_{0}=\lambda_{0+}$, $\lambda\in(m,M) $. Thus,
 $\lambda_0=M-\lambda$ or $\lambda_0=\lambda-m$. Without lose of generality, assume $\lambda_0=M-\lambda$. Then 
 \begin{align*}
    & |F_1(y)-F_1(y_1)|\leq |y-y_1|^{\f12}\|F_1'\|_{L^2},\quad y\in(y_2-2\pi+1/2,y_2+1/2),
 \end{align*}
 By Lemma \ref{lem:V3}, we have
 \begin{align}\label{F1}
    \|\chi_1F_1\|_{L^2}\leq&\|\chi_1(F_1-F_1(y_1))\|_{L^2}+\|\chi_1\|_{L^2}|F_1(y_1)|\\ \notag\leq & 
    \|\chi_1|y-y_1|^{\f12}\|_{L^2(y_2-2\pi+1/2,y_2+1/2)}\|F_1'\|_{L^2}+\|\chi_1\|_{L^2}|F_1(y_1)| 
    \\
    \notag\leq& C\delta^{-\f12}\|F_1'\|_{L^2}+C\delta^{-\f12}\lambda_0^{-1/4}|F_1(y_1)|.
 \end{align}
 
 Let $\Psi$ be a fixed nonnegative smooth function, supported in $[-1,1]$, equal to 1 on\\ $[-1/2,1/2]$. 
  Let $\Psi_1(y)=\Psi(\lambda_0^{-1/2}(y-y_1))$ for $y\in(y_2-2\pi,y_2)$. Then 
  \begin{align*}
    & \langle F_1,\Psi_1\rangle=-\nu\langle \Delta_k w,\Psi_1\rangle+\mathrm{i}k\langle (V-\lambda)w,\Psi_1\rangle,\\
    &|\langle \Delta_k w,\Psi_1\rangle|\leq \|w\|_{H_k^1}\|\Psi_1\|_{H_k^1},\quad |\langle (V-\lambda)w,\Psi_1\rangle|\leq \|w\|_{H_k^{-1}}\|(V-\lambda)\Psi_1\|_{H_k^{1}}.
    \end{align*}
  We also have
  \begin{align*}
    &\|\Psi_1\|_{H_k^1}\leq\|\Psi_1'\|_{L^2}+|k|\|\Psi_1\|_{L^2}=\lambda_0^{-1/4}\|\Psi'\|_{L^2(\R)}+|k|\lambda_0^{1/4}\|\Psi\|_{L^2}\\ &\qquad\leq
    C(\lambda_0^{-1/4}+|k|\lambda_0^{1/4})\leq C\lambda_0^{-1/4},\\
    &|V-\lambda|\leq |V-M|+|M-\lambda|\leq C|y-y_1|^2+\lambda_0,\quad |V'|\leq C|y-y_1|,\end{align*}
 here we used Lemma \ref{lem:V'}, and then
  \begin{align*}
    & \|(V-\lambda)\Psi_1\|_{H_k^{1}}\leq\|V'\Psi_1\|_{L^2}+\|(V-\lambda)\Psi_1'\|_{L^2}+|k|\|(V-\lambda)\Psi_1\|_{L^2}\\
    &\leq C\||y-y_1|\Psi_1\|_{L^2}+
    \|(C|y-y_1|^2+\lambda_0)\Psi_1'\|_{L^2}+|k|\|(C|y-y_1|^2+\lambda_0)\Psi_1\|_{L^2}\\
    &\leq C\lambda_0^{3/4}\||y|\Psi\|_{L^2(\R)}+C\lambda_0^{3/4}\||y|^2\Psi'\|_{L^2(\R)}+\lambda_0^{3/4}\|\Psi'\|_{L^2(\R)}\\&\qquad+
    C|k|\lambda_0^{5/4}\||y|^2\Psi\|_{L^2(\R)}+|k|\lambda_0^{5/4}\|\Psi\|_{L^2(\R)}
    \leq C(\lambda_0^{3/4}+|k|\lambda_0^{5/4})\leq C\lambda_0^{3/4}.
    \end{align*}
    
    Summing up, we arrive at
  \begin{align*}
    &|\langle F_1,\Psi_1\rangle|\leq C\nu\lambda_0^{-1/4}\|w\|_{H_k^1}+C|k|\lambda_0^{3/4}\|w\|_{H_k^{-1}}.
 \end{align*}
 
 On the other hand, we have
\begin{align*}
     &|\langle F_1,{\Psi}_1\rangle-F_1(y_1)\langle 1,{\Psi}_1\rangle|=
     |\langle F_1-F_1(y_1),{\Psi}_1\rangle|
     \leq \langle |y-y_1|^{1/2}\|F_1'\|_{L^2},{\Psi}_1\rangle\\
     &=\|F_1'\|_{L^2}\||y-y_1|^{1/2}{\Psi}_1\|_{L^1}=\|F_1'\|_{L^2}\lambda_0^{3/4}\||y|^{1/2}{\Psi}\|_{L^1(\R)}\leq C\|F_1'\|_{L^2}\lambda_0^{3/4},\end{align*}
     which yields 
     \begin{align*}
     &|F_1(y_1)\langle 1,{\Psi}_1\rangle|\leq C\nu\lambda_0^{-1/4}\|w\|_{H_k^1}+C|k|\lambda_0^{3/4}\|w\|_{H_k^{-1}}+C\|F_1'\|_{L^2}\lambda_0^{3/4}. \end{align*}
     Thanks to $|\langle 1,{\Psi}_1\rangle|=\lambda_0^{1/2}\|{\Psi}\|_{L^1(\R)}\geq \lambda_0^{1/2} $, we get
     \begin{align*}
     &|F_1(y_1)|\leq C\nu\lambda_0^{-3/4}\|w\|_{H_k^1}+C|k|\lambda_0^{1/4}\|w\|_{H_k^{-1}}+C\|F_1'\|_{L^2}\lambda_0^{1/4}.
  \end{align*}
  This along with \eqref{F1} shows 
  \begin{align*}
    \|\chi_1F_1\|_{L^2} 
    \leq& C\delta^{-\f12}\|F_1'\|_{L^2}+C\delta^{-\f12}\lambda_0^{-1/4}|F_1(y_1)|\\ \leq& C\delta^{-\f12}\|F_1'\|_{L^2}+C\nu\lambda_0^{-1}\delta^{-\f12}\|w\|_{H_k^1}+C|k|\delta^{-\f12}\|w\|_{H_k^{-1}}\\ \leq& C\delta^{-\f12}\big(\|F_1\|_{H_k^1}+|k|\|w\|_{H_k^{-1}}\big)+C\nu\lambda_0^{-1}\delta^{-\f12}\|w\|_{H_k^1}.
 \end{align*}
 Then the second inequality of the lemma follows from $\theta=1+|k|\lambda_0^{1/2}+|\lambda|\sim 1$ (in {\it Case 2}).
  \end{proof}

 \subsection{Resolvent estimate for the full linearized operator} 
  We introduce the full linearized operator
\begin{align*}
   & \mathcal{L}_{\lambda}= -\nu\Delta_k +\mathrm{i}k(V-\lambda)+\mathrm{i}k\lambda \mathcal{I}_k.
\end{align*}
 The dual operator of ${\mathcal{L}}_{\lambda}$ (in $L^2$) is
$\overline{\mathcal{L}}_{\lambda}= -\nu\Delta_k -\mathrm{i}k(V-\lambda)-\mathrm{i}k\lambda \mathcal{I}_k$, and the dual operator of 
${\mathcal{L}}_{\lambda}^{-1}$ (in $L^2$) is $\overline{\mathcal{L}}_{\lambda}^{-1}$, thus 
$\|\overline{\mathcal{L}}_{\lambda}^{-1}\|_{H^{a}_k\rightarrow H^b_k}= \|\mathcal{L}_{\lambda}^{-1}\|_{H^{-b}_k\rightarrow H^{-a}_k}$ 
(for $\lambda,a,b\in\mathbb{R}$, $b\leq a+2$). Here $\|f\|_{H^a_k}:=\|(-\Delta_k)^{a/2}f\|_{L^2}$. Then 
\beno
\|f\|_{H^0_k}=\|f\|_{L^2},\quad  \|f\|_{H^1_k}=\|(\partial_y,k)f\|_{L^2},\quad \|f\|_{H^{-1}_k}=\|(\partial_y,k)\Delta_k^{-1}f\|_{L^2}.
\eeno
 As $\overline{\mathcal{L}}_{\lambda}\overline{f}=\overline{\mathcal{L}_{\lambda}f}$, 
$\overline{\mathcal{L}}_{\lambda}^{-1}\overline{f}=\overline{\mathcal{L}_{\lambda}^{-1}f}$, we have $\|{\mathcal{L}}_{\lambda}^{-1}\|_{H^{a}_k\rightarrow H^b_k}= \|\overline{\mathcal{L}}_{\lambda}^{-1}\|_{H^{a}_k\rightarrow H^b_k}$. In summary,
\begin{align}\label{L1}
     &\|{\mathcal{L}}_{\lambda}^{-1}\|_{H^{a}_k\rightarrow H^b_k}=\|\overline{\mathcal{L}}_{\lambda}^{-1}\|_{H^{a}_k\rightarrow H^b_k}= 
     \|\mathcal{L}_{\lambda}^{-1}\|_{H^{-b}_k\rightarrow H^{-a}_k}.
  \end{align}
  
Now we consider $\mathcal{L}_\lambda w=F$, namely, 
\begin{align}\label{eq:w-os}
   & -\nu\Delta_k w+\mathrm{i}k(V-\lambda)w+\mathrm{i}k\lambda \mathcal{I}_kw=F.
\end{align}
We denote
\begin{align}\label{def:A(nu)}
   & A=A(\nu,k,\lambda):=\|\mathcal{L}_{\lambda}^{-1}\|_{H^{1}_{k}\rightarrow H^{-1}_k},
\end{align}
for $0<\nu\leq 1.$  Our goal is to show that $A=A(\nu,k,\lambda)\leq C|k\theta(k,\lambda)|^{-1}.$
\begin{lemma}\label{lem:A}
  It holds that  $A\leq \nu^{-1}|k|^{-4}<+\infty$ for $\nu\in(0,1)$.
\end{lemma}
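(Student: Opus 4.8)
The plan is to obtain both the finiteness of $A$ and the stated bound from one energy identity, after first justifying that $\mathcal{L}_\lambda:H^1_k\to H^{-1}_k$ is invertible. Throughout, recall that $|k|\ge\alpha>1$, so $\Delta_k+1$ has trivial kernel, $\mathcal I_k=(\Delta_k+1)^{-1}=\Delta_{\tilde k}^{-1}$ is a bounded self-adjoint operator on $L^2(\T)$, and the spectrum of $-\Delta_k$ on $\T$ is contained in $[k^2,\infty)$.

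\textbf{Step 1: the energy identity.} Given $F\in H^1_k$, suppose $\mathcal L_\lambda w=F$ with $w\in H^1_k$. Pair with $w$ in $L^2(\T)$ and take the real part. Since $V-\lambda$ is real, $\langle (V-\lambda)w,w\rangle=\int_{\T}(V-\lambda)|w|^2\in\R$; since $\mathcal I_k$ is self-adjoint, $\langle\mathcal I_k w,w\rangle\in\R$. Hence the terms $\mathrm{i}k(V-\lambda)$ and $\mathrm{i}k\lambda\mathcal I_k$ contribute nothing to the real part, and
\[
\nu\|w\|_{H^1_k}^2=\nu\langle-\Delta_k w,w\rangle=\mathbf{Re}\langle F,w\rangle\leq\|F\|_{H^{-1}_k}\|w\|_{H^1_k},
\]
so $\|w\|_{H^1_k}\leq\nu^{-1}\|F\|_{H^{-1}_k}$.

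\textbf{Step 2: invertibility.} Write $\mathcal L_\lambda=(-\nu\Delta_k)(\mathrm{Id}+K)$ with $K=(-\nu\Delta_k)^{-1}\big(\mathrm{i}k(V-\lambda)+\mathrm{i}k\lambda\mathcal I_k\big)$. The operator $-\nu\Delta_k:H^1_k\to H^{-1}_k$ is an isomorphism, multiplication by $V-\lambda$ and the operator $\mathcal I_k$ are bounded on $H^{-1}_k$, and $(-\nu\Delta_k)^{-1}$ maps $H^{-1}_k$ into $H^1_k$, which embeds compactly into $H^{-1}_k$ on the compact manifold $\T$; hence $K$ is compact on $H^{-1}_k$ and the Fredholm alternative applies. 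The a priori bound of Step 1 shows $\mathcal L_\lambda$ is injective, so it is an isomorphism $H^1_k\to H^{-1}_k$, and $w=\mathcal L_\lambda^{-1}F$ satisfies $\|w\|_{H^1_k}\leq\nu^{-1}\|F\|_{H^{-1}_k}$.

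\textbf{Step 3: norm conversion and conclusion.} Because every eigenvalue $\mu$ of $-\Delta_k$ satisfies $\mu\geq k^2$, one has $\mu^{-1}\leq|k|^{-4}\mu$, and therefore $\|g\|_{H^{-1}_k}\leq|k|^{-2}\|g\|_{H^1_k}$ for all $g$ on $\T$. Applying this to $F$ and then to $w$,
\[
\|w\|_{H^{-1}_k}\leq|k|^{-2}\|w\|_{H^1_k}\leq\nu^{-1}|k|^{-2}\|F\|_{H^{-1}_k}\leq\nu^{-1}|k|^{-4}\|F\|_{H^1_k},
\]
which gives $A=\|\mathcal L_\lambda^{-1}\|_{H^1_k\to H^{-1}_k}\leq\nu^{-1}|k|^{-4}<+\infty$. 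The only point needing genuine attention is Step 2, i.e.\ verifying that the multiplier $V-\lambda$ and the nonlocal operator $\mathcal I_k$ make $K$ relatively compact so that Fredholm theory is legitimate; the rest is a routine $L^2$ energy estimate. This bound is deliberately crude — it merely certifies $A<\infty$ — and will be bootstrapped in the remainder of the section to the sharp estimate $A\lesssim |k\,\theta(k,\lambda)|^{-1}$.
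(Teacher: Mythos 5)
Your proof is correct and follows essentially the same route as the paper: the real-part energy identity $\nu\|w\|_{H^1_k}^2=\mathbf{Re}\langle F,w\rangle\le\|F\|_{H^{-1}_k}\|w\|_{H^1_k}$ combined with the spectral bound $\|\cdot\|_{H^{-1}_k}\le |k|^{-2}\|\cdot\|_{H^1_k}$ yields exactly the paper's chain of inequalities. The paper records only the a priori estimate and leaves invertibility implicit; your Step 2 supplies that point legitimately (and could even be shortened, since the coercivity in Step 1 already gives invertibility of $\mathcal{L}_\lambda:H^1_k\to H^{-1}_k$ by Lax--Milgram, with no need for Fredholm theory).
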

\begin{proof}
  Let $F\in H^1_k$ and $w\in H^{-1}_k$ solve \eqref{eq:w-os}. Taking the inner product of \eqref{eq:w-os} with 
$w$ and considering the real part, we obtain
  \begin{align*}
     &\nu\|(\partial_y,k)w\|_{L^2}^2\leq |\langle F,w\rangle|\leq |k|^{-2}\|(\partial_y,k)F\|_{L^2}\|(\partial_y,k)w\|_{L^2}.
  \end{align*}
  Then $\|(\partial_y,k)w\|_{L^2}\leq \nu^{-1}|k|^{-2}\|(\partial_y,k)F\|_{L^2}$, which gives
  \begin{align*}
     & \|(\partial_y,k)\Delta_k^{-1} w\|_{L^2}\leq |k|^{-2}\|(\partial_y,k)w\|_{L^2}\leq \nu^{-1}|k|^{-4}\|(\partial_y,k)F\|_{L^2} .
  \end{align*} 
 Thus, $A\leq \nu^{-1}|k|^{-4}<+\infty$.
\end{proof}

\begin{lemma}\label{lem:res1-os-H-1-A}
If $\nu\in(0,1)$, $\lambda \in\R $, then we have
\begin{align*} 
&\|{\mathcal{L}}_{\lambda}^{-1}\|_{H^{-1}_k\rightarrow H^{-1}_k}=\|{\mathcal{L}}_{\lambda}^{-1}\|_{H^{1}_k\rightarrow H^1_k}
\leq |A/\nu|^{\f12}.\end{align*}
Moreover, if $\lambda_{0+}\geq |\nu/k|^{\f12}$, then
\begin{align*}
&\|{\mathcal{L}}_{\lambda}^{-1}\|_{L^2\rightarrow H^{-1}_k}=\|{\mathcal{L}}_{\lambda}^{-1}\|_{H^{1}_k\rightarrow L^2}\leq 
C|\nu/k|^{\f13}\lambda_0^{-1/6}|A/\nu|^{\f12}+C|k|^{-1}|\nu\lambda_0/k|^{-\f16}\theta^{-\f12}(1+|kA|).
\end{align*}
\end{lemma}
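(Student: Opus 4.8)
The plan is to derive Lemma~\ref{lem:res1-os-H-1-A} from the duality identities \eqref{L1}, the elementary energy identity for $\mathcal{L}_\lambda$, and the simplified resolvent bound of Lemma~\ref{lem:SLNS-res}. For the first estimate, \eqref{L1} already gives $\|\mathcal{L}_\lambda^{-1}\|_{H^{-1}_k\to H^{-1}_k}=\|\mathcal{L}_\lambda^{-1}\|_{H^1_k\to H^1_k}$, so it is enough to bound the latter. Given $F\in H^1_k$ and $w=\mathcal{L}_\lambda^{-1}F$, I would pair the equation $-\nu\Delta_k w+\mathrm{i}k(V-\lambda)w+\mathrm{i}k\lambda\mathcal{I}_k w=F$ with $w$ in $L^2$ and take the real part; both first-order terms are skew-adjoint (using that $\mathcal{I}_k$ is self-adjoint and real), leaving $\nu\|w\|_{H^1_k}^2=\mathrm{Re}\langle F,w\rangle\le\|F\|_{H^1_k}\|w\|_{H^{-1}_k}\le A\|F\|_{H^1_k}^2$ by the definition \eqref{def:A(nu)} of $A$. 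Hence $\|w\|_{H^1_k}\le|A/\nu|^{1/2}\|F\|_{H^1_k}$.

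For the second estimate, \eqref{L1} reduces matters to bounding $\|w\|_{L^2}$ for $w=\mathcal{L}_\lambda^{-1}F$, $F\in H^1_k$. The structural point is that $\lambda_{0+}\ge|\nu/k|^{1/2}>0$ forces $\lambda\in(m,M)$; since $\|V-b\|_{H^4}\le\epsilon_0$ this yields $|\lambda|\le C$ and $\lambda_0=\lambda_{0+}\ge|\nu/k|^{1/2}$. I would then rewrite $\mathcal{L}_\lambda w=F$ as $-\nu\Delta_k w+\mathrm{i}k(V-\lambda)w=F_1$ with $F_1:=F-\mathrm{i}k\lambda\mathcal{I}_k w$, and estimate, using $\|\mathcal{I}_k w\|_{H^1_k}\le C\|w\|_{H^{-1}_k}$ (comparability of $\Delta_k$ and $\Delta_{\tilde k}$ for $|k|>1$), $|\lambda|\le C$, and $\|w\|_{H^{-1}_k}\le A\|F\|_{H^1_k}$,
\[
\|F_1\|_{H^1_k}+|k|\|w\|_{H^{-1}_k}\le C\,(1+|kA|)\|F\|_{H^1_k}.
\]

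Next I would apply Lemma~\ref{lem:SLNS-res} to the equation for $w$ with the balanced choice $\delta:=|\nu\lambda_0/k|^{1/3}$; since $\lambda_0^2\ge\nu/|k|$ one has $\delta\le\lambda_0$, so $\lambda_0+\delta\le2\lambda_0$. An elementary computation of exponents then shows
\[
|k|^{-1}\nu(\lambda_0+\delta)^{1/2}\delta^{-2}+\delta\lambda_0^{-1/2}+|k|^{-1}\nu\lambda_0^{-1}\delta^{-1/2}\le C|\nu/k|^{1/3}\lambda_0^{-1/6},
\]
the last term again using $\lambda_0\ge|\nu/k|^{1/2}$. Plugging in $\|w\|_{H^1_k}\le|A/\nu|^{1/2}\|F\|_{H^1_k}$ from the first part for the coefficients multiplying $\|w\|_{H^1_k}$, and using $\delta^{-1/2}=|\nu\lambda_0/k|^{-1/6}$ together with the previous display in the contribution $C|k|^{-1}\delta^{-1/2}\theta^{-1/2}\big(\|F_1\|_{H^1_k}+|k|\|w\|_{H^{-1}_k}\big)$ coming from Lemma~\ref{lem:SLNS-res}, I obtain
\[
\|w\|_{L^2}\le C|\nu/k|^{1/3}\lambda_0^{-1/6}|A/\nu|^{1/2}\|F\|_{H^1_k}+C|k|^{-1}|\nu\lambda_0/k|^{-1/6}\theta^{-1/2}(1+|kA|)\|F\|_{H^1_k},
\]
which, together with the duality identity $\|\mathcal{L}_\lambda^{-1}\|_{L^2\to H^{-1}_k}=\|\mathcal{L}_\lambda^{-1}\|_{H^1_k\to L^2}$, is the claim.

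The two energy pairings and the scalar exponent arithmetic are routine. The one step that really matters is the observation that $\lambda_{0+}\ge|\nu/k|^{1/2}$ places $\lambda$ strictly inside $(m,M)$ and hence $|\lambda|\le C$: this is exactly what keeps the nonlocal term $\mathrm{i}k\lambda\mathcal{I}_k w$ from producing a stray factor of $\theta$ (it contributes only $|kA|$, not $|k\theta A|$), so that the stated powers of $\theta$ close. Choosing the exponent of $\delta$ so that the three $\|w\|_{H^1_k}$-coefficients in Lemma~\ref{lem:SLNS-res} collapse to $|\nu/k|^{1/3}\lambda_0^{-1/6}$ is the only other place where care is needed.
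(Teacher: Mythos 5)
Your proposal is correct and follows essentially the same route as the paper: the duality identities \eqref{L1} plus the real-part energy pairing for the first bound, then the reduction $F_1=F-\mathrm{i}k\lambda\mathcal{I}_kw$ and an application of Lemma~\ref{lem:SLNS-res} with the same balanced choice $\delta=|\nu\lambda_0/k|^{1/3}$ (using $\delta\le\lambda_0$ to absorb $\lambda_0^{-1}\delta^{-1/2}$ into $\lambda_0^{1/2}\delta^{-2}$) for the second. No gaps.
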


\begin{proof}We consider $\mathcal{L}_\lambda w=F$, that is
\begin{align}\label{eq:w-os-overline}
   & -\nu\Delta_k w+\mathrm{i}k(V-\lambda)w+\mathrm{i}k\lambda \mathcal{I}_kw=F.
\end{align}
As $A=\|{\mathcal{L}}_{\lambda}^{-1}\|_{H^{1}_k\rightarrow H^{-1}_k}$, we have(for convenience we write $A(\nu,k,\lambda)$ as $A$, write $\theta(k,\lambda)$ as $\theta$)
\begin{align*}
   & \|{w}\|_{H^{-1}_k}=\|(\partial_y,k)\Delta_k^{-1}{w}\|_{L^2}\leq A\|(\partial_y,k){F}\|_{L^2}=A\|{F}\|_{H^{1}_k}.
\end{align*}
 Taking the inner product of \eqref{eq:w-os-overline}  with ${w}$ and considering the real part, we obtain
  \begin{align*}
     & \nu\|(\partial_y,k){w}\|_{L^2}^2\leq |\langle{F},{w}\rangle|
\leq \|(\partial_y,k){F}\|_{L^2}\|(\partial_y,k)\Delta_k^{-1}{w}\|_{L^2}\leq A\|(\partial_y,k){F}\|_{L^2}^2,\\
     &\|(\partial_y,k){w}\|_{L^2}\leq |A/\nu|^{\f12}\|(\partial_y,k){F}\|_{L^2},
  \end{align*}
 which gives $\|{\mathcal{L}}_{\lambda}^{-1}\|_{H^{1}_k\rightarrow H^1_k}\leq |A/\nu|^{\f12}$.
 By \eqref{L1}, we have
 \begin{align*}
    & \|\mathcal{L}_{\lambda}^{-1}\|_{H^{-1}_k\rightarrow H^{-1}_k}=\|{\mathcal{L}}_{\lambda}^{-1}\|_{H^{1}_k\rightarrow H^1_k}\leq |A/\nu|^{\f12}.
  \end{align*}
  
Now we assume $\lambda_{0+}\geq |\nu/k|^{\f12}$, then $\lambda_{0}=\lambda_{0+}$, $\lambda\in(m,M) $. Let $ \chi_1=\mathrm{Re}[(V-\lambda+\mathrm{i}\delta)^{-1}]$ for some $\delta>0$, $F_1=F-\mathrm{i}k\lambda \mathcal{I}_kw$, then
$-\nu\Delta_k w+\mathrm{i}k(V-\lambda)w=F_1.$
 By Lemma \ref{lem:SLNS-res}, we have
 \begin{align*}
 \|w\|_{L^{2}} \leq& C\big(|k|^{-1}\nu(\lambda_0+\delta)^{\f12}\delta^{-2}+\delta\lambda_0^{-1/2}\big)\|w\|_{H_k^1}+C|k|^{-1}\|\chi_1F_1\|_{L^2}\\
\leq& C\big(|k|^{-1}\nu(\lambda_0+\delta)^{\f12}\delta^{-2}+\delta\lambda_0^{-1/2}\big)\|w\|_{H_k^1}\\
&+C|k|^{-1}\big(\delta^{-\f12}\theta^{-\f12}(\|F_1\|_{H_k^1}+|k|\|w\|_{H_k^{-1}})+\nu\lambda_0^{-1}\delta^{-\f12}\|w\|_{H_k^1}\big)\\
\leq& C\big(|k|^{-1}\nu(\lambda_0+\delta)^{\f12}\delta^{-2}+\delta\lambda_0^{-1/2}+|k|^{-1}\nu\lambda_0^{-1}\delta^{-\f12}\big)\|w\|_{H_k^1}
\\&+C|k|^{-1}\delta^{-\f12}\theta^{-\f12}\big(\|F_1\|_{H_k^1}+|k|\|w\|_{H_k^{-1}}\big).
\end{align*}
Now we take $\delta=|\nu\lambda_0/k|^{\f13}$. Then we have $\lambda_{0}=\lambda_{0+}\geq |\nu/k|^{\f12}$, 
$|\lambda|\leq C $ and\begin{align*}
 &\lambda_0/\delta=\lambda_0^{2/3}|\nu/k|^{-\f13}\geq (|\nu/k|^{\f12})^{2/3}|\nu/k|^{-\f13}=1,\quad \lambda_0+\delta\leq2\lambda_0,\quad \lambda_0^{-1}\delta^{-\f12}\leq \lambda_0^{1/2}\delta^{-2},\\
 &\|F_1\|_{H_k^1}\leq\|F\|_{H_k^1}+|k\lambda|\| \mathcal{I}_kw\|_{H_k^1}=\|F\|_{H_k^1}+|k\lambda|\| \Delta_{\tilde{k}}^{-1}w\|_{H_k^1}\leq\|F\|_{H_k^1}+C|k|\| w\|_{H_k^{-1}}.
\end{align*}
Thus, we arrive at
\begin{align*} &\|w\|_{L^{2}} \leq C|\nu/k|^{\f13}\lambda_0^{-1/6}\|w\|_{H_k^1}+C|k|^{-1}|\nu\lambda_0/k|^{-\f16}\theta^{-\f12}(\|F_1\|_{H_k^1}+|k|\|w\|_{H_k^{-1}})\\
 &\leq C|\nu/k|^{\f13}\lambda_0^{-1/6}|A/\nu|^{\f12}\|F\|_{H_k^1}+C|k|^{-1}|\nu\lambda_0/k|^{-\f16}\theta^{-\f12}(\|F\|_{H_k^1}+|kA|\|F\|_{H_k^1}).
\end{align*}
Hence, $\|{\mathcal{L}}_{\lambda}^{-1}\|_{H^{1}_k\rightarrow L^2}\leq C|\nu/k|^{\f13}\lambda_0^{-1/6}|A/\nu|^{\f12}+C|k|^{-1}|\nu\lambda_0/k|^{-\f16}\theta^{-\f12}(1+|kA|)$.
 This along with \eqref{L1} shows
 \begin{align*}
    & \|{\mathcal{L}}_{\lambda}^{-1}\|_{L^2\rightarrow H^{-1}_k}=\|{\mathcal{L}}_{\lambda}^{-1}\|_{H^{1}_k\rightarrow L^2}\leq 
C|\nu/k|^{\f13}\lambda_0^{-1/6}|A/\nu|^{\f12}+C|k|^{-1}|\nu\lambda_0/k|^{-\f16}\theta^{-\f12}(1+|kA|).
 \end{align*}
 
 This completes the proof of the lemma.
\end{proof}

\begin{lemma}\label{lem:res1-os-H-1-A2}
If $\nu\in(0,1)$, $\lambda \in\R $, then $A=\|\mathcal{L}_{\lambda}^{-1}\|_{H^{1}_{k}\rightarrow H^{-1}_k}\leq C|k\theta(k,\lambda)|^{-1}$.
\end{lemma}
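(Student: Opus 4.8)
The goal is to bound $A=A(\nu,k,\lambda)=\|\mathcal{L}_\lambda^{-1}\|_{H^1_k\to H^{-1}_k}$ by $C|k\theta(k,\lambda)|^{-1}$, and the natural strategy is a \emph{continuity / bootstrap in $\nu$} (or more precisely, a self-improving argument on $A$). By Lemma \ref{lem:A} we already know $A$ is finite. The idea is to feed the trivial bound (or any a priori bound on $A$) into Lemma \ref{lem:res1-os-H-1-A} together with the linearized-Euler resolvent estimate of Lemma \ref{lem:rayleigh-1} (via a limiting-absorption argument as $\nu\to 0$, or directly via the simplified-operator estimates), and show that the resulting bound on $A$ is strictly better unless $A$ already satisfies $A\lesssim |k\theta|^{-1}$. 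Iterating, or taking the right fixed-point inequality, closes the estimate.

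Concretely, I would split into two regimes according to the size of $\lambda_{0+}$ relative to $|\nu/k|^{1/2}$. \textbf{Regime 1: $\lambda_{0+}\geq |\nu/k|^{1/2}$.} Here Lemma \ref{lem:res1-os-H-1-A} applies and gives a bound on $\|\mathcal{L}_\lambda^{-1}\|_{L^2\to H^{-1}_k}$ in terms of $A$ itself. One then writes $\mathcal{L}_\lambda w = F$, uses the nonlocal term as a perturbation: $-\nu\Delta_k w+ik(V-\lambda)w = F - ik\lambda\mathcal{I}_k w =: F_1$, and applies the Euler-type resolvent bound (Lemma \ref{lem:rayleigh-1}, obtained via the limiting absorption principle proved earlier with $\epsilon\to 0$ playing the role of $\nu/k$-type smoothing) to the $w$-equation; the $V'$-weighted piece of Proposition \ref{pro:res1-os-V'} is what lets one absorb $\mathcal{I}_k w$. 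Choosing the free parameter $\delta=|\nu\lambda_0/k|^{1/3}$ as in Lemma \ref{lem:res1-os-H-1-A}, one gets an inequality of the schematic form
\begin{align*}
A \;\leq\; C|k\theta|^{-1} + (\text{small factor})\cdot A,
\end{align*}
where the small factor is a positive power of $\nu$ (coming from $|\nu/k|^{1/3}\lambda_0^{-1/6}|A/\nu|^{1/2}$ combined with the a priori bound $A\lesssim \nu^{-1}|k|^{-4}$, which makes the product go to $0$). One must be slightly careful: the crude a priori bound only gives $A \lesssim \nu^{-1}|k|^{-4}$, so the first pass does not immediately have a small coefficient; instead one runs the argument at $\tilde\nu$ slightly larger, or uses a connectedness argument in $\nu\in(0,1]$ starting from $\nu$ near $1$ where everything is elementary, to propagate the bound $A\leq C|k\theta|^{-1}$ down to all small $\nu$.

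\textbf{Regime 2: $\lambda_{0+} < |\nu/k|^{1/2}$} (in particular $\lambda$ close to, or outside, the range $[m,M]$, or $\lambda$ near a critical value). Here one cannot use the limiting-absorption machinery the same way, but $\theta(k,\lambda)\lesssim 1+|k||\nu/k|^{1/4}+|\lambda|$, so the target bound $|k\theta|^{-1}$ is weaker, and a direct energy estimate on $\mathcal{L}_\lambda w=F$ suffices: taking the real part of $\langle \mathcal{L}_\lambda w,w\rangle$ controls $\nu\|(\partial_y,k)w\|_{L^2}^2$, and combining with the imaginary part (tested against $\chi_1 w$ as in Lemma \ref{lem:SLNS-res}) and Lemma \ref{lem1} handles the $V'$-weighted term; the smallness of $\lambda_{0+}$ makes all error terms harmless. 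The \emph{main obstacle} is the bookkeeping at the junction of the two regimes and the degenerate case where $\lambda$ is near a critical value $\{-1,1\}$ of $b$ (equivalently $M$ or $m$ for $V$): there, $\lambda_0\to 0$ and several of the negative powers of $\lambda_0$ in Lemma \ref{lem:res1-os-H-1-A} threaten to blow up, and one has to use the sharp degenerate Rayleigh bound Proposition \ref{lem:rayleigh} (the factor $1+|k|(M-\lambda)^{1/2}$) together with the condition $\lambda_{0+}\geq|\nu/k|^{1/2}$ to see that $\delta=|\nu\lambda_0/k|^{1/3}$ stays in the admissible window $\lambda_0/\delta\geq 1$. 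Once these choices are made the inequalities close and one concludes $A\leq C|k\theta(k,\lambda)|^{-1}$ uniformly in $\nu\in(0,1)$, $k$, and $\lambda$.
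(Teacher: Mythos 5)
Your proposal assembles several of the right ingredients (finiteness of $A$ from Lemma \ref{lem:A}, the two bounds of Lemma \ref{lem:res1-os-H-1-A}, the choice $\delta\sim|\nu\lambda_0/k|^{1/3}$, a case split around $\lambda_{0+}$ versus $|\nu/k|^{1/2}$), but the two load-bearing steps are not the ones the paper uses, and as written they do not close. First, the closing mechanism: you claim the self-improvement inequality has a coefficient in front of $A$ that is ``a positive power of $\nu$'' and, noticing this is not true on a first pass, you patch it with a continuity/connectedness argument in $\nu$. In fact the powers of $\nu$ cancel exactly: with $\delta=c_0|\nu\lambda_0/k|^{1/3}$ the term $|\nu/k|^{1/3}\lambda_0^{-1/6}|A/\nu|^{1/2}\cdot|k|\delta\|w_1\|_{L^2}$ reduces to $c_0^{1/2}|A/(k\theta)|^{1/2}\|F\|_{H^1_k}$, with no residual $\nu$-smallness. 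The paper closes the estimate purely algebraically: the inequality obtained is $A\leq C\bigl(c_0^{-3/2}|A/(k\theta)|^{1/2}+c_0^{1/2}\theta^{-1}(|k|^{-1}+|A|)\bigr)$; the term linear in $A$ is absorbed by choosing the free constant $c_0$ small (independently of $\nu$), and the square-root term is handled using only $A<\infty$ from Lemma \ref{lem:A}, since $A\leq C\sqrt{A/(k\theta)}+C|k\theta|^{-1}$ forces $A\leq C'|k\theta|^{-1}$. No continuity of $\nu\mapsto A(\nu,k,\lambda)$ is needed, and your proposed substitute is not justified: continuity of the operator norm is not established, and the ``elementary'' starting bound near $\nu=1$ is false in general (for large $|\lambda|$ one has $\theta\gg|k|^3$, so $|k|^{-4}\not\lesssim|k\theta|^{-1}$ from Lemma \ref{lem:A} alone). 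Relatedly, the key structural device of the paper is missing from your argument: the decomposition $w=w_1+w_2$, where $w_1$ solves the $\delta$-damped full linearized Euler problem (nonlocal term included), estimated by Lemma \ref{lem:rayleigh-1} and Lemma \ref{lem:eps-w}, and $w_2$ solves the viscous equation with source $\nu\Delta_k w_1+k\delta(w_1-\mathcal{I}_kw_1)$, estimated by Lemma \ref{lem:res1-os-H-1-A}. Your ``treat $\mathrm{i}k\lambda\mathcal{I}_kw$ as a perturbation'' step conflates the internal proof of Lemma \ref{lem:res1-os-H-1-A} with the main decomposition and does not produce the $|A/(k\theta)|^{1/2}$ structure.

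Second, your Regime 2 ($\lambda_{0+}<|\nu/k|^{1/2}$) is treated incorrectly. The claim $\theta\lesssim 1+|k||\nu/k|^{1/4}+|\lambda|$ fails when $\lambda$ lies outside $[m,M]$, where $\lambda_{0+}=0$ but $\lambda_0$ can be large; and even when $\theta\sim1$, a direct energy estimate only yields $A\lesssim\nu^{-1}|k|^{-4}$, which is far weaker than the target $|k\theta|^{-1}$ whenever $|\nu k^3|\ll\theta$ (the paper uses the energy bound only in its Case 1, $|\nu k^3|\geq\theta$). Moreover the $\chi_1$-multiplier machinery of Lemma \ref{lem:SLNS-res} is not available here precisely because it requires $\lambda_{0+}\geq|\nu/k|^{1/2}$. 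In this regime the paper again runs the $w_1/w_2$ splitting, now with $\delta=|\nu\theta/k|^{1/2}$ and only the $H^1_k\to H^1_k$ (equivalently $H^{-1}_k\to H^{-1}_k$) bound $|A/\nu|^{1/2}$ for $w_2$, arriving at $A\leq C|A/(k\theta)|^{1/2}$ and concluding as above. So the smallness of $\lambda_{0+}$ does not make the error terms harmless; it changes which branch of Lemma \ref{lem:res1-os-H-1-A} and which choice of $\delta$ one must use.
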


\begin{proof}
{\it Case 1.} $|\nu k^3|\geq \theta$. By Lemma \ref{lem:A}, we have 
$A\leq \nu^{-1}|k|^{-4}=|\nu k^3|^{-1}|k|^{-1}\leq\theta^{-1}|k|^{-1}=|k\theta|^{-1}$.

We consider $\mathcal{L}_\lambda w=F$, namely, 
\begin{align*}
   & -\nu\Delta_k w+\mathrm{i}k(V-\lambda)w+\mathrm{i}k\lambda \mathcal{I}_kw=F.
\end{align*}
\if0{\bf Case 2.} $|\nu k^3|\geq \theta$, $\lambda\not\in[m,M]$. We have
\begin{align*}
   & \mathrm{Im}\langle F,w\rangle=k\langle(V-\lambda)w,w\rangle+k\lambda \langle\mathcal{I}_kw,w\rangle,\quad V/\lambda-1<0\ \text{in}\ \T,\\
   &-\langle\mathcal{I}_kw,w\rangle=\langle(V/\lambda-1)w,w\rangle+\mathrm{Im}\langle F,w\rangle/(k\lambda)\leq\mathrm{Im}\langle F,w\rangle/(k\lambda)\leq|\langle F,w\rangle|/|k\lambda|,\\
   &C^{-1}\|w\|_{H_k^{-1}}^2\leq\|(\partial_y,\tilde{k})\Delta_{\tilde{k}}^{-1}w\|_{L^2}^2=-\langle\mathcal{I}_kw,w\rangle\leq|\langle F,w\rangle|/|k\lambda|\leq \|F\|_{H_k^{-1}}\|w\|_{H_k^{-1}}/|k\lambda|.
\end{align*}\fi
  We decompose $w=w_1+w_2$, where $w_1$ and $w_2$ solve
  \begin{equation}\label{eq:w=w123}
    \left\{\begin{aligned}
   & (V-\lambda-\mathrm{i}\delta)w_1+(\lambda+\mathrm{i}\delta)\mathcal{I}_kw_1=F/(\mathrm{i}k),\\
   &-\nu\Delta_kw_2+\mathrm{i}k(V-\lambda)w_2+\mathrm{i}k\lambda\mathcal{I}_kw_2=\nu\Delta_kw_1+ k\delta (w_1-\mathcal{I}_kw_1),
\end{aligned} 
    \right.
  \end{equation}
with $\delta> 0$ to be determined.
    
  By Lemma \ref{lem:rayleigh-1} and Lemma \ref{lem:eps-w}, we get
  \begin{equation}\label{est:w1-00}
    \begin{aligned}
   &\|w_1\|_{L^2}\leq C|\delta \theta|^{-\f12}|k|^{-1}\|(\partial_y,k)F\|_{L^2},\quad \|w_1\|_{H_k^{-1}}\leq C|k\theta|^{-1}\|(\partial_y,k)F\|_{L^2}\\
   & \|(\partial_y,k)w_1\|_{L^2}\leq C\delta^{-3/2}(\lambda_{0+}/\theta+\delta)^{1/2}|k|^{-1}\|(\partial_y,k)F\|_{L^2}.
\end{aligned}
  \end{equation}

{\it Case 2.} $|\nu k^3|\leq \theta$, $\lambda_{0+}\leq |\nu/k|^{\f12}$.
By Lemma \ref{lem:res1-os-H-1-A} and \eqref{est:w1-00}, we get
\begin{align*}
     &\|(\partial_y,k)\Delta_k^{-1}w_2\|_{L^2}\leq |A/\nu|^{\f12}\big(\nu\|(\partial_y,k)w_1\|_{L^2}+|k|\delta\|w_1-\mathcal{I}_kw_1\|_{H_k^{-1}}\big)\\
     &\leq  |A/\nu|^{\f12}\big(\nu\|(\partial_y,k)w_1\|_{L^2}+C|k|\delta\|w_1\|_{H_k^{-1}}\big)\\
     &\leq  C|A/\nu|^{\f12}\big(\nu\delta^{-3/2}(\lambda_{0+}/\theta+\delta)^{1/2}|k|^{-1}+\delta/\theta\big)\|(\partial_y,k)F\|_{L^2}.
  \end{align*}
 Taking $\delta=|\nu\theta/k|^{\f12}$, we obtain (as $\theta\geq 1 $)
 \begin{align*}
     &\lambda_{0+}/\theta\leq \lambda_{0+}\leq |\nu/k|^{\f12}\leq\delta,\quad \delta^{-3/2}(\lambda_{0+}/\theta+\delta)^{1/2}\leq 2\delta^{-1},\\
     &\|(\partial_y,k)\Delta_k^{-1}w_2\|_{L^2}\le C|A/\nu|^{\f12}|\nu/(k\theta)|^{\f12}\|(\partial_y,k)F\|_{L^2}= C|A/(k\theta)|^{\f12}\|(\partial_y,k)F\|_{L^2},
  \end{align*} 
  which gives $A=\|{\mathcal{L}}_{\lambda}^{-1}\|_{H^{1}_k\rightarrow H^{-1}_k}\leq C|A/(k\theta)|^{\f12},$ $A\leq C|k\theta|^{-1}$.\smallskip
  
  {\it Case 3.} $|\nu k^3|\leq \theta$, $\lambda_{0+}\geq |\nu/k|^{\f12}$. Then $\lambda_{0+}=\lambda_{0} $, $\lambda\in(m,M) $.
  By Lemma \ref{lem:res1-os-H-1-A} and \eqref{est:w1-00}, we have
  \begin{align*}
     &\|(\partial_y,k)\Delta_k^{-1}w_2\|_{L^2}\leq |A/\nu|^{\f12}\nu\|(\partial_y,k)w_1\|_{L^2}\\&\quad+ C\big[|\nu/k|^{\f13}\lambda_0^{-1/6}|A/\nu|^{\f12}+|k|^{-1}|\nu\lambda_0/k|^{-\f16}\theta^{-\f12}(1+|kA|)\big]|k|\delta\|w_1-\mathcal{I}_kw_1\|_{L^2}\\
     &\leq  |A\nu|^{\f12}\|(\partial_y,k)w_1\|_{L^2}+C\big[|\nu/k|^{\f13}\lambda_0^{-1/6}|A/\nu|^{\f12}+|k|^{-1}|\nu\lambda_0/k|^{-\f16}\theta^{-\f12}(1+|kA|)\big]|k|\delta\|w_1\|_{L^2}\\
     &\leq  C|A\nu|^{\f12}\delta^{-3/2}(\lambda_0/\theta+\delta)^{1/2}|k|^{-1}\|(\partial_y,k)F\|_{L^2} \\&\quad+C\big[|\nu/k|^{\f13}\lambda_0^{-1/6}|A/\nu|^{\f12}+|k|^{-1}|\nu\lambda_0/k|^{-\f16}\theta^{-\f12}(1+|kA|)\big]|k|\delta|\delta \theta|^{-\f12}|k|^{-1}\|(\partial_y,k)F\|_{L^2}\\
     &=C|A\nu|^{\f12}|k|^{-1}\big[\delta^{-3/2}(\lambda_0/\theta+\delta)^{1/2}+|\nu/k|^{-\f23}\lambda_0^{-1/6}|\delta/\theta|^{\f12}\big]\|(\partial_y,k)F\|_{L^2}\\
     &\quad+C\delta^{\f12}|\nu\lambda_0/k|^{-\f16}\theta^{-1}(|k|^{-1}+|A|)\|(\partial_y,k)F\|_{L^2}.
  \end{align*}
  
  \def\Cd{C_1}
  Now we take $\delta=c_0|\nu\lambda_0/k|^{\f13}$, here $c_0\in(0,1)$ is a small constant. Then we conclude 
  \begin{align*}
  &\theta=1+|k|\lambda_0^{1/2}+|\lambda|\leq C+|k|\lambda_0^{1/2},\quad \theta^{\f43}\leq C(1+|k|^{\f43}\lambda_0^{2/3}),\quad
  \nu^{\f13}|k|=|\nu k^3|^{\f13}\leq \theta^{\f13},\\
  &\theta^{\f43}\delta/\lambda_0=\theta^{\f43}c_0|\nu/k|^{\f13}\lambda_0^{-2/3}\leq C(1+|k|^{\f43}\lambda_0^{2/3})|\nu/k|^{\f13}\lambda_0^{-2/3}=
  C(\lambda_0^{-2/3}+|k|^{\f43})|\nu/k|^{\f13}\\ &\qquad\leq C((|\nu/k|^{\f12})^{-2/3}+|k|^{\f43})|\nu/k|^{\f13}=C(1+\nu^{\f13}|k|)\leq C\theta^{\f13},\quad
  \delta\leq C\lambda_0/\theta,
  \end{align*}
  Thus, we obtain
  \begin{align*}
  &\|(\partial_y,k)\Delta_k^{-1}w_2\|_{L^2}
     =C\big(c_0^{-3/2}|A/(k\theta)|^{1/2}+c_0^{1/2}\theta^{-1}(|k|^{-1}+|A|)\big)\|(\partial_y,k)F\|_{L^2},
  \end{align*}  
  which gives 
 \beno
 A=\|{\mathcal{L}}_{\lambda}^{-1}\|_{H^{1}_k\rightarrow H^{-1}_k}\leq \Cd\big(c_0^{-3/2}|A/(k\theta)|^{1/2}+c_0^{1/2}\theta^{-1}(|k|^{-1}+|A|)\big).
 \eeno 
  Taking  $c_0=(2\Cd)^{-2}$ implies(as $\theta\geq 1 $)
  \begin{align*}
  &A\leq 2\Cd\big(c_0^{-3/2}|A/(k\theta)|^{1/2}+c_0^{1/2}\theta^{-1}|k|^{-1})\leq C(|A/(k\theta)|^{1/2}+|k\theta|^{-1}\big),
  \end{align*}
  which leads to $ A\leq C|k\theta|^{-1}$.
  \end{proof}

\begin{lemma}\label{pro:res1-os}
If $\nu\in(0,1)$, $\lambda \in\R $ and
\begin{align*}
   & -\nu\Delta_k w+\mathrm{i}k(V-\lambda)w+\mathrm{i}k\lambda \mathcal{I}_kw=F,
\end{align*}
then we have
\begin{align}\label{est:res1-os-1}
   & |\nu k\theta(k,\lambda)|^{\f12}\|(\partial_y,k)w\|_{L^2}+|k|\theta(k,\lambda)\|(\partial_y,k)\Delta_k^{-1}w\|_{L^2}\leq C\|(\partial_y,k) F\|_{L^2},\\
   & \nu \|(\partial_y,k)w\|_{L^2}+|\nu k\theta(k,\lambda)|^{\f12}\|(\partial_y,k)\Delta_k^{-1}w\|_{L^2}\leq C\|(\partial_y,k)\Delta_k^{-1} F\|_{L^2}.\label{est:res1-os-2}
\end{align}
\end{lemma}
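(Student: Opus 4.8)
The plan is to deduce both \eqref{est:res1-os-1} and \eqref{est:res1-os-2} directly from the operator-norm bounds on $\mathcal{L}_{\lambda}^{-1}$ already established in Lemma \ref{lem:res1-os-H-1-A} and Lemma \ref{lem:res1-os-H-1-A2}, together with one elementary energy identity. Throughout I write $\theta=\theta(k,\lambda)$ and $A=A(\nu,k,\lambda)=\|\mathcal{L}_{\lambda}^{-1}\|_{H^1_k\to H^{-1}_k}$, and I recall that by Lemma \ref{lem:res1-os-H-1-A2} we have $A\le C|k\theta|^{-1}$, while the first assertion of Lemma \ref{lem:res1-os-H-1-A} gives $\|\mathcal{L}_{\lambda}^{-1}\|_{H^1_k\to H^1_k}=\|\mathcal{L}_{\lambda}^{-1}\|_{H^{-1}_k\to H^{-1}_k}\le|A/\nu|^{1/2}\le C|k\theta\nu|^{-1/2}$.

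First I would prove \eqref{est:res1-os-1}. Since $w=\mathcal{L}_{\lambda}^{-1}F$, the definition of $A$ gives $\|w\|_{H^{-1}_k}=\|(\partial_y,k)\Delta_k^{-1}w\|_{L^2}\le A\|(\partial_y,k)F\|_{L^2}$, so multiplying by $|k|\theta$ and using $A\le C|k\theta|^{-1}$ controls the second term of \eqref{est:res1-os-1}. For the first term, the bound $\|\mathcal{L}_{\lambda}^{-1}\|_{H^1_k\to H^1_k}\le C|k\theta\nu|^{-1/2}$ yields $\|(\partial_y,k)w\|_{L^2}=\|w\|_{H^1_k}\le C|k\theta\nu|^{-1/2}\|(\partial_y,k)F\|_{L^2}$, and multiplying by $|\nu k\theta|^{1/2}$ finishes \eqref{est:res1-os-1}.

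Next I would prove \eqref{est:res1-os-2}. The term $|\nu k\theta|^{1/2}\|(\partial_y,k)\Delta_k^{-1}w\|_{L^2}=|\nu k\theta|^{1/2}\|w\|_{H^{-1}_k}$ is handled exactly as above, but using the $H^{-1}_k\to H^{-1}_k$ bound, namely $\|w\|_{H^{-1}_k}\le C|k\theta\nu|^{-1/2}\|F\|_{H^{-1}_k}=C|k\theta\nu|^{-1/2}\|(\partial_y,k)\Delta_k^{-1}F\|_{L^2}$. For the remaining term $\nu\|(\partial_y,k)w\|_{L^2}$ I would use a direct energy estimate: taking the $L^2$ inner product of the equation with $w$ and keeping the real part, the contributions of $\mathrm{i}k(V-\lambda)w$ and $\mathrm{i}k\lambda\mathcal{I}_kw$ vanish, because $V-\lambda$ is real-valued and $\mathcal{I}_k=(\Delta_k+1)^{-1}$ is a real self-adjoint operator (so $\langle(V-\lambda)w,w\rangle$ and $\langle\mathcal{I}_kw,w\rangle$ are real and the two terms are purely imaginary), leaving
\begin{align*}
\nu\|(\partial_y,k)w\|_{L^2}^2=\mathrm{Re}\langle F,w\rangle\le|\langle F,w\rangle|\le\|F\|_{H^{-1}_k}\|w\|_{H^1_k}=\|(\partial_y,k)\Delta_k^{-1}F\|_{L^2}\,\|(\partial_y,k)w\|_{L^2},
\end{align*}
hence $\nu\|(\partial_y,k)w\|_{L^2}\le\|(\partial_y,k)\Delta_k^{-1}F\|_{L^2}$. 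Adding the two bounds gives \eqref{est:res1-os-2}.

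There is essentially no hard step left at this point: all the substantive work — in particular the bootstrap yielding $A\le C|k\theta|^{-1}$ — is already contained in Lemma \ref{lem:res1-os-H-1-A2}, which itself rests on the Rayleigh-type resolvent bounds of Lemma \ref{lem:rayleigh-1}, Lemma \ref{lem:eps-w}, and the simplified-operator estimate Lemma \ref{lem:SLNS-res}. The only points requiring care are bookkeeping: checking that the imaginary terms indeed drop out of the real part of the energy identity (using self-adjointness of $\mathcal{I}_k$, valid since $|k|>1$ makes $\tilde k^2=k^2-1>0$), and that the duality identity \eqref{L1} is invoked only in its admissible range so that $\|\mathcal{L}_{\lambda}^{-1}\|_{H^{-1}_k\to H^{-1}_k}=\|\mathcal{L}_{\lambda}^{-1}\|_{H^{1}_k\to H^{1}_k}$.
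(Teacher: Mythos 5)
Your proposal is correct and follows essentially the same route as the paper: inequality \eqref{est:res1-os-1} is obtained from the definition of $A(\nu,k,\lambda)$, the $H^1_k\to H^1_k$ bound of Lemma \ref{lem:res1-os-H-1-A}, and the key bound $A\le C|k\theta|^{-1}$ of Lemma \ref{lem:res1-os-H-1-A2}, while for \eqref{est:res1-os-2} the paper likewise combines the $H^{-1}_k\to H^{-1}_k$ bound with the same real-part energy identity $\nu\|(\partial_y,k)w\|_{L^2}^2\le|\langle F,w\rangle|\le\|(\partial_y,k)\Delta_k^{-1}F\|_{L^2}\|(\partial_y,k)w\|_{L^2}$ that you derive. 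Your explicit justification that the transport and nonlocal terms drop out of the real part (realness of $V-\lambda$ and self-adjointness of $\mathcal{I}_k$) is exactly what the paper leaves implicit.
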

\begin{proof}
The inequality \eqref{est:res1-os-1} follows from Lemma \ref{lem:res1-os-H-1-A}, Lemma \ref{lem:res1-os-H-1-A2} and the definition of $A(\nu,k,\lambda)$ in  \eqref{def:A(nu)}.
Notice that
\begin{align}
   & \nu\|(\partial_y,k)w\|_{L^2}^2\leq |\langle F,w\rangle|\leq \|(\partial_y,k)\Delta_k^{-1}F\|_{L^2}\|(\partial_y,k)w\|_{L^2},\nonumber\\
   &\nu\|(\partial_y,k)w\|_{L^2}\leq \|(\partial_y,k)F\|_{L^2}.\nonumber
\end{align}
 which along with Lemma \ref{lem:res1-os-H-1-A} and Lemma \ref{lem:res1-os-H-1-A2}  gives \eqref{est:res1-os-2}.
\end{proof}

\begin{lemma}\label{lem:kthea-0}
  If $\nu\in(0,1)$, $\lambda \in\R $,
\begin{align*}
   & -\nu\Delta_k f+\mathrm{i}kV(1+\Delta_k^{-1})f-\mathrm{i}k\lambda f=F,
\end{align*}
then we have
\begin{align*}
&|\nu k\theta(k,\lambda)|^{\f12}\|(\partial_y,k)f\|_{L^2}+|k|\theta(k,\lambda)
\|(\partial_y,k)\Delta_k^{-1}f\|_{L^2}\leq C\|(\partial_y,k) F\|_{L^2},\\
   &\nu \|(\partial_y,k)f\|_{L^2}+ |\nu k\theta(k,\lambda)|^{\f12}\|(\partial_y,k)\Delta_k^{-1}f\|_{L^2}\leq C\|(\partial_y,k)\Delta_k^{-1} F\|_{L^2}.
\end{align*}
\end{lemma}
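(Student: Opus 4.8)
The plan is to reduce the lemma to the resolvent bounds for the operator $\mathcal{L}_{\lambda}=-\nu\Delta_k+\mathrm{i}k(V-\lambda)+\mathrm{i}k\lambda\mathcal{I}_k$ already obtained in Lemma~\ref{pro:res1-os}, through the change of unknown $w=(1+\Delta_k^{-1})f$. First I would record the elementary algebra. Writing $\mathcal{I}_k=(\Delta_k+1)^{-1}=\Delta_{\tilde{k}}^{-1}$ with $\tilde{k}=\sqrt{k^2-1}$, one checks $f=(1-\mathcal{I}_k)w$; and since $|k|\geq\alpha>1$, the $y$-Fourier multiplier $1-\mathcal{I}_k$ has symbol $\tfrac{n^2+k^2}{n^2+k^2-1}\in\big[1,\tfrac{\alpha^2}{\alpha^2-1}\big]$, so that $\|(-\Delta_k)^{s/2}f\|_{L^2}\sim\|(-\Delta_k)^{s/2}w\|_{L^2}$ for $s\in\{-1,0,1\}$ — this is the equivalence already used in Lemma~\ref{lem:rayleigh-1}. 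Substituting $V(1+\Delta_k^{-1})f=Vw$, and using $\Delta_k f=\Delta_k w-f$ together with $f=w-\mathcal{I}_kw$, the equation for $f$ becomes
\begin{align*}
-\nu\Delta_k w+\mathrm{i}k(V-\lambda)w+\mathrm{i}k\lambda\mathcal{I}_kw=F-\nu f=F-\nu(1-\mathcal{I}_k)w ,
\end{align*}
that is, $\mathcal{L}_{\lambda}w=F-\nu(1-\mathcal{I}_k)w$, with $\mathcal{L}_{\lambda}$ precisely the operator of Lemma~\ref{pro:res1-os}.

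I would then apply Lemma~\ref{pro:res1-os} to $w$ with right-hand side $G:=F-\nu(1-\mathcal{I}_k)w$. Since $\|(1-\mathcal{I}_k)w\|_{H^1_k}\lesssim\|w\|_{H^1_k}$ and $\|(1-\mathcal{I}_k)w\|_{H^{-1}_k}\lesssim\|w\|_{H^{-1}_k}$, the bound \eqref{est:res1-os-1} gives $|\nu k\theta(k,\lambda)|^{1/2}\|(\partial_y,k)w\|_{L^2}+|k|\theta(k,\lambda)\|(\partial_y,k)\Delta_k^{-1}w\|_{L^2}\lesssim\|(\partial_y,k)F\|_{L^2}+\nu\|(\partial_y,k)w\|_{L^2}$, while \eqref{est:res1-os-2} gives $\nu\|(\partial_y,k)w\|_{L^2}+|\nu k\theta(k,\lambda)|^{1/2}\|(\partial_y,k)\Delta_k^{-1}w\|_{L^2}\lesssim\|(\partial_y,k)\Delta_k^{-1}F\|_{L^2}+\nu\|(\partial_y,k)\Delta_k^{-1}w\|_{L^2}$. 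Pulling back through the norm equivalence of the previous paragraph turns these into the two asserted estimates for $f$, provided the $\nu(\cdots)$ error terms on the right can be absorbed into the left-hand sides.

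That absorption is the only point requiring care, and it is where I expect the main (if minor) difficulty. It uses $|k|\theta(k,\lambda)\geq|k|\geq\alpha>1>\nu$: the left-hand side of each inequality carries the factor $|\nu k\theta(k,\lambda)|^{1/2}$ multiplying the very norm of $w$ occurring in the error term, and $|\nu k\theta(k,\lambda)|^{1/2}\geq C\nu$ precisely when $|k|\theta(k,\lambda)$ is bounded below by a fixed constant $N_0$ determined by the implicit constants; thus the error terms are absorbed whenever $|k|\theta(k,\lambda)\geq N_0$, in particular for all $|k|$ or all $|\lambda|$ above a threshold. In the complementary regime $|k|\leq N_0,\ |\lambda|\leq N_0$ (with $\nu\in(0,1)$ arbitrary), where $\theta(k,\lambda)\sim1$, the extra term $\nu(1-\mathcal{I}_k)w$ is lower order and one simply reruns the arguments leading to Lemma~\ref{pro:res1-os} for the shifted operator $\mathcal{L}_{\lambda}+\nu(1-\mathcal{I}_k)$ — treating $\nu(1-\mathcal{I}_k)$ exactly as the nonlocal term $\mathrm{i}k\lambda\mathcal{I}_k$ was treated, with the spectral input of Subsection~\ref{subspec} and Lemmas~\ref{lem:A} and \ref{lem:res1-os-H-1-A} — to obtain $\|(\mathcal{L}_{\lambda}+\nu(1-\mathcal{I}_k))^{-1}\|_{H^1_k\to H^{-1}_k}\lesssim1$ together with the companion $L^2$-bounds, which is exactly what is needed there. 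Once this bounded-frequency bookkeeping is dispatched, Lemma~\ref{lem:kthea-0} follows as an immediate corollary of Lemma~\ref{pro:res1-os}.
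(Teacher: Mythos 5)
Your core reduction is exactly the one in the paper: set $w=(1+\Delta_k^{-1})f$, observe that $w$ solves $\mathcal{L}_\lambda w=F-\nu f$ with $\nu f=\nu(1-\mathcal{I}_k)w$, apply the resolvent bounds \eqref{est:res1-os-1}--\eqref{est:res1-os-2} of Lemma \ref{pro:res1-os}, and transfer back through the multiplier equivalence $\|(\partial_y,k)^{s}f\|_{L^2}\sim\|(\partial_y,k)^{s}w\|_{L^2}$. Where you diverge is in disposing of the error term $\nu f$, and here you missed the elementary observation the paper uses: pairing the \emph{original} equation with $(1+\Delta_k^{-1})f$ and taking the real part annihilates both $\mathrm{i}kV(1+\Delta_k^{-1})f$ and $-\mathrm{i}k\lambda f$ (the latter because $1+\Delta_k^{-1}$ is real and symmetric), leaving
\begin{align*}
\nu\big(\|\partial_y f\|_{L^2}^2+(k^2-1)\|f\|_{L^2}^2\big)=\mathrm{Re}\langle F,(1+\Delta_k^{-1})f\rangle\le\|F\|_{H_k^{-1}}\|f\|_{H_k^{1}},
\end{align*}
hence $\nu\|f\|_{H_k^{1}}\le C\|F\|_{H_k^{-1}}\le C|k|^{-2}\|F\|_{H_k^{1}}$. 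With this a priori bound the error terms $C\nu\|(\partial_y,k)f\|_{L^2}$ and $C\nu\|f\|_{H_k^{-1}}\le C\nu|k|^{-2}\|f\|_{H_k^{1}}$ are controlled \emph{directly} by $C|k|^{-2}\|(\partial_y,k)F\|_{L^2}$ and $C|k|^{-2}\|F\|_{H_k^{-1}}$, uniformly in $\nu$, $k$, $\lambda$: no absorption and no case-splitting are needed.

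Your substitute for this step is where the write-up stops being a proof. Absorption indeed works when $|k|\theta(k,\lambda)$ exceeds a constant, but in the complementary bounded regime you only assert that one can ``rerun'' the whole resolvent analysis for $\mathcal{L}_\lambda+\nu(1-\mathcal{I}_k)$, ``treating $\nu(1-\mathcal{I}_k)$ exactly as $\mathrm{i}k\lambda\mathcal{I}_k$ was treated.'' That phrase is not literally meaningful: the term $\mathrm{i}k\lambda\mathcal{I}_k$ is handled through the Rayleigh resolvent estimates and the decomposition \eqref{eq:w=w123}, which exploit its specific structure, whereas $\nu(1-\mathcal{I}_k)$ is a dissipative, $\lambda$-independent perturbation and would have to be treated differently (e.g.\ noting it is accretive so the energy bounds of Lemmas \ref{lem:A} and \ref{lem:res1-os-H-1-A} persist, or inverting by a Neumann series using $\|\mathcal{L}_\lambda^{-1}\|_{H^1_k\to H^1_k}\le|A/\nu|^{1/2}$ for $\nu$ small and Lemma \ref{lem:A} for $\nu$ bounded below). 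I believe such a completion is feasible, so the plan is not wrong, but as written the bounded-$(k,\lambda)$ case is an unproved claim standing in for a multi-page argument — and the three-line energy estimate above makes all of it unnecessary.
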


\begin{proof}
First, we have
\begin{align*}
&\mathrm{Re}\langle F,(1+\Delta_k^{-1})f\rangle=\langle -\nu\Delta_k f,(1+\Delta_k^{-1})f\rangle=
\nu\big(\|\partial_yf\|_{L^2}^2+(k^2-1)\|f\|_{L^2}^2\big),\\
&\nu\big(\|\partial_yf\|_{L^2}^2+(k^2-1)\|f\|_{L^2}^2\big)\leq \|F\|_{H_k^{-1}}\|(1+\Delta_k^{-1})f\|_{H_k^{1}}\leq \|F\|_{H_k^{-1}}\|f\|_{H_k^{1}},\\
&\|f\|_{H_k^{1}}\leq C\big(\|\partial_yf\|_{L^2}^2+(k^2-1)\|f\|_{L^2}^2\big)^{1/2}\leq C\nu^{-1}\|F\|_{H_k^{-1}}\leq C\nu^{-1}|k|^{-2}\|F\|_{H_k^{1}}.
\end{align*}
  Let $w=f+\Delta_k^{-1}f$, then $w$ solves $-\nu\Delta_k w+\mathrm{i}k(V-\lambda)w+\mathrm{i}k\lambda \mathcal{I}_kw=F-\nu f$. We then  infer from  
 Proposition  \ref{pro:res1-os} that 
  \begin{align*}
     |\nu k\theta(k,\lambda)|^{\f12}\|(\partial_y,k)w\|_{L^2}&+
     |k|\theta\|(\partial_y,k)\Delta_k^{-1}w\|_{L^2}\leq C\|(\partial_y,k) F\|_{L^2} +C\nu\|(\partial_y,k)f\|_{L^2}\\
     \leq &C\|(\partial_y,k) F\|_{L^2} +C|  k|^{-2}\|(\partial_y,k) F\|_{L^2}
      \leq C\|(\partial_y,k) F\|_{L^2}. 
  \end{align*} 
  Meanwhile, we have
  \begin{align*}
    &\nu \|(\partial_y,k)w\|_{L^2} +|\nu k\theta|^{\f12}\|(\partial_y,k)\Delta_k^{-1}w\|_{L^2} \leq  C\|F\|_{H_k^{-1}} +C\nu\|f\|_{H_k^{-1}}\\ &\leq  C\|F\|_{H_k^{-1}} +C\nu |k|^{-2}\|f\|_{H_k^{1}}\leq 
     C\|F\|_{H_k^{-1}} +C|  k|^{-2}\|F\|_{H_k^{-1}}
      \leq C\|F\|_{H_k^{-1}}.
  \end{align*}

 Thanks to the  facts that $\|(\partial_y,k)\Delta_k^{-1}f\|_{L^2}\sim \|(\partial_y,k)\Delta_k^{-1}w\|_{L^2}$, $\|(\partial_y,k)f\|_{L^2}\sim \|(\partial_y,k)w\|_{L^2}$, 
  we complete the proof of the lemma.
\end{proof}

Now we are in a position to prove Proposition \ref{pro:res1-os-V'}.

  \begin{proof}
  \textit{Step 1}. We first prove \eqref{est:res1-os-1-V'}. Let $\phi=\Delta_k^{-1}f$, then
\begin{align*}
    \mathrm{Im}\langle F,\phi\rangle=&k\mathrm{Re}\langle V(1+\Delta_k^{-1})f-\lambda f,\phi\rangle
    =k\mathrm{Re}\langle (V-\lambda)\Delta_k\phi+V\phi,\phi\rangle\\
    =&k\int_{\T}\big[-(V-\lambda)(|\phi'|^2+k^2|\phi|^2)+(V+V''/2)|\phi|^2\big],\\
    \left|\int_{\T}(V-\lambda)(|\phi'|^2+k^2|\phi|^2)\right|\leq&|k|^{-1}|\langle F,\phi\rangle|+\int_{\T}|V+V''/2||\phi|^2
    \leq|k|^{-1}|\langle F,\phi\rangle|+C\|\phi\|_{L^2}^2.
\end{align*} 
 We then get by Lemma \ref{lem1}  that 
\begin{align}
\notag   \||V'|(\partial_y,k)\phi\|_{L^2}^2\leq&C\left|\int_{\T}(V-\lambda)(|\phi'|^2+k^2|\phi|^2)\right|
   +C\lambda_{0+}\|(\partial_y,k)\phi\|_{L^2}^2\\
\label{V1}   \leq&C|k|^{-1}|\langle F,\phi\rangle|+C\|\phi\|_{L^2}^2+C\lambda_0\|(\partial_y,k)\phi\|_{L^2}^2\\
 \notag  \leq&C|k|^{-1}\|F\|_{L^2}\|\phi\|_{L^2}+C\|\phi\|_{L^2}^2+C\lambda_0\|(\partial_y,k)\phi\|_{L^2}^2\\ \notag  \leq&C|k|^{-2}\|F\|_{L^2}^2+C(\lambda_0+|k|^{-2})\|(\partial_y,k)\phi\|_{L^2}^2,
\end{align}
which along with Lemma \ref{lem:kthea-0} gives(as $\theta(k,\lambda)\geq1+|k|\lambda_0^{1/2}$)
\begin{align*}
\||V'|(\partial_y,k)\phi\|_{L^2}\leq & C(\lambda_0^{1/2}+|k|^{-1})\|(\partial_y,k)\phi\|_{L^2}+C|k|^{-1}\|F\|_{L^2}
 \\
 \leq&
 C|k|^{-1}\theta(k,\lambda)\|(\partial_y,k)\phi\|_{L^2}+C|k|^{-1}\|F\|_{L^2}\\
 \leq& C|k|^{-2}\|(\partial_y,k)F\|_{L^2}+C|k|^{-1}\|F\|_{L^2}\leq C|k|^{-2}\|(\partial_y,k)F\|_{L^2},\end{align*}
 which leads to 
\begin{align*}
&\||V'|^{\f12}(\partial_y,k)\phi\|_{L^2}\leq\||V'|(\partial_y,k)\phi\|_{L^2}^{1/2}\|(\partial_y,k)\phi\|_{L^2}^{1/2}\leq C|k|^{-1}|k\theta(k,\lambda)|^{-1/2}\|(\partial_y,k)F\|_{L^2}. 
\end{align*}  
This along with Lemma \ref{lem:kthea-0} and $\theta(k,\lambda)\geq1$ proves \eqref{est:res1-os-1-V'}.\smallskip

\textit{Step 2}. We next prove \eqref{est:res1-os-2-V'}. By \eqref{V1} and Lemma \ref{lem:kthea-0}, we have (as $\theta(k,\lambda)\geq1+|k|\lambda_0^{1/2}$)
\begin{align*}
&\|\phi\|_{H_k^{1}}\leq|k|^{-2}\|\phi\|_{H_k^{3}}=|k|^{-2}\|f\|_{H_k^{1}}\leq C|k|^{-2}\nu^{-1}\| F\|_{H_k^{-1}},\\
&|k|^{-1}|\langle F,\phi\rangle|\leq|k|^{-1}\| F\|_{H_k^{-1}}\|\phi\|_{H_k^{1}}\leq C|k|^{-3}\nu^{-1}\| F\|_{H_k^{-1}}^2,\\
&\||V'|(\partial_y,k)\phi\|_{L^2}^2\leq C|k|^{-1}|\langle F,\phi\rangle|+C\|\phi\|_{L^2}^2+C\lambda_0\|(\partial_y,k)\phi\|_{L^2}^2\\
&\quad\leq C|k|^{-3}\nu^{-1}\| F\|_{H_k^{-1}}^2+C(|k|^{-2}+\lambda_0)\|(\partial_y,k)\phi\|_{L^2}^2,
\end{align*}
which yields 
\begin{align*}
\||V'|(\partial_y,k)\phi\|_{L^2}\leq&  C|k|^{-1}|\nu k|^{-\frac12}\| F\|_{H_k^{-1}}+C(|k|^{-1}+\lambda_0^{1/2})\|(\partial_y,k)\phi\|_{L^2}\\
\leq&  C|k|^{-1}|\nu k|^{-\frac12}\| F\|_{H_k^{-1}}+C|k|^{-1}\theta(k,\lambda)\|(\partial_y,k)\phi\|_{L^2}.
\end{align*}
Then by H\"older's inequality, $\theta(k,\lambda)\geq1$ and Lemma \ref{lem:kthea-0}, we have
\begin{align*}
   &|k|^{\frac12}\||V'|^{\frac12}(\partial_y,k)\phi\|_{L^2}\leq|k|^{\frac12}
   \||V'|(\partial_y,k)\phi\|_{L^2}^{1/2}\|(\partial_y,k)\phi\|_{L^2}^{1/2}\\
  &\leq  C\big(|\nu k|^{-\frac12}\| F\|_{H_k^{-1}}+\theta(k,\lambda)\|(\partial_y,k)\phi\|_{L^2}\big)^{1/2}\|(\partial_y,k)\phi\|_{L^2}^{1/2}\\
  &\leq C\big(|\nu k|^{-\frac12}\| F\|_{H_k^{-1}}+|\theta(k,\lambda)|^{\f12}\|(\partial_y,k)\phi\|_{L^2}\big) \leq C|\nu k|^{-\frac12}\| F\|_{H_k^{-1}},
\end{align*}
which along with Lemma \ref{lem:kthea-0} gives \eqref{est:res1-os-2-V'}.
 \end{proof}

\section{Inviscid damping and vorticity depletion for the linearized Euler}\label{sec:invisciddamping}

 Let $b(y)=\cos y$, $\Delta_k=\partial_y^2-k^2$ and $\omega_k$ solve ($ k\in\alpha \mathbb{Z}\setminus\{0\},\ \alpha:=2\pi/\mathfrak{p}>1$)
\begin{equation}\label{eq:wk0}
    \partial_t\omega_k+\mathrm{i}k b(\omega_k+\psi_k)=0,\quad \Delta_k\psi_k=\omega_k,\quad 
     \omega_k|_{t=0}=\omega_{0,k}.
\end{equation}
For fixed $k$, we denote
\begin{align}\label{def:AB}
  & A:=\sin y(1+\Delta_k^{-1}),\qquad B:= \cos y(1+\Delta_k^{-1}),\\
  &\Delta_{k,s}=\mathrm{e}^{-\mathrm{i}sy}\Delta_k\mathrm{e}^{\mathrm{i}sy}= (\partial_y+\mathrm{i}s)^2-k^2.\label{def:Del-ks}
\end{align}
Then $ \Delta_{k,s}^{-1}=\mathrm{e}^{-\mathrm{i}sy}\Delta_k^{-1}\mathrm{e}^{\mathrm{i}sy}$, $B$ is symmetric with respect to the inner product 
  $\langle ,\rangle_{*}$ defined in \eqref{fg*}, i.e., $\langle f,g\rangle_\ast := \langle f, (1+\Delta_k^{-1})g\rangle$.

The goal of this section is to prove Theorem \ref{thm:LE}.  
Thanks to $\omega_k=\mathrm{e}^{-\mathrm{i}tkB}\omega_{0,k},\ \psi_k=\Delta_k^{-1}\omega_k$, Theorem \ref{thm:LE} is equivalent to the following proposition by replacing $kt$ with $t$.

 \begin{Proposition}\label{lem:eibtf}
  Let $\omega=\mathrm{e}^{-\mathrm{i}Bt}f$, $\psi=\Delta_k^{-1}\omega$, it holds that
   \begin{align}\label{eq12}
      &|\psi(t,y)|\leq  C(|t|+k^2)^{-2}|k|^{-1/2}\|f\|_{H_k^3},\\
      &|\partial_y\psi(t,y)|\leq  C((|t|+k^2)^{-3/2}|k|^{-1/2} +(|t|+k^2)^{-1}|b'||k|^{-1/2})\|f\|_{H_k^3},\label{est:vor-dep-0}\\
      &{|\omega(t,y)|\leq  C\min((|t|+k^2)^{-1}|k|^{-1/2} +|b'|^2|k|^{-1/2},|k|^{-5/2})\|f\|_{H_k^3}},\label{est:vor-dep-1}\\
      &|\partial_y(\mathrm{e}^{\mathrm{i}tb}\omega)(t,y)|\leq  C\min((|t|+k^2)^{-1/2}|k|^{-1/2}+|b'||k|^{-1/2},|k|^{-3/2})\|f\|_{H_k^3},\label{est:vor-dep-2}\\
      &|\partial_y(\mathrm{e}^{\mathrm{i}tb}\psi)(t,y)|\leq C(|t|+k^2)^{-1.2}|k|^{-1}\|f\|_{H_k^3},\label{est:pareitbpsi}\\
      &\|(\partial_y,k)^2(\mathrm{e}^{\mathrm{i}tb}\omega)\|_{L^2}\leq C|k|^{-1}\|f\|_{H_k^3}.\label{eq17}
   \end{align}
 \end{Proposition}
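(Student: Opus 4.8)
\textbf{Plan of proof for Proposition \ref{lem:eibtf}.}

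The strategy is to exploit the almost-conserved quantity hinted at in the introduction, namely
\begin{align*}
\omega_1:=\bigl(\Delta_k+\mathrm{i}t[\Delta_k,B]-t^2(1-B^2)\bigr)\omega,
\end{align*}
where we have written $t$ for $kt$ as in the reduction to the model with $b(y)=\cos y$. The first step is to derive the evolution equation for $\omega_1$. Since $\omega=\mathrm{e}^{-\mathrm{i}Bt}f$ satisfies $(\partial_t+\mathrm{i}B)\omega=0$, a direct commutator computation should yield an identity of the form $(\partial_t+\mathrm{i}B)\omega_1=-4tk^2\Lambda_3\omega$ with $\Lambda_3=(1-\Delta_k^{-1})\Delta_{k,-1}^{-1}\Delta_{k,1}^{-1}$. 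The key analytic input is the coercivity inequality \eqref{eq:intro7}: applying it with $s=0$ (and $s=1$ to control one more derivative, which is needed to reach the $H^3_k$ level of $M_k$) gives, after pairing the $\omega_1$-equation against suitable weights and integrating the resulting differential inequality in time, a bound of the shape $\|(\partial_y,k)^2(\mathrm{e}^{\mathrm{i}tb}\omega)\|_{L^2}\lesssim |k|^{-1}\|f\|_{H^3_k}$, which is exactly \eqref{eq17}; the factor $k^2$ on the right of the $\omega_1$-equation is what produces the correct power of $|k|$. Here the oscillatory conjugation $\mathrm{e}^{\mathrm{i}tb}$ is essential: it removes the ``fake'' growth $\partial_y\mathrm{e}^{-\mathrm{i}tb}\omega\sim t\sin y\,\omega$ so that $\mathrm{e}^{\mathrm{i}tb}\omega$ has genuinely bounded profile derivatives. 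The time-integral of $t\Lambda_3\omega$ must be shown to be finite, which is itself an inviscid-damping-type statement for $\Lambda_3\omega$ and closes the argument by a bootstrap.

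With \eqref{eq17} in hand, the remaining pointwise estimates \eqref{eq12}--\eqref{est:vor-dep-2} follow from a combination of (a) one-dimensional Sobolev/Gagliardo--Nirenberg inequalities in $y$ applied to $\mathrm{e}^{\mathrm{i}tb}\omega$ and $\mathrm{e}^{\mathrm{i}tb}\psi$, using the uniform $H^2_k$ control of the profile, and (b) stationary-phase / non-stationary-phase analysis of the oscillatory integrals defining $\psi=\Delta_k^{-1}\omega$. Concretely, writing $\psi_k(t,y)=\int K_k(y,y')\,\mathrm{e}^{-\mathrm{i}tb(y')}g(t,y')\,dy'$ with $g$ the bounded profile and $K_k$ the (exponentially localized) Green's kernel of $\Delta_k$, integration by parts in $y'$ against the phase $t b(y')$ gains powers of $(|t|+k^2)^{-1}$; each integration by parts costs a derivative of $g$ (controlled by \eqref{eq17}) or produces a factor $1/b'(y')$. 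The depletion at the critical points, i.e. the loss being only $|b'|$ rather than a full inverse power of $t$, is captured by keeping track of the region $|b'(y')|\lesssim (|t|+k^2)^{-1/2}$ separately: there the phase is nearly stationary and one uses the measure of that region, $\sim(|t|+k^2)^{-1/2}$, together with the vanishing of the amplitude, while away from it one integrates by parts. This is exactly the mechanism behind the $|b'|$-terms in \eqref{est:vor-dep-0}--\eqref{est:vor-dep-2} and the $\langle t\rangle^{-2}$ in \eqref{eq12}. The $\min(\cdot,|k|^{-5/2})$ and $\min(\cdot,|k|^{-3/2})$ alternatives come from simply not integrating by parts at all and using $\|\omega\|_{L^\infty}\lesssim\|\omega\|_{H^1_k}\lesssim|k|^{-?}\|f\|_{H^3_k}$ after noting $\|\mathrm{e}^{\mathrm{i}tb}\omega\|_{H^2_k}\lesssim|k|^{-1}\|f\|_{H^3_k}$.

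The estimate \eqref{est:pareitbpsi}, the $\langle t\rangle^{-1.2}$ decay of one derivative of the stream-function profile, is the genuine obstacle and cannot be obtained by the crude scheme above, which would only give $\langle t\rangle^{-1}$. The plan is to interpolate: one has $\langle t\rangle^{-1}$ decay of $\partial_y(\mathrm{e}^{\mathrm{i}tb}\psi)$ with one derivative of the profile and, from the wave-operator / resolvent structure (Proposition \ref{lem:rayleigh} and the depletion machinery of Section \ref{sec:invisciddamping}), a stronger decay — close to $\langle t\rangle^{-3/2}$ — when one is willing to invest a bounded second derivative of the profile or to exploit the vanishing of $\psi$-derivatives near $b'=0$. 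Interpolating between these two endpoints, with the exponent $1.2$ chosen well inside the admissible range (hence the remark that the sharp rate should be nearer $t^{-3/2}$), yields \eqref{est:pareitbpsi}. Making this interpolation rigorous requires a careful frequency decomposition of the profile adapted to the degeneracy of $b'$ near $y=0,\pi$, and tracking how each piece is damped; this localized analysis near the critical points, rather than the global energy estimate, is where the main technical difficulty lies. The remaining routine verifications — that the weights in the coercivity inequality can be inserted without loss, that the conjugated operators $\Delta_{k,\pm1}^{-1}$ are bounded uniformly in $k$, and that all constants are independent of $k$ and $t$ — are standard and I would relegate them to the body of Section \ref{sec:invisciddamping}.
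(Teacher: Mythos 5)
Your opening step---the almost conserved quantity $\omega_1=\Delta_{(t)}\omega$, the identity $(\partial_t+\mathrm{i}B)\omega_1=-4tk^2\Lambda_3\omega$, and a bootstrap in time leading to \eqref{eq17}---is essentially the paper's route (Lemmas \ref{lem1-dec}, \ref{lem:t2Lamom-om1}, \ref{lem2b}), with one correction: the coercive inequality of Proposition \ref{prop:coer} is proved only for $s\in[0,0.4]$ (indeed $\delta(1)<0$), so ``applying it with $s=1$'' is not available; it is also unnecessary, since the $H^2_k$ control of the profile comes from rewriting $\omega_1=(\partial_y-\mathrm{i}tA)^2\omega-k^2\omega-t^2(1-B^2-A^2)\omega$ and comparing $(\partial_y-\mathrm{i}tA)^2$ with $(\partial_y+\mathrm{i}tb')^2$. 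Note also that closing the $L^2$ bound on $\omega_1$ without loss uses the space--time estimate for $\mathrm{e}^{-\mathrm{i}tB}$ (Lemma \ref{prop:timespace-B-0}, i.e.\ the resolvent bound); a plain $L^1_t$ Duhamel estimate, which your sketch implicitly invokes, loses a logarithm.

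The genuine gaps are in your treatment of the pointwise bounds. First, \eqref{eq12}--\eqref{est:vor-dep-2} cannot be obtained by the stationary-phase scheme you describe: near the stationary points of the phase $tb(y')$ the contribution is of size $t^{-1/2}$ times the amplitude at the critical point, so to reach $t^{-2}$ for $\psi$ and $t^{-1}$ for $\omega(t,y_c)$ you must already know the amplitude (the vorticity) is depleted there---but that vanishing is exactly the conclusion \eqref{est:vor-dep-1} you are trying to prove, so the argument is circular; moreover \eqref{est:vor-dep-1} at the critical point is a statement about the solution itself, not about an oscillatory integral, and cannot be produced by integration by parts. You also run out of derivatives: the profile is controlled only in $H^2_k$, and the Green kernel of $\Delta_k$ has a kink at $y'=y$. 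The paper's mechanism near the critical points is instead an ODE analysis of $L=(\partial_y+\mathrm{i}tb')\partial_y$: one forms $\psi_1=(L-\mathrm{i}t)\psi$, $\psi_2=(L+2\mathrm{i}t)\psi$, shows $(L+\mathrm{i}t(b+1))\psi_1$ and $(L+\mathrm{i}t(b-2))\psi_2$ are small, and inverts these operators through explicit Green's functions (Lemmas \ref{lem5}, \ref{lem5a} and Appendix C); that is where the depletion is generated. Second, your plan for \eqref{est:pareitbpsi} interpolates against an endpoint that does not exist: no $t^{-3/2}$-type bound for $\partial_y(\mathrm{e}^{\mathrm{i}tb}\psi)$ is ever established (the paper states this rate is conjectural). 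The exponent $1.2$ in fact comes from the fractional coercivity at $s=0.4$: Lemma \ref{lem:t2Lamom-om1} gives $t\|(\partial_y-\mathrm{i}tA)\psi\|_{H_k^{1-s}}\lesssim\|\omega_1\|_{H_k^{-s}}$, the Duhamel formula gives $\|\omega_1\|_{H_k^{-1}}\lesssim (k^2+t)^{-1}\ln(t/k^2+2)\|f\|_{H_k^3}$, and interpolation in $s$ together with $H^{0.6}\hookrightarrow L^\infty$ produce the $t^{-1.2}$ rate; this is precisely why Proposition \ref{prop:coer} is proved for fractional $s$ rather than only $s=0$, an ingredient your proposal does not supply.
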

 
 \subsection{Construction of vector fields and coercive estimate}
 
 We introduce the operators
 \beno
 &&\Lambda_1=[\Delta_k,B],\quad  \Lambda_2=[\Lambda_1,B],\quad  \Delta_{k,s}=\mathrm{e}^{-\mathrm{i}sy}\Delta_k\mathrm{e}^{\mathrm{i}sy}= (\partial_y+\mathrm{i}s)^2-k^2,\\
 &&\Lambda_3=(1-\Delta_k^{-1})\Delta_{k,-1}^{-1}\Delta_{k,1}^{-1},\quad \Delta_{(t)}=\Delta_k+\mathrm{i}t\Lambda_1 -t^2(1-B^2).  
 \eeno

 \begin{lemma}\label{lem1-dec}
It holds that 
\beno 
\Lambda_2=2(1-B^2)-4k^2\Lambda_3.
\eeno
 Let $\omega_1=\Delta_{(t)}\omega$ and $\omega=\mathrm{e}^{-\mathrm{i}tB}f$. Then we have
 \beno
  (\partial_t+\mathrm{i}B)\omega_1=-4tk^2\Lambda_3\omega.
  \eeno
   \end{lemma}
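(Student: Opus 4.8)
The plan is to verify the two claims by direct operator computation, exploiting the algebraic structure of $B = \cos y\,(1+\Delta_k^{-1})$ together with the commutator identities for $\Delta_k$ and multiplication operators. First I would establish the identity $\Lambda_2 = 2(1-B^2) - 4k^2\Lambda_3$. Write $b=\cos y$, $a=\sin y$, so $b'=-a$, $b''=-b$, and $B = b + b\Delta_k^{-1}$ (as an operator, meaning $f \mapsto bf + b\Delta_k^{-1}f$ — I will need to be careful that $b$ here is the multiplication operator). Compute $\Lambda_1 = [\Delta_k, B]$: since $\Delta_k$ commutes with itself and with $\Delta_k^{-1}$, and $[\Delta_k, b\,\cdot\,] = b'' + 2b'\partial_y = -b - 2a\partial_y$ acting on the first summand, and for the second summand $[\Delta_k, b\Delta_k^{-1}] = [\Delta_k, b]\Delta_k^{-1} = (-b-2a\partial_y)\Delta_k^{-1}$. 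So $\Lambda_1 = (-b - 2a\partial_y)(1+\Delta_k^{-1})$. Then compute $\Lambda_2 = [\Lambda_1, B]$. This is the main algebraic step: one expands the nested commutator, using $[\partial_y, b] = b' = -a$ and $[\partial_y, a] = a' = b$, and the fact that $\Delta_k^{-1}$ conjugates nicely. The key simplification I expect is that after collecting terms, the "local" (differential-operator) part produces $2(1-b^2)$-type contributions, while the "nonlocal" part condenses into $-4k^2$ times the operator $\Lambda_3 = (1-\Delta_k^{-1})\Delta_{k,-1}^{-1}\Delta_{k,1}^{-1}$; here one uses that $\cos y = \tfrac12(\mathrm{e}^{\mathrm{i}y}+\mathrm{e}^{-\mathrm{i}y})$ so that $B^2$ and commutators with $B$ naturally bring in the shifted Laplacians $\Delta_{k,\pm1} = \mathrm{e}^{\mp\mathrm{i}y}\Delta_k\mathrm{e}^{\pm\mathrm{i}y}$, and the identity $\Delta_{k,1}\Delta_{k,-1} = (\Delta_k+1)^2 + \text{(lower order)}$ or a similar factorization that exposes the $k^2$ prefactor. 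The factor $(1-\Delta_k^{-1})$ arises because conjugation by $\mathrm{e}^{\pm\mathrm{i}y}$ acting on $(1+\Delta_k^{-1})$ produces $(1+\Delta_{k,\pm1}^{-1})$, and the difference telescopes.

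For the second claim, I would differentiate $\omega_1 = \Delta_{(t)}\omega = (\Delta_k + \mathrm{i}t\Lambda_1 - t^2(1-B^2))\omega$ in time. Since $\omega = \mathrm{e}^{-\mathrm{i}tB}f$ solves $\partial_t\omega = -\mathrm{i}B\omega$, we get
\begin{align*}
(\partial_t + \mathrm{i}B)\omega_1 &= \big(\mathrm{i}\Lambda_1 - 2t(1-B^2)\big)\omega + \Delta_{(t)}(\partial_t\omega) + \mathrm{i}B\Delta_{(t)}\omega\\
&= \big(\mathrm{i}\Lambda_1 - 2t(1-B^2)\big)\omega + [\mathrm{i}B, \Delta_{(t)}]\omega.
\end{align*}
Now expand $[\mathrm{i}B,\Delta_{(t)}] = \mathrm{i}[B,\Delta_k] + \mathrm{i}t\,\mathrm{i}[B,\Lambda_1] - \mathrm{i}t^2[B, 1-B^2] = -\mathrm{i}\Lambda_1 - t[B,\Lambda_1] - 0 = -\mathrm{i}\Lambda_1 + t\Lambda_2$, using $[B,\Delta_k] = -\Lambda_1$, $[B,\Lambda_1] = -[\Lambda_1,B] = -\Lambda_2$, and $[B, B^2]=0$. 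Substituting, the $\mathrm{i}\Lambda_1$ terms cancel and we obtain $(\partial_t+\mathrm{i}B)\omega_1 = -2t(1-B^2)\omega + t\Lambda_2\omega = t\big(\Lambda_2 - 2(1-B^2)\big)\omega = -4tk^2\Lambda_3\omega$ by the first part. So the two statements are linked, and the second follows almost immediately once the first is in hand.

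The main obstacle is purely computational: correctly evaluating the nested commutator $\Lambda_2 = [[\Delta_k,B],B]$ and recognizing that the resulting expression is exactly $2(1-B^2) - 4k^2\Lambda_3$. The danger points are (i) keeping track of whether $b$, $b'$, $b''$ denote functions or multiplication operators in each commutator, (ii) the interplay between $\Delta_k^{-1}$ and conjugation by $\mathrm{e}^{\pm\mathrm{i}y}$ — one must use $\mathrm{e}^{\mp\mathrm{i}y}\Delta_k^{-1}\mathrm{e}^{\pm\mathrm{i}y} = \Delta_{k,\pm1}^{-1}$ and expand $\cos y$ into exponentials carefully so that products $B^2 = \tfrac14(\mathrm{e}^{\mathrm{i}y}+\mathrm{e}^{-\mathrm{i}y})(1+\Delta_k^{-1})(\mathrm{e}^{\mathrm{i}y}+\mathrm{e}^{-\mathrm{i}y})(1+\Delta_k^{-1})$ reorganize into the shifted operators, and (iii) correctly extracting the $k^2$ factor — this comes from an identity of the form $\Delta_{k,1}^{-1} + \Delta_{k,-1}^{-1} - 2\Delta_k^{-1}$ or a related second-difference that, when multiplied back by the relevant second-order operators, leaves behind $-4k^2$ times a bounded-looking combination. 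I would do this computation on the Fourier side in $y$ (writing everything in terms of $\mathrm{e}^{\mathrm{i}ny}$ modes) if the operator manipulation becomes unwieldy, since there $\Delta_k$ is diagonal, multiplication by $\mathrm{e}^{\pm\mathrm{i}y}$ is a shift, and the identity reduces to a finite algebraic verification mode-by-mode. Once $\Lambda_2 = 2(1-B^2) - 4k^2\Lambda_3$ is confirmed, the evolution equation for $\omega_1$ is a short cancellation as sketched above.
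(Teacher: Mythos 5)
Your route coincides with the paper's: compute $\Lambda_1$ from the commutator structure, reduce $\Lambda_2=[\Lambda_1,B]$ via the exponential splitting of $\cos y$ into the shifted Laplacians $\Delta_{k,\pm1}$, and then obtain the evolution identity by a short cancellation. Your treatment of the second claim is complete and correct: writing $(\partial_t+\mathrm{i}B)\omega_1=\big(\mathrm{i}\Lambda_1-2t(1-B^2)\big)\omega+[\mathrm{i}B,\Delta_{(t)}]\omega$ and evaluating $[\mathrm{i}B,\Delta_{(t)}]=-\mathrm{i}\Lambda_1+t\Lambda_2$ is just a repackaging of the paper's computation of $(\partial_t+\mathrm{i}B)$ applied to each piece of $\Delta_{(t)}\omega$, and the cancellation you obtain is exactly the paper's (the step $[B,1-B^2]=0$ and the use of $(\partial_t+\mathrm{i}B)\omega=0$ are both legitimate).

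The gap is in the first identity, which is the real content of the lemma: you state that you ``expect'' the nested commutator to reorganize into $2(1-B^2)-4k^2\Lambda_3$, but you never carry the computation out. The paper does it in three concrete steps that your sketch would need to supply: (i) $\Lambda_1=-2A\partial_y-B$, whence $\Lambda_2=2A^2-2[A,B]\partial_y$; (ii) the exponential splitting gives the closed formulas $[A,B]=2\partial_y\Delta_{k,1}^{-1}\Delta_{k,-1}^{-1}(1+\Delta_k^{-1})$ and $A^2+B^2=\big(2+\Delta_{k,-1}^{-1}+\Delta_{k,1}^{-1}\big)(1+\Delta_k^{-1})/2$; (iii) the algebraic identities $\Delta_{k,-1}+\Delta_{k,1}-4\partial_y^2=-2(\Delta_k+1+2k^2)$ and $\Delta_{k,1}\Delta_{k,-1}=(\Delta_k+1)^2+4k^2=(\Delta_k+1)(\Delta_k+1+2k^2)-2k^2(\Delta_k-1)$. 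It is this last factorization that produces both the $k^2$ prefactor and the factor $1-\Delta_k^{-1}=\Delta_k^{-1}(\Delta_k-1)$ appearing in $\Lambda_3$ --- not, as you suggest, a telescoping of conjugated $(1+\Delta_k^{-1})$ factors, so that heuristic should be dropped or corrected. Your fallback of verifying the operator identity mode-by-mode on the Fourier side would indeed settle it, but as written the central identity is asserted rather than proved, so the proposal is incomplete on precisely the point the lemma is about.
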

   \begin{proof}
   First, we have $[\partial_y,B]=-A,$ $[\partial_y,A]=B$, $\Delta_k=\partial_y^2-k^2 $ 
   and 
   \begin{align}\label{lam1}
      &\Lambda_1=[\Delta_k,B]=[\partial_y,B]\partial_y+\partial_y[\partial_y,B]=-A\partial_y- \partial_y A=-2A\partial_y-B=-2\partial_yA+B,
   \end{align}
   and (see \cite{WZ-SCM})
   \begin{align}\label{def:[AB]}
       [A,B]&=\big(\sin y\Delta_k^{-1}\cos y- \cos y\Delta_k^{-1}\sin y\big)(1+\Delta_k^{-1})\nonumber\\
      &=\dfrac{1}{4\mathrm{i}}\left( (\mathrm{e}^{\mathrm{i}y}-\mathrm{e}^{-\mathrm{i}y}) \Delta_k^{-1} (\mathrm{e}^{\mathrm{i}y}+\mathrm{e}^{-\mathrm{i}y})-(\mathrm{e}^{\mathrm{i}y}+ \mathrm{e}^{-\mathrm{i}y}) \Delta_k^{-1} (\mathrm{e}^{\mathrm{i}y}-\mathrm{e}^{-\mathrm{i}y})\right)(1+\Delta_k^{-1})\nonumber\\
      &=\dfrac{1}{2\mathrm{i}}\left( \mathrm{e}^{\mathrm{i}y} \Delta_k^{-1} \mathrm{e}^{-\mathrm{i}y}-\mathrm{e}^{-\mathrm{i}y} \Delta_k^{-1} \mathrm{e}^{\mathrm{i}y}\right)(1+\Delta_k^{-1})=\dfrac{1}{2\mathrm{i}}( \Delta_{k,-1}^{-1}-\Delta_{k,1}^{-1})(1+\Delta_k^{-1})\nonumber\\
      &=\dfrac{1}{2\mathrm{i}}(\Delta_{k,1}-\Delta_{k,-1})\Delta_{k,1}^{-1}\Delta_{k,-1}^{-1}(1+\Delta_k^{-1}) =2\partial_y\Delta_{k,1}^{-1}\Delta_{k,-1}^{-1}(1+\Delta_k^{-1}),
   \end{align}
   which yields 
   \begin{align}
     &\Lambda_2=[\Lambda_1,B]=-2[A,B]\partial_y+2A^2=-4\partial_y^2\Delta_{k,1}^{-1}\Delta_{k,-1}^{-1} (1+\Delta_k^{-1})+2A^2.\label{def:Lam2} 
   \end{align}
   
   Now we calculate $\Lambda_2\omega$.
   Since $\Delta_{k,s}=\mathrm{e}^{-\mathrm{i}sy}\Delta_k\mathrm{e}^{\mathrm{i}sy}= (\partial_y+\mathrm{i}s)^2-k^2$, we have
   \begin{align}
      & B^2+A^2=(1+\cos y\Delta_k^{-1}\cos y+\sin y\Delta_k^{-1}\sin y)(1+\Delta_k^{-1}),\nonumber\\
      &4(\cos y\Delta_k^{-1}\cos y+\sin y\Delta_k^{-1}\sin y)\nonumber\\
      &=(\mathrm{e}^{\mathrm{i}y}+\mathrm{e}^{-\mathrm{i}y})\Delta_k^{-1}(\mathrm{e}^{\mathrm{i}y}+ \mathrm{e}^{-\mathrm{i}y})- (\mathrm{e}^{\mathrm{i}y}-\mathrm{e}^{-\mathrm{i}y})\Delta_k^{-1}(\mathrm{e}^{\mathrm{i}y} -\mathrm{e}^{-\mathrm{i}y})\nonumber\\
      &=2\mathrm{e}^{iy}\Delta_k^{-1}\mathrm{e}^{-\mathrm{i}y}+2\mathrm{e}^{-\mathrm{i}y}\Delta_k^{-1}\mathrm{e}^{\mathrm{i}y} 
   =2\Delta_{k,-1}^{-1}+2\Delta_{k,1}^{-1},\nonumber\\
      \Rightarrow \quad& B^2+A^2=(2+\Delta_{k,-1}^{-1}+\Delta_{k,1}^{-1})(1+\Delta_k^{-1})/2,\label{est:A2+B2}
   \end{align}
   which along with \eqref{def:Lam2} gives
   \begin{align*}
       \Lambda_2+2B^2=&-4\partial_y^2\Delta_{k,1}^{-1}\Delta_{k,-1}^{-1}(1+\Delta_k^{-1})+2(A^2+B^2)\\
      =&-4\partial_y^2\Delta_{k,1}^{-1}\Delta_{k,-1}^{-1}(1+\Delta_k^{-1}) +(2+\Delta_{k,-1}^{-1}+\Delta_{k,1}^{-1})(1+\Delta_k^{-1})\\
      =&(2+(\Delta_{k,-1}+\Delta_{k,1}-4\partial_y^2)\Delta_{k,-1}^{-1}\Delta_{k,1}^{-1})(1+\Delta_k^{-1})\\
      =&2(1-(\Delta_{k}+1+2k^2)\Delta_{k,-1}^{-1} \Delta_{k,1}^{-1})(1+\Delta_k^{-1})\\
      =&2\big(1+\Delta_k^{-1}-\Delta_k^{-1}(\Delta_k+1)(\Delta_{k}+1+2k^2)\Delta_{k,-1}^{-1} \Delta_{k,1}^{-1}\big)\\
      =&2\big(1-2k^2\Delta_k^{-1}(\Delta_k-1)\Delta_{k,-1}^{-1} \Delta_{k,1}^{-1}\big)=2-4k^2\Lambda_3,
   \end{align*}
   This shows $\Lambda_2=2(1-B^2)-4k^2\Lambda_3$. Here we have used the facts that 
  \begin{align}\label{L1-1}
      & \Delta_{k,-1}+\Delta_{k,1}-4\partial_y^2=2(\Delta_{k}-1)-4(\Delta_{k}+k^2)=-2(\Delta_{k}+1+2k^2),
   \end{align}
   and 
   \begin{align}
      \label{L2}\Delta_{k,1}\Delta_{k,-1}=&(\Delta_{k}-1+2\mathrm{i}\partial_y)(\Delta_{k}-1-2\mathrm{i}\partial_y)=(\Delta_{k}-1)^2+4\partial_y^2\nonumber\\
    \notag  =&(\Delta_{k}-1)^2+4(\Delta_{k}+k^2)=(\Delta_{k}+1)^2+4k^2\\=&(\Delta_k+1)(\Delta_{k}+1+2k^2)-2k^2(\Delta_{k}-1).
   \end{align}   
   
 As $\omega=\mathrm{e}^{-\mathrm{i}tB}f$, we have
   \begin{align}
      &\partial_t\omega=- \mathrm{i}B\omega,\quad\partial_t\Delta_k\omega=-\Delta_k\mathrm{i}B\omega=-\mathrm{i}B\Delta_k\omega -\mathrm{i}[\Delta_k,B]\omega,\nonumber\\
      &\partial_t\Lambda_1\omega=-\Lambda_1\mathrm{i}B\omega=-\mathrm{i}B\Lambda_1\omega-\mathrm{i}[\Lambda_1,B]\omega, \label{def:Lam1}
   \end{align} 
   This shows 
   \begin{align}
      &(\partial_t+\mathrm{i}B)\Delta_k\omega=-\mathrm{i}\Lambda_1\omega,\quad (\partial_t+\mathrm{i}B)\Lambda_1\omega=-\mathrm{i}\Lambda_2\omega,\nonumber
   \end{align}
   and then 
\begin{align}
&(\partial_t+\mathrm{i}B)(\Delta_k\omega+\mathrm{i}t\Lambda_1\omega)=t\Lambda_2\omega =2t(1-B^2)\omega-4tk^2\Lambda_3\omega,\nonumber\\
   & (\partial_t+\mathrm{i}B)(\Delta_k\omega+\mathrm{i}t\Lambda_1\omega -t^2(1-B^2)\omega )=-4tk^2\Lambda_3\omega, \nonumber
\end{align}
which gives $(\partial_t+\mathrm{i}B)\omega_1=-4tk^2\Lambda_3\omega$.
\end{proof}

The following striking coercive estimate constitutes a crucial ingredient in our proof. Given that the proof is rather technical, it will be presented in a separate subsection.

\begin{Proposition}\label{prop:coer}
   For $s\in[0,0.4]$, we have
   \begin{align*}
      &\Lambda_1(-\Delta_k)^{s}\Lambda_1+2(1-B^2)(-\Delta_k)^{1+s}+2(-\Delta_k)^{1+s}(1-B^2)\geq 4k^2A(-\Delta_k)^{s}A+\delta(s)(-\Delta_k)^{s}.
   \end{align*}
   Here $\delta(s):=4-\max((8s^2+7s+2)s/2,8s^2+8s-1)\in[0.52,4]$.
 \end{Proposition}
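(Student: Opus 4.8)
It suffices to prove $\langle \mathcal{T}f,f\rangle\geq 0$ for every real-valued trigonometric polynomial $f$ on $\T_{2\pi}$, where $\mathcal{T}$ is the difference of the two sides of the asserted inequality; since every operator involved (built from $\partial_y$, multiplication by $\cos y$ and $\sin y$, and functions of $\Delta_k$) maps real functions to real functions, this quadratic form is real. I would then expand in the Fourier basis $\{\mathrm{e}^{\mathrm{i}ny}\}_{n\in\Z}$, in which $-\Delta_k$ acts diagonally by $\mu_n:=n^2+k^2$, $1+\Delta_k^{-1}$ by $c_n:=(\mu_n-1)/\mu_n>0$ (recall $|k|>1$), and multiplication by $\cos y$ or $\sin y$ shifts $n\mapsto n\pm1$. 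Hence $A=\sin y(1+\Delta_k^{-1})$, $B=\cos y(1+\Delta_k^{-1})$, $\Lambda_1=[\Delta_k,B]=-2A\partial_y-B$, and every operator occurring in $\mathcal{T}$ becomes a banded operator on $\ell^2(\Z)$ with entries at distance $\le 2$ and completely explicit rational coefficients in $n,k$; the inequality collapses to a scalar estimate for the Fourier coefficients $(f_n)$.

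\textbf{The decomposition (Lemma \ref{lem8}).} Before expanding, I would first exploit the algebra set up in Lemma \ref{lem1-dec} and its proof — namely $\Lambda_1=-2A\partial_y-B=-2\partial_yA+B$, the commutator identity $[A,B]=2\partial_y\Delta_{k,1}^{-1}\Delta_{k,-1}^{-1}(1+\Delta_k^{-1})$, and formula \eqref{est:A2+B2} for $B^2+A^2$ — to rewrite $\mathcal{T}$ \emph{exactly} as a nonnegative part plus a lower-order remainder. The guiding point is that in $\Lambda_1(-\Delta_k)^{s}\Lambda_1$ the two copies of $A\partial_y$ produce, after commuting the $\partial_y$'s and $(-\Delta_k)^{s}$ past $A$ and $B$, a leading term built from $A(-\Delta_k)^{s}A$ carrying a factor $\partial_y^2=\Delta_k+k^2$: the $\Delta_k$-part combines with the weight $(-\Delta_k)^{1+s}$ attached to the $(1-B^2)$ terms, while the $k^2$-part cancels the dangerous $4k^2A(-\Delta_k)^{s}A$ on the right. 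What survives is a remainder operator $\mathcal{R}_s$, a finite sum of terms of the shape $\sigma(\Delta_k)\,\mathrm{e}^{\pm\mathrm{i}jy}\,\sigma'(\Delta_k)$ with $|j|\le2$, and the inequality reduces to $\langle\mathcal{R}_s f,f\rangle\geq\delta(s)\langle(-\Delta_k)^{s}f,f\rangle$. Packaging this exact rewriting is the content of Lemma \ref{lem8}.

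\textbf{Reduction to the scalar estimate (Lemma \ref{lem9}).} Expanding $\mathcal{R}_s$ in Fourier series turns the bound into an inequality of the form
\[
\sum_{n}a_n|f_n|^2+\sum_{n}b_n\,\mathrm{Re}\big(f_n\overline{f_{n+2}}\big)+\sum_{n}d_n\,\mathrm{Re}\big(f_n\overline{f_{n+1}}\big)\ \geq\ \delta(s)\sum_{n}\mu_n^{s}|f_n|^2,
\]
where $a_n,b_n,d_n$ are explicit in $\mu_n,\mu_{n\pm1},\mu_{n\pm2}$ and $\mu_n^{s}$. The off-diagonal sums are absorbed by Cauchy--Schwarz with weights proportional to $\mu_n^{s/2}$ times a correction factor tuned so that essentially no constant is wasted; after this step the estimate becomes a purely pointwise comparison between two explicitly defined positive sequences (in essence $\mu_n^{s}$ against $\mu_n-1$), to hold for all $n\in\Z$ and all $|k|>1$. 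Carrying out this comparison separately for the finitely many low modes $|n|\le2$ (using convexity and monotonicity of $n\mapsto\mu_n^{s}$) and for large $|n|$ (via an expansion in $1/\mu_n$) is exactly Lemma \ref{lem9}, and this is where the restriction $s\in[0,0.4]$ and the two-branch formula $\delta(s)=4-\max\big((8s^2+7s+2)s/2,\ 8s^2+8s-1\big)$ emerge — the first argument of the maximum being binding when the $\mathrm{Re}(f_n\overline{f_{n+2}})$ terms dominate, the second when the $\mathrm{Re}(f_n\overline{f_{n+1}})$ terms do.

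\textbf{Main obstacle.} The crux is to perform the decomposition of Lemma \ref{lem8} and the Cauchy--Schwarz splitting of Lemma \ref{lem9} simultaneously \emph{sharply}: since one cannot afford to lose more than $\delta(s)$, the auxiliary weights in the cross-term estimates must be optimized essentially exactly, and the resulting scalar inequality has to hold with no slack for every $n$ and every admissible $k$ — positivity that is genuinely tight near the endpoint $s=0.4$ and near the lowest modes. Organizing this verification, and checking that the two branches of $\delta(s)$ cross over at the correct value of $s$, is the technical heart of the argument.
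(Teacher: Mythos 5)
Your overall outline---an exact operator decomposition isolating $4k^2A(-\Delta_k)^{s}A$, followed by a Fourier-side scalar inequality---matches the paper's, but your proposed execution diverges at the decisive step and leaves a genuine gap. In the paper's Lemma \ref{lem8} the difference of the two sides is rewritten exactly as $4AH_0A+H_*$, where, crucially, both $H_0=((-\Delta_{k,-1})^{1+s}+(-\Delta_{k,1})^{1+s})/2+(\partial_y^2+1/4)(-\Delta_k)^{s}$ and the remainder $H_*$ are pure Fourier multipliers in $y$: every factor $\mathrm{e}^{\pm\mathrm{i}y}$ coming from $A$, $B$, $\Lambda_1$ is absorbed into the shifted Laplacians $\Delta_{k,\pm1}$ and into $H_{\pm1}=\mathrm{e}^{\mp\mathrm{i}y}H_0\mathrm{e}^{\pm\mathrm{i}y}$. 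Consequently the quadratic form of $H_*$ is diagonal in the basis $\mathrm{e}^{\mathrm{i}ny}$, and Lemma \ref{lem9} is literally the pointwise statement $b_n\ge k^2a_n^s$ (which yields $4AH_0A\ge 4k^2A(-\Delta_k)^{s}A$ by sandwiching; this is an inequality proved by convexity of $a\mapsto a^{1+s}$, not a cancellation of the $k^2$ part of $\partial_y^2$ as in your heuristic) together with $c_n\ge\delta(s)a_n^s$. There are no cross terms $\mathrm{Re}(f_n\overline{f_{n+1}})$ or $\mathrm{Re}(f_n\overline{f_{n+2}})$ left to absorb and no Cauchy--Schwarz step at all; the two branches of $\delta(s)=4-\max((8s^2+7s+2)s/2,\,8s^2+8s-1)$ arise inside the scalar proof of $c_n\ge\delta(s)a_n^s$ as the quantities $q_1/2$ and $q_0$ bounding the coefficients of a geometric series in $1/\lambda$, $\lambda=n^2+k^2+1$, not from a competition between distance-one and distance-two couplings.

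The gap in your route is exactly the step you defer to ``optimization'': if your remainder $\mathcal{R}_s$ still contains factors $\mathrm{e}^{\pm\mathrm{i}jy}$, you must absorb the resulting off-diagonal sums by weighted Cauchy--Schwarz without losing more than the sharp constant $\delta(s)$, and nothing in your sketch shows this is possible. Generically such an absorption does lose a constant, and to recover the precise $\delta(s)$ you would have to tune the weights so as to reproduce exactly the completion of squares that Lemma \ref{lem8} encodes---i.e., to rediscover the diagonalizing decomposition rather than bypass it. Moreover, your attribution of the two branches of the maximum to which cross terms dominate is not how the formula arises, which is a sign that the proposed reduction would not land on the stated constant. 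Finally, the claim that the nonnegative part is obtained because ``the $k^2$-part cancels $4k^2A(-\Delta_k)^{s}A$'' is not an identity: the comparison $H_0\ge k^2(-\Delta_k)^{s}$ needs the averaged shifted Laplacians to dominate the negative term $-n^2a_n^{s}$, and proving this together with $c_n\ge\delta(s)a_n^{s}$ is where essentially all the work of the paper's proof lies. As written, your proposal identifies the right landmarks but supplies neither the key algebraic identity nor a substitute argument yielding the stated $\delta(s)$.
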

 
 Based on Proposition \ref{prop:coer},  we can prove the following key lemma.

 \begin{lemma}\label{lem:t2Lamom-om1}
   Let $\psi=\Delta_k^{-1}\omega, \widetilde{\psi}=(-\Delta_k)^{-s}\psi$, and $\omega_1=\Delta_{(t)}\omega$. Then it holds that  for $s\in[0,0.4]$,
   \begin{align*}
     |\langle\omega_1,\widetilde{\psi}\rangle_*|\geq& \|(-\Delta_k)^{s/2}(\Delta_k\widetilde{\psi}+\mathrm{i}t\Lambda_1\widetilde{\psi}/2)\|_{*}^2 \\& +t^2k^2\|(-\Delta_k)^{s/2}A\widetilde{\psi}\|_{*}^2+\delta(s)t^2\|(-\Delta_k)^{s/2}\widetilde{\psi}\|_{*}^2/4,
   \end{align*}
   and
      \begin{align*}
    &t^2\|\psi\|_{H_k^{-s}}\leq C\|\omega_1\|_{H_k^{-s}}\,\, \forall\ t\geq 0,\quad t\|(\partial_y-\mathrm{i}tA)\psi\|_{H_k^{1-s}}\leq C\|\omega_1\|_{H_k^{-s}}\,\, \forall\ t\geq k^2.
   \end{align*}   \end{lemma}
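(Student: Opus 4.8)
The plan is to exploit the almost-conservation law for $\omega_1$ from Lemma \ref{lem1-dec} together with the coercive estimate of Proposition \ref{prop:coer}, transported to the level of $\widetilde\psi=(-\Delta_k)^{-s}\psi$. First I would establish the pointwise-in-time lower bound for $\langle \omega_1,\widetilde\psi\rangle_*$. Writing $\omega=\Delta_k\psi$ and $\omega_1=\Delta_{(t)}\omega=(\Delta_k+\mathrm{i}t\Lambda_1-t^2(1-B^2))\Delta_k\psi$, I would pair against $\widetilde\psi$ in the inner product $\langle\cdot,\cdot\rangle_*$, using that $B$ (hence $1-B^2$) is $*$-symmetric and that $\Lambda_1=[\Delta_k,B]$ is $*$-antisymmetric, while $(-\Delta_k)^{-s}$ commutes with $\Delta_k$ and is $*$-symmetric and positive. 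The real part of $\langle\omega_1,\widetilde\psi\rangle_*$ splits into $\|(-\Delta_k)^{s/2}\Delta_k\widetilde\psi\|_*^2$ coming from the $\Delta_k\Delta_k\psi$ term (after moving one $(-\Delta_k)^{-s}$), an $\mathrm{Im}$-type cross term $t\,\mathrm{Im}\langle\Lambda_1\cdots\rangle$, and the quadratic form $t^2\langle(1-B^2)(-\Delta_k)^{-s}\Delta_k\psi,\Delta_k\psi\rangle_*$. The standard trick is to complete the square: combine $\|(-\Delta_k)^{s/2}\Delta_k\widetilde\psi\|_*^2$ with the cross term and part of the $t^2$ term to produce $\|(-\Delta_k)^{s/2}(\Delta_k\widetilde\psi+\mathrm{i}t\Lambda_1\widetilde\psi/2)\|_*^2$, leaving behind $-\tfrac{t^2}{4}\langle\Lambda_1(-\Delta_k)^{s}\Lambda_1\widetilde\psi,\widetilde\psi\rangle$ plus the full $t^2(1-B^2)$ contribution — which is exactly the combination controlled from below by Proposition \ref{prop:coer} (applied with $\widetilde\psi$ in place of the test function, using symmetry to symmetrize $(1-B^2)(-\Delta_k)^{1+s}$). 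This yields the displayed inequality with the $t^2k^2\|(-\Delta_k)^{s/2}A\widetilde\psi\|_*^2$ and $\delta(s)t^2\|(-\Delta_k)^{s/2}\widetilde\psi\|_*^2/4$ remainders.

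Second, from the almost-conservation law $(\partial_t+\mathrm{i}B)\omega_1=-4tk^2\Lambda_3\omega$ I would derive that $\|\omega_1\|_{H^{-s}_k}$ is essentially non-increasing up to a controllable source. Pairing the evolution equation against a suitable $*$-symmetric weight and using that $\mathrm{i}B$ generates a $*$-unitary flow, the time derivative of $\|\omega_1\|_*^2$-type quantities is governed by $tk^2\langle\Lambda_3\omega,\cdot\rangle$; since $\Lambda_3=(1-\Delta_k^{-1})\Delta_{k,-1}^{-1}\Delta_{k,1}^{-1}$ is smoothing of order two in each factor, $\Lambda_3\omega$ decays like a stream-function quantity, and the $t$-integral of $tk^2\|\Lambda_3\omega\|$ will be bounded by $\|\omega_1(0)\|=\|\Delta_k\omega_{0,k}\|$-type norms. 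Actually, for the present lemma the cleanest route is to note that the first displayed inequality already gives, at each time $t$, that all three nonnegative pieces are bounded by $|\langle\omega_1,\widetilde\psi\rangle_*|\le \|\omega_1\|_{H^{-s}_k}\|\widetilde\psi\|_{H^{2-s}_k}$ after recognizing $\langle\cdot,\cdot\rangle_*$-duality; more precisely $\|(-\Delta_k)^{s/2}\widetilde\psi\|_*\sim\|\psi\|_{H^{-s}_k}$ (up to the harmless $1+\Delta_k^{-1}$ factor, which is comparable to the identity since $|k|>1$). Taking just the last term on the right, $t^2\|\psi\|_{H^{-s}_k}^2\lesssim |\langle\omega_1,\widetilde\psi\rangle_*|\lesssim \|\omega_1\|_{H^{-s}_k}\|\widetilde\psi\|_{H^{2-s}_k}$, and since $\|\widetilde\psi\|_{H^{2-s}_k}=\|(-\Delta_k)^{-s}\psi\|_{H^{2-s}_k}\sim\|\Delta_k\psi\|_{H^{-s}_k}=\|\omega\|_{H^{-s}_k}$... here one must be slightly careful: we want a closed bound $t^2\|\psi\|_{H^{-s}_k}\le C\|\omega_1\|_{H^{-s}_k}$, so instead I would bound $|\langle\omega_1,\widetilde\psi\rangle_*|\le\|\omega_1\|_{H^{-s}_k}\|(1+\Delta_k^{-1})\widetilde\psi\|_{H^{s}_k}\lesssim\|\omega_1\|_{H^{-s}_k}\|\psi\|_{H^{-s}_k}$, because $(-\Delta_k)^{-s}\psi$ paired in $H^{-s}_k\times H^{s}_k$ duality against $\omega_1$ sees exactly $\|\psi\|_{H^{-s}_k}$ on the second factor. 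Combining with the lower bound $\delta(s)t^2\|\psi\|_{H^{-s}_k}^2/C\le|\langle\omega_1,\widetilde\psi\rangle_*|$ and dividing by $\|\psi\|_{H^{-s}_k}$ gives $t^2\|\psi\|_{H^{-s}_k}\le C\|\omega_1\|_{H^{-s}_k}$ for all $t\ge0$.

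Third, for the "good derivative" bound I would use the first remainder term: $(\partial_y-\mathrm{i}tA)\psi$ is, up to lower-order commutator corrections, $\Lambda_1$-related, since from \eqref{lam1} we have $\Lambda_1=-2A\partial_y-B=-2\partial_y A+B$, so $\Delta_k\widetilde\psi+\tfrac{\mathrm{i}t}{2}\Lambda_1\widetilde\psi$ encodes $(\partial_y-\mathrm{i}tA)$ acting on a derivative of $\widetilde\psi$ modulo the zeroth-order operator $B$ and the nonlocal $\Delta_k^{-1}$ tails. More directly, for $t\ge k^2$ the combination of $\|(-\Delta_k)^{s/2}(\Delta_k\widetilde\psi+\mathrm{i}t\Lambda_1\widetilde\psi/2)\|_*^2$ and $t^2k^2\|(-\Delta_k)^{s/2}A\widetilde\psi\|_*^2$ controls $t^2\|(\partial_y-\mathrm{i}tA)\partial_y\widetilde\psi\|$-type quantities after expanding $\Lambda_1$; the threshold $t\ge k^2$ is what allows the error $k^2\|\Delta_k\widetilde\psi\|$ from commutators and from the $B$-term to be absorbed into $\|(-\Delta_k)^{s/2}\Delta_k\widetilde\psi\|$ with constant. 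Then $t\|(\partial_y-\mathrm{i}tA)\psi\|_{H^{1-s}_k}\le C|\langle\omega_1,\widetilde\psi\rangle_*|^{1/2}\le C\|\omega_1\|_{H^{-s}_k}^{1/2}\|\psi\|_{H^{-s}_k}^{1/2}\le C\|\omega_1\|_{H^{-s}_k}$, where the last step uses the already-proven $\|\psi\|_{H^{-s}_k}\le Ct^{-2}\|\omega_1\|_{H^{-s}_k}\le C\|\omega_1\|_{H^{-s}_k}$ (since $t\ge k^2\ge1$). I expect the main obstacle to be the bookkeeping in the third step: precisely matching the operator $\Delta_k\widetilde\psi+\tfrac{\mathrm{i}t}{2}\Lambda_1\widetilde\psi$ to $t(\partial_y-\mathrm{i}tA)\psi$ in the weighted norm $H^{1-s}_k$, keeping track of the nonlocal $(1+\Delta_k^{-1})$ factors hidden in $A$ and $B$, and verifying that all commutator errors are genuinely of lower order in the regime $t\ge k^2$; the completion-of-the-square identity and the invocation of Proposition \ref{prop:coer} are comparatively mechanical.
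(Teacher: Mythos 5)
Your proposal follows essentially the same route as the paper's proof: complete the square in $\mathrm{Re}\langle\omega_1,\widetilde{\psi}\rangle_*$ using the $*$-antisymmetry of $\Lambda_1$ and invoke Proposition \ref{prop:coer}; obtain $t^2\|\psi\|_{H_k^{-s}}\leq C\|\omega_1\|_{H_k^{-s}}$ by Cauchy--Schwarz against the $\delta(s)$ term; and, for $t\geq k^2$, expand $\Lambda_1=-2\partial_yA+B$ so that $\Delta_k\widetilde{\psi}+\mathrm{i}t\Lambda_1\widetilde{\psi}/2=\partial_y(\partial_y-\mathrm{i}tA)\widetilde{\psi}-k^2\widetilde{\psi}+\mathrm{i}tB\widetilde{\psi}/2$, absorbing the $k^2\widetilde{\psi}$ and $tB\widetilde{\psi}$ errors via $t\geq k^2$ and the already-proven bound on $t^2\|\widetilde{\psi}\|_{H_k^s}$. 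The ``bookkeeping'' you flag at the end is exactly what the paper carries out and it does go through: an elliptic lift from $\|\partial_y(\partial_y-\mathrm{i}tA)\widetilde{\psi}\|_{H_k^{s}}$ to $\|(\partial_y-\mathrm{i}tA)\widetilde{\psi}\|_{H_k^{1+s}}$ (using the $t|k|\,\|A\widetilde{\psi}\|_{H_k^s}$ piece from the coercive bound), followed by the commutator estimate $\|[A,(-\Delta_k)^s]\widetilde{\psi}\|_{H_k^{1-s}}\lesssim\|\widetilde{\psi}\|_{H_k^{s}}$, proved on the Fourier side from $|(n^2+k^2)^s-((n+1)^2+k^2)^s|\lesssim(n^2+k^2)^{s-1/2}$, which is what converts the estimate on $\widetilde{\psi}$ into the stated estimate on $\psi$.
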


 \begin{proof}
 As $\psi=\Delta_k^{-1}\omega$ and $\widetilde{\psi}=(-\Delta_k)^{-s}\psi$, we have $\omega=-(-\Delta_k)^{1+s}\widetilde{\psi}$ and 
  \begin{align*}
     & \mathbf{Re}\langle \omega_1,\tilde{\psi}\rangle_*=-\mathbf{Re}\big\langle \big(\Delta_k+\mathrm{i}t\Lambda_1-t^2(1-B^2)\big)(-\Delta_k)^{1+s}\widetilde{\psi},\tilde{\psi}\big\rangle_*\\
     &=\|(-\Delta_k)^{1+s/2}\widetilde{\psi}\|_{*}^2-\mathbf{Re}\big\langle (-\Delta_k)^{1+s}\widetilde{\psi},\mathrm{i}t\Lambda_1\widetilde{\psi}\big\rangle_{*} +t^2\mathbf{Re}\langle(1-B^2)(-\Delta_k)^{1+s}\widetilde{\psi},\widetilde{\psi}\rangle_*\\
     &=\|(-\Delta_k)^{s/2}(\Delta_k\widetilde{\psi}+\mathrm{i}t\Lambda_1\widetilde{\psi}/2)\|_{*}^2-
     t^2\|(-\Delta_k)^{s/2}\mathrm{i}\Lambda_1\widetilde{\psi}/2\|_{*}^2 +t^2\mathbf{Re}\big\langle(1-B^2)(-\Delta_k)^{1+s}\widetilde{\psi},\widetilde{\psi}\big\rangle_*\\
     &=\|(-\Delta_k)^{s/2}(\Delta_k\widetilde{\psi}+\mathrm{i}t\Lambda_1\widetilde{\psi}/2)\|_{*}^2 +t^2\mathbf{Re}\big\langle \Lambda_1(-\Delta_k)^{s}\Lambda_1\tilde{\psi}/4+(1-B^2)(-\Delta_k)^{1+s}\widetilde{\psi},\widetilde{\psi}\big\rangle_*.
  \end{align*}
Here we used that $\Lambda_1$ is antisymmetric. We get by Proposition \ref{prop:coer} that 
\begin{align*}
     & \mathbf{Re}\big\langle \Lambda_1(-\Delta_k)^{s}\Lambda_1\widetilde{\psi}/4+(1-B^2)(-\Delta_k)^{1+s}\widetilde{\psi},\widetilde{\psi}\big\rangle_*\\
     &=\mathbf{Re}\big\langle \Lambda_1(-\Delta_k)^{s}\Lambda_1\widetilde{\psi}+2(1-B^2)(-\Delta_k)^{1+s}\widetilde{\psi}+2(-\Delta_k)^{1+s}(1-B^2)\widetilde{\psi},\widetilde{\psi}\big\rangle_*/4
     \\ &\geq \mathbf{Re}\big\langle 4k^2A(-\Delta_k)^{s}A\widetilde{\psi}+\delta(s)(-\Delta_k)^{s}\widetilde{\psi},\widetilde{\psi}\big\rangle_*/4 
     \\ &= k^2\|(-\Delta_k)^{s/2}A\widetilde{\psi}\|_{*}^2+\delta(s)\|(-\Delta_k)^{s/2}\widetilde{\psi}\|_{*}^2/4.
  \end{align*}
  This shows the first inequality of the lemma, which further gives
  \begin{align*}
&\delta(s)t^2\|(-\Delta_k)^{s/2}\widetilde{\psi}\|_{*}^2/4\leq |\langle\omega_1,\widetilde{\psi}\rangle_*|\leq \|(-\Delta_k)^{-s/2}\omega_1\|_{*}\|(-\Delta_k)^{s/2}\widetilde{\psi}\|_{*},\\
&\delta(s)t^2\|(-\Delta_k)^{s/2}\widetilde{\psi}\|_{*}/4\leq \|(-\Delta_k)^{-s/2}\omega_1\|_{*}.
   \end{align*}
As $\delta(s)\in[0.52,4]$, $\|\cdot\|_{*}\sim\|\cdot\|_{L^2}$ and $\widetilde{\psi}=(-\Delta_k)^{-s}\psi$, we have
\begin{align*}
&t^2\|\psi\|_{H_k^{-s}}=t^2\|\widetilde{\psi}\|_{H_k^{s}}=t^2\|(-\Delta_k)^{s/2}\widetilde{\psi}\|_{L^2}\leq C\|(-\Delta_k)^{-s/2}\omega_1\|_{L^2}= C\|\omega_1\|_{H_k^{-s}}.
   \end{align*}
Using the first inequality of this lemma,  we also have
   \begin{align*}
      &\|(\Delta_k\widetilde{\psi}+\mathrm{i}t\Lambda_1\widetilde{\psi}/2)\|_{H_k^{s}}^2 +t^2k^2\|A\widetilde{\psi}\|_{H_k^{s}}^2\\ &\leq C\big(\|(-\Delta_k)^{s/2}(\Delta_k\tilde{\psi}+\mathrm{i}t\Lambda_1\widetilde{\psi}/2)\|_{*}^2 +t^2k^2\|(-\Delta_k)^{s/2}A\widetilde{\psi}\|_{*}^2\big)
     \leq C|\langle\omega_1,\widetilde{\psi}\rangle_*|\\ &\leq \|(-\Delta_k)^{-s/2}\omega_1\|_{*}\|(-\Delta_k)^{s/2}\widetilde{\psi}\|_{*}\leq Ct^{-2}\|(-\Delta_k)^{-s/2}\omega_1\|_{*}^2\leq Ct^{-2}\|\omega_1\|_{H_k^{-s}}^2.
   \end{align*}
Now we assume $t\geq k^2$. By \eqref{lam1}, we get
\begin{align*}
\Delta_k\widetilde{\psi}+\mathrm{i}t\Lambda_1\widetilde{\psi}/2=(\partial_y^2-k^2)\widetilde{\psi}-\mathrm{i}t\partial_yA\widetilde{\psi}+\mathrm{i}tB\widetilde{\psi}/2
=\partial_y(\partial_y-\mathrm{i}tA)\widetilde{\psi}-k^2\widetilde{\psi}+\mathrm{i}tB\widetilde{\psi}/2.
\end{align*}
Then we have (as $t\geq k^2$, $t^2\|\widetilde{\psi}\|_{H_k^{s}}\le {C}\|\omega_1\|_{H_k^{-s}}$)
\begin{align*}
\|\partial_y(\partial_y-\mathrm{i}tA)\widetilde{\psi}\|_{H_k^{s}}\leq&\|\Delta_k\tilde{\psi}+\mathrm{i}t\Lambda_1\widetilde{\psi}/2\|_{L^2}+
k^2\|\widetilde{\psi}\|_{H_k^{s}}+t\|B\widetilde{\psi}\|_{H_k^{s}}\\
\leq&Ct^{-1}\|\omega_1\|_{H_k^{-s}}+k^2\|{\widetilde{\psi}}\|_{H_k^{s}}+Ct\|{\widetilde{\psi}}\|_{H_k^{s}}\leq Ct^{-1}\|\omega_1\|_{H_k^{-s}},
\end{align*}
and
\begin{align*}
\|(\partial_y-\mathrm{i}tA)\widetilde{\psi}\|_{H_k^{s+1}}=&
\|(\partial_y^2-k^2)(\partial_y-\mathrm{i}tA)\widetilde{\psi}\|_{H_k^{s-1}}\\ \leq&
\|\partial_y^2(\partial_y-\mathrm{i}tA)\widetilde{\psi}\|_{H_k^{s-1}}+k^2\|\partial_y\widetilde{\psi}\|_{H_k^{s-1}}+
k^2t\|A\widetilde{\psi}\|_{H_k^{s-1}}\\ \leq&
\|\partial_y(\partial_y-\mathrm{i}tA)\widetilde{\psi}\|_{H_k^{s}}+t\|\widetilde{\psi}\|_{H_k^{s}}+
|k|t\|A\widetilde{\psi}\|_{H_k^{s}}\leq Ct^{-1}\|\omega_1\|_{H_k^{-s}}.
\end{align*}
Thanks to $ A=\sin y(1+\Delta_k^{-1})=(\mathrm{e}^{\mathrm{i}y}-\mathrm{e}^{-\mathrm{i}y})(1+\Delta_k^{-1})/(2\mathrm{i})$, we have
\begin{align*}
[A,(-\Delta_k)^s]=&\big(\mathrm{e}^{\mathrm{i}y}(-\Delta_k)^s-(-\Delta_k)^s\mathrm{e}^{\mathrm{i}y}
-\mathrm{e}^{-\mathrm{i}y}(-\Delta_k)^s+(-\Delta_k)^s\mathrm{e}^{-\mathrm{i}y}\big)(1+\Delta_k^{-1})/(2\mathrm{i})\\
=&\big(\mathrm{e}^{\mathrm{i}y}((-\Delta_k)^s-(-\Delta_{k,1})^s)+
((-\Delta_k)^s-(-\Delta_{k,1})^s)\mathrm{e}^{-\mathrm{i}y}\big)(1+\Delta_k^{-1})/(2\mathrm{i}).\end{align*}
Then we have 
\beno
\|[A,(-\Delta_k)^s]\widetilde{\psi}\|_{H_k^{1-s}}\leq C\|\widetilde{\psi}\|_{H_k^{s}},
\eeno
which follows from the facts that  $\Delta_{k,1}=(\partial_y+\mathrm{i})^2-k^2 $ and $|(n^2+k^2)^s-((n+1)^2+k^2)^s|\leq C(n^2+k^2)^{s-1/2}$ (by taking Fourier transform).
Thus,
\begin{align*}
\|(\partial_y-\mathrm{i}tA){\psi}\|_{H_k^{1-s}}=&\|(\partial_y-\mathrm{i}tA)(-\Delta_k)^s\widetilde{\psi}\|_{H_k^{1-s}}\\ \leq &
\|(-\Delta_k)^s(\partial_y-\mathrm{i}tA)\widetilde{\psi}\|_{H_k^{1+s}}+t\|[A,(-\Delta_k)^s]\widetilde{\psi}\|_{H_k^{1-s}}
\\ \leq &\|(\partial_y-\mathrm{i}tA)\widetilde{\psi}\|_{H_k^{1-s}}+Ct\|\widetilde{\psi}\|_{H_k^{s}}\leq Ct^{-1}\|\omega_1\|_{H_k^{-s}}.
\end{align*}

This completes the proof of the lemma.
\end{proof}

\subsection{Inviscid damping estimates}

In this subsection, we prove Proposition \ref{lem:eibtf}.

\subsubsection{Bounds on  the semigroup $\mathrm{e}^{-\mathrm{i}tB}$}

\begin{lemma}\label{prop:timespace-B-0}
  If $\omega$ satisfies $\partial_t\omega+\mathrm{i}B\omega=F$, $\omega|_{t=0}=\omega_0$, then for $t\ge 0$, we have 
  \begin{align*}
     &\|\omega(t)\|_{L^2}^2\leq C\Big(\|\omega_0\|_{L^2}^2+\int_{0}^{t}\|F(s)\|_{H_k^1}^2\mathrm{d}s\Big).
  \end{align*}
\end{lemma}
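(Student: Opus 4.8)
The plan is to run an energy estimate in the inner product $\langle\cdot,\cdot\rangle_*$ of \eqref{fg*}, with respect to which $B$ is self-adjoint, and then to close it using a space-time smoothing bound coming from the resolvent estimates of Section~\ref{sec:reseuler}. First note that since $|k|=|\alpha m|\ge\alpha>1$ the Fourier symbol of $1+\Delta_k^{-1}$ lies in $[1-\alpha^{-2},1)$, so $(1-\alpha^{-2})\|\omega\|_{L^2}^2\le\|\omega\|_*^2\le\|\omega\|_{L^2}^2$; the $*$-norm is equivalent to the $L^2$-norm, and $(-\Delta_k)^{s}$ and $1+\Delta_k^{-1}$ commute with everything in sight, so one may freely switch between the two inner products. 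Because $B$ is $*$-symmetric, pairing $\partial_t\omega+\mathrm{i}B\omega=F$ with $\omega$ in $\langle\cdot,\cdot\rangle_*$ makes the advection term drop out and gives $\tfrac12\tfrac{d}{dt}\|\omega(t)\|_*^2=\mathbf{Re}\,\langle F,\omega\rangle_*$. Writing $\psi=\Delta_k^{-1}\omega$ we have $(1+\Delta_k^{-1})\omega=(\Delta_k+1)\psi$, hence, integrating by parts on $\T_{2\pi}$, $\langle F,\omega\rangle_*=\langle(\Delta_k+1)F,\psi\rangle$. Since $-\Delta_k\ge k^2>1$ one checks $\|(\Delta_k+1)F\|_{H_k^{-1}}\le\|F\|_{H_k^1}$, while $\|\psi\|_{H_k^1}=\|\omega\|_{H_k^{-1}}$; so by Cauchy--Schwarz and Young, $\tfrac{d}{dt}\|\omega(t)\|_*^2\le C\|F(t)\|_{H_k^1}^2+\|\omega(t)\|_{H_k^{-1}}^2$, and after integration
\[
\|\omega(t)\|_{L^2}^2\le C\Big(\|\omega_0\|_{L^2}^2+\int_0^t\|F(s)\|_{H_k^1}^2\,ds+\int_0^t\|\omega(s)\|_{H_k^{-1}}^2\,ds\Big).
\]

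It then suffices to establish the space-time smoothing bound $\int_0^t\|\omega(s)\|_{H_k^{-1}}^2\,ds\le C\big(\|\omega_0\|_{L^2}^2+\int_0^t\|F(s)\|_{H_k^1}^2\,ds\big)$, which I would prove on a finite interval $[0,T]$ with constant independent of $T$ and then let $T\to\infty$. Extend $F$ by $0$ to $s<0$, let $\omega$ be the causal solution on $\R$ (vanishing for $s<0$, with jump $\omega_0$ across $s=0$), multiply by $e^{-\epsilon s}$ to gain integrability, and Fourier transform in time; the equation becomes $(B+\tau-\mathrm{i}\epsilon)\widehat{\omega_\epsilon}(\tau)=-\mathrm{i}\,\widehat{(e^{-\epsilon\cdot}F)}(\tau)-\mathrm{i}\omega_0$, with the $-\mathrm{i}\epsilon$ prescription fixed by causality. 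Now $B_k=b(1+\Delta_k^{-1})$ with $b=\cos y$ is precisely the operator whose resolvent is controlled in Section~\ref{sec:reseuler}: for $w=(B_k-z)^{-1}G$ the substitution $\phi=\Delta_k^{-1}w$ turns $(B_k-z)w=G$ into the Rayleigh equation \eqref{porb6}, so Proposition~\ref{lem:rayleigh} (equivalently Lemma~\ref{lem:rayleigh-1}) gives $\theta(k,\lambda)\|w\|_{H_k^{-1}}\le C\|G\|_{H_k^1}$, hence $\|(B_k-z)^{-1}\|_{H_k^1\to H_k^{-1}}\le C$ uniformly in $z\in\C\setminus\R$ because $\theta(k,\lambda)\ge1$. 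Applying this in $\tau$ and using Plancherel in time yields $\|\omega_\epsilon\|_{L^2_sH_k^{-1}}^2\le C\big(\|\omega_0\|_{L^2}^2+\|e^{-\epsilon\cdot}F\|_{L^2_sH_k^1}^2\big)\le C\big(\|\omega_0\|_{L^2}^2+\|F\|_{L^2_sH_k^1}^2\big)$; letting $\epsilon\to0$ and restricting to $[0,t]$ gives the claim. The limit is legitimate since, by Subsection~\ref{subspec}, $B_k$ has neither eigenvalues nor generalized embedded eigenvalues, so the limiting absorption principle holds and there is no concentration of the spectral measure. Combined with the energy inequality (and the trivial bound $\|\mathrm{e}^{-\mathrm{i}tB}g\|_{L^2}\lesssim\|g\|_{L^2}$ for the existence/regularity of $\omega$), this proves Lemma~\ref{prop:timespace-B-0}. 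One could alternatively route the inhomogeneous piece through the dual Kato-smoothing estimate $\|(-\Delta_k)^{-1/2}\mathrm{e}^{-\mathrm{i}tB}g\|_{L^2_tL^2}\lesssim\|g\|_{L^2}$ plus a Christ--Kiselev argument (valid since $2<\infty$), but the Fourier-in-time route is the most direct.

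The step I expect to be the main obstacle is the space-time smoothing bound, specifically the bookkeeping needed to reduce it cleanly to the uniform resolvent estimate: verifying that $(B_k-z)^{-1}$ is literally the resolvent studied in Section~\ref{sec:reseuler} (the change of variables $w=\Delta_k\phi$ and the matching with \eqref{porb6}), handling the causal $\pm\mathrm{i}\epsilon$ prescription and the $\epsilon\to0$ passage via the limiting absorption principle, and keeping track of the harmless but ubiquitous switch between $\langle\cdot,\cdot\rangle_*$ and the $L^2$ inner product. By contrast, the energy identity and the estimate on $\langle F,\omega\rangle_*$ are entirely routine.
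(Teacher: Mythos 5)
Your energy step and the reduction of $\langle F,\omega\rangle_*$ to $\|F\|_{H_k^1}\|\omega\|_{H_k^{-1}}$ are fine and match the paper, but your main route to the space-time smoothing bound has a genuine gap in how the initial data is handled. After the causal extension and Fourier transform in time, the source on the right-hand side is $-\mathrm{i}\,\widehat{(\mathrm{e}^{-\epsilon\cdot}F)}(\tau)-\mathrm{i}\,\omega_0$, and the term $-\mathrm{i}\,\omega_0$ is \emph{constant in $\tau$}: you cannot "apply the resolvent bound and use Plancherel" on it, because (i) it is not square-integrable in $\tau$, and (ii) the resolvent estimate of Proposition \ref{lem:rayleigh} measures the source in $H_k^1$, so even per fixed $\tau$ it would cost $\|\omega_0\|_{H_k^1}$, not $\|\omega_0\|_{L^2}$. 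One can try to rescue the $\tau$-integration by using $\theta(k,\lambda)\geq 1+|\lambda|$, which makes the resolvent bound decay like $(1+|\tau|)^{-1}$, but that still yields a bound by $\|\omega_0\|_{H_k^1}^2$ — a full derivative worse than the statement (and worse than what the later applications can afford, e.g.\ in Lemma \ref{lem2b} the data is $\Delta_k f$ with $f\in H_k^2$ only). So the inequality $\|\omega_\epsilon\|_{L^2_sH_k^{-1}}^2\le C\big(\|\omega_0\|_{L^2}^2+\|F\|_{L^2_sH_k^1}^2\big)$ does not follow from the argument as written.

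The paper avoids this entirely by the decomposition $\omega(t)=\mathrm{e}^{-\mathrm{i}tB}\omega_0+\omega_I(t)$, where $\omega_I$ solves the forced equation with \emph{zero} data: the energy estimate and the Laplace-transform/resolvent/Plancherel argument (your $\epsilon$-regularization is the same device as the paper's $\mathrm{Im}\,\lambda=-\delta\to0$) are run only on $\omega_I$, whose space-time $H_k^{-1}$ norm is then controlled purely by $\int\|F\|_{H_k^1}^2$, while the free piece is dispatched in one line by $*$-unitarity, $\|\mathrm{e}^{-\mathrm{i}tB}\omega_0\|_*=\|\omega_0\|_*$, using the equivalence of $\|\cdot\|_*$ and $\|\cdot\|_{L^2}$. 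If you insist on estimating the full $\omega$ in one piece, the smoothing bound you want for the homogeneous part, $\int_0^\infty\|\mathrm{e}^{-\mathrm{i}sB}\omega_0\|_{H_k^{-1}}^2\,\mathrm{d}s\lesssim\|\omega_0\|_{L^2}^2$, is exactly the Kato-smoothing consequence of the uniform weighted resolvent bound — i.e.\ the "alternative" you mention and set aside is in fact the ingredient your chosen route is missing; with it (plus the dual estimate for the forced part) your argument closes, but as presented the direct Fourier-in-time treatment of $\omega_0$ does not.
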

\begin{proof}
  We decompose $\omega(t_1,y)=\omega_I(t_1,y)+\mathrm{e}^{-\mathrm{i}Bt_1}\omega_0(y)$, where $\omega_I$ satisfies(for fixed $t_1\geq0$)
  \begin{align*}
     & \partial_t\omega_I+\mathrm{i}B\omega_I=F\mathbf{1}_{t<t_1},\quad\omega_I|_{t=0}=0.
  \end{align*}
  Taking the inner product of the above equation with $\omega_I$, we obtain(for $t\in[0,t_1]$) 
  \begin{align*}
     &\dfrac{1}{2}\dfrac{\mathrm{d}}{\mathrm{d}t}\|\omega_I(t)\|_{*}^2\leq \left|\langle F,\omega_I\rangle_*\right|=
     \left|\langle F,(1+\Delta_k^{-1})\omega_I\rangle\right|\leq \|F\|_{H^1_{{k}}}\|(1+\Delta_k^{-1})\omega_I\|_{H^{-1}_{{k}}}, 
      \end{align*}
  which gives
   \begin{align*}
     &\|\omega_I(t)\|_{*}^2\leq \int_{0}^{t}\|F(s)\|_{H_k^1}^2\mathrm{d}s+\int_{0}^{t}
     \|(1+\Delta_k^{-1})\omega_I(s)\|_{H_k^{-1}}^2\mathrm{d}s,\\
     &\|\omega_I(t)\|_{L^2}^2\leq C\left(\int_{0}^{t}\|F(s)\|_{H_k^1}^2\mathrm{d}s+\int_{0}^{t}
     \|\omega_I(s)\|_{H_k^{-1}}^2\mathrm{d}s\right).
  \end{align*}  
  
  We denote $\hat{\omega}(\lambda,y)=\int_{0}^{+\infty}\mathrm{e}^{-\mathrm{i}\lambda t}\omega_I(t,y)\mathrm{d}t$ and $\hat{F}(\lambda,y)=\int_{0}^{+\infty}\mathrm{e}^{-\mathrm{i}\lambda t}F(t,y)\mathrm{d}t$. Then we have (for $\mathrm{Im}\,\lambda<0$)
  \begin{align*}
     &\mathrm{i}\lambda\hat{\omega}+\mathrm{i}B\hat{\omega}=\hat{F}.
  \end{align*} 
  It follows from Proposition \ref{lem:rayleigh} and $\theta(k,\lambda)\geq1$ that $\|\hat{\omega}(\lambda)\|_{H^{-1}_k}\leq C\|\hat{F}(\lambda)\|_{H^1_k}$, which gives
   \begin{align*}
   &\int_{\mathbb{R}}\|\hat{\omega}(\lambda-\mathrm{i}\delta,y)\|_{H_k^{-1}}^2\mathrm{d}\lambda\leq C\int_{\mathbb{R}}\|\hat{F}(\lambda-\mathrm{i}\delta,y)\|_{H_k^1}^2\mathrm{d}\lambda,\quad\ \forall\ \delta>0.
  \end{align*}
  This along with Plancherel's formula shows 
  \begin{align*}
     \int_{0}^{+\infty}\mathrm{e}^{-2s\delta}\|\omega_I(s)\|_{H_k^{-1}}^2\mathrm{d}s=&C \int_{\mathbb{R}}\|\hat{\omega}(\lambda-\mathrm{i}\delta,y)\|_{H_k^{-1}}^2\mathrm{d}\lambda\\
     \leq& C\int_{\mathbb{R}}\|\hat{F}(\lambda-\mathrm{i}\delta,y)\|_{H_k^1}^2\mathrm{d}\lambda 
    = C\int_{0}^{t_1}\mathrm{e}^{-2s\delta}\|F(s)\|_{H_k^1}^2\mathrm{d}s.
  \end{align*}
  Letting $\delta\to0+ $, we get
  \begin{align*}
     & \int_{0}^{+\infty}\|\omega_I(s)\|_{H_k^{-1}}^2\mathrm{d}s\leq C\int_{0}^{t_1}\|F(s)\|_{H_k^1}^2\mathrm{d}s,
  \end{align*}
  Hence, $\|\omega_I(t_1)\|_{L^2}^2\leq C\int_{0}^{t_1}\|F(s)\|_{H_k^1}^2\mathrm{d}s$, and then using $\|\mathrm{e}^{-\mathrm{i}Bt_1}\omega_0\|_{*}= \|\omega_0\|_{*}$, $\|f\|_{*}\sim\|f\|_{L^2}$,
  we arrive at 
  \begin{align*}
     & \|\omega(t_1)\|_{L^2}^2\leq C\|\mathrm{e}^{-\mathrm{i}Bt_1}\omega_0\|_{L^2}^2+C\|\omega_I(t_1)\|_{L^2}^2\leq C\Big(\|\omega_0\|_{L^2}^2+\int_{0}^{t_1}\|F(s)\|_{H_k^1}^2\mathrm{d}s\Big).
  \end{align*}
 {Replacing $t_1$ by $t$ gives the result.}
  \end{proof}

The following lemma provides bounds for $\mathrm{e}^{-\mathrm{i}tB}f$ with $f\in H^s_k$ via the vector field method.

 \begin{lemma}\label{lem2b}
 For $s\in[0,2]$, $t\in\R$, it holds that
   \begin{align}
   &(k^2+|t|)^{s}\|\mathrm{e}^{-\mathrm{i}tB}f\|_{H_k^{-s}}\leq C\|f\|_{H_k^{s}},\label{est:eibtf-1}\\
      & \|\mathrm{e}^{\mathrm{i}tb}\mathrm{e}^{-\mathrm{i}tB}f\|_{H_k^{s}}\leq C\|f\|_{H_k^{s}}, \label{est:eibtf-2}\\
   \label{est:eibtf-3}   &{|\mathrm{e}^{-\mathrm{i}tB}f(t,y)|\leq  C((k^2+|t|)^{-1/4}+|b'|^{1/2})\|f\|_{H_k^{1}}}.
   \end{align}
   \end{lemma}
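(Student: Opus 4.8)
\textbf{Proof plan for Lemma \ref{lem2b}.}

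The plan is to prove the three bounds by combining the ``almost conserved quantity'' $\omega_1 = \Delta_{(t)}\omega$ from Lemma \ref{lem1-dec}, the coercivity from Lemma \ref{lem:t2Lamom-om1}, and the semigroup $L^2$-bound from Lemma \ref{prop:timespace-B-0}. Throughout write $\omega = \mathrm{e}^{-\mathrm{i}tB}f$, $\psi = \Delta_k^{-1}\omega$. For \eqref{est:eibtf-1}: the case $s=0$ is just $\|\mathrm{e}^{-\mathrm{i}tB}f\|_{L^2}\sim\|\mathrm{e}^{-\mathrm{i}tB}f\|_*=\|f\|_*\sim\|f\|_{L^2}$ since $B$ is $\langle\cdot,\cdot\rangle_*$-symmetric. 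For $s\in(0,2]$ I would first control $\omega_1$. By Lemma \ref{lem1-dec}, $(\partial_t+\mathrm{i}B)\omega_1 = -4tk^2\Lambda_3\omega$; since $\Lambda_3 = (1-\Delta_k^{-1})\Delta_{k,-1}^{-1}\Delta_{k,1}^{-1}$ gains four derivatives, $\|4tk^2\Lambda_3\omega\|_{H_k^1}\lesssim t\|\omega\|_{H_k^{-1}}\lesssim t\|f\|_{L^2}$, hence by Lemma \ref{prop:timespace-B-0} applied in $H_k^{-s}$-type norms (or more directly via Duhamel plus the $L^2$-isometry), $\|\omega_1(t)\|_{H_k^{-s}}\lesssim \|\omega_1(0)\|_{H_k^{-s}} + \langle t\rangle^2\|f\|_{H_k^{s}}$, and $\omega_1(0)=\Delta_k\omega_0 = \Delta_k f$ so $\|\omega_1(0)\|_{H_k^{-s}}\lesssim\|f\|_{H_k^{2-s}}\lesssim\|f\|_{H_k^s}$ for $s\in[1,2]$; for $s\in(0,1)$ one interpolates. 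Then Lemma \ref{lem:t2Lamom-om1} (with $s$ there in $[0,0.4]$, extended to $[0,2]$ by interpolation against the trivial $t=0$ bound and the $k^2$-regularized bound) gives $t^2\|\psi\|_{H_k^{-s}}\lesssim\|\omega_1\|_{H_k^{-s}}$, so $t^2\|\mathrm{e}^{-\mathrm{i}tB}f\|_{H_k^{-s}}=t^2\|\Delta_k\psi\|_{H_k^{-s}}\lesssim t^2\|\psi\|_{H_k^{2-s}}$; one reorganizes to see the correct power $(k^2+|t|)^s$ appears after also using the elementary bound $k^{2s}\|\mathrm{e}^{-\mathrm{i}tB}f\|_{H_k^{-s}}\leq k^{2s}\|\mathrm{e}^{-\mathrm{i}tB}f\|_{L^2}\lesssim\|f\|_{L^2}\lesssim\|f\|_{H_k^s}$ to handle the region $|t|\lesssim k^2$.

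For \eqref{est:eibtf-2}, write $g(t,y) = \mathrm{e}^{\mathrm{i}tb}\omega(t,y) = \mathrm{e}^{\mathrm{i}tb}\mathrm{e}^{-\mathrm{i}tB}f$, the ``profile''. A direct computation using $\partial_t\omega = -\mathrm{i}B\omega = -\mathrm{i}b\omega - \mathrm{i}b\Delta_k^{-1}\omega = -\mathrm{i}b\omega - \mathrm{i}b\psi$ shows
\begin{equation}\label{eq:profileeq}
  \partial_t g = -\mathrm{i}b\,\mathrm{e}^{\mathrm{i}tb}\psi = -\mathrm{i}b\,\mathrm{e}^{\mathrm{i}tb}\Delta_k^{-1}(\mathrm{e}^{-\mathrm{i}tb}g),\qquad g|_{t=0} = f.
\end{equation}
The point is that the right-hand side involves $\psi = \Delta_k^{-1}\omega$, which by the $s=0$ case of \eqref{est:eibtf-1} (already established) decays like $\langle t\rangle^{-2}$ in $L^2$ — more precisely $\|\mathrm{e}^{\mathrm{i}tb}\psi\|_{H_k^2}=\|\psi\|_{H_k^2}\sim\|\omega\|_{L^2}\sim\|f\|_{L^2}$ is only bounded, but the commutator structure $\mathrm{e}^{\mathrm{i}tb}\Delta_k^{-1}\mathrm{e}^{-\mathrm{i}tb}$ is what must be tracked. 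I would estimate $\|\partial_t g(t)\|_{H_k^s}$ by passing the $s$ derivatives through, commuting $(-\Delta_k)^{s/2}$ past $\mathrm{e}^{\pm\mathrm{i}tb}$ and $b$ (each commutator costs at most $\langle t\rangle$ times lower order, but is compensated by the $t^{-2}$-type decay coming from \eqref{est:eibtf-1} applied with a positive Sobolev index), to arrive at $\|\partial_t g(t)\|_{H_k^s}\lesssim \langle t\rangle^{-1-\epsilon}\|f\|_{H_k^s}$ or at worst a bound that is integrable in $t$; then $\|g(t)\|_{H_k^s}\leq\|f\|_{H_k^s}+\int_0^t\|\partial_s g\|_{H_k^s}\,ds\lesssim\|f\|_{H_k^s}$. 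The cleanest route is probably to first prove \eqref{est:eibtf-2} for $s=2$ directly (where $\omega_1$ and $\Delta_k g$ are comparable up to the $\mathrm{i}t\Lambda_1 - t^2(1-B^2)$ corrections, and these corrections are exactly cancelled in $g$ by the conjugation, as in \eqref{eq17} of Proposition \ref{lem:eibtf}), then interpolate with $s=0$.

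Finally \eqref{est:eibtf-3} follows from \eqref{est:eibtf-1} and \eqref{est:eibtf-2} by an anisotropic Gagliardo–Nirenberg / Sobolev embedding: near a non-critical point of $b$ one has $|b'|\gtrsim 1$ and the bound $\|\mathrm{e}^{-\mathrm{i}tB}f\|_{L^\infty}\lesssim\|\mathrm{e}^{-\mathrm{i}tB}f\|_{L^2}^{1/2}\|\mathrm{e}^{-\mathrm{i}tB}f\|_{H_k^1}^{1/2}\lesssim k^{1/2}\|f\|_{L^2}$; to get the gain $(k^2+|t|)^{-1/4}$ one writes $\|\mathrm{e}^{-\mathrm{i}tB}f\|_{L^\infty}^2\lesssim \|\mathrm{e}^{-\mathrm{i}tB}f\|_{H_k^{-1}}\|\mathrm{e}^{-\mathrm{i}tB}f\|_{H_k^1}\lesssim (k^2+|t|)^{-1}\|f\|_{H_k^1}\cdot\|f\|_{H_k^1}$ using the $s=1$ case of \eqref{est:eibtf-1} for the first factor, which gives the $(k^2+|t|)^{-1/4}$ term after taking square roots; the $|b'|^{1/2}$ term arises from the endpoint analysis near the critical points $y=0,\pi$ where one cannot integrate by parts in $y$ without picking up the vanishing factor $b'$, and is handled by the stationary-phase-type bound built into the profile estimate \eqref{est:eibtf-2}.

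\textbf{Main obstacle.} The hard part will be \eqref{est:eibtf-2}: controlling the profile $g = \mathrm{e}^{\mathrm{i}tb}\mathrm{e}^{-\mathrm{i}tB}f$ uniformly in $t$ in $H_k^s$. The difficulty is that the equation \eqref{eq:profileeq} for $g$ contains the conjugated operator $\mathrm{e}^{\mathrm{i}tb}\Delta_k^{-1}\mathrm{e}^{-\mathrm{i}tb}$, whose commutators with derivatives grow linearly in $t$; one must show these growing contributions are exactly absorbed by the decay of $\psi$ coming from the coercivity of $\omega_1$, and near the critical points of $b$ (where the depletion phenomenon lives) the usual integration-by-parts arguments degenerate, so one is forced to use the structural identity $\Lambda_2 = 2(1-B^2) - 4k^2\Lambda_3$ and Proposition \ref{prop:coer} rather than soft arguments. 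Everything else is bookkeeping once \eqref{est:eibtf-1} and \eqref{est:eibtf-2} are in hand.
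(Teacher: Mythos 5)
Your outline assembles the right ingredients ($\omega_1=\Delta_{(t)}\omega$, Lemma \ref{lem1-dec}, the coercivity of Lemma \ref{lem:t2Lamom-om1}, Lemma \ref{prop:timespace-B-0}, interpolation between $s=0$ and $s=2$), but the way you propose to use them has genuine gaps. First, for \eqref{est:eibtf-1}: bounding the forcing by $\|4tk^2\Lambda_3\omega\|_{H_k^1}\lesssim t\|f\|_{L^2}$ and concluding $\|\omega_1(t)\|\lesssim\|\omega_1(0)\|+\langle t\rangle^2\|f\|_{H_k^s}$ destroys exactly the decay you are trying to prove: after the coercivity $t^2\|\Delta_k^{-1}\omega\|_{L^2}\lesssim\|\omega_1\|_{L^2}$ you are left with $\|\Delta_k^{-1}\omega\|\lesssim t^{-2}\|f\|_{H_k^2}+\|f\|_{L^2}$, which does not give $(k^2+|t|)^2\|\omega\|_{H_k^{-2}}\lesssim\|f\|_{H_k^2}$, and a simple re-iteration does not close either. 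The paper's proof keeps the unknown inside the Duhamel integral: $\|\Lambda_3\omega\|_{H_k^1}\lesssim\|\omega\|_{H_k^{-3}}\lesssim|k|^{-1}\|\Delta_k^{-1}\omega\|_{L^2}$, so the forcing contribution is rewritten as $k^2(t'^2+k^4)^{-1}\|(t'^2+k^4)\Delta_k^{-1}\omega(t')\|_{L^2}^2$ with the time-integrable weight $k^2(t'^2+k^4)^{-1}$, and Gr\"onwall closes the loop (see \eqref{om8-dec}); this self-improving feedback is the missing idea. Relatedly, "extending Lemma \ref{lem:t2Lamom-om1} to $s\in[0,2]$ by interpolation" is neither legitimate (the coercivity constant $\delta(s)$ is only proved, and only positive, for $s\in[0,0.4]$) nor needed: only $s=0$ is used here, and one interpolates the final estimate between $s=0$ and $s=2$. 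Second, your primary route for \eqref{est:eibtf-2} via Duhamel on $\partial_tg=-\mathrm{i}b\,\mathrm{e}^{\mathrm{i}tb}\psi$ does not close: at $s=2$ the top-order term is $t^2\|\psi\|_{L^2}\lesssim t^2(k^2+|t|)^{-2}\|f\|_{H_k^2}$, bounded but not integrable in time (at $s=1$ you already get a logarithmic divergence or a loss of regularity), so $\int_0^t\|\partial_s g\|_{H_k^s}\,ds$ grows. Your parenthetical fallback is indeed the paper's argument, but the corrections are not "exactly cancelled by the conjugation": one uses \eqref{om11} together with the identity $1-B^2-A^2=-\Delta_k^{-1}-(\Delta_{k,-1}^{-1}+\Delta_{k,1}^{-1})(1+\Delta_k^{-1})/2$ (a smoothing operator) and the already-proved \eqref{est:eibtf-1} to bound $t^2(1-B^2-A^2)\omega$ and the error from replacing $A$ by $-b'$, and only then does $\mathrm{e}^{\mathrm{i}tb}[(\partial_y+\mathrm{i}tb')^2\omega-k^2\omega]=\Delta_k(\mathrm{e}^{\mathrm{i}tb}\omega)$ yield the $H_k^2$ bound of the profile.

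Third, the pivotal inequality you invoke for \eqref{est:eibtf-3}, $\|u\|_{L^\infty}^2\lesssim\|u\|_{H_k^{-1}}\|u\|_{H_k^1}$, is false: a bump of height $1$ and width $\delta$ has $\|u\|_{H^{-1}}\|u\|_{H^1}\sim\delta^{1/2}\to0$ while $\|u\|_{L^\infty}=1$ (the interpolation space halfway between $H^{-1}$ and $H^1$ is $L^2$, not $L^\infty$). Consequently your argument gives only the trivial $|k|^{-1/2}\|f\|_{H_k^1}$ bound and never produces the $(k^2+|t|)^{-1/4}$ gain at the critical points, which is the actual content of \eqref{est:eibtf-3}. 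The paper obtains it by a localized duality argument: from \eqref{est:eibtf-2} at $s=1$ the profile $\widetilde\omega=\mathrm{e}^{\mathrm{i}tb}\omega$ is H\"older-$1/2$ with constant $\|f\|_{H_k^1}$; testing $\widetilde\omega$ against the bump $\Psi_t(y)=\Psi((k^2+|t|)^{1/2}y)$ and deconjugating, $|\langle\widetilde\omega,\Psi_t\rangle|\leq\|\omega\|_{H_k^{-1}}\|\mathrm{e}^{-\mathrm{i}tb}\Psi_t\|_{H_k^1}\lesssim(k^2+|t|)^{-3/4}\|f\|_{H_k^1}$ by \eqref{est:eibtf-1} with $s=1$, and since $|\langle1,\Psi_t\rangle|\gtrsim(k^2+|t|)^{-1/2}$ this forces $|\widetilde\omega(t,0)|\lesssim(k^2+|t|)^{-1/4}\|f\|_{H_k^1}$; the $|b'|^{1/2}$ term then comes from $|\widetilde\omega(t,y)-\widetilde\omega(t,0)|\lesssim|y|^{1/2}\|f\|_{H_k^1}$ and $|y|\sim|b'|$ near $y=0$ (and similarly near $y=\pi$). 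You need some substitute for this step; the anisotropic Gagliardo--Nirenberg route as written would fail.
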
 \begin{proof}
 Without lose of generality, we assume $t\geq 0$.  The case $t\leq 0$ can be proved by taking the conjugation. Let $\omega=\mathrm{e}^{-\mathrm{i}tB}f$, $\omega_1:=\Delta_{(t)}\omega$, $\psi=\Delta_k^{-1}\omega$. Then by Lemma  \ref{lem1-dec}, we have $(\partial_t+\mathrm{i}B)\omega_1=-4tk^2\Lambda_3\omega$. Thus, by {Lemma} \ref{prop:timespace-B-0} and  Lemma \ref{lem:t2Lamom-om1} (for $s=0$), we obtain
\begin{align}\label{om8-dec}
C^{-1}\|t^2\Delta_k^{-1}&\omega(t)\|_{L^2}^2\leq \|\omega_1(t)\|_{L^2}^2\leq C\Big(\|\omega_1(0)\|_{L^2}^2+\int_{0}^{t}t'^2k^4\|\Lambda_3\omega(t')\|_{H_k^1}^2\mathrm{d}t'\Big)\nonumber\\
   \leq& C\Big(\|\Delta_kf\|_{L^2}^2+\int_{0}^{t}t'^2k^4\|\omega(t')\|_{H_k^{-3}}^2\mathrm{d}t'\Big)
    \leq C\Big(\|\Delta_kf\|_{L^2}^2+\int_{0}^{t}t'^2k^2\|\Delta_k^{-1}\omega(t')\|_{L^2}^2\mathrm{d}t'\Big)\nonumber\\
    \leq& C\Big(\|\Delta_kf\|_{L^2}^2+
    \int_{0}^{t}k^{2}(t'^2+k^4)^{-1}\|(t'^2+k^4)\Delta_k^{-1}\omega(t')\|_{L^2}^2\mathrm{d}t'\Big).
\end{align}
Here we used $\omega_1(0)=\Delta_k\omega(0)=\Delta_kf$ and
\begin{align}\label{om8}
   \|\Lambda_3\omega\|_{H_k^1}\leq C\|\omega\|_{H_k^{-3}}.
\end{align}
In fact, as $\Lambda_3=(1-\Delta_k^{-1})\Delta_{k,-1}^{-1}\Delta_{k,1}^{-1}$, \eqref{om8} follows from (taking Fourier transform)
\begin{align*}
   (n^2+k^2)^{1/2}(1+(n^2+k^2)^{-1})((n-1)^2+k^2)^{-1}((n+1)^2+k^2)^{-1}\leq C(n^2+k^2)^{-3/2}.
\end{align*}
As $\omega=\mathrm{e}^{-\mathrm{i}tB}f$ and $B$ is antisymmetric under the inner product $\langle,\rangle_*$, we have $\|\omega(t)\|_{*}=\|f\|_{*}$. Thus, $\|\omega(t)\|_{L^2}\le C\|f\|_{L^2}$ and
   \begin{align*}
      &|k|^4\|\Delta_k^{-1}\omega\|_{L^2}\leq k^2\|\omega\|_{L^{2}}\lesssim k^2\|f\|_{L^2}\leq \|\Delta_kf\|_{L^2},
   \end{align*}
   which along with \eqref{om8-dec} shows 
   \begin{align*}
\|(t^2+k^4)\Delta_k^{-1}\omega(t)\|_{L^2}^2\leq C\Big(\|\Delta_kf\|_{L^2}^2+\int_{0}^{t}k^{2}(t'^2+k^4)^{-1}\|(t'^2+k^4)\Delta_k^{-1}\omega(t')\|_{L^2}^2\mathrm{d}t'\Big).
\end{align*}
By Gr\"{o}nwall's inequality, we have
  \begin{align*}
     & \|(t^2+k^4)\Delta_k^{-1}\omega(t)\|_{L^2}\leq C\|\Delta_kf\|_{L^2}^2\mathrm{e}^{C\int_{0}^{t}k^{2}(t'^2+k^4)^{-1}\mathrm{d}t'}\leq C\|\Delta_kf\|_{L^2}^2= C\|f\|_{H_k^2}^2.
  \end{align*}
  This proves \eqref{est:eibtf-1} for $s=2$; moreover $\|\omega(t)\|_{L^2}\le C\|f\|_{L^2}$, which gives \eqref{est:eibtf-1} for $s=0$.  Thus,  \eqref{est:eibtf-1} holds for $s\in[0,2]$ by the interpolation.

  By \eqref{om8-dec} again, we have
  \begin{align}\label{est:om1-Deltaf}
 \|\omega_1(t)\|_{L^2}^2 \leq& C\Big(\|\Delta_kf\|_{L^2}^2+
   k^{2} \int_{0}^{t}(t'^2+k^4)^{-1}\|(t'^2+k^4)\Delta_k^{-1}\omega(t')\|_{L^2}^2\mathrm{d}t'\Big)\nonumber\\
    \leq& C\Big(\|\Delta_kf\|_{L^2}^2+
   k^{2}\|f\|_{H^2_k}^2 \int_{0}^{t}(t'^2+k^4)^{-1}\mathrm{d}t'\Big)\leq  C\|f\|_{H^2_k}^2 .
\end{align}
  
   Recall that (using \eqref{lam1}), 
   \begin{align}\notag
      \omega_1=&\Delta_{(t)}\omega=(\Delta_k+\mathrm{i}t\Lambda_1 -t^2(1-B^2))\omega\\
       =&(\partial_y^2-k^2)\omega-\mathrm{i}t(A\partial_y+\partial_yA)\omega-t^2 (1-B^2)\omega\nonumber\\
    \label{om11}  =&(\partial_y-\mathrm{i}tA)^2\omega-k^2\omega-t^2(1-B^2-A^2)\omega.
   \end{align}
 By \eqref{est:A2+B2} and Lemma \ref{lem:t2Lamom-om1} for $s=0$, we obtain 
   \begin{align*}
   &1-B^2-A^2=-\Delta_k^{-1}-(\Delta_{k,-1}^{-1}+\Delta_{k,1}^{-1})(1+\Delta_k^{-1})/2,\\
      &\|t^2(1-B^2-A^2)\omega\|_{L^2}\leq Ct^2\|\Delta_k^{-1}\omega\|_{L^2}\leq C\|\omega_1\|_{L^2}.
   \end{align*}Then we conclude
   \begin{align*}
      &\|(\partial_y-\mathrm{i}tA)^2\omega-k^2\omega\|_{L^2}\leq C\|\omega_1\|_{L^2}\leq C\|\Delta_kf\|_{L^2}.
   \end{align*}
   Recall that $A=\sin y(1+\Delta_k^{-1})=-b'(1+\Delta_k^{-1})$, then 
   \begin{align*}
      (\partial_y-\mathrm{i}tA)^2-(\partial_y+\mathrm{i}tb')^2 &=(\partial_y-\mathrm{i}tA)^2-(\partial_y+\mathrm{i}tb')(\partial_y-\mathrm{i}tA)\\
      &\quad+ (\partial_y+\mathrm{i}tb')(\partial_y-\mathrm{i}tA) -(\partial_y+\mathrm{i}tb')^2\\
      &= \mathrm{i}tb'\Delta_k^{-1}(\partial_y-\mathrm{i}tA)+  (\partial_y+\mathrm{i}tb')\mathrm{i}tb'\Delta_k^{-1},
   \end{align*}
  from which and the fact $A$ is bounded on $H_k^{-2}$, we infer that 
   \begin{align*}
      & \|(\partial_y-\mathrm{i}tA)^2\omega-(\partial_y+\mathrm{i}tb')^2\omega\|_{L^2}\leq t\|b'\Delta_k^{-1}(\partial_y-\mathrm{i}tA)\omega\|_{{L^2}}+ t\|(\partial_y+\mathrm{i}tb')(b'\Delta_k^{-1}\omega)\|_{{L^2}}\\
      &\leq Ct\|(-\Delta_k)^{-1/2}\omega\|_{{L^2}}+C(t+t^2)\|\Delta_k^{-1}\omega\|_{{L^2}}\leq C\|\Delta_kf\|_{{L^2}}.
   \end{align*}
   Here we used \eqref{est:eibtf-1} for $s=1$ and $s=2$. Thus,
  \begin{align*}
     & \|(\partial_y+\mathrm{i}tb')^2\omega-k^2\omega\|_{{L^2}}\leq C\|\Delta_kf\|_{{L^2}}.
  \end{align*}
  Notice that
  \begin{align*}
     & \mathrm{e}^{\mathrm{i}tb}[(\partial_y+\mathrm{i}tb')^2\omega-k^2\omega] =(\partial_y^2 -k^2)(\mathrm{e}^{\mathrm{i}tb}\omega)=\Delta_k(\mathrm{e}^{\mathrm{i}tb}\omega),
  \end{align*}
  we have
  \begin{align*}
     & \|\mathrm{e}^{\mathrm{i}tb}\omega\|_{H_k^2}=\|\Delta_k(\mathrm{e}^{\mathrm{i}tb}\omega)\|_{L^2}= \|(\partial_y+\mathrm{i}tb')^2\omega-k^2\omega\|_{L^2}
     \leq C\|f\|_{H_k^2}.
  \end{align*}
  This gives \eqref{est:eibtf-2} for $s=2$; moreover, $\|\mathrm{e}^{\mathrm{i}tb}\omega\|_{L^2}=\|\omega(t)\|_{L^2}\le C\|f\|_{L^2}$, which gives  \eqref{est:eibtf-2} for $s=0$.  Then \eqref{est:eibtf-2} holds for $s\in[0,2]$ by the interpolation. 
  
Let $\widetilde{\omega}=\mathrm{e}^{\mathrm{i}tb}\omega=\mathrm{e}^{\mathrm{i}tb}\mathrm{e}^{-\mathrm{i}tB}f$, then $|\widetilde{\omega}|=|{\omega}|$. By \eqref{est:eibtf-2} for $s=1$, we have 
$\|\widetilde{\omega}\|_{H_k^1}\leq C\|f\|_{H_k^1}$ and 
  \begin{align*}
     &|\widetilde{\omega}(t,y)-\widetilde{\omega}(t,0)|\le |y|^{1/2}\|\widetilde{\omega}\|_{H_k^1}\leq C|y|^{1/2}\|f\|_{H_k^1},
  \end{align*}
  
  Let $\Psi$ be a nonnegative function, which is supported in $[-1,1]$, and equals to 1 on $[-1/2,1/2]$. 
  Let $\Psi_t(y)=\Psi((k^2+|t|)^{1/2}y)$ and $\widetilde{\Psi}_t=\mathrm{e}^{-\mathrm{i}tb}{\Psi}_t$ for $|y|\leq \pi$. Then we get by \eqref{est:eibtf-1} for $s=1$ that 
  \begin{align*}
     &|\langle\widetilde{\omega},{\Psi}_t\rangle|=|\langle{\omega},\widetilde{\Psi}_t\rangle|\leq \|{\omega}\|_{H_k^{-1}}\|\widetilde{\Psi}_t\|_{H_k^{1}}\leq C(k^2+|t|)^{-1}\|f\|_{H_k^1}\|\widetilde{\Psi}_t\|_{H_k^{1}},
  \end{align*}
  where
  \begin{align*}
    \|\widetilde{\Psi}_t\|_{H_k^{1}}\leq& |k|\|\widetilde{\Psi}_t\|_{L^2}+\|\partial_y\widetilde{\Psi}_t\|_{L^2}=
     |k|\|{\Psi}_t\|_{L^2}+\|\partial_y{\Psi}_t-\mathrm{i}tb'{\Psi}_t\|_{L^2}\\
     \leq&|k|\|{\Psi}_t\|_{L^2}+\|\partial_y{\Psi}_t\|_{L^2}+|t|\|b'{\Psi}_t\|_{L^2}\leq
     |k|\|{\Psi}_t\|_{L^2}+\|\partial_y{\Psi}_t\|_{L^2}+|t|\|y{\Psi}_t\|_{L^2}\\=&
     |k|(k^2+|t|)^{-1/4}\|{\Psi}\|_{L^2(\R)}+(k^2+|t|)^{1/4}\|{\Psi}'\|_{L^2(\R)}+|t|(k^2+|t|)^{-3/4}\|y{\Psi}\|_{L^2(\R)}\\
     \leq&C(|k|(k^2+|t|)^{-1/4}+(k^2+|t|)^{1/4}+|t|(k^2+|t|)^{-3/4})\leq C(k^2+|t|)^{1/4}.
  \end{align*}  
  This shows 
  \beno
  |\langle\widetilde{\omega},{\Psi}_t\rangle|\leq C(k^2+|t|)^{-3/4}\|f\|_{H_k^1}.
  \eeno
  
   On the other hand, we {have}
  \begin{align*}
     |\langle\widetilde{\omega},{\Psi}_t\rangle-\widetilde{\omega}(t,0)\langle 1,{\Psi}_t\rangle|=&
     |\langle\widetilde{\omega}(t,y)-\widetilde{\omega}(t,0),{\Psi}_t\rangle|\\
     \leq& \langle C|y|^{1/2}\|f\|_{H_k^1},{\Psi}_t\rangle=C\|f\|_{H_k^1}\||y|^{1/2}{\Psi}_t\|_{L^1}\\
     =&C\|f\|_{H_k^1}(k^2+|t|)^{-3/4}\||y|^{1/2}{\Psi}\|_{L^1(\R)}\leq C(k^2+|t|)^{-3/4}\|f\|_{H_k^1}.
  \end{align*}
  Thus, we obtain 
  \beno
   |\widetilde{\omega}(t,0)\langle 1,{\Psi}_t\rangle|\leq C(k^2+|t|)^{-3/4}\|f\|_{H_k^1}.
   \eeno
  Notice that $\|{\Psi}\|_{L^1(\R)}\geq1$ and
  \begin{align*}
     &|\langle 1,{\Psi}_t\rangle|=(k^2+|t|)^{-1/2}\|{\Psi}\|_{L^1(\R)}\geq (k^2+|t|)^{-1/2}.
  \end{align*}
  Thus, $ |\widetilde{\omega}(t,0)|\leq C(k^2+|t|)^{-1/4}\|f\|_{H_k^1}$ and 
  \begin{align*}
    |{\omega}(t,y)|=|\widetilde{\omega}(t,y)|\leq&|\widetilde{\omega}(t,0)|+|\widetilde{\omega}(t,y)-\widetilde{\omega}(t,0)|\\
    \le& C\big((k^2+|t|)^{-1/4}+|y|^{1/2}\big)\|f\|_{H_k^1},
  \end{align*}
  which implies \eqref{est:eibtf-3} for $|y|\leq \pi/2$ due to $|b'(y)|=|\sin y|\sim|y|$. Similarly, we have $ |\widetilde{\omega}(t,\pi)|\leq C(k^2+|t|)^{-1/4}\|f\|_{H_k^1}$, and if $|y-\pi|\leq \pi/2$, then 
  \begin{align*}
     |{\omega}(t,y)|=|\widetilde{\omega}(t,y)|\leq& C((k^2+|t|)^{-1/4}+|y-\pi|^{1/2})\|f\|_{H_k^1}\\
     \leq& C((k^2+|t|)^{-1/4}+|b'|^{1/2})\|f\|_{H_k^1}.
  \end{align*}
  
  This completes the proof of \eqref{est:eibtf-3} by noticing $\omega(t,y)=\mathrm{e}^{-\mathrm{i}tB}f(y)$.
 \end{proof}

   Next, we establish additional bounds on $\mathrm{e}^{-itB}f$ by iterating the results of Lemma \ref{lem2b}.

   \begin{lemma}\label{lem3-dec}
Let $\omega=\mathrm{e}^{-\mathrm{i}tB}f$, $\omega_1=\Delta_{(t)}\omega$ and $\psi=\Delta_k^{-1}\omega$. Then we have
   \begin{align}\label{om9}
   &\|\omega_1(t)\|_{L^2}\leq C\|f\|_{H_k^{2}},\quad \|\omega_1(t)\|_{H_k^{-1}}\leq C(k^2+|t|)^{-1}\ln(|t|/k^2+2)\|f\|_{H_k^{3}},\\
   &\|(\partial_y,k)(\partial_y+\mathrm{i}tb')\psi\|_{L^2}+\|(\partial_y+\mathrm{i}tb')\partial_y\psi\|_{L^2}+|k|\|\partial_y\psi\|_{L^2}\leq 
      C(k^2+|t|)^{-1}\|f\|_{H_k^2},\label{psi1}\\
      & \|(\partial_y+\mathrm{i}tb')\psi\|_{L^{\infty}}+|t|\|b'\psi\|_{L^{\infty}}+\|\partial_y\psi\|_{L^{\infty}}\leq 
      C(k^2+|t|)^{-1}|k|^{-1/2}\|f\|_{H_k^2}, \label{psi2}\\
    \label{omH-3} & \|\omega(t)\|_{H_k^{-3}}=\|\psi(t)\|_{H_k^{-1}}\leq C(k^2+|t|)^{-2.3}|k|^{-1.4}\|f\|_{H_k^{3}},\\
   \label{psi3}  &\|(\partial_y+\mathrm{i}tb')\psi\|_{L^{\infty}}\leq C(k^2+|t|)^{-1.2}|k|^{-1}\|f\|_{H_k^{3}},\\
   \label{om1t}  &|\omega_1(t,y)|\leq C((k^2+|t|)^{-1/4}+|b'|^{1/2})\|f\|_{H_k^{3}}.
   \end{align}
   \end{lemma}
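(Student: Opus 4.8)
\textbf{Plan of proof for Lemma \ref{lem3-dec}.}

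The overall strategy is to bootstrap Lemma \ref{lem2b} and Lemma \ref{lem:t2Lamom-om1} at successively higher regularity levels. The key point is that $\omega_1=\Delta_{(t)}\omega$ is \emph{almost conserved}: by Lemma \ref{lem1-dec} we have $(\partial_t+\mathrm{i}B)\omega_1=-4tk^2\Lambda_3\omega$, and $\Lambda_3$ gains three derivatives (cf.\ \eqref{om8}, $\|\Lambda_3\omega\|_{H^1_k}\le C\|\omega\|_{H^{-3}_k}$), so we may estimate $\omega_1$ via Lemma \ref{prop:timespace-B-0}. For the first claim in \eqref{om9}, $\|\omega_1(t)\|_{L^2}\le C\|f\|_{H^2_k}$ is already \eqref{est:om1-Deltaf}. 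For the weighted bound $\|\omega_1(t)\|_{H^{-1}_k}$, I would run the same Grönwall argument as in \eqref{om8-dec} but at the level of $H^{-1}_k$: using $\|\Lambda_3\omega\|_{L^2}\lesssim\|\omega\|_{H^{-4}_k}$ together with Lemma \ref{lem:t2Lamom-om1} with $s$ slightly positive (or $s=0$ plus interpolation) to convert $\|\omega_1\|_{H^{-s}_k}$ into decay for $\Delta_k^{-1}\omega$. The logarithmic factor $\ln(|t|/k^2+2)$ comes precisely from the borderline time integral $\int_0^t k^2(t'^2+k^4)^{-1}\,dt'$ appearing in the source term after inserting the sharp $H^{-1}_k$ decay of $\psi$; this is the reason the clean power $(k^2+|t|)^{-1}$ is degraded by a log.

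For \eqref{psi1} and \eqref{psi2}, the idea is to extract $\psi$ and its good derivative $(\partial_y+\mathrm{i}tb')\psi$ from $\omega_1$ via the identity \eqref{om11}, namely $\omega_1=(\partial_y-\mathrm{i}tA)^2\omega-k^2\omega-t^2(1-B^2-A^2)\omega$, and then replace $\partial_y-\mathrm{i}tA$ by $\partial_y+\mathrm{i}tb'$ at the expense of $\Delta_k^{-1}$-smoothing errors exactly as done in the proof of Lemma \ref{lem2b} (the computation $(\partial_y-\mathrm{i}tA)^2-(\partial_y+\mathrm{i}tb')^2=\mathrm{i}tb'\Delta_k^{-1}(\partial_y-\mathrm{i}tA)+(\partial_y+\mathrm{i}tb')\mathrm{i}tb'\Delta_k^{-1}$). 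Since $\omega=-\Delta_k\psi$, conjugating by $\mathrm{e}^{\mathrm{i}tb}$ turns $(\partial_y+\mathrm{i}tb')^2-k^2$ acting on $\omega$ into $\Delta_k$ acting on $\mathrm{e}^{\mathrm{i}tb}\omega=-\mathrm{e}^{\mathrm{i}tb}\Delta_k\psi$; combining with \eqref{est:eibtf-1}--\eqref{est:eibtf-2} at $s=1,2$ and with $t^2\|\psi\|_{L^2}\le C\|\omega_1\|_{L^2}$ (Lemma \ref{lem:t2Lamom-om1}, $s=0$), I get the $L^2$-type bounds on first derivatives of $\psi$ decaying like $(k^2+|t|)^{-1}$. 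The $L^\infty$ bounds \eqref{psi2} then follow by Gagliardo--Nirenberg / Agmon: $\|g\|_{L^\infty}\lesssim|k|^{-1/2}\|g\|^{1/2}_{L^2}\|g\|^{1/2}_{H^1_k}$, applied to $g=(\partial_y+\mathrm{i}tb')\psi$ and $g=\partial_y\psi$, using the just-obtained $H^1_k$ control; the term $|t|\|b'\psi\|_{L^\infty}$ is handled by writing $tb'\psi=\mathrm{i}((\partial_y+\mathrm{i}tb')\psi-\partial_y\psi)\cdot(-\mathrm{i})$... more precisely $tb'\psi=-\mathrm{i}(\partial_y+\mathrm{i}tb')\psi+\mathrm{i}\partial_y\psi$ up to sign, so it is already dominated by the other two.

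For \eqref{omH-3} I would iterate once more: feed the sharp decay of $\|\psi\|_{H^{-1}_k}$ that one reads off from the improved $\|\omega_1\|_{H^{-1}_k}$ bound back into Lemma \ref{lem:t2Lamom-om1} at $s=0.3$ (which is allowed since $s\in[0,0.4]$ and $\delta(0.3)>0$), giving $t^2\|\psi\|_{H^{-0.3}_k}\le C\|\omega_1\|_{H^{-0.3}_k}$; interpolating the resulting decay with the trivial bound and tracking the $|k|$ weights yields the stated $(k^2+|t|)^{-2.3}|k|^{-1.4}$. The estimate \eqref{psi3} is the $L^\infty$ analogue: apply Agmon to $(\partial_y+\mathrm{i}tb')\psi$ but now using the higher-regularity $H^1_k$ control coming from $\|\omega_1\|_{H^{-1}_k}$ (hence the extra half-power of $k^2+|t|$ beyond \eqref{psi2}), again at the cost of one power of $|k|$; here the fractional exponent $1.2$ (rather than the conjecturally sharp $1.5$) is inherited directly from the loss in the borderline log-estimate of \eqref{om9} — this is the one genuinely delicate, non-self-improving step. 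Finally \eqref{om1t} is obtained by applying \eqref{est:eibtf-3} of Lemma \ref{lem2b} not to $f$ but to the ``profile'' of $\omega_1$: since $(\partial_t+\mathrm{i}B)\omega_1$ is an integrable-in-time source in $H^1_k$ (by \eqref{om8} and the decay of $\psi$), Duhamel plus \eqref{est:eibtf-3} with $s=1$ gives the pointwise bound $(k^2+|t|)^{-1/4}+|b'|^{1/2}$ for $\omega_1$, uniformly in time.

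\textbf{Main obstacle.} The hard part is the borderline $H^{-1}_k$ estimate for $\omega_1$ in \eqref{om9}: the source term $tk^2\Lambda_3\omega$, after inserting the best available decay for $\psi$, produces a time integral that is only \emph{logarithmically} divergent, so one must carefully balance the $|k|$- and $t$-weights (splitting $t\lesssim k^2$ and $t\gtrsim k^2$) to land exactly on $(k^2+|t|)^{-1}\ln(|t|/k^2+2)$ without losing a power. This log is then what prevents \eqref{psi3} from reaching the conjectured $t^{-3/2}$ and forces the exponent $1.2$; everything downstream (\eqref{omH-3}, \eqref{psi3}) is a bookkeeping consequence of getting this one bound sharp.
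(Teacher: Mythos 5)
Your overall architecture matches the paper's: bound $\omega_1$ through its almost-conservation law $(\partial_t+\mathrm{i}B)\omega_1=-4tk^2\Lambda_3\omega$, feed the result into Lemma \ref{lem:t2Lamom-om1}, and upgrade to $L^\infty$ by Gagliardo--Nirenberg and to \eqref{om1t} by Duhamel plus \eqref{est:eibtf-3}. However, two of your steps, as described, would not go through.

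First, the decay $\|\omega_1(t)\|_{H^{-1}_k}\le C(k^2+|t|)^{-1}\ln(|t|/k^2+2)\|f\|_{H^3_k}$ cannot come from ``the same Gr\"onwall argument as in \eqref{om8-dec} at the level of $H^{-1}_k$'': a Gr\"onwall scheme of that type yields boundedness of the controlled quantity, not decay in $t$, and the integral you blame for the logarithm, $\int_0^t k^2(t'^2+k^4)^{-1}\mathrm{d}t'$, is uniformly bounded (it equals $\pi/2$ in the limit $t\to\infty$ after rescaling $t'=k^2u$), not logarithmically divergent. What is actually needed is the Duhamel representation $\omega_1(t)=\mathrm{e}^{-\mathrm{i}tB}\Delta_kf-\int_0^t4t'k^2\mathrm{e}^{-\mathrm{i}(t-t')B}\Lambda_3\omega(t')\,\mathrm{d}t'$ together with the semigroup decay $\|\mathrm{e}^{-\mathrm{i}sB}g\|_{H^{-1}_k}\le C(k^2+|s|)^{-1}\|g\|_{H^1_k}$ from \eqref{est:eibtf-1} and the bound $\|\Lambda_3\omega(t')\|_{H^1_k}\le C\|\omega(t')\|_{H^{-3}_k}\le C|k|^{-2}(k^2+t')^{-2}\|f\|_{H^3_k}$; the logarithm is produced by the convolution $\int_0^t(k^2+t-t')^{-1}(k^2+t')^{-1}\mathrm{d}t'=2(2k^2+t)^{-1}\ln(t/k^2+1)$.

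Second, the identity \eqref{om11} expresses $\omega_1$ through $(\partial_y-\mathrm{i}tA)^2\omega$, i.e.\ twisted second derivatives of $\omega=\Delta_k\psi$; it is the right tool for controlling $\partial_y^2(\mathrm{e}^{\mathrm{i}tb}\omega)$ as in \eqref{est:eibtf-2}, but it cannot deliver \eqref{psi1}, which concerns one good derivative of $\psi$. The correct route is the first conclusion of Lemma \ref{lem:t2Lamom-om1} at $s=0$, namely
\begin{equation*}
\|\Delta_k\psi+\mathrm{i}t\Lambda_1\psi/2\|_*^2+t^2k^2\|A\psi\|_*^2\le|\langle\omega_1,\psi\rangle_*|\le\|\omega_1\|_{*}\|\psi\|_{*}\le C(k^2+|t|)^{-2}\|f\|_{H^2_k}^2,
\end{equation*}
combined with the identity $\Delta_k\psi+\mathrm{i}t\Lambda_1\psi/2=(\partial_y-\mathrm{i}tA)\partial_y\psi-k^2\psi-\mathrm{i}tB\psi/2$ and the replacement of $A$ by $-b'$ up to $\Delta_k^{-1}$-smoothing errors. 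Once \eqref{om9} and \eqref{psi1} are obtained along these lines, the remainder of your plan (interpolation of $\|\omega_1\|_{H^{-s}_k}$ over $s\in[0,1]$, Lemma \ref{lem:t2Lamom-om1} at an endpoint value of $s$ near $0.4$, Agmon's inequality, and Duhamel with the convolution estimate for \eqref{om1t}) goes through essentially as in the paper.
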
 
   
   \begin{proof}
 Without lose of generality, we assume $t\geq 0$.  The case $t\leq 0$ can be proved by taking the conjugation. Thanks to $(\partial_t+\mathrm{i}B)\omega_1=-4tk^2\Lambda_3\omega$, we get by Duhamel's formula  that 
 \begin{align}\label{Duhamel}
      &\omega_1(t)=\mathrm{e}^{-\mathrm{i}tB}\Delta_kf-\int_0^t4t'k^2\mathrm{e}^{-\mathrm{i}(t-t')B}\Lambda_3\omega(t')\mathrm{d}t'.\end{align}
  By \eqref{om8} and \eqref{est:eibtf-1} (for $s=2$), we have 
   \begin{align*}
      &\|\Lambda_3\omega(t)\|_{H_k^{1}}\leq C\|\omega(t)\|_{H_k^{-3}},  
       \end{align*}
   and
    \begin{align*}
      \|\omega(t)\|_{H_k^{-3}}&\leq |k|^{-1}\|\omega(t)\|_{H_k^{-2}}=|k|^{-1}\|\mathrm{e}^{-\mathrm{i}tB}f\|_{H_k^{-2}}\\
      &\leq C|k|^{-1}(k^2+|t|)^{-2}\|f\|_{H_k^{2}}\leq C|k|^{-2}(k^2+|t|)^{-2}\|f\|_{H_k^{3}}.
   \end{align*}   
  By \eqref{est:eibtf-1} (for $s=1$), we have
   \begin{align*}
      \|\omega_1(t)\|_{H_k^{-1}}\leq&\|\mathrm{e}^{-\mathrm{i}tB}\Delta_kf\|_{H_k^{-1}}+
      \int_0^t4t'k^2\|\mathrm{e}^{-\mathrm{i}(t-t')B}\Lambda_3\omega(t')\|_{H_k^{-1}}\mathrm{d}t'\\
      \leq&C(k^2+t)^{-1}\|\Delta_kf\|_{H_k^{1}}+C\int_0^tt'k^2(k^2+t-t')^{-1}\|\Lambda_3\omega(t')\|_{H_k^{1}}\mathrm{d}t'\\
      \leq&C(k^2+t)^{-1}\|f\|_{H_k^{3}}+C\int_0^tt'k^2(k^2+t-t')^{-1}\|\omega(t')\|_{H_k^{-3}}\mathrm{d}t'\\
      \leq&C(k^2+t)^{-1}\|f\|_{H_k^{3}}+C\int_0^tt'(k^2+t-t')^{-1}(k^2+t')^{-2}\|f\|_{H_k^{3}}\mathrm{d}t'\\
      \leq&C(k^2+t)^{-1}\|f\|_{H_k^{3}}+C\int_0^t(k^2+t-t')^{-1}(k^2+t')^{-1}\|f\|_{H_k^{3}}\mathrm{d}t'\\
      \leq&C(k^2+t)^{-1}\|f\|_{H_k^{3}}+C(k^2+t)^{-1}\ln(t/k^2+1)\|f\|_{H_k^{3}}\\\leq& C(k^2+t)^{-1}\ln(t/k^2+2)\|f\|_{H_k^{3}}.
   \end{align*}
This along with \eqref{est:om1-Deltaf} gives \eqref{om9}. Here we used the fact
 \begin{align*}
\int_0^t(k^2+t-t')^{-1}(k^2+t')^{-1}\mathrm{d}t'&=(2k^2+t)^{-1}\ln\frac{k^2+t'}{k^2+t-t'}\Big|_{t'=0}^{t'=t}\\
&=2(2k^2+t)^{-1}\ln(t/k^2+1).
\end{align*}

 By Lemma \ref{lem:t2Lamom-om1} (for $s=0$), \eqref{est:om1-Deltaf}, and \eqref{est:eibtf-1} (for $s=2$), we have 
\begin{align}\label{psi9}\|\Delta_k\psi+\mathrm{i}t\Lambda_1\psi/2\|_{*}^2 +t^2k^2\|A\psi\|_{*}^2\leq |\langle\omega_1,\psi\rangle_*|\leq \|\omega_1\|_*\|\psi\|_*\leq C(k^2+t)^{-2}\|f\|_{H_k^2}^2.\end{align} 
By \eqref{lam1}, we have
\begin{align*}
\Delta_k\psi+\mathrm{i}t\Lambda_1\psi/2=(\partial_y^2-k^2)\psi-\mathrm{i}tA\partial_y\psi-\mathrm{i}tB\psi/2
=(\partial_y-\mathrm{i}tA)\partial_y\psi-k^2\psi-\mathrm{i}tB\psi/2.\end{align*}
Then by \eqref{est:eibtf-1} (for $s=2$) and \eqref{psi9}, we obtain
\begin{align*}
\|(\partial_y-\mathrm{i}tA)\partial_y\psi\|_{L^2}\leq&\|\Delta_k\psi+\mathrm{i}t\Lambda_1\psi/2\|_{L^2}+
k^2\|\psi\|_{L^2}+t\|B\psi\|_{L^2}\\
\leq&C(k^2+t)^{-1}\|f\|_{H_k^2}+k^2\|\psi\|_{L^2}+t\|\psi\|_{L^2}\\\leq& C(k^2+t)^{-1}\|f\|_{H_k^2}.
\end{align*}

Recall that $A=\sin y(1+\Delta_k^{-1})=-b'(1+\Delta_k^{-1})$. Thus, 
   \begin{align*}
      & \|(\partial_y-\mathrm{i}tA)\partial_y\psi-(\partial_y+\mathrm{i}tb')\partial_y\psi\|_{L^2}= t\|b'\Delta_k^{-1}\partial_y\psi\|_{{L^2}}\\
      &\quad\leq Ct\|\psi\|_{{L^2}}\leq Ct(k^2+t)^{-2}\|f\|_{H_k^2}\leq (k^2+t)^{-1}\|f\|_{H_k^2},\\
      &\|A\psi+b'\psi\|_{L^2}=\|b'\Delta_k^{-1}\psi\|_{{L^2}}\leq|k|^{-2}\|\psi\|_{{L^2}}\leq C|k|^{-2}(k^2+t)^{-2}\|f\|_{H_k^2}.
   \end{align*}
   Thus, we arrive at
   \beno
   \|(\partial_y+\mathrm{i}tb')\partial_y\psi\|_{L^2}\leq  C(k^2+t)^{-1}\|f\|_{H_k^2},
   \eeno
   and  by \eqref{psi9},
      \begin{align*}
   t|k|\|b'\psi\|_{L^2}\leq& t|k|\|A\psi\|_{L^2}+t|k|\|A\psi+b'\psi\|_{L^2}\\ \leq& C(k^2+t)^{-1}\|f\|_{H_k^2}+Ct|k|^{-1}(k^2+t)^{-2}\|f\|_{H_k^2}\\\leq& C(k^2+t)^{-1}\|f\|_{H_k^2}.
   \end{align*}
   Then, using $\partial_y(\partial_y+\mathrm{i}tb')\psi=(\partial_y+\mathrm{i}tb')\partial_y\psi+\mathrm{i}tb''\psi$ and \eqref{est:eibtf-1}, we conclude   
\begin{align*}
   &\|(\partial_y,k)(\partial_y+\mathrm{i}tb')\psi\|_{L^2}+\|(\partial_y+\mathrm{i}tb')\partial_y\psi\|_{L^2}+|k|\|\partial_y\psi\|_{L^2}\\
   &\leq 2\|(\partial_y+\mathrm{i}tb')\partial_y\psi\|_{L^2}+t|k|\|b'\psi\|_{L^2}+2|k|\|\partial_y\psi\|_{L^2}+t\|b''\psi\|_{L^2}\\
  &\leq C(k^2+t)^{-1}\|f\|_{H_k^2}+2|k|\|\omega\|_{H_k^{-1}}+t\|\psi\|_{L^2}\\
  &\leq C(k^2+t)^{-1}\|f\|_{H_k^2}+C|k|(k^2+t)^{-1}\|f\|_{H_k^{1}}+t(k^2+t)^{-2}\|f\|_{H_k^2}\\&\leq C(k^2+t)^{-1}\|f\|_{H_k^2},
   \end{align*}
   which gives \eqref{psi1}. Thanks to $\mathrm{e}^{\mathrm{i}tb'}(\partial_y+\mathrm{i}tb')\partial_y\psi=\partial_y(\mathrm{e}^{\mathrm{i}tb'}\partial_y\psi)$, we obtain
   \begin{align*}
     &\|(\partial_y+\mathrm{i}tb')\psi\|_{L^{\infty}}\leq C|k|^{-1/2}\|(\partial_y,k)(\partial_y+\mathrm{i}tb')\psi\|_{L^2}\leq 
      C|k|^{-1/2}(k^2+t)^{-1}\|f\|_{H_k^2},\\
      &\|\partial_y\psi\|_{L^{\infty}}=\|\mathrm{e}^{\mathrm{i}tb'}\partial_y\psi\|_{L^{\infty}}\leq C|k|^{-1/2}\|(\partial_y,k)(\mathrm{e}^{\mathrm{i}tb'}\partial_y\psi)\|_{L^2}\\ &\quad\leq 
     C|k|^{-1/2}\big(\|(\partial_y+\mathrm{i}tb')\partial_y\psi\|_{L^2}+|k|\|\partial_y\psi\|_{L^2}\big)\leq C|k|^{-1/2}(k^2+t)^{-1}\|f\|_{H_k^2},
   \end{align*}
   which along with  $|(\partial_y+\mathrm{i}tb')\psi-\partial_y\psi|=|tb'\psi|$ show \eqref{psi2}.

   By \eqref{om9} and the interpolation, we get
   \begin{align}\label{om1s}
      &\|\omega_1(t)\|_{H_k^{-s}}\leq C(k^2+t)^{-s}|k|^{s-1}\ln(t/k^2+2)\|f\|_{H_k^{3}},\quad \forall\ s\in[0,1].
   \end{align}
  By Lemma \ref{lem:t2Lamom-om1} for $s=0.4$, we have 
  \begin{align*}
      \|\omega(t)\|_{H_k^{-3}}=\|\psi(t)\|_{H_k^{-1}}\le& |k|^{s-1}\|\psi(t)\|_{H_k^{-s}}\le C|k|^{s-1}t^{-2}\|\omega_1(t)\|_{H_k^{-s}}\\
      \le& C|k|^{s-1}t^{-2}(k^2+t)^{-s}|k|^{s-1}\ln(t/k^2+2)\|f\|_{H_k^{3}}\\\le& 
      C|k|^{s-1}t^{-2}t^{-s}|k|^{s-1}(t/k^2)^{0.1}\|f\|_{H_k^{3}}
      =Ct^{-2.3}|k|^{-1.4}\|f\|_{H_k^{3}}.
   \end{align*}
   On the other hand,
   \beno
  \|\omega(t)\|_{H_k^{-3}}\leq |k|^{-3}\|\omega(t)\|_{L^2}\leq C|k|^{-3}\|f\|_{L^2}\leq C|k|^{-6}\|f\|_{H_k^{3}}.
  \eeno
  Thus, we arrive at
 \begin{align*}
      &\|\omega(t)\|_{H_k^{-3}}\leq C\min(t^{-2.3}|k|^{-1.4},|k|^{-6})\|f\|_{H_k^{3}}\leq C(k^2+t)^{-2.3}|k|^{-1.4}\|f\|_{H_k^{3}},
  \end{align*}
  which gives \eqref{omH-3}.
  
   For \eqref{psi3}, if $0\leq t\leq k^2$, then we get by \eqref{psi2}  that
   \begin{align*}
     \|(\partial_y+\mathrm{i}tb')\psi\|_{L^{\infty}}\leq&
      C|k|^{-1/2}(k^2+t)^{-1}\|f\|_{H_k^2}\leq C|k|^{-2.5}\|f\|_{H_k^2}\leq C|k|^{-3.5}\|f\|_{H_k^3}\\
      \leq& C|k|^{-3.4}\|f\|_{H_k^3}=C(k^2)^{-1.2}|k|^{-1}\|f\|_{H_k^3}\\\leq& C(k^2+t)^{-1.2}|k|^{-1}\|f\|_{H_k^3}.
   \end{align*}
 Now we assume $t\geq k^2$. By Lemma \ref{lem:t2Lamom-om1} for $s=0.4$ and \eqref{om1s}, we have
 \begin{align*}
     \|(\partial_y-\mathrm{i}tA)\psi\|_{L^{\infty}}\leq& C|k|^{s-1/2}\|(\partial_y-\mathrm{i}tA)\psi\|_{H_k^{1-s}}\leq C|k|^{s-1/2}t^{-1}\|\omega_1\|_{H_k^{-s}}\\
      \leq& C|k|^{s-1/2}t^{-1}(k^2+t)^{-s}|k|^{s-1}\ln(t/k^2+2)\|f\|_{H_k^{3}}\\\leq& Ct^{-1-s}|k|^{2s-3/2}(t/k^2)^{0.2}\|f\|_{H_k^{3}}\\
      =&Ct^{-1.2}|k|^{-1.1}\|f\|_{H_k^{3}}\leq Ct^{-1.2}|k|^{-1}\|f\|_{H_k^{3}}.
   \end{align*} Recall that $A=\sin y(1+\Delta_k^{-1})=-b'(1+\Delta_k^{-1})$, then we get by \eqref{omH-3} that 
   \begin{align*}
    \|(\partial_y-\mathrm{i}tA)\psi-(\partial_y+\mathrm{i}tb')\psi\|_{L^{\infty}}\leq& t\|b'\Delta_k^{-1}\psi\|_{L^{\infty}}\leq Ct|k|^{-1/2}\|\Delta_k^{-1}\psi\|_{H_k^{1}}\\=&Ct|k|^{-1/2}\|\psi\|_{H_k^{-1}}=Ct|k|^{-1/2}\|\omega(t)\|_{H_k^{-3}}\\\leq&
      Ct|k|^{-1/2}t^{-2.3}|k|^{-1.4}\|f\|_{H_k^{3}}\leq Ct^{-1.2}|k|^{-1}\|f\|_{H_k^{3}}.
   \end{align*}
   Thus, we obtain
   \begin{align*}
      & \|(\partial_y+\mathrm{i}tb')\psi\|_{L^{\infty}}\leq Ct^{-1.2}|k|^{-1}\|f\|_{H_k^{3}}\leq C(k^2+t)^{-1.2}|k|^{-1}\|f\|_{H_k^3}.
   \end{align*}
  By combining the two cases $0\leq t\leq k^2$ and $t\geq k^2$, we arrive at \eqref{psi3}. 
   
   By \eqref{Duhamel}, {\eqref{est:eibtf-3}, \eqref{om8} and \eqref{omH-3}}, we have
   \begin{align*}
      &|\omega_1(t,y)|\leq|\mathrm{e}^{-\mathrm{i}tB}\Delta_kf(t,y)|+
      \int_0^t4t'k^2|\mathrm{e}^{-\mathrm{i}(t-t')B}\Lambda_3\omega(t',y)|\mathrm{d}t'\\
      &\leq C\big((k^2+t)^{-1/4}+|b'|^{1/2})\|\Delta_kf\|_{H_k^{1}}+C\int_0^tt'k^2((k^2+t-t')^{-1/4}+|b'|^{1/2}\big)
      \|\Lambda_3\omega(t')\|_{H_k^{1}}\mathrm{d}t'\\
      &\leq C\big((k^2+t)^{-1/4}+|b'|^{1/2}\big)\|f\|_{H_k^{3}}\\&\quad+C\int_0^tt'k^2((k^2+t-t')^{-1/4}+|b'|^{1/2})
      (k^2+t')^{-2.3}|k|^{-1.4}\|f\|_{H_k^{3}}\mathrm{d}t'\\
      &\leq C\big((k^2+t)^{-1/4}+|b'|^{1/2}\big)\|f\|_{H_k^{3}}\\&\quad+C\int_0^t|k|^{0.6}((k^2+t-t')^{-1/4}+|b'|^{1/2})(k^2+t')^{-1.3}\|f\|_{H_k^{3}}\mathrm{d}t'\\
      &\leq C\big((k^2+t)^{-1/4}+|b'|^{1/2}\big)\|f\|_{H_k^{3}}.
   \end{align*}
   This gives \eqref{om1t}. Here we used the fact that 
   \begin{align*}
      &\int_0^t|k|^{0.6}(k^2+t')^{-1.3}\mathrm{d}t'\leq C|k|^{0.6}(k^2)^{-0.3}=C,
   \end{align*}
   and 
    \begin{align*}
     &\int_0^t|k|^{0.6}(k^2+t-t')^{-1/4}(k^2+t')^{-1.3}\mathrm{d}t'\\
      &\leq \int_0^{t/2}|k|^{0.6}(k^2+t/2)^{-1/4}(k^2+t')^{-1.3}\mathrm{d}t'+\int_{t/2}^t|k|^{0.6}(k^2+t-t')^{-1/4}(k^2+t/2)^{-1.3}\mathrm{d}t'\\
      &\leq C|k|^{0.6}(k^2+t/2)^{-1/4}(k^2)^{-0.3}+C|k|^{0.6}(k^2+t/2)^{3/4}(k^2+t/2)^{-1.3}\\
      &= C(k^2+t/2)^{-1/4}+C|k|^{0.6}(k^2+t/2)^{-0.55}\\ &\leq C(k^2+t/2)^{-1/4}+C|k|^{0.6}(k^2+t/2)^{-1/4}(k^2)^{-0.3}\\&\leq C(k^2+t/2)^{-1/4}\leq C(k^2+t)^{-1/4}.
   \end{align*}

   This completes the proof of the lemma.
   \end{proof} 

\subsubsection{Decay estimates near critical point}
In what follows, we denote
\beno
&&\omega=\mathrm{e}^{-\mathrm{i}tB}f,\quad \psi=\Delta_k^{-1}\omega,\\
&&\omega_1=\Delta_{(t)}\omega,\quad  \omega_2=(\partial_y+\mathrm{i}tb')^2\omega+2t^2\psi,\quad  \omega_3=(\partial_y+\mathrm{i}tb')^2\partial_y^2\psi+2t^2\psi.
\eeno
We introduce the operator $L=(\partial_y+\mathrm{i}tb')\partial_y$. Notice that 
\beno
L=\pa_y(\partial_y+\mathrm{i}tb')-\mathrm{i}tb''=\pa_y(\partial_y+\mathrm{i}tb')+\mathrm{i}tb.
\eeno
Then we have
\begin{align*}
      \omega_3=&(\partial_y+\mathrm{i}tb')^2\partial_y^2 {\psi}+2t^2{\psi}
      =(\partial_y+\mathrm{i}tb')[\partial_y(\partial_y+\mathrm{i}tb')+\mathrm{i}tb]\partial_y{\psi}+2t^2{\psi}\\
      =&(\partial_y+\mathrm{i}tb')\partial_y(\partial_y+\mathrm{i}tb')\partial_y{\psi}+
      \mathrm{i}tb(\partial_y+\mathrm{i}tb')\partial_y{\psi}+\mathrm{i}tb'\partial_y{\psi}+2t^2{\psi}.   
      \end{align*}
  This gives
  \begin{align}
      \omega_3
      =&L^2{\psi}+\mathrm{i}tbL{\psi}+\mathrm{i}tb'\partial_y{\psi}+2t^2{\psi}.\label{om3a}
   \end{align}  
   
The main result of this subsection consists of the following decay estimates for $\mathrm{e}^{-\mathrm{i}tB}f$ near the critical point of $b(y)$. 

    \begin{lemma}\label{lem6}      
It {holds} that  for $t\geq k^2$, $|y|\leq 1/|k|$, 
   \begin{align*}
      &|{\psi}(t,y)|\leq Ct^{-2}|k|^{-1/2}\|f\|_{H_k^{3}},\\
      &|\partial_y{\psi}(t,y)|\leq C(t^{-3/2}|k|^{-1/2}+|y|t^{-1}|k|^{-1/2})\|f\|_{H_k^{3}},\\
       &|\partial_y(\mathrm{e}^{\mathrm{i}tb}\omega)(t,y)|\leq C(t^{-1/2}+|y|)|k|^{-1/2}\|f\|_{H_k^{3}},\\
       &|\omega(t,y)|=|\mathrm{e}^{\mathrm{i}tb}\omega(t,y)|\leq C(t^{-1}+|y|^2)|k|^{-1/2}\|f\|_{H_k^{3}}.
   \end{align*}
   \end{lemma}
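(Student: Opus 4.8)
The strategy is to exploit the two quantities $\omega_2=(\partial_y+\mathrm{i}tb')^2\omega+2t^2\psi$ and $\omega_3=(\partial_y+\mathrm{i}tb')^2\partial_y^2\psi+2t^2\psi$ together with the control on $\omega_1=\Delta_{(t)}\omega$ already obtained in Lemma \ref{lem3-dec}. First I would relate $\omega_1$, $\omega_2$ and $\omega_3$: since $\omega_1=(\partial_y-\mathrm{i}tA)^2\omega-k^2\omega-t^2(1-B^2-A^2)\omega$ (equation \eqref{om11}) and $A=-b'(1+\Delta_k^{-1})$, $B=b(1+\Delta_k^{-1})$, the difference between $(\partial_y-\mathrm{i}tA)^2$ and $(\partial_y+\mathrm{i}tb')^2$ and between $t^2(1-B^2-A^2)$ and $-2t^2\Delta_k^{-1}=-2t^2\psi/\omega$ are lower-order: using the bounds on $\psi$ from Lemma \ref{lem3-dec} (namely $\|\psi\|_{H_k^{-1}}\lesssim (k^2+t)^{-2.3}|k|^{-1.4}\|f\|_{H_k^3}$, $\|\psi\|_{L^2}\lesssim (k^2+t)^{-2}\|f\|_{H_k^2}$) I expect to get $\|\omega_2\|_{L^2}\lesssim (k^2+t)^{-1}\ln(t/k^2+2)|k|^{-1}\|f\|_{H_k^3}$ or better, and by multiplying the identity $\mathrm{e}^{\mathrm{i}tb}\omega_2=\Delta_k(\mathrm{e}^{\mathrm{i}tb}\omega)+2t^2\mathrm{e}^{\mathrm{i}tb}\psi$, also $H^s$-type control of $\mathrm{e}^{\mathrm{i}tb}\omega$. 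Then $\omega_3$ is obtained from $\omega_2$ by replacing $\omega=\Delta_k\psi$ with $\partial_y^2\psi$, i.e. $\omega_3-\omega_2 = (\partial_y+\mathrm{i}tb')^2(\partial_y^2-\Delta_k)\psi=(\partial_y+\mathrm{i}tb')^2 k^2\psi$, again controlled by the $\psi$-estimates in Lemma \ref{lem3-dec}.

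The core of the argument is the ODE-type analysis of $\omega_3=L^2\psi+\mathrm{i}tbL\psi+\mathrm{i}tb'\partial_y\psi+2t^2\psi$ from \eqref{om3a} on the small interval $|y|\le 1/|k|$, where $b(y)=\cos y\approx 1-y^2/2$ and $b'(y)=-\sin y\approx -y$. Near the critical point the operator $L=(\partial_y+\mathrm{i}tb')\partial_y$ degenerates, and $\omega_3$ acts as a forcing for a second-order equation in $\psi$ whose coefficient of $\psi$ is $\approx \mathrm{i}t\cdot 1 + 2t^2$, which is large ($\sim t^2$ for $t\ge k^2\ge 1$). The plan is to treat \eqref{om3a} as $2t^2\psi = \omega_3 - L^2\psi - \mathrm{i}tbL\psi - \mathrm{i}tb'\partial_y\psi$ and estimate each term. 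The terms $L\psi$, $L^2\psi$, $\partial_y\psi$ involve $(\partial_y+\mathrm{i}tb')$-derivatives of $\psi$ and its first derivative, which are exactly the quantities bounded in \eqref{psi1}--\eqref{psi2} of Lemma \ref{lem3-dec} (via $L\psi=(\partial_y+\mathrm{i}tb')\partial_y\psi$, and $L^2\psi=(\partial_y+\mathrm{i}tb')\partial_y L\psi$, pulling out $\mathrm{e}^{\mathrm{i}tb'}$ to turn $(\partial_y+\mathrm{i}tb')$ into $\partial_y$ and using $L^2$-to-$L^\infty$ Sobolev embedding on the short interval, costing $|k|^{-1/2}$). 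Combined with the pointwise bound on $\omega_3$ that one gets from the $L^2$ bound on $\omega_3$ plus Sobolev embedding (here the factor $(t^{-1}+|y|^2)$ naturally appears because $\mathrm{e}^{\mathrm{i}tb}\omega_3$ has the structure $\Delta_k(\text{something}) + 2t^2(\cdots)$, and the vorticity-depletion vanishing at $y=0$ contributes the $|y|^2$), dividing by $2t^2$ yields $|\psi(t,y)|\lesssim t^{-2}|k|^{-1/2}\|f\|_{H_k^3}$. For $\partial_y\psi$ I would differentiate, again isolating the $t^2\partial_y\psi$ term and using that one derivative of the forcing and the already-established bound on $\psi$ control the rest, producing the stated $t^{-3/2}|k|^{-1/2}+|y|t^{-1}|k|^{-1/2}$ (the $|y|$ growth reflecting that $b'\approx -y$ and the critical-point degeneracy). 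Finally, $\partial_y(\mathrm{e}^{\mathrm{i}tb}\omega)=\mathrm{e}^{\mathrm{i}tb}(\partial_y+\mathrm{i}tb')\omega$ and $\mathrm{e}^{\mathrm{i}tb}\omega$ are recovered from $\mathrm{e}^{\mathrm{i}tb}\omega_2 = \Delta_k(\mathrm{e}^{\mathrm{i}tb}\omega)+2t^2\mathrm{e}^{\mathrm{i}tb}\psi$ by inverting $\Delta_k$ on the short interval and using the pointwise bound on $\psi$ just obtained together with the $L^2$ bound on $\omega_2$; Sobolev embedding on $|y|\le 1/|k|$ again costs $|k|^{-1/2}$ and the $(t^{-1/2}+|y|)$, $(t^{-1}+|y|^2)$ shapes emerge from integrating $\partial_y^2(\mathrm{e}^{\mathrm{i}tb}\omega)=\mathrm{e}^{\mathrm{i}tb}\omega_2-2t^2\mathrm{e}^{\mathrm{i}tb}\psi$ twice starting from $y=0$, where the depletion forces $\mathrm{e}^{\mathrm{i}tb}\omega$ and its derivative to be small.

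\textbf{Main obstacle.} The delicate point is bookkeeping the interplay between the degeneracy of $L=(\partial_y+\mathrm{i}tb')\partial_y$ at $y=0$ and the requirement that the final bounds be \emph{uniform in $k$} with exactly the claimed powers and exactly the $|y|$-dependence. One must be careful that converting $(\partial_y+\mathrm{i}tb')$-derivatives into ordinary $\partial_y$-derivatives via the integrating factor $\mathrm{e}^{\mathrm{i}tb}$ (or $\mathrm{e}^{\mathrm{i}tb'}$) does not lose powers of $t$, and that on the interval $|y|\le 1/|k|$ the Sobolev embedding constant is genuinely $C|k|^{-1/2}$ (which it is, since the interval has length $2/|k|$). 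The term $\mathrm{i}tbL\psi$ is the largest among the forcing terms in \eqref{om3a} (a full factor $t$ times $L\psi$), so one needs the bound on $L\psi$ to be good enough — precisely $(k^2+t)^{-1}$ from \eqref{psi1} — and the logarithmic losses coming from $\|\omega_1\|_{H_k^{-1}}$ in \eqref{om9} must be checked not to propagate into the final estimates; they do not, because the relevant $\psi$-bounds \eqref{psi1}--\eqref{psi2} are log-free and the $\omega_3$ bound enters only through its $L^2$ norm which one can afford a log in, or one uses the cruder but clean bound on $\omega_2$. I also anticipate needing the identity relating $\omega_2$ and $\omega_3$ and the equation $\Delta_k(\mathrm{e}^{\mathrm{i}tb}\omega)=\mathrm{e}^{\mathrm{i}tb}\omega_2-2t^2\mathrm{e}^{\mathrm{i}tb}\psi$ to be used in tandem — estimating $\mathrm{e}^{\mathrm{i}tb}\omega$ pointwise requires the pointwise bound on $\psi$ first, so the order of the four estimates is: $\psi$, then $\partial_y\psi$, then $\partial_y(\mathrm{e}^{\mathrm{i}tb}\omega)$ and $\omega$ together.
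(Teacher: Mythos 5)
There is a genuine gap at the heart of your plan. Your core step is to read \eqref{om3a} as $2t^2\psi=\omega_3-L^2\psi-\mathrm{i}tbL\psi-\mathrm{i}tb'\partial_y\psi$ and estimate each term on the right separately using Lemma \ref{lem3-dec} plus Sobolev embedding on $|y|\le 1/|k|$, then divide by $t^2$. This termwise strategy cannot produce the claimed $t^{-2}$ decay. The dominant forcing term is $\mathrm{i}tbL\psi$, and to make it of size $|k|^{-1/2}\|f\|_{H^3_k}$ you need the \emph{pointwise} bound $|L\psi|\lesssim t^{-1}|k|^{-1/2}\|f\|_{H^3_k}$ — but that bound is not available a priori: \eqref{psi1} only gives $\|L\psi\|_{L^2}\lesssim (k^2+t)^{-1}\|f\|_{H^2_k}$, and upgrading it to $L^\infty$ by embedding/interpolation requires $L^2$ control with decay of a further derivative of $L\psi$ (an $\omega_3$-type second-order quantity), for which the only available bounds ($\|\omega_1\|_{L^2}\lesssim\|f\|_{H^2_k}$, or the pointwise $(t^{-1/4}+|b'|^{1/2})$ bound on $\omega_2,\omega_3$) carry no $t^{-1}$ decay; Gagliardo–Nirenberg then yields at best $\|L\psi\|_{L^\infty}\lesssim t^{-1/2}$, losing a full $t^{1/2}$. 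Worse, the term $L^2\psi$ has no pointwise decay at all by itself: the individual terms $L^2\psi$, $\mathrm{i}tbL\psi$, $2t^2\psi$ are each potentially $O(1)$ near the critical point, and only their combination $\omega_3$ is small. Note also the circularity: the pointwise bound $|L\psi|\lesssim t^{-1}|k|^{-1/2}$ you would need is precisely \eqref{psi6}, i.e.\ part of the output of the analysis you are trying to avoid.

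What is missing is the mechanism by which the paper actually inverts the degenerate operator $L^2+\mathrm{i}tbL+2t^2$ near $y=0$: since $b\approx 1$ there, one factors $L^2+\mathrm{i}tL+2t^2=(L+2\mathrm{i}t)(L-\mathrm{i}t)$ and rewrites \eqref{om3a} as equations for $\psi_1=(L-\mathrm{i}t)\psi$ and $\psi_2=(L+2\mathrm{i}t)\psi$ with forcings controlled by $\omega_3$, $tb'\partial_y\psi$ and $t^2(b-1)\psi$ (the error $\mathrm{i}t(b-1)L\psi$ being harmless since $b-1=O(y^2)$). Each of these first-order-in-$L$ equations, $(L+\mathrm{i}t(b+1))\psi_1=\cdots$ and $(L+\mathrm{i}t(b-2))\psi_2=\cdots$, is then solved by an explicit Green's function built from oscillatory homogeneous solutions ($f_{1\pm}$, $f_{2\pm}$ in Lemmas \ref{lemf1}–\ref{lemf2}), with stationary-phase pointwise bounds and Wronskian lower bounds $|W_1|\gtrsim t^{-1/2}$, $|W_2|\gtrsim t^{3/2}$; Lemma \ref{lem5a} converts this into $t\|\psi_i\|_{L^\infty}+t^{1/2}|\partial_y\psi_i(0)|\lesssim\|F/(t^{-1/2}+|y|)\|_{L^1}+t|k|^{1/2}\|\psi_i\|_{L^2}$, which is where the full extra factor of $t^{-1}$ is gained from a forcing that is only $O(t^{-1/4}+|b'|^{1/2})$ pointwise. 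From $L\psi=(2\psi_1+\psi_2)/3$, $\mathrm{i}t\psi=(\psi_2-\psi_1)/3$ one then gets \eqref{psi6}–\eqref{psi7}, and only after that do your remaining steps (the identities at $y=0$, the bound \eqref{om4} on $\partial_y^2(\mathrm{e}^{\mathrm{i}tb}\omega)$ via $\omega_2$, and the double integration from $y=0$ producing the $(t^{-1/2}+|y|)$ and $(t^{-1}+|y|^2)$ shapes) go through as you describe. Without the factorization and the ODE/Green's-function input, the argument does not close.
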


The proof is based on the following lemmas.

\begin{lemma}\label{lem4a}
It holds that for $t\geq k^2$, $|y|\leq 1/|k|$,
   \begin{align}\label{om3}
   &|\omega_2(t,y)|\leq C(t^{-1/4}+|b'|^{1/2})\|f\|_{H_k^{3}},\quad |\omega_3(t,y)|\leq C(t^{-1/4}+|b'|^{1/2})\|f\|_{H_k^{3}}.
   \end{align}
\end{lemma}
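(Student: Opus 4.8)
The plan is to reduce both $\omega_2$ and $\omega_3$ to $\omega_1$ plus a handful of explicitly identifiable correction terms, and then estimate each correction pointwise using the bounds already proved in Lemma~\ref{lem3-dec}: the pointwise bound \eqref{om1t} on $\omega_1$, the bounds \eqref{psi1}--\eqref{psi2} on $\psi,\partial_y\psi,tb'\psi$, the negative-norm decay \eqref{omH-3}, the semigroup bound \eqref{est:eibtf-3}, and the elementary Sobolev inequality $\|g\|_{L^\infty}^2\le C\|g\|_{H^1_k}\|g\|_{L^2}$. Throughout, $\psi=\Delta_k^{-1}\omega$, $t\ge k^2$, $|y|\le 1/|k|$. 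For $\omega_2$, writing $\partial_y-\mathrm{i}tA=(\partial_y+\mathrm{i}tb')+\mathrm{i}tb'\Delta_k^{-1}$ and expanding the square in \eqref{om11}, one obtains
$$\omega_2=\omega_1+k^2\omega-\mathcal E\omega+t^2\big(S+2\Delta_k^{-1}\big)\omega,\qquad S=1-B^2-A^2,$$
with $\mathcal E=(\partial_y+\mathrm{i}tb')(\mathrm{i}tb'\Delta_k^{-1})+(\mathrm{i}tb'\Delta_k^{-1})(\partial_y-\mathrm{i}tA)$.

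The decisive algebraic point is that the leading (order $-2$) parts of $S$ and $-2\Delta_k^{-1}$ cancel: using \eqref{est:A2+B2} and \eqref{L1-1}--\eqref{L2},
$$S+2\Delta_k^{-1}=\Delta_k^{-1}-\tfrac12\big(\Delta_{k,-1}^{-1}+\Delta_{k,1}^{-1}\big)\big(1+\Delta_k^{-1}\big)=2\Delta_k^{-1}\big(\Delta_k+1+2k^2\big)\big[(\Delta_k+1)^2+4k^2\big]^{-1},$$
a Fourier multiplier of size $\lesssim(n^2+k^2)^{-2}$, hence an operator of order $-4$; thus $\|(S+2\Delta_k^{-1})\omega\|_{L^\infty}\le C|k|^{-1/2}\|\omega\|_{H^{-3}_k}$, and by \eqref{omH-3} together with $t^2(k^2+t)^{-2.3}\le t^{-0.3}$ this term is $\le Ct^{-1/4}\|f\|_{H^3_k}$. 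The $\omega_1$ and $k^2\omega$ terms follow from \eqref{om1t} and from \eqref{est:eibtf-3} with $k^2\|f\|_{H^1_k}\le\|f\|_{H^3_k}$. Expanding $\mathcal E\omega$ produces a sum of $tb''\psi$, $tb'\partial_y\psi$, $t^2(b')^2\psi$, $t^2b'\Delta_k^{-1}(b'\psi)$ and $t^2b'\Delta_k^{-1}(b'\omega)$; the first three are $\le C(t^{-1/4}+|b'|^{1/2})\|f\|_{H^3_k}$ by \eqref{psi2} (using $\|tb'\psi\|_{L^\infty}\lesssim(k^2+t)^{-1}|k|^{-1/2}\|f\|_{H^2_k}$ and $t(k^2+t)^{-1}\le1$), the fourth by \eqref{omH-3} and the Sobolev inequality, and for the fifth I would first integrate by parts via $b'\omega=\Delta_k(b'\psi)-b'''\psi-2b''\partial_y\psi$, reducing it to $-t^2(b')^2\psi$ plus $t^2b'\Delta_k^{-1}(b'''\psi+2b''\partial_y\psi)$, again handled by \eqref{omH-3}, \eqref{psi1}--\eqref{psi2} and the Sobolev inequality.

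For $\omega_3$ the reduction is simply $\omega_3-\omega_2=k^2(\partial_y+\mathrm{i}tb')^2\psi$ (from the definitions, or from \eqref{om3a}), and after expanding
$$(\partial_y+\mathrm{i}tb')^2\psi=(\partial_y+\mathrm{i}tb')\partial_y\psi+\mathrm{i}tb''\psi+\mathrm{i}tb'\partial_y\psi-t^2(b')^2\psi,$$
the first term is controlled by \eqref{psi1} and $\|g\|_{L^\infty}^2\le C\|g\|_{H^1_k}\|g\|_{L^2}$, spending $\|f\|_{H^1_k}\le k^{-2}\|f\|_{H^3_k}$ to absorb the $k^2$; the second by \eqref{psi2}; and the last two by \eqref{psi2} together with the hypothesis $|y|\le1/|k|$, which yields $|b'|=|\sin y|\le1/|k|$ and hence $|k|^{1/2}|b'|^{1/2}\le1$. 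In all cases the claimed bound $\le C(t^{-1/4}+|b'|^{1/2})\|f\|_{H^3_k}$ results.

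The main obstacle is the cluster of terms carrying an explicit $t^2$ in front of $\omega$ or $\psi$, where a crude estimate would give an unacceptable $t^{1/2}$ growth; this is defeated by (i) the order-$(-4)$ cancellation inside $S+2\Delta_k^{-1}$ and (ii) the integration-by-parts identity for $\Delta_k^{-1}(b'\omega)$, after which the strong decay \eqref{omH-3} (respectively \eqref{psi1}--\eqref{psi2}) suffices. A secondary technical point is careful bookkeeping of which norm $\|f\|_{H^s_k}$ one spends, since $k^2$-factors must be absorbed by $\|f\|_{H^1_k}\le k^{-2}\|f\|_{H^3_k}$ rather than by the weaker $\|f\|_{H^2_k}\le|k|^{-1}\|f\|_{H^3_k}$.
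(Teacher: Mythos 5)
Your overall strategy is the same as the paper's: your identity $\omega_2=\omega_1+k^2\omega-\mathcal E\omega+t^2(S+2\Delta_k^{-1})\omega$ is exactly the paper's computation of $\omega_1-\omega_2$, your multiplier $S+2\Delta_k^{-1}=2\Delta_k^{-1}(\Delta_k+1+2k^2)\big[(\Delta_k+1)^2+4k^2\big]^{-1}$ is precisely the paper's $h_1$ (order $-4$, killed by \eqref{omH-3}), the reduction $\omega_3-\omega_2=k^2(\partial_y+\mathrm{i}tb')^2\psi$ is identical, and your integration by parts in $\Delta_k^{-1}(b'\omega)$ reproduces the paper's terms $t^2(b')^2\psi$ and $t^2b'\Delta_k^{-1}\partial_y(b\psi)$ up to a harmless extra $t^2b'\Delta_k^{-1}(b'\psi)$. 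So the plan and the key cancellations are right.

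Two of your term-by-term justifications, however, do not go through as cited. The substantive one is the term $tb''\psi=-tb\,\psi$, which appears both in $\mathcal E\omega$ and (with a $k^2$) in $k^2(\partial_y+\mathrm{i}tb')^2\psi$: you assign it to \eqref{psi2}, but \eqref{psi2} controls $t\|b'\psi\|_{L^\infty}$, $\|\partial_y\psi\|_{L^\infty}$ and $\|(\partial_y+\mathrm{i}tb')\psi\|_{L^\infty}$, not $t\|\psi\|_{L^\infty}$. Near the critical point $y=0$ one has $b\approx 1$, $b'\approx 0$, so the $b'$-weight gives nothing and one needs decay of $\|\psi\|_{L^\infty}$ strictly faster than $t^{-1}$; none of the estimates in your declared toolkit (\eqref{psi1}--\eqref{psi2}, \eqref{omH-3}, \eqref{est:eibtf-3}, the interpolation inequality) yields this by itself. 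The paper closes exactly this point by \eqref{est:eibtf-1} with the fractional index $s=5/4$ together with Sobolev embedding, giving $k^2(k^2+t)\|\psi\|_{L^\infty}\le Ct^{-1/4}\|f\|_{H^3_k}$; equivalently one can combine your inequality $\|g\|_{L^\infty}^2\le C\|g\|_{H^1_k}\|g\|_{L^2}$ with \eqref{est:eibtf-1} at $s=1$ and $s=2$, but \eqref{est:eibtf-1} must enter, and it is absent from your list.

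The second, smaller issue is the term $k^2(\partial_y+\mathrm{i}tb')\partial_y\psi$ in the $\omega_3$ part: \eqref{psi1} supplies only the $L^2$ half of the interpolation, while the $H^1_k$ half requires $\|\partial_y[(\partial_y+\mathrm{i}tb')\partial_y\psi]\|_{L^2}$, which \eqref{psi1} does not give. One must either expand $\partial_y[(\partial_y+\mathrm{i}tb')\partial_y\psi]=(\partial_y+\mathrm{i}tb')\omega+k^2(\partial_y+\mathrm{i}tb')\psi+\mathrm{i}tb''\partial_y\psi$ and use \eqref{est:eibtf-2} for the first piece, or do as the paper does and substitute $\partial_y^2\psi=\omega+k^2\psi$, so that only $k^2|\omega|$ (via \eqref{est:eibtf-3}) and $k^2(k^2+t)|\psi|$ (via the $s=5/4$ bound above) remain. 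Both gaps are repairable with estimates already proved in Lemma \ref{lem2b}, but as written your argument does not close at these two steps.
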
    

 \begin{proof}    
From \eqref{om11}, \eqref{L1-1}, \eqref{L2} and  \eqref{est:A2+B2}, we know that  
   \begin{align*}
      &\omega_1=(\partial_y-\mathrm{i}tA)^2\omega-k^2\omega-t^2(1-B^2-A^2)\omega,\\
      & \Delta_{k,-1}+\Delta_{k,1}=2(\Delta_{k}-1),\quad \Delta_{k,-1}\Delta_{k,1}=(\Delta_{k}+1)^2+4k^2,\\
      &B^2+A^2=(2+\Delta_{k,-1}^{-1}+\Delta_{k,1}^{-1})(1+\Delta_k^{-1})/2\\
      &=(1+(\Delta_{k}-1)((\Delta_{k}+1)^2+4k^2)^{-1})(1+\Delta_k^{-1})\\
      &=1+2\Delta_k^{-1}-2(\Delta_{k}+1+2k^2)((\Delta_{k}+1)^2+4k^2)^{-1}\Delta_k^{-1}.
   \end{align*}
Let 
\beno
h_1=(1+2\Delta_k^{-1}-B^2-A^2)\omega=2(\Delta_{k}+1+2k^2)((\Delta_{k}+1)^2+4k^2)^{-1}\Delta_k^{-1}\omega.
\eeno
Similar to \eqref{om8}, we have $\|h_1\|_{H_k^{1}}\leq C\|\omega\|_{H_k^{-3}}$. Recall that $A=\sin y(1+\Delta_k^{-1})=-b'(1+\Delta_k^{-1})$, then
\begin{align*}
      & \omega_1-\omega_2=\mathrm{i}tb'\Delta_k^{-1}(\partial_y-\mathrm{i}tA)\omega+\mathrm{i}t(\partial_y+\mathrm{i}tb')(b'\Delta_k^{-1}\omega)-k^2\omega-t^2h_1\\
      &\qquad=2\mathrm{i}tb'\partial_y\psi+t^2b'\Delta_k^{-1}(A\omega)+\mathrm{i}tb''\psi-t^2b'^2\psi-k^2\omega-t^2h_1,\\
      &-\Delta_k^{-1}(A\omega)=\Delta_k^{-1}(b'(\omega+\psi))=b'\psi+2\Delta_k^{-1}\partial_y(b\psi),\quad (b''=-b).  
       \end{align*}
   which gives
   \begin{align*}
      |\omega_1-\omega_2|\leq 2t|b'\partial_y\psi|+t|\psi|+2t^2|b'|^2|\psi|+2t^2|b'||\Delta_k^{-1}\partial_y(b\psi)|+k^2|\omega|+t^2|h_1|.
   \end{align*}   
   
As $\omega=\Delta_k\psi=(\partial_y^2-k^2)\psi$, we have\begin{align*}
      & \omega_2-\omega_3={-}k^2(\partial_y+\mathrm{i}tb')^2\psi={-}k^2(\partial_y^2\psi+2\mathrm{i}tb'\partial_y\psi+
      \mathrm{i}tb''\psi-t^2b'^2\psi),\\
      &|\omega_2-\omega_3|\leq k^2(|\omega|+k^2|\psi|+2t|b'\partial_y\psi|+t|\psi|+t^2|b'|^2|\psi|).
   \end{align*}
   Thus, we obtain
   \begin{align*}
      &|\omega_1-\omega_2|+|\omega_2-\omega_3|\\ 
      &\leq Ck^2(|\omega|+(k^2+t)|\psi|+t|b'\partial_y\psi|+t^2|b'|^2|\psi|)+2t^2|b'||\Delta_k^{-1}\partial_y(b\psi)|+t^2|h_1|.
   \end{align*}
   As $\omega=\mathrm{e}^{-\mathrm{i}tB}f$, we get by {\eqref{est:eibtf-3}}  that 
   \begin{align*}
      &k^2|\omega|\leq Ck^2(t^{-1/4}+|b'|^{1/2})\|f\|_{H_k^{1}}\leq C(t^{-1/4}+|b'|^{1/2})\|f\|_{H_k^{3}}.
   \end{align*}
   By \eqref{est:eibtf-1} for $s=5/4$, we have
   \begin{align*}
      &k^2(k^2+t)|\psi|\leq Ck^2(k^2+t)|k|^{-1/4}\|\psi\|_{H_k^{3/4}}=
      C|k|^{7/4}(k^2+t)\|\omega\|_{H_k^{-5/4}}\\ &\leq C|k|^{7/4}(k^2+t)(k^2+t)^{-5/4}\|f\|_{H_k^{5/4}}\leq
      C|k|^{7/4}t^{-1/4}\|f\|_{H_k^{5/4}}\le Ct^{-1/4}\|f\|_{H_k^{3}}.\end{align*}
   By \eqref{psi2}, we have (for $t\geq k^2$, $|y|\leq 1/|k|$, as $|b'(y)|=|\sin y|\leq |y|\leq 1/|k|$)
   \begin{align}
   \notag k^2(t|b'\partial_y\psi|+t^2|b'|^2|\psi|)&\leq k^2t|b'|(\|\partial_y\psi\|_{L^{\infty}}+t\|b'\psi\|_{L^{\infty}})\leq 
      Ck^2t|b'|(k^2+t)^{-1}|k|^{-1/2}\|f\|_{H_k^2}\\
    \label{psia}  &\leq C|k|^{3/2}|b'|\|f\|_{H_k^2}\leq C|k|^{1/2}|b'|\|f\|_{H_k^3}\leq C|b'|^{1/2}\|f\|_{H_k^3}.
   \end{align} 
   By \eqref{est:eibtf-1} for $s=2$, we have
   \begin{align*}
      &t^2|b'||\Delta_k^{-1}\partial_y(b\psi)|\leq Ct^2|b'|\|\Delta_k^{-1}\partial_y(b\psi)\|_{H_k^1}\leq Ct^2|b'|\|b\psi\|_{L^2}\leq Ct^2|b'|\|\psi\|_{L^2}\\ &\leq Ct^2|b'|(k^2+t)^{-2}\|f\|_{H_k^2}\leq C|b'|\|f\|_{H_k^2}\leq C|b'|\|f\|_{H_k^3}\leq C|b'|^{1/2}\|f\|_{H_k^3}.\end{align*}
      As $ \|h_1\|_{H_k^{1}}\leq C\|\omega\|_{H_k^{-3}}$, we get by \eqref{omH-3} that
      \begin{align*}
      t^2|h_1|\leq& Ct^2\|h_1\|_{H_k^{1}}\leq Ct^2\|\omega\|_{H_k^{-3}}\leq Ct^2(k^2+|t|)^{-2.3}|k|^{-1.4}\|f\|_{H_k^{3}}
      \\ \leq& 
    C(k^2+t)^{-0.3}\|f\|_{H_k^{3}}\leq Ct^{-1/4}\|f\|_{H_k^{3}}.\end{align*}
    
Summing up, we conclude that for $t\geq k^2$, $|y|\leq 1/|k|$,
      \begin{align*}
      &|\omega_1-\omega_2|+|\omega_2-\omega_3|\leq C(t^{-1/4}+|b'|^{1/2})\|f\|_{H_k^{3}},
   \end{align*}
   which along with \eqref{om1t} gives \eqref{om3}. 
    \end{proof}

 \begin{lemma}\label{lem5}      
Let $\psi_1=(L-\mathrm{i}t){\psi}$ and $\psi_2=(L+2\mathrm{i}t){\psi}$. Then it holds that for $t\geq k^2$, $|y|\leq 1/|k|$,
\begin{align*}
      &|(L+\mathrm{i}t(b+1))\psi_1(t,y)|+|(L+\mathrm{i}t(b-2))\psi_2(t,y)|\leq C(t^{-1/4}+|b'|^{1/2})\|f\|_{H_k^{3}},\\
      &\|L\psi(t)\|_{L^2}+\|\psi_1(t)\|_{L^2}+\|\psi_2(t)\|_{L^2}\leq  Ct^{-1}|k|^{-1}\|f\|_{H_k^3},\\
      &|\psi_1(t,y)|+|\psi_2(t,y)|\leq Ct^{-1}|k|^{-1/2}\|f\|_{H_k^{3}},\\ 
      &|\partial_y\psi_1(t,0)|+|\partial_y\psi_2(t,0)|\leq Ct^{-1/2}|k|^{-1/2}\|f\|_{H_k^{3}}.
      \end{align*}
   \end{lemma}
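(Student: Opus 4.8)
\textbf{Proof strategy for Lemma \ref{lem5}.}
The plan is to introduce the natural second-order factorization of the operator that governs $\omega$. Recall from \eqref{om3a} that
\[
\omega_3 = L^2\psi + \mathrm{i}tbL\psi + \mathrm{i}tb'\partial_y\psi + 2t^2\psi.
\]
First I would observe that, modulo the term $\mathrm{i}tb'\partial_y\psi$ which is lower-order near $|y|\le 1/|k|$ (it is $O(|y|)$ times $\partial_y\psi$), the operator $L^2 + \mathrm{i}tbL + 2t^2$ factors as $(L+\mathrm{i}t(b+1))(L-\mathrm{i}t) + (\text{commutator terms})$, and likewise as $(L+\mathrm{i}t(b-2))(L+2\mathrm{i}t) + (\text{commutator terms})$, since the ``characteristic roots'' of $X^2 + \mathrm{i}tbX + 2t^2 = 0$ are $X = -\mathrm{i}t(b\pm 3)/2 \cdot(\ldots)$—more precisely, at $b=1$ (the value at the critical point $y=0$) the roots of $X^2+\mathrm{i}tX+2t^2$ are $X=\mathrm{i}t$ and $X=-2\mathrm{i}t$. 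This is exactly why $\psi_1=(L-\mathrm{i}t)\psi$ and $\psi_2=(L+2\mathrm{i}t)\psi$ are the right quantities: applying the conjugate factor $(L+\mathrm{i}t(b+1))$ to $\psi_1$ and $(L+\mathrm{i}t(b-2))$ to $\psi_2$ reproduces $\omega_3$ up to commutators between $L$ and multiplication by $b$ (which produce factors of $b'=\sin y = O(|y|)$, hence small) together with the explicit $\mathrm{i}tb'\partial_y\psi$ term. So the first displayed bound follows from Lemma \ref{lem4a} (the bound on $\omega_3$) once we check that all the error terms generated by the factorization are controlled by $C(t^{-1/4}+|b'|^{1/2})\|f\|_{H_k^3}$; these errors involve $t|b'| L\psi$, $t^2|b'|\psi$, $t|b'\partial_y\psi|$ and similar, all of which were already estimated (cf. \eqref{psia} and \eqref{psi1}--\eqref{psi2}).

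Second, for the $L^2$ bounds I would go back to the global estimates \eqref{psi1} in Lemma \ref{lem3-dec}. Since $L=(\partial_y+\mathrm{i}tb')\partial_y$, we have $\|L\psi\|_{L^2}\le \|(\partial_y+\mathrm{i}tb')\partial_y\psi\|_{L^2}\le C(k^2+t)^{-1}\|f\|_{H_k^2}\le Ct^{-1}|k|^{-1}\|f\|_{H_k^3}$ for $t\ge k^2$, using $\|f\|_{H_k^2}\le |k|^{-1}\|f\|_{H_k^3}$. Then $\|\psi_1\|_{L^2}\le \|L\psi\|_{L^2}+t\|\psi\|_{L^2}$ and from \eqref{psi9}/\eqref{est:eibtf-1} with $s=2$ we have $t\|\psi\|_{L^2}\le Ct(k^2+t)^{-2}\|f\|_{H_k^2}\le Ct^{-1}|k|^{-1}\|f\|_{H_k^3}$; likewise for $\psi_2$ with $t$ replaced by $2t$. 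This gives the second displayed bound.

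Third, for the pointwise bounds on $\psi_1,\psi_2$ and on $\partial_y\psi_j(t,0)$ I would run the same interpolation/Agmon-type argument used repeatedly in the section: control a function by $C|k|^{-1/2}$ times its $H^1_k$ norm, where the $H^1_k$ norm of $\psi_j$ is bounded by combining the $L^2$ bound just obtained with one more derivative estimate. Concretely, $\partial_y\psi_1 = \partial_y L\psi - \mathrm{i}t\partial_y\psi$; the first term is controlled via $\|(\partial_y,k)(\partial_y+\mathrm{i}tb')\psi\|_{L^2}$ (part of \eqref{psi1}) after commuting, and $t\|\partial_y\psi\|_{L^2}\le Ct(k^2+t)^{-1}\cdot|k|^{-1}\cdot|k|^{-1}\|f\|_{H_k^3}$ from \eqref{psi1}. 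Sobolev embedding on the interval $|y|\le 1/|k|$ (or the standard $\|g\|_{L^\infty}\le C|k|^{-1/2}\|g\|_{H^1_k}$) then yields $|\psi_j(t,y)|\le Ct^{-1}|k|^{-1/2}\|f\|_{H_k^3}$ and $|\partial_y\psi_j(t,0)|\le Ct^{-1/2}|k|^{-1/2}\|f\|_{H_k^3}$, where the $t^{-1/2}$ (rather than $t^{-1}$) reflects that $\|\partial_y\psi_j\|_{L^2}$ carries only one power less of $t$-decay than $\|\psi_j\|_{L^2}$.

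\textbf{Main obstacle.} The delicate point is the first displayed inequality: one must verify that the algebra of the factorization is exact enough, i.e. that the difference between $(L+\mathrm{i}t(b+1))(L-\mathrm{i}t)$ (resp. $(L+\mathrm{i}t(b-2))(L+2\mathrm{i}t)$) and the operator $L^2+\mathrm{i}tbL+\mathrm{i}tb'\partial_y+2t^2$ acting on $\psi$ consists only of terms carrying an extra factor of $b'=\sin y$ (hence $O(|y|)=O(1/|k|)$ on the relevant interval) or an extra $\Delta_k^{-1}$, so that after multiplying by the available powers of $t$ one still lands below $C(t^{-1/4}+|b'|^{1/2})\|f\|_{H_k^3}$ in the regime $t\ge k^2$. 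Keeping careful track of the commutators $[L,b]=[\partial_y+\mathrm{i}tb',\,b]\partial_y + (\partial_y+\mathrm{i}tb')[b,\partial_y]$ and of the interplay between $b$ and $b''=-b$ is where the bookkeeping is heaviest; everything else is a direct appeal to Lemmas \ref{lem3-dec} and \ref{lem4a}.
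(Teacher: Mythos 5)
Your first two steps are essentially right, and in fact the factorization is cleaner than you fear: the paper writes $L^2\psi+\mathrm{i}tbL\psi+2t^2\psi=(L+2\mathrm{i}t)(L-\mathrm{i}t)\psi+\mathrm{i}t(b-1)L\psi$, where the first product is \emph{exact} because the inner factor has constant coefficients (no commutators $[L,b]$ ever appear); the remainder $\mathrm{i}t(b-1)L\psi=\mathrm{i}t(b-1)(\psi_1+\mathrm{i}t\psi)$ is then absorbed, with $\mathrm{i}t(b-1)\psi_1$ going into $(L+\mathrm{i}t(b+1))\psi_1$ and the leftover $t^2(b-1)\psi$ controlled by $t^2b'^2|\psi|$ via \eqref{psia}. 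So the "main obstacle" you identify is actually the easy part, and your second ($L^2$) step coincides with the paper's.

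The genuine gap is in your third step. Sobolev embedding cannot produce the bounds $|\psi_j(t,y)|\le Ct^{-1}|k|^{-1/2}\|f\|_{H_k^3}$ and $|\partial_y\psi_j(t,0)|\le Ct^{-1/2}|k|^{-1/2}\|f\|_{H_k^3}$. By your own accounting, $\|(\partial_y,k)\psi_1\|_{L^2}$ contains the term $t\|\partial_y\psi\|_{L^2}\le Ct(k^2+t)^{-1}|k|^{-2}\|f\|_{H_k^3}\sim |k|^{-2}\|f\|_{H_k^3}$, which does \emph{not} decay in $t$; the Agmon inequality then gives only $|\psi_1|\lesssim |k|^{-5/2}\|f\|_{H_k^3}$, and since $t\ge k^2$ the target $t^{-1}|k|^{-1/2}$ is strictly smaller than $|k|^{-5/2}$ for $t\gg k^2$. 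The same problem, worse by one derivative, blocks the bound on $\partial_y\psi_j(t,0)$. What the paper actually does is treat the first displayed inequality as a \emph{source estimate} for the second-order ODEs satisfied by $\psi_1$ and $\psi_2$: it rewrites $(L+\mathrm{i}t(b-2))\psi_2$ as $(L-\mathrm{i}tb)\psi_2$ plus $2t(b-1)\psi_2=O(t|y||\psi_2|)$, and $(L+\mathrm{i}t(b+1))\psi_1$ as $(L-h)\psi_1$ plus $O(|\psi_1|)$ with $h=Lf_2/f_2$, $f_2=b-1+1/(2\mathrm{i}t)$, and then invokes the Green's-function estimate of Lemma \ref{lem5a} (built from the explicit homogeneous solutions $f_{1\pm},f_{2\pm}$ of Lemmas \ref{lemf1}--\ref{lemf2} and their pointwise decay $(t^{-1/2}+|y|)^{-1}$ near the critical point), which converts the $L^\infty$ bound on the source into $t\|\psi_j\|_{L^\infty}+t^{1/2}|\partial_y\psi_j(0)|\lesssim \|F/(t^{-1/2}+|y|)\|_{L^1}+t|k|^{1/2}\|\psi_j\|_{L^2}$. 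This ODE analysis at the degenerate critical point is the heart of the lemma (it is where the vorticity-depletion gain is extracted), and it is absent from your outline.
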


The proof relies on the following lemma on the $L^\infty$ estimate for the second order ODE. Given that the proof is rather technical, it will be presented in Appendix C.
      
\begin{lemma}\label{lem5a}      
Let $I=[-1/|k|,1/|k|]$, $t\geq k^2$. If $\phi$ solves $(L-\mathrm{i}tb)\phi=F_1$, then we have
\begin{align*}
      &t\|\phi\|_{L^{\infty}(I)}+t^{1/2}|\phi'(0)|\leq C\|F_1/(t^{-1/2}+|y|)\|_{L^1(I)}+Ct|k|^{1/2}\|\phi\|_{L^2(I)}.
      \end{align*}
Let $f_2=b-1+1/(2\mathrm{i}t)$ and $h=Lf_2/f_2$. If $\phi$ solves $(L-h)\phi=F_2$, then we have 
\begin{align*}
      &t\|\phi\|_{L^{\infty}(I)}+t^{1/2}|\phi'(0)|\leq C\|F_2/(t^{-1/2}+|y|)\|_{L^1(I)}+Ct|k|^{1/2}\|\phi\|_{L^2(I)}.
      \end{align*}
   \end{lemma}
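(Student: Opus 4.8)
\textbf{Plan for Lemma \ref{lem5a}.}
The two assertions have exactly the same structure, so I would organize the proof around a single scheme and then apply it twice. The key observation is that in each case the operator on the left can be written as a conjugation: $(L-\mathrm{i}tb)\phi=F_1$ is equivalent to $L(\widetilde{\phi})=\mathrm{e}^{\mathrm{i}t\int b}F_1$ after an integrating factor, but more usefully, since $L=(\partial_y+\mathrm{i}tb')\partial_y$, one factors $L-\mathrm{i}tb=(\partial_y+\mathrm{i}tb')\partial_y-\mathrm{i}tb$. I would first observe that $\mathrm{i}tb = \mathrm{i}tb''(-1)$ does not quite fit, but rather use the identity already recorded in the excerpt: $L=\partial_y(\partial_y+\mathrm{i}tb')+\mathrm{i}tb$, hence $L-\mathrm{i}tb=\partial_y(\partial_y+\mathrm{i}tb')$. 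Thus the first ODE is simply
\begin{equation}\nonumber
\partial_y\big((\partial_y+\mathrm{i}tb')\phi\big)=F_1,
\end{equation}
which integrates directly. The second case is the genuinely new one: with $f_2=b-1+1/(2\mathrm{i}t)$ and $h=Lf_2/f_2$, the operator $(L-h)$ annihilates $f_2$, so $f_2$ is an explicit solution of the homogeneous equation and I can use reduction of order, writing $\phi=f_2 v$ and deriving $L(f_2 v)-h f_2 v = f_2\,\partial_y\big((\partial_y+\mathrm{i}tb')(f_2^2\partial_y v)\big)/f_2$ up to bookkeeping — more precisely the standard computation gives $(L-h)(f_2 v)=(\partial_y+\mathrm{i}tb')(f_2^2\,\partial_y v)/f_2$ (possibly times a constant), reducing again to a first-order problem for $\partial_y v$.

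\textbf{Key steps in order.}
First I would establish the structural identities above so that in both cases the problem reduces to solving $\partial_y\big(g(y)\,\partial_y u\big)=\text{(source)}$ on $I=[-1/|k|,1/|k|]$, where $g$ is either $1$ (first factor of the integrating-factor form) or the weight coming from $f_2^2$ and the $\mathrm{e}^{\mathrm{i}t\int b'}$ factor. Second, integrating once from $0$ to $y$ controls $(\partial_y+\mathrm{i}tb')\phi$ (resp.\ $\partial_y v$) in terms of $\int_I |F|$, with the crucial refinement that the natural weight is $t^{-1/2}+|y|$: on $I$ one has $|b'(y)|=|\sin y|\sim |y|\le 1/|k|$, and the oscillatory factor $\mathrm{e}^{\mathrm{i}t b(y)}\approx \mathrm{e}^{-\mathrm{i}ty^2/2}$ has stationary phase at $y=0$, so integrals of the form $\int_0^y \mathrm{e}^{\mathrm{i}ts^2/2}\,ds$ are bounded by $\min(|y|,t^{-1/2})\lesssim (t^{-1/2}+|y|)^{-1}\cdot(\cdots)$ — this is exactly where the denominator $t^{-1/2}+|y|$ in the hypothesis is used, via the weighted Cauchy–Schwarz/van der Corput bound $\big|\int_0^y \mathrm{e}^{\mathrm{i}ts^2/2}\,ds\big|\lesssim t^{-1/2}$ and $\le|y|$. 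Third, I would integrate a second time from $0$, now against $\mathrm{e}^{-\mathrm{i}tb'}$ or $1/f_2$, and again extract the two uniform bounds $t\|\phi\|_{L^\infty(I)}$ and $t^{1/2}|\phi'(0)|$, the power $t^{1/2}$ on the derivative at the origin again reflecting the stationary-phase gain only being $t^{-1/2}$ rather than $t^{-1}$ there. The error incurred by replacing $b$, $b'$ by their Taylor polynomials $-1+y^2/2$, $-y$ on $I$ is $O(|y|^3)$ in $b$ and $O(|y|^2)$ in $b'$; since $t|y|^3\lesssim t|k|^{-3}\le |k|^{-1}\le 1$ on $I$ when $t\ge k^2$ is of polynomial size in $|k|$ (or, more carefully, since what enters is $t\int_I |y|^2\,dy\lesssim t|k|^{-3}$ contributing to the $L^2$ term $t|k|^{1/2}\|\phi\|_{L^2}$), these corrections are absorbable. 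The final term $Ct|k|^{1/2}\|\phi\|_{L^2(I)}$ collects all the contributions of the lower-order pieces of $L$ (the $\mathrm{i}tb$ versus $\mathrm{i}tb''$ discrepancy, the $h$-term, the $k^2$ parts hidden inside the conjugations) estimated crudely by $\|\phi\|_{L^2(I)}$ together with the Sobolev bound $\|\phi\|_{L^\infty(I)}\lesssim |k|^{1/2}\|\phi\|_{L^2(I)}+|k|^{-1/2}\|\phi'\|_{L^2(I)}$ on the short interval $I$ of length $2/|k|$.

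\textbf{Main obstacle.}
The routine part is the double integration; the delicate point is the sharp stationary-phase bookkeeping near $y=0$ that produces precisely the weight $t^{-1/2}+|y|$ and no worse, uniformly in $t\ge k^2$ and in $k$. In particular, in the reduction-of-order step for the second operator, $f_2$ vanishes nowhere on $I$ (since $|b-1|\ge$ const on $I$, as $|y|$ is small so $b\approx -1$, hence $|f_2|\sim 2$), which is what makes the division by $f_2$ and by $1/f_2$ harmless — I would need to verify this quantitatively, together with bounds on $h=Lf_2/f_2$ and its derivatives (schematically $h\sim \mathrm{i}tb + O(1)$ on $I$), to confirm that $(L-h)$ and $(L-\mathrm{i}tb)$ differ only by terms controlled by the $t|k|^{1/2}\|\phi\|_{L^2}$ budget. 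The other subtlety is making sure the endpoint evaluations at $y=\pm 1/|k|$ generated by the integrations by parts are themselves dominated by $t|k|^{1/2}\|\phi\|_{L^2(I)}$ (via the mean-value/Sobolev trace argument on $I$), rather than by $\|\phi\|_{L^\infty(I)}$, which would be circular. I expect this endpoint control, combined with the weighted van der Corput estimate, to be where essentially all the work lies, with everything else being bookkeeping.
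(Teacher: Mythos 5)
Your structural reduction is sound and in fact parallels the paper: for the first operator the identity $L-\mathrm{i}tb=\partial_y(\partial_y+\mathrm{i}tb')$ is exactly what underlies the construction of the homogeneous solutions $f_{1\pm}$, and your reduction of order $\phi=f_2v$, which turns $(L-h)\phi=F_2$ into $\partial_y\big(f_2^2\mathrm{e}^{\mathrm{i}tb}v'\big)=f_2\mathrm{e}^{\mathrm{i}tb}F_2$, reproduces the paper's $f_{2\pm}(y)=f_2(y)\int_y^{\pm1/|k|}\mathrm{e}^{-\mathrm{i}tb}f_2^{-2}$. However, there is a genuine factual error that trivializes the hardest half of the lemma: on $I=[-1/|k|,1/|k|]$ we have $|y|\le 1$, so $b=\cos y\approx 1$ (this is the critical point $y=0$, not $y=\pi$), hence $f_2=b-1+1/(2\mathrm{i}t)$ satisfies $|f_2|\sim y^2+t^{-1}\sim(t^{-1/2}+|y|)^2$ and degenerates at $y=0$; it is emphatically not of size $2$. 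Consequently the weights $f_2^{-2}$ in your reduced equation are of size up to $t^2$, and all of the real work for the second estimate — the pointwise bounds $|f_{2\pm}(y)|\lesssim(t^{-1/2}+y_\mp)^2(t^{-1/2}+y_\pm)^{-3}$, the Wronskian lower bound $|W_2|\gtrsim t^{3/2}$, and the resulting kernel bound $|K_2(y,y')|\lesssim t^{-1}(t^{-1/2}+|y'|)^{-1}$ — is absent from your plan. Likewise "$h\sim\mathrm{i}tb+O(1)$" is false: since $(L+\mathrm{i}t(b+1))f_2=(1-b)/2$ one has $|h+\mathrm{i}t(b+1)|\le 1/2$, so $h\approx-2\mathrm{i}t$ near $y=0$ while $\mathrm{i}tb\approx\mathrm{i}t$; the discrepancy is of size $t$, and feeding it back as a source costs $t\|\phi/(t^{-1/2}+|y|)\|_{L^1}\lesssim t^{5/4}\|\phi\|_{L^2}$, which exceeds the allowed $t|k|^{1/2}\|\phi\|_{L^2}$ budget once $t\gg k^2$. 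So the second case cannot be handled either perturbatively off the first case or by treating the $f_2$-division as harmless.

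The second gap is the control of the integration constants. Since no boundary conditions are imposed, after your two integrations the homogeneous part $c_1f_{+}+c_2f_{-}$ (with $f_\pm$ the two homogeneous solutions of the relevant operator) must be bounded through $\|\phi\|_{L^2(I)}$ alone; a mean-value choice of base point handles only one constant, and the Sobolev bound you invoke involves $\|\phi'\|_{L^2}$, which is not on the right-hand side — using it would indeed be circular, as you suspect. What is actually needed, and what the paper proves, is quantitative linear independence in $L^2(I)$: the oscillatory-integral lower bound $t^{1/2}|f_{1+}(0)|\gtrsim1$ (equivalently $|W_1|\gtrsim t^{-1/2}$) together with $\|f_{1+}\pm f_{1-}\|_{L^2(I)}\gtrsim|tk|^{-1/2}$, and the analogous but harder bounds $|W_2|\gtrsim t^{3/2}$, $\|f_{2+}\pm f_{2-}\|_{L^2(I)}\gtrsim t^{3/2}|k|^{-5/2}$, which then give $|c_1|+|c_2|\lesssim|tk|^{1/2}\|\phi\|_{L^2}+\dots$ (respectively $t^{-3/2}|k|^{5/2}\|\phi\|_{L^2}+\dots$) and hence the $Ct|k|^{1/2}\|\phi\|_{L^2(I)}$ term. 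These lower bounds are obtained by a non-obvious energy/bootstrap argument (Steps 3--4 of the corresponding lemmas), and nothing in your sketch supplies a substitute; together with the $f_2$ degeneracy issue above, this is where the proof currently fails rather than merely needing bookkeeping.
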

   
   Now we prove Lemma \ref{lem5}.
   
   \begin{proof}
   By {\eqref{om3a}}, we have
   \begin{align*}
      &\omega_3-\mathrm{i}tb'\partial_y{\psi}=L^2{\psi}+\mathrm{i}tbL{\psi}+2t^2{\psi}=(L+2\mathrm{i}t)(L-\mathrm{i}t){\psi} +\mathrm{i}t(b-1)L{\psi}.
   \end{align*}
   As $\psi_1=(L-\mathrm{i}t){\psi}$, $\psi_2=(L+2\mathrm{i}t){\psi}$, we get
   \begin{align*}
      \omega_3-\mathrm{i}tb'\partial_y{\psi}=&(L+2\mathrm{i}t)\psi_1+\mathrm{i}t(b-1)L{\psi}
      =(L+2\mathrm{i}t)\psi_1+\mathrm{i}t(b-1)(\psi_1+\mathrm{i}t{\psi})\\
      =&(L+\mathrm{i}t(b+1))\psi_1-t^2(b-1){\psi},\\
      \omega_3-\mathrm{i}tb'\partial_y{\psi}=&(L-\mathrm{i}t)\psi_2+\mathrm{i}t(b-1)L{\psi}
      =(L-\mathrm{i}t)\psi_2+\mathrm{i}t(b-1)(\psi_2-2\mathrm{i}t{\psi})\\
      =&(L+\mathrm{i}t(b-2))\psi_2+2t^2(b-1){\psi}.
   \end{align*}
    {Thus,}
   \begin{align*}
      &|(L+\mathrm{i}t(b+1))\psi_1|\leq |\omega_3-\mathrm{i}tb'\partial_y{\psi}|+t^2|(b-1){\psi}|,\\
      &|(L+\mathrm{i}t(b-2))\psi_2|\leq |\omega_3-\mathrm{i}tb'\partial_y{\psi}|+2t^2|(b-1){\psi}|.
   \end{align*}
   {By using \eqref{om3}, \eqref{psia} and $0\le 1-b\leq1-b^2=b'^2$ as $b(y)=\cos y>0$ for $|y|\leq 1/|k|$,} we infer that 
    \begin{align*}
     |(L+\mathrm{i}t(b+1))\psi_1|+|(L+\mathrm{i}t(b-2))\psi_2|\leq& 2|\omega_3|+2t|b'\partial_y{\psi}|+3t^2|(b-1){\psi}|\\
      \leq& 2|\omega_3|+2t|b'\partial_y{\psi}|+3t^2|b'^2{\psi}|\\
      \leq& C(t^{-1/4}+|b'|^{1/2})\|f\|_{H_k^{3}}+C|k|^{-2}|b'|^{1/2}\|f\|_{H_k^{3}}.
   \end{align*}
    This proves the first inequality of the lemma. 
    
    As $\psi_1=(L-\mathrm{i}t){\psi}$, $\psi_2=(L+2\mathrm{i}t){\psi}$, $L=(\partial_y+\mathrm{i}tb')\partial_y$, we get by \eqref{psi1} and \eqref{est:eibtf-1} (for $s=2$)  that 
    \begin{align*}
      &\|L\psi(t)\|_{L^2}+\|\psi_1(t)\|_{L^2}+\|\psi_2(t)\|_{L^2}\leq  3\|L\psi(t)\|_{L^2}+3t\|\psi(t)\|_{L^2}\\
      &\leq Ct^{-1}\|f\|_{H_k^{2}} +Ctt^{-2}\|f\|_{H_k^{2}}\leq Ct^{-1}\|f\|_{H_k^{2}}\leq Ct^{-1}|k|^{-1}\|f\|_{H_k^{3}}.
      \end{align*}
      This shows the second inequality of the lemma.

     Notice that $(L+\mathrm{i}t(b+1))f_2=(1-b)/2$, $f_2=b-1+1/(2\mathrm{i}t)$, we have $|f_2|\geq|b-1|=2|(L+\mathrm{i}t(b+1))f_2|$ and
     \begin{align*}
      &|h+\mathrm{i}t(b+1)|=|(L+\mathrm{i}t(b+1))f_2|/|f_2|\leq1/2 .\end{align*} 
      By the first inequality of the lemma, we have
      \begin{align*}
      &|(L-\mathrm{i}tb)\psi_2|=|(L+\mathrm{i}t(b-2))\psi_2|+2t|(b-1)\psi_2|\leq C(t^{-1/4}+|b'|^{1/2})\|f\|_{H_k^{3}}+t|y\psi_2|,\\
      &|(L-h)\psi_1|=|(L+\mathrm{i}t(b+1))\psi_1|+|(h+\mathrm{i}t(b+1))\psi_1|\leq C(t^{-1/4}+|b'|^{1/2})\|f\|_{H_k^{3}}+|\psi_1|.
      \end{align*}
      Here we used $|b-1|=1-\cos y\leq y^2/2\leq |y|/2$ for $|y|\leq 1/|k|\leq 1$. Then by Lemma \ref{lem5a}, and 
      $|b'(y)|=|\sin y|\leq |y|$, we obtain(for $t\geq k^2$, $y\in I=[-1/|k|, 1/|k|]$)
      \begin{align*}
      &t|\psi_2(t,y)|+t^{1/2}|\partial_y\psi_2(t,0)|\leq Ct|k|^{1/2}\|\psi_2(t)\|_{L^2}+C\|(L-\mathrm{i}tb)\psi_2/(t^{-1/2}+|y|)\|_{L^1(I)},\\
      &t|\psi_1(t,y)|+t^{1/2}|\partial_y\psi_1(t,0)|\leq Ct|k|^{1/2}\|\psi_1(t)\|_{L^2}+C\|(L-h)\psi_1/(t^{-1/2}+|y|)\|_{L^1(I)},
      \end{align*}
      which yield by the second inequality of the lemma that
       \begin{align*}
            &t|\psi_1(t,y)|+t^{1/2}|\partial_y\psi_1(t,0)|+t|\psi_2(t,y)|+t^{1/2}|\partial_y\psi_2(t,0)|\\
      &\leq Ct|k|^{1/2}(\|\psi_1(t)\|_{L^2}+\|\psi_2(t)\|_{L^2})+C\|(t^{-1/4}+|b'|^{1/2})/(t^{-1/2}+|y|)\|_{L^1(I)}\|f\|_{H_k^{3}}\\&\qquad+
      Ct\|y\psi_2/(t^{-1/2}+|y|)\|_{L^1(I)}+C\|\psi_1/(t^{-1/2}+|y|)\|_{L^1(I)}\\
      &\leq Ct|k|^{1/2}(\|\psi_1(t)\|_{L^2}+\|\psi_2(t)\|_{L^2})+C\|(t^{-1/2}+|y|)^{-1/2}\|_{L^1(I)}\|f\|_{H_k^{3}}\\&\qquad+
      Ct\|\psi_2\|_{L^1(I)}+Ct^{1/2}\|\psi_1\|_{L^1(I)}\\
      &\leq Ct|k|^{1/2}(\|\psi_1(t)\|_{L^2}+\|\psi_2(t)\|_{L^2})+C|k|^{-1/2}\|f\|_{H_k^{3}}\leq C|k|^{-1/2}\|f\|_{H_k^{3}}.
      \end{align*}
     This proves the third inequality of the lemma.
   \end{proof}
   
   Now we prove Lemma \ref{lem6}.
   
     \begin{proof} 
Notice that {(here $\psi_1=(L-\mathrm{i}t){\psi}$ and $\psi_2=(L+2\mathrm{i}t){\psi}$ as in Lemma \ref{lem5})}
\beno
L{\psi}=(2\psi_1+\psi_2)/3,\quad \mathrm{i}t{\psi}=(\psi_2-\psi_1)/3.
\eeno
 We infer from Lemma \ref{lem5} that for $t\geq k^2$, $|y|\leq 1/|k|$,
\begin{align}\label{psi6}
      &t|{\psi}(t,y)|+|L{\psi}(t,y)|\leq Ct^{-1}|k|^{-1/2}\|f\|_{H_k^{3}},\\
     \label{psi7} &t|\partial_y{\psi}(t,0)|+|\partial_yL{\psi}(t,0)|\leq Ct^{-1/2}|k|^{-1/2}\|f\|_{H_k^{3}},
   \end{align}
   which give the first inequality of the lemma.
    
   As $\mathrm{e}^{\mathrm{i}tb}L{\psi}=\mathrm{e}^{\mathrm{i}tb}(\partial_y+\mathrm{i}tb')\partial_y\psi=
      \partial_y(\mathrm{e}^{\mathrm{i}tb}\partial_y{\psi}) $, we have \begin{align*}
      &|\partial_y{\psi}(t,y)|=|\mathrm{e}^{\mathrm{i}tb}\partial_y{\psi}(t,y)|\leq
      |\mathrm{e}^{\mathrm{i}tb}\partial_y{\psi}(t,0)|+\mathrm{sgn}(y)\int_0^y|\partial_y(\mathrm{e}^{\mathrm{i}tb}\partial_y{\psi})(t,y')|\mathrm{d}y'\\
      &=|\partial_y{\psi}(t,0)|+\mathrm{sgn}(y)\int_0^y|\mathrm{e}^{\mathrm{i}tb}L{\psi}(t,y')|\mathrm{d}y'
      \leq |\partial_y{\psi}(t,0)|+|y|\|L{\psi}(t,\cdot)\|_{L^{\infty}(|y|\leq 1/|k|)}\\
      &\leq Ct^{-3/2}|k|^{-1/2}\|f\|_{H_k^{3}}+C|y|t^{-1}|k|^{-1/2}\|f\|_{H_k^{3}},
   \end{align*}
   which yields the second inequality of the lemma.
     
   Recall  $\omega_2=(\partial_y+\mathrm{i}tb')^2\omega+2t^2\psi$. It follows from \eqref{om3} and \eqref{psi6}  that for $t\geq k^2$, $|y|\leq 1/|k|$, 
   \begin{align}\label{om4}
      |\partial_y^2(\mathrm{e}^{\mathrm{i}tb}\omega)|&=|(\partial_y+\mathrm{i}tb')^2\omega|\leq2t^2|\psi|+|\omega_2|
       \leq C\big(|k|^{-1/2}+t^{-1/4}+|b'|^{1/2}\big)\|f\|_{H_k^{3}}\\& \notag  \leq C\big(|k|^{-1/2}+|k^2|^{-1/4}+|y|^{1/2}\big)\|f\|_{H_k^{3}}
       \leq C|k|^{-1/2}\|f\|_{H_k^{3}}.
   \end{align}
   As $L{\psi}=(\partial_y+\mathrm{i}tb')\partial_y\psi$, $b'(0)=0$, $b''(0)=-1$ and $\omega=(\partial_y^2-k^2){\psi}$, we find
   \begin{align*}
      &L{\psi}(t,0)=\partial_y^2{\psi}(t,0)=\omega(t,0)+k^2{\psi}(t,0),\\
       &\partial_yL{\psi}(t,0)=\partial_y^3{\psi}(t,0)-\mathrm{i}t\partial_y{\psi}(t,0)=\partial_y\omega(t,0)+(k^2-\mathrm{i}t)\partial_y{\psi}(t,0).
   \end{align*}
   Then we get by \eqref{psi6}, \eqref{psi7} and $b'(0)=0$  that for $t\geq k^2$, $|y|\leq 1/|k|$,
   \begin{align}
    \label{om5}  &|\omega(t,0)|\leq |L{\psi}(t,0)|+k^2|{\psi}(t,0)|\leq |L{\psi}(t,0)|+t|{\psi}(t,0)|\leq Ct^{-1}|k|^{-1/2}\|f\|_{H_k^{3}},\\
    \label{om6}   &|\partial_y(\mathrm{e}^{\mathrm{i}tb}\omega)(t,0)|=|\partial_y\omega(t,0)|\leq 
       |\partial_yL{\psi}(t,0)|+(k^2+t)|\partial_y{\psi}(t,0)|\\
   \notag    &\qquad\leq |\partial_yL{\psi}(t,0)|+2t|\partial_y{\psi}(t,0)|\leq Ct^{-1/2}|k|^{-1/2}\|f\|_{H_k^{3}}.
   \end{align}
   By \eqref{om4} and \eqref{om6}, we have (for $t\geq k^2$, $|y|\leq 1/|k|$)
   \begin{align}\label{om7}
      |\partial_y(\mathrm{e}^{\mathrm{i}tb}\omega)(t,y)|&\leq
      |\partial_y(\mathrm{e}^{\mathrm{i}tb}\omega)(t,0)|+\mathrm{sgn}(y)\int_0^y|\partial_y^2(\mathrm{e}^{\mathrm{i}tb}\omega)(t,y')|\mathrm{d}y'\\
     \notag  &\leq Ct^{-1/2}|k|^{-1/2}\|f\|_{H_k^{3}}+\mathrm{sgn}(y)\int_0^yC|k|^{-1/2}\|f\|_{H_k^{3}}\mathrm{d}y'\\
      &=C(t^{-1/2}+|y|)|k|^{-1/2}\|f\|_{H_k^{3}},\nonumber
   \end{align}
   which shows the third inequality of the lemma. 
   
   By \eqref{om5} and \eqref{om7}, we have (for $t\geq k^2$, $|y|\leq 1/|k|$)\begin{align*}
      &|\omega(t,y)|=|\mathrm{e}^{\mathrm{i}tb}\omega(t,y)|\leq
      |\mathrm{e}^{\mathrm{i}tb}\omega(t,0)|+\mathrm{sgn}(y)\int_0^y|\partial_y(\mathrm{e}^{\mathrm{i}tb}\omega)(t,y')|\mathrm{d}y'\\
      &\leq |\omega(t,0)|+\mathrm{sgn}(y)\int_0^yC(t^{-1/2}+|y'|)|k|^{-1/2}\|f\|_{H_k^{3}}\mathrm{d}y'\\
      &\leq Ct^{-1}|k|^{-1/2}\|f\|_{H_k^{3}}+C(t^{-1/2}|y|+|y|^2)|k|^{-1/2}\|f\|_{H_k^{3}}\leq C(t^{-1}+|y|^2)|k|^{-1/2}\|f\|_{H_k^{3}},
   \end{align*}
   which gives the fourth inequality of the lemma.
 \end{proof}

\subsubsection{Proof of Proposition \ref{lem:eibtf}}
\begin{proof}
As $|\partial_y(\mathrm{e}^{\mathrm{i}tb}\psi)|=|(\partial_y+\mathrm{i}tb')\psi|$, \eqref{est:pareitbpsi} follows from \eqref{psi3}. By \eqref{est:eibtf-2} for $s=2$, we can obtain \eqref{eq17} as follows
\begin{align*}
      &\|(\partial_y,k)^2(\mathrm{e}^{\mathrm{i}tb}\omega)\|_{L^2}=\|\mathrm{e}^{\mathrm{i}tb}\omega\|_{H_k^2}\leq C\|f\|_{H_k^2}\leq C|k|^{-1}\|f\|_{H_k^3}.
   \end{align*}
  By Gagliardo-Nirenberg inequality and $\psi=\Delta_k^{-1}\omega$, we have
  \begin{align}\label{om1}
       &\|\omega\|_{L^{\infty}}=\|\mathrm{e}^{\mathrm{i}tb}\omega\|_{L^{\infty}}\leq 
       C|k|^{-3/2}\|(\partial_y,k)^2(\mathrm{e}^{\mathrm{i}tb}\omega)\|_{L^2}\leq   C|k|^{-5/2}\|f\|_{H_k^3},\\\label{om2}
      &\|\partial_y(\mathrm{e}^{\mathrm{i}tb}\omega)\|_{L^{\infty}}\leq C|k|^{-1/2}\|(\partial_y,k)^2(\mathrm{e}^{\mathrm{i}tb}\omega)\|_{L^2}
       \leq   C|k|^{-3/2}\|f\|_{H_k^3},\\ \label{psi4}
       &\|\psi\|_{L^{\infty}}\leq C|k|^{-2}\|\omega\|_{L^{\infty}}\leq   C|k|^{-9/2}\|f\|_{H_k^3},\\ \label{psi5}
       &\|\partial_y\psi\|_{L^{\infty}}\leq C|k|^{-1}\|\omega\|_{L^{\infty}}\leq   C|k|^{-7/2}\|f\|_{H_k^3}.
   \end{align}
   
   For \eqref{eq12}--\eqref{est:vor-dep-2}, we consider the following three cases.\smallskip
   
   {\it Case 1.} $|t|\leq k^2$. In this case, \eqref{eq12}--\eqref{est:vor-dep-2} follow directly from \eqref{om1}--\eqref{psi5}.\smallskip
      
    {\it Case 2.} $|t|\geq k^2$ and $|b'(y)|\geq \sin|1/k|$. In this case, $|b'(y)|\geq 1/|2k|$, we get by \eqref{psi2}  that
    \begin{align*}
      |\psi(t,y)|&\leq  C|k|\|b'\psi(t)\|_{L^{\infty}}\leq C|k||t|^{-1}(|t|+k^2)^{-1}|k|^{-1/2}\|f\|_{H_k^2}\\ 
      &\leq C(|t|+k^2)^{-2}|k|^{1/2}\|f\|_{H_k^2}\leq C(|t|+k^2)^{-2}|k|^{-1/2}\|f\|_{H_k^3},\\
      |\partial_y\psi(t,y)|&\leq  C(|t|+k^2)^{-1}|k|^{-1/2}\|f\|_{H_k^2}\leq  C(|t|+k^2)^{-1}|b'||k|^{1/2}\|f\|_{H_k^2}\\&\leq  C(|t|+k^2)^{-1}|b'||k|^{-1/2}\|f\|_{H_k^3}.
   \end{align*}
   This proves  \eqref{eq12} and \eqref{est:vor-dep-0}. Moreover, \eqref{est:vor-dep-1} and \eqref{est:vor-dep-2} follow from 
   \eqref{om1} and \eqref{om2} by noticing that 
   \beno
   |k|^{-5/2}=|1/k|^2|k|^{-1/2}\leq C|b'|^2|k|^{-1/2},\quad 
   |k|^{-3/2}=|1/k||k|^{-1/2}\leq C|b'||k|^{-1/2}.
   \eeno
   
 {\it Case 3.} $|t|\geq k^2$ and $|b'(y)|\leq \sin|1/k|$. In this case,  $y\in[-1/|k|,1/|k|]\cup[\pi-1/|k|,\pi+1/|k|]$, as $|b'(y)|=|\sin y|$. 
 Without lose of generality, we assume $t\geq k^2$, $y\in[-1/|k|,1/|k|]$. Then \eqref{eq12}--\eqref{est:vor-dep-2} follows from Lemma \ref{lem6} 
 by noting that  $|t|+k^2\sim t$, $|b'(y)|=|\sin y|\sim |y|$.  The case $y\in[\pi-1/|k|,\pi+1/|k|]$ can be proved by the translation: 
 $\om\to\om(t,y+\pi)$, and the case $t\leq -k^2$ can be proved by taking the conjugation. 
  \end{proof}

\subsection{Proof of coercive estimate}\label{sec:coercive}

The proof relies on the following key decomposition.

\begin{lemma}\label{lem8}
  It holds that
   \begin{align*}
      &\Lambda_1(-\Delta_k)^{s}\Lambda_1+2(1-B^2)(-\Delta_k)^{1+s}+2(-\Delta_k)^{1+s}(1-B^2)= 4AH_0A+H_*,  
       \end{align*}
   where
    \begin{align*}
      &H_0=((-\Delta_{k,-1})^{1+s}+(-\Delta_{k,1})^{1+s})/2+(\partial_y^2+1/4)(-\Delta_k)^{s},\\
      &H_*=(\partial_y-\mathrm{i}/2)^2(1+\Delta_{k,-1}^{-1})(-\Delta_{k,-1})^{s}(1+\Delta_k^{-1})+
     (\partial_y+\mathrm{i}/2)^2(1+\Delta_{k,1}^{-1})(-\Delta_{k,1})^{s}(1+\Delta_k^{-1})\\
     &\qquad+(4-(2+\Delta_{k,-1}^{-1}+\Delta_{k,1}^{-1})(1+\Delta_k^{-1}))(-\Delta_{k})^{1+s}\\&\qquad-(1+\Delta_{k,-1}^{-1})H_{-1}(1+\Delta_k^{-1})
     -(1+\Delta_{k,1}^{-1})H_1(1+\Delta_k^{-1}).
   \end{align*}   
   with $H_j=\mathrm{e}^{-\mathrm{i}jy}H_0\mathrm{e}^{\mathrm{i}jy},\ j\in\{\pm1\}$.

 \end{lemma}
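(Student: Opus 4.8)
The plan is to rewrite every operator in the asserted identity in terms of the ``chiral'' pieces
\[
P_\pm:=\pm\tfrac{1}{2\mathrm{i}}\,\mathrm{e}^{\pm\mathrm{i}y}(1+\Delta_k^{-1}),
\]
so that $A=P_++P_-$, $B=\mathrm{i}(P_+-P_-)$, and so that the phases $\mathrm{e}^{\pm\mathrm{i}y}$ generating the oscillation are carried along explicitly. The only inputs are the formula $\Lambda_1=-(A\partial_y+\partial_y A)$ (see \eqref{lam1}), the identity \eqref{est:A2+B2} for $A^2+B^2$, and the conjugation dictionary attached to $\Delta_{k,s}=\mathrm{e}^{-\mathrm{i}sy}\Delta_k\mathrm{e}^{\mathrm{i}sy}$: conjugation by $\mathrm{e}^{\mathrm{i}y}(\cdot)\mathrm{e}^{-\mathrm{i}y}$ sends $\partial_y\mapsto\partial_y-\mathrm{i}$, $\Delta_k\mapsto\Delta_{k,-1}$, and conjugation by $\mathrm{e}^{-\mathrm{i}y}(\cdot)\mathrm{e}^{\mathrm{i}y}$ sends $\partial_y\mapsto\partial_y+\mathrm{i}$, $\Delta_k\mapsto\Delta_{k,1}$; in particular $(-\Delta_k)^s\mathrm{e}^{\pm\mathrm{i}y}=\mathrm{e}^{\pm\mathrm{i}y}(-\Delta_{k,\pm1})^s$, $\mathrm{e}^{\pm\mathrm{i}y}(1+\Delta_k^{-1})=(1+\Delta_{k,\mp1}^{-1})\mathrm{e}^{\pm\mathrm{i}y}$, and every operator built only from $\Delta_k$ and $\partial_y$ — such as $1+\Delta_k^{-1}$, $(-\Delta_k)^{1+s}$, $\Delta_{k,\pm1}^{-1}$, and $H_0$ itself — commutes with $\Delta_k$.

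Since $[\partial_y,P_\pm]=\pm\mathrm{i}P_\pm$, the formula $\Lambda_1=-(A\partial_y+\partial_y A)$ yields the two presentations
\[
\Lambda_1=-2P_+(\partial_y+\tfrac{\mathrm{i}}{2})-2P_-(\partial_y-\tfrac{\mathrm{i}}{2})=-2(\partial_y-\tfrac{\mathrm{i}}{2})P_+-2(\partial_y+\tfrac{\mathrm{i}}{2})P_-.
\]
Putting the first in the left factor and the second in the right factor of $\Lambda_1(-\Delta_k)^s\Lambda_1$, and using that $(-\Delta_k)^s$ commutes with $\partial_y$ (so the inner sandwiches collapse to $(\partial_y^2+\tfrac14)(-\Delta_k)^s$ on the diagonal and to $(-\Delta_k)^s(\partial_y\pm\tfrac{\mathrm{i}}{2})^2$ off it), one gets $\Lambda_1(-\Delta_k)^s\Lambda_1=T_{++}+T_{+-}+T_{-+}+T_{--}$ with $T_{\varepsilon\varepsilon}=4P_\varepsilon(\partial_y^2+\tfrac14)(-\Delta_k)^sP_\varepsilon$, $T_{+-}=4P_+(-\Delta_k)^s(\partial_y+\tfrac{\mathrm{i}}{2})^2P_-$, $T_{-+}=4P_-(-\Delta_k)^s(\partial_y-\tfrac{\mathrm{i}}{2})^2P_+$.

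Next I would carry out three identifications by one-line conjugations with $\mathrm{e}^{\pm\mathrm{i}y}$. First, $T_{+-}=(\partial_y-\tfrac{\mathrm{i}}{2})^2(1+\Delta_{k,-1}^{-1})(-\Delta_{k,-1})^s(1+\Delta_k^{-1})$ and $T_{-+}=(\partial_y+\tfrac{\mathrm{i}}{2})^2(1+\Delta_{k,1}^{-1})(-\Delta_{k,1})^s(1+\Delta_k^{-1})$, i.e. exactly the first two summands of $H_*$. Second, writing $P_+=\tfrac{1}{2\mathrm{i}}(1+\Delta_{k,-1}^{-1})\mathrm{e}^{\mathrm{i}y}$, $P_-=-\tfrac{1}{2\mathrm{i}}(1+\Delta_{k,1}^{-1})\mathrm{e}^{-\mathrm{i}y}$ and using $H_{-1}=\mathrm{e}^{\mathrm{i}y}H_0\mathrm{e}^{-\mathrm{i}y}$, $H_1=\mathrm{e}^{-\mathrm{i}y}H_0\mathrm{e}^{\mathrm{i}y}$ together with $[\Delta_k,H_0]=0$, one checks $(1+\Delta_{k,-1}^{-1})H_{-1}(1+\Delta_k^{-1})=4P_+H_0P_-$ and $(1+\Delta_{k,1}^{-1})H_1(1+\Delta_k^{-1})=4P_-H_0P_+$. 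Third, \eqref{est:A2+B2} reads $(2+\Delta_{k,-1}^{-1}+\Delta_{k,1}^{-1})(1+\Delta_k^{-1})=2(A^2+B^2)$, which commutes with $(-\Delta_k)^{1+s}$. Plugging these into the claimed identity, the terms $T_{+-},T_{-+}$ and $4P_+H_0P_-,4P_-H_0P_+$ cancel between the two sides; and since, writing $D:=(-\Delta_{k,-1})^{1+s}+(-\Delta_{k,1})^{1+s}$, the definition of $H_0$ gives $4P_\varepsilon H_0P_\varepsilon=T_{\varepsilon\varepsilon}+2P_\varepsilon DP_\varepsilon$, the terms $T_{\varepsilon\varepsilon}$ and $4(-\Delta_k)^{1+s}$ cancel as well. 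Using finally $A^2+B^2=2(P_+P_-+P_-P_+)$ and $B^2-A^2=-2(P_+^2+P_-^2)$ (and that $P_+P_-+P_-P_+$ commutes with $(-\Delta_k)^{1+s}$), the whole identity collapses to the single assertion
\[
P_+DP_++P_-DP_-=(-\Delta_k)^{1+s}(P_+^2+P_-^2)+(P_+^2+P_-^2)(-\Delta_k)^{1+s}.
\]

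This last identity I would verify termwise: peeling $P_+=\tfrac{1}{2\mathrm{i}}\mathrm{e}^{\mathrm{i}y}(1+\Delta_k^{-1})$ and commuting $(-\Delta_k)^{1+s}$ through the $\mathrm{e}^{\mathrm{i}y}$ it meets, one gets $(-\Delta_k)^{1+s}P_+^2=-\tfrac14\mathrm{e}^{\mathrm{i}y}(-\Delta_{k,1})^{1+s}(1+\Delta_k^{-1})\mathrm{e}^{\mathrm{i}y}(1+\Delta_k^{-1})$ and $P_+^2(-\Delta_k)^{1+s}=-\tfrac14\mathrm{e}^{\mathrm{i}y}(-\Delta_{k,-1})^{1+s}(1+\Delta_k^{-1})\mathrm{e}^{\mathrm{i}y}(1+\Delta_k^{-1})$, whose sum is exactly $P_+DP_+$; the $P_-$ terms are identical with $\mathrm{e}^{-\mathrm{i}y}$ and the roles of $\Delta_{k,1},\Delta_{k,-1}$ interchanged. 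All the steps are short once the chiral decomposition and the conjugation dictionary are in place; the one point that needs genuine care — and the place I expect the only real difficulty — is keeping track of which of $\Delta_{k,1},\Delta_{k,-1}$, and which shift $\partial_y\mapsto\partial_y\pm\mathrm{i}$, is produced by each conjugation, since the lemma is precisely a cancellation among phase-translated copies of the same operators.
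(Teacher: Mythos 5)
Your proposal is correct and follows essentially the same route as the paper: both proofs factor $\Lambda_1$, $A$, $B$ into the chiral pieces $\mathrm{e}^{\pm\mathrm{i}y}(1+\Delta_k^{-1})$, use the conjugation dictionary for $\Delta_{k,\pm1}$ together with the identity \eqref{est:A2+B2} for $A^2+B^2$, and match the resulting terms against $H_0$, $H_{\pm1}$ and $H_*$. The only difference is bookkeeping: your $P_\pm$ notation reduces everything to the single symmetric identity $P_+DP_++P_-DP_-=(-\Delta_k)^{1+s}(P_+^2+P_-^2)+(P_+^2+P_-^2)(-\Delta_k)^{1+s}$, verified by direct conjugation, whereas the paper obtains the anticommutator $2B^2(-\Delta_k)^{1+s}+2(-\Delta_k)^{1+s}B^2$ by first computing $4B^2(-\Delta_k)^{1+s}$ and then symmetrizing via the duality with respect to $\langle\cdot,\cdot\rangle_*$.
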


\begin{proof}
Recall that
  \begin{align*}
     A&=\sin y(1+\Delta_k^{-1}),\quad   \Lambda_1=-A\partial_y- \partial_y A,\\
\mathrm{i}\Lambda_1&=\mathrm{i}\big(-2\sin y\partial_y-\cos y\big)(1+\Delta_k^{-1}), \\
&=-\mathrm{e}^{\mathrm{i}y}(\partial_y+\mathrm{i}/2)(1+\Delta_k^{-1})+
     \mathrm{e}^{-\mathrm{i}y}(\partial_y-\mathrm{i}/2)(1+\Delta_k^{-1}).
     \end{align*}
 Using the fact that 
 \beno
   \mathrm{e}^{\mathrm{i}y}(\partial_y+\mathrm{i}/2) =(\partial_y-\mathrm{i}/2)\mathrm{e}^{\mathrm{i}y},\quad \mathrm{e}^{-\mathrm{i}y}(\partial_y-\mathrm{i}/2) =(\partial_y+\mathrm{i}/2)\mathrm{e}^{-\mathrm{i}y},
  \eeno
  we deduce 
     \begin{align*} \Lambda_1(-\Delta_k)^{s}\Lambda_1=&-\mathrm{e}^{\mathrm{i}y}(\partial_y+\mathrm{i}/2)(1+\Delta_k^{-1})(-\Delta_k)^{s}
     \mathrm{e}^{\mathrm{i}y}(\partial_y+\mathrm{i}/2)(1+\Delta_k^{-1})\\
     &-\mathrm{e}^{-\mathrm{i}y}(\partial_y-\mathrm{i}/2)(1+\Delta_k^{-1})(-\Delta_k)^{s}
     \mathrm{e}^{-\mathrm{i}y}(\partial_y-\mathrm{i}/2)(1+\Delta_k^{-1})\\
     &+\mathrm{e}^{\mathrm{i}y}(\partial_y+\mathrm{i}/2)(1+\Delta_k^{-1})(-\Delta_k)^{s}
     \mathrm{e}^{-\mathrm{i}y}(\partial_y-\mathrm{i}/2)(1+\Delta_k^{-1})\\
     &+\mathrm{e}^{-\mathrm{i}y}(\partial_y-\mathrm{i}/2)(1+\Delta_k^{-1})(-\Delta_k)^{s}
     \mathrm{e}^{\mathrm{i}y}(\partial_y+\mathrm{i}/2)(1+\Delta_k^{-1})\\
     =&-\mathrm{e}^{\mathrm{i}y}(\partial_y+\mathrm{i}/2)(1+\Delta_k^{-1})(-\Delta_k)^{s}
     (\partial_y-\mathrm{i}/2)\mathrm{e}^{\mathrm{i}y}(1+\Delta_k^{-1})\\
     &-\mathrm{e}^{-\mathrm{i}y}(\partial_y-\mathrm{i}/2)(1+\Delta_k^{-1})(-\Delta_k)^{s}
    (\partial_y+\mathrm{i}/2)\mathrm{e}^{-\mathrm{i}y}(1+\Delta_k^{-1})\\
     &+(\partial_y-\mathrm{i}/2)(1+\Delta_{k,-1}^{-1})(-\Delta_{k,-1})^{s}
     (\partial_y-\mathrm{i}/2)(1+\Delta_k^{-1})\\
     &+(\partial_y+\mathrm{i}/2)(1+\Delta_{k,1}^{-1})(-\Delta_{k,1})^{s}
     (\partial_y+\mathrm{i}/2)(1+\Delta_k^{-1})\\
     =&-\mathrm{e}^{\mathrm{i}y}(\partial_y^2+1/4)(1+\Delta_k^{-1})(-\Delta_k)^{s}\mathrm{e}^{\mathrm{i}y}(1+\Delta_k^{-1})\\
     &-\mathrm{e}^{-\mathrm{i}y}(\partial_y^2+1/4)(1+\Delta_k^{-1})(-\Delta_k)^{s}\mathrm{e}^{-\mathrm{i}y}(1+\Delta_k^{-1})\\
     &+(\partial_y-\mathrm{i}/2)^2(1+\Delta_{k,-1}^{-1})(-\Delta_{k,-1})^{s}(1+\Delta_k^{-1})\\
     &+(\partial_y+\mathrm{i}/2)^2(1+\Delta_{k,1}^{-1})(-\Delta_{k,1})^{s}(1+\Delta_k^{-1}).
  \end{align*}
  
  As $B=\cos y(1+\Delta_k^{-1})=(\mathrm{e}^{\mathrm{i}y}+\mathrm{e}^{-\mathrm{i}y})(1+\Delta_k^{-1})/2,$ we have
  \begin{align*}
     &4B^2(-\Delta_{k})^{1+s}\\=&\mathrm{e}^{\mathrm{i}y}(1+\Delta_k^{-1})\mathrm{e}^{\mathrm{i}y}(1+\Delta_k^{-1})(-\Delta_{k})^{1+s}
     +\mathrm{e}^{-\mathrm{i}y}(1+\Delta_k^{-1})\mathrm{e}^{-\mathrm{i}y}(1+\Delta_k^{-1})(-\Delta_{k})^{1+s}\\
     &+\mathrm{e}^{\mathrm{i}y}(1+\Delta_k^{-1})\mathrm{e}^{-\mathrm{i}y}(1+\Delta_k^{-1})(-\Delta_{k})^{1+s}+
     \mathrm{e}^{-\mathrm{i}y}(1+\Delta_k^{-1})\mathrm{e}^{\mathrm{i}y}(1+\Delta_k^{-1})(-\Delta_{k})^{1+s}\\
     =&\mathrm{e}^{\mathrm{i}y}(1+\Delta_k^{-1})(-\Delta_{k,-1})^{1+s}\mathrm{e}^{\mathrm{i}y}(1+\Delta_k^{-1})+
     \mathrm{e}^{-\mathrm{i}y}(1+\Delta_k^{-1})(-\Delta_{k,1})^{1+s}\mathrm{e}^{-\mathrm{i}y}(1+\Delta_k^{-1})\\
     &+(1+\Delta_{k,-1}^{-1})(1+\Delta_k^{-1})(-\Delta_{k})^{1+s}+(1+\Delta_{k,1}^{-1})(1+\Delta_k^{-1})(-\Delta_{k})^{1+s},
  \end{align*}
  and then (as $(-\Delta_{k})^{1+s}B^2$ is the dual operator of $B^2(-\Delta_{k})^{1+s}$ under the inner product $\langle, \rangle_*$)
   \begin{align*}
     &2B^2(-\Delta_{k})^{1+s}+2(-\Delta_{k})^{1+s}B^2\\=&\mathrm{e}^{\mathrm{i}y}(1+\Delta_k^{-1})
     \big((-\Delta_{k,-1})^{1+s}+(-\Delta_{k,1})^{1+s}\big)\mathrm{e}^{\mathrm{i}y}(1+\Delta_k^{-1})/2\\
     &+\mathrm{e}^{-\mathrm{i}y}(1+\Delta_k^{-1})\big((-\Delta_{k,-1})^{1+s}+(-\Delta_{k,1})^{1+s}\big)\mathrm{e}^{-\mathrm{i}y}(1+\Delta_k^{-1})/2\\
     &+\big(2+\Delta_{k,-1}^{-1}+\Delta_{k,1}^{-1}\big)(1+\Delta_k^{-1})(-\Delta_{k})^{1+s}.
  \end{align*}
  
  Summing up and using the definition of $H_0$, we conclude 
  \begin{align*}
      &\Lambda_1(-\Delta_k)^{s}\Lambda_1+2(1-B^2)(-\Delta_k)^{1+s}+2(-\Delta_k)^{1+s}(1-B^2)\\
      =&\Lambda_1(-\Delta_k)^{s}\Lambda_1+4(-\Delta_k)^{1+s}-(2B^2(-\Delta_{k})^{1+s}+2(-\Delta_{k})^{1+s}B^2)\\
      =&-\mathrm{e}^{\mathrm{i}y}(1+\Delta_k^{-1})H_0\mathrm{e}^{\mathrm{i}y}(1+\Delta_k^{-1})-
      \mathrm{e}^{-\mathrm{i}y}(1+\Delta_k^{-1})H_0\mathrm{e}^{-\mathrm{i}y}(1+\Delta_k^{-1})\\
     &+(\partial_y-\mathrm{i}/2)^2(1+\Delta_{k,-1}^{-1})(-\Delta_{k,-1})^{s}(1+\Delta_k^{-1})+
     (\partial_y+\mathrm{i}/2)^2(1+\Delta_{k,1}^{-1})(-\Delta_{k,1})^{s}(1+\Delta_k^{-1})\\
     &+\big(4-(2+\Delta_{k,-1}^{-1}+\Delta_{k,1}^{-1})(1+\Delta_k^{-1})\big)(-\Delta_{k})^{1+s}.
   \end{align*}
  As $ A=\sin y(1+\Delta_k^{-1})=(\mathrm{e}^{\mathrm{i}y}-\mathrm{e}^{-\mathrm{i}y})(1+\Delta_k^{-1})/(2\mathrm{i}),$ we have
  \begin{align*}
     4AH_0A=&-\mathrm{e}^{\mathrm{i}y}(1+\Delta_k^{-1})H_0\mathrm{e}^{\mathrm{i}y}(1+\Delta_k^{-1})
     -\mathrm{e}^{-\mathrm{i}y}(1+\Delta_k^{-1})H_0\mathrm{e}^{-\mathrm{i}y}(1+\Delta_k^{-1})\\
     &+\mathrm{e}^{\mathrm{i}y}(1+\Delta_k^{-1})H_0\mathrm{e}^{-\mathrm{i}y}(1+\Delta_k^{-1})
     +\mathrm{e}^{-\mathrm{i}y}(1+\Delta_k^{-1})H_0\mathrm{e}^{\mathrm{i}y}(1+\Delta_k^{-1})\\
     =&-\mathrm{e}^{\mathrm{i}y}(1+\Delta_k^{-1})H_0\mathrm{e}^{\mathrm{i}y}(1+\Delta_k^{-1})
     -\mathrm{e}^{-\mathrm{i}y}(1+\Delta_k^{-1})H_0\mathrm{e}^{-\mathrm{i}y}(1+\Delta_k^{-1})\\
     &+(1+\Delta_{k,-1}^{-1})H_{-1}(1+\Delta_k^{-1})
     +(1+\Delta_{k,1}^{-1})H_1(1+\Delta_k^{-1}),
     \end{align*}
     Thus, we find 
     \begin{align*}
      &\Lambda_1(-\Delta_k)^{s}\Lambda_1+2(1-B^2)(-\Delta_k)^{1+s}+2(-\Delta_k)^{1+s}(1-B^2)-4AH_0A\\
      &=(\partial_y-\mathrm{i}/2)^2(1+\Delta_{k,-1}^{-1})(-\Delta_{k,-1})^{s}(1+\Delta_k^{-1})+
     (\partial_y+\mathrm{i}/2)^2(1+\Delta_{k,1}^{-1})(-\Delta_{k,1})^{s}(1+\Delta_k^{-1})\\
     &\quad+\big(4-(2+\Delta_{k,-1}^{-1}+\Delta_{k,1}^{-1})(1+\Delta_k^{-1})\big)(-\Delta_{k})^{1+s}\\&\quad-(1+\Delta_{k,-1}^{-1})H_{-1}(1+\Delta_k^{-1})
     -(1+\Delta_{k,1}^{-1})H_1(1+\Delta_k^{-1})=H_*.
   \end{align*}
   
   This completes the proof of the lemma.
   \end{proof}
   
   Thanks to Lemma \ref{lem8},  Proposition \ref{prop:coer} follows from the following bounds
   \begin{align}\label{H0}
      &H_0\geq k^2(-\Delta_{k})^{s},\quad H_*\geq \delta(s)(-\Delta_{k})^{s}.
   \end{align}
   This motivates us to introduce the following notations:
   \beno
   a_n=n^2+k^2,\quad  b_n=(a_{n-1}^{1+s}+a_{n+1}^{1+s})/2+(-n^2+1/4)a_n^{s},
    \eeno
   and 
    \begin{align*}
      c_n=&-(n-1/2)^2(1-a_{n-1}^{-1})a_{n-1}^{s}(1-a_n^{-1})-
     (n+1/2)^2(1-a_{n+1}^{-1})a_{n+1}^{s}(1-a_n^{-1})\\
     &+\big(4-(2-a_{n-1}^{-1}-a_{n+1}^{-1})(1-a_n^{-1})\big)a_n^{1+s}\\&-(1-a_{n-1}^{-1})b_{n-1}(1-a_n^{-1})
     -(1-a_{n+1}^{-1})b_{n+1}(1-a_n^{-1}).
   \end{align*}   
   By taking the Fourier transform, \eqref{H0} is equivalent to the following result.
   \begin{lemma}\label{lem9}
  For $s\in[0,0.4]$, let $\delta(s)=4-\max((8s^2+7s+2)s/2,8s^2+8s-1)$. It holds that
  \beno
  b_n\geq {k^2}a_n^s,\quad c_n\ge \delta(s)a_n^s.
    \eeno
   \end{lemma}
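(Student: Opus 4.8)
\textbf{Proof plan for Lemma \ref{lem9}.}

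The plan is to reduce both inequalities to elementary (though delicate) estimates on the explicit sequences $a_n = n^2+k^2$, $b_n$, $c_n$, using only $|k|>1$ and $s\in[0,0.4]$. The first bound $b_n \ge k^2 a_n^s$ should be the easy one: writing $b_n = \tfrac12(a_{n-1}^{1+s}+a_{n+1}^{1+s}) + (-n^2+\tfrac14)a_n^s$, I would first note that by convexity of $t\mapsto t^{1+s}$ one has $\tfrac12(a_{n-1}^{1+s}+a_{n+1}^{1+s}) \ge a_n^{1+s}$, in fact with a quantitative gain coming from the second difference $a_{n-1}-2a_n+a_{n+1}=2$. So $b_n \ge a_n^{1+s} + (-n^2 + \tfrac14)a_n^s + (\text{gain}) = a_n^s\bigl(a_n - n^2 + \tfrac14\bigr) + (\text{gain}) = a_n^s(k^2 + \tfrac14) + (\text{gain}) \ge k^2 a_n^s$. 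One has to check the convexity gain is not overwhelmed near $n=0$, but since the correction term $\tfrac14 a_n^s$ is already strictly positive this is comfortable; a short Taylor expansion of $(a_n \pm 2n \mp 1)^{1+s}$ around $a_n$ controls everything.

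The second bound $c_n \ge \delta(s) a_n^s$ is the heart of the matter and where the real work lies. The natural approach is to expand $c_n$ systematically in powers of $a_n^{-1}$. Writing $a_{n\pm1} = a_n \pm (2n\mp1)$ and $b_{n\pm1}$ via the same substitution, I would organize $c_n$ as $c_n = a_n^s\bigl(4 + R_n\bigr)$ where $R_n$ collects all the lower-order corrections, and then prove $R_n \ge -\max\bigl((8s^2+7s+2)s/2,\ 8s^2+8s-1\bigr)$, i.e. $4 + R_n \ge \delta(s)$. The structure of $c_n$ — a combination of terms of the shape $(n\pm\tfrac12)^2(1-a_{n\pm1}^{-1})a_{n\pm1}^s(1-a_n^{-1})$ and $(1-a_{n\pm1}^{-1})b_{n\pm1}(1-a_n^{-1})$ — is such that the leading $a_n^{1+s}$ pieces cancel by design (this is exactly the algebraic identity behind Lemma \ref{lem8}), leaving an $O(a_n^s)$ main term with coefficient $4$ plus genuinely lower-order tails. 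I expect the dominant negative contributions to come from: (i) the discrete second-derivative error in expanding $a_{n\pm1}^s$ and $b_{n\pm1}$, which produces terms with coefficients quadratic in $s$; and (ii) the cross terms between the $(1-a_n^{-1})$ factors and the $n^2/a_n = 1 - k^2/a_n$ ratios. The two alternatives inside the $\max$ in $\delta(s)$ presumably correspond to two regimes — small $|n|$ (where $k^2/a_n$ is close to $1$ and one bound is sharp) versus large $|n|$ (where $k^2/a_n$ is small and the other is sharp) — so I would split the analysis at, say, $n^2 \le k^2$ versus $n^2 > k^2$, or more likely just track the single parameter $\rho := k^2/a_n \in (0,1]$ and minimize the resulting explicit function of $\rho$ and $s$.

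Concretely, the key steps in order are: (1) record the expansions $a_{n\pm1}^{1+s} = a_n^{1+s}(1 \pm (2n\mp1)a_n^{-1})^{1+s}$ and similarly for the $a_{n\pm1}^s$ factors, keeping enough Taylor terms (through second order in $(2n\mp1)a_n^{-1}$, with Lagrange remainder of a definite sign since $s,1+s \in (0,2)$ make the relevant derivatives sign-definite); (2) substitute into $b_{n\pm1}$ and then into $c_n$, and verify the $a_n^{1+s}$ and $n^2 a_n^s$ terms cancel to leave $c_n = a_n^s(4 + R_n)$; (3) bound $R_n$ from below by an explicit rational function of $\rho = k^2/a_n$ and $s$, using $(2n\mp1)^2 = 4n^2 \mp 4n + 1 = 4(a_n - k^2) + O(|n|)$ and absorbing the genuinely $O(a_n^{-1})$ pieces (which are harmless as $a_n \ge k^2 > 1$); (4) minimize that rational function over $\rho \in (0,1]$ and $s \in [0,0.4]$ and check the minimum equals $4 - \max(\cdots) = \delta(s)$. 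The main obstacle I anticipate is step (3)–(4): getting the constants to line up \emph{exactly} with the stated $\delta(s)$ — in particular identifying precisely which $n$ (equivalently which $\rho$) saturates each branch of the $\max$ — will require careful bookkeeping rather than a clever trick, and one must be attentive that the Taylor remainders, though sign-definite in principle, do not eat into the margin; a useful sanity check is to verify the claimed bound at the extreme cases $n=0$ (where $\rho=1$) and $|n|\to\infty$ (where $\rho\to0$) before handling intermediate $n$. The restriction $s\le 0.4$ is surely dictated by the requirement $\delta(s)\ge 0$ (indeed $\ge 0.52$), so I would also confirm $\delta(0.4) = 4 - \max((8\cdot0.16 + 2.8 + 2)\cdot0.2,\ 8\cdot0.16 + 3.2 - 1) = 4 - \max(1.216,\ 3.48) = 0.52$ matches the stated range, which pins down that the second branch is active at the endpoint.
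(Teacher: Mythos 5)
Your treatment of the first inequality is fine and is essentially the paper's: Jensen gives $(a_{n-1}^{1+s}+a_{n+1}^{1+s})/2\ge (a_n+1)^{1+s}\ge (a_n+1)a_n^s$, hence $b_n\ge (k^2+1/4)a_n^s$ already without the convexity gain. The genuine gap is in the second inequality, and it is not mere bookkeeping. Your step (3) dismisses the ``genuinely $O(a_n^{-1})$ pieces'' as harmless ``since $a_n\ge k^2>1$''. But $k^2=\alpha^2$ with $\alpha=2\pi/\mathfrak{p}>1$ possibly arbitrarily close to $1$, so $a_n^{-1}$ can be arbitrarily close to $1$: the expansion parameter is not small, a second-order Taylor truncation (even with a sign-definite Lagrange remainder) leaves a tail of the same order as the margin $\delta(s)\ge 0.52$, and nothing in your plan controls it uniformly. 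The paper never truncates: in Lemma~\ref{lem9s} it keeps the full binomial series in $(2n/\lambda)^{2j}$ and $a_n^{-j}$, using that for $s\in[0,0.4]$ the coefficients ${1+s\choose 2j}$, ${2+2s\choose 2j}$, ${s\choose 2j}$, ${s-1\choose 2j}$ have definite signs, and then --- this is the step your plan lacks --- it writes, with $\lambda=a_n+1>2$, $c_n/a_n^s\ge 4+(\lambda-2)\bigl(2+2/\lambda-(2\lambda+8s^2+6s+1)(1-1/\lambda)^{-s}/\lambda\bigr)$, expands the bracket as $-\sum_{j\ge0}q_j/\lambda^{j+1}$ with $q_0=8s^2+8s-1$, $q_1=(8s^2+7s+2)s$, shows $q_{j+1}\le q_j$ and hence $q_j\le 2^jq_*$ with $q_*=\max(q_1/2,q_0)$, and uses $\sum_{j\ge0}2^j/\lambda^{j+1}=1/(\lambda-2)$ so that the series is dominated by $q_*/(\lambda-2)$ and the prefactor $(\lambda-2)$ cancels exactly, giving $c_n/a_n^s\ge 4-q_*=\delta(s)$ uniformly in $n,k$. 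Without this geometric majorization played against the vanishing prefactor, the small-$a_n$ regime is out of reach.

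Relatedly, your step (4) aims at the wrong target. The two branches of the $\max$ in $\delta(s)$ are not a small-$|n|$ versus large-$|n|$ (i.e. $\rho=k^2/a_n$ near $1$ versus near $0$) dichotomy: they arise from comparing, for each fixed $s$, the zeroth coefficient $q_0$ with half the first coefficient $q_1/2$ of the $1/\lambda$-series, and whichever dominates does so for all $n$ simultaneously. Moreover $\delta(s)$ is not the infimum of $c_n/a_n^s$ --- it is a sufficient lower bound produced by the lossy majorization $q_j\le 2^jq_*$ --- so ``minimize the rational function of $(\rho,s)$ and check the minimum equals $\delta(s)$'' is not expected to close: the matching will generally fail even where the lemma is true, and in the small-$a_n$ regime the function produced by your truncation is simply not a valid lower bound. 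You would also need, as an ingredient your outline assumes but does not produce, the quantitative upper bound $b_{n\pm1}\le\bigl(k^2+1/4+(s+1)(2s+1)\bigr)a_{n\pm1}^s$ of Lemma~\ref{lem9s}, whose exact constant feeds into $8s^2+6s+1$ and hence into the final $q_j$'s.
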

   
   We need the following preliminary lemma. 
 
 \begin{lemma}\label{lem9s}
 Let $\lambda=a_n+1$, $s\in[0,0.4]$. It holds that 
 \beno
&& 
k^2a_n^s\leq b_n\leq \big(k^2+1/4+(s+1)(2s+1)\big)a_n^s, \\
&&\big(4-(2-a_{n-1}^{-1}-a_{n+1}^{-1})(1-a_n^{-1})\big)a_n\geq4+(\lambda-2)(2+2/\lambda),
 \eeno
 and 
  \begin{align*}
      &(n+1/2)^2(1-a_{n+1}^{-1})a_{n+1}^{s}+(1-a_{n+1}^{-1})b_{n+1}+(n-1/2)^2(1-a_{n-1}^{-1})a_{n-1}^{s}+(1-a_{n-1}^{-1})b_{n-1}\\ 
      &\leq (2\lambda+8s^2+6s+1)(\lambda-1)\lambda^{s-1}.
      \end{align*}
 \end{lemma}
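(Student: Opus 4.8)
The plan is to prove Lemma \ref{lem9s} by direct but careful estimation of the explicitly defined sequences, treating $a_n = n^2+k^2$ and $\lambda = a_n+1$ as the base quantities and bounding everything in terms of $\lambda$ and $s\in[0,0.4]$. Note that $a_{n\pm1} = a_n \pm 2n + 1$, so $a_{n-1}a_{n+1} = a_n^2 - 4n^2 + \text{(lower order)}$, and the key arithmetic facts I will repeatedly use are $a_{n-1}+a_{n+1} = 2a_n+2 = 2\lambda$ and $a_{n-1}a_{n+1} = (a_n+1)^2 + 4k^2 - \text{something}$; more precisely, from \eqref{L2}-type identities, $a_{n-1}a_{n+1} = (a_n-1)^2 + 4a_n$ when $n$ is the relevant Fourier variable, i.e.\ $= \lambda^2 - 2\lambda + 1 + 4\lambda - 4 = \lambda^2+2\lambda-3$. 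I will also use convexity of $t\mapsto t^{1+s}$ (for the lower bound on $b_n$) and concavity-type corrections via the mean value theorem, namely $a_{n-1}^{1+s}+a_{n+1}^{1+s} = 2a_n^{1+s} + s(1+s)a_n^{s-1}(2n)^2 + O(\cdots)$ with explicit remainder control since $|a_{n\pm1}-a_n| = |2n\pm1| \le 2\sqrt{a_n}$.

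First I would establish the bound on $b_n$. Writing $b_n = \tfrac12(a_{n-1}^{1+s}+a_{n+1}^{1+s}) + (\tfrac14 - n^2)a_n^s$ and using $a_{n\pm1}^{1+s} = a_n^{1+s}(1\pm\tfrac{2n+1}{a_n})^{1+s}$ resp.\ $(1 - \tfrac{2n-1}{a_n})^{1+s}$, a second-order Taylor expansion gives $\tfrac12(a_{n-1}^{1+s}+a_{n+1}^{1+s}) = a_n^{1+s} + \tfrac{s(1+s)}{2}a_n^{s-1}\big((2n-1)^2+(2n+1)^2\big)/2 \cdot(1+o(1))$ plus the linear term $a_n^{1+s}\cdot\tfrac{(1+s)}{a_n}\cdot\big(\tfrac{(2n+1)-(2n-1)}{2}\big) = (1+s)a_n^s$; collecting, $b_n = a_n^s\big(a_n + (1+s) + \tfrac14 - n^2 + s(1+s)(4n^2+1)/(2a_n)\cdot(1+o(1))\big)$. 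Since $a_n - n^2 = k^2$ and $4n^2/a_n \le 4$, the bracket is between $k^2$ (drop the nonnegative correction, and note $1+s+\tfrac14 > 0$) and $k^2 + 1/4 + (s+1)(2s+1)$ after bounding the quadratic remainder; I will make the Taylor remainders rigorous using the exact integral form of the remainder and the bound $n^2 \le a_n$. The lower bound $b_n \ge k^2 a_n^s$ then follows, and with it the first inequality of Lemma \ref{lem9}; the explicit upper constant feeds into the later estimate.

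Next I would handle the middle inequality: $\big(4 - (2 - a_{n-1}^{-1} - a_{n+1}^{-1})(1-a_n^{-1})\big)a_n \ge 4 + (\lambda-2)(2+2/\lambda)$. Expanding, the left side equals $4a_n - 2(1-a_n^{-1})a_n + (a_{n-1}^{-1}+a_{n+1}^{-1})(1-a_n^{-1})a_n = 2a_n + 2 + (a_{n-1}^{-1}+a_{n+1}^{-1})(a_n-1) = 2\lambda + (a_{n-1}^{-1}+a_{n+1}^{-1})(\lambda-2)$. So the claim reduces to $a_{n-1}^{-1}+a_{n+1}^{-1} \ge 2/\lambda$, equivalently $\lambda(a_{n-1}+a_{n+1}) \ge 2a_{n-1}a_{n+1}$, i.e.\ $2\lambda^2 \ge 2(\lambda^2+2\lambda-3)$, i.e.\ $3 \ge 2\lambda$ — wait, that fails for large $\lambda$, so I must recompute $a_{n-1}a_{n+1}$ correctly in the sign convention used; the identity \eqref{L2} shows the product is $(\lambda-1)^2 + 4(\lambda-1-k^2)+\cdots$ and I will track the exact value to get the inequality in the correct direction (the point is that $a_{n-1}a_{n+1} \le a_n^2$ always, which gives $a_{n-1}^{-1}+a_{n+1}^{-1} \ge 2a_n/(a_{n-1}a_{n+1}) \ge 2/a_n \ge 2/\lambda$ — but I need it with $\lambda$ not $a_n$, so I will instead show $a_{n-1}a_{n+1} \le (a_n+1)\,a_n = \lambda a_n$, equivalently $a_n^2 - 4n^2 + \text{lower} \le \lambda a_n$, which holds comfortably). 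This is the step where I expect the main obstacle: getting the constant chasing exactly right so that the final $\delta(s)$ comes out as stated, because Lemma \ref{lem9} (the real target of this whole sequence) needs $c_n \ge \delta(s)a_n^s$ with the precise $\delta(s) = 4 - \max((8s^2+7s+2)s/2,\,8s^2+8s-1)$, and the three pieces of Lemma \ref{lem9s} must combine with no slack to spare. Finally, for the third (and hardest) inequality of Lemma \ref{lem9s}, I would write $(n\pm1/2)^2 = a_{n\pm1} - k^2 - 1/4 \pm (n\mp \text{small})$ more carefully, factor out $\lambda^{s-1}$, expand $a_{n\pm1}^s = \lambda^s(1 + (a_{n\pm1}-\lambda)/\lambda)^s$ and $b_{n\pm1} \le (k^2+1/4+(s+1)(2s+1))a_{n\pm1}^s$ from the already-proved bound, sum the two symmetric contributions so that the odd-in-$n$ terms cancel to leading order, and collect the residual quadratic-in-$s$ coefficient; the claimed bound $(2\lambda + 8s^2+6s+1)(\lambda-1)\lambda^{s-1}$ is exactly what this collection produces once one uses $\sum (n\pm1/2)^2 \approx 2n^2 + 1/2$, $(1-a_{n\pm1}^{-1}) \le 1$, and the second-order Taylor remainder bounds; I will present the expansion with explicit remainder terms and verify term by term that the coefficient of $\lambda\cdot(\lambda-1)\lambda^{s-1}$ is $\le 2$ and the coefficient of $(\lambda-1)\lambda^{s-1}$ is $\le 8s^2+6s+1$, which is the heart of the computation.
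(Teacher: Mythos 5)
Your plan has three concrete problems, the first two of which are outright errors in the written argument. For the middle inequality, your reduction to $a_{n-1}^{-1}+a_{n+1}^{-1}\ge 2/\lambda$ is correct, but everything after it is wrong: since $a_{n+1}=\lambda+2n$ and $a_{n-1}=\lambda-2n$, one has $a_{n-1}a_{n+1}=\lambda^2-4n^2$, not $\lambda^2+2\lambda-3$, and both of your patch claims are false --- ``$a_{n-1}a_{n+1}\le a_n^2$ always'' and ``$a_{n-1}a_{n+1}\le\lambda a_n$'' both fail at $n=0$, where $a_{-1}a_{1}=(a_0+1)^2=\lambda^2$. The correct one-line argument (which is what the paper does) is $a_{n-1}^{-1}+a_{n+1}^{-1}=2\lambda/(\lambda^2-4n^2)\ge 2/\lambda$; alternatively AM--HM with $a_{n-1}+a_{n+1}=2\lambda$ suffices. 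For the upper bound on $b_n$, your Taylor-with-integral-remainder plan is not adequate as stated: the constant $(s+1)(2s+1)$ is attained with essentially no slack already at second order (take $n^2$ close to $a_n$), so a remainder controlled only in absolute value has nothing to absorb it, and your sketch never explains where the hypothesis $s\le 0.4$ enters. The paper's proof hinges on sign structure, not size: first expand $\frac12(a_{n-1}^{1+s}+a_{n+1}^{1+s})=\lambda^{1+s}\sum_j\binom{1+s}{2j}(2n/\lambda)^{2j}$, use $\binom{1+s}{2j}\ge0$ to replace $2|n|$ by $2\sqrt{a_n}$, re-expand the result as $a_n^{1+s}\sum_j\binom{2+2s}{2j}a_n^{-j}$, and then drop all $j\ge2$ terms because $\binom{2+2s}{2j}\le0$ precisely when $2s<1$; that is where $s\le 0.4$ is used and where the sharp constant comes from.

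For the third inequality, the ingredient ``$(1-a_{n\pm1}^{-1})\le 1$'' in your list is fatal to your own target: the $-a_{n\pm1}^{-1}$ contributions are exactly what generate the factor $(\lambda-1)$ in the claimed bound (they produce the $-(\lambda+4s^2+6s+1)(a_{n+1}^{s-1}+a_{n-1}^{s-1})$ term, i.e.\ the $-\binom{s-1}{2j}$ parts of the coefficients), and discarding them can only yield a bound of the shape $(2\lambda+\cdots)\lambda^{s}$, which is strictly larger than $(2\lambda+8s^2+6s+1)(\lambda-1)\lambda^{s-1}$ and hence cannot prove the stated inequality. The paper instead keeps these terms, uses the exact identity $(n+1/2)^2+k^2+1/4=(a_n+a_{n+1})/2$ (and its mirror) together with the already-proved bound on $b_{n\pm1}$, writes the resulting symmetric sum as $\lambda^{s-1}\sum_j p_j(2n/\lambda)^{2j}$ with $p_j=\lambda^2\binom{s+1}{2j}+\lambda(\lambda+4s^2+6s)\binom{s}{2j}-(\lambda+4s^2+6s+1)\binom{s-1}{2j}$, and closes by checking $p_0=(2\lambda+4s^2+6s+1)(\lambda-1)$, $p_1\le\lambda^2s^2$, $p_j<0$ for $j\ge2$, and $4n^2\le 4(\lambda-1)$. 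Your outline gestures at the same cancellation but, as written, neither produces the $(\lambda-1)$ factor nor controls the signs of the higher-order terms, so the argument does not close.
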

 \begin{proof} It is easy to see that
 \begin{align*}
    (a_{n-1}^{1+s}+a_{n+1}^{1+s})/2&\geq \left(\dfrac{a_{n-1}+a_{n+1}}{2}\right)^{1+s} = (n^2+k^2+1)^{1+s}\geq(n^2+k^2+1)a_n^s\\
    &\geq (n^2-1/4)a_n^s+(k^2+5/4)a_n^s,
 \end{align*}
 which gives 
 \beno
 b_n\geq (k^2+5/4)a_n^s\geq k^2a_n^s.
 \eeno
 
 As $a_n=n^2+k^2$, $\lambda=a_n+1$, we have $ a_{n+1}=a_n+2n+1=\lambda+2n$, $a_{n-1}=a_n-2n+1=\lambda-2n$, $\lambda=a_n+1>n^2+1\geq|2n|$, and
\begin{align}\label{ans}
      a_{n-1}^{1+s}+a_{n+1}^{1+s}=&(\lambda-2n)^{1+s}+(\lambda+2n)^{1+s}=\lambda^{1+s}\big[(1-2n/\lambda)^{1+s}+(1+2n/\lambda)^{1+s}\big]\\
    \notag  =&2\lambda^{1+s}\sum_{j=0}^{\infty}{1+s\choose 2j}(2n/\lambda)^{2j}.
\end{align}
   Here  ${a\choose m}:=\prod_{j=0}^{m-1}\f{a-j}{j+1}$. Notice that ${1+s\choose 0}=1$ and ${1+s\choose 2j}=\frac{s(s+1)}{2}\prod_{l=1}^{2j-2}\frac{l-s}{l+2}\geq0$ for $j\in \Z_+$ and $n^2<a_n$, then 
    \begin{align*}
   &a_{n-1}^{1+s}+a_{n+1}^{1+s}\leq 2\lambda^{1+s}\sum_{j=0}^{\infty}{1+s\choose 2j}(2/\lambda)^{2j}a_n^j=(\lambda-2\sqrt{a_{n}})^{1+s}+(\lambda+2\sqrt{a_{n}})^{1+s}\\
      &=(a_n+1-2\sqrt{a_{n}})^{1+s}+(a_n+1+2\sqrt{a_{n}})^{1+s}=(\sqrt{a_{n}}-1)^{2+2s}+(\sqrt{a_{n}}+1)^{2+2s}\\
      &=a_n^{1+s}\big[(1-1/\sqrt{a_{n}})^{2+2s}+(1+1/\sqrt{a_{n}})^{2+2s}\big]=2a_n^{1+s}\sum_{j=0}^{\infty}{2+2s\choose 2j}a_n^{-j}.
   \end{align*}
 As ${2+2s\choose 0}=1$, ${2+2s\choose 2}=(s+1)(2s+1)$, ${2+2s\choose 2j}=-(s+1)(2s+1)\frac{2s}{3}\prod_{l=1}^{2j-3}\frac{l-2s}{l+3}\leq0$ (recall that $s\in[0,0.4]$) for $j\in \Z$, $j\geq 2$, we obtain
   \begin{align*}
       a_{n-1}^{1+s}+a_{n+1}^{1+s}\leq 2a_n^{1+s}\big(1+(s+1)(2s+1)a_n^{-1}\big),
   \end{align*}
   which gives
   \begin{align*}
      b_n&=(a_{n-1}^{1+s}+a_{n+1}^{1+s})/2+(-n^2+1/4)a_n^{s}\leq a_n^{1+s}\big(1+(s+1)(2s+1)a_n^{-1}\big)+(-n^2+1/4)a_n^{s}\\
      &=(a_n+(s+1)(2s+1)-n^2+1/4)a_n^{s}=(k^2+1/4+(s+1)(2s+1))a_n^{s}.
   \end{align*}
   This shows  
   \beno
   b_n\le (k^2+1/4+(s+1)(2s+1))a_n^{s},
      \eeno
 from which(with $n$ replaced by $n+1$), we infer that
   \begin{align*}
     &(n+1/2)^2(1-a_{n+1}^{-1})a_{n+1}^{s}+(1-a_{n+1}^{-1})b_{n+1}
     \\ \leq &(n+1/2)^2(1-a_{n+1}^{-1})a_{n+1}^{s}+(1-a_{n+1}^{-1})\big(k^2+1/4+(s+1)(2s+1)\big)a_{n+1}^{s}\\ =&(1-a_{n+1}^{-1})\big(n^2+n+k^2+1/2+(s+1)(2s+1)\big)a_{n+1}^{s}\\ =&(1-a_{n+1}^{-1})\big((a_n+a_{n+1})/2+(s+1)(2s+1)\big)a_{n+1}^{s}\\ =&(1-a_{n+1}^{-1})\big(\lambda-1+a_{n+1}+4s^2+6s+2\big)a_{n+1}^{s}/2\\ 
      =&\big(a_{n+1}^{s+1}+(\lambda+4s^2+6s)a_{n+1}^{s}-(\lambda+4s^2+6s+1)a_{n+1}^{s-1}\big)/2.
   \end{align*}
   Here we used $\lambda=a_n+1$. Similarly, we have
   \begin{align*}
      &(n-1/2)^2(1-a_{n-1}^{-1})a_{n-1}^{s}+(1-a_{n-1}^{-1})b_{n-1}\\ 
      \leq& \big(a_{n-1}^{s+1}+(\lambda+4s^2+6s)a_{n-1}^{s}-(\lambda+4s^2+6s+1)a_{n-1}^{s-1}\big)/2.
   \end{align*}
   Thus, we arrive at
    \begin{align*}
     &(n+1/2)^2(1-a_{n+1}^{-1})a_{n+1}^{s}+(1-a_{n+1}^{-1})b_{n+1}+(n-1/2)^2(1-a_{n-1}^{-1})a_{n-1}^{s}+(1-a_{n-1}^{-1})b_{n-1}\\ 
      &\leq \big(a_{n+1}^{s+1}+a_{n-1}^{s+1}+(\lambda+4s^2+6s)(a_{n+1}^{s}+a_{n-1}^{s})-(\lambda+4s^2+6s+1)(a_{n+1}^{s-1}+a_{n-1}^{s-1})\big)/2\\
      &=\lambda^{s-1}\sum_{j=0}^{\infty}p_j\left|\frac{2n}{\lambda}\right|^{2j},
   \end{align*}
   where
   \beno
    p_j=\lambda^{2}{s+1\choose 2j}+\lambda(\lambda+4s^2+6s){s\choose 2j}-(\lambda+4s^2+6s+1){s-1\choose 2j}.
    \eeno
Here we used \eqref{ans} and the following similar formula
   \begin{align*}
      &a_{n-1}^{s}+a_{n+1}^{s}=2\lambda^{s}\sum_{j=0}^{\infty}{s\choose 2j}(2n/\lambda)^{2j},\quad a_{n-1}^{s-1}+a_{n+1}^{s-1}=2\lambda^{s-1}\sum_{j=0}^{\infty}{s-1\choose 2j}(2n/\lambda)^{2j}.
   \end{align*}
 Notice that  \begin{align*}
     p_0&=\lambda^{2}+\lambda(\lambda+4s^2+6s)-(\lambda+4s^2+6s+1)=(2\lambda+4s^2+6s+1)(\lambda-1),\\
     p_1&=\lambda^{2}s(s+1)/2+\lambda(\lambda+4s^2+6s)s(s-1)/2-(\lambda+4s^2+6s+1)(s-1)(s-2)/2\\
     &=\lambda^{2}s^2-\lambda(4s^2+6s)s(1-s)/2-(\lambda+4s^2+6s+1)(1-s)(2-s)/2\leq\lambda^{2}s^2.
   \end{align*} 
   For $j\in\Z$, $j\geq 2$, we have ${s\choose 2j}=-s\prod_{l=1}^{2j-1}\frac{l-s}{l+1}\leq0$, 
   ${s-1\choose 2j}=\prod_{l=1}^{2j}\frac{l-s}{l}>0$, and
   \beno
   {s+1\choose 2j}\!+\!{s\choose 2j}\!=\!\frac{(s+1)s}{2}\!\prod_{l=1}^{2j-2}\!\frac{l-s}{l+2}+\frac{s(s-2j+1)}{2}\!\prod_{l=1}^{2j-2}\!\frac{l-s}{l+2}\!=\!
   -s(j-1-s)\!\prod_{l=1}^{2j-2}\!\frac{l-s}{l+2}\\ \leq0,
   \eeno
    which leads to
   \begin{align*}
     p_j&=\lambda^{2}\left\{{s+1\choose 2j}+{s\choose 2j}\right\}+\lambda(4s^2+6s){s\choose 2j}-(\lambda+4s^2+6s+1){s-1\choose 2j}<0.
   \end{align*}
   Thus (using $n^2<a_n=\lambda-1$),
   \begin{align*}
     &(n+1/2)^2(1-a_{n+1}^{-1})a_{n+1}^{s}+(1-a_{n+1}^{-1})b_{n+1}+(n-1/2)^2(1-a_{n-1}^{-1})a_{n-1}^{s}+(1-a_{n-1}^{-1})b_{n-1}\\ 
      &\leq \lambda^{s-1}\sum_{j=0}^{\infty}p_j\left|\frac{2n}{\lambda}\right|^{2j}\leq \lambda^{s-1}(p_0+p_1(2n/\lambda)^{2})\leq \lambda^{s-1}(p_0+\lambda^{2}s^2(2n/\lambda)^{2})= \lambda^{s-1}(p_0+4n^2s^2)\\ &\leq \lambda^{s-1}\big(p_0+4(\lambda-1)s^2\big)=\lambda^{s-1}\big((2\lambda+4s^2+6s+1)(\lambda-1)+4(\lambda-1)s^2\big)\\ &=
      \lambda^{s-1}(2\lambda+8s^2+6s+1)(\lambda-1).
   \end{align*} 
   
   Finally, as $\lambda=a_n+1$, $ a_{n+1}=\lambda+2n$, $a_{n-1}=\lambda-2n$, we have
   \begin{align*}
      &\big(4-(2-a_{n-1}^{-1}-a_{n+1}^{-1})(1-a_n^{-1})\big)a_n=4a_n-(2-a_{n-1}^{-1}-a_{n+1}^{-1})(a_n{-1})\\
      &=2(a_n+1)+(a_{n-1}^{-1}+a_{n+1}^{-1})(a_n{-1})=2\lambda+\big((\lambda-2n)^{-1}+(\lambda+2n)^{-1}\big)(\lambda-2)\\
      &=2\lambda+2\lambda(\lambda-2)/(\lambda^2-4n^2)\leq2\lambda+2\lambda(\lambda-2)/\lambda^2=4+2(\lambda-2)+2(\lambda-2)/\lambda\\
      &=4+(\lambda-2)(2+2/\lambda).
   \end{align*}
   
   This completes the proof of the lemma.
 \end{proof}
 
 Now we prove Lemma \ref{lem9}.
 
 \begin{proof}
 By Lemma \ref{lem9s}, we have $b_n\geq k^2a_n^s$, and (here $\lambda=a_n+1$, $a_n=\lambda-1$)
 \begin{align*}
      c_n/a_n^s=&\big(4-(2-a_{n-1}^{-1}-a_{n+1}^{-1})(1-a_n^{-1})\big)a_n-
      \big[(n+1/2)^2(1-a_{n+1}^{-1})a_{n+1}^{s}+(1-a_{n+1}^{-1})b_{n+1}\\
      &+(n-1/2)^2(1-a_{n-1}^{-1})a_{n-1}^{s}+(1-a_{n-1}^{-1})b_{n-1}\big](1-a_n^{-1})/a_n^s\\
      \geq&4+(\lambda-2)(2+2/\lambda)-(2\lambda+8s^2+6s+1)(\lambda-1)\lambda^{s-1}(1-a_n^{-1})/a_n^s\\
      =&4+(\lambda-2)(2+2/\lambda)-(2\lambda+8s^2+6s+1)(\lambda-1)\lambda^{s-1}(1-(\lambda-1)^{-1})/(\lambda-1)^s\\
      =&4+(\lambda-2)(2+2/\lambda)-(2\lambda+8s^2+6s+1)(\lambda-2)\lambda^{s-1}/(\lambda-1)^s\\
      =&4+(\lambda-2)\big(2+2/\lambda-(2\lambda+8s^2+6s+1)(1-1/\lambda)^{-s}/\lambda\big).
   \end{align*}
   As $\lambda=a_n+1\geq k^2+1>2$, we have (using $(-1)^{j}{-s\choose j}= {s+j-1\choose j}, j\geq1$)
   \begin{align*}
     &(1-1/\lambda)^{-s}=\sum_{j=0}^{\infty}{-s\choose j}(-1/\lambda)^{j}=1+\sum_{j=1}^{\infty}{s+j-1\choose j}/\lambda^{j},
   \end{align*} 
   and then
   \begin{align*}
      &2+2/\lambda-(2\lambda+8s^2+6s+1)(1-1/\lambda)^{-s}/\lambda \\ =&2+2/\lambda-2(1-1/\lambda)^{-s} -(8s^2+6s+1)(1-1/\lambda)^{-s}/\lambda \\
      =&2+2/\lambda- 2\Big(1+\sum_{j=1}^{\infty}{s+j-1\choose j}/\lambda^{j}\Big)- (8s^2+6s+1)\Big(1/\lambda+\sum_{j=1}^{\infty}{s+j-1\choose j}/\lambda^{j+1}\Big)\\
      =&-\sum_{j=0}^{\infty}q_j/\lambda^{j+1},
   \end{align*}
 where
 \beno
  &&q_0=-2+ 2{s\choose 1} +(8s^2+6s+1)=8s^2+8s-1,\\
  &&q_j=(8s^2+6s+1){s+j-1\choose j}+2{s+j\choose j+1},\quad \forall\  j\in\Z_+.  
   \eeno

 As $0\leq{s+j\choose j+1}={s+j-1\choose j}\frac{s+j}{j+1}\leq{s+j-1\choose j}$, for $j\in\Z_+$, we have $q_{j+1}\leq q_j$ and 
 $q_{j}\leq q_1\leq 2^jq_1/2$ for all $j\in\Z_+$. Let $q_*:=\max(q_1/2,q_0)$, then $q_{j}\leq  2^jq_*$ for all $j\in\Z$, $j\geq0$, thus,
 \begin{align*}
     &2+2/\lambda-(2\lambda+8s^2+6s+1)(1-1/\lambda)^{-s}/\lambda\geq-\sum_{j=0}^{\infty}q_j/\lambda^{j+1}\geq-\sum_{j=0}^{\infty}2^jq_*/\lambda^{j+1}
     =\frac{-q_*}{\lambda-2}.
   \end{align*}
   Here we used $ 1/(\lambda-2)=\sum_{j=0}^{\infty}2^j/\lambda^{j+1}$.
   Notice that $q_1=(8s^2+6s+1)s+(s+1)s=(8s^2+7s+2)s$, then
   \begin{align*}
      c_n/a_n^s\geq&
4+(\lambda-2)(2+2/\lambda-(2\lambda+8s^2+6s+1)(1-1/\lambda)^{-s}/\lambda)\\
\geq&4+(\lambda-2)(-q_*/(\lambda-2))=4-q_*=4-\max(q_1/2,q_0)\\=&4-\max((8s^2+7s+2)s/2,8s^2+8s-1)=\delta(s).
   \end{align*} 
   This shows $c_n\geq\delta(s)a_n^s $. 
 \end{proof}

\section{Space-time estimates for the linearized NS}\label{sec:spacetime}

Recall the space-time norm $X_I$ defined by 
\begin{align*}
 \|f\|_{X_I}=\|f\|_{X_{I,\nu}}=&\|f\|_{L^\infty(I;L^2)}+\nu^{\f12}\|\nabla f\|_{L^2(I;L^2)}+ \||D_x|^{\f12}\nabla \Delta^{-1}f\|_{L^2(I;L^2)}
      \\ \notag&+ \||V'|^{\f12}\partial_x\nabla \Delta^{-1}f\|_{L^2(I;L^2)}.
 \end{align*}
This norm enables us to establish the following control. 

\begin{lemma}\label{lem:norm-X}
It holds that
   \begin{align*}
      & \nu^{\f16}\|\partial_x\nabla\Delta^{-1}f\|_{L^2(I;L^2)}+
      \nu^{\f16}\|\nabla\Delta^{-1}f_{\neq}\|_{L^2(I;L^{\infty})}+\nu^{\f14}\|f_{\neq}\|_{L^2(I;L^2)}\leq C\|f\|_{X_I}.
   \end{align*}
 \end{lemma}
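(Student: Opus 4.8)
\textbf{Proof plan for Lemma \ref{lem:norm-X}.}
The strategy is to interpolate each of the three quantities on the left between the pieces that appear in the definition of $\|f\|_{X_I}$, using the fact that $|D_x|$ and $\nabla$ behave like multipliers of size $\ge|k|\ge\alpha>1$ on the non-zero modes, combined with the elementary inequality $\min(\nu^{1/2}|k|,|k|^{1/2}\langle k\rangle^{-1})\gtrsim\nu^{1/6}$ type bookkeeping. Concretely, I would first handle $\nu^{1/6}\|\partial_x\nabla\Delta^{-1}f\|_{L^2(I;L^2)}$: on the frequency side $|k|^2|\xi,k|^{-2}\le1$, so $\partial_x\nabla\Delta^{-1}f$ has the same $L^2$-size as an operator bounded by $|D_x|^{1/2}\cdot|D_x|^{1/2}\nabla\Delta^{-1}$; splitting the extra $|D_x|^{1/2}$ as $\min(\nu^{1/2}|k|,|k|^{-1/2})\lesssim \nu^{1/6}$ (valid since the two bounds cross at $|k|\sim\nu^{-1/3}$) lets me dominate it by $\nu^{1/6}$ times a geometric mean of $\nu^{1/2}\|\nabla f\|_{L^2L^2}$ and $\||D_x|^{1/2}\nabla\Delta^{-1}f\|_{L^2L^2}$, both controlled by $\|f\|_{X_I}$.

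Next, for $\nu^{1/6}\|\nabla\Delta^{-1}f_{\neq}\|_{L^2(I;L^\infty)}$ I would use the one-dimensional-in-$y$ Gagliardo--Nirenberg / Sobolev embedding on $\T_{2\pi}$, $\|g\|_{L^\infty_y}\lesssim \|g\|_{L^2_y}^{1/2}\|g\|_{H^1_y}^{1/2}$, applied mode by mode to $g=(\nabla\Delta^{-1}f)_k$. This bounds the $L^\infty$ norm by a geometric mean of $\|\nabla\Delta^{-1}f\|_{L^2}$ and $\|\nabla\nabla\Delta^{-1}f\|_{L^2}\lesssim\|f\|_{L^2}$; since on non-zero modes $\|\nabla\Delta^{-1}f_{\neq}\|_{L^2}\lesssim|D_x|^{-1}\||D_x|^{1/2}\nabla\Delta^{-1}f\|_{L^2}\cdot$(a gain), one again distributes powers of $|D_x|$ and uses the same $\min(\nu^{1/2},|k|^{-\cdot})\lesssim\nu^{1/6}$ split, together with the $L^\infty_t L^2$ and $L^2_t L^2$ components of the $X_I$ norm; a Cauchy--Schwarz in $t$ converts the product of an $L^\infty_t$ and an $L^2_t$ factor into the required $L^2_t$ bound. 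Finally, $\nu^{1/4}\|f_{\neq}\|_{L^2(I;L^2)}$ is the most direct: write $\nu^{1/4}\|f_{\neq}\|_{L^2L^2}\le (\nu^{1/2}\|\nabla f\|_{L^2L^2})^{1/2}(\||D_x|^{1/2}\nabla\Delta^{-1}f\|_{L^2L^2})^{1/2}$ after checking the pointwise frequency inequality $|k|^{1/2}|\xi,k|\,|\xi,k|^{-2}\cdot|k|^{1/2}\le |\xi,k|$, i.e. $|k|\le|\xi,k|$, which holds trivially; both factors on the right are $\le\|f\|_{X_I}$.

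The only mildly delicate point — and the step I expect to require the most care — is the Gagliardo--Nirenberg estimate for the $L^2(I;L^\infty)$ term, because one must be careful that the time integration and the mode-by-mode $y$-Sobolev embedding are compatible: one applies the embedding pointwise in $t$, then Cauchy--Schwarz in $t$ to split $\|g\|_{L^2_tL^\infty_y}^2\le\|\,\|g\|_{L^2_y}\,\|_{L^\infty_t}\cdot\|\,\|g\|_{H^1_y}\,\|_{L^1_t}$ is \emph{not} quite what is wanted; instead one keeps $\|g(t)\|_{L^\infty_y}^2\lesssim\|g(t)\|_{L^2_y}\|g(t)\|_{H^1_y}$ and integrates in $t$, then Cauchy--Schwarz in $t$ to get $\|g\|_{L^2_tL^\infty_y}^2\lesssim\|g\|_{L^\infty_tL^2_y}\|g\|_{L^2_tH^1_y}$, which matches exactly the $L^\infty(I;L^2)$ and $\nu^{1/2}\|\nabla f\|_{L^2(I;L^2)}$ (after accounting for $\|\nabla\nabla\Delta^{-1}f\|\lesssim\|f\|$) pieces of $\|f\|_{X_I}$ once the powers of $\nu$ and $|D_x|$ are balanced via the same crossover $|k|\sim\nu^{-1/3}$. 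Keeping track of which power of $|D_x|$ is spent where, and confirming the worst case is indeed the crossover frequency, is where I would spend the bulk of the effort; everything else is routine Littlewood--Paley / frequency bookkeeping.
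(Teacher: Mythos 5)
Your handling of the first and third terms is correct and is essentially the paper's own argument: the paper bounds $\|\partial_x\nabla\Delta^{-1}f\|_{L^2}\le \||D_x|^{1/2}\nabla\Delta^{-1}f\|_{L^2}^{2/3}\|\nabla f\|_{L^2}^{1/3}$ and $\|f_{\neq}\|_{L^2}^2\le\|\nabla f\|_{L^2}\||D_x|^{1/2}\nabla\Delta^{-1}f\|_{L^2}$ and then uses H\"older in time; your frequency split at $|k|\sim\nu^{-1/3}$ is just a different packaging of the same bookkeeping and closes without trouble.

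The $L^2(I;L^\infty)$ term, however, contains a genuine gap. Your committed scheme is the $\tfrac12$--$\tfrac12$ Gagliardo--Nirenberg $\|g\|_{L^\infty_y}^2\lesssim\|g\|_{L^2_y}\|g\|_{H^1_y}$ followed by Cauchy--Schwarz in $t$, pairing an $L^\infty_t L^2$ factor with an $L^2_t H^1$ factor. The problem is that the $H^1_y$-level factor (one derivative above $\nabla\Delta^{-1}f$, i.e.\ at the level of $f$ itself) can only be drawn from the two ``expensive'' pieces of $X_I$: either $\|f\|_{L^\infty(I;L^2)}$, which carries no $\nu$-smallness and whose pairing with any $L^2_t$ quantity costs $|I|^{1/2}\le\nu^{-1/2}$, or $\nu^{1/2}\|\nabla f\|_{L^2(I;L^2)}$, which costs $\nu^{-1/2}$ per unit share. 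Since your GN forces this share to be $\tfrac12$, the time bookkeeping yields at best $\nu^{-1/4}$, not $\nu^{-1/6}$: concretely, mode by mode, $\|g_k\|_{L^2_t L^\infty_y}\lesssim\|g_k\|_{L^\infty_t L^2}^{1/2}\|g_k\|_{L^2_t H^1}^{1/2}\lesssim |k|^{-1}\|f_k\|_{L^\infty_t L^2}^{1/2}\,\big(\nu^{-1/2}\cdot\nu^{1/2}\|\nabla f_k\|_{L^2_tL^2}\big)^{1/2}$, and after summing in $k$ (the factor $|k|^{-1}$ makes that sum harmless) one only gets $\nu^{1/4}\|\nabla\Delta^{-1}f_{\neq}\|_{L^2(I;L^\infty)}\lesssim\|f\|_{X_I}$, which is strictly weaker than the claimed $\nu^{1/6}$ bound. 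No redistribution of $|D_x|$ powers and no crossover at $|k|\sim\nu^{-1/3}$ can repair this, because the loss is produced by the time pairing forced by the $H^1_y$ factor, not by the frequency sum. What is needed is the anisotropic inequality of the paper's Lemma \ref{lem:GN-1}: mode by mode one uses $\|g_k\|_{L^\infty_y}\lesssim\|g_k\|_{L^2}^{3/4}\|g_k\|_{H^2}^{1/4}$, so that $3/4$ of the weight falls on the $|D_x|^{1/2}\nabla\Delta^{-1}f$ component (which is both $L^2_t$-based and carries the $|k|^{1/2}$ gain) and only $1/4$ on $\nabla f$; the resulting $k$-sum $\sum_k|k|^{-3/8}(\cdots)$ is borderline divergent and is made convergent by the extra weight $(1+|\delta k|)^{-9/8}$ with $\delta=\nu^{1/3}$, at the price $\delta^{-1/8}=\nu^{-1/24}$, together with the complementary term $C\delta\|\cdot\|_{H^2}$. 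Both contributions then land exactly at $\nu^{-1/24}\cdot(\nu^{-1/2})^{1/4}=\nu^{-1/6}$ and $\nu^{1/3}\cdot\nu^{-1/2}=\nu^{-1/6}$, which is the assertion of the lemma; your $\tfrac12$--$\tfrac12$ scheme cannot reach this exponent.
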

 
\begin{proof}First, we have
  \begin{align*} 
  \|\partial_x\nabla\Delta^{-1}f\|_{L^2}\leq C\||D_x|^{\f12}\nabla\Delta^{-1}f\|^{\f23}_{L^2}\|\partial_x^{2} \nabla\Delta^{-1}f\|_{L^2}^{\f13}\leq C\||D_x|^{\f12}\nabla\Delta^{-1}f\|^{\f23}_{L^2}\|\nabla f\|_{L^2}^{\f13},
  \end{align*}
  which gives
  \begin{align*} 
  \|\partial_x\nabla\Delta^{-1}f\|_{L^2(I;L^2)}\leq C\||D_x|^{\f12}\nabla\Delta^{-1}f\|^{\f23}_{L^2(I;L^2)}\|\nabla f\|_{L^2(I;L^2)}^{\f13}\leq C \nu^{-\f16}\|f\|_{X_I}.
  \end{align*}

  By Lemma \ref{lem:GN-1} (taking $\delta=\nu^{\f13}$), we get
\begin{align*}   \|\nabla\Delta^{-1}f_{\neq}\|_{L^\infty}\leq& C\nu^{-\f{1}{24}}\||D_x|^{\f12}\nabla\Delta^{-1}f_{\neq}\|_{L^2}^{\f34}\|\nabla\Delta^{-1}f_{\neq}\|_{H^2}^{\f14} +C\nu^{\f13}\|\nabla\Delta^{-1}f_{\neq}\|_{H^2}\\
     \leq& C \nu^{-\f{1}{24}}\||D_x|^{\f12}\nabla \Delta^{-1}f\|_{L^2}^{\f34}\|\nabla f\|_{L^2}^{\f14} +C\nu^{\f13}\|\nabla f\|_{L^2},
     \end{align*}
  which gives
  \begin{align*}    
 \|\nabla\Delta^{-1}f_{\neq}\|_{L^2(I;L^{\infty})}
     \leq& C \nu^{-\f{1}{24}}\||D_x|^{\f12}\nabla \Delta^{-1}f\|_{L^2(I;L^2)}^{\f34}\|\nabla f\|_{L^2(I;L^2)}^{\f14} +C\nu^{\f13}\|\nabla f\|_{L^2(I;L^2)}\\
     \leq& C \nu^{-\f{1}{6}}\|f\|_{X_I}.
  \end{align*}   
  By the interpolation, we get
  \begin{align*} 
  &\|f_{\neq}\|_{L^2(I;L^2)}\leq C\||D_x|^{\f12}\nabla\Delta^{-1}f\|^{\f12}_{L^2(I;L^2)}\|\nabla f\|_{L^2(I;L^2)}^{\f12}\leq C \nu^{-\f14}\|f\|_{X_I}.
  \end{align*}

  This completes the proof.
  \end{proof}

\subsection{Proof of Proposition \ref{prop:LNS-sp1}}
This part is devoted to the space-time estimate for the following linearized Navier-Stokes system
\begin{align}\label{eq:LNS-T1}
  \partial_tf-\nu \Delta f+\mathrm{e}^{-\nu t}V\partial_x(f+\Delta^{-1}f)=\partial_y g_1+\partial_xg_2+g_3+g_4.
\end{align}

We first prove the following energy estimate. {We will use the inner product  $\langle \ ,\ \rangle_{\star}$  defined by (see also \eqref{def:norm-*})}
  \begin{align}\label{8.1}
     & \langle f,g\rangle_\star=\langle P_{\neq} f,P_{\neq}g+\Delta^{-1}P_{\neq}g\rangle +\langle P_0f,P_0g\rangle= \langle f,g+\Delta^{-1}P_{\neq}g\rangle,
  \end{align}

 \begin{lemma}\label{lem2}
Let $f$ solve \eqref{eq:partf-g-1} with $P_0g_3=0$. Then for $I= [t_1,t_2]\subset[0,1/\nu]$, it holds that 
\begin{align*}
&\|f\|_{L^{\infty}({I};L^2)}+\nu^{\f12}\|\nabla f\|_{L^2({I};L^2)}\lesssim\|f(t_1)\|_{L^2}+
\nu^{-\f12}\|(g_{1},g_{2})\|_{L^2({I};L^2)}\\&\qquad+
\||D_x|^{-\f12}\nabla{g}_{3}\|_{L^2({I};L^2)}^{1/2}\||D_x|^{\f12}\nabla\Delta^{-1}f\|_{L^2({I};L^2)}^{1/2}+
\|g_{4}\|_{L^1({I};L^2)}.
\end{align*}
 \end{lemma}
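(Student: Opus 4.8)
The natural approach is a weighted energy identity in the inner product $\langle\,,\,\rangle_\star$ defined in \eqref{8.1}, exploiting the fact that the transport term $\mathrm{e}^{-\nu t}V\partial_x(f+\Delta^{-1}f)$ is skew-symmetric with respect to $\langle\,,\,\rangle_\star$ (this is the whole point of that inner product: $\partial_x(1+\Delta^{-1})$ is the $\star$-symmetric multiplication operator $B$ composed with $\partial_x$, so pairing against $f$ kills it up to harmless commutators with $\Delta^{-1}P_{\neq}$). First I would take $\frac{1}{2}\frac{d}{dt}\|f\|_\star^2$ and substitute \eqref{eq:partf-g-1}, obtaining
\begin{align*}
\frac12\frac{d}{dt}\|f\|_\star^2 = \nu\langle\Delta f,f\rangle_\star - \mathrm{e}^{-\nu t}\langle V\partial_x(f+\Delta^{-1}f),f\rangle_\star + \langle \partial_yg_1+\partial_xg_2+g_3+g_4,f\rangle_\star.
\end{align*}
The transport term vanishes (or contributes only a lower-order commutator term controlled by $\nu^{1/2}\|\nabla f\|$ times $\|f\|$, absorbable). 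The dissipation term gives $-\nu\|\nabla f\|_\star^2$ plus a term $-\nu\langle\nabla f,\nabla\Delta^{-1}P_{\neq}f\rangle$ which is nonnegative-leaning / lower order; since $\|\cdot\|_\star\sim\|\cdot\|_{L^2}$ on the relevant modes, after integrating in $t$ over $I$ this yields the left-hand side $\|f\|_{L^\infty(I;L^2)}^2+\nu\|\nabla f\|_{L^2(I;L^2)}^2$ up to constants.

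Next I would estimate the four forcing pairings. For $\langle\partial_yg_1+\partial_xg_2,f\rangle_\star$, integrate by parts to move the derivative onto $f$ (and onto the $\Delta^{-1}P_{\neq}$ factor, which is bounded), bound by $\|(g_1,g_2)\|_{L^2}\|\nabla f\|_{L^2}$, then use Young's inequality with weight $\nu$: $\lesssim \nu\|\nabla f\|_{L^2}^2 + \nu^{-1}\|(g_1,g_2)\|_{L^2}^2$, integrate in time, absorb the first into the dissipation. For $\langle g_4,f\rangle_\star$, simply bound by $\|g_4\|_{L^2}\|f\|_{L^2}\le\|g_4\|_{L^2}\|f\|_{L^\infty(I;L^2)}$, integrate in $t$ to get $\|g_4\|_{L^1(I;L^2)}\|f\|_{L^\infty(I;L^2)}$, which after the standard $a^2\lesssim ab+\ldots$ trick contributes the $\|g_4\|_{L^1(I;L^2)}$ term.

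The term $\langle g_3,f\rangle_\star$ is the delicate one and is the main obstacle, since the claimed bound contains the curious factor $\||D_x|^{-1/2}\nabla g_3\|_{L^2(I;L^2)}^{1/2}\||D_x|^{1/2}\nabla\Delta^{-1}f\|_{L^2(I;L^2)}^{1/2}$ rather than something symmetric. The idea is to \emph{not} pair $g_3$ against $f$ directly in $L^2$, but to write $\langle g_3,f\rangle_\star=\langle g_3, (1+\Delta^{-1}P_{\neq})f\rangle$ and integrate by parts so as to land on $\nabla\Delta^{-1}f$: schematically $\langle g_3,f\rangle_\star = -\langle \nabla\Delta^{-1}g_3 \cdot\text{(something)}, \ldots\rangle$, distributing $|D_x|^{-1/2}$ onto $g_3$ and $|D_x|^{1/2}$ onto $\nabla\Delta^{-1}f$ via Cauchy--Schwarz in $x$-frequency (here $P_0g_3=0$ is used so that $|D_x|^{\pm1/2}$ makes sense). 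This gives, after Cauchy--Schwarz in $t$,
\begin{align*}
\Big|\int_I\langle g_3,f\rangle_\star\,dt\Big| \lesssim \||D_x|^{-\f12}\nabla g_3\|_{L^2(I;L^2)}\,\||D_x|^{\f12}\nabla\Delta^{-1}f\|_{L^2(I;L^2)}.
\end{align*}
But this is a product of two full norms, not the claimed $(\cdot)^{1/2}(\cdot)^{1/2}$ with a factor $\|f\|^2$ left over; the $1/2$ powers must come from applying Young's inequality $ab\le \frac{1}{2}(\epsilon^{-1}a + \epsilon b)^{?}$ in a way that reabsorbs part of it — more precisely, one bounds $\||D_x|^{1/2}\nabla\Delta^{-1}f\|_{L^2(I;L^2)}$ by interpolating between $\|\nabla\Delta^{-1}f\|$ (controlled by $\|f\|_{L^\infty(I;L^2)}$, essentially) and itself, and keeps one square-root of it explicit in the final bound. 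I would track carefully which half-power of the $f$-norm is genuinely reabsorbed versus displayed; the statement as written keeps $\||D_x|^{1/2}\nabla\Delta^{-1}f\|_{L^2(I;L^2)}^{1/2}$ explicit on the right, so the argument should bound $|\int_I\langle g_3,f\rangle_\star|$ by $\||D_x|^{-1/2}\nabla g_3\|_{L^2(I;L^2)}\,\||D_x|^{1/2}\nabla\Delta^{-1}f\|_{L^2(I;L^2)}$ and then invoke $\||D_x|^{1/2}\nabla\Delta^{-1}f\|_{L^2(I;L^2)} \lesssim \|f\|_{X_I}$ together with a bootstrap/Young step to isolate the stated form. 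Collecting all four estimates, moving the absorbed $\nu\|\nabla f\|_{L^2(I;L^2)}^2$ terms to the left, and taking square roots yields exactly the inequality of the lemma. I expect the bookkeeping of the $g_3$ pairing — in particular justifying the $|D_x|^{\pm1/2}$ splitting and the asymmetric half-powers — to be the only nonroutine point; everything else is a standard skew-symmetry-plus-Young energy estimate.
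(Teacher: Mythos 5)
Your proposal follows exactly the paper's route: an energy identity in the $\langle\,,\,\rangle_\star$ inner product, skew-symmetry of the transport term (which in fact vanishes identically, not just up to commutators, since $P_0$ of the transport term is zero and the remaining pairing is a perfect $x$-derivative), coercivity of $\langle-\nu\Delta f,f\rangle_\star$ thanks to $\alpha=2\pi/\mathfrak p>1$, integration by parts for $g_1,g_2$, the trivial pairing for $g_4$, and for $g_3$ precisely the paper's identity $\langle g_3,f\rangle_\star=-\langle|D_x|^{-\f12}\nabla g_3,\,|D_x|^{\f12}\nabla\Delta^{-1}f\rangle_\star$ (legitimate since $P_0g_3=0$ and all the multipliers commute with $1+\Delta^{-1}P_{\neq}$), giving $|\int_I\langle g_3,f\rangle_\star\,dt|\lesssim\||D_x|^{-\f12}\nabla g_3\|_{L^2(I;L^2)}\||D_x|^{\f12}\nabla\Delta^{-1}f\|_{L^2(I;L^2)}$. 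Up to this point everything you wrote is correct and is the paper's proof.

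The one place you go astray is the final bookkeeping for the half-powers, which you try to produce by interpolation, absorption, or an appeal to $\|f\|_{X_I}$; none of that is needed, and parts of it would not work (e.g.\ $\|\nabla\Delta^{-1}f\|_{L^2(I;L^2)}$ is a time-integrated quantity over an interval of length up to $1/\nu$, so it is not ``essentially controlled by $\|f\|_{L^\infty(I;L^2)}$'' without a fatal $\nu^{-\f12}$ loss, and invoking the full $X_I$ norm inside this lemma would be circular). The point is simply that the Gr\"onwall-type step bounds the \emph{square} of the left-hand side: after integrating in $t$ one has $\|f\|_{L^\infty(I;L^2)}^2+\nu\|\nabla f\|_{L^2(I;L^2)}^2\lesssim\|f(t_1)\|_{L^2}^2+\|(g_1,g_2)\|_{L^2(I;L^2)}\|\nabla f\|_{L^2(I;L^2)}+\||D_x|^{-\f12}\nabla g_3\|_{L^2(I;L^2)}\||D_x|^{\f12}\nabla\Delta^{-1}f\|_{L^2(I;L^2)}+\|g_4\|_{L^1(I;L^2)}\|f\|_{L^\infty(I;L^2)}$; Young's inequality absorbs the $\|\nabla f\|$ and $\|f\|_{L^\infty L^2}$ factors from the $g_1,g_2,g_4$ terms, and then taking the square root (using $\sqrt{a+b}\le\sqrt a+\sqrt b$) turns the untouched product term into exactly the asymmetric factor $\||D_x|^{-\f12}\nabla g_3\|_{L^2(I;L^2)}^{1/2}\||D_x|^{\f12}\nabla\Delta^{-1}f\|_{L^2(I;L^2)}^{1/2}$ of the statement, with the $f$-dependent half left on the right. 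That term is only closed later, when Lemma \ref{lem2} is combined with the spacetime bound of Proposition \ref{prop:space-time-3} in the proof of Proposition \ref{prop:LNS-sp1}, again by Young's inequality.
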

 \begin{proof}
 Taking the inner product $\langle \ ,\ \rangle_{\star}$ with $f$, we obtain(here $\langle \ ,\ \rangle_{\star}$ is defined in
 \eqref{8.1})
 \begin{align*}
&\f12\partial_t\|f\|_{\star}^2+{\nu}\|\nabla f\|_{\star}^2\leq |\langle g,f\rangle_\star|\\&=
|-\langle g_{1},\partial_yf\rangle_\star-\langle g_{2},\partial_xf\rangle_\star-
\langle|D_x|^{-\f12}\nabla{g}_{3},|D_x|^{\f12}\nabla\Delta^{-1}f\rangle_\star+
\langle{g}_{4},f\rangle_\star|\\ &\leq \|(g_{1},g_{2})\|_{\star}\|\nabla f\|_{\star}+
\||D_x|^{-\f12}\nabla{g}_{3}\|_{\star}\||D_x|^{\f12}\nabla\Delta^{-1}f\|_{\star}+\|g_{4}\|_{\star}\|f\|_{\star}.
\end{align*}
Integrating in $t$ and using the fact that $\|h\|_{\star}\sim \|h\|_{L^2} $, we have
\begin{align*}
&\|f\|_{L^{\infty}({I};L^2)}^2+{\nu}\|\nabla f\|_{L^2({I};L^2)}^2\lesssim\|f(t_1)\|_{L^2}^2+
\|(g_{1},g_{2})\|_{L^2({I};L^2)}\|\nabla f\|_{L^2({I};L^2)}\\&\quad+
\||D_x|^{-\f12}\nabla{g}_{3}\|_{L^2({I};L^2)}\||D_x|^{\f12}\nabla\Delta^{-1}f\|_{L^2({I};L^2)}+
\|g_{4}\|_{L^1({I};L^2)}\|f\|_{L^{\infty}({I};L^2)}.
\end{align*}
Then the result follows from Young's inequality.
\end{proof}

It remains to estimate $\||D_x|^{\f12}\nabla\Delta^{-1}f\|_{L^2({I};L^2)}+ \||V'|^{\f12}\partial_x\nabla \Delta^{-1}f\|_{L^2(I;L^2)}$. 
We first consider the case when $\mathrm{e}^{-\nu t}$ is replaced by $1$, i.e.,
\begin{align}\label{7.2}
   & \partial_tf-\nu \Delta f+V\partial_x(f+\Delta^{-1}f)=g=\partial_y g_1+\partial_xg_2+g_3+g_4.
\end{align}

\begin{Proposition}\label{prop:space-time-3}
Let $f$ solve \eqref{7.2} with $P_0g_3=0$. 
There exists $\epsilon_0>0$ such that if $V=V(y)$, $\|V-\cos y\|_{H^4}\leq \epsilon_0$, $\nu\in(0,1)$, then for $I= [t_1,t_2]\subset[0,1/\nu]$, it holds that 
\begin{align*}
&\||D_x|^{\f12}\nabla\Delta^{-1}f\|_{L^2({I};L^2)}+ \||V'|^{\f12}\partial_x\nabla \Delta^{-1}f\|_{L^2(I;L^2)}\\ &\leq C\big(\|f(t_1)\|_{L^2}+\nu^{-\f12}\|(g_1,g_2)\|_{L^2(I;L^2)}+\||D_x|^{-\f12}\nabla g_3\|_{L^2(I;L^2)}+\|g_4\|_{L^1(I;L^2)}\big).
\end{align*}
\end{Proposition}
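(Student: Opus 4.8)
\textbf{Proof proposal for Proposition \ref{prop:space-time-3}.}

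The plan is to reduce the space-time estimate to the resolvent bounds of Proposition \ref{prop:res-LNS} via a Fourier transform in time, treating the whole equation \eqref{7.2} on the fixed torus $\T_{2\pi}$ in $y$ and the periodic domain in $x$. First I would take a Fourier transform in $x$, decomposing $f=\sum_k f_k(t,y)\mathrm{e}^{\mathrm{i}kx}$ with $k\in\alpha\Z$, and note that the $k=0$ mode satisfies a pure heat equation $\partial_t f_0-\nu\partial_y^2 f_0=(\partial_yg_1+g_4)_0$ (the terms $V\partial_x(\cdots)$ and $g_3$ contribute nothing to the zero mode by hypothesis $P_0g_3=0$ and the structure of $\partial_x$), for which the claimed bound (with $|D_x|^{\pm1/2}$ absent) is trivial since $|D_x|^{\f12}\nabla\Delta^{-1}$ annihilates the zero mode anyway. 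So the real content is in the nonzero modes, where $|k|\geq\alpha>1$.

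For a fixed nonzero $k$, I would extend $f_k$ and the forcing to all $t\in\R$ (this is where one has to be a little careful — $f$ is only given on $I=[t_1,t_2]$; the standard device, as in \cite{CLWZ-ARMA}, is to cut off and absorb the commutator with the cutoff into the $g_4$ and initial-data terms, controlling it by $\|f(t_1)\|_{L^2}$ and $\|f\|_{L^\infty(I;L^2)}$, the latter already bounded by Lemma \ref{lem2}). Then take a Fourier transform in $t$, writing $\widehat{f_k}(\tau,y)$, so that \eqref{7.2} becomes the inhomogeneous resolvent equation
\begin{align*}
-\nu\Delta_k\widehat{f_k}+\mathrm{i}kV(1+\Delta_k^{-1})\widehat{f_k}-\mathrm{i}(k\lambda)\widehat{f_k}=\widehat{g_k}(\tau,y),\qquad \lambda=-\tau/k,
\end{align*}
with $\widehat{g_k}=\partial_y\widehat{g_{1,k}}+\mathrm{i}k\widehat{g_{2,k}}+\widehat{g_{3,k}}+\widehat{g_{4,k}}$. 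Applying Proposition \ref{prop:res-LNS} mode by mode gives, for each $\tau$,
\begin{align*}
\||D_x|^{\f12}\nabla\Delta^{-1}\widehat{f_k}\|_{L^2_y}+\||V'|^{\f12}\partial_x\nabla\Delta^{-1}\widehat{f_k}\|_{L^2_y}\leq C\min\big(\||D_x|^{-\f12}\nabla\widehat{g_k^{(3)}}\|_{L^2_y},\ \nu^{-\f12}\|\widehat{g_k}\|_{H^{-1}}\big)
\end{align*}
where one splits $\widehat{g_k}$ according to which part of the right-hand side of Proposition \ref{prop:res-LNS} is most efficient: the $g_1,g_2$ parts are handled by the $\nu^{-1/2}\|\cdot\|_{H^{-1}}$ bound (since $\partial_y g_1+\partial_x g_2$ has $H^{-1}$ norm $\lesssim\|(g_1,g_2)\|_{L^2}$), the $g_3$ part by the first (sharp) resolvent bound involving $|D_x|^{-1/2}\nabla$, and the $g_4$ part by a cruder estimate. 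Then I would integrate in $\tau$ and invoke Plancherel's identity in $t$ to convert these pointwise-in-$\tau$ bounds into the $L^2(I;L^2)$ (really $L^2_tL^2_y$) estimates in the statement; the $g_4$ term needs the usual trick of writing it as a superposition of initial-value problems so that $\|g_4\|_{L^1_tL^2}$ rather than $\|g_4\|_{L^2_tL^2}$ appears on the right.

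The main obstacle, I expect, is the honest treatment of the time-truncation: since $f$ solves \eqref{7.2} only on $I$ and we need the Fourier-in-time machinery on all of $\R$, one must introduce a smooth temporal cutoff $\chi$ supported slightly beyond $I$, note that $\chi f$ solves the same equation with an extra forcing $\chi' f$, and check that $\|\chi' f\|_{L^1_tL^2}\lesssim\|f\|_{L^\infty(I;L^2)}$, which is already controlled by Lemma \ref{lem2} in terms of the desired right-hand side plus a small multiple of $\||D_x|^{1/2}\nabla\Delta^{-1}f\|_{L^2(I;L^2)}$ — so one closes the estimate by absorbing that last term, using that its coefficient can be made small (via Young's inequality in Lemma \ref{lem2}). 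A secondary, more routine, point is verifying the limiting-absorption step implicit in applying the resolvent bounds for real $\lambda$ (i.e.\ real $\tau$), which is legitimate because the viscous term $-\nu\Delta_k$ with $\nu>0$ makes the operator invertible for all real $\lambda$ with uniform bounds, exactly as quantified in Proposition \ref{prop:res-LNS}. Combining the nonzero-mode bounds (summed in $k$ via Plancherel in $x$) with the trivial zero-mode bound yields the proposition.
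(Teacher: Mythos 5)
Your overall strategy — Fourier transform in $t$, apply the resolvent bounds of Proposition \ref{prop:res-LNS} pointwise in the dual time variable, then Plancherel — is exactly the mechanism the paper uses, and your treatment of the $g_1,g_2$ terms (via the $\nu^{-1/2}\|\cdot\|_{H^{-1}}$ resolvent bound) and of $g_3$ (via the sharp $\||D_x|^{-1/2}\nabla\cdot\|_{L^2}$ bound) is correct. You also correctly identify the time-truncation as the main obstacle. The gap is in your proposed resolution of it. Absorbing the cutoff commutator $\chi'f$ into a $g_4$-type forcing, and then handling $g_4$ "by the usual trick of writing it as a superposition of initial-value problems," is circular: that superposition (Duhamel plus Minkowski) reduces both the $g_4$ term and the initial datum $f(t_1)$ to the \emph{homogeneous} space-time bound $\||D_x|^{1/2}\nabla\Delta^{-1}\mathrm{e}^{t\mathcal{L}}h\|_{L^2_tL^2}+\||V'|^{1/2}\partial_x\nabla\Delta^{-1}\mathrm{e}^{t\mathcal{L}}h\|_{L^2_tL^2}\leq C\|h\|_{L^2}$, and this bound cannot be extracted from the Plancherel machinery by cutting off: the forcing produced by a sharp truncation at $t_1$ is the impulse $\delta_{t_1}\otimes f(t_1)$, whose time-Fourier transform is only bounded in $\tau$, not square-integrable, so integrating the resolvent estimate over $\tau$ diverges; a smoothed cutoff over a window of length $\delta$ only trades this for a forcing of size $\delta^{-1/2}\|f\|_{L^\infty L^2}$ in $L^2_tL^2$, which after the $\nu^{-1/2}$ loss is not absorbable unless $\delta\gtrsim\nu^{-1}$, and treating it as a $g_3$-type term would require a derivative of $f$ you do not control.

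The paper closes this gap with a separate argument (Lemma \ref{prop:space-time-g0}) that you would need to reproduce: extend $f_1=P_{\neq}f$ to $t<0$ by solving the \emph{time-reversed} equation $\partial_tf_1+\nu\Delta f_1+V\partial_x(f_1+\Delta^{-1}f_1)=0$ backward from $t=0$. The resulting function is defined and decaying on all of $\R$ and solves the original equation globally with the extra forcing $-2\nu\Delta f_1\mathbf{1}_{t<0}$; this forcing is of gradient type with $\nu^{-1/2}\|2\nu\nabla f_1\|_{L^2((-\infty,0];L^2)}\lesssim\|f(0)\|_{L^2}$ by the energy identity, so the global-in-time Plancherel argument applies with no cutoff at all. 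Once you have this homogeneous bound, the decomposition $f=f_1+f_2$ (with $f_1$ carrying $f(t_1)$ and $g_4$ via Duhamel, and $f_2$ carrying $g_1,g_2,g_3$ with zero data — for which your forward extension to $[t_1,\infty)$ with compactly supported forcing works as written) completes the proof. The remainder of your outline, including the zero-mode reduction and the absorption of $\||D_x|^{1/2}\nabla\Delta^{-1}f\|_{L^2(I;L^2)}$ appearing through Lemma \ref{lem2}, is sound.
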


Now we prove Proposition \ref{prop:LNS-sp1} by using Proposition \ref{prop:space-time-3} and Lemma \ref{lem2}.

\begin{proof}[Proof of Proposition \ref{prop:LNS-sp1}]
Let $t_*=\int_{0}^{t}\mathrm{e}^{-\nu s}\mathrm{d}s= (1-\mathrm{e}^{-\nu t})/\nu,\quad t\geq0$, and
\begin{align*}
      &f(t,y)=f_{*}(t_*,y), \quad g(t,y)=\mathrm{e}^{-\nu t}g_{*}(t_*,y), \quad g_j(t,y)=\mathrm{e}^{-\nu t}g_{j*}(t_*,y),\quad j=1,2,3,4.
   \end{align*}Then we have (as $\mathrm{e}^{\nu t}=1/(1-\nu t_*)$)
   \begin{align*}
      & \partial_tf_{*}-\f{\nu}{1-\nu t} \Delta f_{*}+V\partial_x(1+\Delta^{-1})f_{*}=g_*=\partial_y g_{1*}+\partial_xg_{2*}+g_{3*}+g_{4*},\\
     &\partial_tf_{*}-{\nu}\Delta f_{*}+V\partial_x(1+\Delta^{-1})f_{*}=\partial_y\tilde{g}_{1}+\partial_x\tilde{g}_{2}+g_{3*}+g_{4*}.
   \end{align*}
   Here $\tilde{g}_{1}={g}_{1*}+\f{\nu^2t}{1-\nu t}\partial_yf_{*}$, $\tilde{g}_{2}={g}_{2*}+\f{\nu^2t}{1-\nu t}\partial_xf_{*}$. Let 
   \beno
   t_{1*}=(1-\mathrm{e}^{-\nu t_1})/\nu,\quad t_{2*}=(1-\mathrm{e}^{-\nu t_2})/\nu,\quad \tilde{I}=[t_{1*},t_{2*}].
   \eeno
  As $I=[t_1,t_2]\subset[0,1/\nu]$, we have $\tilde{I}\subset[0,(1-\mathrm{e}^{-1})/\nu]\subset[0,1/\nu]$. 
   As $\mathrm{e}^{-\nu t}\sim1 $ for $t\in I$, and $t\in I\Leftrightarrow t_*\in \tilde{I}$, we find 
   \begin{align*}
&\|f(t_1)\|_{L^2}=\|f_*(t_{1*})\|_{L^2},\quad \||D_x|^{\f12}\nabla\Delta^{-1}f\|_{L^2({I};L^2)}\sim \||D_x|^{\f12}\nabla\Delta^{-1}f_*\|_{L^2(\tilde{I};L^2)},\\
& \||V'|^{\f12}\partial_x\nabla \Delta^{-1}f\|_{L^2(I;L^2)}\sim\||V'|^{\f12}\partial_x\nabla \Delta^{-1}f_*\|_{L^2(\tilde{I};L^2)},\\
& \|\nabla f\|_{L^2({I};L^2)}\sim \|\nabla f_*\|_{L^2(\tilde{I};L^2)},\quad \|(g_1,g_2)\|_{L^2(I;L^2)}\sim \|(g_{1*},g_{2*})\|_{L^2(\tilde{I};L^2)},\\
&\||D_x|^{-\f12}\nabla g_3\|_{L^2(I;L^2)}\sim \||D_x|^{-\f12}\nabla {g}_{3*}\|_{L^2(\tilde{I};L^2)},\quad 
\|g_4\|_{L^1(I;L^2)}\sim \|g_{4*}\|_{L^1(\tilde{I};L^2)}.
\end{align*}
By Proposition \ref{prop:space-time-3}, Lemma \ref{lem2} and $ \f{\nu^2t}{1-\nu t}\leq C\nu$ for $ t\in \tilde I$, we obtain
\begin{align*}
&\||D_x|^{\f12}\nabla\Delta^{-1}f\|_{L^2({I};L^2)}+\||V'|^{\f12}\partial_x\nabla \Delta^{-1}f\|_{L^2(I;L^2)}\\
&\leq C\big(\||D_x|^{\f12}\nabla\Delta^{-1}f_*\|_{L^2(\tilde{I};L^2)}+\||V'|^{\f12}\partial_x\nabla \Delta^{-1}f_*\|_{L^2(\tilde{I};L^2)}\big)\\
&\leq C\big(\|f_*(t_{1*})\|_{L^2}+\nu^{-\f12}\|(\tilde{g}_{1},\tilde{g}_{2})\|_{L^2(\tilde{I};L^2)}+ \||D_x|^{-\f12}\nabla{g}_{3*}\|_{L^2(\tilde{I};L^2)}+\|g_{4*}\|_{L^1(\tilde{I};L^2)}\big)\\
&\leq C\big(\|f_*(t_{1*})\|_{L^2}+\nu^{-\f12}\|(g_{1*},g_{2*})\|_{L^2(\tilde{I};L^2)}+ \||D_x|^{-\f12}\nabla{g}_{3*}\|_{L^2(\tilde{I};L^2)}+\|g_{4*}\|_{L^1(\tilde{I};L^2)}\\&\qquad+
\nu^{\f12}\|\nabla f_*\|_{L^2(\tilde{I};L^2)}\big)\\
&\leq C\big(\|f(t_{1})\|_{L^2}+\nu^{-\f12}\|(g_{1},g_{2})\|_{L^2({I};L^2)}+ \||D_x|^{-\f12}\nabla{g}_{3}\|_{L^2({I};L^2)}+\|g_{4}\|_{L^1({I};L^2)}\\&\qquad+
\nu^{\f12}\|\nabla f\|_{L^2({I};L^2)}\big)\\
&\leq C\big(\|f(t_{1})\|_{L^2}+\nu^{-\f12}\|(g_{1},g_{2})\|_{L^2({I};L^2)}+ \||D_x|^{-\f12}\nabla{g}_{3}\|_{L^2({I};L^2)}+\|g_{4}\|_{L^1({I};L^2)}\\&\qquad+
\||D_x|^{-\f12}\nabla{g}_{3}\|_{L^2({I};L^2)}^{1/2}\||D_x|^{\f12}\nabla\Delta^{-1}f\|_{L^2({I};L^2)}^{1/2}\big).
\end{align*}
By Lemma \ref{lem2} again, we have 
\begin{align*}
&\|f\|_{L^{\infty}({I};L^2)}+\nu^{\f12}\|\nabla f\|_{L^2({I};L^2)}+\||D_x|^{\f12}\nabla\Delta^{-1}f\|_{L^2({I};L^2)}+\||V'|^{\f12}\partial_x\nabla \Delta^{-1}f\|_{L^2(I;L^2)}\\
&\leq C\big(\|f(t_{1})\|_{L^2}+\nu^{-\f12}\|(g_{1},g_{2})\|_{L^2({I};L^2)}+ \||D_x|^{-\f12}\nabla{g}_{3}\|_{L^2({I};L^2)}+\|g_{4}\|_{L^1({I};L^2)}\\&\qquad+
\||D_x|^{-\f12}\nabla{g}_{3}\|_{L^2({I};L^2)}^{1/2}\||D_x|^{\f12}\nabla\Delta^{-1}f\|_{L^2({I};L^2)}^{1/2}\big),
\end{align*}
which implies our result by Young's inequality.
\end{proof}

To prove Proposition \ref{prop:space-time-3}, we first consider the case where $g_4=0$ and $f(t_1)=0$.

\begin{lemma}\label{prop:space-time-4}
Let $f$ solve \eqref{7.2} with $P_0g_3=0$, $g_4=0$ and $f(t_1)=0$.  Then for $I= [t_1,+\infty)$, it holds that 
\begin{align*}
&\||D_x|^{\f12}\nabla\Delta^{-1}f\|_{L^2({I};L^2)}+ \||V'|^{\f12}\partial_x\nabla \Delta^{-1}f\|_{L^2(I;L^2)}\\ 
&\leq C\big(\nu^{-\f12}\|(g_1,g_2)\|_{L^2(I;L^2)}+\||D_x|^{-\f12}\nabla g_3\|_{L^2(I;L^2)}\big).
\end{align*}
\end{lemma}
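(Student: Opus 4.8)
\textbf{Proof plan for Lemma \ref{prop:space-time-4}.} The plan is to pass to the Fourier side in both $x$ and $t$, reducing the space-time bound to the resolvent estimates already established in Proposition \ref{prop:res-LNS} (equivalently Proposition \ref{pro:res1-os-V'}), and then handle the inhomogeneous term $\partial_y g_1 + \partial_x g_2$ separately from $g_3$ since the two require the two different resolvent bounds. Since $P_0 g_3 = 0$ and, by the equation, $P_0 f$ solves a forced heat equation with zero initial data and forcing $\partial_y g_1 + \partial_x g_2$ (the nonlocal term and $g_3$ having no zero mode), the zero mode of $f$ is controlled by a trivial parabolic estimate; so I would first reduce to $f = f_{\neq}$ and work mode by mode in $k \in \alpha\Z\setminus\{0\}$.

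First I would extend $f, g_1, g_2, g_3$ by zero for $t < t_1$ (legitimate since $f(t_1) = 0$ and the equation is first order in $t$), so that on all of $\R_t$ we have $\partial_t f - \nu\Delta f + V\partial_x(1+\Delta^{-1})f = \partial_y g_1 + \partial_x g_2 + g_3$. Taking the Fourier transform in $t$, writing $\widehat{f}_k(\tau,y)$ for the $(t\to\tau, x\to k)$-transform, each mode satisfies
\begin{align*}
-\nu\Delta_k \widehat{f}_k + \mathrm{i}k V(1+\Delta_k^{-1})\widehat{f}_k - \mathrm{i}(k\lambda)\,\mathrm{i}\,\,\,\cdots
\end{align*}
— more precisely $(\mathrm{i}\tau - \nu\Delta_k)\widehat{f}_k + \mathrm{i}k V(1+\Delta_k^{-1})\widehat{f}_k = \mathrm{i}k\,\widehat{g}_{1,k}\cdot(\text{from }\partial_y\text{ it is }\partial_y\widehat{g}_{1,k}) + \mathrm{i}k \widehat{g}_{2,k} + \widehat{g}_{3,k}$, i.e. setting $\tau = -k\lambda$ this is exactly the resolvent equation of Proposition \ref{pro:res1-os-V'} with $F = \partial_y \widehat{g}_{1,k} + \mathrm{i}k\widehat{g}_{2,k} + \widehat{g}_{3,k}$. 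Then by Plancherel in $\tau$ the $L^2_t$ norms of $|D_x|^{1/2}\nabla\Delta^{-1}f$ and $|V'|^{1/2}\partial_x\nabla\Delta^{-1}f$ become $\tau$-integrals of the corresponding $L^2_y$ norms of $\widehat{f}_k$, and I apply \eqref{est:res1-os-1-V'} to the part of $F$ coming from $g_3$ — which gives the bound $\||D_x|^{-1/2}\nabla g_3\|_{L^2(I;L^2)}$ after summing in $k$ (note $\|(\partial_y,k)\Delta_k^{-1}F\|$ with $F$ containing $\widehat{g}_{3,k}$ is $|k|^{-1}$ times an $H^1_k$-type quantity, matching the $|D_x|^{-1/2}\nabla$ weight once one tracks the $|k|$ powers) — and \eqref{est:res1-os-2-V'} (or rather the $\nu^{-1/2}$-type bound in Proposition \ref{prop:res-LNS}) to the part coming from $\partial_y g_1 + \partial_x g_2 = \nabla\cdot(g_1,g_2)$, which is an element of $H^{-1}$ with norm $\lesssim \|(g_1,g_2)\|_{L^2}$, yielding the $\nu^{-1/2}\|(g_1,g_2)\|_{L^2(I;L^2)}$ term. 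Summing the squares over $k$ and using $\sum_k \|\widehat{f}_k\|^2 \sim \|f\|^2$ closes the estimate.

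The main technical point — and the step I expect to require the most care — is bookkeeping the exact powers of $|k|$ and the weight $\theta(k,\lambda)$ so that the resolvent bounds \eqref{est:res1-os-1-V'}--\eqref{est:res1-os-2-V'}, which are stated mode-by-mode in terms of $\|(\partial_y,k)F\|_{L^2}$ or $\|(\partial_y,k)\Delta_k^{-1}F\|_{L^2}$, reassemble after summation into the claimed norms $\||D_x|^{-1/2}\nabla g_3\|_{L^2}$, $\nu^{-1/2}\|(g_1,g_2)\|_{L^2}$, and the left side into $\||D_x|^{1/2}\nabla\Delta^{-1}f\|_{L^2} + \||V'|^{1/2}\partial_x\nabla\Delta^{-1}f\|_{L^2}$. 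In particular one must check that the operator $|D_x|^{1/2}\nabla\Delta^{-1}$ acting on mode $k$ is comparable to $|k|^{1/2}\|(\partial_y,k)\Delta_k^{-1}\cdot\|$ up to constants uniform in $k$ (using $|k| > 1$), and similarly that $\|\,|D_x|^{-1/2}\nabla g_3\|$ on mode $k$ is $\sim |k|^{-1/2}\|(\partial_y,k)\widehat{g}_{3,k}\|$; these are immediate but must be spelled out. A secondary subtlety is the justification of the Fourier transform in $t$ on the half-line $I=[t_1,+\infty)$: one should argue via a limiting-absorption / damped approximation (insert $\mathrm{e}^{-\delta t}$, $\delta \to 0+$) exactly as done in the proof of Lemma \ref{prop:timespace-B-0}, to ensure $\widehat{f}_k$ is well-defined and the Plancherel identity applies, using that the resolvent bounds of Proposition \ref{pro:res1-os-V'} hold uniformly for $\tau$ in a complex half-plane (or suffice for real $\tau$ together with a contour shift). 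Once these are in place the estimate is a direct transcription; no new analytic input beyond the already-proven resolvent estimates is needed.
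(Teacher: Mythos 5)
Your proposal is correct and follows essentially the same route as the paper: Fourier transform in $t$ (the paper takes it directly on $[t_1,+\infty)$ using $f(t_1)=0$, rather than your extension by zero, but this is equivalent), Plancherel, and then the resolvent bounds of Proposition \ref{prop:res-LNS} (the assembled, summed-in-$k$ form of Proposition \ref{pro:res1-os-V'}), with the $\nu^{-1/2}\,H^{-1}$ bound applied to $\partial_y g_1+\partial_x g_2$ and the $|D_x|^{-1/2}\nabla$ bound applied to $g_3$, exactly as you describe. Your preliminary reduction for the zero mode is harmless but unnecessary, since the operators $|D_x|^{1/2}\nabla\Delta^{-1}$ and $|V'|^{1/2}\partial_x\nabla\Delta^{-1}$ on the left-hand side already annihilate $P_0 f$.
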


Then we consider the case where $g=0$.

\begin{lemma}\label{prop:space-time-g0}
Let $f$ solve \eqref{7.2}  with $g=0$. Then for $I= [0,+\infty)$, it holds that
\begin{align*}
\||D_x|^{\f12}\nabla\Delta^{-1}f\|_{L^2({I};L^2)}+ \||V'|^{\f12}\partial_x\nabla \Delta^{-1}f\|_{L^2(I;L^2)}\leq C\|f(0)\|_{L^2}.
\end{align*}
\end{lemma}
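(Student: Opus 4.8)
\textbf{Proof proposal for Lemma \ref{prop:space-time-g0}.}

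The plan is to pass to Fourier modes in $x$, write $f=\sum_k f_k(t,y)\mathrm{e}^{\mathrm{i}kx}$ with $k\in\alpha\Z\setminus\{0\}$ (the zero mode is killed since $g=0$ and, after the $X_I$ norm, only the off-zero part of $f$ enters the quantities to be bounded; more precisely $\nabla\Delta^{-1}f=\nabla\Delta^{-1}f_{\neq}$), and for each fixed $k$ analyze the homogeneous initial value problem
\begin{align*}
  \partial_t f_k-\nu\Delta_k f_k+\mathrm{i}kV(1+\Delta_k^{-1})f_k=0,\qquad f_k|_{t=0}=f_{0,k}.
\end{align*}
The natural tool is the Laplace/Fourier transform in time, which converts the semigroup into the resolvent operator already controlled in Section \ref{sec:resnavier}. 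First I would write $\widehat{f_k}(\tau,y)=\int_0^{\infty}\mathrm{e}^{-(\mathrm{i}\tau+\varepsilon)t}f_k(t,y)\,dt$ (with a small decay parameter $\varepsilon>0$ to ensure integrability, to be removed at the end), so that $\widehat{f_k}$ solves
\begin{align*}
  -\nu\Delta_k\widehat{f_k}+\mathrm{i}kV(1+\Delta_k^{-1})\widehat{f_k}-\mathrm{i}(k\lambda)\widehat{f_k}=f_{0,k},
\end{align*}
where I set $\tau=k\lambda$ to match the normalization of Proposition \ref{pro:res1-os-V'}. Since $f_{0,k}=\Delta_k\phi_{0,k}$ is itself a vorticity with stream function $\phi_{0,k}=\Delta_k^{-1}f_{0,k}$, this is exactly the inhomogeneous eigenvalue equation appearing there (after identifying $F$ appropriately). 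Applying Proposition \ref{pro:res1-os-V'}, specifically \eqref{est:res1-os-2-V'}, gives for every $\tau$
\begin{align*}
  |\nu k\theta(k,\lambda)|^{\f12}\|(\partial_y,k)\Delta_k^{-1}\widehat{f_k}\|_{L^2}
  +\nu^{\f12}|k|\||V'|^{\f12}(\partial_y,k)\Delta_k^{-1}\widehat{f_k}\|_{L^2}
  \leq C\|(\partial_y,k)\Delta_k^{-1}f_{0,k}\|_{L^2}.
\end{align*}

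The second step is to square, integrate in $\tau=k\lambda$ over $\R$ (i.e.\ $d\tau=|k|\,d\lambda$), and apply Plancherel in time. Because $\theta(k,\lambda)\geq 1+|k|\lambda_0^{1/2}$, the coercivity constant $|k\theta(k,\lambda)|$ is bounded below by $|k|$; but I only need the crude fact $\theta\geq 1$ for the $|D_x|^{1/2}$ term — indeed $\||D_x|^{1/2}\nabla\Delta^{-1}f_{\neq}\|_{L^2(I;L^2)}^2=\sum_k|k|\|(\partial_y,k)\Delta_k^{-1}f_k\|_{L^2(I;L^2)}^2$, and Plancherel turns $\int_0^\infty\|(\partial_y,k)\Delta_k^{-1}f_k\|_{L^2}^2\,dt$ into (a constant times) $\int_\R\|(\partial_y,k)\Delta_k^{-1}\widehat{f_k}(\tau)\|_{L^2}^2\,d\tau$. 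Feeding in the resolvent bound, the $\tau$-integral of the right side contributes a factor $|k|\cdot\int_\R(\nu k\theta)^{-1}\,d\lambda$; here is where the depletion weight $\theta$ earns its keep: $\int_\R\theta(k,\lambda)^{-1}\,d\lambda=\int_\R(1+|k|\lambda_0^{1/2}+|\lambda|)^{-1}\,d\lambda\lesssim 1$ uniformly in $k$ (the $|\lambda|^{-1}$ tail is integrable at infinity against the $\theta\gtrsim|\lambda|$ bound, and near the edges $\lambda_0\to 0$ the square-root vanishing still gives an integrable singularity after the $|k|\lambda_0^{1/2}$ term). Altogether one gets $\||D_x|^{1/2}\nabla\Delta^{-1}f\|_{L^2(I;L^2)}^2\lesssim \nu^{-1}\sum_k |k|^{-1}\|(\partial_y,k)\Delta_k^{-1}f_{0,k}\|_{L^2}^2$; and since $\|(\partial_y,k)\Delta_k^{-1}f_{0,k}\|_{L^2}\lesssim|k|^{-1}\|f_{0,k}\|_{L^2}$ for $|k|>1$, the right side is $\lesssim\nu^{-1}|k|^{-3}\|f_{0,k}\|_{L^2}^2$. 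This looks like it carries an unwanted $\nu^{-1}$, so I would instead keep the $\theta$ factor on the left, writing the left quantity with the weight and using $|k|\theta\geq |k|^2\lambda_0/\theta$ — the correct bookkeeping is to exploit that $(\nu k\theta)^{-1}$ integrated gives a clean $O(1)$ constant only after one also uses the prefactor $\nu^{1/2}$ sitting in the norm $\nu^{1/2}\|\nabla f\|$; but for the present lemma the cleanest route is: the $X_I$ norm of the homogeneous solution with $g=0$ is controlled by $\|f(0)\|_{L^2}$ directly through Lemma \ref{lem2} for the $L^\infty_tL^2$ and $\nu^{1/2}\|\nabla f\|$ parts, and through the resolvent estimate (with the $\theta\geq1$ version and Plancherel) for the two remaining space-time integrals, using that the resolvent bound is uniform in $\tau$ and $\int_\R\theta^{-1}d\lambda$ is finite. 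The $|V'|^{1/2}$ term is handled identically using the second term in \eqref{est:res1-os-2-V'}, noting $\nu^{1/2}|k|\geq$ the needed power after combining with $\theta\geq1$.

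The third step is to remove the regularization: run the argument with $\mathrm{e}^{-\varepsilon t}f_k$, obtain bounds uniform in $\varepsilon>0$, and let $\varepsilon\to0+$ using Fatou/monotone convergence in $t$ on the interval $I=[0,+\infty)$; the constants from Proposition \ref{pro:res1-os-V'} are $\varepsilon$-independent since that proposition allows the spectral parameter to have arbitrary (nonzero) imaginary part — concretely one applies it with $\lambda$ replaced by a complex parameter, which only improves the estimates, or one notes the resolvent bounds extend continuously to the real axis by the limiting-absorption results of Section \ref{sec:reseuler}. Summing the squared bounds over $k\in\alpha\Z\setminus\{0\}$ and using $\sum_k\|f_{0,k}\|_{L^2}^2=\|f(0)\|_{L^2}^2$ (Parseval in $x$) yields the claimed $\||D_x|^{1/2}\nabla\Delta^{-1}f\|_{L^2(I;L^2)}+\||V'|^{1/2}\partial_x\nabla\Delta^{-1}f\|_{L^2(I;L^2)}\leq C\|f(0)\|_{L^2}$.

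I expect the main obstacle to be the careful bookkeeping of powers of $\nu$ and $k$ and the verification that $\int_\R\theta(k,\lambda)^{-1}\,d\lambda$ is bounded uniformly in $k$ — in particular handling the two edge points $\lambda=m,M$ where $\lambda_0\to0$ and the vanishing of $\theta$ must be offset either by the finite measure of a small neighborhood or by pairing with the $|V'|^{1/2}\sim\lambda_0^{1/4}$ weight (via Lemma \ref{lem1} and the bounds of Section \ref{sec:resnavier}). A secondary technical point is justifying the time-Fourier-transform manipulations rigorously on a half-line (rather than all of $\R$), which is why the decay regularization $\mathrm{e}^{-\varepsilon t}$ and the subsequent limit are needed; this is routine but should be stated. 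Everything else reduces to Plancherel's identity in $t$, Parseval in $x$, and the already-established resolvent estimates \eqref{est:res1-os-1-V'}--\eqref{est:res1-os-2-V'}.
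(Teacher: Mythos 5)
Your overall strategy (Plancherel in time plus the resolvent estimates of Section \ref{sec:resnavier}) is the right family of ideas, and it is exactly what the paper uses for the \emph{forced, zero-data} case (Lemma \ref{prop:space-time-4}). But for the present lemma your implementation has a genuine quantitative gap: you take the Laplace transform of the semigroup so that the resolvent equation is forced by the fixed, $\tau$-independent vector $f_{0,k}$, and then try to integrate the resolvent bound in $\tau$. This fails twice. First, your key claim that $\int_\R\theta(k,\lambda)^{-1}\,d\lambda\lesssim 1$ is false: for $|\lambda|\gtrsim k^2$ one has $\theta(k,\lambda)\sim|\lambda|$, so the integral diverges logarithmically; a resolvent bound applied to a fixed vector simply is not square-integrable on the line without extra decay in $\tau$ coming from the data itself. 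Second, even on a bounded $\lambda$-region the only available bound with $L^2$ data is \eqref{est:res1-os-2-V'}, whose left-hand prefactors are $|\nu k\theta|^{1/2}$ and $\nu^{1/2}|k|$, so after squaring you are left with an irreducible factor $\nu^{-1}$; you notice this (``an unwanted $\nu^{-1}$'') but the fix you sketch — invoking the $\nu^{1/2}\|\nabla f\|$ piece of the $X_I$ norm and asserting the finiteness of $\int\theta^{-1}d\lambda$ — is circular and does not remove it. The conclusion you need is uniform in $\nu$, so this route, as written, does not close.

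The missing idea in the paper's proof is an extension-to-negative-time trick that converts the initial data into a $\tau$-dependent forcing carrying an explicit factor of $\nu$. One sets $f_1=P_{\neq}f$ for $t\geq0$ and, for $t\leq0$, solves the time-reversed problem $\partial_tf_1+\nu\Delta f_1+V\partial_x(f_1+\Delta^{-1}f_1)=0$ with data $P_{\neq}f(0)$ at $t=0$, so that on all of $\R$
\begin{align*}
\partial_tf_1-\nu\,\mathrm{sgn}(t)\Delta f_1+V\partial_x(f_1+\Delta^{-1}f_1)=0 .
\end{align*}
The energy identity in the inner product \eqref{8.1} gives $\nu\|\nabla f_1\|_{L^2(\R;L^2)}^2\leq C\|f(0)\|_{L^2}^2$. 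Taking the Fourier transform in $t$ over the whole line, $w(\lambda)=\int_\R f_1e^{\mathrm{i}\lambda t}dt$ solves the resolvent equation with right-hand side $-2\nu\Delta F$, where $F(\lambda)=\int_{-\infty}^0 f_1e^{\mathrm{i}\lambda t}dt$. Now Proposition \ref{prop:res-LNS} (its second inequality) gives
$\||D_x|^{\f12}\nabla\Delta^{-1}w\|_{L^2}+\||V'|^{\f12}\partial_x\nabla\Delta^{-1}w\|_{L^2}\leq C\nu^{-\f12}\|2\nu\Delta F\|_{H^{-1}}\leq C\nu^{\f12}\|\nabla F\|_{L^2}$,
and the factor $\nu$ in the forcing exactly cancels the $\nu^{-\f12}$ loss; Plancherel then bounds $\int_\R\nu\|\nabla F(\lambda)\|_{L^2}^2d\lambda\sim\nu\|\nabla f_1\|_{L^2((-\infty,0];L^2)}^2\leq C\|f(0)\|_{L^2}^2$, with no $\lambda$-integrability issue because the forcing is itself square-integrable in $\lambda$. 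This is the device your proposal is missing; without it (or an equivalent sandwiched-resolvent/Kato-smoothing argument), forcing with the initial data cannot yield the $\nu$-uniform spacetime bound.
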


Now we prove Proposition \ref{prop:space-time-3} by using Proposition \ref{prop:space-time-4} and Proposition \ref{prop:space-time-g0}.

\begin{proof}[Proof of Proposition \ref{prop:space-time-3}]
  We decompose $f=f_1+f_2$ (for $t\in I=[t_1,t_2]$), where $f_1,\ f_{2}$ solve
  \begin{equation*}\left\{
    \begin{aligned}
     &\partial_tf_1-\nu \Delta f_1+V\partial_x(f_1+\Delta^{-1}f_1)=g_4\mathbf{1}_{t\in [t_1,t_2]},\quad f_{1}|_{t=t_1}=f(t_1),\\
     &\partial_tf_2-\nu \Delta f_2+V\partial_x(f_2+\Delta^{-1}f_2)=(\partial_y g_1+\partial_xg_2+g_3)\mathbf{1}_{t\in [t_1,t_2]},\quad f_{2}|_{t=t_1}=0. 
  \end{aligned}\right.
  \end{equation*}
  Then we get by Lemma \ref{prop:space-time-4} that
  \begin{align*}
     &\||D_x|^{\f12}\nabla\Delta^{-1}f_2\|_{L^2(I;L^2)}+ \||V'|^{\f12}\partial_x\nabla \Delta^{-1}f_2\|_{L^2(I;L^2)}\\ &\leq \||D_x|^{\f12}\nabla\Delta^{-1}f_2\|_{L^2({[t_1,+\infty)};L^2)}+ \||V'|^{\f12}\partial_x\nabla \Delta^{-1}f_2\|_{L^2([t_1,+\infty);L^2)}\\ &\leq C\big(\nu^{-\f12}\|(g_1,g_2)\|_{L^2(I;L^2)}+\||D_x|^{-\f12}\nabla g_3\|_{L^2(I;L^2)}\big).
  \end{align*} 
  Let $ \mathcal{L}=\nu \Delta -V\partial_x(1+\Delta^{-1})$. By Duhamel's formula, we get
\begin{align*}
     &f_1(t)=\mathrm{e}^{(t-t_1)\mathcal{L}}f(t_1)+\int_{t_1}^t\mathrm{e}^{(t-s)\mathcal{L}}g_4(s)\mathrm{d}s,\quad t\in I=[t_1,t_2].
  \end{align*} By Proposition \ref{prop:space-time-g0}, we have
\begin{align*}
\||D_x|^{\f12}\nabla\Delta^{-1}\mathrm{e}^{t\mathcal{L}}h\|_{L^2({[0,+\infty)};L^2)}+ \||V'|^{\f12}\partial_x\nabla \Delta^{-1}\mathrm{e}^{t\mathcal{L}}h\|_{L^2([0,+\infty);L^2)}\leq C\|h\|_{L^2}.
\end{align*}
which along with Minkowski's inequality gives
\begin{align*}
     &\||D_x|^{\f12}\nabla\Delta^{-1}f_1\|_{L^2(I;L^2)}+ \||V'|^{\f12}\partial_x\nabla \Delta^{-1}f_1\|_{L^2(I;L^2)}\\ &\leq \||D_x|^{\f12}\nabla\Delta^{-1}\mathrm{e}^{(t-t_1)\mathcal{L}}f(t_1)\|_{L^2({[t_1,t_2]};L^2)}+ \||V'|^{\f12}\partial_x\nabla \Delta^{-1}\mathrm{e}^{(t-t_1)\mathcal{L}}f(t_1)\|_{L^2([t_1,t_2];L^2)}\\
     &\quad+\int_{t_1}^{t_2}\big(\||D_x|^{\f12}\nabla\Delta^{-1}\mathrm{e}^{(t-s)\mathcal{L}}g_4(s)\|_{L^2([s,t_2];L^2)}+ \||V'|^{\f12}\partial_x\nabla \Delta^{-1}\mathrm{e}^{(t-s)\mathcal{L}}g_4(s)\|_{L^2([s,t_2];L^2)}\big)\mathrm{d}s\\ 
     &\leq C\|f(t_1)\|_{L^2}+C\int_{t_1}^{t_2}\|g_4(s)\|_{L^2}\mathrm{d}s=C\big(\|f(t_1)\|_{L^2}+\|g_4\|_{L^1(I,L^2)}\big).
     \end{align*}
     
      Summing up the estimates of $f_1$ and $f_2$, we conclude Proposition  \ref{prop:space-time-3}.
\end{proof}

Now we prove Lemma \ref{prop:space-time-4} and Lemma \ref{prop:space-time-g0} using Proposition \ref{prop:res-LNS}.
\begin{proof}[Proof of Lemma \ref{prop:space-time-4}]
Let 
\begin{align*}
   & w(\lambda,x,y)=\int_{t_1}^{+\infty}f_{\neq}(t,x,y)\mathrm{e}^{\mathrm{i}\lambda t}\mathrm{d}t,\quad F(\lambda,x,y)=\int_{t_1}^{+\infty}g_{\neq}(t,x,y)\mathrm{e}^{\mathrm{i}\lambda t}\mathrm{d}t,\\
   &F_j(\lambda,x,y)=\int_{t_1}^{+\infty}P_{\neq}g_{j}(t,x,y)\mathrm{e}^{\mathrm{i}\lambda t}\mathrm{d}t,\quad j=1,2,3.
\end{align*}
Then we have
\begin{align*}
   &-\nu\Delta w+V\partial_x(w+\Delta^{-1} w)-\mathrm{i}\lambda w=F=\partial_y F_1+\partial_xF_2+F_3.
\end{align*}
Using Plancherel's formula, we know that (here $I=[t_1,+\infty)$)
\begin{align*}
   & \||D_x|^{\f12}\nabla\Delta^{-1}f\|_{L^2(I;L^2)}^2+ \||V'|^{\f12}\partial_x\nabla \Delta^{-1}f\|_{L^2(I;L^2)}^2\\ &\sim
   \int_{\R}(\||D_x|^{\f12}\nabla\Delta^{-1}w(\lambda)\|_{L^2}^2+\||V'|^{\f12}\partial_x\nabla\Delta^{-1}w(\lambda)\|_{L^2}^2)\mathrm{d}\lambda,\\
   &\|(g_{1},g_2)\|_{L^2(I;L^2)}^2\sim  \int_{\R}\|(F_1,F_2)(\lambda)\|_{L^2}^2\mathrm{d}\lambda,\\
   &\||D_x|^{-\f12}\nabla g_3\|_{L^2(I;L^2)}^2\sim  \int_{\R}\||D_x|^{-\f12}\nabla F_3(\lambda)\|_{L^2}^2\mathrm{d}\lambda.
\end{align*}
It follows from Proposition \ref{prop:res-LNS} that 
\begin{align*}
   & \||D_x|^{\f12}\nabla\Delta^{-1}w(\lambda)\|_{L^2}+\||V'|^{\f12}\partial_x\nabla\Delta^{-1}w(\lambda)\|_{L^2}\\ 
   &\leq    C\nu^{-\f12}\|(\partial_y F_1+\partial_xF_2)(\lambda)\|_{H^{-1}}+C\||D_x|^{-\f12}\nabla F_3(\lambda)\|_{L^2}\\ 
   &\leq    C\nu^{-\f12}\|(F_1,F_2)(\lambda)\|_{L^2}+C\||D_x|^{-\f12}\nabla F_3(\lambda)\|_{L^2}.
\end{align*}
This shows that 
\begin{align*}
   & \||D_x|^{\f12}\nabla\Delta^{-1}f\|_{L^2(I;L^2)}^2+ \||V'|^{\f12}\partial_x\nabla \Delta^{-1}f\|_{L^2(I;L^2)}^2\\ &\sim
   \int_{\R}\big(\||D_x|^{\f12}\nabla\Delta^{-1}w(\lambda)\|_{L^2}^2+\||V'|^{\f12}\partial_x\nabla\Delta^{-1}w(\lambda)\|_{L^2}^2\big)\mathrm{d}\lambda\\
   &\leq C\int_{\R}\big(\nu^{-1}\|(F_1,F_2)(\lambda)\|_{L^2}^2+\||D_x|^{-\f12}\nabla F_3(\lambda)\|_{L^2}^2\big)\mathrm{d}\lambda\\
   &\sim \nu^{-1}\|(g_{1},g_2)\|_{L^2(I;L^2)}^2+\||D_x|^{-\f12}\nabla g_3\|_{L^2(I;L^2)}^2.
\end{align*}

This completes the proof.
\end{proof}

\begin{proof}[Proof of Lemma \ref{prop:space-time-g0}]
Let $f_1=P_{\neq}f$ for $t\geq 0$ and let $f_1$ solve
\begin{align*}
      & \partial_tf_1+\nu \Delta f_1+V\partial_x(f_1+\Delta^{-1}f_1)=0, \ f_1|_{t=0}=P_{\neq}f(0),\quad t\leq0.
   \end{align*}Then we have $P_0f_1=0$ and\begin{align*}
      & \partial_tf_1-\nu\mathrm{sgn}(t) \Delta f_1+V\partial_x(f_1+\Delta^{-1}f_1)=0, \ f_1|_{t=0}=P_{\neq}f(0),\quad\forall\ t\in\R.
   \end{align*} 
  Taking the inner product $\langle \ ,\ \rangle_{\star}$ with $f_1$, we have (here $\langle \ ,\ \rangle_{\star}$ is defined in
 \eqref{def:norm-*})
 \begin{align*}
&\f12\partial_t\|f_1\|_{\star}^2+{\nu}\mathrm{sgn}(t)\|\nabla f_1\|_{\star}^2=0.
\end{align*}
Integrating in $t$ and using that $\|h\|_{\star}\sim \|h\|_{L^2} $, we obtain
\begin{align*}
&{\nu}\|\nabla f\|_{L^2(\R;L^2)}^2\leq C\|f_1(0)\|_{L^2}\leq C\|f(0)\|_{L^2}.
\end{align*}Let 
\begin{align*}
   & w(\lambda,x,y)=\int_{\R}f_{1}(t,x,y)\mathrm{e}^{\mathrm{i}\lambda t}\mathrm{d}t,\quad 
   F(\lambda,x,y)=\int_{-\infty}^{0}f_{1}(t,x,y)\mathrm{e}^{\mathrm{i}\lambda t}\mathrm{d}t.
\end{align*}
Then we get
\begin{align*}
   &-\nu\Delta w+V\partial_x(w+\Delta^{-1} w)-\mathrm{i}\lambda w=-2\nu\Delta F.
\end{align*}
Using Plancherel's formula, we know that 
\begin{align*}
   & \||D_x|^{\f12}\nabla\Delta^{-1}f_1\|_{L^2(\R;L^2)}^2+ \||V'|^{\f12}\partial_x\nabla \Delta^{-1}f_1\|_{L^2(\R;L^2)}^2\\ &\sim
   \int_{\R}(\||D_x|^{\f12}\nabla\Delta^{-1}w(\lambda)\|_{L^2}^2+\||V'|^{\f12}\partial_x\nabla\Delta^{-1}w(\lambda)\|_{L^2}^2)\mathrm{d}\lambda,\\
   &\|\nabla f_1\|_{L^2((-\infty,0];L^2)}^2\sim  \int_{\R}\|\nabla F(\lambda)\|_{L^2}^2\mathrm{d}\lambda.
\end{align*}
By Proposition \ref{prop:res-LNS}, we have
\begin{align*}
   & \||D_x|^{\f12}\nabla\Delta^{-1}w(\lambda)\|_{L^2}+\||V'|^{\f12}\partial_x\nabla\Delta^{-1}w(\lambda)\|_{L^2}\\ 
   &\leq    C\nu^{-\f12}\|2\nu\Delta F(\lambda)\|_{H^{-1}}   \leq C\nu^{\f12}\|\nabla F(\lambda)\|_{L^2}.
\end{align*}
Thus(using $|D_x|^{\f12}\nabla\Delta^{-1}f=|D_x|^{\f12}\nabla\Delta^{-1}f_1 $, 
$\partial_x\nabla\Delta^{-1}f=\partial_x\nabla\Delta^{-1}f_1 $ for $ t\in I=[0,+\infty)$),
\begin{align*}
 & \||D_x|^{\f12}\nabla\Delta^{-1}f\|_{L^2(I;L^2)}^2+ \||V'|^{\f12}\partial_x\nabla \Delta^{-1}f\|_{L^2(I;L^2)}^2\\ 
  &\leq  \||D_x|^{\f12}\nabla\Delta^{-1}f_1\|_{L^2(\R;L^2)}^2+ \||V'|^{\f12}\partial_x\nabla \Delta^{-1}f_1\|_{L^2(\R;L^2)}^2\\ &\sim
   \int_{\R}\big(\||D_x|^{\f12}\nabla\Delta^{-1}w(\lambda)\|_{L^2}^2+\||V'|^{\f12}\partial_x\nabla\Delta^{-1}w(\lambda)\|_{L^2}^2\big)\mathrm{d}\lambda\\
   &\leq C\int_{\R}\nu\|\nabla F(\lambda)\|_{L^2}^2\mathrm{d}\lambda
   \sim \nu\|\nabla f_1\|_{L^2((-\infty,0];L^2)}^2\leq C\|f(0)\|_{L^2}^2.
\end{align*}

This completes the proof.
\end{proof}

\subsection{Proof of Proposition \ref{prop:LNS-sp2}}

This part is devoted to the space-time estimates for the following linearized Navier-Stokes system(i.e., \eqref{eq2})
\begin{align}\label{eq:LNS-T2}
  \partial_tf-\nu \Delta f+\mathrm{e}^{-\nu t}(V\partial_xf-V''\partial_x\Delta^{-1}f)=\partial_y g_1+\partial_xg_2.
\end{align} 
\def\Ce{C_2}
 \begin{lemma}\label{lem4}
Let $f$ solve \eqref{eq:LNS-T1} with $P_0f=0$ and $g=0$. Then for $I= [t_1,t_2]\subset[0,1/\nu]$, it holds that 
\begin{align*}
&\|f(t_2)\|_{L^2}\leq \Ce\nu^{-\f14}(t_2-t_1)^{-\f12}\|f(t_1)\|_{L^2}.
\end{align*}
 \end{lemma}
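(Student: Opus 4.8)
The statement is a smoothing estimate: the semigroup of the homogeneous linearized Navier--Stokes operator (with the non-local term $-\mathrm{e}^{-\nu t}V''\partial_x\Delta^{-1}$) gains a factor $\nu^{-1/4}(t_2-t_1)^{-1/2}$ in $L^2$ when restricted to the nonzero modes. The plan is to combine the basic energy dissipation with the space-time estimate of Proposition~\ref{prop:LNS-sp1}. First I would record the basic $L^2$ energy identity: pairing \eqref{eq:LNS-T1} (with $g=0$) against $f$ in the modified inner product $\langle\,,\,\rangle_\star$ from \eqref{8.1}, using that $\mathrm{e}^{-\nu t}V\partial_x(1+\Delta^{-1})$ is antisymmetric with respect to $\langle\,,\,\rangle_\star$, gives $\tfrac12\partial_t\|f\|_\star^2+\nu\|\nabla f\|_\star^2=0$, hence $t\mapsto\|f(t)\|_{L^2}$ is (essentially) monotone decreasing and $\nu\|\nabla f\|_{L^2(I;L^2)}^2\lesssim\|f(t_1)\|_{L^2}^2$.

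The key point is then to upgrade the $L^2_tL^2$ control on $\nabla\Delta^{-1}f$ coming from the $X_I$ norm into a pointwise-in-time $L^2$ bound on $f$ itself at the endpoint $t_2$, exploiting that $f$ is a nonzero-mode function so that $\|f\|_{L^2}\sim\|\nabla\Delta^{-1}f\|_{L^2}$ up to the $|D_x|$ weights is not quite enough---one actually needs the full derivative. The mechanism is: by Proposition~\ref{prop:LNS-sp1} applied on $I=[t_1,t_2]$ with $g_1=g_2=g_3=g_4=0$,
\begin{align*}
\|f\|_{X_I}\leq C\|f(t_1)\|_{L^2},
\end{align*}
which in particular gives $\nu^{1/2}\|\nabla f\|_{L^2(I;L^2)}+\||D_x|^{1/2}\nabla\Delta^{-1}f\|_{L^2(I;L^2)}\leq C\|f(t_1)\|_{L^2}$. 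By interpolating these two quantities (as in the proof of Lemma~\ref{lem:norm-X}, $\|f_{\neq}\|_{L^2}\lesssim\||D_x|^{1/2}\nabla\Delta^{-1}f\|_{L^2}^{1/2}\|\nabla f\|_{L^2}^{1/2}$) one obtains
\begin{align*}
\|f\|_{L^2(I;L^2)}\leq C\nu^{-1/4}\|f(t_1)\|_{L^2}.
\end{align*}
Since $\|f(t)\|_{L^2}$ is monotone decreasing in $t$ (from the energy identity), for any $t_2$ we have
\begin{align*}
(t_2-t_1)\|f(t_2)\|_{L^2}^2\leq\int_{t_1}^{t_2}\|f(t)\|_{L^2}^2\,dt\leq C\nu^{-1/2}\|f(t_1)\|_{L^2}^2,
\end{align*}
which is exactly $\|f(t_2)\|_{L^2}\leq C\nu^{-1/4}(t_2-t_1)^{-1/2}\|f(t_1)\|_{L^2}$.

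I expect the main technical points to be two. First, the monotonicity of $\|f(t)\|_{L^2}$ holds only for the equivalent norm $\|\cdot\|_\star$ rather than $\|\cdot\|_{L^2}$ exactly, so I would carry the argument in $\|\cdot\|_\star$ throughout and convert at the end using $\|h\|_\star\sim\|h\|_{L^2}$; one must check the norm-equivalence constants do not degenerate, which is standard since $\|h\|_\star^2=\|P_{\neq}h\|_{L^2}^2-\|\nabla\Delta^{-1}P_{\neq}h\|_{L^2}^2+\|P_0h\|_{L^2}^2$ and on nonzero modes $\|\nabla\Delta^{-1}h\|_{L^2}\leq\alpha^{-1}\|h\|_{L^2}$ with $\alpha>1$. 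Second, Proposition~\ref{prop:LNS-sp1} requires $I\subset[0,1/\nu]$ and the smallness hypothesis $\|V-\cos y\|_{H^4}\leq\epsilon_0\nu^{1/3}$, so the statement of Lemma~\ref{lem4} must inherit exactly those hypotheses---which is why $I=[t_1,t_2]\subset[0,1/\nu]$ appears in the statement---and I would simply note that the ambient assumption of the section supplies the bound on $V$. No real obstacle beyond bookkeeping: the estimate is a clean consequence of the space-time bound plus energy monotonicity, and the $\nu^{-1/4}$ loss is precisely the interpolation exponent between the $\nu^{1/2}$-weighted gradient term and the $|D_x|^{1/2}$-weighted term in $X_I$.
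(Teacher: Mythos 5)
Your proposal is correct and follows essentially the same route as the paper: the $\star$-inner-product energy identity giving near-monotonicity of $\|f(t)\|_{L^2}$, the bound $\|f\|_{X_I}\leq C\|f(t_1)\|_{L^2}$ from Proposition \ref{prop:LNS-sp1} with zero forcing, the interpolation $\nu^{1/4}\|f_{\neq}\|_{L^2(I;L^2)}\lesssim\|f\|_{X_I}$ (which is exactly Lemma \ref{lem:norm-X} in the paper), and the averaging step $(t_2-t_1)\|f(t_2)\|_{L^2}^2\leq C\|f\|_{L^2(I;L^2)}^2$. The paper's proof is the same argument, merely quoting Lemma \ref{lem:norm-X} instead of redoing the interpolation inline.
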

 
 \begin{proof}
 Taking the inner product $\langle \ ,\ \rangle_{\star}$ with $f$, we have(here $\langle \ ,\ \rangle_{\star}$ is defined in
 \eqref{8.1})
 \begin{align*}
&\f12\partial_t\|f\|_{\star}^2+{\nu}\|\nabla f\|_{\star}^2=0,
\end{align*}
which implies by $\|h\|_{\star}\sim \|h\|_{L^2} $ that 
\begin{align*}
&\|f(t_2)\|_{\star}^2\leq \|f(t)\|_{\star}^2,\quad \|f(t_2)\|_{L^2}^2\leq C\|f(t)\|_{L^2}^2,\quad \forall \ t\in[t_1,t_2].
\end{align*}
By Lemma \ref{lem:norm-X} and Proposition \ref{prop:LNS-sp1}, we have (as $f=f_{\neq}$)
\begin{align*}
      &\nu^{\f14}\|f\|_{L^2(I;L^2)}\leq C\|f\|_{X_I}\leq C\|f(t_1)\|_{L^2}.
   \end{align*}
Thus, we obtain
\begin{align*}
      (t_2-t_1)\|f(t_2)\|_{L^2}^2\leq C\int_{t_1}^{t_2}\|f(t)\|_{L^2}^2\mathrm{d}t=C\|f\|_{L^2(I;L^2)}^2\leq C{\nu}^{-\f12}\|f(t_1)\|_{L^2}^2.
   \end{align*}
\end{proof}

 \begin{lemma}\label{lem3}
 Let $f$ solve  \eqref{eq:LNS-T2} with $P_0f=0$, and $t_0=9\Ce^2\nu^{-\f12}$ with $\Ce$ given by Lemma \ref{lem4}. There exists $\epsilon_0>0$ such that if $V=V(t,y)$ satisfies 
 \beno
 \|V-\cos y\|_{H^4}\leq \epsilon_0\nu^{\f13},\quad  \|\partial_tV\|_{H^2}\leq \epsilon_0\nu^{\f43}\quad \text{for}\,\, t\in[0,1/\nu],
 \eeno
 then for $I= [t_1,t_2]\subset[0,1/\nu]$, $t_2-t_1\leq t_0$, it holds that 
\begin{align*}
&\|f\|_{X_I}\leq C\big(\|f(t_1)\|_{L^2}+\nu^{-\f12}\|(g_1,g_2)\|_{L^2(I;L^2)}\big).
\end{align*}
Moreover, if $t_2-t_1=t_0$, then we have
\begin{align*}
&\|f(t_2)\|_{L^2}\leq \|f(t_1)\|_{L^2}/2+C\nu^{-\f12}\|(g_1,g_2)\|_{L^2(I;L^2)}.
\end{align*}
 \end{lemma}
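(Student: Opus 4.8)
\textbf{Proof proposal for Lemma \ref{lem3}.}

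The plan is to treat the equation \eqref{eq:LNS-T2} as a perturbation of the time-independent-coefficient equation \eqref{eq:LNS-T1} to which Proposition \ref{prop:LNS-sp1} applies, and to exploit the shortness of the interval ($t_2-t_1\le t_0=9\Ce^2\nu^{-1/2}$) together with the smallness hypotheses on $V-\cos y$ and $\partial_t V$. First I would rewrite \eqref{eq:LNS-T2} in the form \eqref{eq:partf-g-1} by freezing the coefficient at the left endpoint: set $V_0(y)=V(t_1,y)$, so that $\|V_0-\cos y\|_{H^4}\le\epsilon_0\nu^{1/3}$, and move the difference to the right-hand side. Concretely, writing $\mathrm{e}^{-\nu t}(V\partial_x f-V''\partial_x\Delta^{-1}f)=\mathrm{e}^{-\nu t_1}V_0\partial_x(f+\Delta^{-1}f)+R$, the remainder $R$ splits into three pieces: one from $V''+V$ acting on $\partial_x\Delta^{-1}f$ (which is $O(\epsilon_0\nu^{1/3})$ in $H^4$, hence small in $L^\infty$), one from $V-V_0$ (controlled by $\|\partial_tV\|_{H^2}|t-t_1|\lesssim\epsilon_0\nu^{4/3}t_0\lesssim\epsilon_0\nu^{5/6}$), and one from $\mathrm{e}^{-\nu t}-\mathrm{e}^{-\nu t_1}$ (bounded by $\nu|t-t_1|\lesssim\nu^{1/2}$). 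Each of these is an operator of the schematic form ``small coefficient times $\partial_x$ or $\partial_x\Delta^{-1}$ applied to $f$'', so it can be routed into the $g_3$ slot (for the $f$-part, using $\||D_x|^{-1/2}\nabla(\text{coeff}\cdot\partial_x f)\|_{L^2}\lesssim\|\text{coeff}\|_{W^{1,\infty}}\|f\|_{L^2}$ type bounds after an integration by parts) or into $g_4$ (for the $\Delta^{-1}f$-part, using the $L^1_tL^2$ norm and Lemma \ref{lem:norm-X}); the small prefactors beat the constants in Proposition \ref{prop:LNS-sp1}.

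The first displayed estimate then follows by a standard absorption argument: apply Proposition \ref{prop:LNS-sp1} on $I$ to get
\begin{align*}
\|f\|_{X_I}\le C\|f(t_1)\|_{L^2}+C\nu^{-1/2}\|(g_1,g_2)\|_{L^2(I;L^2)}+C\mu\,\|f\|_{X_I},
\end{align*}
where $\mu=\mu(\epsilon_0,\nu)\to0$ as $\epsilon_0\to0$ (uniformly in $\nu\le1$), using that $X_I$ dominates $\|f\|_{L^\infty_tL^2}$, $\|f_{\neq}\|_{L^2_tL^2}$ (via $\nu^{1/4}\|f_{\neq}\|_{L^2_tL^2}\le C\|f\|_{X_I}$ from Lemma \ref{lem:norm-X}), and $\|\partial_x\nabla\Delta^{-1}f\|_{L^2_tL^2}$. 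For $\epsilon_0$ small enough $C\mu\le1/2$, and the $X_I$ term is absorbed, yielding $\|f\|_{X_I}\le C(\|f(t_1)\|_{L^2}+\nu^{-1/2}\|(g_1,g_2)\|_{L^2(I;L^2)})$. I should double-check here that the error terms $R$ really can be assigned to $g_3,g_4$ with norms that close — in particular the $g_3$ requires $P_0g_3=0$, which holds because $P_0f=0$ is propagated by the equation and the coefficients are $x$-independent.

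For the second statement, with $t_2-t_1=t_0=9\Ce^2\nu^{-1/2}$, I would decompose $f=f^{(1)}+f^{(2)}$ on $I$ where $f^{(1)}$ solves the homogeneous equation ($g=0$) with data $f(t_1)$ and $f^{(2)}$ solves \eqref{eq:LNS-T2} with zero data and forcing $(g_1,g_2)$. For $f^{(1)}$, I reduce again to the frozen-coefficient homogeneous problem and invoke Lemma \ref{lem4}: its proof used only the energy identity plus $\nu^{1/4}\|f\|_{L^2_tL^2}\le C\|f(t_1)\|_{L^2}$, both of which survive the perturbation up to the same small error $\mu$; hence $\|f^{(1)}(t_2)\|_{L^2}\le \Ce\nu^{-1/4}t_0^{-1/2}\|f(t_1)\|_{L^2}+(\text{small})=\tfrac13\|f(t_1)\|_{L^2}+(\text{small})$ by the choice $t_0^{1/2}=3\Ce\nu^{-1/4}$, and after absorbing the small correction into $\epsilon_0$ this is $\le\tfrac12\|f(t_1)\|_{L^2}$. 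For $f^{(2)}$ I use $\|f^{(2)}(t_2)\|_{L^2}\le\|f^{(2)}\|_{X_I}\le C\nu^{-1/2}\|(g_1,g_2)\|_{L^2(I;L^2)}$ from the first part. Adding the two gives the claimed bound.

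\textbf{Main obstacle.} The delicate point is the bookkeeping for the perturbative remainder $R$: one must verify that \emph{every} error generated by time-dependence of $V$, by the $\mathrm{e}^{-\nu t}$ factor, and by the non-local $V''\partial_x\Delta^{-1}$ term can be placed in the $g_1,g_2,g_3,g_4$ channels of Proposition \ref{prop:LNS-sp1} with a prefactor that is genuinely small \emph{uniformly in $\nu$} — the subtlety being that some of these prefactors are $O(\nu^{1/3})$ (from $\|V-\cos y\|_{H^4}$) while others only become small because the time interval has length $O(\nu^{-1/2})$ rather than $O(\nu^{-1})$, so the two smallness mechanisms (small $\epsilon_0$ versus short interval) have to be combined carefully and the constant $t_0$ chosen exactly as $9\Ce^2\nu^{-1/2}$ to make the Lemma \ref{lem4}-type contraction factor come out to $1/3$ before the perturbation is added.
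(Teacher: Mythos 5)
Your overall architecture is the same as the paper's: freeze the shear at the left endpoint, absorb the time-variation of the coefficients as a perturbation in the $X_I$ norm using the shortness of $I$ together with the hypotheses on $V-\cos y$ and $\partial_t V$, and then, for the endpoint contraction, split into a homogeneous part handled by Lemma \ref{lem4} (the choice $t_0\sim\nu^{-1/2}$ giving the factor $1/3$) plus an inhomogeneous part controlled by the first estimate, absorbing the $O(\epsilon_0)$ correction to reach $1/2$. The genuine gap is your decision to freeze the amplitude factor $\mathrm{e}^{-\nu t}$ at $t_1$ as well. First, the frozen operator $\mathrm{e}^{-\nu t_1}V_0\partial_x(1+\Delta^{-1})$ is not covered by Proposition \ref{prop:LNS-sp1}, whose equation \eqref{eq:partf-g-1} keeps the factor $\mathrm{e}^{-\nu t}$; nor is it covered by the time-independent Proposition \ref{prop:space-time-3}, because for $t_1\sim 1/\nu$ one has $\|\mathrm{e}^{-\nu t_1}V_0-\cos y\|_{H^4}\geq (1-\mathrm{e}^{-1})\|\cos y\|_{H^4}-\|V_0-\cos y\|_{H^4}$, which is of order one, not small. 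Second, and independently, the error $(\mathrm{e}^{-\nu t_1}-\mathrm{e}^{-\nu t})V\partial_x f$ does not close in your bookkeeping: its coefficient is only $O(\nu|t-t_1|)=O(\nu^{1/2})$, but it carries a full derivative of $f$, which the $X_I$ norm controls only through $\nu^{1/2}\|\nabla f\|_{L^2(I;L^2)}\le\|f\|_{X_I}$. Routing it into $g_4$ costs $\nu^{1/2}\cdot|I|^{1/2}\cdot\nu^{-1/2}\|f\|_{X_I}\sim\nu^{-1/4}\|f\|_{X_I}$; writing it as $\partial_x(\delta V f)$ and using the $g_2$ channel costs $\nu^{-1/2}\cdot\nu^{1/2}\cdot\|f_{\neq}\|_{L^2(I;L^2)}\sim\nu^{-1/4}\|f\|_{X_I}$; and in the $g_3$ channel $\||D_x|^{-1/2}\nabla\partial_x(\delta Vf)\|_{L^2}$ is comparable to $\nu^{1/2}\|\nabla f\|_{L^2}$, i.e.\ an $O(1)$, not small, multiple of $\|f\|_{X_I}$. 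Relatedly, your proposed inequality $\||D_x|^{-1/2}\nabla(\mathrm{coeff}\cdot\partial_xf)\|_{L^2}\lesssim\|\mathrm{coeff}\|_{W^{1,\infty}}\|f\|_{L^2}$ is false, since $|D_x|^{-1/2}\nabla\partial_x$ is an operator of positive order.

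The repair is exactly what the paper does: do not touch $\mathrm{e}^{-\nu t}$. Proposition \ref{prop:LNS-sp1} is designed for $\mathrm{e}^{-\nu t}V_*(y)$ with $V_*$ time-independent, so only $V$ needs freezing; the sole remainder is $g_4=\mathrm{e}^{-\nu t}(-\widetilde V\partial_xf+V_1\partial_x\Delta^{-1}f)$ with $\widetilde V=V-V_*$ and $V_1=\partial_y^2V+V_*$, whose coefficients satisfy $\|\widetilde V\|_{H^2}\le \epsilon_0\nu^{4/3}(t-t_1)\le C\epsilon_0\nu^{5/6}$ and $\|V_1\|_{H^2}\le 2\epsilon_0\nu^{1/3}$, giving $\|g_4\|_{L^1(I;L^2)}\le C\epsilon_0(t_2-t_1)^{1/2}\nu^{1/3}\|f\|_{X_I}\le C\epsilon_0\|f\|_{X_I}$, which is absorbed for $\epsilon_0$ small; no $g_3$ channel is used at all. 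For the endpoint bound the paper also lets $f_1,f_2$ solve the frozen-coefficient equations, so Lemma \ref{lem4} applies to $f_1$ verbatim and all perturbative terms (which involve the full $f$) are loaded into the $f_2$ forcing; your variant, where $f^{(1)}$ solves the original homogeneous time-dependent equation and Lemma \ref{lem4} is claimed to ``survive the perturbation,'' would additionally require checking that the $\star$-energy is almost non-increasing despite the $(V+V'')\partial_x\Delta^{-1}$ term, a step you assert but do not carry out.
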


 \begin{proof}
 Let
 \beno
 V_*(y)=V(t_1,y),\quad  \widetilde V(t,y)=V(t,y)-V_*(y),\quad V_1(t,y)=\partial_y^2V(t,y)+V_*(y).
 \eeno
 Thus,  \eqref{eq2}  is reduced to 
 \begin{align*}
  \partial_tf-\nu \Delta f+\mathrm{e}^{-\nu t}V_*\partial_x(f+\Delta^{-1}f)=\partial_y g_1+\partial_xg_2+g_4,
  \end{align*}
where $g_4=\mathrm{e}^{-\nu t}(-\widetilde V\partial_xf+V_1\partial_x\Delta^{-1}f)$.

For $t\in I=[t_1,t_2]$, we have $ 0\leq t-t_1\leq t_2-t_1\leq t_0\leq C\nu^{-\f12}$. Thanks to $\widetilde V(t,y)=\int_{t_1}^t\partial_tV(s,y)\mathrm{d}s$, we get
\begin{align*}
  \|\widetilde V(t,\cdot)\|_{H^2}\leq \int_{t_1}^t\|\partial_tV\|_{H^2}\mathrm{d}s\leq  (t-t_1)\epsilon_0\nu^{\f43}\leq C\epsilon_0\nu^{\f56}.
\end{align*}
Thanks to $V_1=\partial_y^2(V-\cos y)+(V_*-\cos y)$, we have
\begin{align*}
 \|V_1\|_{H^2}\leq \|V-\cos y\|_{H^4}+\|V_*-\cos y\|_{H^4}\leq 2\epsilon_0\nu^{\f13}.
\end{align*}
Then we can apply Proposition \ref{prop:LNS-sp1} (with $V$ replaced by $V_*$)  to obtain
\begin{align*}
      &\|f\|_{X_I}\leq C\big(\|f(t_1)\|_{L^2}+\nu^{-\f12}\|(g_1,g_2)\|_{L^2(I;L^2)}+\|g_4\|_{L^1(I;L^2)}\big).
   \end{align*}
For $g_4$, we have 
\begin{align*}
\|g_4\|_{L^1(I;L^2)}\leq& \|\widetilde V\|_{L^2(I;L^{\infty})}\|\partial_xf\|_{L^2(I;L^2)}+
\|V_1\|_{L^2(I;L^{\infty})}\|\partial_x\Delta^{-1}f\|_{L^2(I;L^2)}
\\ \leq& C\big(\|\widetilde V\|_{L^2(I;H^2)}\|\nabla f\|_{L^2(I;L^2)}+\|V_1\|_{L^2(I;H^2)}\||D_x|^{\f12}\nabla\Delta^{-1}f\|_{L^2(I;L^2)}\big)\\ \leq& C\epsilon_0(t_2-t_1)^{\f12}\big(\nu^{\f56}\|\nabla f\|_{L^2(I;L^2)}+\nu^{\f13}\||D_x|^{\f12}\nabla\Delta^{-1}f\|_{L^2(I;L^2)}\big)\\ \leq& C\epsilon_0(t_2-t_1)^{\f12}\nu^{\f13}\|f\|_{X_I}\leq C\epsilon_0\nu^{-\f14}\nu^{\f13}\|f\|_{X_I}\leq C\epsilon_0\|f\|_{X_I}.
\end{align*}
Thus, we obtain 
\begin{align*}
      &\|f\|_{X_I}\leq C\big(\|f(t_1)\|_{L^2}+\nu^{-\f12}\|(g_1,g_2)\|_{L^2(I;L^2)}+\epsilon_0\|f\|_{X_I}\big),
   \end{align*}
   which gives the first inequality of the lemma by taking $\epsilon_0$ small enough. 
   
   Now we consider $t_2-t_1=t_0$. We decompose $f=f_1+f_2$, where $f_1,\ f_{2}$ solve
  \begin{equation*}\left\{
    \begin{aligned}
     &\partial_tf_1-\nu \Delta f_1+V_*\partial_x(f_1+\Delta^{-1}f_1)=0,\quad f_{1}|_{t=t_1}=f(t_1),\\
     &\partial_tf_2-\nu \Delta f_2+V_*\partial_x(f_2+\Delta^{-1}f_2)=\partial_y g_1+\partial_xg_2+g_4,\quad f_{2}|_{t=t_1}=0. 
  \end{aligned}\right.
  \end{equation*}
  By Proposition \ref{prop:LNS-sp1}, $\|g_4\|_{L^1(I;L^2)}\leq {C}\epsilon_0\|f\|_{X_I}$ and the first inequality of the lemma, we get
  \begin{align*}
      \|f_2\|_{X_I}&\leq C\big(\nu^{-\f12}\|(g_1,g_2)\|_{L^2(I;L^2)}+\|g_4\|_{L^1(I;L^2)}\big)\leq 
      C\big(\nu^{-\f12}\|(g_1,g_2)\|_{L^2(I;L^2)}+\epsilon_0\|f\|_{X_I}\big)\\
      &\leq C\big(\epsilon_0\|f(t_1)\|_{L^2}+\nu^{-\f12}\|(g_1,g_2)\|_{L^2(I;L^2)}\big).
   \end{align*}
As $P_0f_1(t_1)=P_0f(t_1)=0$, we have $P_0f_1=0$. Then we get  by Lemma \ref{lem4}  that
\begin{align*}
&\|f_1(t_2)\|_{L^2}\leq \Ce\nu^{-\f14}(t_2-t_1)^{-\f12}\|f_1(t_1)\|_{L^2}=\Ce\nu^{-\f14}t_0^{-\f12}\|f(t_1)\|_{L^2}=\|f(t_1)\|_{L^2}/3.
\end{align*}Here we used $t_0=9\Ce^2\nu^{-\f12}$. As $f=f_1+f_2$, we have\begin{align*}
\|f(t_2)\|_{L^2}\leq& \|f_1(t_2)\|_{L^2}+\|f_2(t_2)\|_{L^2}\leq \|f_1(t_2)\|_{L^2}+\|f_2\|_{L^{\infty}(I;L^2)}\leq \|f_1(t_2)\|_{L^2}+\|f_2\|_{X_I}\\ \leq& \|f(t_1)\|_{L^2}/3+
C\big(\epsilon_0\|f(t_1)\|_{L^2}+\nu^{-\f12}\|(g_1,g_2)\|_{L^2(I;L^2)}\big),
\end{align*}
which implies the second inequality of the lemma by taking $\epsilon_0$ small enough such that $C\epsilon_0\leq 1/6$.
 \end{proof}
 
 Now we are in a position to prove  Proposition \ref{prop:LNS-sp2}.
  
 \begin{proof}[Proof of Proposition \ref{prop:LNS-sp2}] 
 It is easy to see that
 \begin{align}\label{f1}
\|f\|_{X_I}+\|\mathrm{e}^{\tilde{\epsilon}\nu^{\f12}t}P_{\neq}f\|_{X_I}&\leq \|P_0f\|_{X_I}+\|P_{\neq}f\|_{X_I}+\|\mathrm{e}^{\tilde{\epsilon}\nu^{\f12}t}P_{\neq}f\|_{X_I}\\ \notag&\leq \|P_0f\|_{X_I}+2\|\mathrm{e}^{\tilde{\epsilon}\nu^{\f12}t}P_{\neq}f\|_{X_I}.
\end{align}

For the zero mode, we have
\begin{align}
\notag  &\partial_tP_0f-\nu \Delta P_0f=\partial_y P_0g_1.
\end{align}
The energy estimate gives
\begin{align}
&\f12\partial_t\|P_0f\|_{L^2}^2+{\nu}\|\nabla P_0f\|_{L^2}^2\leq|\langle\partial_y P_0g_1,P_0f\rangle|=|\langle P_0g_1,\partial_yP_0f\rangle|\leq \|g_1\|_{L^2}\|\nabla P_0f\|_{L^2},
 \notag 
 \end{align}
which implies 
\begin{align}
\label{f2}  &\|P_0f\|_{X_I}=\|P_0f\|_{L^\infty(I;L^2)}+\nu^{\f12}\|\nabla P_0f\|_{L^2(I;L^2)}\leq C\big(\|P_0f(T_1)\|_{L^2}+\nu^{-\f12}\|g_1\|_{L^2(I;L^2)}\big).
\end{align}

\def\A{a}
Now we consider the nonzero mode. Let $t_0=9\Ce^2\nu^{-\f12}$, ${\epsilon}=1/(18\Ce^2) $ with $\Ce$ given by Lemma \ref{lem4}. Let 
$N=\inf(\Z\cap[(T-t_1)/t_0,+\infty))$ and 
\beno
t_{N+1}=T,\quad  t_j=t_1+(j-1)t_0,\quad  I_j=[t_j,t_{j+1}]\quad \text{for}\,\, j\in\Z\cap(0,N].
\eeno
 Then 
$I=[t_1,T]=\cup_{j=1}^NI_j$, $\mathrm{e}^{\tilde{\epsilon}\nu^{\f12}t}\sim\mathrm{e}^{\tilde{\epsilon}\nu^{\f12}jt_0}=\A^{j} $ for $ t\in I_j$, here 
$\A:=\mathrm{e}^{\tilde{\epsilon}\nu^{\f12}t_0}=\mathrm{e}^{\tilde{\epsilon}/(2\epsilon)}\in[1,\mathrm{e}^{1/2}]$ (recall that $\tilde{\epsilon}\in[0,\epsilon]$) and \begin{align*}
  &\|\mathrm{e}^{\tilde{\epsilon}\nu^{\f12}t}P_{\neq}f\|_{X_I}^2\leq C\sum_{j=1}^N\A^{2j}\|P_{\neq}f\|_{X_{I_j}}^2,\quad 
  \|\mathrm{e}^{\tilde{\epsilon}\nu^{\f12}t}(g_1,g_2)\|_{L^2(I;L^2)}^2\sim\sum_{j=1}^N\A^{2j}\|(g_1,g_2)\|_{L^2(I_j;L^2)}^2.
\end{align*}
Since $I_j=[t_j,t_{j+1}]$ and $t_{j+1}-t_j\leq t_0 $ for $j\in\Z\cap(0,N]$, and
\begin{align*}
  \partial_tP_{\neq}f-\nu \Delta P_{\neq}f+\mathrm{e}^{-\nu t}(V\partial_xP_{\neq}f-V''\partial_x\Delta^{-1}P_{\neq}f)=\partial_y P_{\neq}g_1+\partial_xP_{\neq}g_2.
\end{align*}
we get by Lemma \ref{lem3}  that 
\begin{align*}
  &\|P_{\neq}f\|_{X_{I_j}}\leq C(\|P_{\neq}f(t_j)\|_{L^2}+\nu^{-\f12}\|(P_{\neq}g_1,P_{\neq}g_2)\|_{L^2(I_j;L^2)})\leq C(a_j+\nu^{-\f12}b_j),\end{align*}
where
\beno
a_j:=\|P_{\neq}f(t_j)\|_{L^2},\quad b_j:=\|(g_1,g_2)\|_{L^2(I_j;L^2)},\quad\forall\ j\in \Z\cap(0,N].
\eeno

As $I_j=[t_j,t_{j+1}]$ and $t_{j+1}-t_j= t_0 $ for $j\in\Z\cap(0,N)$, by Lemma \ref{lem3}, we have
\begin{align*}
  &\|P_{\neq}f(t_{j+1})\|_{L^2}\leq \|P_{\neq}f(t_j)\|_{L^2}/2+C\nu^{-\f12}\|(P_{\neq}g_1,P_{\neq}g_2)\|_{L^2(I_j;L^2)},
\end{align*}
which yields 
\beno
a_{j+1}\leq a_j/2+C\nu^{-\f12}b_j.
\eeno
Then by the induction, we have (recall that  $\A\in[1,\mathrm{e}^{1/2}]$, $\A/2\in[1/2,\mathrm{e}^{1/2}/2]\subset(0,1)$)
\begin{align*}
  &a_j\leq 2^{1-j}a_1+C\sum_{k=1}^{j-1}2^{k+1-j}\nu^{-\f12}b_k,\quad \A^{j}a_j\leq 2(\A/2)^{j}a_1+C\sum_{k=1}^{j-1}2(\A/2)^{j-k}\nu^{-\f12}\A^{k}b_k,\\
  &\A^{2j}a_j^2\leq C(\A/2)^{2j}a_1^2+C\sum_{k=1}^{j-1}(\A/2)^{j-k}\nu^{-1}\A^{2k}b_k^2,\quad \forall\ j\in\Z\cap(0,N].
\end{align*}
Thus,
\begin{align} \notag&\|\mathrm{e}^{\tilde{\epsilon}\nu^{\f12}t}(g_1,g_2)\|_{L^2(I;L^2)}^2\sim
\sum_{j=1}^N\A^{2j}\|(g_1,g_2)\|_{L^2(I_j;L^2)}^2=\sum_{j=1}^N\A^{2j}b_j^2,
\end{align}
and 
\begin{align} 
  \|\mathrm{e}^{\tilde{\epsilon}\nu^{\f12}t}P_{\neq}f\|_{X_I}^2\leq& C\sum_{j=1}^N\A^{2j}\|P_{\neq}f\|_{X_{I_j}}^2
 \label{f3} \leq C\sum_{j=1}^N\A^{2j}(a_j+\nu^{-\f12}b_j)^2\leq C\sum_{j=1}^N\A^{2j}(a_j^2+\nu^{-1}b_j^2)\\
\notag \leq& C\sum_{j=1}^N((\A/2)^{2j}a_1^2+\nu^{-1}\A^{2j}b_j^2)+C\sum_{j=1}^N\sum_{k=1}^{j-1}(\A/2)^{j-k}\nu^{-1}\A^{2k}b_k^2\\
\notag \leq& Ca_1^2+C\nu^{-1}\sum_{j=1}^N\A^{2j}b_j^2+C\nu^{-1}\sum_{k=1}^{N-1}\A^{2k}b_k^2\\
\notag \leq& C\|P_{\neq}f(t_1)\|_{L^2}^2+C\nu^{-1}\|\mathrm{e}^{\tilde{\epsilon}\nu^{\f12}t}(g_1,g_2)\|_{L^2(I;L^2)}^2.
\end{align}

Now the result follows from \eqref{f1}, \eqref{f2}, \eqref{f3}.
 \end{proof}\subsection{Proof of Proposition \ref{Prop1}}
\begin{lemma}\label{lem3a}
 Let $f$ solve  \eqref{eq:LNS-T2} with $P_0f=0$. There exist $\epsilon_0,\nu_0\in(0,1/8)$ such that if $0<\nu\leq\nu_0$, $V=V(t,y)$ satisfies 
 \beno
 \|V-\cos y\|_{H^4}\leq \epsilon_0\nu^{\f13},\quad  \|\partial_tV\|_{H^2}\leq \epsilon_0\nu^{\f43}\quad \text{for}\,\, t\in[1/\nu,+\infty),
 \eeno
 then for $I= \big[t_1,t_2]\subset[1/\nu,\frac{2}{3\nu}\ln(1/\nu)\big]$, $t_2-t_1\leq 1/\nu$, it holds that 
\begin{align*}
&\|f\|_{L^{\infty}(I;L^2)}+\nu^{\f16}\mathrm{e}^{-\nu t_1/3}\big(\|\partial_x \nabla\Delta^{-1}f\|_{L^2(I;L^2)}
+\|\nabla\Delta^{-1}f\|_{L^2(I;L^\infty)}\big)\\&\leq C\big(\|f(t_1)\|_{L^2}+\nu^{-\f12}\|(g_1,g_2)\|_{L^2(I;L^2)}\big).
\end{align*}
Moreover, if $t_2-t_1=1/\nu$, then we have
\begin{align*}
&\|f(t_2)\|_{L^2}\leq \|f(t_1)\|_{L^2}/4+C\nu^{-\f12}\|(g_1,g_2)\|_{L^2(I;L^2)}.
\end{align*}
 \end{lemma}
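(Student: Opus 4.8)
The statement is the large-time analogue of Lemma~\ref{lem3} (which was proved on intervals of length $t_0=9\Ce^2\nu^{-1/2}$ and gave $X_I$-control plus a contraction factor $1/2$), but now the intervals have length $1/\nu$, the decay gained is only $\mathrm{e}^{-\nu t/3}$ on the inviscid-damping pieces (reflecting the $Y_I$-norm of Proposition~\ref{Prop1} rather than the $X_I$-norm), and the admissible time window is $[1/\nu,\tfrac{2}{3\nu}\ln(1/\nu)]$, on which $\mathrm{e}^{-\nu t}$ ranges between $\nu^{2/3}$ and $\mathrm{e}^{-1}$, so $\mathrm{e}^{-\nu t}$ is \emph{small} rather than $\sim 1$. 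The plan is to mimic the proof of Lemma~\ref{lem3}: freeze the background shear at $t=t_1$, write $V=V_*+\widetilde V$ with $V_*(y)=V(t_1,y)$, and absorb the time-dependence and the non-local $-V''\partial_x\Delta^{-1}f$ term into a forcing $g_4$. The equation becomes
\begin{align*}
  \partial_tf-\nu\Delta f+\mathrm{e}^{-\nu t}V_*\partial_x(f+\Delta^{-1}f)=\partial_yg_1+\partial_xg_2+g_4,
\end{align*}
with $g_4=\mathrm{e}^{-\nu t}(-\widetilde V\partial_xf+V_1\partial_x\Delta^{-1}f)$ and $V_1=\partial_y^2V+V_*$.

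First I would rescale time by $s=\int_0^t\mathrm{e}^{-\nu\tau}d\tau=(1-\mathrm{e}^{-\nu t})/\nu$ exactly as in the proof of Proposition~\ref{prop:LNS-sp1}, turning the $\mathrm{e}^{-\nu t}V_*\partial_x$ term into $V_*\partial_x$ with a corrected viscosity $\nu/(1-\nu s)$, and then apply Proposition~\ref{prop:LNS-sp1} (with $V$ replaced by $V_*$, which is $H^4$-close to $\cos y$ by \eqref{ass:U-b1}) on the rescaled interval. This gives $\|f\|_{X_I}\le C(\|f(t_1)\|_{L^2}+\nu^{-1/2}\|(g_1,g_2)\|_{L^2(I;L^2)}+\|g_4\|_{L^1(I;L^2)})$; the $X_I$-bound controls $\|f\|_{L^\infty(I;L^2)}$, $\nu^{1/2}\|\nabla f\|_{L^2(I;L^2)}$, and—via Lemma~\ref{lem:norm-X}—$\nu^{1/6}\|\partial_x\nabla\Delta^{-1}f\|_{L^2(I;L^2)}$ and $\nu^{1/6}\|\nabla\Delta^{-1}f\|_{L^2(I;L^\infty)}$ (without the $\mathrm{e}^{-\nu t_1/3}$ gain yet). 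Next I would estimate $\|g_4\|_{L^1(I;L^2)}$: since $\|\widetilde V(t,\cdot)\|_{H^2}\le(t-t_1)\epsilon_0\nu^{4/3}\le\epsilon_0\nu^{1/3}$ on an interval of length $1/\nu$, and $\|V_1\|_{H^2}\le2\epsilon_0\nu^{1/3}$, the crude bound gives $\|g_4\|_{L^1(I;L^2)}\le C\epsilon_0\|\mathrm{e}^{-\nu t}(|\nabla f|+|D_x|^{1/2}\nabla\Delta^{-1}f)\|_{L^1(I;L^2)}$; here the extra smallness of $\mathrm{e}^{-\nu t}\le\mathrm{e}^{-1}$ on $[1/\nu,\infty)$ together with the $\nu^{1/3}$ from $\widetilde V,V_1$ and Cauchy–Schwarz in $t$ (length $1/\nu$) should produce a bound $C\epsilon_0\|f\|_{X_I}$, exactly as in Lemma~\ref{lem3}, so that $\|g_4\|_{L^1}$ is absorbed for $\epsilon_0$ small. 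Then to upgrade the middle terms to carry the $\mathrm{e}^{-\nu t_1/3}$ factor I would use the enhanced-dissipation decay already available for the zero-free part: split $f=f_1+f_2$ with $f_1$ the solution of the homogeneous (frozen-$V_*$) equation with data $f(t_1)$ and $f_2$ the forced part with zero data; apply Proposition~\ref{prop:LNS-sp2} (with $\tilde\epsilon=\epsilon$, say, or a small multiple) to $f_1$ to gain a factor $\mathrm{e}^{-\epsilon\nu^{1/2}(t-t_1)}$—which over an interval of length $1/\nu$ beats any polynomial and certainly dominates $\mathrm{e}^{-\nu t_1/3}$—while the $Y_I$-type weights in Proposition~\ref{Prop1} are reproduced by combining $\nu^{1/6}$ with this enhanced-dissipation factor.

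For the contraction statement when $t_2-t_1=1/\nu$, I would again split $f=f_1+f_2$ as above and bound $\|f_1(t_2)\|_{L^2}$ using the enhanced-dissipation estimate of Proposition~\ref{prop:LNS-sp2}: $\|f_1(t_2)\|_{L^2}\le C\mathrm{e}^{-\epsilon\nu^{1/2}(t_2-t_1)}\|f(t_1)\|_{L^2}=C\mathrm{e}^{-\epsilon\nu^{-1/2}}\|f(t_1)\|_{L^2}\le\|f(t_1)\|_{L^2}/8$ for $\nu\le\nu_0$ small, while $\|f_2(t_2)\|_{L^2}\le\|f_2\|_{L^\infty(I;L^2)}\le\|f_2\|_{X_I}\le C\epsilon_0\|f(t_1)\|_{L^2}+C\nu^{-1/2}\|(g_1,g_2)\|_{L^2(I;L^2)}$ from the forced estimate with absorbed $g_4$; choosing $\epsilon_0$ so that $C\epsilon_0\le1/8$ yields $\|f(t_2)\|_{L^2}\le\|f(t_1)\|_{L^2}/4+C\nu^{-1/2}\|(g_1,g_2)\|_{L^2(I;L^2)}$ as claimed.

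\textbf{Main obstacle.} The delicate point is the $g_4$ estimate over an interval of length $1/\nu$: unlike in Lemma~\ref{lem3}, where the interval had length $\nu^{-1/2}$ and one pulled out $(t_2-t_1)^{1/2}\sim\nu^{-1/4}$, here a naive Cauchy–Schwarz in time produces $(t_2-t_1)^{1/2}\sim\nu^{-1/2}$, which together with the $\nu^{1/3}$ from $\widetilde V,V_1$ gives only $\nu^{-1/6}$—\emph{not} small. The resolution must exploit the factor $\mathrm{e}^{-\nu t}$: on $[1/\nu,\infty)$ we have $\|\mathrm{e}^{-\nu t}\|_{L^2_t(I)}^2=\int_I\mathrm{e}^{-2\nu t}dt\le\mathrm{e}^{-2\nu t_1}/(2\nu)\le C\nu^{-1}\mathrm{e}^{-2}$, so the time integration of $\mathrm{e}^{-\nu t}\times(\text{energy-level quantity in }L^2_t)$ costs only the $\nu^{-1/2}$ but comes with the bounded constant $\mathrm{e}^{-1}$; one then needs the $\nu^{1/3}$ smallness of $\widetilde V,V_1$ against the $\nu^{-1/2}$ inherited from $\|\nabla f\|_{L^2}\le\nu^{-1/2}\|f\|_{X_I}$ and against the $\nu^{-1/6}$ from $\||D_x|^{1/2}\nabla\Delta^{-1}f\|$ — and here the $\mathrm{e}^{-\nu t}$ weight together with the $Y_I$-norm rescaling (the $\nu^{1/6}$ weights, not $\nu^{1/2}$) is exactly what makes the bookkeeping close. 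I expect that, as in the companion lemmas, everything balances precisely because the window is capped at $t\le\tfrac{2}{3\nu}\ln(1/\nu)$, on which $\mathrm{e}^{-\nu t}\ge\nu^{2/3}$, so the corrected viscosity $\nu/(1-\nu s)$ stays comparable to $\nu$ and the time-rescaling does not degrade any estimate; carrying these $\nu$-powers consistently is the only real work.
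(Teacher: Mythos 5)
Your overall architecture (reduce to the established linear propositions, split into homogeneous plus forced parts, obtain the contraction from enhanced dissipation) has the right shape, but the central device of the paper's proof is missing, and without it your two key steps do not close. The paper does not freeze $V$ at $t_1$ and push everything into a forcing $g_4$; it performs the change of variables $\widetilde f(t,y)=f(t_1+\mathrm{e}^{\nu t_1}t,y)$, $\widetilde g_j(t,y)=\mathrm{e}^{-\nu t_1}g_j(t_1+\mathrm{e}^{\nu t_1}t,y)$, which leaves the equation form-invariant with the \emph{boosted} viscosity $\nu_*=\nu\mathrm{e}^{\nu t_1}\in[\mathrm{e}\nu,\nu^{1/3}]$ on a rescaled interval $\tilde I\subseteq[0,1/\nu_*]$; the hypotheses of Proposition \ref{prop:LNS-sp2} are then checked \emph{for $\nu_*$} (since $\nu\le\nu_*$ and $\|\partial_t\widetilde V\|_{H^2}=\mathrm{e}^{\nu t_1}\|\partial_tV\|_{H^2}\le\epsilon_0\nu^{4/3}\mathrm{e}^{4\nu t_1/3}=\epsilon_0\nu_*^{4/3}$), and Lemma \ref{lem:norm-X} with $\nu$ replaced by $\nu_*$ yields precisely the weight in the statement through the identity $\nu^{1/6}\mathrm{e}^{-\nu t_1/3}\mathrm{e}^{\nu t_1/2}=\nu_*^{1/6}$. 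In your scheme two things go wrong. First, your $s$-rescaling does not keep the viscosity comparable to $\nu$: on $[1/\nu,\tfrac{2}{3\nu}\ln(1/\nu)]$ one has $\nu/(1-\nu s)=\nu\mathrm{e}^{\nu t}\in[\mathrm{e}\nu,\nu^{1/3}]$, so the viscous correction you would move into $\tilde g_1,\tilde g_2$ is of size up to $\nu^{1/3}\|\nabla f\|_{L^2}$ and $\nu^{-1/2}$ times that is of order $\nu^{-2/3}\|f\|_{X_I}$, not absorbable; more fundamentally, Propositions \ref{prop:LNS-sp1} and \ref{prop:LNS-sp2} are only established on intervals inside $[0,1/\nu]$, where $\mathrm{e}^{-\nu t}\sim1$, so you cannot invoke them on $[t_1,t_2]$ with the original $\nu$.

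Second, the obstacle you yourself flag is real and your proposed resolution does not repair it: with $\|\widetilde V\|_{H^2},\|V_1\|_{H^2}\lesssim\epsilon_0\nu^{1/3}$, $\|\mathrm{e}^{-\nu t}\|_{L^2_t(I)}\lesssim\nu^{-1/2}\mathrm{e}^{-\nu t_1}$ and $\|\nabla f\|_{L^2(I;L^2)}\le\nu^{-1/2}\|f\|_{X_{I,\nu}}$, the best available bound is $\|g_4\|_{L^1(I;L^2)}\lesssim\epsilon_0\nu^{-2/3}\mathrm{e}^{-\nu t_1}\|f\|_{X_{I,\nu}}$, and since $\mathrm{e}^{-\nu t_1}$ is merely bounded (it equals $\mathrm{e}^{-1}$ at $t_1=1/\nu$), this is not $O(\epsilon_0)\|f\|_{X_I}$; the smallness only materializes when all quantities are measured at the viscosity scale $\nu_*$, i.e.\ after the rescaling above, which is exactly what the paper does. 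Relatedly, your contraction step overstates the decay: applying Proposition \ref{prop:LNS-sp2} directly on $[t_1,t_2]$ is not licensed, and the enhanced dissipation actually available is the one for viscosity $\nu_*$ in rescaled time, namely a factor $\mathrm{e}^{-\epsilon\nu_*^{-1/2}}$, which can be as weak as $\mathrm{e}^{-\epsilon\nu^{-1/6}}$ (not $\mathrm{e}^{-\epsilon\nu^{-1/2}}$); this is still ample for the $1/4$ contraction once $\nu_0$ is small, but it is obtained by applying Proposition \ref{prop:LNS-sp2} with $\tilde\epsilon=\epsilon$ to the rescaled homogeneous part, not in the original variables.
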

   \begin{proof}Let $\nu_*=\nu \mathrm{e}^{\nu t_1}$, $\tilde{t}_2=\mathrm{e}^{-\nu t_1}(t_{2}-t_1)$, $\tilde{I}=[0,\tilde{t}_2] $ and (for $j=1,2$)
\begin{align*}
      &\widetilde f(t,y)=f(t_1+\mathrm{e}^{\nu t_1}t,y), \quad \widetilde{g}_j(t,y)=\mathrm{e}^{-\nu t_1}g_{j}(t_1+\mathrm{e}^{\nu t_1}t,y), \quad \widetilde V(t,y)=V(t_1+\mathrm{e}^{\nu t_1}t,y).
   \end{align*}
   Then we have 
   \begin{align*}
      & \partial_t\widetilde f-\nu_* \Delta\widetilde f+\mathrm{e}^{-\nu_* t}(\widetilde V\partial_x\widetilde f-\widetilde V''\partial_x\Delta^{-1}\widetilde f)=\partial_y \widetilde{g}_{1}+\partial_x\widetilde{g}_{2},   
      \end{align*}
   and
    \begin{align*}
     &\|f(t_1)\|_{L^2}=\|\tilde f(0)\|_{L^2},\quad \|f(t_2)\|_{L^2}=\|\widetilde f(\tilde{t}_2)\|_{L^2},\quad \|f\|_{L^{\infty}(I;L^2)}=\|\widetilde f\|_{L^{\infty}(\tilde I;L^2)},\\
      &\|\partial_x \nabla\Delta^{-1}f\|_{L^2(I;L^2)}
+\|\nabla\Delta^{-1}f\|_{L^2(I;L^\infty)}=\mathrm{e}^{\nu t_1/2}\big(\|\partial_x \nabla\Delta^{-1}\widetilde f\|_{L^2(\tilde I;L^2)}
+\|\nabla\Delta^{-1}\widetilde f\|_{L^2(\tilde I;L^\infty)}\big),\\
&\|(g_1,g_2)\|_{L^2(I;L^2)}=\mathrm{e}^{-\nu t_1/2}\|(\widetilde{g}_1,\widetilde{g}_2)\|_{L^2(\tilde{I};L^2)}.
   \end{align*}   
 Due to $t_2-t_1\leq 1/\nu$, we have $\tilde{t}_2=\mathrm{e}^{-\nu t_1}(t_{2}-t_1)\leq \mathrm{e}^{-\nu t_1}/\nu=1/\nu_*$, 
  $\tilde{I}=[0,\tilde{t}_2]\subseteq[0,1/\nu_*]$. For $I= [t_1,t_2]\subset[1/\nu,\frac{2}{3\nu}\ln(1/\nu)]$, we have 
  $t_1<t_2\leq\frac{2}{3\nu}\ln(1/\nu)$, $\mathrm{e}^{\nu t_1}\leq \nu^{-\frac{2}{3}} $, 
  $\nu_*=\nu \mathrm{e}^{\nu t_1}\leq \nu^{\frac{1}{3}}\leq1$. For $t\geq0$, we have 
  \beno
 &&\|\widetilde V(t,\cdot)-\cos y\|_{H^4}=\| V(t_1+\mathrm{e}^{\nu t_1}t,\cdot)-\cos y\|_{H^4}\leq \epsilon_0\nu^{\f13}\leq \epsilon_0\nu_*^{\f13},\\
   &&\|\partial_t\widetilde V(t,\cdot)\|_{H^2}=\mathrm{e}^{\nu t_1}\|\partial_t V(t_1+\mathrm{e}^{\nu t_1}t,\cdot)\|_{H^2}\leq \epsilon_0\nu^{\f43}\mathrm{e}^{\nu t_1}\leq \epsilon_0\nu^{\f43}\mathrm{e}^{\f43\nu t_1}=\epsilon_0\nu_*^{\f43}.
 \eeno
Thus, we can apply Proposition \ref{prop:LNS-sp2} with $\nu$ replaced by $\nu_*$ and $\tilde\epsilon=0 $ to obtain
\begin{align*}
&\|\widetilde f\|_{X_{\tilde I,\nu_*}}\leq C\big(\|\widetilde f(0)\|_{L^2}+\nu_*^{-\f12}\|(\widetilde{g}_1,\widetilde{g}_2)\|_{L^2(\tilde I;L^2)}\big),
\end{align*}
where
\begin{align*}
\nu_*^{-\f12}\|(\widetilde{g}_1,\widetilde{g}_2)\|_{L^2(\tilde I;L^2)}=\nu^{-\f12}\mathrm{e}^{-\nu t_1/2}\|(\widetilde{g}_1,\widetilde{g}_2)\|_{L^2(\tilde I;L^2)}
=\nu^{-\f12}\|({g}_1,{g}_2)\|_{L^2(I;L^2)}.
\end{align*}

Due to $P_0f=0$, we have $f=f_{\neq}$, $\widetilde f=\widetilde{f}_{\neq}$. By Lemma \ref{lem:norm-X} with $\nu$ replaced by $\nu_*$, we get
   \begin{align*}
      & \nu_*^{\f16}\|\partial_x\nabla\Delta^{-1}\widetilde f\|_{L^2(\tilde I;L^2)}+
      \nu_*^{\f16}\|\nabla\Delta^{-1}\widetilde f\|_{L^2(\tilde I;L^{\infty})}\leq C\|\widetilde f\|_{X_{\tilde I,\nu_*}}.
   \end{align*}
 Notice that $\nu_*=\nu \mathrm{e}^{\nu t_1} $ and $\nu^{\f16}\mathrm{e}^{-\nu t_1/3}\mathrm{e}^{\nu t_1/2}=\nu^{\f16}\mathrm{e}^{\nu t_1/6}=\nu_*^{\f16}$. Thus, 
 \begin{align*}
&\|f\|_{L^{\infty}(I;L^2)}+\nu^{\f16}\mathrm{e}^{-\nu t_1/3}\big(\|\partial_x \nabla\Delta^{-1}f\|_{L^2(I;L^2)}
+\|\nabla\Delta^{-1}f\|_{L^2(I;L^\infty)}\big)\\&=\|\widetilde f\|_{L^{\infty}(\tilde I;L^2)}+\nu^{\f16}\mathrm{e}^{-\nu t_1/3}\mathrm{e}^{\nu t_1/2}\big(\|\partial_x \nabla\Delta^{-1}\tilde f\|_{L^2(\tilde I;L^2)}
+\|\nabla\Delta^{-1}\widetilde f\|_{L^2(\tilde I;L^\infty)}\big)\\&=\|\widetilde f\|_{L^{\infty}(\tilde I;L^2)}+\nu_*^{\f16}\big(\|\partial_x \nabla\Delta^{-1}\tilde f\|_{L^2(\tilde I;L^2)}
+\|\nabla\Delta^{-1}\widetilde f\|_{L^2(\tilde I;L^\infty)}\big)\\ &\leq C\|\widetilde f\|_{X_{\tilde I,\nu_*}}\leq C\big(\|\widetilde f(0)\|_{L^2}+\nu_*^{-\f12}\|(\widetilde{g}_1,\widetilde{g}_2)\|_{L^2(\tilde I;L^2)}\big)\\&=C\big(\|f(t_1)\|_{L^2}+\nu^{-\f12}\|({g}_1,{g}_2)\|_{L^2(I;L^2)}\big).
\end{align*} 
This proves the first inequality of the lemma. 
   
   Now we consider $t_2-t_1=1/\nu$. Then we have $\tilde{t}_2=\mathrm{e}^{-\nu t_1}(t_{2}-t_1)= \mathrm{e}^{-\nu t_1}/\nu=1/\nu_*$, 
  $\tilde{I}=[0,\tilde{t}_2]=[0,1/\nu_*]$. We decompose $\widetilde f=f_1+f_2$, where $f_1, f_{2}$ solve
  \begin{equation*}\left\{
    \begin{aligned}
     &\partial_t f_1-\nu_* \Delta f_1+\mathrm{e}^{-\nu_* t}\big(\widetilde V\partial_x f_1-\widetilde V''\partial_x\Delta^{-1} f_1\big)=0,\quad f_{1}|_{t=0}=\widetilde f(0)=f(t_1),\\
     &\partial_t f_2-\nu_* \Delta f_2+\mathrm{e}^{-\nu_* t}\big(\widetilde V\partial_x f_2-\widetilde V''\partial_x\Delta^{-1} f_2\big)=\partial_y \widetilde{g}_{1}+\partial_x\widetilde{g}_{2},\quad f_{2}|_{t=0}=0. 
  \end{aligned}\right.
  \end{equation*}
 Due to $P_0f=0$, we have $P_0f_1|_{t=0}=0$, $P_0f_1=0$, $f_1=P_{\neq}f_1$. By Proposition \ref{prop:LNS-sp2} with $\nu$ replaced by $\nu_*$ and $\tilde\epsilon=\epsilon $, we obtain 
\begin{align*}
&\| \mathrm{e}^{{\epsilon}\nu_*^{\f12}t}f_1\|_{X_{\tilde I,\nu_*}}\leq C\|f(t_1)\|_{L^2}.
\end{align*}
Due to $\nu_*\leq \nu^{\frac{1}{3}}\leq \nu_0^{\frac{1}{3}}$, we have 
\begin{align*}
&\| f_1|_{t=1/\nu_*}\|_{L^2}\leq \mathrm{e}^{-{\epsilon}\nu_*^{-1/2}}
\| \mathrm{e}^{{\epsilon}\nu_*^{\f12}t}f_1\|_{L^{\infty}(\tilde I;L^2)}\leq\mathrm{e}^{-{\epsilon}\nu^{-1/6}}\| \mathrm{e}^{{\epsilon}\nu_*^{\f12}t}f_1\|_{X_{\tilde I,\nu_*}}\leq C\mathrm{e}^{-{\epsilon}\nu_0^{-1/6}}\|f(t_1)\|_{L^2}.
\end{align*}
By Proposition \ref{prop:LNS-sp2} with $\nu$ replaced by $\nu_*$ and $\tilde\epsilon=0 $, we get
\begin{align*}
&\| f_2|_{t=1/\nu_*}\|_{L^2}\leq\|f_2\|_{X_{\tilde I,\nu_*}}\leq C\nu_*^{-\f12}\|(\widetilde{g}_1,\widetilde{g}_2)\|_{L^2(\tilde I;L^2)}=C\nu^{-\f12}\|({g}_1,{g}_2)\|_{L^2(I;L^2)}.
\end{align*}
Thus, 
\begin{align*}
\|f(t_2)\|_{L^2}&=\|\widetilde f(\tilde{t}_2)\|_{L^2}=\|\widetilde f|_{t=1/\nu_*}\|_{L^2}\leq \| f_1|_{t=1/\nu_*}\|_{L^2}+\| f_2|_{t=1/\nu_*}\|_{L^2}\\ &\leq C\mathrm{e}^{-{\epsilon}\nu_0^{-1/6}}\|f(t_1)\|_{L^2}+C\nu^{-\f12}\|(g_1,g_2)\|_{L^2(I;L^2)},
\end{align*}
which shows the second inequality of the lemma by taking $ \nu_0$ small   such that $C\mathrm{e}^{-{\epsilon}\nu_0^{-1/6}}\leq 1/4$.
 \end{proof}
 
 Recall that 
\begin{align*}
&\|f\|_{\YI}=\|\mathrm{e}^{\nu t}f\|_{L^{\infty}(I;L^2)}+\nu^{\f16}\|\mathrm{e}^{\nu t/2}\partial_x \nabla\Delta^{-1}f\|_{L^2(I;L^2)}
+\nu^{\f16}\|\mathrm{e}^{\nu t/2}\nabla\Delta^{-1}f\|_{L^2(I;L^\infty)}.
\end{align*}
Let $T_2=1/\nu$ and $T_3=\frac{2}{3\nu}\ln(1/\nu)$. We estimate the nonzero mode for $t\in [T_2,T_3]=[1/\nu,\frac{2}{3\nu}\ln(1/\nu)]$ and 
$t\in [T_3,+\infty)$ separately.

\begin{lemma}\label{lem11}
Let $f$ solve \eqref{eq:LNS-T2} with $P_0f=0$. There exists $\epsilon_0,\nu_0\in(0,1/4)$ such that if  $0<\nu \leq \nu_0$, $V=V(t,y)$, $\|V-\cos y\|_{H^4}\leq \epsilon_0\nu^{\f13}$, 
$\|\partial_tV\|_{H^2}\leq \epsilon_0\nu^{\f43}$ for $t\in[1/\nu,+\infty)$ , then
 for $I=[T_2,T]\subseteq[T_2,T_3]$, 
 it holds that 
\begin{align*}
&\|f\|_{\YI}\leq 
C\big(\|f(T_2)\|_{L^2}+\nu^{-\f12}\|\mathrm{e}^{\nu t}(g_1,g_2)\|_{L^2(I;L^2)}\big);
\end{align*}
and for $I=[T_3,T]\subset[T_3,+\infty)$, it holds that 
\begin{align*}
&\|f\|_{\YI}\leq 
C\big(\mathrm{e}^{\nu T_3}\|f(T_3)\|_{L^2}+\nu^{-\f12}\|\mathrm{e}^{\nu t}(g_1,g_2)\|_{L^2(I;L^2)}\big).
\end{align*}
\end{lemma}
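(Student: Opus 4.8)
The plan is to treat the two stated sub-intervals by quite different mechanisms, in each case reducing to estimates already available in the excerpt.

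\textbf{The regime $[T_2,T_3]$.} Here the shear coefficient $\mathrm{e}^{-\nu t}$ is of order one near $T_2=1/\nu$, so it cannot be treated perturbatively; instead I would partition and iterate Lemma \ref{lem3a}, mirroring the proof of Proposition \ref{prop:LNS-sp2}. Write $I=\bigcup_j I_j$ with $I_j=[j/\nu,(j+1)/\nu]$ (and a possibly shorter last interval); each $I_j\subset[1/\nu,T_3]$ has length $1/\nu$, so Lemma \ref{lem3a} applies on it. Setting $a_j=\|f(j/\nu)\|_{L^2}$, $b_j=\|(g_1,g_2)\|_{L^2(I_j;L^2)}$ and $\widetilde b_j=\|\mathrm{e}^{\nu t}(g_1,g_2)\|_{L^2(I_j;L^2)}\sim \mathrm{e}^{j}b_j$, the second bound of Lemma \ref{lem3a} gives the one-step contraction $a_{j+1}\le a_j/4+C\nu^{-1/2}b_j$, which iterates to $a_j\le 4^{1-j}a_1+C\nu^{-1/2}\sum_{k<j}4^{k-j}b_k$. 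Since $\nu T_2=1$, on $I_j$ the weight $\mathrm{e}^{\nu t}$ is $\sim \mathrm{e}^{j}$ while the per-step damping factor is $1/4$, and $\mathrm{e}<4$; hence after multiplying by $\mathrm{e}^{j}$ the geometric series $\sum(\mathrm{e}/4)^{j-k}$ converges, giving $\mathrm{e}^{j}a_j\le Ca_1+C\nu^{-1/2}\sum_{k<j}(\mathrm{e}/4)^{j-k}\widetilde b_k$ and $\mathrm{e}^{j}\nu^{-1/2}b_j\le C\nu^{-1/2}\widetilde b_j$. Feeding this into the first bound of Lemma \ref{lem3a}, which on $I_j$ controls $\|f\|_{L^\infty(I_j;L^2)}$ together with $\nu^{1/6}\mathrm{e}^{-\nu t_1/3}$ times the two $L^2$ norms of $\nabla\Delta^{-1}f$ occurring in $Y_I$, and summing the squares over $j$ (the matching factor $\mathrm{e}^{\pm j/3}$ only improves the geometric convergence since $\mathrm{e}^{2/3}<4$), a Cauchy--Schwarz in the indices $j,k$ yields $\|f\|_{Y_I}\le C(\|f(T_2)\|_{L^2}+\nu^{-1/2}\|\mathrm{e}^{\nu t}(g_1,g_2)\|_{L^2(I;L^2)})$. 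The number of intervals is $\sim\ln(1/\nu)$, but it enters only through a convergent geometric sum, so there is no logarithmic loss.

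\textbf{The regime $[T_3,\infty)$.} Here $\mathrm{e}^{-\nu t}\le\nu^{2/3}$, so the shear term is genuinely small and I would argue by a mode-by-mode weighted energy estimate for \eqref{eq:LNS-T2} in the inner product $\langle\cdot,\cdot\rangle_\star$ of \eqref{8.1} (recall $P_0f=0$, so $\langle f,g\rangle_\star=\langle f,(1+\Delta^{-1})g\rangle$). Decomposing $V-V''\Delta_k^{-1}=\cos y\,(1+\Delta_k^{-1})+R_k$ with $R_k=(V-\cos y)-(V-\cos y)''\Delta_k^{-1}$, and using that $\cos y\,(1+\Delta_k^{-1})=B_k$ is self-adjoint for $\langle\cdot,\cdot\rangle_\star$ (Subsection \ref{subspec}), the energy identity for $\|f_k\|_\star^2$ has vanishing contribution from the principal part ($\mathbf{Re}\langle\mathrm{i}kB_kf_k,f_k\rangle_\star=0$), while $|\mathbf{Re}\langle\mathrm{i}kR_kf_k,f_k\rangle_\star|\le C\epsilon_0\nu^{1/3}|k|\,\|f_k\|_\star^2$ since $\|V-\cos y\|_{C^2}\lesssim\epsilon_0\nu^{1/3}$. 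For $t\ge T_3$ this is $\le C\epsilon_0\nu|k|\,\|f_k\|_\star^2$, which is absorbed into $\nu\langle-\Delta_kf_k,f_k\rangle_\star\ge\nu k^2\|f_k\|_\star^2$ after choosing $\epsilon_0$ small depending on $\alpha=2\pi/\mathfrak{p}>1$; the source terms $\mathrm{i}kg_{2,k}+\partial_y g_{1,k}$ are likewise absorbed by a small fraction of $\nu\langle-\Delta_kf_k,f_k\rangle_\star$ at the cost of $C\nu^{-1}(\|g_{1,k}\|^2+\|g_{2,k}\|^2)$. One arrives at $\tfrac12\partial_t\|f_k\|_\star^2+c\,\nu\langle-\Delta_kf_k,f_k\rangle_\star\le C\nu^{-1}(\|g_{1,k}\|^2+\|g_{2,k}\|^2)$ with $c$ as close to $1$ as desired. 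The decisive facts are $\langle-\Delta_kf_k,f_k\rangle_\star\ge\alpha^2\|f_k\|_\star^2$ (mode-wise ratio $(a_n-1)/(1-a_n^{-1})=a_n\ge\alpha^2$, $a_n=k^2+n^2$) and $\alpha>1$, which together with $c>\alpha^{-2}$ let me multiply by $\mathrm{e}^{2\nu t}$ and close a weighted Gr\"onwall: the dissipation still dominates the $\nu\mathrm{e}^{2\nu t}\|f_k\|_\star^2$ produced by the weight, with margin $(c-\alpha^{-2})\nu\mathrm{e}^{2\nu t}\langle-\Delta_kf_k,f_k\rangle_\star>0$. Integrating in $t$ then gives in one shot $\|\mathrm{e}^{\nu t}f\|_{L^\infty(I;L^2)}^2+\nu\int_I\mathrm{e}^{2\nu t}\|\nabla f\|_{L^2}^2\,dt\le C(\mathrm{e}^{2\nu T_3}\|f(T_3)\|_{L^2}^2+\nu^{-1}\|\mathrm{e}^{\nu t}(g_1,g_2)\|_{L^2(I;L^2)}^2)$. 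For the two $L^2$ pieces of $Y_I$ one writes $\mathrm{e}^{\nu t/2}\nabla\Delta^{-1}f=\mathrm{e}^{-\nu t/2}(\mathrm{e}^{\nu t}\nabla\Delta^{-1}f)$, uses $\|\partial_x\nabla\Delta^{-1}f_k\|_{L^2_y}\le\|f_k\|_{L^2_y}$ and a Gagliardo--Nirenberg bound for $\nabla\Delta^{-1}f_{\neq}$ in $L^\infty$, together with $\int_{T_3}^\infty\mathrm{e}^{-\nu t}\,dt=\nu^{-1/3}$ and $\int_I\mathrm{e}^{\nu t}\|\nabla f\|^2\le\mathrm{e}^{-\nu T_3}\int_I\mathrm{e}^{2\nu t}\|\nabla f\|^2=\nu^{2/3}(\cdots)$; the prefactor $\nu^{1/6}$ then exactly cancels the $\nu^{-1/6}$ arising from these time integrals. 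This yields the claimed bound on $[T_3,T]$.

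\textbf{Main obstacle.} The delicate point is the $[T_3,\infty)$ estimate: the nonlocal term $\mathrm{e}^{-\nu t}V''\partial_x\Delta^{-1}f$ has an order-one coefficient and must \emph{not} be bounded crudely (that would cost $\mathrm{e}^{-\nu t}\lesssim\nu^{2/3}\gg\nu$ and destroy the decay needed to beat the $\mathrm{e}^{\nu t}$ weight), but reorganized so that its order-one part recombines with $\cos y\,\partial_x f$ into the $\langle\cdot,\cdot\rangle_\star$-antisymmetric operator $\mathrm{i}kB_k$, which drops out of the energy identity, leaving only the $\epsilon_0\nu^{1/3}$-small remainder $R_k$; one must then exploit the strict gap between the heat decay rate $\nu\alpha^2$ of the lowest nonzero mode and the weight rate $\nu$. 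The remaining bookkeeping — the weights $\nu^{1/6}$, the precise choice $T_3=\tfrac{2}{3\nu}\ln(1/\nu)$, and the geometric sums on $[T_2,T_3]$ — is routine once this structural point is in place.
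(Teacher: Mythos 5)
Your proposal is correct and takes essentially the same route as the paper: on $[T_2,T_3]$ it iterates Lemma \ref{lem3a} over intervals of length $1/\nu$ and sums geometrically using $\mathrm{e}<4$, and on $[T_3,T]$ it runs a weighted energy estimate in $\langle\cdot,\cdot\rangle_\star$, absorbing the remainder via $\mathrm{e}^{-\nu T_3}=\nu^{2/3}$ and the gap $\alpha=2\pi/\mathfrak{p}>1$, exactly as in the paper's proof. The only cosmetic difference is the split of the nonlocal term in the second regime: the paper keeps the full $\mathrm{e}^{-\nu t}V\partial_x(1+\Delta^{-1})$ as the exactly $\star$-antisymmetric part and treats $g_4=\mathrm{e}^{-\nu t}(V+V'')\partial_x\Delta^{-1}f$ as the $O(\epsilon_0\nu^{1/3})$ remainder, whereas you freeze $\cos y\,\partial_x(1+\Delta^{-1})$ and put $V-\cos y$ into the remainder; both versions yield the same smallness and the same absorption.
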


\begin{proof}
We first consider the case where $I=[T_2,T]\subseteq[T_2,T_3]$. Let 
$N=\inf(\Z\cap[\nu T,+\infty))$ and 
\beno
t_{N}=T,\quad  t_j=j/\nu,\quad  I_j=[t_j,t_{j+1}]\quad \text{for}\,\, j\in\Z\cap(0,N).
\eeno
 Then $T_2=1/\nu=t_1$,
$I=[T_2,T]=\cup_{j=1}^{N-1}I_j$, $\mathrm{e}^{\nu t}\sim\mathrm{e}^{j}$ for $ t\in I_j$,  and 
\begin{align*}
  &\|f\|_{\YI}^2\leq C\sum_{j=1}^{N-1}\big(\mathrm{e}^{2j}\|f\|_{L^{\infty}(I_j;L^2)}^2+\nu^{\f13}\mathrm{e}^{j}\|\partial_x \nabla\Delta^{-1}f\|_{L^2(I_j;L^2)}^2
+\nu^{\f13}\mathrm{e}^{j}\|\nabla\Delta^{-1}f\|_{L^2(I_j;L^\infty)}^2\big),\\
  &\|\mathrm{e}^{\nu t}(g_1,g_2)\|_{L^2(I;L^2)}^2\sim\sum_{j=1}^{N-1}\mathrm{e}^{2j}\|(g_1,g_2)\|_{L^2(I_j;L^2)}^2.
\end{align*}

Since $I_j=[t_j,t_{j+1}]\subseteq[T_2,T_3]=[1/\nu,\frac{2}{3\nu}\ln(1/\nu)]$ and $t_{j+1}-t_j\leq 1/\nu $ for $j\in\Z\cap(0,N)$, 
we get by Lemma \ref{lem3a}  and $ \mathrm{e}^{-\nu t_j/3}=\mathrm{e}^{-j/3}$ that
\begin{align*}
  &\|f\|_{L^{\infty}(I_j;L^2)}+\nu^{\f16}\mathrm{e}^{-j/3}\big(\|\partial_x \nabla\Delta^{-1}f\|_{L^2(I_j;L^2)}
+\|\nabla\Delta^{-1}f\|_{L^2(I_j;L^\infty)}\big)\\&\leq C\big(\|f(t_j)\|_{L^2}+\nu^{-\f12}\|(g_1,g_2)\|_{L^2(I_j;L^2)})= C(a_j+\nu^{-\f12}b_j\big),
\end{align*}
where
\beno
a_j:=\|f(t_j)\|_{L^2},\quad b_j:=\|(g_1,g_2)\|_{L^2(I_j;L^2)},\quad\forall\ j\in \Z\cap(0,N).
\eeno As $I_j=[t_j,t_{j+1}]$ and $t_{j+1}-t_j= 1/\nu $ for $j\in\Z\cap(0,N-2]$, we get by Lemma \ref{lem3a} that
\begin{align*}
  &\|f(t_{j+1})\|_{L^2}\leq \|f(t_j)\|_{L^2}/4+C\nu^{-\f12}\|(g_1,g_2)\|_{L^2(I_j;L^2)},
\end{align*}
which gives
\beno
a_{j+1}\leq a_j/4+C\nu^{-\f12}b_j.
\eeno
Then by the induction, we obtain 
\begin{align*}
  &a_j\leq 4^{1-j}a_1+C\sum_{k=1}^{j-1}4^{k+1-j}\nu^{-\f12}b_k,\quad \mathrm{e}^{j}a_j\leq 4(\mathrm{e}/4)^{j}a_1+C\sum_{k=1}^{j-1}4(\mathrm{e}/4)^{j-k}\nu^{-\f12}\mathrm{e}^{k}b_k,\\
  &\mathrm{e}^{2j}a_j^2\leq C(\mathrm{e}/4)^{2j}a_1^2+C\sum_{k=1}^{j-1}(\mathrm{e}/4)^{j-k}\nu^{-1}\mathrm{e}^{2k}b_k^2,\quad \forall\ j\in\Z\cap(0,N).
\end{align*}
Thus, we conclude 
\begin{align} \notag&\|\mathrm{e}^{\nu t}(g_1,g_2)\|_{L^2(I;L^2)}^2\sim\sum_{j=1}^{N-1}\mathrm{e}^{2j}\|(g_1,g_2)\|_{L^2(I_j;L^2)}^2=\sum_{j=1}^{N-1}\mathrm{e}^{2j}b_j^2,
\end{align}
and
\begin{align*}
  \|f\|_{\YI}^2&\leq C\sum_{j=1}^{N-1}\big(\mathrm{e}^{2j}\|f\|_{L^{\infty}(I_j;L^2)}^2+\nu^{\f13}\mathrm{e}^{j}\|\partial_x \nabla\Delta^{-1}f\|_{L^2(I_j;L^2)}^2
+\nu^{\f13}\mathrm{e}^{j}\|\nabla\Delta^{-1}f\|_{L^2(I_j;L^\infty)}^2\big)\\ 
&\leq C\sum_{j=1}^{N-1}\mathrm{e}^{2j}\left(\|f\|_{L^{\infty}(I_j;L^2)}+\nu^{\f16}\mathrm{e}^{-j/3}(\|\partial_x \nabla\Delta^{-1}f\|_{L^2(I_j;L^2)}
+\|\nabla\Delta^{-1}f\|_{L^2(I_j;L^\infty)})\right)^2\\ 
&\leq C\sum_{j=1}^{N-1}\mathrm{e}^{2j}(a_j+\nu^{-\f12}b_j)^2\leq C\sum_{j=1}^{N-1}\mathrm{e}^{2j}(a_j^2+\nu^{-1}b_j^2)\\
 &\leq C\sum_{j=1}^{N-1}((\mathrm{e}/4)^{2j}a_1^2+\nu^{-1}\mathrm{e}^{2j}b_j^2)+
 C\sum_{j=1}^{N-1}\sum_{k=1}^{j-1}(\mathrm{e}/4)^{j-k}\nu^{-1}\mathrm{e}^{2k}b_k^2\\
 &\leq Ca_1^2+C\nu^{-1}\sum_{j=1}^{N-1}\mathrm{e}^{2j}b_j^2+C\nu^{-1}\sum_{k=1}^{N-2}\mathrm{e}^{2k}b_k^2\\
 &\leq C\|f(t_1)\|_{L^2}^2+C\nu^{-1}\|\mathrm{e}^{\nu t}(g_1,g_2)\|_{L^2(I;L^2)}^2,
\end{align*}
which leads to the first inequality of the lemma due to $T_2=1/\nu=t_1$.

Next, we assume that $I=[T_3,T]\subset[T_3,+\infty)$ and rewrite \eqref{eq:LNS-T2} as
\begin{align*}
  \partial_tf-\nu \Delta f+\mathrm{e}^{-\nu t}V\partial_x(f+\Delta^{-1}f)=g:=\partial_y g_1+\partial_xg_2+g_4,
  \end{align*}
where $g_4=\mathrm{e}^{-\nu t}(V+V'')\partial_x\Delta^{-1}f$. Taking the inner product $\langle \ ,\ \rangle_{\star}$ with $f$, we obtain
 \begin{align*}
\f12\partial_t\|f\|_{\star}^2+{\nu}\|\nabla f\|_{\star}^2\leq& |\langle g,f\rangle_\star|\\ =&
|-\langle g_{1},\partial_yf\rangle_\star-\langle g_{2},\partial_xf\rangle_\star+
\langle{g}_{4},f\rangle_\star|\\ \leq& \|(g_{1},g_{2})\|_{\star}\|\nabla f\|_{\star}+\|g_{4}\|_{\star}\|f\|_{\star}.
\end{align*}
As $P_0f=0$, we have $\|\nabla f\|_{\star}\geq\alpha\| f\|_{\star} $, here $\alpha=2\pi/\mathfrak{p}>1$, and\begin{align*}
  &\|g_{4}\|_{\star}\| f\|_{\star}\leq\|g_{4}\|_{L^2}\| f\|_{\star}\leq \mathrm{e}^{-\nu t}\|V+V''\|_{L^{\infty}}\|\partial_x\Delta^{-1}f\|_{L^2}\| f\|_{\star},\\
  &\|V+V''\|_{L^{\infty}}=\|(\partial_y^2+1)(V-\cos y)\|_{L^{\infty}}\le C\|V-\cos y\|_{H^4}\le C\epsilon_0\nu^{\f13},\\
  &\|\partial_x\Delta^{-1}f\|_{L^2}\leq \|f\|_{L^2}\leq C\|f\|_{*}.
  \end{align*}
  Thus, for $t\in I=[T_3,T]$, we have(as $T_3=\frac{2}{3\nu}\ln(1/\nu)$, $\mathrm{e}^{-\nu T_3}=\nu^{\f23} $)
  \begin{align*}
  &\|g_{4}\|_{\star}\| f\|_{\star}\leq C\mathrm{e}^{-\nu t}\epsilon_0\nu^{\f13}\| f\|_{\star}^2\leq C\mathrm{e}^{-\nu T_3}\epsilon_0\nu^{\f13}\| \nabla f\|_{\star}^2=C\epsilon_0\nu\| \nabla f\|_{\star}^2,  
  \end{align*}
  which gives
  \begin{align*}
  &\f12\partial_t(\mathrm{e}^{2\nu t}\|f\|_{\star}^2)+{\nu}\mathrm{e}^{2\nu t}\|\nabla f\|_{\star}^2-\nu\mathrm{e}^{2\nu t}\|f\|_{\star}^2 \leq \mathrm{e}^{2\nu t}|\langle g,f\rangle_\star|\\ &\leq \mathrm{e}^{2\nu t}(\|(g_{1},g_{2})\|_{\star}\|\nabla f\|_{\star}+\|g_{4}\|_{\star}\|f\|_{\star})\leq \mathrm{e}^{2\nu t}(\|(g_{1},g_{2})\|_{\star}\|\nabla f\|_{\star}+C\epsilon_0\nu\| \nabla f\|_{\star}^2),\\
  &\f12\partial_t(\mathrm{e}^{2\nu t}\|f\|_{\star}^2)+{\nu}\mathrm{e}^{2\nu t}(1-C\epsilon_0-\alpha^{-2})\|\nabla f\|_{\star}^2\leq \mathrm{e}^{2\nu t}\|(g_{1},g_{2})\|_{\star}\|\nabla f\|_{\star}.
  \end{align*}
 Here we used $\|\nabla f\|_{\star}\geq\alpha\| f\|_{\star} $. Taking $\epsilon_0 $ small enough yields 
  \begin{align*}
  &\f12\partial_t(\mathrm{e}^{2\nu t}\|f\|_{\star}^2)+\f12{\nu}\mathrm{e}^{2\nu t}(1-\alpha^{-2})\|\nabla f\|_{\star}^2\leq \mathrm{e}^{2\nu t}\|(g_{1},g_{2})\|_{\star}\|\nabla f\|_{\star}.
  \end{align*}Integrating in $t$ and using the fact $\|h\|_{\star}\sim \|h\|_{L^2} $, we obtain
\begin{align*}
&\|\mathrm{e}^{\nu t}f\|_{L^{\infty}({I};L^2)}^2+{\nu}\|\mathrm{e}^{\nu t}\nabla f\|_{L^2({I};L^2)}^2\lesssim\mathrm{e}^{2\nu T_3}\|f(T_3)\|_{L^2}^2+
\|\mathrm{e}^{\nu t}(g_{1},g_{2})\|_{L^2({I};L^2)}\|\mathrm{e}^{\nu t}\nabla f\|_{L^2({I};L^2)},
\end{align*}
which along with Young's inequality shows 
\begin{align*}
&\|\mathrm{e}^{\nu t}f\|_{L^{\infty}({I};L^2)}+{\nu}^{\f12}\|\mathrm{e}^{\nu t}\nabla f\|_{L^2({I};L^2)}
\lesssim\mathrm{e}^{\nu T_3}\|f(T_3)\|_{L^2}+{\nu}^{-\f12}\|\mathrm{e}^{\nu t}(g_{1},g_{2})\|_{L^2({I};L^2)}.
\end{align*}
As $P_0f=0$, we get by using $T_3=\frac{2}{3\nu}\ln(1/\nu)$, $\mathrm{e}^{-\nu T_3/2}=\nu^{\f13}$ that
\begin{align*}
&\|\partial_x \nabla\Delta^{-1}f\|_{L^2}+\|\nabla\Delta^{-1}f\|_{L^\infty}\leq C\|\nabla\Delta^{-1}f\|_{H^2}\leq C\|\nabla f\|_{L^2},\\
&\nu^{\f16}\|\mathrm{e}^{\nu t/2}\partial_x \nabla\Delta^{-1}f\|_{L^2(I;L^2)}
+\nu^{\f16}\|\mathrm{e}^{\nu t/2}\nabla\Delta^{-1}f\|_{L^2(I;L^\infty)}\\
&\leq C\nu^{\f16}\|\mathrm{e}^{\nu t/2}\nabla f\|_{L^2(I;L^2)}\leq C\nu^{\f16}\mathrm{e}^{-\nu T_3/2}\|\mathrm{e}^{\nu t}\nabla f\|_{L^2(I;L^2)}= C\nu^{\f12}\|\mathrm{e}^{\nu t}\nabla f\|_{L^2(I;L^2)},
\end{align*}and\begin{align*}
\|f\|_{\YI}=&\|\mathrm{e}^{\nu t}f\|_{L^{\infty}(I;L^2)}+\nu^{\f16}\|\mathrm{e}^{\nu t/2}\partial_x \nabla\Delta^{-1}f\|_{L^2(I;L^2)}
+\nu^{\f16}\|\mathrm{e}^{\nu t/2}\nabla\Delta^{-1}f\|_{L^2(I;L^\infty)}\\
\leq& \|\mathrm{e}^{\nu t}f\|_{L^{\infty}(I;L^2)}+C\nu^{\f12}\|\mathrm{e}^{\nu t}\nabla f\|_{L^2(I;L^2)}\\ \leq& C\big(\mathrm{e}^{\nu T_3}\|f(T_3)\|_{L^2}+{\nu}^{-\f12}\|\mathrm{e}^{\nu t}(g_{1},g_{2})\|_{L^2({I};L^2)}\big).
\end{align*}
This proves the second inequality of the lemma.
 \end{proof}  
 
 Now we are in a position to prove Proposition \ref{Prop1}.
  
 \begin{proof}[Proof of Proposition \ref{Prop1}] Recall that $T_2=1/\nu,$ $T_3=\frac{2}{3\nu}\ln(1/\nu)$. For the nonzero mode, we have 
 \begin{align*}
  \partial_tP_{\neq}f-\nu \Delta P_{\neq}f+\mathrm{e}^{-\nu t}(V\partial_xP_{\neq}f-V''\partial_x\Delta^{-1}P_{\neq}f)=\partial_y P_{\neq}g_1+\partial_xP_{\neq}g_2.
\end{align*}

{\it Case 1.} $T\leq T_3$. We  infer from Lemma \ref{lem11}  that 
\begin{align*}
  \|P_{\neq}f\|_{\YI}&\leq 
C\big(\|P_{\neq}f(T_2)\|_{L^2}+\nu^{-\f12}\|\mathrm{e}^{\nu t}(P_{\neq}g_1,P_{\neq}g_2)\|_{L^2(I;L^2)}\big)\\ &\leq C\big(\|P_{\neq}f|_{t=1/\nu}\|_{L^2}+\nu^{-\f12}\|\mathrm{e}^{\nu t}(g_1,g_2)\|_{L^2(I;L^2)}\big).
\end{align*}

{\it Case 2.} $T> T_3$. It follows from Lemma \ref{lem11}  that \def\Y{Y}
\begin{align*}
  &\|P_{\neq}f\|_{\Y_{[T_2,T_3]}}\leq 
C\big(\|P_{\neq}f(T_2)\|_{L^2}+\nu^{-\f12}\|\mathrm{e}^{\nu t}(P_{\neq}g_1,P_{\neq}g_2)\|_{L^2([T_2,T_3];L^2)}\big),\\ 
&\|P_{\neq}f\|_{\Y_{[T_3,T]}}\leq 
C\big(\mathrm{e}^{\nu T_3}\|P_{\neq}f(T_3)\|_{L^2}+\nu^{-\f12}\|\mathrm{e}^{\nu t}(P_{\neq}g_1,P_{\neq}g_2)\|_{L^2([T_3,T];L^2)}\big),
\end{align*}
from which and the fact that $\mathrm{e}^{\nu T_3}\|P_{\neq}f(T_3)\|_{L^2}\leq \|\mathrm{e}^{\nu t}P_{\neq}f\|_{L^{\infty}({[T_2,T_3]};L^2)}\leq \|P_{\neq}f\|_{\Y_{[T_2,T_3]}}$, we infer that
\begin{align*}
&\|P_{\neq}f\|_{\Y_{[T_3,T]}}\leq 
C\big(\|P_{\neq}f\|_{\Y_{[T_2,T_3]}}+\nu^{-\f12}\|\mathrm{e}^{\nu t}(P_{\neq}g_1,P_{\neq}g_2)\|_{L^2([T_3,T];L^2)}\big),
\end{align*}
and
\begin{align*}
\|P_{\neq}f\|_{\YI}&\leq\|P_{\neq}f\|_{\Y_{[T_2,T_3]}}+ \|P_{\neq}f\|_{\Y_{[T_3,T]}}\\
&\leq 
C\big(\|P_{\neq}f\|_{\Y_{[T_2,T_3]}}+\nu^{-\f12}\|\mathrm{e}^{\nu t}(P_{\neq}g_1,P_{\neq}g_2)\|_{L^2([T_3,T];L^2)}\big)\\ 
&\leq C\big(\|P_{\neq}f(T_2)\|_{L^2}+\nu^{-\f12}\|\mathrm{e}^{\nu t}(P_{\neq}g_1,P_{\neq}g_2)\|_{L^2([T_2,T];L^2)}\big)\\ &\leq C\big(\|P_{\neq}f|_{t=1/\nu}\|_{L^2}+\nu^{-\f12}\|\mathrm{e}^{\nu t}(g_1,g_2)\|_{L^2(I;L^2)}\big).
\end{align*}
This proves the first inequality of the proposition. 

For the zero mode, we have
\begin{align}
\notag  &\partial_tP_0f-\nu \Delta P_0f=\partial_y P_0g_1.
\end{align}
The energy estimate gives
\begin{align}
&\f12\partial_t\|P_0f\|_{L^2}^2+{\nu}\|\nabla P_0f\|_{L^2}^2\leq|\langle\partial_y P_0g_1,P_0f\rangle|=|\langle P_0g_1,\partial_yP_0f\rangle|\leq \|g_1\|_{L^2}\|\nabla P_0f\|_{L^2},
 \notag 
 \end{align}
 As $\int_{\mathbb{T}_{\mathfrak{p}}\times\mathbb{T}_{2\pi}}f=0 $, we have $\int_{\mathbb{T}_{\mathfrak{p}}\times\mathbb{T}_{2\pi}}P_0f=0 $ and 
 $\|\nabla P_0f\|_{L^2}\geq \| P_0f\|_{L^2} $. Then
 \begin{align*}
&\f12\partial_t(\mathrm{e}^{\nu t}\|P_0f\|_{L^2}^2)+{\nu}\mathrm{e}^{\nu t}\|\nabla P_0f\|_{L^2}^2-\f12{\nu}\mathrm{e}^{\nu t}\| P_0f\|_{L^2}^2\leq\mathrm{e}^{\nu t}\|g_1\|_{L^2}\|\nabla P_0f\|_{L^2},\\
&\f12\partial_t(\mathrm{e}^{\nu t}\|P_0f\|_{L^2}^2)+\f12{\nu}\mathrm{e}^{\nu t}\|\nabla P_0f\|_{L^2}^2\leq\mathrm{e}^{\nu t}\|g_1\|_{L^2}\|\nabla P_0f\|_{L^2}.
  \end{align*}Integrating in $t$ and using Young's inequality, we obtain
\begin{align*}
&\|\mathrm{e}^{\nu t/2}P_0f\|_{L^\infty(I;L^2)}+\nu^{\f12}\|\mathrm{e}^{\nu t/2}\nabla P_0f\|_{L^2(I;L^2)}\leq C\big(\|P_0f(T_2)\|_{L^2}+\nu^{-\f12}\|\mathrm{e}^{\nu t/2}g_1\|_{L^2(I;L^2)}\big).
\end{align*}
Thus, we get by using the first inequality of the proposition and $T_2=1/\nu$ that
\begin{align*}
&\|\mathrm{e}^{\nu t/2}f\|_{L^{\infty}(I;L^2)}\leq\|\mathrm{e}^{\nu t/2}P_0f\|_{L^{\infty}(I;L^2)}+\|\mathrm{e}^{\nu t}P_{\neq}f\|_{L^{\infty}(I;L^2)}\\ 
&\leq C\big(\|P_0f(T_2)\|_{L^2}+\nu^{-\f12}\|\mathrm{e}^{\nu t/2}g_1\|_{L^2(I;L^2)}\big)+\|P_{\neq}f\|_{\YI}\\
&\leq C\big(\|f(T_2)\|_{L^2}+\nu^{-\f12}\|\mathrm{e}^{\nu t}g_1\|_{L^2(I;L^2)}\big)+ C\big(\|P_{\neq}f|_{t=1/\nu}\|_{L^2}+\nu^{-\f12}\|\mathrm{e}^{\nu t}(g_1,g_2)\|_{L^2(I;L^2)}\big)\\ &\leq C\big(\|f|_{t=1/\nu}\|_{L^2}+\nu^{-\f12}\|\mathrm{e}^{\nu t}(g_1,g_2)\|_{L^2(I;L^2)}\big),
\end{align*}
which gives the second inequality of the proposition. 
 \end{proof} 
 
 \section{The approximate solution}\label{appro}

This section is devoted to the proof of  Proposition \ref{prop:app}.

\subsection{Preliminary lemmas}

  We first provide the estimates for $w_{k,1},\ \psi_{k,1}$ defined in \eqref{def:wk1}.
  
   \begin{lemma}\label{lem:wk1}
   If $\|V-b\|_{H^4}\le \epsilon_0$ with $\epsilon_0$ given by Lemma \ref{lem:genV} , then it holds that for $t\leq T_1$, 
  \begin{align*}
     &|\psi_{k,1}(t,y)|\lesssim\langle t\rangle^{-2}|k|^{-5/2}M_k,\\
     &|w_{k,1}(t,y)|\lesssim\min(\langle t\rangle^{-1}|k|^{-3/2}+|V'|^2|k|^{-1/2},|k|^{-5/2})M_k,\\
     &|\partial_y\psi_{k,1}(t,y)|\lesssim(\langle t\rangle^{-3/2}|k|^{-2}+\langle t\rangle^{-1}|V'||k|^{-3/2})M_k,\\ 
     &|\partial_y(\mathrm{e}^{\mathrm{i}kt_*V}\psi_{k,1})(t,y)|\lesssim \langle t\rangle^{-1.2}|k|^{-2.2}M_k,\\ &|\partial_y(\mathrm{e}^{\mathrm{i}kt_*V}w_{k,1})(t,y)|\lesssim \min(\langle t\rangle^{-1/2}|k|^{-1}+|V'||k|^{-1/2},|k|^{-3/2})M_k,\\
     &\|(\partial_y,k)^2(\mathrm{e}^{\mathrm{i}kt_*V}w_{k,1})\|_{L^2}\lesssim|k|^{-1}M_k.
  \end{align*}  
  Here we recall $M_k=\|\omega_{0,k}\|_{H_k^3}=\|(\partial_y,k)^3\omega_{0,k}\|_{L^2}$.  
   \end{lemma}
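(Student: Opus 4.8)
\textbf{Proof plan for Lemma \ref{lem:wk1}.}

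The plan is to reduce every estimate to the corresponding pointwise/Sobolev bound from Theorem \ref{thm:LE} (equivalently Proposition \ref{lem:eibtf}) via the change of coordinates \eqref{eq:changeofcoordinate}. Recall $w_{k,1}(t,y)=\widetilde w_k(at_*,\theta(y))\mathrm{e}^{-\mathrm{i}kdt_*}$ and $\psi_{k,1}(t,y)=\widetilde\psi_k(at_*,\theta(y))\mathrm{e}^{-\mathrm{i}kdt_*}$, where $\widetilde w_k$ solves the Kolmogorov linearized Euler equation \eqref{def:tildewk} with data $\widetilde w_{0,k}=w_{0,k}\circ\theta^{-1}$. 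First I would record the elementary facts coming from Lemma \ref{lem:genV}: $\|\theta-\mathrm{id}\|_{H^3}\le C\|V-b\|_{H^4}\ll 1$, so $\theta$ is a bi-Lipschitz diffeomorphism of $\T_{2\pi}$ with $\theta'\sim 1$, $\theta''$ bounded, and hence $\|\widetilde w_{0,k}\|_{H_k^3}\sim \|w_{0,k}\|_{H_k^3}=M_k$; also $V(y)=ab(\theta(y))+d$ gives $b'(\theta(y))\theta'(y)=V'(y)/a$, so $|b'(\theta(y))|\sim|V'(y)|$, and $b(\theta(y))=(V(y)-d)/a$. Finally $a t_*$ plays the role of the time variable in Theorem \ref{thm:LE}, and since $a\sim 1$ and for $t\le T_1=\nu^{-4/9}$ one has $t_*=(1-\mathrm{e}^{-\nu t})/\nu\sim t$ (indeed $t/2\le t_*\le t$ for $\nu$ small), we have $\langle at_*\rangle\sim\langle t\rangle$. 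The phase factor $\mathrm{e}^{-\mathrm{i}kdt_*}$ is unimodular and $y$-independent, so it drops out of every absolute value and every $y$-derivative.

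Next I would go through the six bounds in order. For the first (the $\psi_{k,1}$ bound) and the second (the $w_{k,1}$ bound, including its two-sided $\min$), simply apply the first and third displayed inequalities of Theorem \ref{thm:LE} at time $at_*$ and point $\theta(y)$, using $\|\widetilde w_{0,k}\|_{H_k^3}\sim M_k$, $|b'(\theta(y))|\sim|V'(y)|$, and $\langle at_*\rangle\sim\langle t\rangle$; no differentiation is involved, so these are immediate. For the third bound, $\partial_y\psi_{k,1}(t,y)=\mathrm{e}^{-\mathrm{i}kdt_*}\theta'(y)(\partial_y\widetilde\psi_k)(at_*,\theta(y))$, so $|\partial_y\psi_{k,1}|\le C|(\partial_y\widetilde\psi_k)(at_*,\theta(y))|$ by $\theta'\sim 1$, and the second inequality of Theorem \ref{thm:LE} finishes it (again $|b'|\sim|V'|$). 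The last three require moving the oscillatory factor through the change of variables: the key identity is
\[
\mathrm{e}^{\mathrm{i}kt_*V(y)}=\mathrm{e}^{\mathrm{i}kt_* d}\,\mathrm{e}^{\mathrm{i}k(at_*)b(\theta(y))},
\]
so that $\mathrm{e}^{\mathrm{i}kt_*V}w_{k,1}=\mathrm{e}^{-\mathrm{i}kd t_*}\mathrm{e}^{\mathrm{i}k t_* d}\big(\mathrm{e}^{\mathrm{i}(at_*)b}\widetilde w_k\big)(at_*,\theta(y))$, i.e. up to a unimodular $y$-independent constant it equals the ``profile'' $\big(\mathrm{e}^{\mathrm{i}(at_*)b}\widetilde w_k\big)$ evaluated at $(at_*,\theta(y))$. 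Differentiating in $y$ brings down a factor $\theta'(y)\sim 1$, so $|\partial_y(\mathrm{e}^{\mathrm{i}kt_*V}w_{k,1})(t,y)|\le C|\partial_y(\mathrm{e}^{\mathrm{i}(at_*)b}\widetilde w_k)(at_*,\theta(y))|$, and the fourth bound of Theorem \ref{thm:LE} gives the stated estimate; the fourth inequality of the present lemma likewise follows from the fifth inequality of Theorem \ref{thm:LE} applied to $\mathrm{e}^{\mathrm{i}(at_*)b}\widetilde\psi_k$. For the last, $H^2$-type bound, I would write $\|(\partial_y,k)^2(\mathrm{e}^{\mathrm{i}kt_*V}w_{k,1})\|_{L^2(\T_{2\pi})}$, change variables $z=\theta(y)$ (with Jacobian $\mathrm{d}y=(\theta^{-1})'(z)\,\mathrm{d}z\sim\mathrm{d}z$), and expand $(\partial_y,k)^2$ of a composition with $\theta$: the top-order term is $(\theta')^2(\partial_z,k)^2$ applied to $\mathrm{e}^{\mathrm{i}(at_*)b}\widetilde w_k$, and the lower-order terms involve $\theta''$ times one $z$-derivative, all controlled in $L^2$ by $\|(\partial_z,k)^2(\mathrm{e}^{\mathrm{i}(at_*)b}\widetilde w_k)\|_{L^2}+\|(\partial_z,k)(\mathrm{e}^{\mathrm{i}(at_*)b}\widetilde w_k)\|_{L^2}$, hence $\le C\|\widetilde w_{0,k}\|_{H_k^1}\le C|k|^{-1}M_k$ by the sixth inequality of Theorem \ref{thm:LE} together with interpolation against $\|\mathrm{e}^{\mathrm{i}(at_*)b}\widetilde w_k\|_{L^2}=\|\widetilde w_k(at_*)\|_{L^2}=\|\widetilde w_{0,k}\|_*\lesssim M_k$.

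The routine-but-necessary bookkeeping is the commutator algebra for composing with $\theta$ in the last (Sobolev-norm) estimate, where one must be careful that only first and second derivatives of $\theta$ appear and that $\theta''\in L^\infty$ (guaranteed by $\|\theta-\mathrm{id}\|_{H^3}\ll 1$ and Sobolev embedding), so no unbounded factors are produced. The main obstacle, however, is conceptual rather than computational: one must verify that the substitution $t\mapsto at_*$ is legitimate for \emph{all} the decay rates simultaneously on the whole interval $t\le T_1$, i.e. that $\langle at_*\rangle\sim\langle t\rangle$ uniformly (which uses $t_*\sim t$ on $[0,T_1]$ from the restriction $T_1=\nu^{-4/9}\le\nu^{-1}$ — note this is where the exponent in $T_1$ matters), and that the equivalences $|b'(\theta(y))|\sim|V'(y)|$ and $\|\cdot\|_{H_k^3}$-equivalence under $\theta$ hold with constants depending only on $\|V-b\|_{H^4}$, so that the final constants in the lemma are uniform in $\nu$ and $k$. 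Once these equivalences are established the lemma is a direct transcription of Theorem \ref{thm:LE}.
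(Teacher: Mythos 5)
Your proposal is correct and follows essentially the same route as the paper: the paper's proof likewise transfers each bound of Theorem \ref{thm:LE} evaluated at time $at_*$ and point $\theta(y)$, using $|a|\sim 1$, $\theta'\sim 1$, $b'(\theta(y))\sim V'(y)$, $t_*\sim t$ on $[0,T_1]$, and the phase identity $kt_*V(y)=k(at_*)b(\theta(y))+kdt_*$, with the same $(|\theta'|^2+|\theta''|+1)$-type bookkeeping for the final $L^2$ estimate.
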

  
   \begin{proof}
   Thanks to $\|V-b\|_{H^4}\le \epsilon_0$, we get by Lemma \ref{lem:genV} that
    \begin{align*}
       & |a-1|+|d|+|\theta(y)-y|+|V(y)-b(y)|+|V'(y)-b'(y)|\leq C\|V-b\|_{H^4}\leq 1/2,
    \end{align*}
    which gives 
   \begin{align}\label{est:athetab}
      &|a|\sim 1,\ \theta'(y)\sim 1, \ (\theta^{-1}(y))'\sim1, \ b'(\theta(y))\sim V'(y).
   \end{align}
   
   By Theorem \ref{thm:LE}, we have (here $\widetilde{M}_k=\|\widetilde{w}_{0,k}\|_{H^3_k}$)
   \begin{align*}
     &|\widetilde{\psi}_{k}(at_*,\theta(y))|\lesssim\langle at_*\rangle^{-2}|k|^{-5/2}\widetilde{M}_k,\\
     &|\partial_y[\widetilde{\psi}_{k}(at_*,\theta(y))]|\lesssim|\theta'(y)|(\langle at_*\rangle^{-3/2}|k|^{-2}+\langle at_*\rangle^{-1}|b'(\theta(y))||k|^{-3/2})\widetilde{M}_k,\\
     &|\widetilde{w}_{k}(at_*,\theta(y))|\lesssim\min(\langle at_*\rangle^{-1}|k|^{-3/2}+|b'(\theta(y))|^2|k|^{-1/2},|k|^{-5/2})\widetilde{M}_k,\\
     &|\partial_y[\mathrm{e}^{\mathrm{i}kat_*b((\theta(y)))}\widetilde{w}_{k}(at_*,\theta(y))]| \\
     &\quad\lesssim|\theta'(y)|\min(\langle at_*\rangle^{-1/2}|k|^{-1}+|b'(\theta(y))||k|^{-1/2},|k|^{-3/2}))\widetilde{M}_k,\\
     &|\partial_y[\mathrm{e}^{\mathrm{i}kat_*b(\theta(y))}\widetilde{\psi}_{k}(at_*,\theta(y))]|\lesssim \langle t\rangle^{-1.2}|k|^{-2.2}\widetilde{M}_k,\\
     &\|(\partial_y,k)^2[\mathrm{e}^{\mathrm{i}kat_*b(\theta(y))}\widetilde{w}_{k}(at_*,\theta(y))]\|_{L^2}\lesssim (|\theta'(y)|^2+|\theta''(y)|+1)|k|^{-1}\widetilde{M}_k.
  \end{align*}
   Recall that $w_{k,1}(t,y)=\widetilde{w}_{k}(at_*,\theta(y))\mathrm{e}^{-\mathrm{i}kdt_*}$ and $\psi_{k,1}(t,y)=\widetilde{\psi}_{k}(at_*,\theta(y))\mathrm{e}^{-\mathrm{i}kdt_*}$. 
   Then the lemma follows from \eqref{est:athetab} and $V(y)=ab(\theta(y))+d.$ and the fact $t_*\sim t$ for $ t\leq T_1$.
      \end{proof}

       Thanks to the definition of $w_{k,1},\ \psi_{k,1}$, we have
     \begin{align*}
     &\partial_tw_{k,1}+\mathrm{i}k\mathrm{e}^{-\nu t} V(w_{k,1}+\psi_{k,1})={\mathrm{i}k\mathrm{e}^{-\nu t}d\psi_{k,1}},\quad \widetilde{\Delta}_k\psi_{k,1}=w_{k,1},\quad 
     w_{k,1}|_{t=0}={w}_{0,k},   
     \end{align*}
      where
       \begin{align*}
    \widetilde{\Delta}_kf=h_1\partial_y(h_1\partial_yf)-k^2f=h_2\partial_y^2f+h_2'\partial_yf/2-k^2f,    
      \end{align*}
    where $ h_1={1/\theta'}, h_2=h_1^2$ and $h_3=h_2-1$ with   
       \begin{align*}
    \|h_3\|_{H^2}=\|h_2-1\|_{H^2}\lesssim\|h_1-1\|_{H^2}\lesssim\|\theta(y)-y\|_{H^3}\lesssim\|V-b\|_{H^4}.
      \end{align*}      
     Let  $\psi_{k,*}=\Delta_k^{-1}w_{k,1}$. We have      
      \begin{equation}\label{eq:wk1-11}
        \left\{\begin{aligned}
         &\partial_tw_{k,1}+\mathrm{i}k\mathrm{e}^{-\nu t} V(w_{k,1}+\psi_{k,*})=\mathrm{i}k\mathrm{e}^{-\nu t}(V(\psi_{k,*}-\psi_{k,1})+d\psi_{k,1}),\\
         & w_{k,1}|_{t=0}={w}_{0,k},\ k\neq 0.
      \end{aligned}
        \right.
      \end{equation}      
      
      The following lemma shows the estimates for $\psi_{k,*}$.
      
        \begin{lemma}\label{lem:psik*}
      Let $\psi_{k,*}=\Delta_k^{-1}w_{k,1}$. If $\|V-b\|_{H^4}\ll \nu^{1/3}$, $|k|\leq \nu^{-\f13}$, then it holds that for $t\le T_1$,
     \begin{align*}
     &|\psi_{k,*}(t,y)-\psi_{k,1}(t,y)|\lesssim \nu^{1/3}\langle t\rangle^{-2}|k|^{-5/2}M_k,\\ &|\psi_{k,*}(t,y)|\lesssim\langle t\rangle^{-2}|k|^{-5/2}M_k,\\
     &|\partial_y\psi_{k,*}(t,y)|\lesssim(\langle t\rangle^{-3/2}|k|^{-2}+\langle t\rangle^{-1}|V'||k|^{-3/2})M_k,\\
     &|\partial_y(\mathrm{e}^{\mathrm{i}kt_*V}\psi_{k,*})(t,y)|\lesssim \langle t\rangle^{-1.2}|k|^{-2.2}M_k.
  \end{align*} 
         \end{lemma}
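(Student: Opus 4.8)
The plan is to write $\psi_{k,*}=\psi_{k,1}+\delta\psi$ and estimate the difference $\delta\psi=\psi_{k,*}-\psi_{k,1}$, exploiting that the modified Laplacian $\widetilde\Delta_k f=h_2\partial_y^2f+\tfrac{h_2'}{2}\partial_yf-k^2f$ differs from $\Delta_k$ only by the operator $\widetilde\Delta_k-\Delta_k=h_3\partial_y^2+\tfrac{h_3'}{2}\partial_y$, whose coefficient $h_3=h_2-1$ satisfies $\|h_3\|_{H^2}\lesssim\|V-b\|_{H^4}\ll\nu^{1/3}$. Since $\Delta_k\psi_{k,*}=\widetilde\Delta_k\psi_{k,1}=w_{k,1}$, we get
\[
\Delta_k\delta\psi=(\widetilde\Delta_k-\Delta_k)\psi_{k,1}=h_3\,\partial_y^2\psi_{k,1}+\tfrac{h_3'}{2}\,\partial_y\psi_{k,1},
\]
which, after one integration by parts inside $\Delta_k^{-1}$, becomes
\[
\delta\psi=\partial_y\Delta_k^{-1}\bigl(h_3\,\partial_y\psi_{k,1}\bigr)-\Delta_k^{-1}\bigl(\tfrac{h_3'}{2}\,\partial_y\psi_{k,1}\bigr);
\]
alternatively, using $\widetilde\Delta_k\psi_{k,1}=w_{k,1}$ to write $\partial_y^2\psi_{k,1}=h_2^{-1}\bigl(w_{k,1}+k^2\psi_{k,1}-\tfrac{h_2'}{2}\partial_y\psi_{k,1}\bigr)$, one reduces the right-hand side to $w_{k,1}$, $\psi_{k,1}$ and $\partial_y\psi_{k,1}$, all of which are controlled by Lemma~\ref{lem:wk1} (recall also $t_*\sim t$ for $t\le T_1$).

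The bulk of the work is then an elliptic estimate for $\Delta_k^{-1}$. I would use that $\Delta_k^{-1}$ has an exponentially localized (periodized) kernel, which both gains a factor $|k|^{-1}$ per derivative (so $\|\partial_y\Delta_k^{-1}g\|_{H_k^1}\lesssim\|g\|_{L^2}$, $\partial_y^2\Delta_k^{-1}g=g+k^2\Delta_k^{-1}g$) and preserves the spatial concentration of the inputs near the critical points of $V$; combined with $\|h\|_{L^\infty}\lesssim|k|^{-1/2}\|h\|_{H_k^1}$ this converts the $\psi_{k,1}$-bounds of Lemma~\ref{lem:wk1}—in particular their $\min(\cdots)$ and $|V'|$-weighted forms, which carry the vorticity-depletion structure—into pointwise bounds for $\delta\psi$ and $\partial_y\delta\psi$. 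The decisive point is that every resulting term picks up the extra factor $\nu^{1/3}$ coming from $h_3$; the loss in time-decay rate and in $|k|$-weight such a term suffers relative to the corresponding $\psi_{k,1}$-bound is then absorbed by the standing constraints $|k|\le\nu^{-1/3}$ and $t\le T_1=\nu^{-4/9}$. This yields the first estimate $|\delta\psi|\lesssim\nu^{1/3}\langle t\rangle^{-2}|k|^{-5/2}M_k$, and the second estimate then follows at once by the triangle inequality with the $\psi_{k,1}$-bound of Lemma~\ref{lem:wk1}.

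For the last two estimates I would write $\partial_y\psi_{k,*}=\partial_y\psi_{k,1}+\partial_y\delta\psi$ and, for the profile,
\[
\partial_y\bigl(\mathrm{e}^{\mathrm{i}kt_*V}\psi_{k,*}\bigr)=\partial_y\bigl(\mathrm{e}^{\mathrm{i}kt_*V}\psi_{k,1}\bigr)+\mathrm{e}^{\mathrm{i}kt_*V}\bigl(\mathrm{i}kt_*V'\,\delta\psi+\partial_y\delta\psi\bigr),
\]
taking the $\psi_{k,1}$-contributions directly from Lemma~\ref{lem:wk1} (which already supply the $\langle t\rangle^{-1.2}|k|^{-2.2}$ decay). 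It is convenient here to work with the profile equation $\bigl((\partial_y-\mathrm{i}kt_*V')^2-k^2\bigr)\bigl(\mathrm{e}^{\mathrm{i}kt_*V}\delta\psi\bigr)=\mathrm{e}^{\mathrm{i}kt_*V}F$, whose right-hand side, expressed through $\mathrm{e}^{\mathrm{i}kt_*V}w_{k,1}$ and $\partial_y(\mathrm{e}^{\mathrm{i}kt_*V}\psi_{k,1})$, inherits good decay from Lemma~\ref{lem:wk1}; energy estimates for this ODE, \emph{together with differentiating the equation} to control $\partial_y^2(\mathrm{e}^{\mathrm{i}kt_*V}\delta\psi)$ directly, then give the needed $L^\infty$ bound without the lossy conversion of $(\partial_y-\mathrm{i}kt_*V')$-derivatives into $\partial_y$-derivatives.

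I expect the $\langle t\rangle^{-1.2}|k|^{-2.2}$ profile-derivative bound to be the main obstacle. A brute-force estimate of $\mathrm{i}kt_*V'\delta\psi$ decays only like $\langle t\rangle^{-1}$ where $|V'|\sim1$, and the genuinely nonlocal pieces of $\partial_y\delta\psi$ do not see the vanishing of $V'$ at the critical points; closing this bound therefore requires propagating the fine (vorticity-depletion) structure of $\psi_{k,1}$ through the perturbation carefully—tracking which pieces carry the true inviscid-damping decay and which are $\nu^{1/3}$-small errors—while using only $H^2$, not $C^2$, control of $h_3$, which is exactly why the integration by parts above (moving one $\partial_y$ off $\psi_{k,1}$, possibly onto $h_3$) is needed. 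Matching the precise exponents $-1.2$ and $-2.2$, rather than some lossy surrogate, is where the bookkeeping is delicate.
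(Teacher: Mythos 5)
Your overall strategy is the paper's: treat $\widetilde\Delta_k-\Delta_k$ perturbatively, use the $L^1\to L^\infty$ and $L^\infty\to L^\infty$ mapping properties of $(\partial_y,k)\Delta_k^{-1}$ and $(\partial_y,k)^2\Delta_k^{-1}$ (the paper's Lemma~\ref{lem:comm-1}), and conclude by the triangle inequality with Lemma~\ref{lem:wk1}. But there is a genuine quantitative gap in how you set up the error term. Your displayed decomposition
$\delta\psi=\partial_y\Delta_k^{-1}(h_3\partial_y\psi_{k,1})-\Delta_k^{-1}(\tfrac{h_3'}{2}\partial_y\psi_{k,1})$
leaves $\partial_y\psi_{k,1}$ as the input (and your ``alternative'' reduction leaves $w_{k,1}$), and these quantities do \emph{not} have the decay needed for the first two estimates: by Lemma~\ref{lem:wk1}, $\|\partial_y\psi_{k,1}\|_{L^\infty}\lesssim\langle t\rangle^{-1}|k|^{-3/2}M_k$ away from the critical points, and $\|w_{k,1}\|_{L^\infty}$ has no time decay at all there. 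So the best you get from this decomposition is $|\delta\psi|\lesssim\nu^{1/3}\langle t\rangle^{-1}|k|^{-3/2}M_k$, and the deficit relative to the target $\nu^{1/3}\langle t\rangle^{-2}|k|^{-5/2}M_k$ is a factor $\langle t\rangle|k|$, which can be as large as $T_1\nu^{-1/3}=\nu^{-7/9}$. Your claim that this loss ``is absorbed by the standing constraints'' is therefore false: the only gain available is the single factor $\nu^{1/3}$ from $h_3$, which cannot absorb $\nu^{-7/9}$. (The first estimate with its full $\langle t\rangle^{-2}$ decay is not cosmetic — it is what makes $\|W_{k,2}\|_{L^2}\lesssim\nu^{1/3}\langle t\rangle^{-2}M_k$ integrable in time in Proposition~\ref{prop:error}.)

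The fix is to push the integration by parts one step further so that \emph{no} derivative of $\psi_{k,1}$ survives in the source: writing $h_3\partial_y\psi_{k,1}=\partial_y(h_3\psi_{k,1})-h_3'\psi_{k,1}$ and using $\partial_y^2\Delta_k^{-1}=1+k^2\Delta_k^{-1}$, your formula collapses to the paper's identity
$\Delta_k^{-1}\widetilde\Delta_k f=h_2f-\tfrac32\partial_y\Delta_k^{-1}(h_3'f)+\Delta_k^{-1}(h_3''f/2+k^2h_3f)$
with $f=\psi_{k,1}$, whose every term is bounded by $C\|h_3\|_{H^2}\|\psi_{k,1}\|_{L^\infty}\lesssim\nu^{1/3}\langle t\rangle^{-2}|k|^{-5/2}M_k$ via Lemma~\ref{lem:comm-1}; this needs only $h_3''\in L^1$, consistent with your $H^2$ control of $h_3$. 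Once this is in place, the third and fourth estimates follow by the same elementary triangle inequality (using also $\|\partial_y\Delta_k^{-1}\widetilde\Delta_k f-h_2\partial_yf\|_{L^\infty}\lesssim\nu^{1/3}|k|\|f\|_{L^\infty}$): for the profile derivative the error is $\nu^{1/3}|k|(t_*|V'|+1)\|\psi_{k,1}\|_{L^\infty}+\nu^{1/3}\|\partial_y\psi_{k,1}\|_{L^\infty}\lesssim\nu^{1/3}\langle t\rangle^{-1}|k|^{-3/2}M_k$, and here the deficit against the weaker target $\langle t\rangle^{-1.2}|k|^{-2.2}$ is only $\langle t\rangle^{0.2}|k|^{0.7}\le\nu^{-0.1}\nu^{-7/30}<\nu^{-1/3}$, so the absorption genuinely works. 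In particular the ODE/energy machinery you propose for the fourth estimate is unnecessary.
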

   
      \begin{proof}

For the operator $\widetilde{\Delta}_k$, we find (as $h_3=h_2-1$)
\begin{align}
      &\widetilde{\Delta}_kf={\Delta}_k(h_2f)-(3/2)\partial_y(h_2'f)+h_2''f/2+k^2(h_2-1)f,\nonumber\\
      &{\Delta}_k^{-1}\widetilde{\Delta}_kf=h_2f-(3/2)\partial_y{\Delta}_k^{-1}(h_3'f)+
      {\Delta}_k^{-1}(h_3''f/2+k^2h_3f).\label{est:De-1Def}
      \end{align}
    Then  we get by Lemma  \ref{lem:comm-1} that
      \begin{align}
        &\|\Delta^{-1}_k\widetilde{\Delta}_kf-f\|_{L^\infty} \nonumber\\
        &\leq \|h_3\|_{L^\infty}\|f\|_{L^\infty} +C\|\partial_y{\Delta}_k^{-1}(h_3'f)\|_{L^\infty}+C \|{\Delta}_k^{-1}(h_3''f/2+k^2h_3f)\|_{L^\infty}\nonumber\\
        &\leq C\|h_3\|_{L^\infty}\|f\|_{L^\infty} +C\|h_3'f\|_{L^1}+C|k|^{-1}\|h_3''f\|_{L^1}+C\|h_3f\|_{L^\infty}\nonumber\\
        &\leq C\|h_3\|_{H^2}\|f\|_{L^\infty}\leq C\nu^{1/3}\|f\|_{L^\infty},\label{est:De-1Def-f}
      \end{align}
   and       
   \begin{align}
         &\|\partial_y{\Delta}_k^{-1}\widetilde{\Delta}_kf-h_2\partial_yf\|_{L^{\infty}}\nonumber\\
          &\leq \|h_3'f\|_{L^
          \infty}+C\|\partial_y^2{\Delta}_k^{-1}(h_3'f)\|_{L^\infty}+C \|\partial_y{\Delta}_k^{-1}(h_3''f/2+k^2h_3f)\|_{L^\infty}\nonumber\\
        &\leq C\|h_3'f\|_{L^\infty}+C|k|^{-1/2}\|h_3''f\|_{L^2}+C|k|\|h_3f\|_{L^\infty}\nonumber\\
        &\leq C|k|\|h_3\|_{H^2}\|f\|_{L^\infty}\leq C\nu^{1/3}|k|\|f\|_{L^\infty}.\label{est:paryDe-1Def-h2f}
      \end{align}
      
        By \eqref{est:De-1Def-f} and \eqref{est:paryDe-1Def-h2f}, we have
      \begin{align*}
      &|\psi_{k,*}(t,y)-\psi_{k,1}(t,y)|\leq \|\Delta_k^{-1}\tilde{\Delta}_k\psi_{k,1}(t,y)-\psi_{k,1}(t,y)\|_{L^\infty}\leq C\nu^{1/3}\|\psi_{k,1}\|_{L^\infty},\\
         & |\psi_{k,*}(t,y)|\leq |\psi_{k,1}(t,y)|+\|\Delta_k^{-1}\tilde{\Delta}_k\psi_{k,1}-\psi_{k,1}\|_{L^\infty}\leq C\|\psi_{k,1}\|_{L^\infty},\\
         & |\partial_y\psi_{k,*}(t,y)|\leq |h_2\partial_y\psi_{k,1}(t,y)|+\|\partial_y\Delta_k^{-1}\tilde{\Delta}_k\psi_{k,1}- h_2\partial_y\psi_{k,1}\|_{L^\infty} \\
         &\quad\qquad\leq C|\partial_y\psi_{k,1}(t,y)|+C\nu^{1/3}|k|\|\psi_{k,1}\|_{L^\infty},\\
         &|\partial_y(\mathrm{e}^{\mathrm{i}kt_*V}\psi_{k,*})(t,y)|\leq |\partial_y(\mathrm{e}^{\mathrm{i}kt_*V}\psi_{k,1})(t,y)|+|k|t_*|V'| \|\Delta_k^{-1}\tilde{\Delta}_k\psi_{k,1}-\psi_{k,1}\|_{L^\infty}\\
         &\quad\qquad+\|\partial_y\Delta_k^{-1}\tilde{\Delta}_k\psi_{k,1}-h_2\partial_y\psi_{k,1} \|_{L^\infty}+\|h_3\partial_y\psi_{k,1} \|_{L^\infty}\\
         &\quad\qquad\leq |\partial_y(\mathrm{e}^{\mathrm{i}kt_*V}\psi_{k,1})(t,y)| +C\nu^{1/3}|k|(t_*|V'|+1)\|\psi_{k,1}\|_{L^\infty} +C\nu^{1/3}\|\partial_y\psi_{k,1}\|_{L^\infty},
      \end{align*}
    which along with Lemma \ref{lem:wk1}  show that  for $t\leq T_1=\nu^{-\f49}$($|k|\leq \nu^{-\f13}$),
        \begin{align*}
     &|\psi_{k,*}(t,y)-\psi_{k,1}(t,y)|\lesssim \nu^{1/3}\langle t\rangle^{-2}|k|^{-5/2}M_k,\\ &|\psi_{k,*}(t,y)|\lesssim\langle t\rangle^{-2}|k|^{-5/2}M_k,\\
     &|\partial_y\psi_{k,*}(t,y)|\lesssim(\langle t\rangle^{-3/2}|k|^{-2}+\langle t\rangle^{-1}|V'||k|^{-3/2}+
     \nu^{1/3}|k|\langle t\rangle^{-2}|k|^{-5/2})M_k,\\
   &|\partial_y(\mathrm{e}^{\mathrm{i}kt_*V}\psi_{k,*})(t,y)|\lesssim (\langle t\rangle^{-1.2}|k|^{-2.2}+
     \nu^{1/3}\langle t\rangle^{-1}|k|^{-3/2})M_k.
      \end{align*} 
  Then the lemma follows by using the facts that 
   \begin{align*}
     &\nu^{1/3}|k|\langle t\rangle^{-2}|k|^{-5/2}\leq \langle t\rangle^{-2}|k|^{-5/2}\leq\langle t\rangle^{-3/2}|k|^{-2},\\
    &\nu^{1/3}\langle t\rangle^{-1}|k|^{-3/2}\leq \nu^{1/3-0.1}\langle t\rangle^{-1.2}|k|^{-3/2}
     \leq |k|^{-3(1/3-0.1)}\langle t\rangle^{-1.2}|k|^{-3/2}=\langle t\rangle^{-1.2}|k|^{-2.2}.
  \end{align*} 
      \end{proof}
      
      Now we derive the estimates for $w_{k,2},\ \psi_{k,2}$ defined in \eqref{def:wk2}.
       \begin{lemma}\label{lem:wk2}
       If $\|V-b\|_{H^4}\ll \nu^{1/3}$, then it holds that for $t\le T_1$,
       \begin{align*}
     &|w_{k,2}(t,y)|\lesssim\langle \nu t^3\rangle^{-1}|k|^{-5/3}M_k,\quad|\partial_y(\mathrm{e}^{\mathrm{i}kt_*V}w_{k,2})(t,y)|\lesssim\langle \nu t^3\rangle^{-1/2}|k|^{-2/3} M_k,\\
      &|\psi_{k,2}(t,y)|\lesssim\langle \nu t^3\rangle^{1/2}\langle t\rangle^{-2}|k|^{-5/2}M_k,\\
     &|\partial_y\psi_{k,2}(t,y)|\lesssim(\langle t\rangle^{-3/2}+\langle t\rangle^{-1}|V'|)|k|^{-3/2}M_k,\\
     &|\partial_y(\mathrm{e}^{\mathrm{i}kt_*V}\psi_{k,2})(t,y)|\lesssim (\langle t\rangle^{-1.2}|k|^{-2.2}+
    |\nu t^3|^{\f12}\langle t\rangle^{-1}|k|^{-3/2})M_k.
      \end{align*}
      \end{lemma}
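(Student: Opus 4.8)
The plan is to obtain Lemma~\ref{lem:wk2} directly from Lemma~\ref{lem:psik*} and Lemma~\ref{lem:wk1} by tracking how the Gaussian damping factor $\mathrm{e}^{-\nu k^2\gamma_1(t)|V'|^2}$ acts, using the crucial pointwise bound
\begin{align}\label{eq:gauss-pointwise}
  |V'|^{2m}\mathrm{e}^{-\nu k^2\gamma_1(t)|V'|^2}\lesssim \big(\nu k^2\gamma_1(t)\big)^{-m}\lesssim \langle \nu t^3\rangle^{-m}|k|^{-2m}\langle t\rangle^{2m}, \qquad m\ge 0,
\end{align}
which follows from $\sup_{x\ge 0} x^m\mathrm{e}^{-cx}\lesssim c^{-m}$ together with the two-sided estimate $t^3/4\le\gamma_1(t)\le t^3/3$ valid for $t\le T_1$ (and the fact that for $t\le 1$ the factor $\langle\nu t^3\rangle$ is comparable to $1$). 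The point is that each derivative falling on the exponential produces a factor $\nu k^2\gamma_1(t)|V'|$, and \eqref{eq:gauss-pointwise} converts powers of $|V'|$ into negative powers of $\langle\nu t^3\rangle$; this is exactly the mechanism that upgrades the bare linear-Euler decay rates to rates carrying the extra $\langle\nu t^3\rangle^{-1}$ factor.

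First I would estimate $w_{k,2}=w_{k,1}\mathrm{e}^{-\nu k^2\gamma_1(t)|V'|^2}$. Writing $w_{k,1}$ via Lemma~\ref{lem:wk1}, we have $|w_{k,1}|\lesssim \min(\langle t\rangle^{-1}|k|^{-3/2}+|V'|^2|k|^{-1/2},|k|^{-5/2})M_k$. For the term $|V'|^2|k|^{-1/2}\mathrm{e}^{-\nu k^2\gamma_1(t)|V'|^2}$, apply \eqref{eq:gauss-pointwise} with $m=1$ to get $\lesssim \langle\nu t^3\rangle^{-1}|k|^{-2}\langle t\rangle^2\cdot|k|^{-1/2}M_k$; balancing this against the other piece $\langle t\rangle^{-1}|k|^{-3/2}M_k$ and the uniform bound $|k|^{-5/2}M_k$, and using $|k|\le\nu^{-1/3}$, one checks the claimed $\langle\nu t^3\rangle^{-1}|k|^{-5/3}M_k$ holds (the loss $|k|^{-5/3}$ versus $|k|^{-5/2}$ being the price of the interpolation against the growing $\langle t\rangle$ factors). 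For $\partial_y(\mathrm{e}^{\mathrm{i}kt_*V}w_{k,2})$, expand $\partial_y(\mathrm{e}^{\mathrm{i}kt_*V}w_{k,1}\mathrm{e}^{-\nu k^2\gamma_1|V'|^2})$ by Leibniz: one term is $\partial_y(\mathrm{e}^{\mathrm{i}kt_*V}w_{k,1})\cdot\mathrm{e}^{-\nu k^2\gamma_1|V'|^2}$, bounded using Lemma~\ref{lem:wk1}'s fifth estimate and \eqref{eq:gauss-pointwise} with $m=1$; the other term is $\mathrm{e}^{\mathrm{i}kt_*V}w_{k,1}\cdot(-2\nu k^2\gamma_1 V'V'')\mathrm{e}^{-\nu k^2\gamma_1|V'|^2}$, where the factor $\nu k^2\gamma_1|V'|$ combines with $|w_{k,1}|\lesssim|k|^{-5/2}M_k$ and \eqref{eq:gauss-pointwise} (now effectively $m=\tfrac12$) to give the stated $\langle\nu t^3\rangle^{-1/2}|k|^{-2/3}M_k$. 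Here $|V''|\lesssim 1$ and $|V'|\lesssim 1$ are used freely.

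Next, $\psi_{k,2}=\Delta_k^{-1}w_{k,2}$. The cleanest route is to compare $\psi_{k,2}$ with $\psi_{k,*}=\Delta_k^{-1}w_{k,1}$ (controlled by Lemma~\ref{lem:psik*}) by estimating $\Delta_k^{-1}\big(w_{k,1}(1-\mathrm{e}^{-\nu k^2\gamma_1|V'|^2})\big)$; since $0\le 1-\mathrm{e}^{-x}\le x$, the correction term $w_{k,1}\nu k^2\gamma_1|V'|^2$ is bounded pointwise by $\nu k^2\gamma_1\cdot|k|^{-5/2}M_k\lesssim \langle\nu t^3\rangle\langle t\rangle^{-3}\cdot|k|^{-5/2}\langle t\rangle^3\langle t\rangle^{-3}M_k$ — more simply, using $|V'|^2\le C$, $w_{k,1}(1-\mathrm{e}^{-\cdots})$ is dominated by $|w_{k,1}|$ times $\min(1,\nu k^2 t^3)$, and the elliptic estimate (via the commutator lemmas, analogous to \eqref{est:De-1Def-f}--\eqref{est:paryDe-1Def-h2f}) transfers $L^\infty$ and $L^1$ bounds on the source to $L^\infty$ bounds on $\Delta_k^{-1}$ and $\partial_y\Delta_k^{-1}$. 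Tracking the $\langle\nu t^3\rangle^{\pm}$ powers then yields $|\psi_{k,2}|\lesssim\langle\nu t^3\rangle^{1/2}\langle t\rangle^{-2}|k|^{-5/2}M_k$ and $|\partial_y\psi_{k,2}|\lesssim(\langle t\rangle^{-3/2}+\langle t\rangle^{-1}|V'|)|k|^{-3/2}M_k$, the latter being essentially insensitive to the damping factor because the Gaussian only improves decay. For $\partial_y(\mathrm{e}^{\mathrm{i}kt_*V}\psi_{k,2})$ one again splits via Leibniz exactly as for $w_{k,2}$: the main term reuses the fourth estimate of Lemma~\ref{lem:psik*} (which already gives $\langle t\rangle^{-1.2}|k|^{-2.2}M_k$), and the term where $\partial_y$ hits $\mathrm{e}^{-\nu k^2\gamma_1|V'|^2}$ produces the factor $\nu k^2\gamma_1|V'|$ which, combined with $|\psi_{k,*}|\lesssim\langle t\rangle^{-2}|k|^{-5/2}M_k$ and $\gamma_1\sim t^3$, yields the extra $|\nu t^3|^{1/2}\langle t\rangle^{-1}|k|^{-3/2}M_k$ contribution after applying \eqref{eq:gauss-pointwise} with $m=\tfrac12$.

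The main obstacle, and the place requiring genuine care rather than bookkeeping, is the $\partial_y(\mathrm{e}^{\mathrm{i}kt_*V}\psi_{k,2})$ estimate: one cannot simply differentiate the $L^\infty$ bound on $\psi_{k,2}$, and the operator $\Delta_k^{-1}$ does not commute with the Gaussian weight, so the split $\psi_{k,2}=\psi_{k,*}-\Delta_k^{-1}\big(w_{k,1}(1-\mathrm{e}^{-\nu k^2\gamma_1|V'|^2})\big)$ must be chosen so that the "profile" derivative $\partial_y(\mathrm{e}^{\mathrm{i}kt_*V}\cdot)$ lands on quantities for which Lemma~\ref{lem:psik*} already supplies a bounded-profile bound, and so that the remainder — which involves $\partial_y\Delta_k^{-1}$ of a source of size $\langle\nu t^3\rangle^{-1}|k|^{-5/2}M_k$ times a smooth weight — can be handled by the same elliptic/commutator estimates used in Lemma~\ref{lem:psik*}'s proof. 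I expect this to go through but to require writing $\mathrm{e}^{\mathrm{i}kt_*V}\partial_y\Delta_k^{-1}(\cdots)$ in terms of $\partial_y\Delta_k^{-1}(\mathrm{e}^{\mathrm{i}kt_*V}(\cdots))$ plus commutator terms of the form $[\mathrm{e}^{\mathrm{i}kt_*V},\partial_y\Delta_k^{-1}]$, which contribute the $|k|t_*|V'|$-type factors that are exactly what \eqref{eq:gauss-pointwise} is designed to absorb.
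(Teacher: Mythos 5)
Your strategy for the first two estimates (letting the Gaussian factor absorb powers of $|V'|$) is the same mechanism the paper uses, but the second inequality in your displayed pointwise bound is false for small $t$ (for $\nu t^3\le 1$ the middle quantity $(\nu k^2\gamma_1)^{-m}$ blows up while your right-hand side stays bounded), and even where it holds the extra factor $\langle t\rangle^{2m}$ is fatal: for the $|V'|^2|k|^{-1/2}$ part of $w_{k,1}$ it gives $\langle\nu t^3\rangle^{-1}|k|^{-5/2}\langle t\rangle^{2}M_k$, and no balancing against $\langle t\rangle^{-1}|k|^{-3/2}M_k$ and the uniform $|k|^{-5/2}M_k$ recovers $\langle\nu t^3\rangle^{-1}|k|^{-5/3}M_k$ when, say, $|k|\sim 1$ and $t\sim T_1$. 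What is actually needed is to keep $(\nu k^2\gamma_1)^{-1}\sim(\nu k^2t^3)^{-1}$ with no loss, and to convert the remaining piece via the interpolation $\min(\langle t\rangle^{-1}|k|^{-3/2},|k|^{-5/2})\le\langle t\rangle^{-3/4}|k|^{-7/4}$ combined with $\nu t^{9/4}\le 1$ for $t\le T_1$ (so $\langle t\rangle^{-3/4}\le C(\nu t^3)^{-1}$); invoking only $|k|\le\nu^{-1/3}$ does not close this. This part is a repairable slip.

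The genuine gap is in the third through fifth estimates. The paper does not estimate a source like $w_{k,1}(1-g)$ pointwise; it applies the commutator estimates of Lemma \ref{lem:comm-1} to $g\,w_{k,1}$ with $g=\mathrm{e}^{-\nu k^2\gamma_1|V'|^2}$, comparing $\Delta_k^{-1}(gw_{k,1})$ with $g\,\Delta_k^{-1}w_{k,1}=g\,\psi_{k,*}$ and bounding the error by $\|\psi_{k,*}\|_{L^\infty}\|g''\|_{L^1}$, and the quantitative heart is $\|g'\|_{L^\infty}+\|g''\|_{L^1}\le C|\nu k^2t^3|^{1/2}$ — a square root, coming from integrating in $y$ over the Gaussian's width $|V'|\lesssim(\nu k^2t^3)^{-1/2}$, which no pointwise inequality produces. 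Your route (bound $1-\mathrm{e}^{-x}\le x$, or $\min(1,\nu k^2t^3|V'|^2)$, on the source and then use elliptic/$L^\infty$–$L^1$ transfer) either discards the oscillatory cancellation responsible for the $\langle t\rangle^{-2}$ decay of $\psi_{k,*}$ (taking absolute values of $w_{k,1}$ only yields a correction of size $|k|^{-5/2}M_k$ with no time decay, far above the target $\langle\nu t^3\rangle^{1/2}\langle t\rangle^{-2}|k|^{-5/2}M_k$), or, if the smallness is put on $1-g$ in $L^\infty$, produces a full factor $\nu k^2t^3$ instead of its square root. Likewise the fourth estimate is not ``insensitive to the damping factor'': $\partial_y$ does not commute with $\Delta_k^{-1}(g\,\cdot)$ and $\|g'\|_{L^\infty}\sim(\nu k^2t^3)^{1/2}$ is large, so the same commutator bound is needed there; and in the fifth estimate the factor $|kt_*V'|$ multiplies the commutator $g\Delta_k^{-1}w_{k,1}-\Delta_k^{-1}(gw_{k,1})$, which does not carry the Gaussian weight, so your pointwise absorption argument cannot be applied to it — the paper instead bounds that term by $|kt|\cdot|k|^{-1}\|\psi_{k,*}\|_{L^\infty}\|g''\|_{L^1}$. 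Without identifying $\|g''\|_{L^1}\lesssim|\nu k^2t^3|^{1/2}$ and routing the estimates through Lemma \ref{lem:comm-1} so that the smallness comes from the decay of $\psi_{k,*}$ (from Lemma \ref{lem:psik*}) times an $L^1$ norm of $g''$, the stated $\langle\nu t^3\rangle^{1/2}$ and $|\nu t^3|^{1/2}$ factors cannot be obtained from your argument.
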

   
   \begin{proof}
     Recalling that $w_{k,2}=w_{k,1}\mathrm{e}^{-\nu k^2\gamma_1(t)|V'|^2}$ and $\gamma_1(t)\sim t^3$, we get by Lemma \ref{lem:wk1}  that
     \begin{align*}
        |w_{k,2}(t,y)|\leq&\mathrm{e}^{-\nu k^2\gamma_1(t)|V'|^2/2}|w_{k,1}(t,y)|\\ \leq& C\mathrm{e}^{-\nu k^2\gamma_1(t)|V'|^2/2}\min(\langle t\rangle^{-1}|k|^{-\f32}+|V'|^2|k|^{-\f12},|k|^{-\f52})M_k,\\
        \leq& C\min(\langle t\rangle^{-1}|k|^{-\f32}+(\nu k^2t^3)^{-1}|k|^{-\f12},|k|^{-\f52})M_k\\
        \leq &C\langle \nu t^3\rangle^{-1}|k|^{-\f74}M_k\leq C\langle \nu t^3\rangle^{-1}|k|^{-\f53}M_k.
     \end{align*}
     Here we have used the fact that
     \beno
     \min(\langle t\rangle^{-1}|k|^{-\f32},|k|^{-\f52})\leq\langle t\rangle^{-\f34}|k|^{-\f74}\leq C(\nu t^3)^{-1}|k|^{-\f74},\quad \text{for}\, t\leq T_1=\nu^{-\f49}.
     \eeno

  By Lemma \ref{lem:wk1}, we have
     \begin{align*}
       &|\partial_y(\mathrm{e}^{\mathrm{i}kt_*V}w_{k,2})(t,y)| \\
       &\leq C(\nu k^2\gamma_1(t)|V'V''|)|w_{k,2}(t,y)|+C\mathrm{e}^{-\nu k^2\gamma_1(t)|V'|^2}|\partial_y(\mathrm{e}^{\mathrm{i}kt_*V}w_{k,1})(t,y)|\\
       &\leq C\langle\nu k^2t^3\rangle^{\f12}\mathrm{e}^{-\nu k^2\gamma_1(t)|V'|^2/2}|w_{k,1}(t,y)|\\
       &\quad+C\mathrm{e}^{-\nu k^2\gamma_1(t)|V'|^2/2}\min(\langle t\rangle^{-1/2}|k|^{-1}+|V'||k|^{-1/2},|k|^{-3/2})M_k\\
       &\leq C\langle\nu k^2t^3\rangle^{\f12}\langle\nu t^3\rangle^{-1}|k|^{-5/3}M_k+C\min(\langle t\rangle^{-1/2}|k|^{-1}+(\nu k^2t^3)^{-\f12}|k|^{-\f12},|k|^{-\f32})M_k\\
       &\leq C\langle \nu t^3\rangle^{-\f12}|k|^{-\f23}M_k.
     \end{align*}
     
 Let
 \begin{align*}
&g=\mathrm{e}^{-\nu k^2\gamma_1(t)|V'|^2}\quad g_1=\mathrm{e}^{-\mathrm{i}kt_*V}g=\mathrm{e}^{g_2},\quad 
g_2=-\nu k^2\gamma_1(t)|V'|^2-\mathrm{i}kt_*V.
\end{align*}
It is easy to verify that
\begin{align}\label{est:g}
   & \|g'\|_{L^\infty}+\|g''\|_{L^1}\leq C|\nu k^2t^3|^{\f12}, \quad\|g\|_{L^\infty}\leq 1.
\end{align}
Then  by Lemma \ref{lem:psik*}, Lemma \ref{lem:comm-1} and \eqref{est:g}, we obtain
      \begin{align*}
         |\psi_{k,2}(t,y)|\leq& C\|\Delta_k^{-1}(gw_{k,1})\|_{L^\infty}\leq C\|\psi_{k,*}\|_{L^\infty}(\|g\|_{L^\infty}+|k|^{-1}\|g''\|_{L^1})\\
         \leq &C\langle t\rangle^{-2}|k|^{-\f52}M_k(1+|k|^{-1}|\nu k^2t^3|^{\f12})\\
         \leq &C\langle \nu t^3\rangle^{\f12}\langle t\rangle^{-2}|k|^{-\f52}M_k,
      \end{align*}
   and     
    \begin{align*}
        |\partial_y\psi_{k,2}(t,y)|&\leq |\partial_y(g\psi_{k,*})(t,y)| + \|\partial_y(\Delta_k^{-1}(gw_{k,1})-g\Delta_k^{-1}w_{k,1})\|_{L^\infty}\\
        &\leq |\partial_y(\psi_{k,*})(t,y)|\|g\|_{L^\infty} + \|\psi_{k,*}\|_{L^\infty}\|g'\|_{L^\infty}+ C\|\psi_{k,*}\|_{L^\infty}\|g''\|_{L^1}\\
        &\leq C (\langle t\rangle^{-3/2}|k|^{-2}+\langle t\rangle^{-1}|V'||k|^{-3/2})M_k+C|\nu k^2t^3|^{\f12}\langle t\rangle^{-2}|k|^{-\f52}M_k\\
        &\leq C(\langle t\rangle^{-3/2}+\langle t\rangle^{-1}|V'|)|k|^{-3/2}M_k.
     \end{align*}
     Here we have used  $\nu t^2\leq \nu^{\f19}\leq 1$ for $0\leq t\leq T_1=\nu^{-\f49}$.

     By Lemma \ref{lem:psik*}, Lemma \ref{lem:comm-1} and  \eqref{est:g} again,  we have
\begin{align*}
  &|\partial_y(\mathrm{e}^{\mathrm{i}kt_*V}\psi_{k,2})(t,y)|\\
&\leq |\partial_y(\mathrm{e}^{\mathrm{i}kt_*V}g\psi_{k,*})(t,y)|+
|\partial_y(\mathrm{e}^{\mathrm{i}kt_*V}g\Delta_k^{-1}w_{k,1}
-\mathrm{e}^{\mathrm{i}kt_*V}\Delta_k^{-1}(gw_{k,1}))(t,y)|\\
&\leq \|g'\|_{L^\infty}\|\psi_{k,*}\|_{L^\infty}+\|g\|_{L^\infty}
|\partial_y(\mathrm{e}^{\mathrm{i}kt_*V}\psi_{k,*})(t,y)|\\
&\quad+\|\partial_y(g\Delta_k^{-1}w_{k,1}
-\Delta_k^{-1}(gw_{k,1}))\|_{L^\infty}+|kt_*V'|\|g\Delta_k^{-1}w_{k,1}
-\Delta_k^{-1}(gw_{k,1})\|_{L^\infty}\\
&\leq \|g'\|_{L^\infty}\|\psi_{k,*}\|_{L^\infty}+\|g\|_{L^\infty}
|\partial_y(\mathrm{e}^{\mathrm{i}kt_*V}\psi_{k,*})(t,y)|+C\|\psi_{k,*}\|_{L^\infty}\|g''\|_{L^1}\\
&\quad+C|kt||k|^{-1}\|\psi_{k,*}\|_{L^\infty}\|g''\|_{L^1}\\
&\leq C(1+t)|\nu k^2t^3|^{\f12}\langle t\rangle^{-2}|k|^{-\f52}M_k+C \langle t\rangle^{-1.2}|k|^{-2.2}M_k\\
&\leq C (\langle t\rangle^{-1.2}|k|^{-2.2}+
   |\nu t^3|^{\f12}\langle t\rangle^{-1}|k|^{-3/2})M_k.
\end{align*}

This finishes the proof of the lemma.
  \end{proof}
   
    Next  we  show the estimates for $w_{k,3}$ defined in \eqref{def:wk3} .
       \begin{lemma}\label{lem:wk3}
       If $\|V-b\|_{H^4}\ll \nu^{1/3}$, then it holds that for $t\le T_1$,
       \begin{align*}
     &|w_{k,3}(t,y)|\lesssim  \min(\langle t\rangle^{-1}|k|^{-\f12},|k|^{-\f52})M_k,\\
     &|tV'w_{k,3}(t,y)|\lesssim  C\min(\langle t\rangle^{-\f12}|k|^{-\f12},\langle t\rangle^{\f12}|k|^{-\f52})M_k.
      \end{align*}
   \end{lemma}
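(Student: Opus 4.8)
The plan is to bound $w_{k,3}=\eta(\sqrt{t}V')w_{k,2}$ directly from the estimates for $w_{k,2}$ established in Lemma \ref{lem:wk2}, exploiting the cutoff $\eta(\sqrt{t}V')$ to localize to the region $|V'|\lesssim t^{-1/2}$ near the critical points. The key observation is that on the support of $\eta(\sqrt{t}V')$, the exponential damping factor $\mathrm{e}^{-\nu k^2\gamma_1(t)|V'|^2}$ appearing in $w_{k,2}=w_{k,1}\mathrm{e}^{-\nu k^2\gamma_1(t)|V'|^2}$ is $\sim 1$ (since $\nu k^2 t^3 |V'|^2 \lesssim \nu k^2 t^3 \cdot t^{-1} = \nu k^2 t^2 \lesssim 1$ for $t\le T_1=\nu^{-4/9}$ and $|k|\le\nu^{-1/3}$), so that $|w_{k,3}|\lesssim |\eta(\sqrt t V')||w_{k,1}|$ and we may as well use the sharp bounds on $w_{k,1}$ from Lemma \ref{lem:wk1} rather than the $\langle\nu t^3\rangle$-weighted versions.

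First I would write $|w_{k,3}(t,y)| \le |w_{k,1}(t,y)|$ on the support of the cutoff (using $|\eta|\le 1$ and that the exponential is $\le 1$) and invoke the second estimate of Lemma \ref{lem:wk1}, namely $|w_{k,1}(t,y)|\lesssim\min(\langle t\rangle^{-1}|k|^{-3/2}+|V'|^2|k|^{-1/2},|k|^{-5/2})M_k$. On the support of $\eta(\sqrt t V')$ we have $|V'|^2\lesssim t^{-1}$, so $|V'|^2|k|^{-1/2}\lesssim \langle t\rangle^{-1}|k|^{-1/2}$, which absorbs into (and in fact dominates) the term $\langle t\rangle^{-1}|k|^{-3/2}$; combined with the unconditional bound $|k|^{-5/2}$ this gives $|w_{k,3}(t,y)|\lesssim\min(\langle t\rangle^{-1}|k|^{-1/2},|k|^{-5/2})M_k$, which is the first claimed inequality.

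For the second inequality, involving $tV'$, the point is that $|tV'| = \sqrt t\cdot \sqrt t|V'| \lesssim \sqrt t$ on the support of $\eta(\sqrt t V')$, since $\sqrt t|V'|\lesssim 1$ there. Hence $|tV' w_{k,3}(t,y)| \lesssim \sqrt t\,|w_{k,3}(t,y)| \lesssim \sqrt t \min(\langle t\rangle^{-1}|k|^{-1/2},|k|^{-5/2})M_k = \min(\langle t\rangle^{-1/2}|k|^{-1/2},\langle t\rangle^{1/2}|k|^{-5/2})M_k$ (using $\sqrt t \le \langle t\rangle^{1/2}$ and $\sqrt t\langle t\rangle^{-1}\lesssim\langle t\rangle^{-1/2}$), which is exactly the second claim. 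One should be slightly careful that $\eta(s)$ is supported in $|s|\le 2$, so $|\sqrt t V'|\le 2$ on the support, giving $|\sqrt t|V'|\le 2$ and $|tV'|\le 2\sqrt t$; this only affects the implied constants.

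I do not expect a genuine obstacle here: this lemma is a routine consequence of Lemma \ref{lem:wk1} (or Lemma \ref{lem:wk2}) together with the elementary support properties of the cutoff. The only mild point requiring attention is verifying that the exponential factor is harmless on the cutoff region — i.e., checking $\nu k^2\gamma_1(t)|V'|^2\lesssim 1$ there using $\gamma_1(t)\le t^3/3$, $|V'|^2\lesssim t^{-1}$, $t\le T_1=\nu^{-4/9}$, and $|k|\le\nu^{-1/3}$, which yields $\nu k^2 t^2\le \nu\cdot\nu^{-2/3}\cdot\nu^{-8/9}=\nu^{-5/9}$ — wait, this is large, so in fact one should \emph{not} drop the exponential but simply keep the bound $\le 1$ on it, which is all that is needed since we only want upper bounds; the weighted estimates of Lemma \ref{lem:wk2} would give a weaker result, so the cleaner route is to pass through $|w_{k,1}|$ directly as described above, using only $|\eta|\le1$ and $0<\mathrm{e}^{-\nu k^2\gamma_1(t)|V'|^2}\le 1$.
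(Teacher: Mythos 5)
Your proposal is correct and follows essentially the same route as the paper: bound $|\eta|\le 1$ and $\mathrm{e}^{-\nu k^2\gamma_1(t)|V'|^2}\le 1$, use the bound for $w_{k,1}$ from Lemma \ref{lem:wk1}, and exploit that $|V'|\lesssim t^{-1/2}$ (hence $t|V'|\lesssim \sqrt t$) on the support of the cutoff, which is exactly what the paper encodes via $|\eta(x)|\le C\min(1,|x|^{-2})$ and $|x\eta(x)|\le C\min(1,|x|^{-2})$. Your mid-course correction to keep the exponential only as an upper bound (rather than trying to show it is $\sim 1$) is precisely what is needed, so there is no gap.
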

   
   \begin{proof}
     Thanks to $\eta(\sqrt{t}V')w_{k,2}=\eta(\sqrt{t}V')w_{k,1}\mathrm{e}^{-\nu k^2\gamma_1(t)|V'|^2}$, we get by Lemma \ref{lem:wk1}  that
     \begin{align*}
        |w_{k,3}(t,y)|\leq& C\mathrm{e}^{-\nu k^2\gamma_1(t)|V'|^2}\eta(\sqrt{t}V')\min(\langle t\rangle^{-1}|k|^{-\f32}+|V'|^2|k|^{-\f12},|k|^{-\f52})M_k\\
        \leq& C\min(\langle t\rangle^{-1}|k|^{-\f32}+\langle t\rangle^{-1}|k|^{-\f12},|k|^{-\f52})M_k\\
        \leq &C\min(\langle t\rangle^{-1}|k|^{-\f12},|k|^{-\f52})M_k,
     \end{align*}
  {here we used $ |\eta(x)|\leq C\min(1,|x|^{-2})$} and
     \begin{align*}
        |tV'w_{k,3}(t,y)|\leq& C\mathrm{e}^{-\nu k^2\gamma_1(t)|V'|^2}t|V'|\eta(\sqrt{t}V')\min(\langle t\rangle^{-1}|k|^{-\f32}+|V'|^2|k|^{-\f12},|k|^{-\f52})M_k,\\
        \leq& C\min(\langle t\rangle^{-\f12}|k|^{-\f32}+\langle t\rangle^{-\f12}|k|^{-\f12},\langle t\rangle^{\f12}|k|^{-\f52})M_k\\
        \leq &C\min(\langle t\rangle^{-\f12}|k|^{-\f12},\langle t\rangle^{\f12}|k|^{-\f52})M_k,
     \end{align*}
     {here we used $ |x\eta(x)|\leq C\min(1,|x|^{-2})$}.
    \end{proof}
   
   For $w_{k*}$ defined in \eqref{def:wk*}, we have the following estimates.
   
       \begin{lemma}\label{lem:wk*}
       If $\|V-b\|_{H^4}\ll \nu^{1/3}$, then it holds that   for $ t\leq T_1$,      
        \begin{align*}
     & |w_{k*}(t,y)|\lesssim\min(|V'|^2|k|^{-\f12},|k|^{-\f52})\mathrm{e}^{-\nu k^2\gamma_1(t)|V'|^2}M_k,\\
&|\partial_y(V'\mathrm{e}^{\mathrm{i}kt_*V}w_{k*})(t,y)|\lesssim \min(|V'|^2|k|^{-\f12},|k|^{-\f52})\mathrm{e}^{-\nu k^2\gamma_1(t)|V'|^2/2}M_k,\\
&\|\partial_y(V'\mathrm{e}^{\mathrm{i}kt_*V}w_{k*})\|_{L^2_y}\lesssim |k|^{-\f52}\langle \nu t^3\rangle^{-\f54}
M_k.
     \end{align*}
     \end{lemma}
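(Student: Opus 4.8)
The plan is to expand $w_{k*}=(1-\eta(\sqrt{t}V'))\,w_{k,1}\,g$ with $g:=\mathrm{e}^{-\nu k^2\gamma_1(t)|V'|^2}$, and estimate every resulting term by the bounds on $w_{k,1}$ and on its profile $\mathrm{e}^{\mathrm{i}kt_*V}w_{k,1}$ from Lemma~\ref{lem:wk1}, together with $\gamma_1(t)\sim t^3$ for $t\le T_1$ and the support property of the cutoff: on $\mathrm{supp}\,(1-\eta(\sqrt{t}V'))$ one has $\sqrt{t}\,|V'|\ge 1$, and since $|V'|\lesssim 1$ this forces $t\gtrsim 1$, whence $\langle t\rangle^{-1}\lesssim |V'|^2$ and $\langle t\rangle^{-1/2}\lesssim |V'|$ there. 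For the first inequality I would simply use $|w_{k*}|\le |w_{k,1}|\,g$, and observe that the Lemma~\ref{lem:wk1} bound $|w_{k,1}|\lesssim\min(\langle t\rangle^{-1}|k|^{-3/2}+|V'|^2|k|^{-1/2},|k|^{-5/2})M_k$ collapses, on the support, to $\lesssim\min(|V'|^2|k|^{-1/2},|k|^{-5/2})M_k$, because $\langle t\rangle^{-1}|k|^{-3/2}\lesssim|V'|^2|k|^{-3/2}\le|V'|^2|k|^{-1/2}$.

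For the second (pointwise derivative) inequality I would write $V'\mathrm{e}^{\mathrm{i}kt_*V}w_{k*}=V'(1-\eta(\sqrt{t}V'))\,g\,\widetilde w_{k,1}$ with $\widetilde w_{k,1}:=\mathrm{e}^{\mathrm{i}kt_*V}w_{k,1}$, and apply the Leibniz rule. The four resulting terms are: (i) $\partial_y$ on $V'$, producing a bounded factor $V''$; (ii) $\partial_y$ on the cutoff, producing $-V'\sqrt{t}\,V''\eta'(\sqrt{t}V')$, which is bounded since it is supported on $\sqrt{t}|V'|\sim 1$; (iii) $\partial_y$ on $g$, producing $-2\nu k^2\gamma_1(t)V'V''\,g$, where the elementary inequality $x\mathrm{e}^{-x}\lesssim \mathrm{e}^{-x/2}$ trades the full Gaussian for the half-Gaussian $g^{1/2}$ appearing in the statement; and (iv) $\partial_y$ on the profile, producing $V'(1-\eta(\sqrt{t}V'))\,g\,\partial_y\widetilde w_{k,1}$. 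For (i)--(iii) one bounds $|\widetilde w_{k,1}|=|w_{k,1}|$ exactly as in the first inequality, and for (iv) one uses the profile-derivative bound $|\partial_y\widetilde w_{k,1}|\lesssim\min(\langle t\rangle^{-1/2}|k|^{-1}+|V'||k|^{-1/2},|k|^{-3/2})M_k$ of Lemma~\ref{lem:wk1}, which on the support simplifies to $\lesssim\min(|V'||k|^{-1/2},|k|^{-3/2})M_k$ using $\langle t\rangle^{-1/2}\lesssim|V'|$.

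Finally, for the $L^2_y$ bound I would integrate the square of the pointwise bound just obtained. Near the critical points of $V$ one has $|V'|\sim\mathrm{dist}$, so the relevant weight is $\min(\mathrm{dist}^2|k|^{-1/2},|k|^{-5/2})^2\,\mathrm{e}^{-c\nu k^2 t^3\,\mathrm{dist}^2}$; splitting at $\mathrm{dist}\sim|k|^{-1}$ and using $\int_0^\infty \mathrm{e}^{-2c\nu k^2 t^3 s^2}\,ds\lesssim (1+\nu k^2 t^3)^{-1/2}$ together with $|V'|^2\mathrm{e}^{-c\nu k^2 t^3|V'|^2}\lesssim(1+\nu k^2 t^3)^{-1}$ yields the factor $|k|^{-5/2}\langle\nu t^3\rangle^{-5/4}M_k$; away from the critical points $|V'|\sim 1$ and the Gaussian already supplies the required decay. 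The main obstacle is term (iv): its naive estimate $|V'|\cdot|k|^{-3/2}$ falls short of the claimed $|k|^{-5/2}$ by a factor of order $|V'||k|$, so one must exploit simultaneously the cutoff restriction $|V'|\gtrsim t^{-1/2}$ (which gives $\gamma_1(t)|V'|^2\gtrsim t^2$, hence a genuine localization in the Gaussian $g$) and the profile-derivative decay in order to recover the missing power; this quantitative balancing is the delicate point of the proof.
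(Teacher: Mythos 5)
Your treatment of the first inequality and of terms (i)--(iii) in the Leibniz expansion is correct and coincides with the paper's proof (the paper bounds $1-\eta(\sqrt t V')\le C\min(t|V'|^2,1)$ instead of invoking the support condition, and absorbs the factor $\nu k^2\gamma_1(t)|V'|^2+1$ into $\mathrm{e}^{\nu k^2\gamma_1(t)|V'|^2/2}$, which is your $x\mathrm{e}^{-x}\lesssim\mathrm{e}^{-x/2}$). The genuine gap is exactly where you stop: term (iv). From Lemma \ref{lem:wk1} together with the cutoff restriction $|V'|\gtrsim t^{-1/2}$ one only obtains
\begin{align*}
\big|V'(1-\eta(\sqrt tV'))\,g\,\partial_y(\mathrm{e}^{\mathrm{i}kt_*V}w_{k,1})\big|
\lesssim \min\big(|V'|^2|k|^{-1/2},\,|V'||k|^{-3/2}\big)\,g\,M_k ,
\end{align*}
and converting the branch $|V'||k|^{-3/2}$ into the claimed $|k|^{-5/2}$ requires a gain of $(|V'||k|)^{-1}$. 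Your proposed remedy (Gaussian localization from $\gamma_1(t)|V'|^2\gtrsim t^2$ combined with profile decay) cannot supply this: when $\nu k^2\gamma_1(t)|V'|^2\lesssim 1$ (for instance $t\sim 1$, $\nu$ small, $|V'|\sim 1$) the factor $g$ is of order one and the profile bound of Lemma \ref{lem:wk1} gives no better than $|k|^{-3/2}M_k$ pointwise, so there is no source of smallness left in your sketch. Consequently neither the second inequality nor the third (which you derive by squaring and integrating the second) is established by the proposal as written; "this quantitative balancing is the delicate point" is a statement of the problem, not an argument.

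For comparison, the paper does not perform any such balancing either: it bounds term (iv) by exactly your intermediate expression $C|V'|\min(|V'||k|^{-1/2},|k|^{-3/2})M_k$ and then absorbs it in one step into $C(\nu k^2\gamma_1(t)|V'|^2+1)\min(|V'|^2|k|^{-1/2},|k|^{-5/2})M_k$, i.e. the interplay with the profile decay or the Gaussian that you anticipate plays no role there; the absorption is transparent only on the branch $|V'|\lesssim|k|^{-1}$, so your instinct that something nontrivial is being claimed at this point is well founded. But the bottom line for your submission is that the key step is missing: you have proved the first inequality, reproduced the paper's reduction for the second, and left the decisive estimate for term (iv), and hence the second and third inequalities of Lemma \ref{lem:wk*}, unproven.
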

   
   \begin{proof}
 Recall that  $w_{k*}=(1-\eta(\sqrt{t}V'))w_{k,2}=(1-\eta(\sqrt{t}V'))w_{k,1}\mathrm{e}^{-\nu k^2\gamma_1(t)|V'|^2}.$
            Thanks to $(1-\eta(\sqrt{t}V'))\leq C\min({t}|V'|^2,1)$, we get by Lemma \ref{lem:wk1}  that 
     \begin{align}\label{est:parV'wk*-0}
        &|w_{k*}(t,y)|\\
        &\leq C\min({t}|V'|^2,1)\min(\langle t\rangle^{-1}|k|^{-\f32}+|V'|^2|k|^{-\f12},|k|^{-\f52})M_k\mathrm{e}^{-\nu k^2\gamma_1(t)|V'|^2},\nonumber\\
        &\leq C\min(|V'|^2|k|^{-\f12},|k|^{-\f52})\mathrm{e}^{-\nu k^2\gamma_1(t)|V'|^2}M_k,    \nonumber
         \end{align}
     which gives the first inequality of the lemma.  By  Lemma \ref{lem:wk1} again, we have
       \begin{align*}
        &\mathrm{e}^{\nu k^2\gamma_1(t)|V'|^2}|\partial_y(V'\mathrm{e}^{\mathrm{i}kt_*V}w_{k*})(t,y)|\\
        &\leq C((\nu k^2\gamma_1(t)|V'|^2|V''|+|V''|)(1-\eta(\sqrt{t}V'))+\sqrt{t}|V'V''||\eta'(\sqrt{t}V')|)|w_{k,1}(t,y)|\\
        &\quad+ C|V'|(1-\eta(\sqrt{t}V'))|\partial_y(\mathrm{e}^{\mathrm{i}kt_*V}w_{k,1})(t,y)|\\
        &\leq C|V''|((\nu k^2\gamma_1(t)|V'|^2+1)\min({t}|V'|^2,1)+\sqrt{t}|V'|\min(|\sqrt{t}V'|,|\sqrt{t}V'|^{-1}))|w_{k,1}(t,y)|\\
        &\quad+ C|V'|\min(|\sqrt{t}V'|,1)|\partial_y(\mathrm{e}^{\mathrm{i}kt_*V}w_{k,1})(t,y)|\\
        &\leq C(\nu k^2\gamma_1(t)|V'|^2+1)\min({t}|V'|^2,1)|w_{k,1}(t,y)|\\
        &\quad+C|V'|\min(|\sqrt{t}V'|,1)|\partial_y(\mathrm{e}^{\mathrm{i}kt_*V}w_{k,1})(t,y)|\\
        &\leq C(\nu k^2\gamma_1(t)|V'|^2+1)\min({t}|V'|^2,1)\min(\langle t\rangle^{-1}|k|^{-\f32}+|V'|^2|k|^{-\f12},|k|^{-\f52})M_k\\
        &\quad+ C|V'|\min(|\sqrt{t}V'|,1)\min(\langle t\rangle^{-\f12}|k|^{-1}+|V'||k|^{-\f12},|k|^{-\f32})M_k\\
        &\leq C(\nu k^2\gamma_1(t)|V'|^2+1)\min(|V'|^2|k|^{-\f12},|k|^{-\f52})M_k\\&\quad+ C|V'|\min(|V'||k|^{-\f12},|k|^{-\f32})M_k\\
        &\leq C(\nu k^2\gamma_1(t)|V'|^2+1)\min(|V'|^2|k|^{-\f12},|k|^{-\f52})M_k\\
        &\leq C\mathrm{e}^{\nu k^2\gamma_1(t)|V'|^2/2}\min(|V'|^2|k|^{-\f12},|k|^{-\f52})M_k,
     \end{align*}
     which gives the second inequality of the lemma. Thus, we get by using $\gamma_1(t)\sim t^3 $ that 
     \begin{align*}
        &\|\partial_y(V'\mathrm{e}^{\mathrm{i}kt_*V}w_{k*})\|_{L^2_y}\\
        &\leq C\min(\||V'|^2|k|^{-\f12}\mathrm{e}^{-\nu k^2\gamma_1(t)|V'|^2/2}\|_{L^2_y},|k|^{-\f52})M_k\\
        &\leq C\min(|k|^{-\f12}|\nu k^2 t^3|^{-\f54},|k|^{-\f52})M_k\leq C|k|^{-\f52}\langle \nu t^3\rangle^{-\f54}M_k,
     \end{align*}
     which shows the third inequality of the lemma.
     \end{proof}

     \subsection{Proof of Proposition \ref{prop:app}}

\begin{proof}
  By Lemma \ref{lem:wk2}, $\omega_L(t,x,y)=\sum_{k\in \Lambda_*}w_{k,2}(t,y)\mathrm{e}^{\mathrm{i}kx}$, \eqref{def:u1L} and Lemma \ref{lem:comm-1}, we have
  \begin{align*}
     & \langle \nu t^3\rangle\big(\|\partial_x\omega_L(t)\|_{L^\infty}+ \|\nabla u_L(t)\|_{L^\infty}\big)+\langle \nu t^3\rangle^{1/2}\|(\partial_y+t_*V'\partial_x)\omega_L(t)\|_{L^\infty}\\
     &\leq \langle \nu t^3\rangle\sum_{k\in\Lambda_{*}}\big(|k|\|w_{k,2}(t)\|_{L^\infty}+\|(\partial_y,k)^2\psi_{k,2}(t)\|_{L^\infty}\big)+ \langle \nu t^3\rangle^{1/2}\sum_{k\in\Lambda_{*}}\|\partial_y(\mathrm{e}^{\mathrm{i}kt_*V} w_{k,2})(t)\|_{L^\infty}\\
     &\leq C\sum_{k\in\Lambda_{*}}(|k|^{-2/3}M_k)\leq C\Big(\sum_{k\in\Lambda_{*}}|k|^{-4/3}\Big)^{1/2}\Big(\sum_{k\in\Lambda_{*}}M_k^2\Big)^{1/2}\\
     &\leq C\Big(\sum_{k\in\Lambda_{*}}M_k^2\Big)^{1/2}\leq C\|\omega_0\|_{H^3}.
  \end{align*}
  This shows  \eqref{est:paromL-inf}.
  
  By \eqref{def:u1L} and Lemma \ref{lem:wk2}, we get
  \begin{align*}
     &\|(u^x_L-t_*V'u^y_L)(t)\|_{L^{\infty}}\leq \sum_{k\in\Lambda_{*}}
     \|\partial_y(\mathrm{e}^{\mathrm{i}kt_*V}\psi_{k,2})(t,y)\|_{L^\infty}\\
     &\leq C\sum_{k\in\Lambda_{*}}\big(\big(\langle t\rangle^{-1.2}|k|^{-2.2}+
    |\nu t^3|^{\f12}\langle t\rangle^{-1}|k|^{-3/2}\big)M_k\big)\\
    &\leq C\big(\langle t\rangle^{-1.2}+
    |\nu t^3|^{\f12}\langle t\rangle^{-1}\big)\sum_{k\in\Lambda_{*}}
    \big((|k|^{-2.2}+|k|^{-3/2})M_k\big)\\
    &\leq C\big(\langle t\rangle^{-1.2}+
    |\nu t^3|^{\f12}\langle t\rangle^{-1}\big)\Big(\sum_{k\in\Lambda_{*}}(|k|^{-4.4}+|k|^{-3})\Big)^{1/2}\Big(\sum_{k\in\Lambda_{*}}M_k^2\Big)^{1/2}\\
    &\leq C\big(\langle t\rangle^{-1.2}+
    |\nu t^3|^{\f12}\langle t\rangle^{-1}\big)\|\omega_{0}\|_{H^3},
  \end{align*}
  which gives \eqref{est:u2L2}.

  By \eqref{def:u1L} and Lemma \ref{lem:wk2} again, we have
  \begin{align*}
     & |u_L^x(t,x,y)|\leq \sum_{k\in\Lambda_{*}}|\partial_y\psi_{k,2}(t,y)|\leq C\sum_{k\in\Lambda_{*}}\big(\langle t\rangle^{-3/2}+\langle t\rangle^{-1}|V'|\big)|k|^{-3/2}M_k\\
     &\leq C\big(\langle t\rangle^{-3/2}+\langle t\rangle^{-1}|V'|\big)
     \Big(\sum_{k\in\Lambda_{*}}|k|^{-3}\Big)^{1/2}\Big(\sum_{k\in\Lambda_{*}}M_k^2\Big)^{1/2}\\
     &\leq C\big(\langle t\rangle^{-3/2}+\langle t\rangle^{-1}|V'|\big)\|\omega_0\|_{H^3},
  \end{align*}
  which gives \eqref{est:u1Linf}. Similarly, we have
 \begin{align*}
  &\|u^y_L(t)\|_{L^{\infty}} \leq \sum_{k\in\Lambda_{*}}|k\psi_{k,2}(t,y)|\leq C\sum_{k\in\Lambda_{*}}\langle \nu t^3\rangle^{1/2}\langle t\rangle^{-2}|k|^{-3/2}M_k\\
     &\leq C\langle \nu t^3\rangle^{1/2}\langle t\rangle^{-2}
     \Big(\sum_{k\in\Lambda_{*}}|k|^{-3}\Big)^{1/2}\Big(\sum_{k\in\Lambda_{*}}M_k^2\Big)^{1/2}\leq C\langle \nu t^3\rangle^{1/2}\langle t\rangle^{-2}\|\omega_{0}\|_{H^3}.
  \end{align*}
  This shows \eqref{est:u2Linf}.\smallskip
  
  Notice that
  \begin{align*}
     & u_L\cdot\nabla\omega_L= (u^x_L-t_{*}V'u^y_{L})\partial_x\omega_{L}+u^y_{L}(\partial_y+tV'\partial_x)\omega_L.
  \end{align*}
  Then by \eqref{est:paromL-inf}, \eqref{est:u2L2} and \eqref{est:u2Linf},  we get
  \begin{align*}
     \|u_L\cdot\nabla\omega_L\|_{{L^\infty}}\leq & C\|u^x_L-t_{*}V'u^y_{L}\|_{L^\infty}
     \|\partial_x\omega_{L}\|_{L^\infty}+\|u^y_{L}\|_{L^\infty}
     \|(\partial_y+tV'\partial_x)\omega_L\|_{L^\infty}\\
     \leq &C\big(\langle t\rangle^{-1.2}+| \nu t^3|^{1/2}\langle t\rangle^{-1}\big)\langle\nu t^3\rangle^{-1}\|\omega_0\|_{H^3}^2+C\langle t\rangle^{-2}\|\omega_0\|_{H^3}^2\\
     \leq& C\big(\langle t\rangle^{-1.2}+| \nu t^3|^{1/2}\langle \nu t^3\rangle^{-1}\langle t\rangle^{-1}\big)\|\omega_{0}\|_{H^3}^2.
  \end{align*}
  This proves \eqref{est:uLnawL}.\smallskip
  
  By \eqref{def:tilomL}, Lemma \ref{lem:wk2} and Lemma \ref{lem:wk3}, we have
  \begin{align*}
     &\|t_*V'\partial_x\tilde{\omega}_L(t)\|_{L^\infty}+t^{-1}\langle \nu t^3\rangle\|t_*V'\partial_x{\omega}_L(t)\|_{L^\infty}\\
&\leq \sum_{k\in\Lambda_*}\big(\|t_*kV'w_{k,3}(t)\|_{L^\infty}+t^{-1}\langle \nu t^3\rangle\|t_*kV'w_{k,2}(t)\|_{L^\infty}\big)\\
&\leq C\sum_{k\in\Lambda_*}\big( \min(\langle t\rangle^{-\f12}|k|^{\f12},\langle t\rangle^{\f12}|k|^{-\f32})+|k|^{-\f23}\big)M_k\\
&\leq C\Big(\sum_{k\in\Lambda_{*}}(\min(\langle t\rangle^{-1}|k|,\langle t\rangle|k|^{-3})+|k|^{-\f43})\Big)^{\f12}\Big(\sum_{k\in\Lambda_{*}}M_k^2\Big)^{\f12}\\
&\leq C\Big(\sum_{k\in\Lambda_{*}}M_k^2\Big)^{\f12}\leq C\|\omega_0\|_{H^3},
  \end{align*}
  here we used $\sum_{k\in\Lambda_{*}}\min(\langle t\rangle^{-1}|k|,\langle t\rangle|k|^{-3})\leq C$. This gives \eqref{est:t*Vparw}.
  \end{proof}

\subsection{Proof of Proposition \ref{prop:error}}
Recall $w_{k,2}=w_{k,1}\mathrm{e}^{-\nu k^2\gamma_1(t)|V'|^2}=g w_{k,1}$ with\\
    $g=\mathrm{e}^{-\nu k^2\gamma_1(t)|V'|^2}$. Then we have
      \begin{align*}
         \partial_tw_{k,2}&=g(\partial_tw_{k,1}-\nu k^2t_*^2|V'|^2w_{k,1})=g\partial_tw_{k,1}-\nu k^2t_*^2|V'|^2w_{k,2},
      \end{align*}
     which along with \eqref{eq:wk1-11} gives
      \begin{align}\label{eq:wk2-11}
         &\partial_tw_{k,2}+\nu k^2 t_*^2|V'|^2w_{k,2}+\mathrm{i}k\mathrm{e}^{-\nu t}V(w_{k,2}+g\psi_{k,*})
         =\mathrm{i}k\mathrm{e}^{-\nu t}g (V(\psi_{k,*}-\psi_{k,1})+d\psi_{k,1}).
      \end{align}
      We denote
      \begin{align*}
 &W_{k}=\partial_tw_{k,2}-\nu\Delta_kw_{k,2}+\mathrm{i}k\mathrm{e}^{-\nu t}V(w_{k,2}+\psi_{k,2})=W_{k,1}+W_{k,2}+W_{k,3},
 \end{align*}      
 where     
 \begin{align*}
 &W_{k,1}=-\nu k^2t_*^2|V'|^2w_{k,2}-\nu \Delta_kw_{k,2},\\
&W_{k,2}=\mathrm{i}k\mathrm{e}^{-\nu t}(V(\psi_{k,*}-\psi_{k,1})g+{gd\psi_{k,1}}),\\
&W_{k,3}=\mathrm{i}k\mathrm{e}^{-\nu t}V(\psi_{k,2}-\psi_{k,*}g).
\end{align*}

\begin{lemma}\label{lem:Wkj}
  If $\|V-b\|_{H^4}\ll \nu^{1/3}$, then it holds that for $ t\leq T_1$, 
  \begin{align*}
&\|W_{k,1}\|_{L^2}\leq C\nu(1+t\langle \nu t^3\rangle^{-1})M_k,\\
&\|W_{k,2}\|_{L^2}\leq C\nu^{1/3}\langle t\rangle^{-2}M_k,\\
&\|(\partial_y,k)W_{k,3}\|_{L^{\infty}} \leq C|k|^{-1/2}\langle t\rangle^{-2}|\nu t^3|^{1/2}M_k.
\end{align*}
\end{lemma}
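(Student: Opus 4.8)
The plan is to estimate the three pieces $W_{k,1},W_{k,2},W_{k,3}$ separately; $W_{k,2}$ and $W_{k,3}$ are straightforward, while $W_{k,1}$ carries the cancellation built into the construction \eqref{def:wk2} and is the crux.

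For $W_{k,2}=\mathrm{i}k\mathrm{e}^{-\nu t}\big(gV(\psi_{k,*}-\psi_{k,1})+gd\psi_{k,1}\big)$ with $g:=\mathrm{e}^{-\nu k^2\gamma_1(t)|V'|^2}$, I would bound directly in $L^2$ using $\mathrm{e}^{-\nu t}\le1$, $\|g\|_{L^\infty}\le1$, $\|V\|_{L^\infty}\lesssim1$, the Morse-type bound $|d|\lesssim\|V-b\|_{H^4}\ll\nu^{1/3}$ from Lemma \ref{lem:genV}, and the pointwise estimates $|\psi_{k,*}-\psi_{k,1}|\lesssim\nu^{1/3}\langle t\rangle^{-2}|k|^{-5/2}M_k$ (Lemma \ref{lem:psik*}) and $|\psi_{k,1}|\lesssim\langle t\rangle^{-2}|k|^{-5/2}M_k$ (Lemma \ref{lem:wk1}); this gives $\|W_{k,2}\|_{L^2}\lesssim|k|\,\nu^{1/3}\langle t\rangle^{-2}|k|^{-5/2}M_k\lesssim\nu^{1/3}\langle t\rangle^{-2}M_k$. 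For $W_{k,3}=\mathrm{i}k\mathrm{e}^{-\nu t}V(\psi_{k,2}-g\psi_{k,*})$, the key observation is that $\psi_{k,2}-g\psi_{k,*}=\Delta_k^{-1}(gw_{k,1})-g\Delta_k^{-1}w_{k,1}=[\Delta_k^{-1},g]w_{k,1}$ is a commutator; the commutator bound (Lemma \ref{lem:comm-1}, as used in the proof of Lemma \ref{lem:wk2}) gives $\|[\Delta_k^{-1},g]w_{k,1}\|_{L^\infty}\lesssim|k|^{-1}\|g''\|_{L^1}\|\psi_{k,*}\|_{L^\infty}$ and $\|\partial_y[\Delta_k^{-1},g]w_{k,1}\|_{L^\infty}\lesssim\|g''\|_{L^1}\|\psi_{k,*}\|_{L^\infty}$, hence, expanding $(\partial_y,k)$ onto $V,V'$ and onto the commutator and its derivative, $\|(\partial_y,k)W_{k,3}\|_{L^\infty}\lesssim|k|\,\|g''\|_{L^1}\|\psi_{k,*}\|_{L^\infty}$; inserting $\|g''\|_{L^1}\lesssim|\nu k^2t^3|^{1/2}$ from \eqref{est:g} and $\|\psi_{k,*}\|_{L^\infty}\lesssim\langle t\rangle^{-2}|k|^{-5/2}M_k$ (Lemma \ref{lem:psik*}) yields exactly $|k|^{-1/2}\langle t\rangle^{-2}|\nu t^3|^{1/2}M_k$.

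The main work is $W_{k,1}=-\nu k^2t_*^2|V'|^2w_{k,2}-\nu\Delta_kw_{k,2}$, and the plan is to expose the cancellation by conjugating out the oscillation. Write $\widetilde w:=\mathrm{e}^{\mathrm{i}kt_*V}w_{k,1}$, which coincides (up to the change of variables $\theta$, since $at_*b(\theta(y))=t_*(V-d)$) with the Euler profile of Theorem \ref{thm:LE} and therefore obeys the profile bounds of Lemma \ref{lem:wk1}, and set $\Phi:=\mathrm{e}^{\mathrm{i}kt_*V}w_{k,2}=g\widetilde w$. Then
\[
\mathrm{e}^{\mathrm{i}kt_*V}\Delta_kw_{k,2}=\big(\partial_y-\mathrm{i}kt_*V'\big)^2\Phi-k^2\Phi=\partial_y^2\Phi-2\mathrm{i}kt_*V'\partial_y\Phi-\mathrm{i}kt_*V''\Phi-k^2t_*^2(V')^2\Phi-k^2\Phi ,
\]
so the term $-k^2t_*^2(V')^2\Phi$ precisely cancels $\mathrm{e}^{\mathrm{i}kt_*V}(-\nu k^2t_*^2|V'|^2w_{k,2})=-\nu k^2t_*^2(V')^2\Phi$ (this is exactly why $\gamma_1$ is chosen with $\gamma_1'=t_*^2$ in \eqref{def:gamma1}, \eqref{def:wk2}), leaving
\[
W_{k,1}=\mathrm{e}^{-\mathrm{i}kt_*V}\big(\nu k^2\Phi+\mathrm{i}\nu kt_*V''\Phi+2\mathrm{i}\nu kt_*V'\partial_y\Phi-\nu\partial_y^2\Phi\big),\qquad\Phi=g\widetilde w .
\]
Expanding $\partial_y\Phi=g'\widetilde w+g\widetilde w'$ and $\partial_y^2\Phi=g''\widetilde w+2g'\widetilde w'+g\widetilde w''$ and using $g'=-2\nu k^2\gamma_1V'V''g$, every surviving term carries at least one explicit factor of $\nu$.

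The main obstacle is then the $L^2_y$ estimate of this remainder, which I would close by combining: (i) the profile bounds of Lemma \ref{lem:wk1}, $|\widetilde w|\lesssim\min\big(\langle t\rangle^{-1}|k|^{-3/2}+|V'|^2|k|^{-1/2},\,|k|^{-5/2}\big)M_k$, $|\widetilde w'|\lesssim\min\big(\langle t\rangle^{-1/2}|k|^{-1}+|V'||k|^{-1/2},\,|k|^{-3/2}\big)M_k$, $\|\widetilde w''\|_{L^2}\lesssim|k|^{-1}M_k$, keeping the $\min$ and using the $|k|^{-5/2}$ (resp.\ $|k|^{-3/2}$) branch when $\nu t^3\le1$; (ii) the Gaussian localization of $g$ near the two critical points $y=0,\pi$ of $V$: since $|V'|$ is comparable there to the distance to the critical point and $\gamma_1(t)\sim t^3$ for $t\le T_1$, one has $\||V'|^jg\|_{L^2_y}\lesssim\langle\nu k^2t^3\rangle^{-(2j+1)/4}$ and $(\nu k^2\gamma_1)^{j/2}|V'|^jg\lesssim1$ pointwise, together with $\|g'\|_{L^\infty}+\|g''\|_{L^1}\lesssim|\nu k^2t^3|^{1/2}$ from \eqref{est:g}; and (iii) the constraints $|k|\le\nu^{-1/3}$ and $t\le T_1=\nu^{-4/9}$, so that $\nu t^2\le1$, $t_*\sim t$, and $\nu k^2t^3\le1\Rightarrow t^3\le(\nu k^2)^{-1}$. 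Thus the leading terms give $\|\nu k^2g\widetilde w\|_{L^2}\lesssim\nu|k|^{-1/2}\langle\nu k^2t^3\rangle^{-1/4}M_k\lesssim\nu M_k$, $\|\nu g\widetilde w''\|_{L^2}\lesssim\nu|k|^{-1}M_k$, and, separating the regimes $\nu t^3\le1$ and $\nu t^3\ge1$ (where $1+t\langle\nu t^3\rangle^{-1}$ behaves like $1+t$ and like $1+(\nu t^2)^{-1}$ respectively), $\|\nu kt_*V''g\widetilde w\|_{L^2}\lesssim\nu(1+t\langle\nu t^3\rangle^{-1})M_k$; the cross terms carrying $g'$ contribute an extra bounded factor $\nu k^2\gamma_1(V')^2$ and, exploiting $t^3\le(\nu k^2)^{-1}$ in the low-time regime, likewise fall under $\nu(1+t\langle\nu t^3\rangle^{-1})M_k$. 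I expect the only delicate point to be this bookkeeping — matching each of the several remainder terms against the two-term right-hand side and tracking the two regimes — which is elementary once the cancellation above and the localization estimates for $g$ are in place.
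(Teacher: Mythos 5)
Your proposal is correct and follows essentially the same route as the paper: your conjugation $\Phi=\mathrm{e}^{\mathrm{i}kt_*V}w_{k,2}=g\widetilde w$ is exactly the paper's factorization $w_{k,2}=w_{k,4}\mathrm{e}^{g_2}$, the cancellation of $-k^2t_*^2|V'|^2\Phi$ is the paper's cancellation inside $g_3=\partial_y^2g_2+(\partial_yg_2)^2+k^2t_*^2|V'|^2$, and the remainder is closed, as in the paper, by the profile bounds of Lemma \ref{lem:wk1} together with the Gaussian localization of $g$ (your regime split $\nu t^3\lessgtr1$ replacing the paper's single $\min$-based estimate $\|(1+\nu k^2t^3|V'|^2)w_{k,4}g_1\|_{L^2}+\|V'\partial_yw_{k,4}g_1\|_{L^2}\lesssim|k|^{-2}\langle\nu t^3\rangle^{-1}M_k$), while your treatments of $W_{k,2}$ and $W_{k,3}$ coincide with the paper's. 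The only point to make explicit in the bookkeeping is that in the cross terms carrying $g'$ and $g''$ you must retain a fraction of the Gaussian (e.g.\ $\nu k^2\gamma_1|V'|^2g\lesssim g^{1/2}$) and use the $\min$-branches of Lemma \ref{lem:wk1} in the regime $\nu t^3\ge1$, since discarding $g$ entirely there would overshoot the bound $\nu(1+t\langle\nu t^3\rangle^{-1})M_k$.
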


\begin{proof}
We first estimate $W_{k,1}$. Let  $w_{k,4}=\mathrm{e}^{\mathrm{i}kt_*V}w_{k,1}$. Then
\begin{align*}
& w_{k,2}=w_{k,4}g_1,\quad g_1=\mathrm{e}^{-\mathrm{i}kt_*V}g=\mathrm{e}^{g_2},\quad 
g_2=-\nu k^2\gamma_1(t)|V'|^2-\mathrm{i}kt_*V,
\end{align*}
and
\begin{align*}
&\Delta_kw_{k,2}=\big[\Delta_kw_{k,4}+2\partial_yw_{k,4}\partial_yg_2+w_{k,4}\big(\partial_y^2g_2+(\partial_yg_2)^2\big)\big]g_1,\\
&W_{k,1}=-\nu\big(\Delta_kw_{k,4}+2\partial_yw_{k,4}\partial_yg_2+w_{k,4}g_3\big)g_1,\\
&g_3=\partial_y^2g_2+(\partial_yg_2)^2+k^2t_*^2|V'|^2.
\end{align*}
Thanks to the definitions of $g_2$ and $g_3$, it is easy to verify that
 \begin{align*}
&|\partial_yg_2|\lesssim\nu k^2t^3|V'|+|ktV'|,\quad |\partial_y^2g_2|\lesssim \nu k^2t^3+|kt|,\\
&|\partial_yg_2+\mathrm{i}kt_*V'|=2\nu k^2\gamma_1(t)|V'V''|\lesssim \nu k^2t^3|V'|,\\
&|(\partial_yg_2)^2+k^2t_*^2|V'|^2|\lesssim(\nu k^2t^3+|kt|)\nu k^2t^3|V'|^2,\\
&|g_3|\lesssim(\nu k^2t^3+|kt|)(1+\nu k^2t^3|V'|^2).\end{align*}
Then we infer that
\begin{align*}
  &\|W_{k,1}\|_{L^2} \leq \nu\big(\|\Delta_kw_{k,4}\|_{L^2}\|g_1\|_{L^\infty}+2\|\partial_yw_{k,4}\partial_yg_2g_1\|_{L^2}
  +\|w_{k,4}g_3g_1\|_{L^2}\big)\\
\leq& C\nu\big(\|\Delta_kw_{k,4}\|_{L^2}+(\nu k^2t^3+|kt|)( \|V'\partial_yw_{k,4}\cdot g_1\|_{L^2}+\|(1+\nu k^2t^3|V'|^2)w_{k,4}g_1\|_{L^2})\big).
\end{align*}

By Lemma \ref{lem:wk1} and $|g_1|=|\mathrm{e}^{g_2}|\leq \mathrm{e}^{-\nu k^2\gamma_1(t)|V'|^2}$, we have \begin{align*}
&\|(\partial_y,k)^2w_{k,4}\|_{L^2}\leq C|k|^{-1}M_k,
\end{align*}
and (as $|w_{k,4}|=|w_{k,1}|$, $|g|=|g_{1}|$)
\begin{align*}
&\|(1+\nu k^2t^3|V'|^2)w_{k,4}g_1\|_{L^2}+\|V'\partial_yw_{k,4}\cdot g_1\|_{L^2}\\
&\leq C\|\mathrm{e}^{-\nu k^2\gamma_1(t)|V'|^2/2}w_{k,1}\|_{L^2}+ C\|(V'g)\partial_yw_{k,4}\|_{L^2}\\
&\leq C\|(\mathrm{e}^{-\nu k^2\gamma_1(t)|V'|^2/2}\langle t\rangle^{-1}|k|^{-\f32}+(\nu t^3)^{-1}|k|^{-\f52})\|_{L^2}M_k\\
&\quad+C\|(V'g\langle t\rangle^{-1/2}|k|^{-1}+(\nu t^3)^{-1}|k|^{-\f52})\|_{L^2}M_k\\
&\leq C\big((\nu k^2t^3)^{-\f14}\langle t\rangle^{-1}|k|^{-\f32}+(\nu t^3)^{-1}|k|^{-\f52}\big)M_k\\
&\quad+C\big(( \nu k^2t^3)^{-\f12}\langle t\rangle^{-1/2}|k|^{-1}+(\nu t^3)^{-1}|k|^{-\f52}\big)M_k\\
&\leq C|k|^{-2}( \nu t^3)^{-1}M_k.
\end{align*}
Here we used the facts that
\beno
 \|\mathrm{e}^{-\nu k^2\gamma_1(t)|V'|^2/2}\|_{L^2}\leq C(\nu k^2t^3)^{-\f14},\quad \|V'g\|_{L^2}\leq C( \nu k^2t^3)^{-\f12},\quad \nu t^{9/4}\leq 1.
 \eeno
On the other hand, we have
\begin{align*}
&\|(1+\nu k^2t^3|V'|^2)w_{k,4}g_1\|_{L^2}+\|V'\partial_yw_{k,4}\cdot g_1\|_{L^2}\\
&\leq C(\|w_{k,4}\|_{L^2}+ \|\partial_yw_{k,4}\|_{L^2})\leq C|k|^{-1}\||\Delta_kw_{k,4}\|_{L^2}
\leq C|k|^{-2}M_k.
\end{align*}
Thus, we obtain
 \begin{align*}
&\|(1+\nu k^2t^3|V'|^2)w_{k,4}g_1\|_{L^2}+\|V'\partial_yw_{k,4}\cdot g_1\|_{L^2}
\leq C|k|^{-2}\langle \nu t^3\rangle^{-1}M_k.
\end{align*}
Summing up,  we conclude 
\begin{align*}
   \|W_{k,1}\|_{L^2}\leq C\nu\big(|k|^{-1}M_k+(\nu k^2t^3+|kt|)|k|^{-2}\langle \nu t^3\rangle^{-1}M_k\big)
\leq C\nu(1+t\langle \nu t^3\rangle^{-1})M_k.
\end{align*}
  
 Thanks to the definition of $W_{k,2}$, we get by Lemma \ref{lem:wk1}, Lemma \ref{lem:psik*}, and $|d|\leq C\|V-b\|_{H^4}\leq C\nu^{1/3}$ that
\begin{align*}
    \|W_{k,2}\|_{L^2}\leq &C|k|\|\psi_{k,*}-\psi_{k,1}\|_{L^{\infty}}+C|k||d|\|\psi_{k,1}\|_{L^2}\leq C\nu^{1/3}\langle t\rangle^{-2}M_k.
\end{align*}

Thanks to  the definition of $W_{k,3}$, we get by Lemma \ref{lem:psik*} and Lemma \ref{lem:comm-1} that
\begin{align*}
\|(\partial_y,k)W_{k,3}\|_{L^{\infty}}&\leq C|k|\|(\partial_y,k)(\psi_{k,2}-\psi_{k,*}g)\|_{L^{\infty}}
\leq C|k|\|\psi_{k,*}\|_{L^{\infty}}\|g''\|_{L^{1}}\\
&\leq C|k|\langle t\rangle^{-2}|k|^{-5/2}M_k|\nu t^3|^{1/2}|k|=C|k|^{-1/2}\langle t\rangle^{-2}|\nu t^3|^{1/2}M_k.
\end{align*}

This finishes the proof of  the lemma.
\end{proof}

Now we are in a position to prove Proposition \ref{prop:error}.   
   
\begin{proof}
We decompose $Er_L$ as
\begin{align}\label{def:ErL=ErL123}
    &Er_{L}=Er_{L1}+Er_{L2}+Er_{L3},\quad Er_{Lj}=\sum_{k\in \Lambda_*}W_{k,j}(t,y)\mathrm{e}^{\mathrm{i}kx}.
\end{align}
Then the first two inequalities of the lemma follows from Lemma \ref{lem:Wkj} and Plancherel's formula. 
   
   For the third inequality of the lemma, we get by Lemma \ref{lem:Wkj} that
   \begin{align*}
       \||D_x|^{\f12}\nabla Er_{L3}(t)\|_{L^2}^2 =&C\sum_{k\in \Lambda_*}|k|\|(\partial_y,k)W_{k,3}(t)\|_{L^2}^2\\
      \leq&  C\sum_{k\in \Lambda_*}|k|\|(\partial_y,k)W_{k,3}(t)\|_{L^\infty}^2\\
      \leq &C\langle t\rangle^{-4}|\nu t^3|\sum_{k\in\Lambda_*}M_k^2 \leq C\langle t\rangle^{-4}|\nu t^3|\|\omega_0\|_{H^3}^2.
   \end{align*}
   \end{proof}

\section{Nonlinear energy estimates}\label{sec:nonlinearenergy}

This section is devoted to the proof of energy estimate in different timescales.

\subsection{Energy estimate in short-time regime}\label{Energy-short-regime}

In this subsection, we prove Proposition \ref{prop:om-e-short}. The proof is based on a direct energy estimate under new inner product  $\langle \ ,\ \rangle_{\star}$  defined by
  \begin{align}\label{def:norm-*}
     & \langle f,g\rangle_\star=\langle P_{\neq} f,P_{\neq}g+\Delta^{-1}P_{\neq}g\rangle +\langle P_0f,P_0g\rangle= \langle f,g+\Delta^{-1}P_{\neq}g\rangle,
  \end{align}
  where
  \begin{align*}
     & (P_0f)(y):=\dfrac{1}{\mathfrak{p}}\int_{\mathbb{T}_{\mathfrak{p}}}f(x,y)\mathrm{d}x,\quad (P_{\neq}f)(x,y):= f(x,y)-(P_0f)(y).
  \end{align*}
 It is easy to see that 
 \begin{align*}
    & (1-(\mathfrak{p}/(2\pi))^2)\|P_{\neq}f\|_{L^2}^2\leq \|P_{\neq}f\|_{\star}^2\leq \|P_{\neq}f\|_{L^2}^2,\\
    &\|f\|_{\star}^2=\|P_{\neq }f\|_{\star}^2+\|P_0f\|_{L^2}^2.
 \end{align*}  which gives $\|P_{\neq }f\|_{L^2}\sim \|P_{\neq}f\|_{\star}$ and $\|f\|_{L^2}\sim \|f\|_{\star}$ for $\mathfrak{p}\in( 0,2\pi)$.

 \begin{lemma}\label{lem:short time Er}
 Let $Er$ be defined in  \eqref{def:Er}. For $0\leq t\leq T_0$, we have
   \begin{align*}
      & \int_{0}^{t}\|Er(s)\|_{L^2}\mathrm{d}s\leq C\nu^{1/3}\|\omega_{0}\|_{H^3}.
   \end{align*}
 \end{lemma}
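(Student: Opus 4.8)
The plan is to split $Er$ according to its definition \eqref{def:Er} into the four pieces $Er_L$, $-\mathrm{e}^{-\nu t}(V''+V)\partial_x\phi_L$, $u_L\cdot\nabla\omega_L$ and $\nu\mathrm{e}^{-\nu t}(V'+V''')$, and to bound each separately in $L^1\big([0,T_0];L^2\big)$. The decisive structural feature is that the interval is short: since $T_0=\nu^{-1/6}$, for $t\le T_0$ we have $\nu t^3\le\nu T_0^3=\nu^{1/2}\le1$, so $\langle\nu t^3\rangle\sim1$ there, and moreover $\nu T_0^2=\nu^{2/3}$, $\nu^{1/2}T_0^{1/2}=\nu^{5/12}$, $\nu^{1/2}T_0^{3/2}=\nu^{1/4}$, $\nu T_0=\nu^{5/6}$, while $\int_0^\infty\langle t\rangle^{-2}\,dt$ and $\int_0^\infty\langle t\rangle^{-1.2}\,dt$ are finite. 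These facts keep every growing time-integral below $\nu^{1/3}$.

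For $Er_L$ I would invoke Proposition \ref{prop:error}: $Er_L=Er_{L1}+Er_{L2}+Er_{L3}$ with $\|Er_{L1}(t)\|_{L^2}\lesssim\nu\langle t\rangle\langle\nu t^3\rangle^{-1}\|\omega_0\|_{H^3}$, $\|Er_{L2}(t)\|_{L^2}\lesssim\nu^{1/3}\langle t\rangle^{-2}\|\omega_0\|_{H^3}$, and $\||D_x|^{1/2}\nabla Er_{L3}(t)\|_{L^2}\lesssim\langle t\rangle^{-2}|\nu t^3|^{1/2}\|\omega_0\|_{H^3}$. Since $Er_{L3}=\sum_{k\in\Lambda_*}W_{k,3}(t,y)\mathrm{e}^{\mathrm{i}kx}$ is supported on $x$-modes $k\in\Lambda_*\subset\alpha\mathbb{Z}\setminus\{0\}$ with $|k|\ge\alpha>1$, one has the pointwise-in-$t$ inequality $\|Er_{L3}(t)\|_{L^2}\lesssim\||D_x|^{1/2}\nabla Er_{L3}(t)\|_{L^2}$, so, integrating over $[0,T_0]$ and using $\int_0^{T_0}\langle t\rangle^{-2}(\nu t^3)^{1/2}\,dt\lesssim\nu^{1/2}\int_0^{T_0}\langle t\rangle^{-1/2}\,dt\lesssim\nu^{1/2}T_0^{1/2}$, one gets $\int_0^{T_0}\|Er_L\|_{L^2}\,dt\lesssim\big(\nu T_0^2+\nu^{1/3}+\nu^{1/2}T_0^{1/2}\big)\|\omega_0\|_{H^3}\lesssim\nu^{1/3}\|\omega_0\|_{H^3}$.

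For $\mathrm{e}^{-\nu t}(V''+V)\partial_x\phi_L$ I would use $V+V''=(\partial_y^2+1)(V-b)$, hence $\|V+V''\|_{L^\infty}\lesssim\|V-b\|_{H^4}\lesssim\epsilon_1\nu^{1/3}$ by \eqref{ass:U-b}, together with $\partial_x\phi_L=u^y_L$ and the pointwise bound \eqref{est:u2Linf}; on $[0,T_0]$ this yields $\|\mathrm{e}^{-\nu t}(V''+V)\partial_x\phi_L\|_{L^2}\lesssim\epsilon_1\nu^{1/3}\langle t\rangle^{-2}\|\omega_0\|_{H^3}$, which integrates to $\lesssim\epsilon_1\nu^{1/3}\|\omega_0\|_{H^3}$. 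For $u_L\cdot\nabla\omega_L$ I would use \eqref{est:uLnawL}, giving $\|u_L\cdot\nabla\omega_L(t)\|_{L^2}\lesssim\big(\langle t\rangle^{-1.2}+|\nu t^3|^{1/2}\langle t\rangle^{-1}\big)\|\omega_0\|_{H^3}^2$, whose integral over $[0,T_0]$ is $\lesssim(1+\nu^{1/2}T_0^{3/2})\|\omega_0\|_{H^3}^2\lesssim\|\omega_0\|_{H^3}^2\le\epsilon_1\nu^{1/3}\|\omega_0\|_{H^3}$, the last step using the hypothesis $\|\omega_0\|_{H^3}\le\epsilon_1\nu^{1/3}$. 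Finally, for the intrinsic forcing $\nu\mathrm{e}^{-\nu t}(V'+V''')$ I would use $\|V'+V'''\|_{L^2}\lesssim\|V-b\|_{H^4}\lesssim\epsilon_1\nu^{1/3}$ from \eqref{ass:U'-b'}, so that $\int_0^{T_0}\nu\mathrm{e}^{-\nu s}\|V'+V'''\|_{L^2}\,ds\le(1-\mathrm{e}^{-\nu T_0})\|V'+V'''\|_{L^2}\lesssim\nu T_0\cdot\epsilon_1\nu^{1/3}=\epsilon_1\nu^{7/6}$, a higher-order term absorbed into the right-hand side $C\nu^{1/3}\|\omega_0\|_{H^3}$ (equivalently, this piece is of the size $\nu^{1/3}\|\Omega_0-\sin y\|_{H^3}$ that is actually used downstream, via $\|\omega_0\|_{H^3}+\|V-b\|_{H^4}\sim\|\Omega_0-\sin y\|_{H^3}$). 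Summing the four contributions gives the claim.

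I do not expect a genuine conceptual obstacle: the decay and smallness needed are already packaged in Propositions \ref{prop:app} and \ref{prop:error} and in \eqref{ass:U-b} and \eqref{ass:U'-b'}. The two points that require a little care are (i) the reduction of the $|D_x|^{1/2}\nabla$-bound on $Er_{L3}$ to an $L^2$-bound, which is legitimate precisely because $Er_{L3}$ has no $x$-zero-mode and $|k|\ge\alpha>1$ on $\Lambda_*$; and (ii) the bookkeeping of the powers of $\nu$ in the time integrals, where the choice $T_0=\nu^{-1/6}$ is exactly tuned so that each of $\nu T_0^2$, $\nu^{1/2}T_0^{1/2}$, $\nu^{1/2}T_0^{3/2}$ and $\nu T_0$ is at most $O(\nu^{1/3})$.
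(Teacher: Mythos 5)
Your proposal is correct and follows essentially the same route as the paper: the same four-term splitting of $Er$ from \eqref{def:Er}, the same use of Proposition \ref{prop:error} (with the reduction $\|Er_{L3}\|_{L^2}\lesssim\||D_x|^{1/2}\nabla Er_{L3}\|_{L^2}$, valid since $Er_{L3}$ has only nonzero $x$-modes with $|k|\ge\alpha>1$), the bounds \eqref{est:u2Linf} and \eqref{est:uLnawL} for the remaining pieces, and the absorption of the quadratic and $V$-dependent terms via the standing smallness assumptions. The only cosmetic difference is your Hölder split $\|V''+V\|_{L^\infty}\|\partial_x\phi_L\|_{L^2}$ in place of the paper's $\|V''+V\|_{L^2}\|\partial_x\phi_L\|_{L^\infty}$, which changes nothing in the outcome.
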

 \begin{proof}
   We first estimate $Er_L$. By Proposition \ref{prop:error}, we have  
 \begin{align*}
    \int_{0}^{t}\|Er_L(s)\|_{L^2}\mathrm{d}s \leq & \int_{0}^{t}\big(\|Er_{L1}(s)\|_{L^2}+ \|Er_{L2}(s)\|_{L^2}+ \||D_x|^{\f12}\nabla Er_{L3}(s)\|_{L^2}\big)\mathrm{d}s\\
    \leq &C\|\omega_0\|_{H^3}\int_{0}^{t}\big(\nu \langle s\rangle \langle \nu s^3\rangle^{-1}+\nu^{1/3}\langle s\rangle^{-2}+\langle s\rangle^{-2}|\nu s^3|^{1/2}\big)\mathrm{d}s\\
    \leq &C\|\omega_0\|_{H^3}\big(\nu \langle t\rangle^2+\nu^{1/3} +\nu^{1/2}\langle t\rangle^{1/2}\big)
    \leq C\nu^{1/3}\|\omega_0\|_{H^3}.
 \end{align*}
 
 By \eqref{est:u2Linf} in Proposition \ref{prop:app} and $u_L^y=-\partial_x\phi_L$, we have
  \begin{align*}
     & \int_{0}^{t}\big(\|\mathrm{e}^{-\nu s}(V''+V)\partial_x\phi_L(s)\|_{L^2}+\nu\|\mathrm{e}^{-\nu s}(V'+V''')\|_{L^2}\big)\mathrm{d}s\\
     &\leq  \int_{0}^{t}\big(\|\mathrm{e}^{-\nu s}(V''+V)\|_{L^2}\|\partial_x\phi_L(s)\|_{L^\infty}+\nu\|(V'+V''')\|_{L^2}\big)\mathrm{d}s\\
     &\leq \int_{0}^{t}\big(\langle \nu s^3\rangle^{1/2}\langle s\rangle^{-2}\|\omega_0\|_{H^3}^2+\nu\|\omega_0\|_{H^3}\big)\mathrm{d}s\\
     &\leq C\|\omega_0\|_{H^3}^2+C\nu t\|\omega_0\|_{H^3}\leq C\big(\nu^{1/3}\|\omega_{0}\|_{H^3}+\|\omega_0\|_{H^3}^2\big),
  \end{align*}
and by \eqref{est:uLnawL},
  \begin{align*}
\int_{0}^{t}\| u_L\cdot\nabla\omega_L(s)\|_{L^2}\mathrm{d}s\leq& C\int_{0}^{t}\big(\langle s\rangle^{-1.2}+|\nu s^3|^{1/2}\langle \nu s^3\rangle^{-1}\langle {s}\rangle^{-1}\big)\mathrm{d}s\|\omega_0\|_{H^3}^2\\
\leq &C\big(1+\nu^{1/2}\langle t\rangle^{1/2}\big)\|\omega_0\|_{H^3}^2\leq C\|\omega_0\|_{H^3}^2.
  \end{align*}
  
  Thus, we conclude 
  \begin{align*}
     & \int_{0}^{t}\|Er(s)\|_{L^2}\mathrm{d}s\leq C\big(\nu^{1/3}\|\omega_{0}\|_{H^3}+\|\omega_0\|_{H^3}^2\big)\leq C\nu^{1/3}\|\omega_{0}\|_{H^3}.
  \end{align*}
  This finishes the proof of this lemma.
 \end{proof}

 \begin{proof}[Proof of the Proposition \ref{prop:om-e-short}]
  
  Thanks to $P_0(\mathrm{e}^{-\nu t}V\partial_x(\omega_e+\phi_e))=0$, we obtain 
   \begin{align*}
      &\big\langle \mathrm{e}^{-\nu t}V\partial_x(\omega_e+\phi_e),\omega_e \big\rangle_\star=\big \langle \mathrm{e}^{-\nu t}V\partial_xP_{\neq}(\omega_e+\phi_e),P_{\neq}\omega_e +\Delta^{-1}P_{\neq}\omega_e \big\rangle=0, \\
      &\big\langle -\nu\Delta \omega_e ,\omega_e\big\rangle_\star =\nu\big(\|\nabla P_{\neq}\omega_e\|_{L^2}^2-\|P_{\neq }\omega_e\|_{L^2}^2\big)+\nu\|\partial_yP_0\omega_e\|_{L^2}^2,\\
      &\big\langle u\cdot\nabla\omega_e,\omega_e\big\rangle_\star=\big\langle u\cdot\nabla \omega_e,\omega_e+P_{\neq}\phi_e\big\rangle =\big\langle u\cdot\nabla \omega_e,P_{\neq}\phi_e\big\rangle =-\big\langle u\omega_e,\nabla P_{\neq}\phi_e\big\rangle,\\
      &\big\langle \partial_t\omega_e,\omega_e\big\rangle_\star=\big\langle \partial_tP_{\neq}\omega_e,P_{\neq }\omega_e+P_{\neq}\phi_e\big\rangle+\big\langle\partial_tP_{0}\omega_e,P_{0 }\omega_e\big\rangle\\
      &\quad= \dfrac{1}{2}\dfrac{\mathrm{d}}{\mathrm{d}t}\big(\|P_{\neq }\omega_e\|_{L^2}^2-\|\nabla P_{\neq}\phi_{e}\|_{L^2}^2+\|P_0\omega\|_{L^2}^2\big)= \dfrac{1}{2}\dfrac{\mathrm{d}}{\mathrm{d}t}\|\omega_e\|_{\star}^2.
   \end{align*} 
   Then we get by taking the inner product $\langle ,\rangle_\star$ between  \eqref{eq:ome} and $\omega_e$ that
   \begin{align*}
      &\dfrac{1}{2}\dfrac{\mathrm{d}}{\mathrm{d}t}\|\omega_e\|_{\star}^2+\nu\big(\|\nabla P_{\neq}\omega_e\|_{L^2}^2-\|P_{\neq }\omega_e\|_{L^2}^2\big)+\nu\|\partial_yP_0\omega_e\|_{L^2}^2\\
&=\big\langle u\omega_e,\nabla P_{\neq}\phi_e\big\rangle+\big\langle\mathrm{e}^{-\nu t}(V''+V)\partial_x\phi_e-u_e\cdot\nabla \omega_L-Er ,\omega_e\big\rangle_\star.
   \end{align*}

Thanks to $\|\nabla P_{\neq}\omega_e\|_{L^2}^2-\|P_{\neq }\omega_e\|_{L^2}^2+\|\partial_y P_0\omega_e\|_{L^2}^2\geq (1-(2\pi/\mathfrak{p})^{2})\|\nabla \omega_e\|_{L^2}^2\geq 0$, we conclude
   \begin{align}\nonumber
      \dfrac{1}{2}\dfrac{\mathrm{d}}{\mathrm{d}t}\|\omega_e\|_{\star}^2 \leq& \|u\|_{L^4}\|\omega_e\|_{L^2}\|P_{\neq}\nabla\phi_e\|_{L^4}\\
      &+ \big|\big\langle\mathrm{e}^{-\nu t}(V''+V)\partial_x\phi_e-u_e\cdot\nabla \omega_L-Er ,\omega_e\big\rangle_\star\big|,\nonumber
   \end{align}
from which and  $H^{1/2}\hookrightarrow L^4$, we infer that 
   \begin{align*}
\dfrac{\mathrm{d}}{\mathrm{d}t}\|\omega_e\|_{\star}& 
       \leq C\|u\|_{L^4}\|P_{\neq}\nabla\phi_e\|_{L^4}+\|\mathrm{e}^{-\nu t}(V''+V)\partial_x\phi_e-u_e\cdot\nabla \omega_L-Er \|_{\star} \\
       &\leq  C\|u\|_{L^4}\|P_{\neq }\nabla\phi_e\|_{H^{1/2}}+\|\mathrm{e}^{-\nu t}(V''+V)\partial_x\phi_e-u_e\cdot\nabla \omega_L-Er \|_{L^2}\\
       &\leq C\|u\|_{L^4}\|\omega_e\|_{L^2}+\|(V''+V)\|_{L^\infty}\|\partial_x\phi_e\|_{L^2}+\|u_e\|_{L^2}\|\nabla \omega_L\|_{L^\infty}+\|Er \|_{L^2}.
   \end{align*}
 As $u_e=(-\partial_y,\partial_x)\phi_e$ and $\phi_e=\Delta^{-1}\omega_e$, we have $\|u_e\|_{L^2}=\|\nabla \phi_e\|_{L^2}\leq C\|\omega_e\|_{L^2}$ and $\|\omega_e\|_{L^2}\leq C\|\omega\|_{\star}$. Then we get by Gronwall's inequality that
 \begin{align}\label{est:shortwe}
    \|\omega_e(t)\|_{L^2}&\leq C\|\omega_e(t)\|_{\star}\\
    &\leq C\exp\Big(C\int_{0}^{t}\big(\|u(s)\|_{L^4}+\|(V''+V)\|_{L^\infty} +\|\nabla \omega_L(s)\|_{L^\infty}\big)\mathrm{d}s\Big)\nonumber \\ &\quad\times\left(\|\omega_e(0)\|_{\star}+\int_{0}^{t}\|Er(s)\|_{L^2}\mathrm{d}s\right).\nonumber
 \end{align}
 Now we have
   \begin{align}\label{est:shortwe-in}
      \|\omega_e(0)\|_{\star}&\leq C\|\omega_e(0)\|_{L^2}=C\|\omega_0-\omega_L(0)\|_{L^2}=C\|\mathrm{P}_{\text{high}}\omega_0\|_{L^2}  \leq C\nu^{\f13}\|\partial_x\omega_0\|_{L^2},
   \end{align}
 and by $H^{1/2}\hookrightarrow L^4$,
 \begin{align}
     \int_{0}^{t}\|u(s)\|_{L^4}\mathrm{d}s\leq& \int_{0}^{t}\big(\|u_L(s)\|_{L^4}+\|u_e(s)\|_{L^4}\big)\mathrm{d}s\leq C\int_{0}^{t}\big(\|u_L(s)\|_{H^{1/2}}+\|u_e(s)\|_{H^{1/2}}\big)\mathrm{d}s\nonumber\\
    \leq & C\int_{0}^{t}\big(\|\nabla \omega_L(s)\|_{L^\infty}+\|\omega_e(s)\|_{L^2}\big)\mathrm{d}s \nonumber\\
    \leq& C\int_{0}^{t}\|\nabla \omega_L(s)\|_{L^\infty}\mathrm{d}s +Ct\|\omega_e\|_{L^\infty(0,t;L^2)}.\label{est:usL4-1}
 \end{align}
 By \eqref{est:paromL-inf} in Proposition \ref{prop:app}, we have
 \begin{align*}
    & \int_{0}^{t}(\|(V''+V)\|_{L^\infty}+\|\nabla \omega_L(s)\|_{L^\infty})\mathrm{d}s\leq C\int_{0}^{t}(1+\langle\nu s^3\rangle^{-1/2}\langle s\rangle )\|\omega_0\|_{H^3}\mathrm{d}s\leq C,
 \end{align*}
 which along with Lemma \ref{lem:short time Er}, \eqref{est:shortwe}, \eqref{est:shortwe-in} and \eqref{est:usL4-1} gives
 \begin{align}\label{est:shortwe-0}
    \|\omega_e(t)\|_{L^2}\leq &C\exp\big(C\nu^{-\f16}\|\omega_e\|_{L^\infty(0,t;L^2)}\big)\times\nu^{\f13}\|\omega_0\|_{H^3}.
 \end{align} 

If $\|\omega_e(s)\|_{L^2}\leq \nu^{\f16}$ for $0\leq s\leq t\leq T_0$, then by \eqref{est:shortwe-0}, this estimate can be improved  to $\|\omega_e(s)\|_{L^2}\leq C\nu^{\f13}\|\omega_0\|_{H^3}\leq C\nu^{\f23}\leq \nu^{\f16}/2$ for $0\leq s\leq t\leq T_0$. Thus, by a bootstrap argument, we conclude $\|\omega_e(t)\|_{L^2}\leq \nu^{\f16}$ for $t\leq T_0$. Then \eqref{est:shortwe-0} yields 
\begin{align*}
    & \|\omega_e(t)\|_{L^2}\leq C\nu^{\f13}\|\omega_0\|_{H^3}\quad \text{for}\quad t\leq T_0.
 \end{align*}
  
 This completes the proof for Proposition \ref{prop:om-e-short}.
 \end{proof}

\subsection{Energy estimate in medium-time regime}\label{Energy-middle-regime}

First, we derive the following estimates for the source $G$ in the middle-time regime.
 
 \begin{lemma}\label{lem:G}
For $G$ defined by \eqref{def:G} and $Er_{L3}$ defined by \eqref{def:ErL=ErL123}, there holds
  \begin{align*}
 &\|G-Er_{L3}-\text{curl} ( u\cdot \nabla u_e)\|_{L^1(T_0, T; L^2)}
 \leq C\big(\nu^{-\f13}\|\omega_e\|_{X_I}\|\omega_0\|_{H^3}+
 \nu^{\f13}\|\omega_{0}\|_{H^3} +\|\omega_0\|_{H^3}^2\big),\\
 &\nu^{-\f12}\|u\cdot \nabla u_e\|_{L^2(T_0, T;L^2)}\leq C\nu^{-\f13}\|\omega_e\|_{X_I}\|\omega_0\|_{H^3}+C\nu^{-\f23}\|\omega_e\|_{X_I}^2,\\
 &\||D_x|^{\f12}\nabla Er_{L3}\|_{L^2(T_0, T; L^2)}\leq  C\nu^{1/2}|\ln(\nu)|\|\omega_0\|_{H^3}.
  \end{align*}
 Here $T\in[T_0, T_1],$ $I=[T_0,T]$, and $\text{curl} ( u\cdot \nabla u_e)=\partial_x(u\cdot\nabla u_e^2)-\partial_y(u\cdot\nabla u_e^1). $
\end{lemma}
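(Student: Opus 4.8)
The plan is to rewrite the source $G$ from \eqref{def:G} so that each of its pieces falls into one of the three inhomogeneity slots of Proposition~\ref{prop:LNS-sp1} (a $\partial_y g_1+\partial_x g_2$ term, a $g_3$ term, or a $g_4$ term), and the three displayed inequalities are then exactly what is needed to feed that proposition on equation \eqref{eq:om-star}. The one structural point that makes this work is the vorticity identity for the transport term: setting $R(a,b):=\text{curl}(a\cdot\nabla b)-a\cdot\nabla\,\text{curl}(b)$, which is bilinear and first order in each argument, the two–dimensional Euler cancellation $\text{curl}(v\cdot\nabla v)=v\cdot\nabla\,\text{curl}(v)$ for divergence free $v$ gives $R(v,v)=0$; since $\omega_e=\text{curl}(u_e)$ and $u=u_L+u_e$, this yields $u\cdot\nabla\omega_e=\text{curl}(u\cdot\nabla u_e)-R(u_L,u_e)$. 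Thus the genuinely quadratic-in-$\omega_e$ part of the transport term stays inside $\text{curl}(u\cdot\nabla u_e)$, while $R(u_L,u_e)$ is only linear in $\omega_e$. Consequently $G-Er_{L3}-\text{curl}(u\cdot\nabla u_e)$ is the sum of $-R(u_L,u_e)$; the ``source'' terms $u_e^x\partial_x\omega_L$, $u_e^y(\partial_y+t_*V'\partial_x)\omega_L$, $t_*V'P_{\text{high}}u_e^y\partial_x\omega_L$, $t_*V'P_{\text{low}}u_e^y\partial_x\widetilde\omega_L$, $u_L\cdot\nabla\omega_L$; the approximation errors $Er_{L1}+Er_{L2}$ (see \eqref{def:ErL=ErL123}); and the $V$–coefficient terms $\mathrm{e}^{-\nu t}(V+V'')\partial_x\phi_e$, $\mathrm{e}^{-\nu t}(V+V'')\partial_x\phi_L$, $\nu\mathrm{e}^{-\nu t}(V'+V''')$.

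\textbf{The first inequality.} I would bound each of those terms in $L^1(I;L^2)$ by H\"older, taking one factor from the approximate-solution bounds of Proposition~\ref{prop:app} (which supply the decay factors $\langle\nu t^3\rangle^{-1}$, $\langle\nu t^3\rangle^{-1/2}$, $\langle t\rangle^{-1.2}$, $\langle\nu t^3\rangle^{1/2}\langle t\rangle^{-2}$ and $t\langle\nu t^3\rangle^{-1}$ appearing in \eqref{est:paromL-inf}, \eqref{est:u2Linf}, \eqref{est:u2L2}, \eqref{est:uLnawL}, \eqref{est:t*Vparw}), from Proposition~\ref{prop:error} for $Er_{L1},Er_{L2}$, or from the smallness $\|V-b\|_{H^4}\lesssim\nu^{1/3}$ (which gives $\|V+V''\|_{L^\infty}+\|V'+V'''\|_{L^2}\lesssim\nu^{1/3}$, since $V+V''=(1+\partial_y^2)(V-b)$), and the complementary factor from Lemma~\ref{lem:norm-X}, which controls $(\omega_e)_{\neq}$, $\nabla\Delta^{-1}\omega_e$ and $\partial_x\nabla\Delta^{-1}\omega_e$ in space-time norms dominated by $\|\omega_e\|_{X_I}$; for the $P_{\text{high}}$ piece one also uses the extra $\nu^{1/3}$ coming from the cutoff $|k|>\nu^{-1/3}$, and for the $\widetilde\omega_L$ and $\mathrm{e}^{-\nu t}(V+V'')\partial_x\phi_e$ pieces a crude Cauchy--Schwarz in time over $|I|\le T_1$. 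The only nonroutine ingredient is the arithmetic of the time integrals: with $T_0=\nu^{-1/6}$ and $T_1=\nu^{-4/9}$ one has $\int_{T_0}^{T_1}\langle\nu t^3\rangle^{-1}\,dt\sim\int_{T_0}^{T_1}\langle\nu t^3\rangle^{-1/2}\,dt\sim\nu^{-1/3}$ and $\int_{T_0}^{T_1}t\langle\nu t^3\rangle^{-1}\,dt\sim\nu^{-2/3}$, which is precisely the milestone $\langle\nu T_1^3\rangle^{-1}\sim\nu^{1/3}$; these produce the $\nu^{-1/3}\|\omega_e\|_{X_I}\|\omega_0\|_{H^3}$ terms, while $u_L\cdot\nabla\omega_L$ (via \eqref{est:uLnawL}) gives $O(\|\omega_0\|_{H^3}^2)$ and the $V$-only terms give $O(\nu^{1/3}\|\omega_0\|_{H^3})$.

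\textbf{The second and third inequalities.} For the second, I would write $u\cdot\nabla u_e=u_L\cdot\nabla u_e+u_e\cdot\nabla u_e$; the self-interaction $u_e\cdot\nabla u_e$ is estimated in $L^2(I;L^2)$ by H\"older and Lemma~\ref{lem:norm-X}, producing (after the $\nu^{-1/2}$) the $\nu^{-2/3}\|\omega_e\|_{X_I}^2$ term, and the cross term $u_L\cdot\nabla u_e$ is handled by the good-derivative splitting $\partial_y u_e=(\partial_y-t_*V'\partial_x)u_e+t_*V'\partial_x u_e$, using that the slowly decaying part of $u_L^x$ in \eqref{est:u1Linf} carries a factor $|V'|$ (to be paired with the $|V'|^{1/2}$-weighted component of $\|\cdot\|_{X_I}$) and that $u_L^y$, $t_*V'u_L^y$ enjoy the strong decay of \eqref{est:u2Linf}, \eqref{est:t*Vparw}; again the time integrals are tuned to $T_0,T_1$ so that the outcome is $\nu^{-1/3}\|\omega_e\|_{X_I}\|\omega_0\|_{H^3}$. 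For the third inequality I would insert the pointwise bound $\||D_x|^{1/2}\nabla Er_{L3}(t)\|_{L^2}\lesssim\langle t\rangle^{-2}|\nu t^3|^{1/2}\|\omega_0\|_{H^3}$ of Proposition~\ref{prop:error}, square, and integrate: $\int_{T_0}^{T_1}\langle t\rangle^{-4}\nu t^3\,dt\sim\nu\ln(T_1/T_0)\sim\nu|\ln\nu|$, giving $\||D_x|^{1/2}\nabla Er_{L3}\|_{L^2(I;L^2)}\lesssim\nu^{1/2}|\ln\nu|^{1/2}\|\omega_0\|_{H^3}\le C\nu^{1/2}|\ln\nu|\|\omega_0\|_{H^3}$.

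\textbf{Main obstacle.} The hard part will not be any single inequality but the terms that are only \emph{linear} in $\omega_e$ with a prefactor carrying no spare power of $\nu$ --- namely the cross term $u_L\cdot\nabla u_e$ and the reaction-type pieces $u_e^x\partial_x\omega_L$ and $t_*V'P_{\text{high}}u_e^y\partial_x\omega_L$: extracting exactly the exponent $\nu^{-1/3}$, and not a worse one, forces the \emph{coordinated} use of the sharp weighted decay of $\omega_L$ and $u_L$ from Proposition~\ref{prop:app}, the $|V'|^{1/2}$-weighted part of the $X_I$ norm, the good-derivative splitting, and careful bookkeeping of the time integrals over $[T_0,T_1]$. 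The clean structural point that makes everything line up is the vorticity identity with $R(u_e,u_e)=0$, which confines the quadratic-in-$\omega_e$ interaction to the $\text{curl}$-channel (the second inequality) and thereby keeps the first inequality free of a $\|\omega_e\|_{X_I}^2$ contribution.
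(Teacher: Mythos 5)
Your proposal is correct and follows essentially the same route as the paper: the same cancellation $u_e\cdot\nabla\omega_e=\mathrm{curl}(u_e\cdot\nabla u_e)$ reducing the transport contribution to the $u_L$-$u_e$ cross terms, the same term-by-term H\"older pairings of Proposition~\ref{prop:app}, Proposition~\ref{prop:error} and Lemma~\ref{lem:norm-X} against the components of $\|\cdot\|_{X_I}$, and the same time-integral arithmetic on $[T_0,T_1]$ for all three inequalities. The only deviations are cosmetic (e.g.\ gaining $\nu^{1/3}$ rather than $\nu^{2/3}$ from the $P_{\text{high}}$ cutoff with a compensating $L^1_t$ pairing, and an unnecessary good-derivative splitting of $\partial_yu_e$ in the cross term, where the paper's direct use of the $\langle t\rangle^{-3/2}$ decay of $u_L^y$ and the $|V'|^{1/2}$-weighted norm already suffices), and they do not affect the outcome.
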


\begin{proof}
Recalling \eqref{def:G}, we have
\begin{align*}
   G-Er_{L3}-\text{curl} ( u\cdot \nabla u_e)=&{u^x_e\partial_x\omega_L}  +u^y_e(\partial_y+t_*V'\partial_x)\omega_L-\mathrm{e}^{-\nu t}(V+V'')\partial_x\phi_e\\
 \notag&-t_*V'P_{\text{high}}u^y_e\partial_x\omega_L-t_*V'P_{\text{low}}u^y_e\partial_x \widetilde{\omega}_L\\
 \notag&+u\cdot\nabla \omega_e -\text{curl}(u\cdot\nabla u_e)\\
 \notag& +Er-Er_{L3},
\end{align*}

Next we estimate each term of $G-Er_{L3}-\text{curl} ( u_L\cdot \nabla u_e)$ by the following four steps.\smallskip
  
\textit{ Step 1}. Estimate of $ u^x_e\partial_x\omega_L+u^y_e(\partial_y+t_*V'\partial_x)\omega_L$ and 
  $ \mathrm{e}^{-\nu t}(V+V'')\partial_x\phi_e$.\smallskip

   By \eqref{est:paromL-inf} in Proposition \ref{prop:app} and \eqref{ass:U'-b'}, we have
  \begin{align*}
     \|u^y_e(\partial_y+t_*V'\partial_x)\omega_L\|_{L^2}\leq & \|u^y_e\|_{L^2}\|(\partial_y+t_*V'\partial_x)\omega_L\|_{L^\infty}
     \leq C\|u^y_e\|_{L^2}\|\omega_{0}\|_{H^3},\\ \|\mathrm{e}^{-\nu t}(V+V'')\partial_x\phi_e\|_{L^2}\leq & \|\partial_x\phi_e\|_{L^2}\|V+V''\|_{L^\infty}
     \leq C\|\partial_x\phi_e\|_{L^2}\|\omega_{0}\|_{H^3},
   \end{align*}
  from which, we infer that (as $u^y_e=\partial_x\phi_e $, here $L^pL^q=L^p(T_0, T;L^q(\Omega))$)
   \begin{align*}
      &\| u^y_e(\partial_y+t_*V'\partial_x)\omega_L\|_{L^1L^2}+\| \mathrm{e}^{-\nu t}(V+V'')\partial_x\phi_e\|_{L^1L^2}\\
       \leq & C\|\partial_x\phi_e\|_{L^2L^2}T^{1/2}\|\omega_{0}\|_{H^3}\leq CT^{1/2}\|\omega_e\|_{X_I}\|\omega_0\|_{H^3}\leq C\nu^{-\f13}\|\omega_e\|_{X_I}\|\omega_0\|_{H^3}.
   \end{align*}
 By \eqref{est:paromL-inf} in Proposition \ref{prop:app} again, we get
   \begin{align*}
 \| u^x_e\partial_x\omega_L\|_{L^2}\leq  & \|u^x_e\|_{L^2}\|\partial_x\omega_L\|_{L^\infty}
 \leq C \|\omega_e\|_{L^2}\langle\nu t^3\rangle^{-1}\|\omega_{0}\|_{H^3},
   \end{align*}
   which gives
     \begin{align*}
      &\| u^x_e\partial_x\omega_L\|_{L^1L^2}
       \leq C\|\omega_e\|_{L^\infty L^2}\|\langle\nu t^3\rangle^{-1}\|_{L^1(T_0,T)}\|\omega_{0}\|_{H^3}
      \leq  C\nu^{-\f13}\|\omega_e\|_{X_I}\|\omega_0\|_{H^3}.
   \end{align*}

\textit{Step 2}. Estimate of $t_*V'P_{\text{high}}u^y_e\partial_x\omega_L$ and $t_*V'P_{\text{low}}u^y_e\partial_x\widetilde{\omega}_L$. \smallskip

By \eqref{est:t*Vparw} in Proposition \ref{prop:app}, we have
\begin{align*}
     \|t_*V'P_{\text{high}}u^y_e\partial_x\omega_L\|_{L^2}\leq & \|P_{\text{high}}u^y_e\|_{L^2}\|t_*V'\partial_x\omega_L\|_{L^{\infty}}
     \leq C\nu^{\f23}\|u_e^y\|_{H^2}t\langle\nu t^3\rangle^{-1}\|\omega_{0}\|_{H^3},
   \end{align*}
   and
   \begin{align*}
     \|t_*V'P_{\text{low}}u^y_e\partial_x\tilde{\omega}_L\|_{L^2}\leq & \|P_{\text{low}}u^y_e\|_{L^2}\|t_*V'\partial_x\widetilde{\omega}_L\|_{L^{\infty}}
     \leq C\|u^y_e\|_{L^2}\|\omega_{0}\|_{H^3}.
   \end{align*}
 Then we conclude (as $\|u_e^y\|_{H^2}\leq C\|\nabla\omega_e\|_{L^2}$)
   \begin{align*}
      \|t_*V'P_{\text{high}}u^y_e\partial_x\omega_L\|_{L^1L^2}\leq& C\nu^{\f23}\|\nabla\omega_e\|_{L^2L^2}\|t\langle\nu t^3\rangle^{-1}\|_{L^2(T_0,T)}\|\omega_{0}\|_{H^3}\\
      \leq&C\nu^{\f23}\nu^{-\f12}\|\omega_e\|_{X_I}\nu^{-\f12}\|\omega_0\|_{H^3}=  C\nu^{-\f13}\|\omega_e\|_{X_I}\|\omega_0\|_{H^3},
   \end{align*}
   and
   \begin{align*}
      \|t_*V'P_{\text{low}}u^y_e\partial_x\tilde{\omega}_L\|_{L^1L^2}\leq& C\|\partial_x\phi_e\|_{L^2L^{2}}T^{\f12}\|\omega_{0}\|_{H^3}\\
      \leq&C\|\omega_e\|_{X_I}T^{\f12}\|\omega_0\|_{H^3}\leq  C\nu^{-\f13}\|\omega_e\|_{X_I}\|\omega_0\|_{H^3}.
   \end{align*}
   
\textit{Step 3}. Estimate of $u\cdot\nabla \omega_e-\text{curl} ( u_L\cdot \nabla u_e)$.\smallskip

Thanks to $u_e\cdot\nabla \omega_e= \text{curl} ( u_e\cdot \nabla u_e)$, we have
  \begin{align*}
  &u\cdot\nabla \omega_e - \text{curl} ( u\cdot \nabla u_e)\\
  &=u_L\cdot\nabla \omega_e - \text{curl} ( u_L\cdot \nabla u_e) +\big(u_e\cdot\nabla \omega_e - \text{curl} ( u_e\cdot \nabla u_e) \big)\\
    &=  u_L\cdot\nabla \omega_e - \text{curl} ( u_L\cdot \nabla u_e)=
-\partial_xu_L\cdot\nabla u_e^y+\partial_yu_L\cdot\nabla u^x_e.
  \end{align*}
  
  By  \eqref{est:paromL-inf} in Proposition \ref{prop:app}, we get
  \begin{align*}
     \|-\partial_xu_L\cdot\nabla u_e^y(t)+\partial_yu_L\cdot\nabla u^x_e(t)\|_{L^2}&\leq C\|\nabla u_L(t)\|_{L^\infty_y}\|\nabla u_e(t)\|_{L^2}
     \\&\leq C\langle \nu t^3\rangle^{-1}\|\omega_0\|_{H^3}\|\omega_e(t)\|_{L^2},
  \end{align*}
  Then we infer that
  \begin{align}\label{est:unablaome-1}
     \|u\cdot\nabla \omega_e - \text{curl} ( u\cdot \nabla u_e)\|_{L^1L^2}=& \| \partial_xu_e\cdot\nabla u_L^y(t)-\partial_yu_e\cdot\nabla u^x_L(t)\|_{L^1L^2}\\
     \leq &C\|\langle\nu t^3\rangle^{-1}\|_{L^1(T_0,T)}\|\omega_0\|_{H^3}\|\omega_e\|_{L^\infty L^2}\nonumber\\
     \leq &C\nu^{-\f13}\|\omega_0\|_{H^3}\|\omega_e\|_{X_0}.\nonumber
  \end{align}

\textit{Step 4}. Estimate of $Er-Er_{L3}$.\smallskip

  Recalling \eqref{def:Er} and \eqref{def:ErL=ErL123}, we have 
  \beno
 Er-Er_{L3}=Er_{L1}+Er_{L2}-\mathrm{e}^{-\nu t}(V''+V)\partial_x\phi_L+u_L\cdot\nabla\omega_L+\nu\mathrm{e}^{-\nu t}(V'+V''').
 \eeno 
  By Proposition \ref{prop:error}, we have
  \begin{align*}
     & \|Er_{L1}+Er_{L2}\|_{L^1(T_0,T;L^2)}\leq C\|(\nu\langle t\rangle\langle \nu t^3\rangle^{-1}+\nu^{1/3}\langle t\rangle^{-2})\|_{L^1(T_0,T)}\|\omega_0\|_{H^3}\\
   &\leq C\|(\nu^{2/3}\langle \nu t^3\rangle^{-2/3}+\nu^{1/3}\langle t\rangle^{-2})\|_{L^1(T_0,T)}\|\omega_0\|_{H^3}\leq C\nu^{1/3}\|\omega_0\|_{H^3}.
   \end{align*}
By \eqref{est:u2Linf} in Proposition \ref{prop:app} and $u^y_L=\partial_x\phi_L$, we have
\begin{align*}
   &\|\mathrm{e}^{-\nu t}(V''+V)\partial_x\phi_L\|_{L^1(T_0,T;L^2)}\leq C\|V''+V\|_{L^\infty}\|\mathrm{e}^{-\nu t}u_L^y\|_{L^1(T_0,T;L^2)}\\
&\leq C\|\omega_0\|_{H^3}\|\mathrm{e}^{-\nu t}\langle \nu t^3\rangle^{1/2}\langle t\rangle^{-2}\|_{L^1(T_0,T)}\|\omega_{0}\|_{H^3}
\leq C\|\omega_0\|_{H^3}^2.
\end{align*}
By \eqref{est:uLnawL} in Proposition \ref{prop:app}, we have
\begin{align*}
   &\|u_L\cdot\nabla\omega_L\|_{L^1(T_0,T;L^2)}\leq C\|u_L\cdot\nabla\omega_L\|_{L^1(T_0,T;L^\infty)}\\
   &\leq C\|(\langle t\rangle^{-1.2}+| \nu t^3|^{1/2}\langle \nu t^3\rangle^{-1}\langle t\rangle^{-1})\|_{L^1_t}\|\omega_{0}\|_{H^3}^2
\\
   &\leq C\|(\langle t\rangle^{-1.2}+\nu^{1/3}(\nu t^3)^{1/6}\langle \nu t^3\rangle^{-1})\|_{L^1_t}\|\omega_{0}\|_{H^3}^2
   \leq C\|\omega_{0}\|_{H^3}^2.
\end{align*}
We also have
\begin{align*}
    \|\nu\mathrm{e}^{-\nu t}(V'+V''')\|_{L^1(T_0,T;L^2)}\leq& C\|\nu\mathrm{e}^{-\nu t}\|_{L^1(T_0,T)}\|(V'+V''')\|_{L^\infty L_y^2}\\
\leq& C\nu T\|\omega_0\|_{H^3}\leq C\nu^{\f13}\|\omega_0\|_{H^3}.
\end{align*}
 This shows that 
  \begin{align*}
     & \|Er-Er_{L3}\|_{L^1(T_0,T_1;L^2)}\leq C\nu^{\f13}\|\omega_0\|_{H^3}+C\|\omega_0\|_{H^3}^2.
  \end{align*}
  
  Thus, we finish the estimate of $\|G-Er_{L3}\|_{L^1(T_0,T;L^2)}$ by these four steps.\smallskip
  
   Next we estimate $\nu^{-\f12}\|u\cdot \nabla u_e\|_{L^2L^2}$. By \eqref{est:u1Linf} and \eqref{est:u2Linf} in Proposition \ref{prop:app} (for $T_0\leq t\leq T\leq T_1\leq \nu^{-\f12}$), 
   \begin{align*}
     & |u^x_L(t,x,y)|\leq C (\langle t\rangle^{-\f32}+\langle t\rangle^{-1}|V'|)\|\omega_0\|_{H^3},\\
     &\|u^y_L(t)\|_{L^\infty}\leq C\langle \nu t^3\rangle^{1/2}\langle t\rangle^{-2}\|\omega_0\|_{H^3}\leq C \langle t\rangle^{-\f32}\|\omega_0\|_{H^3},\\
     &\|u_L\cdot \nabla u_e\|_{L^2}\leq C(\langle t\rangle^{-\f32}\|\nabla u_e\|_{L^2}+\langle t\rangle^{-1}\|V'\partial_xu_e\|_{L^2})\|\omega_0\|_{H^3}\\ &\quad\leq C(\langle t\rangle^{-\f32}\|\omega_e\|_{L^2}+\langle t\rangle^{-1}\||V'|^{\f12}\partial_xu_e\|_{L^2})\|\omega_0\|_{H^3}
  \end{align*} 
  Then we infer that (recall that $u_e=(-\partial_y,\partial_x)\phi_e=(-\partial_y,\partial_x)\Delta^{-1}\omega_e$)
  \begin{align}\label{est:uLnaue}
     &\nu^{-\f12}\|u_L\cdot \nabla u_e\|_{L^2L^2}\\
     &\leq C\nu^{-\f12}\big(\|\langle t\rangle^{-1}\|_{L^\infty(T_0,T)}\||V'|^{\f12}\partial_xu_e\|_{L^2L^2}+\|\langle t\rangle^{-\f32}\|_{L^2(T_0,T)}\| \omega_e\|_{L^\infty L^2}\big)\|\omega_0\|_{H^3}\nonumber\\
     &\leq C\nu^{-\f12}T_0^{-1}\|\omega_e\|_{X_I}\|\omega_0\|_{H^3}
     \leq C\nu^{-\f13}\|\omega_e\|_{X_I}\|\omega_0\|_{H^3}.\nonumber
  \end{align}
  
   For $u_e\cdot\nabla u_e$, we have
  \begin{align*}
     u_e\cdot\nabla u_e=\mathrm{P}_{\neq} u_e\cdot\nabla u_e+\mathrm{P}_0u^x_e\partial_xu_e. 
  \end{align*}
  By Lemma \ref{lem:norm-X} and $u_e=(-\partial_y,\partial_x)\Delta^{-1}\omega_e$, we get
  \begin{align*}
     &\|\partial_x u_e\|_{L^2L^2}+\|\mathrm{P}_{\neq}u_e\|_{L^2L^\infty}=
     \|\partial_x \nabla\Delta^{-1}\omega_e\|_{L^2L^2}+\|\mathrm{P}_{\neq}\nabla\Delta^{-1}\omega_e\|_{L^2L^\infty}\leq C\nu^{-\f{1}{6}}\|\omega_e\|_{X_I},\\
     &\|\mathrm{P}_0u_e\|_{L^\infty}\leq C\|\mathrm{P}_0u_e\|_{H^1}\leq C\|\omega_e\|_{L^2}.
  \end{align*} 
  Thus, we obtain 
  \begin{align*}
  &\|\mathrm{P}_{\neq} u_e\cdot\nabla u_e\|_{L^2L^2}\leq \|\mathrm{P}_{\neq} u_e\|_{L^2L^\infty}\|\nabla u_e\|_{L^\infty L^2} \leq C\nu^{-\f16}\|\omega_e\|_{X_I}^2,\\
     &\|\mathrm{P}_0u^x_e\partial_xu_e\|_{L^2L^2}\leq \|\mathrm{P}_0u_e
     \|_{L^\infty L^\infty}\|\partial_x u_e\|_{L^2L^2} \leq C\nu^{-\f16}\|\omega_e\|_{X_I}^2.
  \end{align*}
   This shows that
  \begin{align*}
     & \nu^{-\f12}\|u_e\cdot\nabla u_e\|_{L^2L^2}\leq C\nu^{-\f23}\|\omega_e\|_{X_I}^2,
  \end{align*}
  which along with \eqref{est:uLnaue} and the fact $u\cdot\nabla u_e=u_L\cdot\nabla u_e+u_e\cdot\nabla u_e$  gives
  \begin{align}\label{est:unablaome-2}
   \nu^{-\f12} \| u\cdot \nabla u_e\|_{L^2(T_0,T_1;L^2)}
     \leq &C\nu^{-\f13}\|\omega_0\|_{H^3}\|\omega_e\|_{X_I}+ C\nu^{-\f23}\|\omega_e\|_{X_I}^2.
  \end{align}
  
  For $\||D_x|^{\f12}\nabla Er_{L3}(t)\|_{L^2(T_0, T_1; L^2)}$, we get by Proposition \ref{prop:error} that
  \begin{align*}
     &\||D_x|^{\f12}\nabla Er_{L3}\|_{L^2(T_0, T; L^2)}
     \leq  C\|\langle t\rangle^{-2}|\nu t^3|^{1/2}\|_{L^2(T_0,T)}\|\omega_{0}\|_{H^3}\leq C\nu^{\f12}|\ln\nu|\|\omega_0\|_{H^3}.
  \end{align*}

 This completes the proof of this lemma.
\end{proof}

Now we are in a position to prove Proposition \ref{prop:om-star}.

 \begin{proof}[Proof of Proposition \ref{prop:om-star}]
 Recall that $\omega_*$ satisfies
 \begin{align*}
    & \partial_t\omega_{*}-\nu \Delta \omega_{*}+\mathrm{e}^{-\nu t}V\partial_x(\omega_{*}+\Delta^{-1}\omega_{*}) +G=0, \quad\omega_{*}|_{t=T_0}=\omega_{e}|_{t=T_0},
 \end{align*}
 Thanks to $G=(G-Er_{L3}-\text{curl}(u\cdot\nabla u_e))+ \text{curl}(u\cdot\nabla u_e)+ Er_{L3}$, we get by 
 Proposition \ref{prop:LNS-sp1} and Lemma \ref{lem:G} that 
 \begin{align*}
    \|\omega_{*}\|_{X_0}\leq& C\|G-Er_{L3}-\text{curl}(u\cdot\nabla u_e)\|_{L^1(T_0, T;L^2)}+C\nu^{-\f12}\|u\cdot\nabla u_e\|_{L^2(T_0, T;L^2)}\\
    &+C\||D_x|^{\f12}\nabla Er_{L3}\|_{L^2(T_0, T;L^2)}+C\|\omega_*(T_0)\|_{L^2}\\
    \leq & C\big(\nu^{-\f23}\|\omega_e\|_{X_I}^2+\nu^{-\f13}\|\omega_e\|_{X_I}\|\omega_0\|_{H^3}+
 \nu^{\f13}\|\omega_{0}\|_{H^3} +\|\omega_0\|_{H^3}^2\big)\\
 &+C\nu^{\f12}|\ln(\nu)|\|\omega_0\|_{H^3} +C\|\omega_*(T_0)\|_{L^2}.
 \end{align*}
 \end{proof}

\subsection{Energy estimate in large-time regime}\label{Energy-large-regime}

In this subsection, we prove Proposition \ref{prop:omega-e-large}.

\begin{proof}
First of all, we prove the following estimate: for $T\in[T_1, 1/\nu],$ $I=[T_1,T]$, 
     \begin{align}\label{eq:om-large}
       \|\omega\|_{X_I}+\|\mathrm{e}^{\epsilon\nu^{\f12}t}\omega_{\neq}\|_{X_I}\leq & C\big(\|\omega(T_1)\|_{L^2}+\nu^{-\f23}\|\omega\|_{X_I}\|\mathrm{e}^{\epsilon\nu^{\f12}t}\omega_{\neq}\|_{X_I}\big).
     \end{align}
Thanks to \eqref{ass:U-b1} and \eqref{ass:U-b2},  applying Proposition \ref{prop:LNS-sp2} to \eqref{eq:>T1}
({taking $\tilde{\epsilon}=\epsilon $ and using} $u\cdot\nabla \omega=-\partial_y(u\cdot\nabla u^x)+\partial_x(u\cdot\nabla u^y)$), 
we obtain 
\begin{align}
     \label{om1a}  \|\omega\|_{X_I}+\|\mathrm{e}^{\epsilon\nu^{\f12}t}\omega_{\neq}\|_{X_I}\leq & 
       C\big(\|\omega(T_1)\|_{L^2}+\nu^{-\f12}\|\mathrm{e}^{\epsilon\nu^{\f12}t}u\cdot\nabla u\|_{L^2(I;L^2)}\big).
     \end{align}
  By Lemma \ref{lem:norm-X} and $u=(-\partial_y,\partial_x)\Delta^{-1}\omega$, we get
  \begin{align*}
     &\|\mathrm{e}^{\epsilon\nu^{\f12}t}\partial_x u\|_{L^2(I;L^2)}+\|{\mathrm{e}^{\epsilon\nu^{\f12}t}}\mathrm{P}_{\neq}u\|_{L^2(I;L^\infty)}\\&=
     \|\mathrm{e}^{\epsilon\nu^{\f12}t}\partial_x \nabla\Delta^{-1}\omega_{\neq}\|_{L^2(I;L^2)}+\|\mathrm{e}^{\epsilon\nu^{\f12}t}\nabla\Delta^{-1}\omega_{\neq}\|_{L^2(I;L^\infty)}\leq C\nu^{-\f{1}{6}}\|\mathrm{e}^{\epsilon\nu^{\f12}t}\omega_{\neq}\|_{X_I},\\
     &\|\mathrm{P}_0u\|_{L^\infty}\leq C\|\mathrm{P}_0u\|_{H^1}\leq C\|\omega\|_{L^2}\leq C\|\omega\|_{X_I}.
  \end{align*} 
  Thus, we obtain 
  \begin{align*}
  &\|\mathrm{e}^{\epsilon\nu^{\f12}t}\mathrm{P}_{\neq} u\cdot\nabla u\|_{L^2(I;L^2)}\leq \|\mathrm{e}^{\epsilon\nu^{\f12}t}\mathrm{P}_{\neq} u\|_{L^2(I;L^\infty)}\|\nabla u\|_{L^\infty (I;L^2)} \leq C\nu^{-\f16}\|\mathrm{e}^{\epsilon\nu^{\f12}t}\omega_{\neq}\|_{X_I}\|\omega\|_{X_I},\\
     &\|\mathrm{e}^{\epsilon\nu^{\f12}t}\mathrm{P}_0u^x\partial_xu\|_{L^2(I;L^2)}\leq \|\mathrm{P}_0{u}
     \|_{L^\infty(I;L^\infty)}\|\mathrm{e}^{\epsilon\nu^{\f12}t}\partial_x u\|_{L^2(I;L^2)} \leq C\nu^{-\f16}\|\omega\|_{X_I}\|\mathrm{e}^{\epsilon\nu^{\f12}t}\omega_{\neq}\|_{X_I}.
  \end{align*}
   Thanks to $u\cdot\nabla u=\mathrm{P}_{\neq} u\cdot\nabla u+\mathrm{P}_0u^x\partial_xu$, we obtain
  \begin{align}
   \label{om2b}  & \nu^{-\f12}\|\mathrm{e}^{\epsilon\nu^{\f12}t}u\cdot\nabla u\|_{L^2(I;L^2)}\leq C\nu^{-\f23}\|\omega\|_{X_I}\|\mathrm{e}^{\epsilon\nu^{\f12}t}\omega_{\neq}\|_{X_I}.
  \end{align}
  Now \eqref{eq:om-large} follows from \eqref{om1a} and \eqref{om2b}.

Next we use a bootstrap argument. First, we assume that $\|\omega\|_{X_{[T_1,T]}}\leq \varepsilon_0\nu^{\f23}$ for small $\varepsilon_0$ determined later, which holds for some $T>T_1$ by $\|\omega(T_1)\|_{L^2}\leq \epsilon_2\nu^{\f23}$(for $\epsilon_2$ small enough). 
Then \eqref{eq:om-large} implies that 
 \begin{align}\label{est>T1}
   \|\omega\|_{X_{[T_1,T]}}+\|\mathrm{e}^{\epsilon\nu^{\f12}t}\omega_{\neq}\|_{X_{[T_1,T]}}\leq C\|\omega(T_1)\|_{L^2}\leq  C\epsilon_2\nu^{\f23}.
 \end{align}
 Taking $\epsilon_2=\varepsilon_0/(2C)$, we can conclude \eqref{est>T1} for $T=1/\nu$, and the result follows from 
the definition of the $X_{[T_1,T]}$ norm.
\end{proof}

Next, we prove Proposition \ref{prop2}.

\begin{proof}
Thanks to \eqref{ass:U-b1} and \eqref{ass:U-b2},  applying Proposition \ref{Prop1} to \eqref{eq:>T1}
(using $u\cdot\nabla \omega=-\partial_y(u\cdot\nabla u^x)+\partial_x(u\cdot\nabla u^y)$), 
we obtain 
\begin{align*}
&\|\omega_{\neq}\|_{\YI}:=\|\mathrm{e}^{\nu t}\omega_{\neq}\|_{L^{\infty}(I;L^2)}+\nu^{\f16}\|\mathrm{e}^{\nu t/2}\partial_x \nabla\Delta^{-1}\omega\|_{L^2(I;L^2)}
+\nu^{\f16}\|\mathrm{e}^{\nu t/2}\nabla\Delta^{-1}\omega_{\neq}\|_{L^2(I;L^\infty)}\\ &\leq 
C\big(\|\omega_{\neq}|_{t=1/\nu}\|_{L^2}+\nu^{-\f12}\|\mathrm{e}^{\nu t}u\cdot\nabla u\|_{L^2(I;L^2)}\big),\\
&\|\mathrm{e}^{\nu t/2}\omega\|_{L^{\infty}(I;L^2)}\leq C\big(\|\omega|_{t=1/\nu}\|_{L^2}+\nu^{-\f12}\|\mathrm{e}^{\nu t}u\cdot\nabla u\|_{L^2(I;L^2)}\big).
\end{align*}
Thanks to $u\cdot\nabla u=\mathrm{P}_{\neq} u\cdot\nabla u+\mathrm{P}_0u^x\partial_xu$, and $u=(-\partial_y,\partial_x)\Delta^{-1}\omega$, we obtain
  \begin{align*}
     & \|\mathrm{e}^{\nu t}u\cdot\nabla u\|_{L^2(I;L^2)}\\ \leq&  
   \|\mathrm{e}^{\nu t/2}\mathrm{P}_{\neq}u\|_{L^2(I;L^{\infty})}\|\mathrm{e}^{\nu t/2}\nabla u\|_{L^{\infty}(I;L^{2})} +
   \|\mathrm{e}^{\nu t/2}\mathrm{P}_0u^x\|_{L^{\infty}(I;L^{\infty})}\|\mathrm{e}^{\nu t/2}\partial_xu\|_{L^{\infty}(I;L^{2})} \\ \leq&\|\mathrm{e}^{\nu t/2}\nabla\Delta^{-1}\omega_{\neq}\|_{L^2(I;L^{\infty})}\|\mathrm{e}^{\nu t/2}\omega\|_{L^{\infty}(I;L^{2})}\\ &+\|\mathrm{e}^{\nu t/2}\omega\|_{L^{\infty}(I;L^{\infty})}\|\mathrm{e}^{\nu t/2}\partial_x\nabla\Delta^{-1}\omega\|_{L^{\infty}(I;L^{2})}\leq \nu^{-\f16}\|\mathrm{e}^{\nu t/2}\omega\|_{L^{\infty}(I;L^2)}\|\omega_{\neq}\|_{\YI}.
  \end{align*}
  Thus,
  \begin{align}\label{eq:om1}
&\|\omega_{\neq}\|_{\YI}\leq 
C\big(\|\omega_{\neq}|_{t=1/\nu}\|_{L^2}+\nu^{-\f23}\|\mathrm{e}^{\nu t/2}\omega\|_{L^{\infty}(I;L^2)}\|\omega_{\neq}\|_{\YI}\big),\\
\label{eq:om2}&\|\mathrm{e}^{\nu t/2}\omega\|_{L^{\infty}(I;L^2)}\leq C\big(\|\omega|_{t=1/\nu}\|_{L^2}+\nu^{-\f23}\|\mathrm{e}^{\nu t/2}\omega\|_{L^{\infty}(I;L^2)}\|\omega_{\neq}\|_{\YI}\big).
\end{align}
 Next we use a bootstrap argument. First, we assume that $\|\omega_{\neq}\|_{\YI}+\|\mathrm{e}^{\nu t/2}\omega\|_{L^{\infty}(I;L^2)}\leq \varepsilon_0\nu^{\f23}$ for small $\varepsilon_0$ determined later, which holds for some $T>1/\nu$ by $\|\omega|_{t=1/\nu}\|_{L^2}\leq \epsilon_3\nu^{\f23}$(for $\epsilon_3$ small enough). 
Then \eqref{eq:om1}, \eqref{eq:om2} implies that 
 \begin{align}\label{est>1/nu}
   \|\omega_{\neq}\|_{\YI}\leq 
C\|\omega_{\neq}|_{t=1/\nu}\|_{L^2}\leq C\epsilon_3\nu^{\f23},\quad \|\mathrm{e}^{\nu t/2}\omega\|_{L^{\infty}(I;L^2)}\leq C\|\omega|_{t=1/\nu}\|_{L^2}\leq  C\epsilon_3\nu^{\f23}.
 \end{align}
 Taking $\epsilon_3=\varepsilon_0/(3C)$, we can conclude \eqref{est>1/nu} for $T=+\infty$, and the result follows from 
the definition of the $Y_{I}$ norm. 
\end{proof}

\section{Control of the reaction term}\label{sec:reactionpart}
In this section, we control the most difficult reaction part $\om_{re}$, which solves 
\begin{align}\label{eq:reaction-sec}
   & \partial_t\omega_{re}-\nu \Delta \omega_{re}+\mathrm{e}^{-\nu t}V\partial_x(\omega_{re}+\Delta^{-1}\omega_{re}) -t_*V'P_{\text{low}}u^y_e\partial_x\omega_L^* =0, \quad\omega_{re}|_{t=T_0}=0.
\end{align}
For fixed $T\in[T_0,T_1]$, let $N=\inf(\Z\cap[(T-T_0)\nu^{\f13},+\infty))$, $t_{N}=T$, $t_j=T_0+j\nu^{-\f13}$, for $j\in\Z\cap[0,N)$.
We introduce the following decomposition to the time interval
\beno
[T_0,T]=\cup_{j=1}^NI_j,\quad I_j:=[t_{j-1},t_{j}].
\eeno
\subsection{Plancherel's type lemma}

Let $\Psi$ be a nonnegative function, which is supported on the interval $1-1/7\leq y\leq 2$, and equals to 1 on the smaller interval $1\leq y\leq 2-2/7$, and satisfies $\Psi(y)+\Psi(y/2)=1$ for $1\leq y\leq 4-4/7$. Then we have
\begin{align*}
\sum_{m\in \mathbb{Z}}\Psi(2^{m}y) =1\quad \text{for}\,\, y>0.
\end{align*}

Let $V'(y_1)=V'(y_2)=0$ with $|y_1|<1/10$ and $|y_2-\pi|<1/10$. Then $\pi/2-y_1,\ y_1+\pi/2,\ y_2-\pi/2,\ 3\pi/2-y_2\in (1,2-2/7)$.
We denote
\begin{align*}
  &\Psi_0^+(y)=\Psi(y-y_1)\quad y\in(y_1,\pi/2],\quad \Psi_0^+(y)=\Psi(y_2-y)\quad y\in[\pi/2,y_2),\\
  &\Psi_m^+(y)=\Psi(2^m(y-y_1)),\quad \Psi_{-m}^+(y)=\Psi(2^m(y_2-y))\quad y\in(y_1,y_2),\ m\in\Z_+,\\
 &\Psi_0^-(y)=\Psi(y_1-y),\quad y\in[-\pi/2,y_1),\quad \Psi_0^-(y)=\Psi(y+2\pi-y_2)\quad y\in(y_2-2\pi,-\pi/2]),\\
  &\Psi_m^-(y)=\Psi(2^m(y_1-y)),\quad \Psi_{-m}^-(y)=\Psi(2^m(y+2\pi-y_2))\quad y\in(y_2-2\pi,y_1),\ m\in\Z_+. 
\end{align*}
We then have
\begin{align}\label{eq:Psisum1}
   &\sum_{m\in\Z}\Psi^{+}_m(y)=1\ \forall\ y\in(y_1,y_2),\quad \sum_{m\in\Z}\Psi^{-}_m(y)=1\ \forall\ y\in(y_2-2\pi,y_1),\\
  \label{eq:Psisum2}
   &\f12\leq\sum_{m\in\Z}\big(\Psi^{+}_m(y)\big)^2\leq1\ \forall\ y\in(y_1,y_2),\quad \f12\leq\sum_{m\in\Z}\big(\Psi^{-}_m(y)\big)^2\leq1\ 
   \forall\ y\in(y_2-2\pi,y_1).
\end{align}
We define $ \Psi^{+}_m(y)=0$ for $y\in(y_2-2\pi,y_1)$, $\Psi^{-}_m(y)=0$ for $y\in(y_1,y_2)$, and then extend $\Psi^{\pm}_m$ to a $2\pi$-periodic function (i.e., a function on $\T_{2\pi}$). 
We define
  \begin{align}
      \widetilde{g}^{+}_m(t,\eta)&= \dfrac{1}{\sqrt{2\pi}}\int_{y_1}^{y_2}g(t,y)\Psi_{m}^{+}(y)\mathrm{e}^{-\mathrm{i} V(y)\eta}\mathrm{d}y,\quad\forall\ m\in \mathbb{Z},\\
      \widetilde{g}^{-}_m(t,\eta)&= \dfrac{1}{\sqrt{2\pi}}\int_{y_2-2\pi}^{y_1}g(t,y)\Psi_{m}^{-}(y)\mathrm{e}^{-\mathrm{i} V(y)\eta}\mathrm{d}y,\quad\forall\ m\in \mathbb{Z},\\
  \label{def:tildegmpm}    \widetilde{g}^{\pm}_m(t,\eta)&= \dfrac{1}{\sqrt{2\pi}}\int_{y_2-2\pi}^{y_2}g(t,y)\Psi_{m}^{\pm}(y)\mathrm{e}^{-\mathrm{i} V(y)\eta}\mathrm{d}y,\quad\forall\ m\in \mathbb{Z}.
  \end{align}
  
 As $V:[y_1,y_2]\to\R$ is strictly decreasing, we denote its inverse function by \\
$V^+: [V(y_2),V(y_1)]\to[y_1,y_2]$. As $V:[y_2-2\pi,y_1]\to\R$ is strictly increasing, we denote its inverse function by
$V^-:[V(y_2),V(y_1)]\to[y_2-2\pi,y_1]$.  Then we have $V(V^{-}(z))=V(V^{+}(z))=z $ for $z\in [V(y_2),V(y_1)]$.
       
\begin{lemma}\label{lem:plancherel}
 It holds that for $\gamma\in\mathbb{R}$, $m\in\mathbb{Z}$,
  \begin{align}
     &\|\widetilde{g}^{\pm}_m(t,\eta)\|_{L^2_\eta}=
     \||V'|^{-\f12}\Psi_m^{\pm}(y)g(t,y)\|_{L^2_y}
     \leq C2^{(\f12-\gamma)|m|}\||V'|^{-\gamma}\Psi_m^{\pm}(y)g(t,y)\|_{L^2_y},\label{est:gmppm}\\
  \label{gm1}  &\int_{\R}\widetilde{g}^{\pm}_m(t,\eta)\overline{\widetilde{h}^{\pm}_{m'}(t,\eta)}\mathrm{d}\eta=
    \int_{y_2-2\pi}^{y_2}|V'(y)|^{-1}\Psi_m^{\pm}(y)\Psi_{m'}^{\pm}(y)g(t,y)\overline{h(t,y)}\mathrm{d}y, \\
 & \||\eta+ls|\widetilde{g}^{\pm}_m(t,\eta)\|_{L^2_\eta}
     \leq C2^{\f {3|m|}{2}}\|\Psi^{\pm}_m(y)(\partial_y+\mathrm{i}lV's)g\|_{L^2_y}\label{est:etagmppm}\\
     &\qquad+C2^{(\f52-\gamma)|m|}\| |V'|^{-\gamma}\Psi^{\pm}_m(y)g\|_{L^2_y}+C\sum_{|m_1-m|\leq 1}2^{(\f 52-\gamma)|m|}\||V'|^{-\gamma}\Psi_{m_1}^{\pm}(y)g\|_{L^2_y}.\nonumber
  \end{align}
\end{lemma}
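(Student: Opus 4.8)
The plan is to reduce all three assertions to the ordinary one‑dimensional Plancherel theorem by the change of variables $z=V(y)$ on each branch where $V$ is monotone. Recall from the proof of Lemma \ref{lem:V'} that $V$ is strictly decreasing on $[y_1,y_2]$ and strictly increasing on $[y_2-2\pi,y_1]$; write $V^{+}\colon[V(y_2),V(y_1)]\to[y_1,y_2]$ and $V^{-}\colon[V(y_2),V(y_1)]\to[y_2-2\pi,y_1]$ for the two inverse branches, so that $(V^{\pm})'(z)=1/V'(V^{\pm}(z))$ away from the critical values $V(y_1),V(y_2)$. Each cutoff $\Psi_m^{\pm}$ is supported in a set of the form $\{y:\ |y-y_c|\sim 2^{-|m|}\}$ about the relevant critical point $y_c\in\{y_1,y_2\}$, hence at a definite distance from $y_1,y_2$; consequently $\Psi_m^{\pm}\circ V^{\pm}$ is smooth and compactly supported in the open interval $(V(y_2),V(y_1))$, and moreover $|V'|\sim 2^{-|m|}$ and $|V''|\lesssim 1$ on $\text{supp}\,\Psi_m^{\pm}$ by Lemma \ref{lem:V'}. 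By the support conditions, $\widetilde g_m^{+}$ only involves the branch $[y_1,y_2]$ and $\widetilde g_m^{-}$ only the branch $[y_2-2\pi,y_1]$; substituting $y=V^{\pm}(z)$ in \eqref{def:tildegmpm} gives
\[
\widetilde g_m^{\pm}(t,\eta)=\frac{1}{\sqrt{2\pi}}\int_{\mathbb{R}}G_m^{\pm}(t,z)\,\mathrm{e}^{-\mathrm{i}z\eta}\,\mathrm{d}z,\qquad G_m^{\pm}(t,z):=\Bigl[(g\Psi_m^{\pm})\,|V'|^{-1}\Bigr]\bigl(t,V^{\pm}(z)\bigr),
\]
i.e.\ $\widetilde g_m^{\pm}(t,\cdot)$ is the Fourier transform in $z$ of the compactly supported function $G_m^{\pm}(t,\cdot)$.

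Given this, the first two assertions become immediate. Plancherel gives $\int_{\mathbb{R}}\widetilde g_m^{\pm}\,\overline{\widetilde h_{m'}^{\pm}}\,\mathrm{d}\eta=\int_{\mathbb{R}}G_m^{\pm}\,\overline{H_{m'}^{\pm}}\,\mathrm{d}z$, where $H_{m'}^{\pm}$ is built from $h$ exactly as $G_m^{\pm}$ was from $g$; undoing the substitution ($\mathrm{d}z=|V'|\,\mathrm{d}y$) rewrites the right‑hand side as $\int_{y_2-2\pi}^{y_2}|V'|^{-1}\Psi_m^{\pm}\Psi_{m'}^{\pm}g\bar h\,\mathrm{d}y$, which is \eqref{gm1}, and specializing $h=g$, $m'=m$ yields the equality in \eqref{est:gmppm}. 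The inequality in \eqref{est:gmppm} then follows from the pointwise bound $|V'|^{-1/2}\le C\,2^{(1/2-\gamma)|m|}|V'|^{-\gamma}$ valid on $\text{supp}\,\Psi_m^{\pm}$, a direct consequence of $|V'|\sim 2^{-|m|}$ there.

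For \eqref{est:etagmppm} the key point is that the weight $\eta+ls$ acts, under the Fourier transform, as a shifted derivative: from $(\eta+ls)\mathrm{e}^{-\mathrm{i}z\eta}=\mathrm{i}(\partial_z-\mathrm{i}ls)\mathrm{e}^{-\mathrm{i}z\eta}$ and an integration by parts (with no boundary contribution, since $G_m^{\pm}$ is compactly supported in the open interval) one obtains $(\eta+ls)\widetilde g_m^{\pm}(t,\eta)=-\mathrm{i}\,\widehat{(\partial_z+\mathrm{i}ls)G_m^{\pm}}(\eta)$. Plancherel together with $\partial_z+\mathrm{i}ls=\tfrac1{V'}(\partial_y+\mathrm{i}ls\,V')$ (as operators on functions of $y$) and $\mathrm{d}z=|V'|\,\mathrm{d}y$ then gives
\[
\bigl\|(\eta+ls)\widetilde g_m^{\pm}(t,\cdot)\bigr\|_{L^2_\eta}^2=\int_{\mathbb{R}}\bigl|(\partial_z+\mathrm{i}ls)G_m^{\pm}\bigr|^2\,\mathrm{d}z=\int_{y_2-2\pi}^{y_2}|V'|^{-1}\,\bigl|(\partial_y+\mathrm{i}ls\,V')\bigl[(g\Psi_m^{\pm})|V'|^{-1}\bigr]\bigr|^2\,\mathrm{d}y .
\]
It then remains to expand $(\partial_y+\mathrm{i}ls\,V')\bigl[(g\Psi_m^{\pm})|V'|^{-1}\bigr]$ by the Leibniz rule into the three pieces $|V'|^{-1}\Psi_m^{\pm}(\partial_y+\mathrm{i}ls\,V')g$, $\ |V'|^{-1}g\,\partial_y\Psi_m^{\pm}$, and $(g\Psi_m^{\pm})\partial_y(|V'|^{-1})=-(g\Psi_m^{\pm})|V'|^{-2}\mathrm{sgn}(V')V''$, and to estimate each in $L^2(|V'|^{-1}\,\mathrm{d}y)$ using $|V'|\sim 2^{-|m|}$ and $|V''|\lesssim 1$ on $\text{supp}\,\Psi_m^{\pm}$: the first piece yields the term $C\,2^{3|m|/2}\|\Psi_m^{\pm}(\partial_y+\mathrm{i}lV's)g\|_{L^2}$; the third yields the middle term $C\,2^{(5/2-\gamma)|m|}\||V'|^{-\gamma}\Psi_m^{\pm}g\|_{L^2}$ after converting the plain $L^2$ norm into the weighted one at the cost of $2^{-\gamma|m|}$; and the second, whose integrand is supported in the overlaps of $\Psi_m^{\pm}$ with $\Psi_{m_1}^{\pm}$, $|m_1-m|\le 1$, yields the sum term by combining $|\partial_y\Psi_m^{\pm}|\lesssim 2^{|m|}$, the partition‑of‑unity lower bound \eqref{eq:Psisum1}, and once more the weight conversion.

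I expect the last step — the bookkeeping in \eqref{est:etagmppm} — to be the main obstacle: one has to match $\eta+ls$ with the ``good derivative'' $\partial_y+\mathrm{i}lV's$ (which is exactly $V'$ times $\partial_z+\mathrm{i}ls$) and then keep track precisely of the powers of $2^{|m|}$ produced by the Jacobian $|V'|^{-1}$, by the cutoff derivative $\partial_y\Psi_m^{\pm}$, and by the weights $|V'|^{\pm\gamma}$ and $|V''|$. By contrast, the identity \eqref{gm1} and the equality in \eqref{est:gmppm} are essentially automatic once the change of variables is in place.
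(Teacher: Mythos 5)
Your proposal is correct and follows essentially the same route as the paper: the change of variables $z=V(y)$ on each monotone branch reduces \eqref{est:gmppm} and \eqref{gm1} to Plancherel, and \eqref{est:etagmppm} comes from identifying $\eta+ls$ with the good derivative (your $(\partial_z+\mathrm{i}ls)G_m^{\pm}$ is exactly the paper's transform of $(\partial_y+\mathrm{i}lV's)\bigl(\Psi_m^{\pm}g/V'\bigr)$), followed by the same Leibniz splitting into the three pieces and the bounds $|V'|\sim 2^{-|m|}$, $|\partial_y\Psi_m^{\pm}|\lesssim 2^{|m|}$, $|V''|\lesssim 1$ together with the overlap property of the cutoffs. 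The only difference is presentational: you work on the $z$-side after the substitution, while the paper stays in the $y$-variable and re-applies \eqref{est:gmppm} to the pieces $I_m^{\pm}$, $II_{m_1}^{\pm}$.
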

\begin{proof}
  Thanks to the definition of $V^+$, $V^-$, we have
   \begin{align*}
      \widetilde{g}^{-}_m(t,\eta)&= \dfrac{1}{\sqrt{2\pi}}\int_{y_2-2\pi}^{y_1}g(t,y)\Psi^-_m(y)\mathrm{e}^{-\mathrm{i} V(y)\eta}\mathrm{d}y\\
          &=\dfrac{1}{\sqrt{2\pi}}\int_{V(y_2)}^{V(y_1)}g(t,V^{-}(z)) \Psi^-_m(V^{-}(z))
      \mathrm{e}^{-\mathrm{i} z\eta}\mathrm{d}V^{-}(z)\\
      &=\dfrac{1}{\sqrt{2\pi}}\int_{\mathbb{R}}g(t,V^{-}(z)) \Psi^-_m(V^{-}(z))
      \partial_zV^-(z)\mathrm{e}^{-\mathrm{i} z\eta}\mathbf{1}_{[V(y_2),V(y_1)]}(z)\mathrm{d}z,
   \end{align*}
   and
      \begin{align*}
      \widetilde{g}^{+}_m(t,\eta)&= \dfrac{1}{\sqrt{2\pi}}\int_{y_1}^{y_2}g(t,y)\Psi^+_m(y)\mathrm{e}^{-\mathrm{i} V(y)\eta}\mathrm{d}y,\\
           &=-\dfrac{1}{\sqrt{2\pi}}\int_{V(y_2)}^{V(y_1)}g(t,V^{+}(z))\Psi_m^+(V^{+}(z))\mathrm{e}^{-\mathrm{i} z\eta}\mathrm{d}V^{+}(z)\\
      &=-\dfrac{1}{\sqrt{2\pi}}\int_{\mathbb{R}}
      g(t,V^{+}(z))\Psi_m^+(V^{+}(z))\partial_zV^{+}(z)\mathrm{e}^{-\mathrm{i} z\eta}\mathbf{1}_{[V(y_2),V(y_1)]}(z)\mathrm{d}z.
   \end{align*}
  Notice that
   \begin{align}\label{eq:parVVm}
   &\partial_zV^{-}(z)V'(V^{-}(z))=\partial_zV^{+}(z)V'(V^{+}(z))=1, \quad z\in [V(y_2),V(y_1)].
    \end{align}
   By Plancherel's formula and \eqref{eq:parVVm}, we get
   \begin{align*}
       \|\tilde{g}_{m}^{\pm}(t,\eta)\|_{L^2_\eta}^2&= \big\|g(t,V^{\pm}(z)) \Psi^{\pm}_m(V^{\pm}(z))
      \partial_zV^{\pm}(z)\big\|_{L^2_z}^2\\
      &= \big\|g(t,y) \Psi^{\pm}_m(y)
      |\partial_zV^{\pm}(z)|^{\f12}\big\|_{L^2_y}^2\\
      &=\big\|g(t,y)\Psi^{\pm}_{m}(y)|V'(y)|^{-\f12}\big\|_{L^2_y}^2,\qquad m\in\mathbb{Z},
   \end{align*}
which gives \eqref{est:gmppm} by noticing $ |V'(y)|\sim 2^{-|m|} $ on the support of $\Psi^{\pm}_m$. The proof of  \eqref{gm1} is similar.

  For $m\in\mathbb{Z}$, $ m_1\in\{m-1,m,m+1\}$, let
  \begin{align*}
     & I_m^{\pm}(t,\eta):= \dfrac{1}{\sqrt{2\pi}}\int_{-\pi}^{\pi} \Psi^{\pm}_m(y)(\partial_y+\mathrm{i}lV't_*)\big(g(t,y)/ V'(y)\big)\mathrm{e}^{-\mathrm{i} V(y)\eta}\mathrm{d}y,\\
     &II_{m_1}^{\pm}(t,\eta):= \dfrac{1}{\sqrt{2\pi}}\int_{-\pi}^{\pi} \Psi_{m_1}^{\pm}(y)\partial_y\Psi^{\pm}_m(y){(g(t,y)/ V'(y))}\mathrm{e}^{-\mathrm{i} V(y)\eta}\mathrm{d}y.
  \end{align*}
  We get by \eqref{est:gmppm} that
  \begin{align}\label{est:Ipmm}
\| I_m^{\pm}(t,\eta)\|_{L^2_\eta}\leq& C\||V'|^{-\f12}\Psi^{\pm}_m(y)(\partial_y+\mathrm{i}lV't_*)\big(g(t,y)/V'(y)\big)\|_{L_y^2}\\
     \leq &C\||V'|^{-\f32}\Psi^{\pm}_m(y)(\partial_y+\mathrm{i}lV't_*)g\|_{L_y^2}+ C\||V'|^{-\f52}V''\Psi^{\pm}_m(y)g\|_{L_y^2}\nonumber\\
     \leq &C2^{\f {3|m|}{2}}\|\Psi^{\pm}_m(y)(\partial_y+\mathrm{i}lV't_*)g\|_{L_y^2}+C2^{(\f52-\gamma)|m|}\| |V'|^{-\gamma}\Psi^{\pm}_m(y) g\|_{L_y^2},\nonumber
  \end{align}
  and
  \begin{align}\label{est:IIpmm}
     \|II_{m_1}^{\pm}(t,\eta)\|_{L^2_\eta}\leq &C \||V'|^{-\f32}\Psi_{m_1}^{\pm}(y)\partial_y\Psi^{\pm}_m(y)g\|_{L_y^2} \\
     \leq& C2^{(\f 52-\gamma)|m|}\||V'|^{-\gamma}\Psi_{m_1}^{\pm}(y)g\|_{L_y^2}, \nonumber
  \end{align}
  here we used $|\partial_y\Psi^{\pm}_m(y)|\leq C2^{|m|}$.
  
 On one hand, we have 
  \begin{align*}
     &\dfrac{1}{\sqrt{2\pi}}\int_{y_2-2\pi}^{y_2} (\partial_y+\mathrm{i}lV's)\big(\Psi^{\pm}_m(y)g(t,y)/V'(y)\big)\mathrm{e}^{-\mathrm{i} V(y)\eta}\mathrm{d}y\\
     =&\dfrac{1}{\sqrt{2\pi}}\int_{y_2-2\pi}^{y_2} \big[\Psi^{\pm}_m(y)(\partial_y+\mathrm{i}lV's)\big(g(t,y)/V'(y)\big)\mathrm{e}^{-\mathrm{i} V(y)\eta} +\partial_y\Psi^{\pm}_m({y})g\mathrm{e}^{-\mathrm{i} V(y)\eta}\big]\mathrm{d}y\\
     =&I_m^{\pm}(t,\eta) +\dfrac{1}{\sqrt{2\pi}}\sum_{|m_1-m|\leq 1}\int_{-\pi}^{\pi} \Psi_{m_1}^{\pm}(y) \partial_y\Psi^{\pm}_m(y){(g(t,y)/ V'(y))}\mathrm{e}^{-\mathrm{i} V(y)\eta}\mathrm{d}y\\
     =&I_m^{\pm}(t,\eta) +\sum_{|m_1-m|\leq 1}II_{m_1}^{\pm}(t,\eta),
  \end{align*}
  here we used the fact 
  \begin{align*}
     \partial_y\Psi^{\pm}_m(y)=& \big(\Psi_{m-1}^{\pm}(y)+\Psi_{m}^{\pm}(y)+\Psi_{m+1}^{\pm}(y)\big) \partial_y\Psi^{\pm}_m(y).
  \end{align*}
  On the other hand, we {have}
     \begin{align*}
     &\dfrac{1}{\sqrt{2\pi}}\int_{y_2-2\pi}^{y_2} (\partial_y+\mathrm{i}lV's)\big(\Psi^{\pm}_m(y)g(t,y)/V'(y)\big)\mathrm{e}^{-\mathrm{i} V(y)\eta}\mathrm{d}y \\
     =&\dfrac{\mathrm{i}(\eta+ls)}{\sqrt{2\pi}}\int_{y_2-2\pi}^{y_2} 
     \Psi^{\pm}_m(y)g(t,y)\mathrm{e}^{-\mathrm{i} V(y)\eta}\mathrm{d}y \\
     =&\mathrm{i}(\eta+ls)\widetilde{g}^{\pm}_{m}(t,\eta).
  \end{align*}
  Thus, we conclude 
  \begin{align*}
     & \mathrm{i}(\eta+ls)\widetilde{g}^{\pm}_{m}(t,\eta)= I_m^{\pm} (t,\eta)+\sum_{|m_1-m|\leq 1}II_{m_1}^{\pm}(t,\eta),
  \end{align*}
  which along with \eqref{est:Ipmm} and \eqref{est:IIpmm} shows 
  \begin{align*}
     &\||\eta+ls|\widetilde{g}^{\pm}_m(t,\eta)\|_{L^2_\eta}
     \leq C2^{\f {3|m|}{2}}\|\Psi^{\pm}_m(y) (\partial_y+\mathrm{i}lV's)g\|_{L_y^2}+C2^{(\f52-\gamma)|m|}\| |V'|^{-\gamma}\Psi^{\pm}_m(y) g\|_{L_y^2}\\
     &\qquad+C\sum_{|m_1-m|\leq 1}2^{(\f 52-\gamma)|m|}\||V'|^{-\gamma}\Psi_{m_1}^{\pm}(y)g\|_{L_y^2}.
  \end{align*}
This proves \eqref{est:etagmppm}. 
 \end{proof}

\subsection{Inviscid damping estimates}

\begin{lemma}\label{lem5.7}
Let $\om(t,y)$ solve 
\beno
\partial_t{\om}+\mathrm{i}k\mathrm{e}^{-\nu t}V {\om}+f=0
\eeno
for $t\in I_j\subseteq[0,\nu^{-1}]$ and $y\in \mathbb{T}_{2\pi}$, $\om(t_{j-1},y)=0$. 
Let $\psi=\Delta_k^{-1}\omega $. Assume that $k,l\in\alpha\Z$, $k\neq l$, $|l|\leq \nu^{-1/3}$, $|k-l|\leq \nu^{-1/3}$. Then it holds that
\begin{align}\label{9.2}
&\||V'|^{-\f12}\om\|_{L^{\infty}L^2}^2+|k|^2\|\big(\partial_y,k)\psi\|_{L^{2}L^2}^2\\  \notag &\qquad\leq  C(\gamma)|k-l|^{-1}(\|\partial_y(\mathrm{e}^{\mathrm{i}lt_*V}f)\|_{L^2L^2}^2+\||V'|^{-\gamma}f\|_{L^2L^2}^2\big),\ \forall\ \gamma>2,
\\
 \label{9.3} &\|V'\om\|_{L^{\infty}L^2}^2\leq C|k-l|^{-2}\big(\|V'\partial_y(\mathrm{e}^{\mathrm{i}lt_*V}f)\|_{L^2L^2}^2+|k-l|^{2}\|f\|_{L^2L^2}^2\big),
\end{align}
and
\begin{align*}
   & \|\omega\|_{L^\infty L^2}^2+|k|^2\|(\partial_y,k)\psi\|_{L^{2}L^2}^2 +\nu \|(\partial_y,k)\omega\|_{L^2L^2}^2\\
   &\leq  C\big(1+|l|^2j^2|k-l|^{-2}\big)
\big(\|V'\partial_y(\mathrm{e}^{\mathrm{i}lt_*V}f)\|_{L^2L^2}^2+|k-l|^{2}\|f\|_{L^2L^2}^2\big)\\
&\qquad+C\big(\gamma)|k-l|^{-1}(\|\partial_y(\mathrm{e}^{\mathrm{i}lt_*V}f)\|_{L^2L^2}^2+
\||V'|^{-\gamma}f\|_{L^2L^2}^2\big),\ \forall\ \gamma>2.
\end{align*}
Here $L^pL^q$ denotes $L^p(I_j;L^q(\mathbb{T}_{2\pi}))$ and $C$ only depends on $\gamma $.
\end{lemma}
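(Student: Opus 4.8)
\textbf{Proof strategy for Lemma \ref{lem5.7}.}

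The plan is to exploit the non-resonance between the forcing oscillation $\mathrm{e}^{-\mathrm{i}lt_*V}$ (carried by $f$) and the intrinsic oscillation $\mathrm{e}^{-\mathrm{i}kt_*V}$ of the transport equation, following the heuristic in \eqref{eq:intro18}--\eqref{eq:intro19}, but adapted to the non-monotone profile $V$ using the degenerate Fourier transform of Section 9.1. First I would solve the transport equation explicitly: since $\partial_t\omega+\mathrm{i}k\mathrm{e}^{-\nu t}V\omega = -f$ with zero initial data at $t_{j-1}$, integrating along characteristics gives
\[
\omega(t,y) = -\mathrm{e}^{-\mathrm{i}kt_*V(y)}\int_{t_{j-1}}^{t}\mathrm{e}^{\mathrm{i}ks_*V(y)}f(s,y)\,\mathrm{d}s.
\]
Writing $f(s,y) = \mathrm{e}^{-\mathrm{i}ls_*V(y)}g(s,y)$ with $g = \mathrm{e}^{\mathrm{i}ls_*V}f$, the integrand oscillates with phase $\mathrm{e}^{\mathrm{i}(k-l)s_*V(y)}$, which is genuinely oscillatory in $y$ because $k\neq l$. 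Applying the windowed transform $\widetilde{(\cdot)}^{\pm}_m$ from \eqref{def:tildegmpm} in the variable $y$ (with the oscillation variable $\eta$), and using the Plancherel identity \eqref{est:gmppm} and \eqref{gm1}, I would reduce the weighted norms $\||V'|^{-1/2}\omega\|_{L^2_y}$ and $\|V'\omega\|_{L^2_y}$ to sums over dyadic pieces $m$ of $L^2_\eta$-norms of $\int \widetilde{g}^{\pm}_m(s,\eta - (k-l)s_*)\,\mathrm{d}s$-type expressions. The key gain is then Cauchy--Schwarz in $s$ against the weight $\langle \eta - (k-l)s_*\rangle$: since $\frac{d}{ds}\big((k-l)s_*\big) = (k-l)\mathrm{e}^{-\nu s}\sim (k-l)$, this costs a factor $|k-l|^{-1/2}$ and trades one unit of $\eta$-decay (equivalently one $\partial_y$-derivative on $g$, controlled via \eqref{est:etagmppm}) for an $L^2_s$ rather than $L^1_s$ norm on the forcing.

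The detailed steps I would carry out are: (i) derive the Duhamel representation above and insert $f = \mathrm{e}^{-\mathrm{i}lt_*V}g$; (ii) for \eqref{9.2} and \eqref{9.3}, localize with $\Psi^{\pm}_m$, apply \eqref{est:gmppm}/\eqref{est:etagmppm} to pass to the $\eta$-side, perform the Cauchy--Schwarz-in-$s$ estimate picking up $|k-l|^{-1}$, then sum over $m\in\mathbb{Z}$ — using the summability of the geometric factors $2^{(1/2-\gamma)|m|}$ with $\gamma>2$ (this is exactly why the hypothesis $\gamma>2$ appears and why the $|V'|^{-\gamma}$ weight on $f$ is the natural one), and recombine the pieces via \eqref{eq:Psisum2} which bounds $\sum_m (\Psi^{\pm}_m)^2$ below and above; (iii) to bound $|k|^2\|(\partial_y,k)\psi\|_{L^2_tL^2}^2$, note $\omega=\Delta_k\psi$ so this is $|k|^2\|\omega\|_{H_k^{-1}}^2 \le C|k|^2\||V'|^{-1/2}\omega\|^2$ up to harmless corrections — more precisely I would use the elliptic estimate $\|(\partial_y,k)\psi\|_{L^2}^2 = -\langle \omega,\psi\rangle \le \|\omega\|_{L^2(\mathrm{supp}\,\Psi_m)}\|\psi\|_{L^2}$ dyadically, or simply $\||V'|^{-1/2}\omega\|_{L^2}\gtrsim |k|\,\|(\partial_y,k)\psi\|_{L^2}$ after noting $|V'|^{-1/2}$ is bounded below; (iv) for the last, full-strength estimate, split $\omega$ itself (not just its weighted norms): the unweighted $\|\omega\|_{L^2}$ near the critical points $y_1,y_2$ requires controlling $\omega$ without the helpful $|V'|^{-1/2}$ weight, which I would do by writing $\omega(t,y) = \omega(t,y) - \omega(t,y_{1,2})$-type Taylor expansion combined with the bound on $V'\omega$ from \eqref{9.3}, absorbing the critical-point contribution into the $\|f\|_{L^2L^2}$ term and the factor $|l|^2j^2|k-l|^{-2}$ coming from $|t_*|\lesssim j\nu^{-1/3}$ on $I_j$ and $|\partial_y(\mathrm{e}^{\mathrm{i}lt_*V}\cdot)| \sim |l t_* V'|$; (v) the viscous term $\nu\|(\partial_y,k)\omega\|_{L^2L^2}^2$ would come from a separate energy estimate on the full viscous equation that \eqref{eq:reaction-sec} really satisfies — pairing with $\omega$ and using that the transport term is skew-adjoint, so $\nu\|(\partial_y,k)\omega\|_{L^2L^2}^2 \lesssim \|f\|_{L^2L^2}\|\omega\|_{L^2L^2}$ plus lower order, then inserting the already-obtained bound on $\|\omega\|_{L^\infty L^2}$.

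The main obstacle I anticipate is the behavior near the critical points $y_1, y_2$ of $V$, where $|V'|$ vanishes and the transform $\widetilde{(\cdot)}^{\pm}_m$ degenerates: the dyadic decomposition in $m$ generates infinitely many scales, and one must verify that the geometric series in $2^{(1/2-\gamma)|m|}$ (resp.\ $2^{(5/2-\gamma)|m|}$ for the $\eta$-weighted estimate \eqref{est:etagmppm}) actually converges and that the overlap of neighboring windows $\Psi^{\pm}_{m\pm1}$ is handled uniformly — this is precisely where the restriction $\gamma>2$ is forced and where the weighted space $\||V'|^{-\gamma}f\|$ on the right-hand side (rather than a plain $L^2$ norm) becomes unavoidable. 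A secondary difficulty is bookkeeping the two different ``flavors'' of estimate simultaneously: the $|V'|^{-1/2}$-weighted version \eqref{9.2} (good for inviscid damping of $\psi$) versus the $V'$-weighted version \eqref{9.3} (good for transferring to the reaction term), and then interpolating/combining them to get the final unweighted statement with its somewhat delicate constant $1 + |l|^2j^2|k-l|^{-2}$, which must track the growth of $t_*$ over the length-$\nu^{-1/3}$ subinterval $I_j$ precisely enough that the eventual sum over $j$ (done later in Section 9) converges.
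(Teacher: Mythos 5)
Your steps (i)–(ii) — the Duhamel representation along characteristics, the windowed transform $\widetilde{(\cdot)}^{\pm}_m$, the Cauchy–Schwarz in time against the non-resonant phase $(k-l)s_*$ costing $|k-l|^{-1}$, and the dyadic summation that forces $\gamma>2$ — are exactly the paper's Steps 1–2, so the $L^\infty$-in-time weighted bounds in \eqref{9.2} and \eqref{9.3} are within reach of your outline. The genuine gap is your step (iii). The pointwise-in-time inequality $|k|\,\|(\partial_y,k)\psi(t)\|_{L^2}\le C\||V'|^{-1/2}\omega(t)\|_{L^2}$ is true, but \eqref{9.2} puts the \emph{$L^2$-in-time} norm $|k|^2\|(\partial_y,k)\psi\|_{L^2L^2}^2$ on the left with the \emph{same} right-hand side as the $L^\infty$-in-time term; squaring your pointwise bound and integrating over $I_j$ loses a factor $|I_j|\sim\nu^{-1/3}$. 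That loss is fatal downstream: in Lemma \ref{lem:f_{1j}} the $Y_{k,j}$-norm contains precisely $|k|^2\|(\partial_y,k)\Delta_k^{-1}f\|_{L^2(I_j;L^2)}^2$, and an extra $\nu^{-1/3}$ there turns the bound of Proposition \ref{prop:reaction} into $\nu^{-2/3}\|\omega_e\|_{X_I}\|\omega_0\|_{H^3}$, which no longer closes the bootstrap of Proposition \ref{prop:om-e}. The $L^2$-in-time gain is the inviscid damping content of the lemma and needs its own argument: the paper first proves the pointwise decomposition \eqref{am}--\eqref{ampm}, $|k|\|(\partial_y,k)\psi(t)\|_{L^2}\le C\sum_m(a_m^+(t)+a_m^-(t))$ with $a_m^{\pm}$ built from the kernel $\frac{|k|^2 2^{-|m|}}{(2^{-|m|}\eta)^2+k^2}$ plus the point value $|k||\widetilde{\omega}_m^{\pm}(t,0)|^2$ (a three-case analysis comparing $2^{|m|}$ with $|k|$, subtracting $\psi(t,y_{1})$ or $\psi(t,y_2)$ near the critical points and using Hardy's inequality), and only then integrates in time, where the change of variables $\eta\mapsto\eta-kt_*$ makes the kernel time-integrable against the time-independent profile $W_m^{\pm}$ with no factor of $|I_j|$. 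Nothing in your sketch produces this mechanism.

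Your step (v) is also off-target: the $\omega$ of the lemma solves the \emph{pure transport} equation — the viscous and nonlocal terms of \eqref{eq:reaction-sec} are stripped off beforehand and restored perturbatively afterwards (the correction $f_{2,j}$ in the proof of Proposition \ref{prop:reaction}) — so there is no dissipative term available, and pairing with $\omega$ and skew-adjointness of the transport term yields no $\nu\|(\partial_y,k)\omega\|^2$ at all. In the paper this term is obtained directly from the transport structure: $\partial_y(\mathrm{e}^{\mathrm{i}lt_*V}\omega)$ satisfies a forced transport equation with source $\mathrm{i}(k-l)\mathrm{e}^{-\nu t}V'\mathrm{e}^{\mathrm{i}lt_*V}\omega+\partial_y(\mathrm{e}^{\mathrm{i}lt_*V}f)$ and zero data at $t_{j-1}$; integrating over the short interval $|I_j|\le\nu^{-1/3}$, using \eqref{9.3} and $t_*\lesssim j\nu^{-1/3}$, gives $\|\partial_y\omega(t)\|_{L^2}\lesssim(|k-l|+|l|j)\nu^{-1/3}\|V'\omega\|_{L^{\infty}L^2}+\nu^{-1/6}\|\partial_y(\mathrm{e}^{\mathrm{i}lt_*V}f)\|_{L^2L^2}$, and multiplying by $\nu^{1/2}$ produces exactly the factor $(1+|l|^2j^2|k-l|^{-2})$. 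Your heuristic for that factor ($t_*\lesssim j\nu^{-1/3}$) is correct but attached to the wrong term: the unweighted $\|\omega\|_{L^{\infty}L^2}$ needs no Taylor expansion near the critical points — it follows trivially from \eqref{9.2} since $|V'|\le C$ makes $|V'|^{-1/2}$ bounded below.
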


\begin{proof}
\textit{Step 1.} Estimate of $\||V'|^{-\f12}\omega\|_{L^\infty L^2}$.\smallskip

We introduce 
\begin{align*}
   & \widetilde{\omega}_m^{\pm}(t,\eta)=\dfrac{1}{\sqrt{2\pi}}\int_{y_2-2\pi}^{y_2}\omega(t,y)\Psi_m^{\pm} (y)\mathrm{e}^{-\mathrm{i} V(y)\eta}\mathrm{d}y,\\
   & \widetilde{f}_m^{\pm}(t,\eta)=\dfrac{1}{\sqrt{2\pi}}
   \int_{y_2-2\pi}^{y_2}f(t,y)\Psi^{\pm}_m(y)\mathrm{e}^{-\mathrm{i} V(y)\eta}\mathrm{d}y,\qquad m\in\mathbb{Z}.
\end{align*}
 Notice that
 \begin{align*}
    & \partial_t\left(\mathrm{e}^{\mathrm{i}kVt_*}\Psi^{\pm}_m\omega\right) +\mathrm{e}^{\mathrm{i}kVt_*}\Psi^{\pm}_mf=0,
  \end{align*}
  and $\omega(t_{j-1},y)=0$. We have
  \begin{align*}
     & \dfrac{\mathrm{d}}{\mathrm{d}t}\widetilde{\omega}_m^{\pm}(t,\eta-k t_*)+\widetilde{f}_m^{\pm}(t,\eta-kt_*)=0,\quad \tilde{\omega}_m^{\pm}(t_{j-1},\eta-kt_*(t_{j-1}))=0.
  \end{align*}
  
  For fixed $\eta\in\mathbb{R},$ let $W_m^{\pm}(\eta)=\text{sup}_{t\in I_j}|\tilde{\omega}_m^{\pm}(t,\eta-kt_*)|,\ T\leq \nu^{-1}.$ Then we have
  \begin{align}\label{est:Wmpm}
    &W_m^{\pm}(\eta) \leq \int_{I_j}|\tilde{f}^{\pm}_m(t,\eta-kt_*(t))|\mathrm{d}t\\
    &\leq  \left(\int_{I_j}( 2^{-(\gamma+1)|m|} |\eta-kt_*+lt_*|^2+a^22^{(\gamma+1)|m|})|\widetilde{f}_m^{\pm}(t,\eta-kt_*)|^2\mathrm{d}t\right)^{\f12} \nonumber \\
    &\quad\times\left(\int_{T_0}^{T}\dfrac{\mathrm{d}t}{ 2^{-(\gamma+1)|m|} |\eta-kt_*+lt_*|^2+a^22^{(\gamma+1)|m|}}\right)^{\f12},\ \forall\ \gamma\in\R,\ a>0.\nonumber
  \end{align}
  Notice that$(s=(k-l)t_*(t)-\eta)$
  \begin{align}\label{est:intT0T}
    & \int_{T_0}^{T}\dfrac{\mathrm{d}t}{ 2^{-(\gamma+1)|m|} |\eta-kt_*+lt_*|^2+a^22^{(\gamma+1)|m|}}\\  &=\dfrac{1}{k-l}\int_{(k-l)t_*(T_0)-\eta}^{(k-l)t_*(T)-\eta}\dfrac{\mathrm{e}^{\nu t}\mathrm{d}s}{2^{-(\gamma+1)|m|}|s|^2+a^22^{(\gamma+1)|m|}}\nonumber\\
&\leq\dfrac{1}{k-l}\int_{(k-l)t_*(T_0)-\eta}^{(k-l)t_*(T)-\eta}\dfrac{\mathrm{e}^{\nu T}\mathrm{d}s}{2^{-(\gamma+1)|m|}|s|^2+a^22^{(\gamma+1)|m|}}\nonumber\\
&\leq \dfrac{\mathrm{e}^{\nu T}}{|k-l|}\int_{\mathbb{R}}\dfrac{\mathrm{d}s}{2^{-(\gamma+1)|m|}|s|^2+a^22^{(\gamma+1)|m|}}\nonumber\\
&=\dfrac{\pi\mathrm{e}^{\nu T}}{|k-l|}\dfrac{1}{a}\leq\dfrac{\pi\mathrm{e}}{|k-l|a},\ \forall\ \gamma\in\R,\ a>0,\nonumber
  \end{align}
 By \eqref{est:etagmppm} in Lemma \ref{lem:plancherel} (for $s=t_*$), we have 
 \begin{align*}
    & 2^{-3|m|}\||\eta+lt_*|\widetilde{f}^{\pm}_m(t,\eta)\|_{L^2_\eta}^2
     \leq C\|\Psi^{\pm}_m(y)(\partial_y+\mathrm{i}lV't_*)f\|^2_{L^2_y}\\
     &\quad+C2^{2(1-\gamma)|m|}\| |V'|^{-\gamma}\Psi^{\pm}_m(y)f\|_{L^2_y}^2+C\sum_{|m_1-m|\leq 1}2^{2(1-\gamma)|m|}\||V'|^{-\gamma}\Psi_{m_1}^{\pm}(y)f\|_{L^2_y}^2,\ \forall\ \gamma\in\R,
 \end{align*}
 and
\begin{align*}
  2^{(2\gamma-1)|m|}\|\widetilde{f}^{\pm}_m(t,\eta)\|^2_{L^2_\eta}\leq C\||V'|^{-\gamma} \Psi_m^{\pm}(y)f(t,y)\|_{L^2_y}^2,\ \forall\ \gamma\in\R.
\end{align*}
Thus, we conclude that for all $\gamma\geq1,\ a\geq 1$,
\begin{align*}
   & \int_{\mathbb{R}}( 2^{-(\gamma+1)|m|} |\eta-kt_*+lt_*|^2+a^22^{(\gamma+1)|m|})|\widetilde{f}_m^{\pm}(t,\eta-kt_*)|^2\mathrm{d}\eta\\
   &\leq 2^{-(\gamma+1)|m|} \||\eta+lt_*|\widetilde{f}^{\pm}_m(t,\eta)\|_{L^2_\eta}^2+a^22^{(\gamma+1)|m|} \|\widetilde{f}^{\pm}_m(t,\eta)\|_{L^2_\eta}^2\\
   &= 2^{-(\gamma-2)|m|} \big(2^{-3|m|}\||\eta+lt_*|\widetilde{f}^{\pm}_m(t,\eta)\|^2_{L^2_\eta}+ a^2 2^{(2\gamma-1)|m|}\|\widetilde{f}^{\pm}_m(t,\eta)\|_{L^2_\eta}^2\big)\\
   &\leq C2^{-(\gamma-2)|m|}  \Big(\|\Psi^{\pm}_m(y)(\partial_y+\mathrm{i}lV't_*)f\|^2_{L^2_y}+2^{2(1-\gamma)|m|}\| |V'|^{-\gamma}\Psi^{\pm}_m(y) f\|_{L^2_y}^2\\
     &\qquad+\sum_{|m_1-m|\leq 1}2^{2(1-\gamma)|m|}\||V'|^{-\gamma}\Psi_{m_1}^{\pm}(y)f\|_{L^2_y}^2+a^2\||V'|^{-\gamma} \Psi_m^{\pm}(y)f(t,y)\|_{L^2_y}^2 \Big)\\
     &\leq C2^{-(\gamma-2)|m|} \Big(\|\Psi^{\pm}_m(y)(\partial_y+\mathrm{i}lV't_*)f\|^2_{L^2_y}+\sum_{|m_1-m|\leq 1}\|a|V'|^{-\gamma}\Psi_{m_1}^{\pm}(y)f\|_{L^2_y}^2 \Big),
\end{align*}
 which along with \eqref{est:Wmpm} and  \eqref{est:intT0T} shows 
 \begin{align}\label{Wm1}
    & \gamma\geq1,\ a\geq 1\Rightarrow\int_{\mathbb{R}}|W_m^{\pm}(\eta)|^2\mathrm{d}\eta\\
 \notag   &\quad\leq \dfrac{C 2^{-(\gamma-2)|m|}}{|k-l|a} \Big(\|\Psi^{\pm}_m(y)(\partial_y+\mathrm{i}lV't_*)f\|^2_{L^2L^2_y}+\sum_{|m_1-m|\leq 1}\|a|V'|^{-\gamma}\Psi_{m_1}^{\pm}(y)f\|_{L^2L^2_y}^2 \Big).
 \end{align}
 Then by \eqref{est:gmppm} in Lemma \ref{lem:plancherel}, \eqref{eq:Psisum2} and \eqref{Wm1} (for $a=1$), we infer that for $t\in I_j$,
 \begin{align*}
     &\||V'|^{-\f12}\omega(t)\|_{L^2}^2\leq 2\sum_{m\in\Z}\||V'|^{-\f12}\Psi^{-}_m(y)\omega(t)\|_{ L^2}^2+ 2 \sum_{m\in\Z}\||V'|^{-\f12}\Psi^{+}_m(y)\omega(t)\|_{ L^2}^2\\
      = &2\sum_{m\in\Z}\|\tilde{\omega}^-_m(t,\eta)\|_{ L^2}^2+ 2\sum_{m\in\Z}\|\tilde{\omega}^-_m(t,\eta)\|_{ L^2}^2\\
      =&2\sum_{m\in\Z}\int_{\mathbb{R}}|\tilde{\omega}^-_m(t,\eta-kt_*)|^2\mathrm{d}\eta+  2\sum_{m\in\Z}\int_{\mathbb{R}}|\tilde{\omega}^+_m(t,\eta-kt_*)|^2\mathrm{d}\eta\\
      \leq &2\sum_{m\in\Z}\int_{\mathbb{R}}|W_m^{-}(\eta)|^2\mathrm{d}\eta+ 2\sum_{m\in\Z}\int_{\mathbb{R}}|W_m^{+}(\eta)|^2\mathrm{d}\eta\\
      \leq &\sum_{m\in\Z} \dfrac{C 2^{-(\gamma-2)|m|}}{|k-l|}  \Big(\|\Psi^{-}_m(y) (\partial_y+\mathrm{i}lV't_*)f\|^2_{L^2L^2_y}+\sum_{|m_1-m|\leq 1}\||V'|^{-\gamma}\Psi_{m_1}^{-}(y)f\|_{L^2L^2_y}^2 \Big)\\
     &+\sum_{m\in\Z} \dfrac{C 2^{-(\gamma-2)|m|}}{|k-l|}  \Big(\|\Psi^{+}_m(y) (\partial_y+\mathrm{i}lV't_*)f\|^2_{L^2L^2_y}+\sum_{|m_1-m|\leq 1}\||V'|^{-\gamma}\Psi_{m_1}^{+}(y)f\|_{L^2L^2_y}^2 \Big)\\
     \leq &C|k-l|^{-1}\big(\| (\partial_y+\mathrm{i}lV't_*)f\|^2_{L^2L^2}+\||V'|^{-\gamma}f\|_{L^2L^2}^2 \big)\\
     =&C|k-l|^{-1}\big(\|\partial_y(\mathrm{e}^{\mathrm{i}lt_*V}f)\|_{L^2L^2}^2+\||V'|^{-\gamma}f\|_{L^2L^2}^2 \big),\quad \forall\ \gamma\geq 2.
 \end{align*}
 
 This shows the estimates for $\||V'|^{-\f12}\omega\|_{L^\infty L^2}$.\smallskip
 
 \textit{Step 2.} Estimate of $\|V'\omega\|_{L^\infty L^2}$. \smallskip
 
   By \eqref{est:gmppm} in Lemma \ref{lem:plancherel}, \eqref{eq:Psisum2}, \eqref{Wm1} (for $\gamma=1$, $a=|k-l|$) and $ |V'(y)|\sim 2^{-|m|} $ on the support of $\Psi^{\pm}_m$, we infer that for $t\in I_j$, 
 \begin{align*}
     &\|V'\omega(t)\|_{L^2}^2\leq 2\sum_{m\in\Z}\|V'\Psi^{-}_m(y)\omega(t)\|_{ L^2}^2+ 2\sum_{m\in\Z}\|V'\Psi^{+}_m(y)\omega(t)\|_{ L^2}^2\\
     &\leq C\sum_{m\in\Z}2^{-3|m|}\||V'|^{-\f12}\Psi^{-}_m(y)\omega(t)\|_{ L^2}^2+ C\sum_{m\in\Z}2^{-3|m|}\||V'|^{-\f12}\Psi^{+}_m(y)\omega(t)\|_{ L^2}^2\\
      &= C\sum_{m\in\Z}2^{-3|m|}\|\tilde{\omega}^-_m(t,\eta)\|_{ L^2}^2+ C\sum_{m\in\Z}2^{-3|m|}\|\tilde{\omega}^-_m(t,\eta)\|_{ L^2}^2\\
      &=C\sum_{m\in\Z}2^{-3|m|}\int_{\mathbb{R}}|\tilde{\omega}^-_m(t,\eta-kt_*)|^2\mathrm{d}\eta+  C\sum_{m\in\Z}2^{-3|m|}\int_{\mathbb{R}}|\tilde{\omega}^+_m(t,\eta-kt_*)|^2\mathrm{d}\eta\\
      &\leq C\sum_{m\in\Z}2^{-3|m|}\int_{\mathbb{R}}|W_m^{-}(\eta)|^2\mathrm{d}\eta+ C\sum_{m\in\Z}2^{-3|m|}\int_{\mathbb{R}}|W_m^{+}(\eta)|^2\mathrm{d}\eta\\
      &\leq \sum_{m\in\Z} \dfrac{C 2^{-3|m|}2^{|m|}}{|k-l|^2}  \Big(\|\Psi^{-}_m(y) (\partial_y+\mathrm{i}lV't_*)f\|^2_{L^2L^2_y}+\sum_{|m_1-m|\leq 1}\||k-l||V'|^{-1}\Psi_{m_1}^{-}(y)f\|_{L^2L^2_y}^2 \Big)\\
     &\quad+\sum_{m\in\Z} \dfrac{C 2^{-3|m|}2^{|m|}}{|k-l|^2}  \Big(\|\Psi^{+}_m(y) (\partial_y+\mathrm{i}lV't_*)f\|^2_{L^2L^2_y}+\sum_{|m_1-m|\leq 1}\||k-l||V'|^{-1}\Psi_{m_1}^{+}(y)f\|_{L^2L^2_y}^2 \Big)\\
     &\leq C|k-l|^{-2}\big(\|V' (\partial_y+\mathrm{i}lV't_*)f\|^2_{L^2L^2}+|k-l|^2\|f\|_{L^2L^2}^2 \big)\\
     &=C|k-l|^{-2}\big(\|V'\partial_y(\mathrm{e}^{\mathrm{i}lt_*V}f)\|_{L^2L^2}^2+|k-l|^2\|f\|_{L^2L^2}^2 \big).
 \end{align*}
This completes the proof of \eqref{9.3}.\smallskip

\textit{Step 3.} Estimate of $|k|\|(\partial_y,k)\psi\|_{L^2 L^2}$. \smallskip

We first claim that  
\begin{align}\label{am}
   & |k|\|(\partial_y,k)\psi(t)\|_{L^2 }\leq C\sum_{m\in\Z}(a_m^{+}(t)+a_m^{-}(t)),
 \end{align}
where
\begin{align}\label{ampm}
&a_m^{\pm}(t):=\left|\int_{\R}\dfrac{|k|^22^{-|m|}|\widetilde{\omega}_m^{\pm}(t,\eta)|^2}{(2^{-|m|}\eta)^2+k^2}\mathrm{d}\eta
   +|k||\widetilde{\omega}_m^{\pm}(t,0)|^2\right|^{1/2}.
\end{align}

 By using the change of variables $s=kt_*(t)-\eta$ or $s=kt_*(t)$, and \eqref{Wm1} for $a=1$, we obtain
 \begin{align*}
 \int_{I_j}|a_m^{\pm}(t)|^2\mathrm{d}t=&\int_{I_j}
 \int_{\R}\dfrac{|k|^22^{-|m|}|\widetilde{\omega}_m^{\pm}(t,\eta-kt_*)|^2}{2^{-2|m|}|\eta-kt_*|^2+k^2}\mathrm{d}\eta
   +|k|\int_{I_j}|\tilde{\omega}_m^{\pm}(t,\eta-kt_*)|^2\big|_{\eta=kt_*}\mathrm{d}t\\
   \le & \int_{T_0}^T\int_{\mathbb{R}}\dfrac{|k|^22^{-|m|}|W_m^{\pm}(\eta)|^2}{2^{-2|m|}|\eta-kt_*|^2+k^2}
    \mathrm{d}\eta\mathrm{d}t+|k|\int_{T_0}^T|W_m^{\pm}(kt_*)|^2\mathrm{d}t\\ \leq &
    \int_{\mathbb{R}}|W_m^{\pm}(\eta)|^2\int_{kt_*(T_0)-\eta}^{kt_*(T)-\eta}
    \dfrac{k2^{-|m|}\mathrm{e}^{\nu t}\mathrm{d}s}{ 2^{-2|m|}|s|^2+k^2}\mathrm{d}\eta+
    \mathrm{sgn}(k)\int_{kt_*(T_0)}^{kt_*(T)}\mathrm{e}^{\nu t}|W_m^{\pm}(s)|^2\mathrm{d}s\\ \leq &
    \int_{\mathbb{R}}|W_m^{\pm}(\eta)|^2\int_{\R}
    \dfrac{|k|2^{-|m|}\mathrm{e}^{\nu T}\mathrm{d}s}{ 2^{-2|m|}|s|^2+k^2}\mathrm{d}\eta+
    \int_{\R}\mathrm{e}^{\nu T}|W_m^{\pm}(s)|^2\mathrm{d}s\\
    =&\mathrm{e}^{\nu T}(1/\pi+1)\int_{\mathbb{R}}|W_m^{\pm}(\eta)|^2\mathrm{d}\eta\\
    \leq& \dfrac{C 2^{-(\gamma-2)|m|}}{|k-l|} \Big(\|\Psi^{\pm}_m(y)(\partial_y+\mathrm{i}lV't_*)f\|^2_{L^2L^2_y}+\sum_{|m_1-m|\leq 1}\||V'|^{-\gamma}\Psi_{m_1}^{\pm}(y)f\|_{L^2L^2_y}^2 \Big)\\
    \leq& \dfrac{C 2^{-(\gamma-2)|m|}}{|k-l|} \Big(\|(\partial_y+\mathrm{i}lV't_*)f\|^2_{L^2L^2_y}+\||V'|^{-\gamma}f\|_{L^2L^2_y}^2 \Big)\\
    =&C2^{-(\gamma-2)|m|}|k-l|^{-1}\big(\|\partial_y(\mathrm{e}^{\mathrm{i}lt_*V}f)\|_{L^2L^2}^2+\||V'|^{-\gamma}f\|_{L^2L^2}^2 \big),
 \end{align*}
 which along with \eqref{am} gives(for fixed $\gamma>2$)
 \begin{align}\label{9.2a}
   & |k|\|(\partial_y,k)\psi\|_{L^2L^2 }\leq C\sum_{m\in\Z}\|a_m^{+}(t)+a_m^{-}(t)\|_{L^2(I_j)}\\ 
 \notag  &\leq C\sum_{m\in\Z}2^{-(\gamma-2)|m|/2}|k-l|^{-1/2}\big(\|\partial_y(\mathrm{e}^{\mathrm{i}lt_*V}f)\|_{L^2L^2}+\||V'|^{-\gamma}f\|_{L^2L^2} \big)
   \\ \notag &\leq C|k-l|^{-1/2}\big(\|\partial_y(\mathrm{e}^{\mathrm{i}lt_*V}f)\|_{L^2L^2}+\||V'|^{-\gamma}f\|_{L^2L^2} \big).
\end{align}

 It remains to prove \eqref{am}. As \eqref{am} is clearly true for $k=0$, we assume $k\neq 0$. As $\psi=\Delta_k^{-1}\omega $, 
we get  by \eqref{eq:Psisum1}  that
\begin{align*}
   & \|(\partial_y,k)\psi(t)\|_{L^2}^2=-\int_{y_2-2\pi}^{y_2}\omega(t,y)\overline{\psi(t,y)}\mathrm{d}y=
   -\sum_{m\in\Z}\int_{y_2-2\pi}^{y_2}(\Psi^{+}_m(y)+\Psi^{-}_m(y))\omega(t,y)\overline{\psi(t,y)}\mathrm{d}y.
\end{align*} 
Thus, it is enough to prove that
\begin{align}\label{k1}
   & \left|\int_{y_2-2\pi}^{y_2}\Psi^{\pm}_m(y)\omega(t,y)\overline{\psi(t,y)}\mathrm{d}y\right|\leq C|k|^{-1}a_{m}^{\pm}(t)\|(\partial_y,k)\psi(t)\|_{L^2}.
\end{align} 

{\it Case 1.} If $m\in\Z$, $2^{|m|}\leq |k|$, then we get by using \eqref{gm1} in Lemma \ref{lem:plancherel} that
\begin{align*}
   & \int_{y_2-2\pi}^{y_2}\Psi^{\pm}_m(y)\omega(t,y)\overline{\psi(t,y)}\mathrm{d}y=
   \sum_{|m_1-m|\leq 1}\int_{y_2-2\pi}^{y_2}\Psi^{\pm}_m(y)\Psi^{\pm}_{m_1}(y)\omega(t,y)\psi(t,y)\mathrm{d}y\\&=
   \sum_{|m_1-m|\leq 1}\int_{\R}\widetilde{\omega}^{\pm}_m(t,\eta)\overline{{h}^{\pm}_{m_1}(t,\eta)}\mathrm{d}\eta,
\end{align*}
where  
\begin{align*}
 h_m^{\pm}(t,\eta):=\dfrac{1}{\sqrt{2\pi}}\int_{y_2-2\pi}^{y_2}|V'(y)|\psi(t,y)\Psi_m^{\pm} (y)\mathrm{e}^{-\mathrm{i} V(y)\eta}\mathrm{d}y.
\end{align*}
Thus,
\begin{align*}
   & \left|\int_{y_2-2\pi}^{y_2}\Psi^{\pm}_m(y)\omega(t,y)\overline{\psi(t,y)}\mathrm{d}y\right|\\ &\leq
   \sum_{|m_1-m|\leq 1}\left|\int_{\R}\dfrac{|\widetilde{\omega}_m^{\pm}(t,\eta)|^2}{(2^{-|m|}\eta)^2+k^2}\mathrm{d}\eta\right|^{\f12}
   \left|\int_{\R}((2^{-|m|}\eta)^2+k^2)|{{h}^{\pm}_{m_1}(t,\eta)}|^2\mathrm{d}\eta\right|^{\f12}\\ &\leq
   \sum_{|m_1-m|\leq 1}a_m^{\pm}(t)2^{|m|/2}|k|^{-1}
   \left|\int_{\R}((2^{-|m|}\eta)^2+k^2)|{{h}^{\pm}_{m_1}(t,\eta)}|^2\mathrm{d}\eta\right|^{\f12}.
\end{align*}
Then, it is enough to prove that
\begin{align}\label{k2}
   & 2^{|m|}\int_{\R}((2^{-|m|}\eta)^2+k^2)|\overline{{h}^{\pm}_{m}(t,\eta)}|^2\mathrm{d}\eta\leq C\|(\partial_y,k)\psi(t)\|_{L^2}^2,\quad \ \forall\ m\in\Z,\ 2^{|m|}\leq 2|k|.
\end{align}
By \eqref{est:gmppm} in Lemma \ref{lem:plancherel}, we have
\begin{align}\label{km1}
   &\int_{\R}|{{h}^{\pm}_{m}(t,\eta)}|^2\mathrm{d}\eta=\||V'|^{-\f12}|V'|\Psi^{\pm}_m(y)\psi(t)\|_{ L^2}^2\leq 
   C2^{-|m|}\|\Psi^{\pm}_m(y)\psi(t)\|_{ L^2}^2,
\end{align}
and by \eqref{est:etagmppm} in Lemma \ref{lem:plancherel} (for $s=0$, $ \gamma=1$), 
 \begin{align}\label{km2}
    & \int_{\R}\eta^2|\overline{{h}^{\pm}_{m}(t,\eta)}|^2\mathrm{d}\eta
     \leq C2^{3|m|}\|\Psi^{\pm}_m(y)\partial_y(|V'|\psi)\|^2_{L^2_y}\\
   \notag  &\qquad+C2^{3|m|}\| \Psi^{\pm}_m(y)\psi\|_{L^2_y}^2+C\sum_{|m_1-m|\leq 1}2^{3|m|}\|\Psi_{m_1}^{\pm}(y)\psi\|_{L^2_y}^2\\
   \notag   &\leq C2^{3|m|}\|\Psi^{\pm}_m(y)|V'|\partial_y\psi\|^2_{L^2_y}+C2^{3|m|}\| \psi\|_{L^2_y}^2\\
   \notag   &\leq C2^{|m|}\|\partial_y\psi\|^2_{L^2_y}+C2^{|m|}|k|^2\| \psi\|_{L^2_y}^2= C2^{|m|}\|(\partial_y\psi,k)\|^2_{L^2_y}.
 \end{align}
 Now, \eqref{k2} follows from \eqref{km1} and \eqref{km2}, and \eqref{k2} implies \eqref{k1} for {\it Case 1}.\smallskip
 
 {\it Case 2.} If $m\in\Z$, $m>0$, $2^{|m|}> |k|$, then we get by using \eqref{gm1} in Lemma \ref{lem:plancherel} that 
 \begin{align*}
   & \int_{y_2-2\pi}^{y_2}\Psi^{\pm}_m(y)\omega(t,y)\overline{\psi(t,y)}\mathrm{d}y\\&=
   \int_{y_2-2\pi}^{y_2}\Psi^{\pm}_m(y)\omega(t,y)\overline{\psi(t,y)-\psi(t,y_1)}\mathrm{d}y+
   \overline{\psi(t,y_1)}\int_{y_2-2\pi}^{y_2}\Psi^{\pm}_m(y)\omega(t,y)\mathrm{d}y\\&=
   \sum_{|m_1-m|\leq 1}\int_{y_2-2\pi}^{y_2}\Psi^{\pm}_m(y)\Psi^{\pm}_{m_1}(y)\omega(t,y)\overline{\psi(t,y)-\psi(t,y_1)}\mathrm{d}y+
   \overline{\psi(t,y_1)}\widetilde{\omega}_m^{\pm}(t,0)\\&=
   \sum_{|m_1-m|\leq 1}\int_{\R}\widetilde{\omega}^{\pm}_m(t,\eta)\overline{{h}^{\pm1}_{m_1}(t,\eta)}\mathrm{d}\eta+
   \overline{\psi(t,y_1)}\widetilde{\omega}_m^{\pm}(t,0),
\end{align*}
where 
\begin{align*}
 h_m^{\pm1}(t,\eta):=\dfrac{1}{\sqrt{2\pi}}\int_{y_2-2\pi}^{y_2}|V'(y)|(\psi(t,y)-\psi(t,y_1))\Psi_m^{\pm} (y)\mathrm{e}^{-\mathrm{i} V(y)\eta}\mathrm{d}y.
\end{align*}
Then we have
\begin{align*}
   & \left|\int_{y_2-2\pi}^{y_2}\Psi^{\pm}_m(y)\omega(t,y)\overline{\psi(t,y)}\mathrm{d}y\right|\\ \leq&
   \sum_{|m_1-m|\leq 1}\left|\int_{\R}\dfrac{|\widetilde{\omega}_m^{\pm}(t,\eta)|^2}{(2^{-|m|}\eta)^2+k^2}\mathrm{d}\eta\right|^{\f12}
   \left|\int_{\R}((2^{-|m|}\eta)^2+k^2)|\overline{{h}^{\pm1}_{m_1}(t,\eta)}|^2\mathrm{d}\eta\right|^{\f12}+
   \|\psi(t)\|_{L^{\infty}}|\tilde{\omega}_m^{\pm}(t,0)|\\ \leq&
   \sum_{|m_1-m|\leq 1}a_m^{\pm}(t)2^{|m|/2}|k|^{-1}
   \left|\int_{\R}((2^{-|m|}\eta)^2+k^2)|\overline{{h}^{\pm1}_{m_1}(t,\eta)}|^2\mathrm{d}\eta\right|^{\f12}+
   \|\psi(t)\|_{L^{\infty}}|k|^{-\f12}a_m^{\pm}(t).
\end{align*}
Thus, it is enough to prove that (note that $ \|\psi(t)\|_{L^{\infty}}\leq C|k|^{-\f12}\|(\partial_y,k)\psi(t)\|_{L^2}$)\begin{align}\label{k3}
   & 2^{|m|}\int_{\R}((2^{-|m|}\eta)^2+k^2)|\overline{{h}^{\pm1}_{m}(t,\eta)}|^2\mathrm{d}\eta\leq C\|(\partial_y,k)\psi(t)\|_{L^2}^2,\quad \ \forall\ m\in\Z,\ 2^m>|k|/2.
\end{align} 
 By \eqref{est:gmppm} in Lemma \ref{lem:plancherel}, we have
\begin{align}\label{km3}
   \int_{\R}|{{h}^{\pm1}_{m}(t,\eta)}|^2\mathrm{d}\eta=&\||V'|^{-\f12}|V'|\Psi^{\pm}_m(y)(\psi(t,y)-\psi(t,y_1))\|_{ L^2}^2\\
 \nonumber  \leq &C\||V'|^{\f12}\Psi^{\pm}_m(y)\|_{ L^2}^2\|\psi(t)\|_{L^{\infty}}^2 \leq  C2^{-2|m|}\|\psi(t)\|_{L^{\infty}}^2\\  \nonumber
  \leq& C 2^{-2|m|}|k|^{-1}\|(\partial_y,k)\psi(t)\|_{L^2}^2 \leq C 2^{-|m|}|k|^{-2}\|(\partial_y,k)\psi(t)\|_{L^2}^2.
\end{align}
By \eqref{est:etagmppm} in Lemma \ref{lem:plancherel} (for $s=0$, $ \gamma=1$), and Hardy's inequality, we have 
 \begin{align}\label{km4}
    &\int_{\R}\eta^2|\overline{{h}^{\pm1}_{m}(t,\eta)}|^2\mathrm{d}\eta
     \leq C2^{3|m|}\|\Psi^{\pm}_m(y)\partial_y[|V'(y)|(\psi(t,y)-\psi(t,y_1))]\|^2_{L^2_y}\\
   \notag  &\quad+C2^{3|m|}\| \Psi^{\pm}_m(y)(\psi(t,y)-\psi(t,y_1))\|_{L^2_y}^2\\
   \notag  &\quad+C\sum_{|m_1-m|\leq 1}2^{3|m|}\|\Psi_{m_1}^{\pm}(y)(\psi(t,y)-\psi(t,y_1))\|_{L^2_y}^2\\
   \notag   &\leq C2^{3|m|}\|\Psi^{\pm}_m(y)|V'|\partial_y\psi\|^2_{L^2_y}+
   C2^{3|m|}\sum_{|m_1-m|\leq 1}\|\Psi_{m_1}^{\pm}(y)(\psi(t,y)-\psi(t,y_1))\|_{L^2_y}^2\\
   \notag   &\leq C2^{|m|}\|\partial_y\psi\|^2_{L^2_y}+C2^{|m|}\| (\psi(t,y)-\psi(t,y_1))/(y-y_1)\|_{L^2_y}^2
   \leq C2^{|m|}\|\partial_y\psi\|^2_{L^2_y}.
 \end{align} 
 Now \eqref{k3} follows from \eqref{km3} and \eqref{km4}, and \eqref{k3} implies \eqref{k1} for {\it Case 2}.\smallskip
 
{\it Case 3.} If $m\in\Z$, $m<0$, $2^{|m|}> |k|$, the proof of \eqref{k1} is similar to {\it Case 2} by using
 \begin{align*}
   & \int_{y_2-2\pi}^{y_2}\Psi^{\pm}_m(y)\omega(t,y)\overline{\psi(t,y)}\mathrm{d}y\\&=
   \int_{y_2-2\pi}^{y_2}\Psi^{\pm}_m(y)\omega(t,y)\overline{\psi(t,y)-\psi(t,y_2)}\mathrm{d}y+
   \overline{\psi(t,y_2)}\int_{y_2-2\pi}^{y_2}\Psi^{\pm}_m(y)\omega(t,y)\mathrm{d}y.
\end{align*}

Now we have completed the proof of \eqref{k1}, hence  \eqref{am} and  \eqref{9.2a}. Moreover \eqref{9.2} follows from {\it Step 1} and  {\it Step 3}.

Finally,  we use \eqref{9.2} and \eqref{9.3} to finish the proof of  this lemma. Notice that 
\begin{align*}
&\partial_t(\mathrm{e}^{\mathrm{i}lt_*V}\om)+\mathrm{i}(k-l)\mathrm{e}^{-\nu t}V(\mathrm{e}^{\mathrm{i}lt_*V}\om)+\mathrm{e}^{\mathrm{i}lt_*V}f=0,\\
&(\partial_t+\mathrm{i}(k-l)\mathrm{e}^{-\nu t}V)\partial_y(\mathrm{e}^{\mathrm{i}lt_*V}\om)+\mathrm{i}(k-l)\mathrm{e}^{-\nu t}V'\mathrm{e}^{\mathrm{i}lt_*V}\om
+\partial_y(\mathrm{e}^{\mathrm{i}lt_*V}f)=0.
\end{align*}
Thanks to $\om|_{t=t_{j-1}}=0,\ \partial_y(\mathrm{e}^{\mathrm{i}lt_*V}\om)|_{t=t_{j-1}}=0$, we infer that for $t\in I_j,$
\begin{align*}
\|\partial_y(\mathrm{e}^{\mathrm{i}lt_*V}\om(t))\|_{L^2}&\leq\int_{t_{j-1}}^t\|\mathrm{i}(k-l)\mathrm{e}^{-\nu s}V'\mathrm{e}^{\mathrm{i}ls_*V}\om(s)
+\partial_y(\mathrm{e}^{\mathrm{i}ls_*V}f(s))\|_{L^2}\mathrm{d}s
\\ &\leq\|k-l\|_{L^1(t_{j-1},t)}\|V'\om\|_{L^{\infty}L^2}+\|1\|_{L^2(t_{j-1},t)}\|\partial_y(\mathrm{e}^{\mathrm{i}lt_*V}f)\|_{L^2L^2}
\\&\leq|k-l|\nu^{-\f13}\|V'\om\|_{L^{\infty}L^2}+\nu^{-\f16}\|\partial_y(\mathrm{e}^{\mathrm{i}lt_*V}f)\|_{L^2L^2}.
\end{align*}
Thanks to $\partial_y{\om}=\mathrm{e}^{-\mathrm{i}lt_*V}\partial_y(\mathrm{e}^{\mathrm{i}t_*lV}\om)-\mathrm{i}lt_*V'\om$, we have 
(also for $t\in I_j$)
\begin{align*}
\|\partial_y\om(t)\|_{L^2}&\leq\|\partial_y(\mathrm{e}^{\mathrm{i}lt_*V}\om(t))\|_{L^2}+|lt_*|\|V'\om(t)\|_{L^2}
\\&\leq(|k-l|+|l|j)\nu^{-\f13}\|V'\om\|_{L^{\infty}L^2}+\nu^{-\f16}\|\partial_y(\mathrm{e}^{\mathrm{i}lt_*V}f)\|_{L^2L^2}.
\end{align*}
This shows that 
\begin{align*}
  &\nu  \|\partial_y\om\|^2_{L^2L^2}+\nu |k|^2\|\om\|^2_{L^2L^2}\\ &\leq 2\nu^{\f13}(|k-l|+|l|j)^2|I_j|\|V'\om\|_{L^{\infty}L^2}^2+
  2\nu^{\f23}|I_j|\|\partial_y(\mathrm{e}^{\mathrm{i}lt_*V}f)\|_{L^2L^2}^2+\nu |k|^2|I_j|\|\om\|_{L^{\infty}L^2}^2\\ &\leq 2(|k-l|+|l|j)^2\|V'\om\|_{L^{\infty}L^2}^2+
  2\nu^{\f13}\|\partial_y(\mathrm{e}^{\mathrm{i}lt_*V}f)\|_{L^2L^2}^2+{\nu^{\f23} |k|^2\|\om\|_{L^{\infty}L^2}^2}. 
  \end{align*}
Thus, we get by using $|k|\leq |k-l|+|l|\leq 2\nu^{-1/3}$, \eqref{9.2} and \eqref{9.3} that
\begin{align*}
   & \|\omega\|_{L^\infty L^2}^2+|k|^2\|(\partial_y,k)\psi\|_{L^{2}L^2}^2 +\nu \|(\partial_y,k)\omega\|_{L^2L^2}^2\\
   &\leq  2(|k-l|+|l|j)^2\|V'\om\|_{L^{\infty}L^2}^2+
  2\nu^{\f13}\|\partial_y(\mathrm{e}^{\mathrm{i}lt_*V}f)\|_{L^2L^2}^2+C\|\om\|_{L^{\infty}L^2}^2+|k|^2\|(\partial_y,k)\psi\|_{L^{2}L^2}^2 \\
  &\leq C(|k-l|+|l|j)^2|k-l|^{-2}
\big(\|V'\partial_y(\mathrm{e}^{\mathrm{i}lt_*V}f)\|_{L^2L^2}^2+|k-l|^{2}\|f\|_{L^2L^2}^2\big)\\
&\quad+2\nu^{\f13}\|\partial_y(\mathrm{e}^{\mathrm{i}lt_*V}f)\|_{L^2L^2}^2+C|k-l|^{-1}\big(\|\partial_y(\mathrm{e}^{\mathrm{i}lt_*V}f)\|_{L^2L^2}^2+
\||V'|^{-\gamma}f\|_{L^2L^2}^2\big)\\
&\leq C(1+|l|^2j^2|k-l|^{-2})
\big(\|V'\partial_y(\mathrm{e}^{\mathrm{i}lt_*V}f)\|_{L^2L^2}^2+|k-l|^{2}\|f\|_{L^2L^2}^2\big)\\
&\quad+C|k-l|^{-1}\big(\|\partial_y(\mathrm{e}^{\mathrm{i}lt_*V}f)\|_{L^2L^2}^2+
\||V'|^{-\gamma}f\|_{L^2L^2}^2\big),
\end{align*}

This completes the proof of the lemma.
\end{proof}

\subsection{Proof of Proposition \ref{prop:reaction}}

\begin{lemma}\label{lem:f_{1j}}
Let $f_{1,j}$ solve
\begin{align*}
   & \partial_tf_{1,j}+\mathrm{e}^{-\nu t}V\partial_xf_{1,j}-t_*V'P_{\text{low}}u^y_e\partial_x\omega_L^*\chi_{I_j}(t) =0, \quad f_{1,j}|_{t=T_0}=0.
\end{align*}
Then we have 
\beno
\|f_{1,j}\|_{X_{I_j}}\leq C\nu^{-\f13}j^{-1.1}\|\omega_{e}\|_{X_I}\|\om_0\|_{H^3}.
\eeno
Here  $I_j=[t_{j-1},t_j]$, $I=[T_0,T]$ and $I_j\subset I$.
\end{lemma}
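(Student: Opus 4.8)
The goal is to bound $\|f_{1,j}\|_{X_{I_j}}$ where $f_{1,j}$ solves the transport-with-source equation with source $t_*V'P_{\text{low}}u^y_e\partial_x\omega_L^*\chi_{I_j}$. The strategy is to decompose in $x$-frequency, apply the inviscid damping estimates for the transport equation from Lemma~\ref{lem5.7} mode-by-mode, then reassemble using the properties of the approximate solution $\omega_L^*$ in Proposition~\ref{prop:app} and Lemma~\ref{lem:wk*}, together with the Plancherel-type bounds of Lemma~\ref{lem:plancherel}. First I would take the Fourier transform in $x$: writing $f_{1,j}=\sum_k f_{1,j,k}(t,y)\mathrm{e}^{\mathrm{i}kx}$, $u^y_e=\sum_l \mathrm{i}l\phi_{e,l}\mathrm{e}^{\mathrm{i}lx}$, and $\omega_L^*=\sum_{k-l}w_{(k-l)*}\mathrm{e}^{\mathrm{i}(k-l)x}$, the equation for each mode $f_{1,j,k}$ becomes a transport equation of the type treated in Lemma~\ref{lem5.7} with forcing built from the convolution $\sum_{l}(k-l)lt_*V'\phi_{e,l}w_{(k-l)*}$ restricted to the window $I_j$. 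Crucially, because $\omega_L^*$ is supported on $0<|k-l|\leq \nu^{-1/3}$ (so $k-l\in\Lambda_*$) and $u^y_e$ is restricted to $|l|\leq \nu^{-1/3}$ by $P_{\text{low}}$, and $l\neq 0$ in the forcing (since $u^y_e=\partial_x\phi_e$ kills the zero mode), the non-resonance hypothesis $k\neq l$, $|l|\leq\nu^{-1/3}$, $|k-l|\leq\nu^{-1/3}$ of Lemma~\ref{lem5.7} is satisfied for every relevant pair $(k,l)$.

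Next I would insert the structural identity \eqref{eq:intro17}, namely that $\phi_{e,l}w_{(k-l)*}$ can be written with the oscillation $\mathrm{e}^{-\mathrm{i}t(k-l)V}$ pulled out (here adapted to the time-dependent Kolmogorov background, so the oscillatory phase is $\mathrm{e}^{\mathrm{i}(k-l)t_*V}$ as in Lemma~\ref{lem:wk*}), and then apply Lemma~\ref{lem5.7} with the source $f$ there being the $k$-th Fourier mode of the forcing and with the parameter $l$ in the lemma being the frequency $k-l$ coming from $\omega_L^*$. The decay gain $|k-l|^{-1/2}$ or $|k-l|^{-1}$ in Lemma~\ref{lem5.7} converts an $L^2_t$-type norm on the profile $V'\partial_y(\mathrm{e}^{\mathrm{i}(k-l)t_*V}(\ell t_* V' \phi_{e,l}w_{(k-l)*}))$ plus $\||V'|^{-\gamma}(\cdots)\|_{L^2_t L^2}$ into control of $\|f_{1,j,k}\|_{L^\infty L^2}$, $\|(\partial_y,k)\Delta_k^{-1}f_{1,j,k}\|$ in $L^2_tL^2$, and (with the $\nu$-factors) the dissipative component of $X_{I_j}$. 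To control the right-hand side I would use: the bound $\|\partial_y(V'\mathrm{e}^{\mathrm{i}(k-l)t_*V}w_{(k-l)*})\|_{L^2_y}\lesssim |k-l|^{-5/2}\langle\nu t^3\rangle^{-5/4}M_{k-l}$ from Lemma~\ref{lem:wk*}, the pointwise size of $w_{k*}$ there (with its Gaussian-in-$V'$ weight, which tames the $|V'|^{-\gamma}$ weight in Lemma~\ref{lem5.7} for $\gamma$ close to $2$), and the energy-level control $\|\phi_{e,l}\|$ and $\|(\partial_y+\mathrm{i}lt_*V')\phi_{e,l}\|$ supplied by the $X_I$ norm of $\omega_e$ (via $\||V'|^{1/2}\partial_x\nabla\Delta^{-1}\omega_e\|_{L^2_tL^2}$ and $\||D_x|^{1/2}\nabla\Delta^{-1}\omega_e\|_{L^2_tL^2}$ terms in $\|\omega_e\|_{X_I}$). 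Summing over $l$ with $|l|\leq\nu^{-1/3}$ and over $k$, the $|k-l|^{-1/2}$ (resp.\ $|k-l|^{-1}$) from the lemma together with the $\langle\nu t^3\rangle^{-5/4}$ and $|k-l|^{-5/2}$ weights from Lemma~\ref{lem:wk*} gives absolute summability; the factor $j^{-1.1}$ is extracted from the time-localization to $I_j=[T_0+(j-1)\nu^{-1/3}, T_0+j\nu^{-1/3}]$, on which $t\gtrsim j\nu^{-1/3}$ so that $\langle\nu t^3\rangle^{-1}\lesssim (j\nu^{-1/3})^{-3}\nu^{-1}$ and the accumulated decay of $\omega_L^*$ beats any logarithmic or polynomial loss in $j$ coming from the $|l|^2 j^2|k-l|^{-2}$ term in Lemma~\ref{lem5.7}.

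The main obstacle I anticipate is the careful bookkeeping of the competition between the growth factor $t_*\sim j\nu^{-1/3}$ appearing explicitly in the source $t_*V'P_{\text{low}}u^y_e\partial_x\omega_L^*$ and in the $|l|^2j^2|k-l|^{-2}$ term of Lemma~\ref{lem5.7}, versus the decay $\langle\nu t^3\rangle^{-5/4}\sim (j^3\nu)^{-5/4}$ of the profile of $w_{k*}$ on $I_j$ from Lemma~\ref{lem:wk*}. One must check that the net power of $j$ is at most $j^{-1.1}$ and that all $\nu$-powers combine to exactly $\nu^{-1/3}$; this requires tracking the $\nu^{-1/3}$ from $T^{1/2}$-type time integrations, the $\nu^{1/3}$-size of $w_{k*}$-related quantities (recall $M_{k-l}$ sums to $\|\omega_0\|_{H^3}$, while the $\langle\nu t^3\rangle$-weights are evaluated at $t\sim j\nu^{-1/3}\leq T_1=\nu^{-4/9}$), and being careful that on $I_j$ with $j$ large the weight $\langle\nu t^3\rangle$ may be as large as $\langle\nu T_1^3\rangle=\nu^{-1/3}$, which is exactly where $\omega_L^*$ and $\omega_e$ become comparable. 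A secondary technical point is justifying the use of the degenerate Fourier transform $\widetilde{g}^\pm_m$ near the critical points $y_1,y_2$ of $V$ (where $|V'|^{-\gamma}$ blows up): here the Gaussian factor $\mathrm{e}^{-\nu(k-l)^2\gamma_1(t)|V'|^2}$ in the definition of $w_{(k-l)*}$ — precisely the vorticity-depletion-adapted cutoff $1-\eta(\sqrt{t}V')$ — ensures the source vanishes fast enough near critical points that the $\||V'|^{-\gamma}f\|_{L^2L^2}$ terms in Lemma~\ref{lem5.7} stay finite and small, which is the reactive analogue of the vanishing property exploited in section~\ref{sec:reactionpart}.
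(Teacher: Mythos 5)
Your plan is essentially the paper's own proof: the same frequency decomposition into modes $w_{j,k,l}$ on $I_j$ with zero data at $t_{j-1}$, the same application of Lemma \ref{lem5.7} with the profile oscillation taken from the $\omega_L^*$ factor so that the non-resonance gain comes in inverse powers of the (nonzero) $u_e$-frequency, the same use of Lemma \ref{lem:wk*} (including the $1-\eta(\sqrt{t}V')$ cutoff taming the $|V'|^{-\gamma}$ weight with $\gamma>2$) together with the $E_{k-l}$-type components of $\|\omega_e\|_{X_I}$, and the same source of the factor $j^{-1.1}$, namely $t_*\langle\nu t^3\rangle^{-0.7}\lesssim \nu^{-1/3}j^{-1.1}$ on $I_j$ beating the $|l|^2j^2|k-l|^{-2}$ growth term, so only the explicit power counting (which you correctly flag) remains to be carried out. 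The one cosmetic slip is that the good derivative $(\partial_y+\mathrm{i}lt_*V')\phi_{e,l}$ you list is neither controlled by $X_I$ nor needed: the phase correction acts only on the $w_{l*}$ factor, and the derivative falling on $u^y_{k-l}$ is handled by $\||V'|^{\f12}\partial_yu^y_{k-l}\|_{L^2}$ and $\|u^y_{k-l}\|_{L^\infty}$, exactly as elsewhere in your list.
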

\begin{proof}

We write
\beno
P_{\text{low}}u_{e}^y(t,x,y)=\sum_{k\in \Lambda_{*}}u_{k}^y(t,y)\mathrm{e}^{\mathrm{i}kx},\quad  
\Lambda_{*}:=[-\nu^{-1/3},\nu^{-1/3}]\cap\alpha\Z\setminus\{0\},\quad \alpha=2\pi/\mathfrak{p}, 
\eeno
and then
\begin{align*}
&u_{e}^y\partial_x \om_{L}^*(t,x,y)=\sum_{k,l\in\Lambda_{*}}(u_{k}^y\mathrm{i}lw_{l*})(t,y)\mathrm{e}^{\mathrm{i}(k+l)x}
=\sum_{k-l,l\in\Lambda_{*}}(u_{k-l}^y\mathrm{i}lw_{l*})(t,y)\mathrm{e}^{\mathrm{i}kx}.
\end{align*}
Here $w_{l*}(t,y)=(1-\eta(\sqrt{t}V'(y)))w_{l,2}(t,y)$ {is} defined in \eqref{def:wk*}.  Let $ w_{j,k,l}$ solve
\begin{align*}
(\partial_t+\mathrm{i}k\mathrm{e}^{-\nu t}V) w_{j,k,l}(t,y)=t_*V'u^{y}_{k-l}(t,y)\mathrm{i}lw_{l*}(t,y)=F_{k,l}(t,y)\quad\text{for}\ t\in I_j,
\end{align*}
with the initial data $w_{j,k,l}(t_{j-1},y)=0 $. Then we have
\begin{align*}
&f_{1,j}(t,x,y)=\sum_{k-l,l\in\Lambda_{*}}w_{j,k,l}(t,y)\mathrm{e}^{\mathrm{i}kx}=\sum_{k\in\Lambda_+}\widetilde{w}_{j,k}(t,y)
\mathrm{e}^{\mathrm{i}kx},\quad \widetilde{w}_{j,k}=\sum_{l\in\Lambda_*\cap(k-\Lambda_{*})}w_{j,k,l},
\end{align*}for $t\in I_j $. Here $\Lambda_+:=[-2\nu^{-1/3},2\nu^{-1/3}]\cap\alpha\Z$.

We denote (here $L^pL^q=L^p(I;L^q)$, $I=[T_0,T]$)
\beno
E_k=\|(|k|^{-\f12}+|V'|^{\f12})(\partial_y,k)u_{k}^y\|_{L^2L^2}
\eeno
 and
\begin{align*}
&\|f\|_{Y_{k,j}}^2=\|f\|_{L^{\infty}(I_j;L^2)}^2+
|k|^2\|(\partial_y,k)\Delta_{k}^{-1}f\|_{L^{2}(I_j;L^2)}^2+\nu\|(\partial_y,k)f\|_{L^{2}(I_j;L^2)}^2.
\end{align*}
By using $u_{e}^y=\partial_x\Delta^{-1}\om_e$, $M_k=\|(\partial_y,k)^3\omega_{0,k}\|_{L^2}$, and the definition of $X_I$ in \eqref{def:XT}, we obtain
\begin{align*}
&\sum_{k\in\Lambda_*}E_k^2\lesssim \||D_x|^{-\f12}\nabla u_{e}^y\|_{L^2L^2}^2+\||V'|^{\f12}\nabla u_{e}^y\|_{L^2L^2}^2\le\|\om_e\|_{X_I}^2,\\
&\|f_{1,j}\|_{X_{I_j}}^2\lesssim\|f_{1,j}\|_{L^\infty(I_j;L^2)}^{2}+\nu\|\nabla f_{1,j}\|_{L^2(I_j;L^2)}^{2}+ \|\partial_x\nabla \Delta^{-1}f_{1,j}\|_{L^2(I_j;L^2)}^{2}\lesssim\sum_{k\in\Lambda_+}\|\widetilde{w}_{j,k}\|_{Y_{k,j}}^2,\\
&\sum_{k\in\Lambda_*}E_k^2\sum_{l\in\Lambda_*}M_l^2
\lesssim \|\om_e\|_{X_I}^2\|\om_{0}\|_{H^3}^2.
\end{align*}
Thus, it is enough to show that
\begin{align*}
&\sum_{k\in\Lambda_+}\|\widetilde{w}_{j,k}\|_{Y_{j,k}}^2\lesssim \nu^{-\frac{2}{3}}j^{-2.2}\sum_{k\in\Lambda_*}E_k^2\sum_{l\in\Lambda_*}M_l^2
=\nu^{-\frac{2}{3}}j^{-2.2}\sum_{k-l,l\in\Lambda_*}E_{k-l}^2M_l^2,
\end{align*}
which is further reduced to the following estimate
\begin{align}\label{wYk}
&\|\widetilde{w}_{j,k}\|_{Y_{k,j}}^2\lesssim \nu^{-\frac{2}{3}}j^{-2.2}\sum_{l\in\Lambda_*\cap(k-\Lambda_*)}
E_{k-l}^2M_l^2,\quad \forall\ k\in \Lambda_+,\ j\in(0,N]\cap\Z.
\end{align}

Thanks to $\widetilde{w}_{j,k}=\sum_{l\in\Lambda_*\cap(k-\Lambda_*)}w_{j,k,l}$, we only need to estimate $\|w_{j,k,l}\|_{Y_{k,j}} $. Recall that
\beno
(\partial_t+\mathrm{i}k\mathrm{e}^{-\nu t}V) w_{j,k,l}(t,y)=F_{k,l}(t,y),\quad w_{k,l}(t_{j-1},y)=0.
\eeno
Then  we get by Lemma \ref{lem5.7} that for $k\neq l$,
\begin{align}\label{est:wjklY-0}
\|w_{j,k,l}\|_{Y_{k,j}}^2\lesssim&(1+|l|^2j^2|k-l|^{-2})
\big(\|V'\partial_y(\mathrm{e}^{\mathrm{i}lt_*V}F_{k,l})\|_{L^2(I_j;L^2)}^2+
|k-l|^{2}\|F_{k,l}\|_{L^2(I_j;L^2)}^2\big)\nonumber\\&
+|k-l|^{-1}\big(\|\partial_y(\mathrm{e}^{\mathrm{i}lt_*V}F_{k,l})\|_{L^2(I_j;L^2)}^2+ \||V'|^{-2.1}F_{k,l}\|_{L^2(I_j;L^2)}^2\big)\\
\lesssim&(1+|l|^2j^2|k-l|^{-2})
\big(\|\partial_y(\mathrm{e}^{\mathrm{i}lt_*V}F_{k,l})\|_{L^2(I_j;L^2)}^2+
|k-l|^{2}\|F_{k,l}\|_{L^2(I_j;L^2)}^2\big)\nonumber\\&
+|k-l|^{-1}\||V'|^{-2.1}F_{k,l}\|_{L^2(I_j;L^2)}^2\nonumber.
\end{align}

Next we estimate each term on the right hand side. 
Recall\\ $F_{k,l}=t_*V'(y)u^{y}_{k-l}(t,y)\mathrm{i}lw_{l*}(t,y)$.
First of all, we have
\begin{align*}
&\|u^{y}_{k}\|_{L^2L^{\infty}}\leq C\||k|^{-\f12}(\partial_y,k)u_{k}^y\|_{L^2L^2}\leq CE_k,
\end{align*}
We consider 
\begin{align*}
\||V'|^{-2.1}F_{k,l}\|_{L^2}= t_*|l|\||V'|^{-1.1}u^{y}_{k-l}w_{l*}\|_{L^2}\leq t_*|l|\|u^{y}_{k-l}\|_{L^{\infty}}\||V'|^{-1.1}w_{l*}\|_{L^2}.
\end{align*}
Thanks to Lemma \ref{lem:wk*}, we have ${|w_{l*}(t,y)|\leq C\min(|V'|^{2}|l|^{-\f12},|l|^{-\f52})\mathrm{e}^{-\nu l^2\gamma_1(t)|V'(y)|^2}}M_l$, then
\begin{align*}
&\||V'|^{-1.1}w_{l*}\|_{L^2}\leq CM_l\|\min(|V'|^{0.9}|l|^{-\f12},|V'|^{-1.1}|l|^{-\f52})\mathrm{e}^{-\nu l^2\gamma_1(t)|V'(y)|^2}\|_{L^2}\\
\leq& CM_l\min\big(\||V'|^{0.9}|l|^{-\f12}\mathrm{e}^{-\nu l^2\gamma_1(t)|V'(y)|^2}\|_{L^2},\|\min(|V'|^{0.9}|l|^{-\f12},|V'|^{-1.1}|l|^{-\f52})\|_{L^2}\big)\\
\leq& CM_l\min(|l|^{-\f12}|\nu t^3 l^2|^{-0.7},|l|^{-1.9})\leq C|l|^{-1.9}M_l\langle\nu  t^3\rangle^{-0.7}.
\end{align*}
As $t_*\leq t\leq T_0+\nu^{-\f13}j\leq 2\nu^{-\f13}j$, $1+\nu t^3\sim j^3$ for $t\in I_j$, we obtain
\begin{align}
&\||V'|^{-2.1}F_{k,l}\|_{L^2}\leq Ct|l|^{-0.9}\|u^{y}_{k-l}\|_{L^{\infty}}M_l\langle\nu t^3\rangle^{-0.7}\leq C\nu^{-\f13}|l|^{-0.9}\|u^{y}_{k-l}\|_{L^{\infty}}M_lj^{-1.1},\nonumber
\end{align}
which yields
\begin{align}\label{est:wjklY-1}
&\||V'|^{-2.1}F_{k,l}\|_{L^2(I_j;L^2)}\leq C\nu^{-\f13}|l|^{-0.9}\|u^{y}_{k-l}\|_{L^2L^{\infty}}M_lj^{-1.1}\leq C\nu^{-\f13}|l|^{-0.9}E_{k-l}M_lj^{-1.1}.
\end{align}

By Lemma \ref{lem:wk*}, we have 
\begin{align}\label{est:V'wl*}
\||V'|^{\f12}w_{l*}\|_{L^{\infty}} &\leq M_l\|\min(|V'|^{\f52}|l|^{-\f12},|V'|^{\f12}|l|^{-\f52})\mathrm{e}^{-\nu l^2\gamma_1(t)|V'(y)|^2}\|_{L^{\infty}}\\
&\leq C M_l\min(|l|^{-\f12}|\nu t^3 l^2|^{-\f52},|l|^{-\f52})\leq C\langle\nu t^3\rangle^{-\f54}|l|^{-\f52} M_l.\nonumber
\end{align}
Together with the fact that 
\begin{align*}
&\|F_{k,l}\|_{L^2}= t_*|l|\||V'|u^{y}_{k-l}w_{l*}\|_{L^2}\leq Ct|l|\||V'|^{\f12}u^{y}_{k-l}\|_{L^{2}}\||V'|^{\f12}w_{l*}\|_{L^{\infty}},
\end{align*}
we deduce that for $t\in I_j$,
\begin{align}\label{est:wjklY-11}
&\|F_{k,l}\|_{L^2}\leq Ct|l|^{-1}\||V'|^{\f12}u^{y}_{k-l}\|_{L^{2}}M_l\langle \nu t^3\rangle^{-\f54}
\leq C\nu^{-\f13}|l|^{-1}\||V'|^{\f12}u^{y}_{k-l}\|_{L^{2}}M_lj^{-2.75},\nonumber\\
&\|F_{k,l}\|_{L^2(I_j;L^2)}\leq C\nu^{-\f13}|l|^{-1}\||V'|^{\f12}u^{y}_{k-l}\|_{L^2L^{2}}M_lj^{-2.75}\\
&\qquad\quad\qquad\quad\leq C\nu^{-\f13}|l|^{-1}|k-l|^{-1}E_{k-l}M_lj^{-2.75}.\nonumber
\end{align}

Thanks to the fact
\begin{align*}
\partial_y(\mathrm{e}^{\mathrm{i}lt_*V}F_{k,l})=\mathrm{i}t_*l\partial_yu^{y}_{k-l} V'\mathrm{e}^{\mathrm{i}lt_*V}w_{l*}+
\mathrm{i}t_*lu^{y}_{k-l}\partial_y(V'\mathrm{e}^{\mathrm{i}lt_*V}w_{l*}),
\end{align*}
we obtain
\begin{align*}
\|\partial_y(\mathrm{e}^{\mathrm{i}lt_*V}F_{k,l})\|_{L^2}\leq t_*|l|\||V'|^{\f12}\partial_yu^{y}_{k-l}\|_{L^{2}}\||V'|^{\f12}w_{l*}\|_{L^{\infty}}+
t_*|l|\|u^{y}_{k-l}\|_{L^{\infty}}\|\partial_y(V'\mathrm{e}^{\mathrm{i}lt_*V}w_{l*})\|_{L^2}.
\end{align*}
By Lemma \ref{lem:wk*}, we have 
\begin{align*}
&\|\partial_y(V'\mathrm{e}^{\mathrm{i}lt_*V}w_{l*})\|_{L^2}\leq C|l|^{-\f52}\langle \nu t^3\rangle^{-\f54}
M_l,
\end{align*}
and then by \eqref{est:V'wl*}, 
\begin{align*}
\|\partial_y(\mathrm{e}^{\mathrm{i}lt_*V}F_{k,l})\|_{L^2}
&\leq Ct|l|^{-1}\||V'|^{\f12}\partial_yu^{y}_{k-l}\|_{L^{2}}\langle\nu t^3\rangle^{-\f54} M_l+
Ct|l|^{-1}\|u^{y}_{k-l}\|_{L^{\infty}}\langle\nu t^3\rangle^{-\f54}M_l\\
&\leq Ct|l|^{-1}\big(\||V'|^{\f12}\partial_yu^{y}_{k-l}\|_{L^{2}}+\|u^{y}_{k-l}\|_{L^{\infty}}\big)\langle\nu t^3\rangle^{-\f54}M_l\\
&\leq C\nu^{-\f13}|l|^{-1}\big(\||V'|^{\f12}\partial_yu^{y}_{k-l}\|_{L^{2}}+\|u^{y}_{k-l}\|_{L^{\infty}}\big)M_l j^{-2.75},
\end{align*}
which gives
\begin{align}\label{est:wjklY-2}
\|\partial_y(\mathrm{e}^{\mathrm{i}lt_*V}F_{k,l})\|_{L^2(I_j;L^2)}
&\leq C\nu^{-\f13}|l|^{-1}\big(\||V'|^{\f12}\partial_yu^{y}_{k-l}\|_{L^2L^{2}}+\|u^{y}_{k-l}\|_{L^2L^{\infty}}\big)M_lj^{-2.75}\nonumber\\
&\leq C\nu^{-\f13}|l|^{-1}E_{k-l}M_lj^{-2.75}.
\end{align}

Summing up \eqref{est:wjklY-0}, \eqref{est:wjklY-1}, \eqref{est:wjklY-11} and \eqref{est:wjklY-2}, we conclude 
\begin{align*}
&\|w_{j,k,l}\|_{Y_{k,j}}^2\\
&\lesssim (1+|l|^2j^2|k-l|^{-2})\nu^{-\f23}|l|^{-2}(E_{k-l}M_l)^2j^{-5.5}
+|k-l|^{-1}\nu^{-\f23}|l|^{-1.8}(E_{k-l}M_l)^2j^{-2.2}\\
&\lesssim (|l|^{-2}+|k-l|^{-2})\nu^{-\f23}(E_{k-l}M_l)^2j^{-2.2},
\end{align*}
which gives
\begin{align*}
\|\widetilde{w}_{j,k}\|_{Y_{k,j}}&\lesssim \sum_{l\in\Lambda_*\cap(k-\Lambda_*)}\|w_{j,k,l}\|_{Y_{k,j}}
\lesssim\sum_{l\in\Lambda_*\cap(k-\Lambda_*)}(|l|^{-1}+|k-l|^{-1})\nu^{-\f13}E_{k-l}M_lj^{-1.1},\\
\|\widetilde{w}_{j,k}\|_{Y_{k,j}}^2&\lesssim\sum_{l\in\Lambda_*\cap(k-\Lambda_*)}
(|l|^{-2}+|k-l|^{-2})
\sum_{l\in\Lambda_*\cap(k-\Lambda_*)}\nu^{-\f23}(E_{k-l}M_l)^2j^{-2.2}\\ &\lesssim
\sum_{l\in\Lambda_*\cap(k-\Lambda_*)}\nu^{-\f23}(E_{k-l}M_l)^2j^{-2.2}.
\end{align*}
This implies \eqref{wYk}. 
\end{proof}

Now we prove Proposition \ref{prop:reaction}. 

\begin{proof}
In each interval $I_j$, let $\omega_{re,j}$ solve 
\begin{align*}
   & \partial_t\omega_{re,j}-\nu \Delta \omega_{re,j}+\mathrm{e}^{-\nu t}V\partial_x(\omega_{re,j}+\Delta^{-1}\omega_{re,j}) -t_*V'P_{\text{low}}u^2_e\partial_x\omega_L^*\chi_{I_j}(t) =0, \quad\omega_{re,j}|_{t=T_0}=0.
\end{align*} 
Then we have $\omega_{re}=\sum_{j=1}^N\omega_{re,j}$ and
\begin{align*}
   & \partial_t\omega_{re,j}-\nu \Delta \omega_{re,j}+\mathrm{e}^{-\nu t}V\partial_x(\omega_{re,j}+\Delta^{-1}\omega_{re,j}) =0, \quad\forall t\in[T_0,t_{j-1}]\cup[t_j,T].
\end{align*} 
Thus, $\omega_{re,j}=0$ for $t\in[T_0,t_{j-1}]$,   
$\|\omega_{re,j}\|_{X_{[t_{j},T]}}\leq C\|\omega_{re,j}(t_j)\|_{L^2}$ (by Proposition \ref{prop:LNS-sp1}), and 
\begin{align*}
   &  \|\omega_{re,j}\|_{X_{I}}=\|\omega_{re,j}\|_{X_{[T_0,T]}}\leq 
   \|\omega_{re,j}\|_{X_{[t_{j-1},t_j]}}+\|\omega_{re,j}\|_{X_{[t_{j},T]}}\\
   &\leq \|\omega_{re,j}\|_{X_{[t_{j-1},t_j]}}+C\|\omega_{re,j}(t_j)\|_{L^2}\leq C\|\omega_{re,j}\|_{X_{[t_{j-1},t_j]}}=
   C\|\omega_{re,j}\|_{X_{I_j}}.
\end{align*}
Here $I_j=[t_{j-1},t_j]$, $I=[T_0,T]$. Let $f_{2,j}=\omega_{re,j}-f_{1,j}$, which solves
\begin{align*}
   & \partial_tf_{2,j}-\nu \Delta f_{2,j}+\mathrm{e}^{-\nu t}V\partial_x(f_{2,j}+\Delta^{-1}f_{2,j})=\nu \Delta f_{1,j}-
   \mathrm{e}^{-\nu t}V\partial_x\Delta^{-1} f_{1,j}, \quad f_{2,j}|_{t=t_{j-1}}=0.
\end{align*}
By Proposition \ref{prop:LNS-sp1} again, we get
\begin{align*}
    \|f_{2,j}\|_{X_{I_j}}\leq  &C\nu^{-\f12}\|\nu\nabla f_{1,j}\|_{L^2(I_j;L^2)}+C\||D_x|^{-\f12}\nabla(
   \mathrm{e}^{-\nu t}V\partial_x\Delta^{-1} f_{1,j})\|_{L^2(I_j;L^2)}\\
   \leq & C\nu^{\f12}\|\nabla f_{1,j}\|_{L^2(I_j;L^2)}+ C\||V'|\partial_x\Delta^{-1}f_{1,j} \|_{L^2(I_j;L^2)}
   +C{\||D_x|^{\f12}\nabla\Delta^{-1}f_{1,j}\|_{L^2(I_j;L^2)}}\\
   \leq &C\|f_{1,j}\|_{X_{I_j}}.
\end{align*}
Then we have
\begin{align*}
   &  C^{-1}\|\omega_{re,j}\|_{X_{I}}\leq\|\omega_{re,j}\|_{X_{I_j}}\leq \|f_{2,j}\|_{X_{I_j}}+\|f_{1,j}\|_{X_{I_j}}\leq  C\|f_{1,j}\|_{X_{I_j}}.
  \end{align*}
Thus, we infer from Lemma \ref{lem:f_{1j}} that 
\begin{align*}
   \|\omega_{re}\|_{X_I}&\leq \sum_{j=1}^N\|\omega_{re,j}\|_{X_{I}}\leq C\sum_{j=1}^N\|f_{1,j}\|_{X_{I_j}}
    \leq C\sum_{j=1}^N\nu^{-\f13}j^{-1.1}\|\omega_{e}\|_{X_I}\|\om_0\|_{H^3}\\
    &\leq C\nu^{-\f13}\|\omega_e\|_{X_I}\|\omega_0\|_{H^3}.
\end{align*}

  This completes the proof.
\end{proof}

\appendix

\section{{Morse} type lemma}

In this section, we prove the following {Morse} type lemma.

\begin{lemma}\label{lem:genV}
For general (real valued) $V$ satisfying $\|V-b\|_{H^4}\leq\epsilon_0$ with $\epsilon_0>0$ sufficiently small,  there exist constants $a$,  $d$ and a function $\theta(y):\T_{2\pi}\to \T_{2\pi}$ such that $V(y)=ab(\theta(y))+d$, $b(y)=\cos y$, with
     \begin{align*}
{|a-1|+|d|+}\|\theta(y)-y\|_{H^3}\leq C\|V-b\|_{H^4}. 
 \end{align*}
  \end{lemma}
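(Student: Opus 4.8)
The statement is a quantitative Morse-type normal form: $V$ is a small $H^4$ perturbation of $\cos y$, and we want to write $V(y)=a\cos(\theta(y))+d$ with $\theta$ a small $H^3$ perturbation of the identity. The natural strategy is to first fix the constants $a,d$ by matching the extremal values of $V$ with those of $\cos$, and then to construct $\theta$ piecewise on the two monotonicity intervals of $V$, gluing across the two critical points using a careful square-root desingularization.

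\textbf{Step 1: locate the critical points and fix $a,d$.} Since $\|V-b\|_{H^4}\le\epsilon_0$, Sobolev embedding gives $\|V'-b'\|_{C^2}\lesssim\epsilon_0$, and as $b'=-\sin y$ has simple zeros at $0,\pi$ with $|b''|\ge c>0$ near them, the implicit function theorem (or a direct degree/monotonicity argument) produces unique critical points $y_1,y_2$ of $V$ with $|y_1|\le 1/10$, $|y_2-\pi|\le 1/10$, $V''(y_1)<0<V''(y_2)$, and $|V''|\gtrsim 1$ near each. These are the global max/min on $\T_{2\pi}$; set $M=V(y_1)$, $m=V(y_2)$, and define
\begin{align*}
a=\frac{M-m}{2},\qquad d=\frac{M+m}{2}.
\end{align*}
Then $|a-1|+|d|\le C\|V-b\|_{H^4}$ because $M-2=V(y_1)-b(0)+b(0)-b(y_1)$ and $b(0)-b(y_1)=O(y_1^2)=O(\|V-b\|_{H^4}^2)$, similarly for $m$. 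After this rescaling, $(V-d)/a$ takes values exactly in $[-1,1]$ with endpoints attained at $y_1,y_2$, so it suffices to solve $\cos(\theta(y))=(V(y)-d)/a=:\widetilde V(y)$.

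\textbf{Step 2: construct $\theta$ on each monotone arc.} On $[y_1,y_2]$, $\widetilde V$ is strictly decreasing from $1$ to $-1$, so we can set $\theta(y)=\arccos(\widetilde V(y))\in[0,\pi]$; on $[y_2,y_1+2\pi]$, $\widetilde V$ is strictly increasing from $-1$ to $1$, so $\theta(y)=2\pi-\arccos(\widetilde V(y))$. This defines $\theta:\T_{2\pi}\to\T_{2\pi}$ continuous with $\theta(y_1)\equiv 0$, $\theta(y_2)=\pi$. To control $\|\theta(y)-y\|_{H^3}$ we write $\theta(y)-y=\bigl(\arccos\widetilde V(y)-\arccos b(y)\bigr)$ on the first arc; since $\arccos$ is smooth away from $\pm1$ with derivative $-(1-t^2)^{-1/2}$, the danger is precisely near the endpoints where $1-\widetilde V^2\to 0$. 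The key analytic input is that, by the same Morse structure used elsewhere in the paper (compare Lemma~\ref{lem:V'}), $1-\widetilde V(y)^2\gtrsim |y-y_1|^2$ near $y_1$ and $\gtrsim|y-y_2|^2$ near $y_2$, and likewise for $b$, so that the ratio $(1-b^2)/(1-\widetilde V^2)$ stays bounded above and below. This lets us write $\sqrt{1-\widetilde V^2}=|V'(\xi)|\cdot h(y)$ for a smooth bounded nonvanishing factor (via the fundamental theorem of calculus applied to $1-\widetilde V^2$, whose $y$-derivative vanishes to first order at the critical points), and similarly for $b$; dividing, the square-root singularities cancel and one is left with an $H^3$-regular expression whose difference from the identity is controlled by $\|V-b\|_{H^4}$.

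\textbf{Step 3: assemble the estimate and check matching.} Once $\theta$ is shown to be $C^3$ (indeed $H^4$, but we only claim $H^3$) on each closed arc, one must verify $C^2$-matching at $y_1$ and $y_2$ so that $\theta\in H^3(\T_{2\pi})$: this follows because $\theta'(y)=\widetilde V'(y)/\sqrt{1-\widetilde V(y)^2}$ extends continuously across $y_1,y_2$ to $\mp\sqrt{-\widetilde V''(y_j)/1}$-type limits (using $\widetilde V'(y_j)=0$ and L'Hôpital / Taylor expansion), and the analogous one-sided limits agree from both arcs by the evenness of $\cos$ near its critical points, inherited by $V$ up to $O(\epsilon_0)$. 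The bound $\|\theta(y)-y\|_{H^3}\le C\|V-b\|_{H^4}$ then comes from differentiating the quotient representation at most three times and using $\|\widetilde V-b\|_{H^4}\lesssim\|V-b\|_{H^4}$ together with the uniform lower bounds on the desingularized factors; one loses exactly one derivative (hence $H^4\to H^3$) because each $\partial_y$ hitting $(1-\widetilde V^2)^{-1/2}$ produces a factor $\widetilde V'/(1-\widetilde V^2)\sim |y-y_j|^{-1}$ that must be absorbed by a vanishing factor from the numerator, and the bookkeeping of these cancellations is tight at the third derivative.

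\textbf{Main obstacle.} The genuinely delicate point is Step~2--3: showing that the composition $\arccos\circ\,\widetilde V$ is $H^3$ near the degenerate points $y_1,y_2$ with the \emph{linear} dependence on $\|V-b\|_{H^4}$, rather than just qualitatively. This requires a clean desingularization lemma — essentially that $y\mapsto \arccos(V(y))$, for $V$ a Morse function $C^4$-close to a reference Morse function, depends $H^3$-continuously and linearly on $V$ near nondegenerate critical points — and the non-square-root-integrable singular factors must be tracked and cancelled order by order. The rest (Step~1, the constants $a,d$, the global gluing topology) is routine.
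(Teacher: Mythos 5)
Your Step 1 (locating $y_1,y_2$, setting $a=(M-m)/2$, $d=(M+m)/2$, and bounding $|a-1|+|d|$) and your definition of $\theta$ as $\pm\arccos(\widetilde V)$ on the two monotone arcs coincide with the paper's argument, as does the away-from-critical-points estimate via the integral representation of the difference of $\arccos$'s. However, at the crux of the lemma — the quantitative $H^3$ bound for $\theta(y)-y$ \emph{near} the degenerate points $y_1,y_2$ with \emph{linear} dependence on $\|V-b\|_{H^4}$ — your proposal does not give a proof: Steps 2--3 assert that "the square-root singularities cancel" and that the bound follows from "bookkeeping of these cancellations," and your own closing paragraph concedes that this "requires a clean desingularization lemma" which you do not state or prove. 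That unproved lemma \emph{is} the content of the result near the critical points, so as written there is a genuine gap. Moreover, your justification of the $C^2$ matching of $\theta$ across $y_1,y_2$ by "evenness of $\cos$ near its critical points, inherited by $V$ up to $O(\epsilon_0)$" is not a valid mechanism: $V$ is not approximately even about $y_j$ in any sense that controls higher derivatives, and the matching actually comes from the sign conventions on the two arcs together with Taylor expansion at a nondegenerate critical point (i.e., from the Morse structure alone).

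The paper closes exactly this gap by avoiding the singular bookkeeping altogether: near $y_1$ it passes to the half-angle variable, writing $\sin(\theta(y)/2)=\mathrm{sgn}(y-y_1)\sqrt{(1-\widetilde V(y))/2}$ and comparing with $\sin((y-y_1)/2)=\mathrm{sgn}(y-y_1)\sqrt{(1-\cos(y-y_1))/2}$. The only nontrivial input is then the Morse square-root lemma (Lemma~\ref{lem1.1}): if $f_i\ge 0$ vanishes to exactly second order at the critical point with $f_i''\sim 1$, then $g_i=\mathrm{sgn}(y)\sqrt{f_i}$ satisfies $\|g_1-g_2\|_{H^3}\le C\|f_1-f_2\|_{H^4}$, proved cleanly by factoring $f_i(y)=y^2\tilde f_i(y)$ with $\tilde f_i$ bounded below in $H^2$ and writing $g_i'=\tilde f_i^{-1/2}f_i^*/2$ with $f_i^*(y)=\int_0^1 f_i''(sy)\,ds$. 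Since the resulting arguments of $\arcsin$ stay in $[-1/2,1/2]$ near $y_j$, one only composes with a smooth function, and no order-by-order cancellation of factors like $\widetilde V'/(1-\widetilde V^2)$ is ever needed. If you want to keep your direct route (differentiating $\arccos(\widetilde V)$ three times), you must actually formulate and prove the desingularization estimate you invoke — which, in effect, amounts to reproving Lemma~\ref{lem1.1} in a more awkward guise.
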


We need the following lemma.
\if0\begin{lemma}\label{lem1.2}
Let $If(y)=\int_0^yf(z)\mathrm{d}z$, $a_1,a_2>0$,  then we have
     \begin{align*}
 \|y^{-3}I(y^2f)\|_{H^1(-a_1,a_2)}\leq C\|f\|_{H^1(-a_1,a_2)}. 
 \end{align*}
  \end{lemma}$f\in C^3$, $y^2\leq f(y)\leq 2y^2$, $g(y)={sgn}(y)\sqrt{f(y)}$, then $g\in C^2$.
  \begin{proof}
  Note that $y^{-3}I(y^2f)(y)=\int_0^1s^2f(sy)\mathrm{d}s,$ $(y^{-3}I(y^2f))'(y)=\int_0^1s^3f'(sy)\mathrm{d}s. $ Then the result follows from Minkowski inequality.
  \end{proof}\fi
 \begin{lemma}\label{lem1.1}
 Assume that $1/4\leq f_i''(y)\leq 1$, $f_i(0)=f_i'(0)=0$,  $g_i(y)=\mathrm{sgn}(y)\sqrt{f_i(y)}$ for $y\in (-a_1,a_2)$, $a_1,a_2\in[1/10,1]$, $\|f_i\|_{H^4(-a_1,a_2)}\leq 10 $, $i=1,2$. Then we have
     \begin{align*}
 \|g_i\|_{H^3(-a_1,a_2)}\leq C\|f_i\|_{H^4(-a_1,a_2)},\quad \|g_1-g_2\|_{H^3(-a_1,a_2)}\leq C\|f_1-f_2\|_{H^4(-a_1,a_2)}. 
 \end{align*}
  \end{lemma}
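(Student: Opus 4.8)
\textbf{Proof plan for Lemma~\ref{lem1.1}.} The statement is a quantitative version of the elementary fact that the square root of a smooth function vanishing to exactly second order at a point is again smooth, with one derivative lost. The plan is to use the integral representation that extracts the vanishing, namely to write $f_i(y)=y^2 F_i(y)$ with $F_i(y)=\int_0^1\int_0^1 t\,f_i''(sty)\,ds\,dt$ (since $f_i(0)=f_i'(0)=0$), so that $g_i(y)=\mathrm{sgn}(y)|y|\sqrt{F_i(y)}=y\,G_i(y)$ with $G_i(y)=\sqrt{F_i(y)}$. The hypothesis $1/4\le f_i''\le 1$ gives $1/4\le F_i(y)\le 1$ pointwise, so $G_i$ is bounded below away from zero; consequently $\sqrt{\,\cdot\,}$ is a smooth function on the range of $F_i$ and the chain rule causes no trouble. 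First I would record that the substitution $y\mapsto (s,t,y)$ in the integral formula for $F_i$, combined with Minkowski's inequality in integral form, yields $\|F_i\|_{H^2(-a_1,a_2)}\le C\|f_i''\|_{H^2(-a_1,a_2)}\le C\|f_i\|_{H^4(-a_1,a_2)}$ (each $y$-derivative of $F_i$ is again an average of $f_i''$-derivatives against the bounded weights $t,st$, and one loses no integrability because $a_1,a_2\le 1$). This is the step where the loss of exactly one derivative appears: $f_i\in H^4$ feeds $F_i\in H^2$.

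Next I would pass from $F_i$ to $G_i=\sqrt{F_i}$. Since $1/4\le F_i\le 1$ on the interval, the Moser/Gagliardo--Nirenberg composition estimate for $H^2$ functions (valid here because $H^2(-a_1,a_2)\hookrightarrow L^\infty$ and the interval length is bounded above and below) gives $\|G_i\|_{H^2}\le C\,\phi(\|F_i\|_{L^\infty})\,(1+\|F_i\|_{H^2})\le C\|f_i\|_{H^4}$, where $\phi$ accounts for the sup-norm of $\sqrt{\,\cdot\,}$ and its derivatives on $[1/4,1]$. Then $g_i(y)=y\,G_i(y)$, and multiplication by the smooth bounded function $y$ on $(-a_1,a_2)$ (with $a_1,a_2\in[1/10,1]$) is bounded $H^2\to H^2$, but more importantly a direct computation shows $g_i\in H^3$: indeed $g_i'=G_i+yG_i'$, $g_i''=2G_i'+yG_i''$, $g_i'''=3G_i''+yG_i'''$, and here one should instead keep track that $G_i\in H^2$ only gives $G_i''\in L^2$, so I should organize the bookkeeping so that the third derivative of $g_i$ is $3G_i''+yG_i'''$ and the worrisome term $yG_i'''$ is controlled by writing $G_i'''=(\sqrt{F_i})'''$ explicitly in terms of $F_i',F_i'',F_i'''$ — wait, $F_i$ is only $H^2$. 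The cleaner route is to differentiate $g_i$ directly rather than through $G_i$: from $g_i^2=f_i$ and $g_i=yG_i$ with $G_i$ bounded below, one gets $g_i'=f_i'/(2g_i)=f_i'/(2yG_i)$, and since $f_i'/(y)=\int_0^1 f_i''(sy)ds$ is an $H^3$ function (average of $f_i''\in H^2$ against bounded weights, times... ) — more precisely $f_i'/y\in H^2$ and is bounded, and $1/(2G_i)\in H^2$ and bounded, so $g_i'\in H^2$, i.e.\ $g_i\in H^3$, with the quantitative bound $\|g_i\|_{H^3}\le C\|f_i\|_{H^4}$ following from the algebra and composition estimates in $H^2$.

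For the Lipschitz (difference) estimate I would run the same scheme in difference form. Write $F_1-F_2=\int_0^1\int_0^1 t\,(f_1''-f_2'')(sty)\,ds\,dt$, giving $\|F_1-F_2\|_{H^2}\le C\|f_1-f_2\|_{H^4}$. Then $G_1-G_2=\sqrt{F_1}-\sqrt{F_2}=(F_1-F_2)/(\sqrt{F_1}+\sqrt{F_2})$, and since $\sqrt{F_1}+\sqrt{F_2}\ge 1$ is bounded below and $1/(\sqrt{F_1}+\sqrt{F_2})\in H^2$ with norm controlled by $\|f_1\|_{H^4}+\|f_2\|_{H^4}\le 20$, the product estimate in the algebra $H^2(-a_1,a_2)$ gives $\|G_1-G_2\|_{H^2}\le C\|F_1-F_2\|_{H^2}\le C\|f_1-f_2\|_{H^4}$. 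Finally $g_1-g_2=y(G_1-G_2)$, and to get the $H^3$ difference bound I would again argue through $g_i'=(f_i'/y)\cdot\frac{1}{2G_i}$, so that $g_1'-g_2'=(f_1'/y-f_2'/y)\frac{1}{2G_1}+(f_2'/y)\big(\frac{1}{2G_1}-\frac{1}{2G_2}\big)$, and each factor is estimated in the $H^2$ algebra using $\|f_i'/y\|_{H^2}\le C\|f_i\|_{H^4}$, $\|1/G_i\|_{H^2}\le C$, and $\|1/G_1-1/G_2\|_{H^2}=\|(G_2-G_1)/(G_1G_2)\|_{H^2}\le C\|G_1-G_2\|_{H^2}$. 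The main obstacle — and the place that needs the most care — is precisely this bookkeeping of derivative counts: making sure that at every stage one only ever differentiates $f_i$ four times and $F_i$ (or $G_i$) twice, and that all the reciprocals and composition factors are handled in the Banach algebra $H^2(-a_1,a_2)$ where the lower bound $F_i\ge 1/4$ keeps everything finite; once the substitution-plus-Minkowski step and the $H^2$-algebra structure are in place, the rest is routine. A brief remark should also note that the constants $C$ are uniform because $a_1,a_2$ range over the compact interval $[1/10,1]$, so all embedding and algebra constants can be taken uniform.
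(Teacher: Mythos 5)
Your plan is correct and follows essentially the same route as the paper's proof: factor $f_i=y^2F_i$ by Taylor's formula with $\|F_i\|_{H^2}\leq C\|f_i\|_{H^4}$, set $g_i=y\sqrt{F_i}$, and recover the third derivative through the identity $g_i'=(f_i'/y)/(2\sqrt{F_i})$ together with the $H^2$-algebra and the lower bound on $F_i$, with the difference bound obtained from the same decomposition. (Only a trivial numerical slip: with the weight $(1-s)$ the correct pointwise bounds are $1/8\leq F_i\leq 1/2$, not $1/4\leq F_i\leq 1$, which changes nothing in the argument.)
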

  \begin{Remark}\label{rem1}
By the translation, if $1/4\leq f_i''(y)\leq 1$, $f_i(a_0)=f_i'(a_0)=0$,  $g_i(y)=\mathrm{sgn}(y-a_0)\sqrt{f_i(y)}$, $\forall\ y\in (-a_1,a_1)$, $a_1\in[1/5,1/2]$, $|a_0|\leq 1/10$, $\|f_i\|_{H^4(-a_1,a_1)}\leq 10 $, $i=1,2$ then we also have
     \begin{align*}
 \|g_i\|_{H^3(-a_1,a_1)}\leq C,\quad \|g_1-g_2\|_{H^3(-a_1,a_1)}\leq C\|f_1-f_2\|_{H^4(-a_1,a_1)}. 
 \end{align*}
\end{Remark} 

 \begin{proof}[Proof of Lemma \ref{lem1.1}]
As $f_i(0)=f_i'(0)=0$, we get by Taylor's formula  that
\begin{align*}
 &f_i(y)=y^2\tilde {f}_i(y),\quad \tilde {f}_i(y):=\int_0^1f_i''(sy)(1-s)\mathrm{d}s,\quad 
 \|\tilde {f}_i\|_{H^2(-a_1,a_2)}\leq C\| {f}_i\|_{H^4(-a_1,a_2)}\leq C,\\
 &f_i'(y)=y{f}_i^*(y),\quad  {f}_i^*(y):=\int_0^1f_i''(sy)\mathrm{d}s,\quad 
 \|{f}_i^*\|_{H^2(-a_1,a_2)}\leq C\| {f}_i\|_{H^4(-a_1,a_2)}\leq C,\\
 &\tilde {f}_1(y)-\tilde {f}_2(y)=\int_0^1(f_1-f_2)''(sy)(1-s)\mathrm{d}s,\quad 
 \|\tilde {f}_1-\tilde {f}_2\|_{H^2(-a_1,a_2)}\leq C\| {f}_1-f_2\|_{H^4(-a_1,a_2)},\\
 &{f}_1^*(y)-{f}_2^*(y)=\int_0^1(f_1-f_2)''(sy)\mathrm{d}s,\quad 
 \| {f}_1^*-{f}_2^*\|_{H^2(-a_1,a_2)}\leq C\| {f}_1-f_2\|_{H^4(-a_1,a_2)}. 
 \end{align*}
As $1/4\leq f_i''(y)\leq 1$, we have $1/8\leq \tilde {f}_i\leq 1/2$. 
Let $\tilde {g}_i:=\tilde {f}_i^{1/2}$. Then
\beno
g_i(y)=y\tilde {g}_i(y),\quad  \|\tilde {g}_i\|_{H^2(-a_1,a_2)}\leq C,\quad  \|\tilde {g}_i^{-1}\|_{H^2(-a_1,a_2)}\leq C,\quad  \|{g}_i\|_{H^2(-a_1,a_2)}\leq C.
\eeno 
Notice that  $f_i=g_i^2$, $y{f}_i^*=f_i'=2g_ig_i'=2y\tilde {g}_ig_i'$, $g_i'=\tilde {g}_i^{-1}{f}_i^*/2$, and then
\begin{align*}
&\| {g}_i'\|_{H^2(-a_1,a_2)}\leq C\|\tilde {g}_i^{-1}\|_{H^2(-a_1,a_2)}\|f_i^*\|_{H^2(-a_1,a_2)}\leq C,\\
&\|g_i\|_{H^3(-a_1,a_2)}\leq C\big(\|g_i\|_{H^2(-a_1,a_2)}+\|g_i'\|_{H^2(-a_1,a_2)}\big)\leq C.\end{align*}

Notice that 
\begin{align*}
g_1'-g_2'=(\tilde {g}_1^{-1}-\tilde {g}_2^{-1}){f}_1^*/2+\tilde {g}_2^{-1}({f}_1^*-{f}_2^*)/2.
\end{align*}
Then by $\tilde {g}_i=\tilde {f}_i^{1/2}$, $1/8\leq \tilde {f}_i\leq 1/2$, we have
\begin{align*}
&\|g_1'-g_2'\|_{H^2(-a_1,a_2)}\leq C\|\tilde {g}_1^{-1}-\tilde {g}_2^{-1}\|_{H^2(-a_1,a_2)}\|{f}_1^*\|_{H^2(-a_1,a_2)}\\
&\qquad+C\|\tilde {g}_2^{-1}\|_{H^2(-a_1,a_2)}\|{f}_1^*-{f}_2^*\|_{H^2(-a_1,a_2)}\\
&\leq C\|\tilde {g}_1^{-1}-\tilde {g}_2^{-1}\|_{H^2(-a_1,a_2)}+C\|{f}_1^*-{f}_2^*\|_{H^2(-a_1,a_2)}\\
&=C\|\tilde {f}_1^{-1/2}-\tilde {f}_2^{-1/2}\|_{H^2(-a_1,a_2)}+C\|{f}_1^*-{f}_2^*\|_{H^2(-a_1,a_2)}\\
&\leq C\|\tilde {f}_1-\tilde {f}_2\|_{H^2(-a_1,a_2)}+C\|{f}_1^*-{f}_2^*\|_{H^2(-a_1,a_2)}\leq C\|f_1-f_2\|_{H^4(-a_1,a_1)}.
\end{align*}
Moreover, we have
\begin{align*}
&\|g_1-g_2\|_{H^2(-a_1,a_2)}=\|y(\tilde {g}_1-\tilde {g}_2)\|_{H^2(-a_1,a_2)}\leq C\|\tilde {g}_1-\tilde {g}_2\|_{H^2(-a_1,a_2)}\\
&=C\|\tilde {f}_1^{1/2}-\tilde {f}_2^{1/2}\|_{H^2(-a_1,a_2)}\leq C\|\tilde {f}_1-\tilde {f}_2\|_{H^2(-a_1,a_2)}\leq 
C\|{f}_1- {f}_2\|_{H^4(-a_1,a_2)}.
\end{align*}
Then we infer that 
\begin{align*}
\|g_1-g_2\|_{H^3(-a_1,a_2)}\leq C(\|g_1-g_2\|_{H^2(-a_1,a_2)}+\|g_1'-g_2'\|_{H^2(-a_1,a_2)})\leq C\|{f}_1- {f}_2\|_{H^4(-a_1,a_2)}.
\end{align*}

This completes the proof.
\end{proof}

Now we prove Lemma  \ref{lem:genV}.

  \begin{proof}

    Thanks to $b''(y)=-\cos(y) \in [-1,-\cos(1/10)]$, it holds that if $y\in[-1/10,1/10]$, then
    \begin{align*}
       & V''(y)\leq \max\{b''(y):y\in[-1/10,1/10]\}+\|V''-b''\|_{L^\infty}\leq -\cos(1/4)+C\epsilon_0<0.
    \end{align*}
  It also holds that
    \begin{align*}
       &V'(-1/10)\geq b'(-1/10)-\|V'-b'\|_{L^\infty}\geq \sin(1/10)-C\epsilon_0>0,\\
       &V'(1/10)\leq b'(1/10)+\|V'-b'\|_{L^\infty}\leq -\sin(1/10)+C\epsilon_0<0.
    \end{align*}
    Then there exists a point $y_1\in [-1/10,1/10]$ such that $V'(y_1)=0$; $y_1$ is the maximum point in $[-1/10,1/10]$; $V'(y)> 0$ if $y\in[-1/10,y_1)$ and $V'(y)< 0$ if $y\in(y_1,1/10].$ Similar argument ensures that there exists a point $y_2\in [-1/10+\pi,1/10+\pi]$ such that $V'(y_2)=0$; $y_2$ is the minimum point in 
    $[-1/10+\pi,1/10+\pi]$; $V'(y)< 0$ if $y\in[-1/10+\pi,y_2)$ and $V'(y)> 0$ if $y\in(y_2,1/10+\pi]$.
        
    For $y\in[1/10,-1/10+\pi]$, we have 
    \beno
    V'(y)\leq \min\{b'(y):y\in[1/10,-1/10+\pi]\}+\|V'-b'\|_{L^\infty}\leq -\sin(1/10)+C\epsilon_0<0.
    \eeno 
    For $y\in[1/10-\pi,-1/10]$, we have 
    \beno 
    V'(y)\geq \min\{b'(y):y\in[1/10-\pi,-1/10]\}-\|V'-b'\|_{L^\infty}\geq \sin(1/10)-C\epsilon_0>0.
    \eeno
    
 Thus, we can conclude that $y_1$ is the maximum point of $V$ and $y_2$ is the minimum point of $V$; $V(y)$ is decreasing when $y\in[y_1,y_2]$ and $V(y)$ is increasing when $y\in [y_2-2\pi,y_1]$.
    
    Let $V_{max}:=V(y_1)$ and $V_{min}:=V(y_2)$. We denote
    \begin{align}\label{choice:ab}
       & a=(V_{max}-V_{min})/2,\quad d=(V_{max}+V_{min})/2,\quad \widetilde{V}(y)=(V(y)-d)/a.
    \end{align}    
    Then we have $\widetilde{V}(y)\in[-1,1]$, $\widetilde{V}(y_1)=1,\ \widetilde{V}(y_2)=-1$.
    
   Next  we define $\theta(y)$ as follows 
    \begin{equation}\label{def:theta-0}
      \theta(y)=\left\{\begin{aligned}
                          & \arccos (\widetilde{V}(y)),\qquad\quad \text{if}\ y\in[y_1,y_2],\\
       &-\arccos(\widetilde{V}(y)),\quad \text{if}\ y\in[y_2-2\pi,y_1].
                       \end{aligned}
      \right.
    \end{equation}
 We extend $\theta(y+2\pi k)=\theta(y)+2\pi k$, $\forall\ y\in[y_2-2\pi,y_1]$, $k\in\Z$.
    
    By \eqref{choice:ab} and \eqref{def:theta-0}, we have
    \begin{align}\label{eq:V=ab+d}
       & V(y)=a\cos(\theta(y))+d=ab(\theta(y))+d.
    \end{align}
    
    By the definition of $y_1$ and $y_1\in[-1/10,1/10]$, we have
    \begin{align*}
       &|b'(0)-b'(y_1)|=|\sin(0)-\sin(y_1)|\geq |y_1-0|/\cos(1/10)\geq C^{-1}|y_1|,
    \end{align*}
    then by using $V'(y_1)=0$,
    \begin{align*}
       &|y_1|\leq C|b'(0)-b'(y_1)|= C|V'(y_1)-b'(y_1)|\leq C\|V'-b'\|_{L^\infty}\leq C\|V-b\|_{H^4}.
    \end{align*}
    Similarly, we have $|y_2-\pi|\leq C\|V-b\|_{H^4}$.
   Then
    \begin{align*}
        |V_{max}-1|=&|V(y_1)-b(0)|\leq |V(y_1)-V(0)|+|V(0)-b(0)|\\
       \leq & |y_1|\|V'\|_{L^\infty}+\|V-b\|_{L^\infty}\leq C\|V-b\|_{H^4},\\
       |V_{min}+1|=&|V(y_2)-b(\pi)|\leq |V(y_2)-V(\pi)|+|V(\pi)-b(\pi)|\\
       \leq & |y_2-\pi|\|V'\|_{L^\infty}+\|V-b\|_{L^\infty}\leq C\|V-b\|_{H^4},
    \end{align*}
    hence, 
    \begin{align}\label{est:a-1+d}
       & |a-1|+|d|\leq |V_{max}-1|+|V_{min}+1|\leq C\|V-b\|_{H^4}.
    \end{align}
By \eqref{est:a-1+d}, we get
    \begin{align*}
\|\widetilde{V}-b\|_{H^4}=&\|(V-d)/a-b\|_{H^4}\leq \|d/a\|_{H^4}+\|(V-b)/a\|_{H^4}+\|(1-a)b/a\|_{H^4}\\
       \leq& C\|V-b\|_{H^4}\leq C\epsilon_0.
    \end{align*}
        
For $y\in(1/10,\pi-1/10)$, we have   
\begin{align*}
       \theta(y)-y=&\arccos (\widetilde{V}(y))-\arccos (b(y))=\int_{\widetilde{V}(y)}^{b(y)}\f{\mathrm{d}z}{\sqrt{1-z^2}}
       \\=&\int_{0}^{1}\f{(b(y)-\widetilde{V}(y))\mathrm{d}z}{[1-(z\widetilde{V}(y)+(1-z)b(y))^2]^{1/2}}.
        \end{align*}   
From the following facts that 
 \begin{align*}
      z\in[0,1]\Rightarrow&|z\widetilde{V}(y)+(1-z)b(y)|\leq |b(y)|+|V(y)-b(y)|\leq \cos(1/10)+C\epsilon_0<0.999,\\
       &\|z\widetilde{V}(y)+(1-z)b(y)\|_{H_y^3}\leq \|\widetilde{V}-b\|_{H^3}+\|b\|_{H^3}\le C,\quad \forall\ z\in[0,1],\\
       &\varphi(z):=(1-z^2)^{-1/2}\in C^{\infty}([-0.999,0.999]),\\
       &\|\varphi(z\widetilde{V}(y)+(1-z)b(y))\|_{H_y^3(1/10,\pi-1/10)}\leq C,\quad \forall\ z\in[0,1],
    \end{align*}   
we infer that    
   \begin{align*}
       &\|\theta(y)-y\|_{H^3(1/10,\pi-1/10)}\leq\int_{0}^{1}
       \bigg\|\f{(b(y)-\widetilde{V}(y))}{[1-(z\widetilde{V}(y)+(1-z)b(y))^2]^{1/2}}\bigg\|_{H_y^3(1/10,\pi-1/10)}\mathrm{d}z\\
       &\leq C\int_{0}^{1}
      \|b(y)-\widetilde{V}(y)\|_{H^3}\|\varphi(z\widetilde{V}(y)+(1-z)b(y))\|_{H_y^3(1/10,\pi-1/10)}\mathrm{d}z\\
      &\leq C\|b(y)-\widetilde{V}(y)\|_{H^3}\leq C\|V-b\|_{H^4}.
    \end{align*}
     Similarly, we have 
     \begin{align*}
       &\|\theta(y)-y\|_{H^3(-\pi+1/10,-1/10)}\leq C\|V-b\|_{H^4}.
    \end{align*}  
    
Let $V_1(y)=\sin\f{\theta(y)}{2}$, $b_1(y)=\sin\f{y-y_1}{2}$, $\widetilde{b}(y)=\cos(y-y_1)=b(y-y_1)$. Then for\\ $y\in[-1/5,1/5]$, we have
\begin{align*}
      &V_1(y)=\mathrm{sgn}(y-y_1)\sqrt{(1-\widetilde{V}(y))/2},\quad b_1(y)=\mathrm{sgn}(y-y_1)\sqrt{(1-\widetilde{b}(y))/2},\\
      &|y_1|\leq C\epsilon_0\leq 1/10,\quad (1-\widetilde{V}(y_1))/2=-\widetilde{V}'(y_1)/2=0,\quad 
      (1-\widetilde{b}(y_1))/2=-\widetilde{b}'(y_1)/2=0,\\
      &-\widetilde{b}''(y)/2=\cos(y-y_1)/2\geq\cos(2/5)/2\geq 1/4,\quad -\widetilde{b}''(y)/2=\cos(y-y_1)/2\leq 1/2,\\
      &-\widetilde{V}''(y)/2\geq-b''(y)/2-\|\widetilde{V}''-b''\|_{L^{\infty}}\geq \cos(y)/2-C\|\widetilde{V}-b\|_{H^4}\geq \cos(0.2)/2-C\epsilon_0\geq 1/4,\\
      &-\widetilde{V}''(y)/2\leq-b''(y)/2+\|\widetilde{V}''-b''\|_{L^{\infty}}\leq \cos(y)/2+C\|\widetilde{V}-b\|_{H^4}{\leq } 1/2+C\epsilon_0\leq 1,\\
      &\|(1-\widetilde{b}(y))/2\|_{H^4}=3\sqrt{\pi/2}\leq 10,\\& \|(1-\widetilde{V}(y))/2\|_{H^4}\leq \|(1-b(y))/2\|_{H^4}+\|\widetilde{V}-b\|_{H^4}\leq 3\sqrt{\pi/2}+C\epsilon_0\leq 10.
    \end{align*}
Then we get by Remark \ref{rem1} that 
\begin{align*}
      &\|V_1-b_1\|_{H^3(-1/5,1/5)}\leq C\|(1-\widetilde{V})/2-(1-\widetilde{b})/2\|_{H^4(-1/5,1/5)} 
       \leq C\|\widetilde{V}-\widetilde{b}\|_{H^4}\\ &\leq 
       C\|\widetilde{V}-{b}\|_{H^4}+C\|\widetilde{b}-{b}\|_{H^4}\leq 
       C\|\widetilde{V}-{b}\|_{H^4}+C|y_1|\leq 
       C\|{V}-{b}\|_{H^4}\leq C\epsilon_0.
    \end{align*}
    
For $y\in(-1/5,1/5)$, we have $\theta(y)=2\arcsin V_1(y)$  and
    \begin{align*}
       \theta(y)-y=&2\arcsin ({V}_1(y))-2\arcsin (b_1(y))-y_1=2\int_{{b}_1(y)}^{V_1(y)}\f{\mathrm{d}z}{\sqrt{1-z^2}}-y_1
       \\=&\int_{0}^{1}\f{2({V}_1(y)-{b}_1(y))\mathrm{d}z}{[1-(z{V}_1(y)+(1-z)b_1(y))^2]^{1/2}}-y_1.
       \end{align*} 
 From the following facts that
    \begin{align*}
      z\in[0,1]\Rightarrow&|z{V}_1(y)+(1-z)b_1(y)|\leq |b_1(y)|+|V_1(y)-b_1(y)|\leq \sin(1/5)+C\epsilon_0<1/2,\\
       &\|z{V}_1(y)+(1-z)b_1(y)\|_{H_y^3(-1/5,1/5)}\leq \|\widetilde{V}\|_{H^3}+\|b\|_{H^3}\le C,\quad \forall\ z\in[0,1],\\
       &\varphi(z):=(1-z^2)^{-1/2}\in C^{\infty}([-1/2,1/2]),\\
       &\|\varphi(z{V}_1(y)+(1-z)b_1(y))\|_{H_y^3(-1/5,1/5)}\leq C,\quad \forall\ z\in[0,1],
    \end{align*}     
 we can deduce that 
    \begin{align*}
&\|\theta(y)-y\|_{H^3(-1/5,1/5)}\\ &\leq \int_{0}^{1}
       \bigg\|\f{2({V}_1(y)-{b}_1(y))\mathrm{d}z}{[1-(z{V}_1(y)+(1-z)b_1(y))^2]^{1/2}}\bigg\|_{H_y^3(-1/5,1/5)}\mathrm{d}z+C|y_1|\\
       &\leq C\int_{0}^{1}
      \|{V}_1(y)-{b}_1(y)\|_{H^3(-1/5,1/5)}\|\varphi(z\widetilde{V}(y)+(1-z)b(y))\|_{H_y^3(-1/5,1/5)}\mathrm{d}z+C|y_1|\\
      &\leq C\|{V}_1(y)-{b}_1(y)\|_{H^3(-1/5,1/5)}+C|y_1|\leq C\|V-b\|_{H^4}.
    \end{align*} 
    Similarly, we have
     \begin{align*}
       &\|\theta(y)-y\|_{H^3(\pi-1/5,\pi+1/5)}\leq C\|V-b\|_{H^4}.
    \end{align*}   
       
  Combining the estimates in the 4 intervals $(1/10,\pi-1/10),\ (-\pi+1/10,-1/10),\\ (-1/5,1/5),\ (\pi-1/5,\pi+1/5)$, we arrive at 
 \begin{align*}
       &\|\theta(y)-y\|_{H^3}\leq C\|V-b\|_{H^4}.
    \end{align*}  
    
    This completes the proof of the lemma.        
  \end{proof}
  
\section{Sobolev estimates}

\begin{lemma}\label{lem:GN-1}
If $\mathrm{P}_0f=0$, then for any  $0<\delta<1$, there exists $C>0$ independent of $\delta$ such that  
  \begin{align*}
     & \|f\|_{L^{\infty}}\leq C\delta^{-\f18}\||D_x|^{\f12}f\|_{L^2}^{\f34}\|f\|_{H^2}^{\f14}+C\delta\|f\|_{H^2}.
  \end{align*}
 \end{lemma}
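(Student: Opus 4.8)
The plan is to prove the claimed interpolation inequality by decomposing $f$ into a low-frequency (in $x$) part and a high-frequency part, and to handle each with a standard Gagliardo--Nirenberg estimate, optimizing over the cutoff frequency. Since $\mathrm{P}_0 f = 0$, we may write $f = \sum_{k\in\alpha\mathbb{Z}\setminus\{0\}} f_k(y)\mathrm{e}^{\mathrm{i}kx}$, and for a threshold $N\geq 1$ (to be chosen in terms of $\delta$ and the two norms) set $f^{\mathrm{lo}} = \mathrm{P}_{|D_x|\leq N} f$ and $f^{\mathrm{hi}} = f - f^{\mathrm{lo}}$. The high part is controlled crudely: since every nonzero frequency in $f^{\mathrm{hi}}$ has $|k|\geq N$, one gains a factor $N^{-1}$ or more when passing from $\|f^{\mathrm{hi}}\|_{H^2}$ to lower-order quantities, and then $\|f^{\mathrm{hi}}\|_{L^\infty}\lesssim \|f^{\mathrm{hi}}\|_{H^{1+}}$ by Sobolev embedding on $\mathbb{T}_\mathfrak{p}\times\mathbb{T}_{2\pi}$; this should give $\|f^{\mathrm{hi}}\|_{L^\infty}\lesssim N^{-\sigma}\|f\|_{H^2}$ for a suitable $\sigma>0$. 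Choosing $N$ so that $N^{-\sigma}\lesssim\delta$ absorbs this term into the $C\delta\|f\|_{H^2}$ piece.

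For the low part, the key point is that the $x$-Fourier support lies in $\{|k|\leq N\}$, so $\||D_x|^{\frac12}f^{\mathrm{lo}}\|_{L^2}$ and $\|f^{\mathrm{lo}}\|_{L^2}$ are comparable up to a power of $N$: indeed $\|f^{\mathrm{lo}}\|_{L^2}\leq \|f^{\mathrm{lo}}\|_{L^2}$ trivially and $\||D_x|^{\frac12}f^{\mathrm{lo}}\|_{L^2}\geq \|f^{\mathrm{lo}}\|_{L^2}$ since $|k|\geq 1$ on the support, while $\|f^{\mathrm{lo}}\|_{L^2}\leq N^{-1/2}\||D_x|^{\frac12}f^{\mathrm{lo}}\|_{L^2}\cdot N^{1/2}$ — more usefully, $\|f^{\mathrm{lo}}\|_{L^2}^2 = \sum_{|k|\leq N}\|f_k\|_{L^2}^2$ and $\||D_x|^{\frac12}f^{\mathrm{lo}}\|_{L^2}^2 = \sum_{|k|\leq N}|k|\|f_k\|_{L^2}^2$, so $\|f^{\mathrm{lo}}\|_{L^2}\leq \||D_x|^{\frac12}f^{\mathrm{lo}}\|_{L^2}$. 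Then apply the two-dimensional Gagliardo--Nirenberg--Sobolev inequality $\|g\|_{L^\infty}\lesssim \|g\|_{L^2}^{1-\theta}\|g\|_{H^{2}}^{\theta}$ in the appropriate range (here $\theta$ close to $1/2$, or use $\|g\|_{L^\infty}\lesssim\|g\|_{H^1}^{1/2}\|g\|_{H^2}^{1/2}$ followed by a further interpolation of $\|g\|_{H^1}$ between $L^2$ and $H^2$), together with $\|f^{\mathrm{lo}}\|_{H^2}\leq \|f\|_{H^2}$. Substituting $\|f^{\mathrm{lo}}\|_{L^2}\leq \||D_x|^{\frac12}f\|_{L^2}$ and collecting powers of $N$, one arrives at a bound of the form $\|f^{\mathrm{lo}}\|_{L^\infty}\lesssim N^{a}\||D_x|^{\frac12}f\|_{L^2}^{3/4}\|f\|_{H^2}^{1/4}$ for some exponent $a$ matching the $\delta^{-1/8}$ when $N\sim\delta^{-1/\sigma}$; the precise bookkeeping of exponents is what fixes $\sigma$ and $a$.

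The only mild subtlety, and the step I would be most careful about, is choosing the Gagliardo--Nirenberg exponents and the frequency split so that the two powers of $N$ (the negative power gained on the high part and the positive power lost on the low part) are consistent with a single choice $N = N(\delta)$ and simultaneously produce exactly the exponents $3/4$ on $\||D_x|^{\frac12}f\|_{L^2}$ and $1/4$ on $\|f\|_{H^2}$ and the factor $\delta^{-1/8}$. Concretely, I would start from $\|g\|_{L^\infty}\lesssim \|g\|_{L^2}^{1/2}\|\nabla^2 g\|_{L^2}^{1/2}$ (valid in 2D after accounting for the nonzero mean, which here is automatic since $\mathrm{P}_0 f^{\mathrm{lo}}=0$), then further interpolate $\|f^{\mathrm{lo}}\|_{L^2}$: writing $\|f^{\mathrm{lo}}\|_{L^2}^{1/2} = \|f^{\mathrm{lo}}\|_{L^2}^{3/8}\|f^{\mathrm{lo}}\|_{L^2}^{1/8}$ and bounding the second factor by $\|f\|_{H^2}^{1/8}$ up to $N$-powers. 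Matching everything gives the stated inequality. All pieces are elementary Fourier-analytic and Sobolev-embedding arguments on the torus; no new ideas beyond careful exponent matching are required.
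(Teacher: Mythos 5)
There is a genuine gap in your treatment of the low-frequency part, and it is exactly at the step you flagged as "exponent bookkeeping". Starting from the isotropic two-dimensional estimate $\|g\|_{L^\infty}\lesssim \|g\|_{L^2}^{1/2}\|g\|_{H^2}^{1/2}$ caps the exponent you can ever place on $\||D_x|^{1/2}f\|_{L^2}$ at $1/2$: the only conversion available to you is $\|f^{\mathrm{lo}}\|_{L^2}\leq \||D_x|^{1/2}f\|_{L^2}$, so your concrete recipe ($\|f^{\mathrm{lo}}\|_{L^2}^{1/2}=\|f^{\mathrm{lo}}\|_{L^2}^{3/8}\|f^{\mathrm{lo}}\|_{L^2}^{1/8}$, bounding the second factor by $\|f\|_{H^2}^{1/8}$) lands on $\||D_x|^{1/2}f\|_{L^2}^{3/8}\|f\|_{H^2}^{5/8}$, not the required $\||D_x|^{1/2}f\|_{L^2}^{3/4}\|f\|_{H^2}^{1/4}$. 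Moreover, the trade you would need to repair this, namely converting a power of $\|f\|_{H^2}$ (or of $\|f^{\mathrm{lo}}\|_{L^2}$ relative to $\|f\|_{H^2}$) into a power of $\||D_x|^{1/2}f\|_{L^2}$ at a cost of powers of $N$, is false: the cutoff is only in the $x$-frequency, while $\|f\|_{H^2}$ contains $\partial_y^2$, so for $f^{\mathrm{lo}}=e^{\mathrm{i}kx}e^{\mathrm{i}\eta y}$ with $|k|\leq N$ fixed and $\eta\to\infty$ no inequality of the form $\|f^{\mathrm{lo}}\|_{L^2}^{2}\|f\|_{H^2}\lesssim N^{s}\||D_x|^{1/2}f\|_{L^2}^{3}$ can hold. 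The best your route yields, via Young's inequality from $\||D_x|^{1/2}f\|_{L^2}^{1/2}\|f\|_{H^2}^{1/2}$, is the lemma with $\delta^{-1/2}$ in place of $\delta^{-1/8}$, a strictly weaker statement.

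The missing idea is that the gain must be extracted anisotropically, mode by mode in $x$ with the Gagliardo--Nirenberg inequality applied only in $y$: for each mode, $\|f_k\|_{L^\infty_y}\leq C\|f_k\|_{L^2_y}^{3/4}\|f_k\|_{H^2_k}^{1/4}$, which is where the exponents $3/4$ and $1/4$ come from, and then the sum over $k$ is closed by a weighted Cauchy--Schwarz/H\"older using $\sum_{k\neq 0}|k|^{-3/4}(1+|\delta k|)^{-9/4}\leq C\delta^{-1/2}$, which produces the factor $\delta^{-1/8}$ and lets the high frequencies be absorbed into $C\delta\|f\|_{H^2}$; this is the paper's proof, with a smooth weight $(1+|\delta k|)^{-9/8}|k|^{-3/8}$ playing the role of your sharp cutoff at $N\sim\delta^{-1}$. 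Your frequency-splitting skeleton can be salvaged, but only by replacing the isotropic 2D estimate on $f^{\mathrm{lo}}$ with this mode-wise argument (and, for the sharp power of $\delta$, by also estimating $f^{\mathrm{hi}}$ mode-wise, e.g. $\|f^{\mathrm{hi}}\|_{L^\infty}\leq\sum_{|k|>N}\|f_k\|_{L^\infty_y}\lesssim N^{-1}\|f\|_{H^2}$, since the crude $H^{1+}$ embedding loses an $\epsilon$ in the exponent), at which point it coincides with the paper's argument.
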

 
\begin{proof}
  Let $f_k(y)=\f{1}{2\pi}\int_{\mathbb{T}}f(x,y)e^{-\mathrm{i}kx}dx$ for $k\in\mathbb{Z}$. Then $f(x,y)=\sum_{k\in\mathbb{Z}\setminus\{0\}}f_k(y)e^{\mathrm{i}kx}$ and
  \begin{align*}
     &\||D_x|^{\f12}f\|_{L^2(\Omega)}^2=2\pi \sum_{k\in\mathbb{Z}\setminus\{0\}}\||k|^{\f12}f_k\|_{L^2(\mathbb{T}_{\mathfrak{p}})}^2,\quad \|\nabla^2f\|_{L^2(\Omega)}^2=2\pi \sum_{k\in\mathbb{Z}\setminus\{0\}}\|(\partial_y,k)^2f_k\|_{L^2(\mathbb{T}_{\mathfrak{p}})}^2.
  \end{align*}
  By Gagliardo-Nirenberg inequality and H\"older inequality, we obtain 
  \begin{align*}
    \|f\|_{L^{\infty}(\Omega)}&\leq \sum_{k\in\mathbb{Z}\setminus\{0\}}\|f_k\|_{L^\infty(\mathbb{T}_{\mathfrak{p}})}\leq C\sum_{k\in\mathbb{Z}\setminus\{0\}}\|f_k\|_{L^2(\mathbb{T}_{\mathfrak{p}})}^{\f34}\|f_k\|_{H^2(\mathbb{T}_{\mathfrak{p}})}^{\f14}\\
    &\leq C\sum_{k\in\mathbb{Z}\setminus\{0\}}(1+|\delta k|)^{-\f98}|k|^{-\f38}\|| k|^{\f12}(1+|\delta k|)^{\f32}f_k\|_{L^2(\mathbb{T}_{\mathfrak{p}})}^{\f34}\|f_k\|_{H^2(\mathbb{T}_{\mathfrak{p}})}^{\f14},\\
     &\leq C\Big(\sum_{k\in\mathbb{Z}\setminus\{0\}}(1+|\delta k|)^{-\f94}|k|^{-\f34}\Big)^{\f12}\Big(\sum_{k\in\mathbb{Z}\setminus\{0\}}
     \|| k|^{\f12}(1+|\delta k|)^{\f32}f_k\|_{L^2(\mathbb{T}_{\mathfrak{p}})}^2\Big)^{\f38}\\
     &\quad\times\Big(\sum_{k\in\mathbb{Z}\setminus\{0\}}\|(\partial_y,k)^2f_k\|_{L^2(\mathbb{T}_{\mathfrak{p}})}^2\Big)^{\f18}\\
    &\leq C\delta^{-\f18}\big(\||D_x|^{\f12}f\|_{L^2}+\delta^{\f32}\|\partial_x^2f\|_{L^2}\big)^{\f34} \|f\|_{H^2}^{\f14}\\
   &\leq C\delta^{-\f18}\||D_x|^{\f12}f\|_{L^2}^{\f34}\|f\|_{H^2}^{\f14}+C\delta\|f\|_{H^2}.
  \end{align*}
  
  This completes the proof.
\end{proof}

  \begin{lemma}\label{lem:comm-1}
  It holds that
     \begin{align*}
     &\|(\partial_y,k)\Delta_k^{-1}h\|_{L^\infty}\leq C\|h\|_{L^1},\quad {\|(\partial_y,k)^2\Delta_k^{-1}h\|_{L^\infty}\leq C\|h\|_{L^\infty}},\\
      &\|(\partial_y,k)[{\Delta}_k^{-1}(fg)-g{\Delta}_k^{-1}f]\|_{L^{\infty}}\leq C\|{\Delta}_k^{-1}f\|_{L^{\infty}}\|g''\|_{L^{1}} ,\\
       &\|\partial_y{\Delta}_k^{-1}(fg)\|_{L^{\infty}}\leq C\big(\|\partial_y{\Delta}_k^{-1}f\|_{L^{\infty}}\|g\|_{L^{\infty}}+
       \|{\Delta}_k^{-1}f\|_{L^{\infty}}\|g''\|_{L^{1}}\big),\\
       &\|{\Delta}_k^{-1}(fg)\|_{L^{\infty}}\leq C\|{\Delta}_k^{-1}f\|_{L^{\infty}}\big(\|g\|_{L^{\infty}}+|k|^{-1}\|g''\|_{L^{1}}\big).
       \end{align*}
      Here $C$ is a constant independent of $k$.
  \end{lemma}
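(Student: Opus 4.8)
The plan is to reduce all five bounds to a handful of kernel estimates for the resolvent $\Delta_k^{-1}=(\partial_y^2-k^2)^{-1}$ on $\T_{2\pi}$, together with two elementary commutator identities. Since $k\in\alpha\Z\setminus\{0\}$ with $\alpha>1$ (hence $|k|>1$), the operator $-\Delta_k=k^2-\partial_y^2$ is positive on $\T_{2\pi}$ and $\Delta_k^{-1}$ acts by circle convolution, $\Delta_k^{-1}h=\mathcal G_k*h$, with the explicit Green's function $\mathcal G_k(z)=-\frac{\cosh(k(\pi-|z|_{2\pi}))}{2k\sinh(k\pi)}$, where $|z|_{2\pi}\in[0,\pi]$ denotes the distance on the circle. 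From this closed form one reads off, uniformly for $|k|\ge 1$,
\begin{align*}
\|\mathcal G_k\|_{L^\infty}\le C|k|^{-1},\qquad \|\mathcal G_k\|_{L^1}=|k|^{-2},\qquad \|\partial_y\mathcal G_k\|_{L^\infty}\le \tfrac12,\qquad \|\partial_y\mathcal G_k\|_{L^1}\le C|k|^{-1}.
\end{align*}
I would also use $\partial_y^2\Delta_k^{-1}=I+k^2\Delta_k^{-1}$, the commutation $\partial_y\Delta_k^{-1}=\Delta_k^{-1}\partial_y$, Young's inequality $\|K*h\|_{L^\infty}\le\|K\|_{L^p}\|h\|_{L^{p'}}$, and — crucially — that a $2\pi$-periodic $g$ has a derivative of zero mean, so that $\|g'\|_{L^\infty}\le\|g''\|_{L^1}$ and $\|g'\|_{L^1}\le C\|g''\|_{L^1}$.

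Granting those bounds, the first two estimates are immediate. For the first, write $\partial_y\Delta_k^{-1}h=(\partial_y\mathcal G_k)*h$ and bound it by $\|\partial_y\mathcal G_k\|_{L^\infty}\|h\|_{L^1}$, while $k\Delta_k^{-1}h=k\,\mathcal G_k*h$ is bounded by $|k|\,\|\mathcal G_k\|_{L^\infty}\|h\|_{L^1}\le C\|h\|_{L^1}$. For the second, use $\partial_y^2\Delta_k^{-1}h=h+k^2\Delta_k^{-1}h$ with $\|k^2\Delta_k^{-1}h\|_{L^\infty}\le k^2\|\mathcal G_k\|_{L^1}\|h\|_{L^\infty}=\|h\|_{L^\infty}$, together with $k\partial_y\Delta_k^{-1}h=k(\partial_y\mathcal G_k)*h$ and $|k|\,\|\partial_y\mathcal G_k\|_{L^1}\le C$.

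For the three commutator estimates I would set $\phi=\Delta_k^{-1}f$, so $f=\Delta_k\phi$, expand $\Delta_k(g\phi)=g''\phi+2g'\phi'+gf$, rewrite $2g'\phi'=2\partial_y(g'\phi)-2g''\phi$, and invert $\Delta_k$ to obtain the key identity
\begin{align*}
\Delta_k^{-1}(fg)=g\phi+\Delta_k^{-1}(g''\phi)-2\,\partial_y\Delta_k^{-1}(g'\phi).
\end{align*}
The third estimate then applies $(\partial_y,k)$ to $\Delta_k^{-1}(fg)-g\phi=\Delta_k^{-1}(g''\phi)-2\partial_y\Delta_k^{-1}(g'\phi)$: the first term is handled by the already-proved first estimate with $h=g''\phi$, giving $C\|g''\phi\|_{L^1}\le C\|\phi\|_{L^\infty}\|g''\|_{L^1}$; for the second term, $\partial_y^2\Delta_k^{-1}(g'\phi)=g'\phi+k^2\Delta_k^{-1}(g'\phi)$ and $k\partial_y\Delta_k^{-1}(g'\phi)=k(\partial_y\mathcal G_k)*(g'\phi)$ are each $\le C\|g'\phi\|_{L^\infty}\le C\|g''\|_{L^1}\|\phi\|_{L^\infty}$. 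The fifth estimate differentiates the key identity, $\partial_y\Delta_k^{-1}(fg)=g'\phi+g\,\partial_y\Delta_k^{-1}f+\partial_y\Delta_k^{-1}(g''\phi)-2\partial_y^2\Delta_k^{-1}(g'\phi)$, where $\|g'\phi\|_{L^\infty}\le\|g''\|_{L^1}\|\phi\|_{L^\infty}$, $\|g\,\partial_y\Delta_k^{-1}f\|_{L^\infty}\le\|g\|_{L^\infty}\|\partial_y\Delta_k^{-1}f\|_{L^\infty}$, $\|\partial_y\Delta_k^{-1}(g''\phi)\|_{L^\infty}\le\|\partial_y\mathcal G_k\|_{L^\infty}\|g''\phi\|_{L^1}\le C\|\phi\|_{L^\infty}\|g''\|_{L^1}$, and $\partial_y^2\Delta_k^{-1}(g'\phi)$ is controlled as above. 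The fourth estimate is the key identity bounded directly in $L^\infty$: $\|g\phi\|_{L^\infty}\le\|g\|_{L^\infty}\|\phi\|_{L^\infty}$, $\|\Delta_k^{-1}(g''\phi)\|_{L^\infty}\le\|\mathcal G_k\|_{L^1}\|g''\phi\|_{L^1}\le|k|^{-2}\|\phi\|_{L^\infty}\|g''\|_{L^1}$, and $\|\partial_y\Delta_k^{-1}(g'\phi)\|_{L^\infty}\le\|\partial_y\mathcal G_k\|_{L^1}\|g'\phi\|_{L^1}\le C|k|^{-1}\|g'\|_{L^1}\|\phi\|_{L^\infty}\le C|k|^{-1}\|g''\|_{L^1}\|\phi\|_{L^\infty}$.

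I do not expect a genuine obstacle: the entire content is the four explicit kernel bounds plus the bookkeeping around the single integration by parts. The one point that needs care is that every derivative loss must be absorbed by $\Delta_k^{-1}$ — never left on $\phi'=\partial_y\Delta_k^{-1}f$, which is only controlled in $L^\infty$ — which is exactly why one replaces $2g'\phi'$ by $2\partial_y(g'\phi)-2g''\phi$ before inverting $\Delta_k$; and that, since the right-hand sides of the lemma feature only $\|g\|_{L^\infty}$ and $\|g''\|_{L^1}$, the $g'$ terms have to be controlled through $\|g'\|_{L^\infty}\le\|g''\|_{L^1}$ and $\|g'\|_{L^1}\le C\|g''\|_{L^1}$, which rely on the periodicity (zero mean) of $g'$.
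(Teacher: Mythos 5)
Your proposal is correct in substance and, at its heart, rests on the same key identity as the paper: setting $\phi=\Delta_k^{-1}f$, expanding $\Delta_k(g\phi)=fg-g''\phi+2\partial_y(g'\phi)$ and inverting, which is exactly how the paper isolates the commutator, and, like the paper, you then use periodicity (zero mean of $g'$) to control $\|g'\|_{L^\infty}$ and $\|g'\|_{L^1}$ by $\|g''\|_{L^1}$. Where you genuinely differ is in how the two elementary resolvent bounds are obtained: the paper proves $\|(\partial_y,k)\Delta_k^{-1}h\|_{L^\infty}\leq C\|h\|_{L^1}$ by a Fourier-coefficient bound for $\|\Delta_k^{-1}h\|_{L^\infty}$ combined with the identity $\partial_y\big[(\partial_y\Delta_k^{-1}h)^2-k^2(\Delta_k^{-1}h)^2\big]=2h\,\partial_y\Delta_k^{-1}h$, and proves the $L^\infty\to L^\infty$ bound via the maximum principle plus Gagliardo--Nirenberg, whereas you read everything off the explicit circle Green's function $\mathcal G_k$ and its $L^1$/$L^\infty$ norms. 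Your route is more explicit and self-contained and yields the same $k$-uniform constants; the paper's route avoids computing the kernel. Finally, the paper deduces the last two inequalities from the commutator estimate by the triangle inequality, while you bound the key identity (and its $y$-derivative) directly; these are equivalent.

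Two small repairs are needed. First, in your proof of the last inequality you twice invoke an $L^1\times L^1\to L^\infty$ convolution bound ($\|\mathcal G_k\ast(g''\phi)\|_{L^\infty}\leq\|\mathcal G_k\|_{L^1}\|g''\phi\|_{L^1}$, and similarly with $\partial_y\mathcal G_k$), which is not a valid case of Young's inequality; use the dual pairings instead, namely $\|\mathcal G_k\ast(g''\phi)\|_{L^\infty}\leq\|\mathcal G_k\|_{L^\infty}\|g''\phi\|_{L^1}\leq C|k|^{-1}\|\Delta_k^{-1}f\|_{L^\infty}\|g''\|_{L^1}$ and $\|\partial_y\mathcal G_k\ast(g'\phi)\|_{L^\infty}\leq\|\partial_y\mathcal G_k\|_{L^1}\|g'\phi\|_{L^\infty}\leq C|k|^{-1}\|g''\|_{L^1}\|\Delta_k^{-1}f\|_{L^\infty}$, which still give the stated bound (your claimed $|k|^{-2}$ is stronger than needed and not what the valid pairing delivers). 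Second, you swapped the labels ``fourth'' and ``fifth'' relative to the lemma's ordering, but both estimates are in fact proved, so this is only a bookkeeping slip.
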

  
  \begin{proof}
    Let $\hat{h}(l)=\dfrac{1}{2\pi}\int_{\mathbb{T}} e^{-\mathrm{i}l y}h(y)\mathrm{d}y$. Then $\|h\|_{L^\infty}\leq C\|\hat{h}\|_{\ell^1_{l}}, \|\hat{h}\|_{\ell^\infty_{l}}\leq C\|h\|_{L^1}$, and 
    \begin{align}\label{est:Del-1h}
       &\|\Delta_k^{-1}h\|_{L^\infty}\leq C\|(k^2+l^2)^{-1}\hat{h}\|_{\ell^1_{l}}\leq C\|(k^2+l^2)^{-1}\|_{\ell^1_{l}}\|\hat{h}\|_{\ell^\infty_{l}}\leq C|k|^{-1}\|h\|_{L^1}.
    \end{align}
Without lose of generality, we assume that $h$ is real valued. Notice that
    \begin{align*}
       &\partial_y\left[(\partial_y\Delta_k^{-1}h)^2-k^2(\Delta_k^{-1}h)^2\right] =2h\partial_y\Delta_k^{-1}h,
    \end{align*}
    and
    \begin{align*}
       \min\{\big|(\partial_y\Delta_k^{-1}h)^2-k^2(\Delta_k^{-1}h)^2\big|\}\leq & \Big(\dfrac{1}{2\pi}\int_{\mathbb{T}} \partial_y\Delta_k^{-1}h\mathrm{d}y\Big)^2+k^2\|\Delta_k^{-1}h\|_{L^\infty}^2\\
       =& k^2\|\Delta_k^{-1}h\|_{L^\infty}^2.
    \end{align*}
  Then we infer that
    \begin{align*}
       \left\|(\partial_y\Delta_k^{-1}h)^2 -k^2(\Delta_k^{-1}h)^2\right\|_{L^\infty}&\leq \min\{\big|(\partial_y\Delta_k^{-1}h)^2-k^2(\Delta_k^{-1}h)^2\big|\}+ 2\|h\partial_y\Delta_k^{-1}h\|_{L^1}\\
       &\leq k^2\|\Delta_k^{-1}h\|_{L^\infty}^2+2\|h\|_{L^1}\|\partial_y\Delta_k^{-1}h\|_{L^\infty},
    \end{align*}
   which along with \eqref{est:Del-1h} gives
    \begin{align*}
       & \|\partial_y\Delta_k^{-1}h\|_{L^\infty}^2 \leq C k^2\|\Delta_k^{-1}h\|_{L^\infty}^2+ 2\|h\|_{L^1}\|\partial_y\Delta_k^{-1}h\|_{L^\infty}\leq C\|h\|_{L^1}^2+2\|h\|_{L^1}\|\partial_y\Delta_k^{-1}h\|_{L^\infty}.
    \end{align*}
  This implies that
    \begin{align}\label{est:hLin-L1}
       &\|(\partial_y,k)\Delta_k^{-1}h\|_{L^\infty}\leq C\|h\|_{L^1}.
    \end{align}
As $ (\partial_y^2-k^2)\Delta_k^{-1}h=h$, we get by the maximum principle that
$$ k^2\|\Delta_k^{-1}h\|_{L^\infty}\leq\|h\|_{L^\infty},$$ and then
\beno
\|\partial_y^2\Delta_k^{-1}h\|_{L^\infty}\leq k^2\|\Delta_k^{-1}h\|_{L^\infty}+\|h\|_{L^\infty}\leq2\|h\|_{L^\infty}.
\eeno  
By Gagliardo-Nirenberg inequality, we have
\beno 
\|\partial_y\Delta_k^{-1}h\|_{L^\infty}\leq
C\|\partial_y^2\Delta_k^{-1}h\|_{L^\infty}^{1/2}\|h\|_{L^\infty}^{1/2}\leq C|k|^{-1}\|h\|_{L^\infty}.
\eeno
Summing up, we arrive at
   \begin{align}\label{est:parD-1hLp}
      \|(\partial_y,k)^2\Delta_k^{-1}h\|_{L^\infty}\leq C\|h\|_{L^\infty}.
    \end{align}
    
    Thanks to $\Delta_k(g\Delta_k^{-1}f)=fg-\partial_y^2g\Delta_k^{-1}f+2\partial_y[\partial_yg\Delta^{-1}_kf]$, we get
  \begin{align*}
     & \Delta_k^{-1}(fg)-g\Delta_k^{-1}f =\Delta_k^{-1}[\partial_y^2g\Delta_k^{-1}f]-2\Delta_k^{-1}\partial_y[\partial_yg\Delta^{-1}_kf].
  \end{align*}
  By \eqref{est:hLin-L1}, we have
    \begin{align*}
       &\|(\partial_y,k)\Delta_k^{-1}[\partial_y^2g\Delta_k^{-1}f]\|_{L^\infty}\leq C\|\partial_y^2g\Delta_k^{-1}f\|_{L^1} \leq C\|\Delta_k^{-1}f\|_{L^\infty}\|g''\|_{L^1},
    \end{align*}
    and by  \eqref{est:parD-1hLp} and Poinc\'are inequality for $ \partial_yg$, we get
    \begin{align*}
       \|(\partial_y,k)\Delta_k^{-1}\partial_y[\partial_yg\Delta^{-1}_kf]\|_{L^\infty}\leq &C\|\partial_yg\Delta_k^{-1}f\|_{L^\infty}
       \leq  C\|\partial_yg\|_{L^\infty}\|\Delta_k^{-1}f\|_{L^\infty} \\ \leq &C\|g''\|_{L^1}\|\Delta_k^{-1}f\|_{L^\infty} .
    \end{align*}
    Then we conclude
    \begin{align}\label{est:parD-1comm}
      &\|(\partial_y,k)[{\Delta}_k^{-1}(fg)-g{\Delta}_k^{-1}f]\|_{L^{\infty}}\leq C\|{\Delta}_k^{-1}f\|_{L^{\infty}}\|g''\|_{L^{1}} .
       \end{align}
    
 By \eqref{est:parD-1comm}, we have 
  \begin{equation}\label{est:payD-1fg}
    \begin{aligned}
       &\|\partial_y\Delta_k^{-1}(fg)\|_{L^\infty}\leq \|(\partial_y,k)[{\Delta}_k^{-1}(fg)-g{\Delta}_k^{-1}f]\|_{L^{\infty}} +\|\partial_y(g\Delta_k^{-1}f)\|_{L^\infty}\\
        &\leq C\|{\Delta}_k^{-1}f\|_{L^{\infty}}\|g''\|_{L^{1}}  +\|g\|_{L^\infty}\|\partial_y\Delta_k^{-1}f\|_{L^\infty} +\|\partial_yg\|_{L^\infty}\|\Delta_k^{-1}f\|_{L^\infty}\\
         &\leq C\|{\Delta}_k^{-1}f\|_{L^{\infty}}\|g''\|_{L^{1}}  +\|g\|_{L^\infty}\|\partial_y\Delta_k^{-1}f\|_{L^\infty} .
    \end{aligned}
  \end{equation}

  By \eqref{est:parD-1comm} and  \eqref{est:payD-1fg}, we get
  \begin{equation}\label{est:D-1fg}
    \begin{aligned}
       &\|\Delta_k^{-1}(fg)\|_{L^\infty}\leq |k|^{-1}\|(\partial_y,k)[{\Delta}_k^{-1}(fg)-g{\Delta}_k^{-1}f]\|_{L^{\infty}} +\|g\Delta_k^{-1}f\|_{L^\infty}\\
        &\leq C|k|^{-1}\|{\Delta}_k^{-1}f\|_{L^{\infty}}\|g''\|_{L^{1}}  +\|g\|_{L^\infty}\|\Delta_k^{-1}f\|_{L^\infty} \\
         &\leq C\|{\Delta}_k^{-1}f\|_{L^{\infty}}(|k|^{-1}\|g''\|_{L^{1}} +\|g\|_{L^\infty}) .
    \end{aligned}
  \end{equation}
     
 The lemma follows from \eqref{est:hLin-L1}, \eqref{est:parD-1hLp}, \eqref{est:parD-1comm}, \eqref{est:payD-1fg} and \eqref{est:D-1fg}.
  \end{proof}

  \section{ODE lemma}
   In this subsection, we prove the ODE Lemma \ref{lem5a} via the Green{'s} function method. First, we construct two linearly {independent} solutions to $(L-\mathrm{i}tb)f=0$, where $L=(\partial_y+\mathrm{i}tb')\partial_y$.
    
    \begin{lemma}\label{lemf1} For fixed $t\geq k^2$, let $f_1=\mathrm{e}^{-\mathrm{i}tb}$, and
    \beno
   && f_{1+}(y)=f_1(y)\int_y^{1/|k|}\mathrm{e}^{-\mathrm{i}tb}f_1^{-2}(y')\mathrm{d}y'=
   \mathrm{e}^{-\mathrm{i}tb(y)}\int_y^{1/|k|}\mathrm{e}^{\mathrm{i}tb(y')}\mathrm{d}y',\\
   &&f_{1-}(y)=f_{1+}(-y),\quad  W_1=\int_{-1/|k|}^{1/|k|}\mathrm{e}^{\mathrm{i}tb(y')}\mathrm{d}y'.
     \eeno
  Then for $y\in I=[-1/|k|,1/|k|]$, we have 
  \begin{align*}
      &Lf_{1+}=\mathrm{i}tbf_{1+},\quad Lf_{1-}=\mathrm{i}tbf_{1-},\\
      &W(f_{1+},f_{1-})=f_{1+}f_{1-}'-f_{1+}'f_{1-}=f_{1+}+f_{1-}=f_1W_1,
      \end{align*}
      and 
      \begin{align*}
     &|f_{1+}(y)|\leq Ct^{-1}(t^{-1/2}+y_+)^{-1},\quad |f_{1-}(y)|\leq Ct^{-1}(t^{-1/2}+y_-)^{-1},\\
      & |W_1|\geq C^{-1}t^{-1/2},\quad \|f_{1+}+f_{1-}\|_{L^2(I)}\geq C^{-1}|tk|^{-1/2},\\
      &\|f_{1+}-f_{1-}\|_{L^2(I)}\geq C^{-1}|tk|^{-1/2}.\end{align*}      
      Here  $y_+=\max(y,0)$ and $y_-=\max(-y,0)$.
      \end{lemma}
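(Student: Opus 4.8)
The plan is to verify the claimed properties of $f_1$, $f_{1+}$, $f_{1-}$ directly from their explicit formulas, since everything is essentially an exercise in integration by parts and stationary phase estimates for the oscillatory integral $\int \mathrm{e}^{\mathrm{i}tb(y')}\,\mathrm{d}y'$ with $b(y)=\cos y$, $b'(0)=0$, $b''(0)=-1$, on the short interval $I=[-1/|k|,1/|k|]$ where $t\ge k^2$ (so $t^{1/2}\ge|k|$, meaning $I$ is ``long'' compared with the oscillation scale $t^{-1/2}$). First I would check the ODE identities: a direct computation using $L=\partial_y(\partial_y+\mathrm{i}tb')+\mathrm{i}tb$ (the identity already recorded before \eqref{om3a}) gives $Lf_1=\partial_y(\partial_y f_1+\mathrm{i}tb'f_1)+\mathrm{i}tbf_1=\partial_y(0)+\mathrm{i}tbf_1=\mathrm{i}tbf_1$ since $\partial_yf_1=-\mathrm{i}tb'f_1$. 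For $f_{1+}=f_1\int_y^{1/|k|}\mathrm{e}^{\mathrm{i}tb}$, one notes $f_{1+}=f_1g$ with $g'=\mathrm{e}^{\mathrm{i}tb}f_1^{-1}=f_1$ (using $f_1^{-1}=\mathrm{e}^{\mathrm{i}tb}$), and then the standard reduction-of-order computation $(L-\mathrm{i}tb)(f_1g)=f_1''g+2f_1'g'+\cdots$ — more cleanly, since $f_1$ solves $(L-\mathrm{i}tb)f_1=0$, one has $(L-\mathrm{i}tb)(f_1g)=(\partial_y+\mathrm{i}tb')(f_1^2 g')/f_1\cdot f_1$-type identity; the point is that $g'=f_1=\mathrm{e}^{-\mathrm{i}tb}$ is chosen precisely so that $\partial_y+\mathrm{i}tb'$ kills the relevant factor, giving $(L-\mathrm{i}tb)f_{1+}=0$. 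The same for $f_{1-}(y)=f_{1+}(-y)$ since $b$ is even, so $Lf_{1-}=\mathrm{i}tbf_{1-}$ too. The Wronskian identity $W(f_{1+},f_{1-})=f_{1+}f_{1-}'-f_{1+}'f_{1-}$ I would compute from the formulas: writing $f_{1\pm}=f_1 g_\pm$ with $g_+(y)=\int_y^{1/|k|}\mathrm{e}^{\mathrm{i}tb}$, $g_-(y)=\int_{-1/|k|}^{y}\mathrm{e}^{\mathrm{i}tb}$ (so $g_-(y)=g_+(-y)$ reflected, with $g_+'=-\mathrm{e}^{\mathrm{i}tb}=-f_1^{-1}$, $g_-'=\mathrm{e}^{\mathrm{i}tb}=f_1^{-1}$), one gets $W(f_{1+},f_{1-})=f_1^2(g_+g_-'-g_+'g_-)=f_1^2\cdot f_1^{-1}(g_++g_-)=f_1(g_++g_-)=f_1 W_1$; and $g_++g_-=\int_{-1/|k|}^{1/|k|}\mathrm{e}^{\mathrm{i}tb}=W_1$. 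This also shows $f_{1+}+f_{1-}=f_1 W_1$.

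Next I would establish the pointwise bounds $|f_{1+}(y)|\le Ct^{-1}(t^{-1/2}+y_+)^{-1}$ and its mirror. Since $|f_{1+}(y)|=\big|\int_y^{1/|k|}\mathrm{e}^{\mathrm{i}tb(y')}\mathrm{d}y'\big|$, this is an oscillatory integral on $[y,1/|k|]$. For $y$ negative or small (comparable to $t^{-1/2}$), the trivial bound $|f_{1+}(y)|\le |1/|k|-y|\le C/|k|\le Ct^{-1/2}\le Ct^{-1}(t^{-1/2})^{-1}$ works when we also use that the stationary point at $0$ contributes $O(t^{-1/2})$; more carefully, splitting at $|y'|\le t^{-1/2}$ and $|y'|\ge t^{-1/2}$ and integrating by parts on the latter (where $|b'(y')|\gtrsim |y'|\gtrsim t^{-1/2}$, with boundary and $\partial(1/b')$ terms controlled), one gets $|f_{1+}(y)|\lesssim t^{-1/2}$ always and $\lesssim (t y)^{-1}\cdot$(from the $1/(tb'(y))$ boundary term at the lower endpoint when $y\gg t^{-1/2}$). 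Combining: $|f_{1+}(y)|\lesssim\min(t^{-1/2}, t^{-1}y^{-1})\asymp t^{-1}(t^{-1/2}+y_+)^{-1}$ for $y\ge 0$, and for $y<0$ the integral includes the stationary point so the bound is just $t^{-1/2}=t^{-1}(t^{-1/2})^{-1}\asymp t^{-1}(t^{-1/2}+y_+)^{-1}$. This requires the standard van der Corput / integration-by-parts estimate with the nondegenerate stationary point, which I expect to be the main technical obstacle — one must handle the endpoint contributions and the resonance region $|y'|\lesssim t^{-1/2}$ carefully, and the claimed sharp form $(t^{-1/2}+y_+)^{-1}$ needs the $1/(tb')$ boundary term to be extracted cleanly.

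The lower bound $|W_1|\ge C^{-1}t^{-1/2}$ is a genuine (non-vanishing) stationary phase lower bound: since $b(y')=1-y'^2/2+O(y'^4)$ near $0$, one has $W_1=\int_{-1/|k|}^{1/|k|}\mathrm{e}^{\mathrm{i}t(1-y'^2/2+\cdots)}\mathrm{d}y'$, and after the substitution $y'=t^{-1/2}s$ and extending/comparing with the Gaussian/Fresnel integral $\int_{\mathbb R}\mathrm{e}^{-\mathrm{i}s^2/2}\mathrm{d}s=\sqrt{2\pi}\,\mathrm{e}^{-\mathrm{i}\pi/4}\ne 0$, one gets $W_1=t^{-1/2}\mathrm{e}^{\mathrm{i}t}(\sqrt{2\pi}\,\mathrm{e}^{-\mathrm{i}\pi/4}+o(1))$ as $t\to\infty$ (the tails $|s|\ge t^{1/2}/|k|\ge 1$ and the $O(y'^4)$ correction being lower order, using $t^{1/2}/|k|\ge 1$). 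Hence $|W_1|\asymp t^{-1/2}$, and for $t\ge k^2$ large enough this gives $|W_1|\ge C^{-1}t^{-1/2}$; for bounded $t$ one checks directly. Finally, for the $L^2$ lower bounds: $\|f_{1+}+f_{1-}\|_{L^2(I)}=\|f_1 W_1\|_{L^2(I)}=|W_1|\,\|f_1\|_{L^2(I)}=|W_1|\cdot|I|^{1/2}=|W_1|\cdot(2/|k|)^{1/2}\ge C^{-1}t^{-1/2}|k|^{-1/2}=C^{-1}|tk|^{-1/2}$, since $|f_1|\equiv 1$. For $\|f_{1+}-f_{1-}\|_{L^2(I)}$, since $f_{1+}-f_{1-}$ is an odd function (as $f_{1-}(y)=f_{1+}(-y)$), I would evaluate or lower-bound its $L^2$ norm by testing against an appropriate region: near $y=0$, $f_{1+}(0)-f_{1-}(0)=0$ but $\partial_y(f_{1+}-f_{1-})(0)=2f_{1+}'(0)$, and one can compute $f_{1+}'(0)=-1-\mathrm{i}tb'(0)\cdot(\text{const})=-1$ (since $f_{1+}'=f_1'\int_y^{1/|k|}\mathrm{e}^{\mathrm{i}tb}-\mathrm{e}^{\mathrm{i}tb}f_1$ and at $y=0$, $f_1'(0)=0$, $\mathrm{e}^{\mathrm{i}tb(0)}f_1(0)=1$), so $f_{1+}-f_{1-}$ has slope $\asymp 1$ at the origin; combined with the a priori bound $|f_{1+}-f_{1-}|\lesssim t^{-1/2}$ this forces it to be $\asymp |y|$ for $|y|\lesssim t^{-1/2}$ and hence $\|f_{1+}-f_{1-}\|_{L^2(I)}\gtrsim \|y\mathbf 1_{|y|\le t^{-1/2}}\|_{L^2}\asymp t^{-3/4}$; but we need the better bound $\gtrsim|tk|^{-1/2}$, so instead I would argue via the reflection structure directly: $f_{1+}-f_{1-}=f_1(g_+-g_-)$ with $g_+-g_-=\int_y^{1/|k|}\mathrm{e}^{\mathrm{i}tb}-\int_{-1/|k|}^y\mathrm{e}^{\mathrm{i}tb}$, an odd function whose value at $y=1/|k|$ is $-W_1$ and at $y=-1/|k|$ is $W_1$; then $\|f_{1+}-f_{1-}\|_{L^2(I)}^2=\|g_+-g_-\|_{L^2(I)}^2$ and I would lower-bound this by restricting to $y$ near $\pm 1/|k|$ where $|g_+-g_-|$ is close to $|W_1|\asymp t^{-1/2}$ (using the $L^\infty$ derivative bound $|g_\pm'|=1$ to control how fast it moves), giving a contribution $\gtrsim t^{-1}\cdot(\text{length}\gtrsim|k|^{-1})\wedge(\text{but capped by }t^{-1/2})$; a clean way is: on an interval of length $\asymp\min(|k|^{-1}, t^{-1/2})=t^{-1/2}$ (since $|k|^{-1}\ge t^{-1/2}$) adjacent to $1/|k|$, $|g_+-g_-|\ge |W_1|/2\gtrsim t^{-1/2}$, so $\|g_+-g_-\|_{L^2}^2\gtrsim t^{-1}\cdot t^{-1/2}=t^{-3/2}$... this gives $t^{-3/4}$, which is weaker than $|tk|^{-1/2}=t^{-1/2}|k|^{-1/2}$ when $|k|\le t^{1/2}$. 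I would therefore reexamine: actually $|tk|^{-1/2}=t^{-1/2}|k|^{-1/2}$ and $t^{-3/4}$; these are ordered as $t^{-3/4}\le t^{-1/2}|k|^{-1/2}\iff t^{-1/4}\le|k|^{-1/2}\iff |k|\le t^{1/2}$, which holds, so $t^{-3/4}$ is in fact the \emph{smaller} one and does not suffice. The right approach is: on the interval $|k|^{-1}/2\le|y|\le|k|^{-1}$, of total length $\asymp|k|^{-1}$, the function $|g_+-g_-|$ stays $\gtrsim t^{-1/2}$ (since near the endpoint it equals $\pm W_1$ and its derivative has modulus $1\le t^{1/2}$, so it cannot decay below $|W_1|/2\gtrsim t^{-1/2}$ over a sub-interval of length $\ll t^{-1/2}$; more carefully one uses that $g_+-g_-$ itself is an oscillatory-integral-type object but its behavior near the endpoints is dominated by the non-oscillatory tail). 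Then $\|g_+-g_-\|_{L^2(I)}^2\gtrsim t^{-1}\cdot|k|^{-1}=|tk|^{-1}$, hence $\|f_{1+}-f_{1-}\|_{L^2(I)}\gtrsim|tk|^{-1/2}$, as claimed. I expect pinning down this last lower bound rigorously — showing $|g_+-g_-|$ does not dip much below $|W_1|$ on an interval of length $\asymp|k|^{-1}$ near each endpoint — to be the second delicate point, handled via the fundamental theorem of calculus and $|g_\pm'|\le 1$ together with, if needed, a further integration by parts in the oscillatory part of $g_+-g_-$ away from the stationary point.
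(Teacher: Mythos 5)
The first half of your proposal (the ODE identities, the Wronskian computation, the pointwise bounds via integration by parts away from the stationary point plus the trivial bound on the region $|y'|\lesssim t^{-1/2}$, and the identity $\|f_{1+}+f_{1-}\|_{L^2(I)}=|W_1|\,|I|^{1/2}$) is correct and essentially matches the paper's Steps 1, 2 and the first part of Step 4. The genuine problems are exactly at the two points you yourself flag as delicate, and in both cases the troublesome regime is $t\asymp k^2$ with $k$ (hence $t$) large. For $|W_1|\geq C^{-1}t^{-1/2}$: after rescaling $y'=t^{-1/2}s$ the domain is $|s|\le t^{1/2}/|k|$, and when $t/k^2$ is merely bounded this has length $O(1)$, so the tail of the Fresnel integral you discard is $O(1)$ and \emph{not} lower order compared with $\sqrt{2\pi}$; your fallback ``for bounded $t$ one checks directly'' does not cover $k^2\le t\le Ck^2$ with $k\to\infty$, and even in a compact parameter range the non-vanishing of $W_1$ is not immediate. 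You would need a uniform lower bound on the truncated Fresnel integral (the Cornu spiral staying away from the origin) together with control of the quartic phase correction, none of which is in the proposal. The paper avoids this issue entirely: it never uses stationary-phase asymptotics, but instead derives $t\int_0^{1/|k|}|f_{1+}|^2\,\mathrm{d}y\le 4|f_{1+}(0)|$ from the equation $Lf_{1+}=\mathrm{i}tbf_{1+}$ (imaginary-part energy identity with $f_{1+}(1/|k|)=0$, $f_{1+}'(0)=-1$), then integrates $\partial_y(f_{1+}+y)=-\mathrm{i}tb'f_{1+}$ with Cauchy--Schwarz and evaluates at $y=t^{-1/2}$ to force $t^{1/2}|f_{1+}(0)|\ge C^{-1}$, i.e. $|W_1|=2|f_{1+}(0)|\ge C^{-1}t^{-1/2}$, uniformly for all $t\ge k^2$.

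The second gap is in $\|f_{1+}-f_{1-}\|_{L^2(I)}\gtrsim|tk|^{-1/2}$. Your stated justification that $|g_+-g_-|\gtrsim t^{-1/2}$ on all of $|k|^{-1}/2\le|y|\le|k|^{-1}$ ``since its derivative has modulus $1$'' is not valid: the derivative bound only keeps $g_+-g_-$ within $|W_1|/2$ of its endpoint value on a neighborhood of length $\sim|W_1|\asymp t^{-1/2}$, which is much shorter than $|k|^{-1}$ when $t\gg k^2$. The correct mechanism on that interval is the oscillatory bound $|f_{1+}(y)|\le Ct^{-1}|k|$, which beats $|W_1|$ only when $t\gg k^2$; when $t\asymp k^2$ one has $Ct^{-1}|k|\asymp Ct^{-1/2}$, comparable to $|W_1|$, so the triangle inequality does not close without a further idea. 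The paper resolves this by a case split: for $t\ge 4C_3^4k^2$ it uses exactly the $I_+=[1/(2|k|),1/|k|]$ argument with the bound $|f_{1+}|\le Ct^{-1}|k|$, while for $k^2\le t\le 4C_3^4k^2$ it switches to the \emph{trivial} bound $|f_{1+}(y)|\le 1/|k|-y$ on an endpoint interval of length $a=\min(|W_1|/3,1/|k|)$ (which is $\gtrsim 1/|k|$ there, thanks to $|W_1|\gtrsim t^{-1/2}\gtrsim 1/(C|k|)$), giving $\|f_{1+}-f_{1-}\|_{L^2}\ge a^{1/2}|W_1|-2a^{3/2}/\sqrt3\ge a^{1/2}|W_1|/2\gtrsim|tk|^{-1/2}$. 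Your proposal would need both of these repairs before it constitutes a proof.
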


    \begin{proof}
{\it Step 1.}  As $f_{1+}=\mathrm{e}^{-\mathrm{i}tb(y)}\int_y^{1/|k|}\mathrm{e}^{\mathrm{i}tb(y')}\mathrm{d}y'$, we have $(\partial_y+\mathrm{i}tb')f_{1+}=-1$, $f_{1+}(1/|k|)=0$ and \begin{align*}
      &Lf_{1+}=(\partial_y+\mathrm{i}tb')\partial_yf_{1+}=\partial_y(\partial_y+\mathrm{i}tb')f_{1+}-\mathrm{i}tb''f_{1+}=-\mathrm{i}tb''f_{1+}
      =\mathrm{i}tbf_{1+}.
 \end{align*}  
 As $f_{1-}(y)=f_{1+}(-y)$, $b(y)=b(-y)$, we have $f_{1-}(y)=\mathrm{e}^{-\mathrm{i}tb(y)}\int_{-1/|k|}^y\mathrm{e}^{\mathrm{i}tb(y')}\mathrm{d}y'$, and 
 \beno
 (\partial_y+\mathrm{i}tb')f_{1-}={1},\quad  Lf_{1-}=\mathrm{i}tbf_{1-}, 
 \eeno
 and
 \begin{align*}
      W(f_{1+},f_{1-})&=f_{1+}f_{1-}'-f_{1+}'f_{1-}=f_{1+}(\partial_y+\mathrm{i}tb')f_{1-}-(\partial_y+\mathrm{i}tb')f_{1+}f_{1-}=f_{1+}+f_{1-}\\
      &=\mathrm{e}^{-\mathrm{i}tb(y)}\int_{-1/|k|}^{1/|k|}\mathrm{e}^{\mathrm{i}tb(y')}\mathrm{d}y'=\mathrm{e}^{-\mathrm{i}tb(y)}W_1=f_1W_1.
 \end{align*}

 {\it Step 2.} As $t\geq k^2$, for $t^{-1/2}\leq y\leq 1/|k|$, we get by using $b'(y)=-\sin y<0$, $b''(y)=-\cos y<0$ that 
 \begin{align*}
     &f_{1+}(y)\mathrm{e}^{\mathrm{i}tb(y)}=\int_y^{1/|k|}\mathrm{e}^{\mathrm{i}tb(y')}\mathrm{d}y'
     =\frac{\mathrm{e}^{\mathrm{i}tb}}{\mathrm{i}tb'}\Big|_{y}^{1/|k|}+
     \int_y^{1/|k|}\frac{\mathrm{e}^{\mathrm{i}tb(y')}b''(y')}{\mathrm{i}tb'(y')^2}\mathrm{d}y',
     \end{align*}
 which gives
 \begin{align*}
    |f_{1+}(y)|=&|f_{1+}(y)\mathrm{e}^{\mathrm{i}tb(y)}|\le \frac{1}{t|b'(y)|}+\frac{1}{t|b'(1/|k|)|}+\int_y^{1/|k|}\frac{|b''(y')|}{tb'(y')^2}\mathrm{d}y'\\
     =&\frac{1}{t|b'(y)|}+\frac{1}{t|b'(1/|k|)|}+\frac{1}{t|b'(y)|}-\frac{1}{t|b'(1/|k|)|}\\=&\frac{2}{t|b'(y)|}=\frac{2}{t\sin y}\leq \frac{C}{ty}\leq\frac{C}{t(t^{-1/2}+y_+)}.
 \end{align*}

 For $0\leq y\leq t^{-1/2}$, we have
 \begin{align*}
     (f_{1+}\mathrm{e}^{\mathrm{i}tb})\big|_{y}^{t^{-1/2}}=\int_y^{t^{-1/2}}\mathrm{e}^{\mathrm{i}tb(y')}\mathrm{d}y',\quad \left|(f_{1+}\mathrm{e}^{\mathrm{i}tb})\big|_{y}^{t^{-1/2}}\right|\leq \int_y^{t^{-1/2}}\mathrm{d}y'=t^{-1/2}-y\leq t^{-1/2}, 
     \end{align*}
 which gives
  \begin{align*}
    |f_{1+}(y)|=&|f_{1+}(y)\mathrm{e}^{\mathrm{i}tb(y)}|
     \le |(f_{1+}\mathrm{e}^{\mathrm{i}tb})|_{t^{-1/2}}|+|(f_{1+}\mathrm{e}^{\mathrm{i}tb})|_{y}^{t^{-1/2}}|\\
     \leq&\frac{C}{tt^{-1/2}}+t^{-1/2}\leq{C}{t^{-1/2}}\le\frac{C}{t(t^{-1/2}+y_+)}.
 \end{align*} 
Thus, $|f_{1+}(y)|\leq {C}{t^{-1}(t^{-1/2}+y_+)^{-1}}\leq Ct^{-1/2}$ for $0\leq y\leq 1/|k|$. 

As  $W_1=\int_{-1/|k|}^{1/|k|}\mathrm{e}^{\mathrm{i}tb(y')}\mathrm{d}y'=2\int_{0}^{1/|k|}\mathrm{e}^{\mathrm{i}tb(y')}\mathrm{d}y'$, we have $|W_1|=2|f_{1+}(0)|\leq Ct^{-1/2}$. Thanks to $ f_{1+}+f_{1-}=\mathrm{e}^{-\mathrm{i}tb(y)}W_1$, we obtain
 \begin{align*}
     &|f_{1-}(y)|\leq |W_1|+|f_{1+}(y)|\leq Ct^{-1/2}= Ct^{-1}(t^{-1/2}+y_-)^{-1},\quad \forall\ 0\leq y\leq 1/|k|.
 \end{align*}
 As $f_{1-}(y)=f_{1+}(-y)$, for $ -1/|k|\leq y\leq 0$, we have $0\leq -y\leq 1/|k|$ and 
 \begin{align*}
     &|f_{1+}(y)|=|f_{1-}(-y)|\leq Ct^{-1/2}= Ct^{-1}(t^{-1/2}+y_+)^{-1},\\ 
     &|f_{1-}(y)|=|f_{1+}(-y)|\leq Ct^{-1}(t^{-1/2}-y)^{-1}= Ct^{-1}(t^{-1/2}+y_-)^{-1}.
 \end{align*}
 
 In summary, we arrive at 
 \begin{align*}
     &|f_{1+}(y)|\leq Ct^{-1}(t^{-1/2}+y_+)^{-1},\quad |f_{1-}(y)|\leq Ct^{-1}(t^{-1/2}+y_-)^{-1},\quad \forall\ y\in I=[-1/|k|,1/|k|].
 \end{align*}
 
 {\it Step 3.} 
  As $Lf_{1+}=(\partial_y+\mathrm{i}tb')\partial_yf_{1+}=\mathrm{i}tbf_{1+}$, $ b''=-b$, $f_{1+}(1/|k|)=0$, $b'(0)=0$, we have
 \begin{align*}
      &tb|f_{1+}|^2=\mathrm{Im}\big[(\partial_y+\mathrm{i}tb')\partial_yf_{1+}\overline{f_{1+}}\big]
      =\mathrm{Im}\partial_y(\partial_yf_{1+}\overline{f_{1+}})+tb'\mathrm{Re}(\partial_yf_{1+}\overline{f_{1+}})\\
      &=\partial_y\mathrm{Im}(\partial_yf_{1+}\overline{f_{1+}})+tb'\partial_y|f_{1+}|^2/2=
      \partial_y\mathrm{Im}(\partial_yf_{1+}\overline{f_{1+}})+\partial_y(tb'|f_{1+}|^2/2)-tb''|f_{1+}|^2/2,\\
      &\partial_y\big[\mathrm{Im}(\partial_yf_{1+}\overline{f_{1+}})+tb'|f_{1+}|^2/2\big]=tb|f_{1+}|^2+tb''|f_{1+}|^2/2=tb|f_{1+}|^2/2, 
      \end{align*}
 which gives
 \begin{align*}
     \int_0^{1/|k|}[tb(y)|f_{1+}(y)|^2/2]\mathrm{d}y=&\big[\mathrm{Im}(\partial_yf_{1+}\overline{f_{1+}})+tb'|f_{1+}|^2/2\big]|_0^{1/|k|}
      =-\mathrm{Im}(\partial_yf_{1+}\overline{f_{1+}})(0)\\
      \leq&|f_{1+}'(0)||f_{1+}(0)|=|f_{1+}(0)|.
 \end{align*}
Here we used  $f_{1+}'(0)=-1$. Due to $b(y)=\cos y>1/2$ for $0\leq y\leq 1/|k|\leq 1<\pi/3$, we obtain
\begin{align*}
      &t\int_0^{1/|k|}|f_{1+}(y)|^2\mathrm{d}y\le 2\int_0^{1/|k|}tb(y)|f_{1+}(y)|^2\mathrm{d}y\leq 4|f_{1+}(0)|.
 \end{align*}
 
 Recall that $(\partial_y+\mathrm{i}tb')f_{1+}=-1$, we have $\partial_y(f_{1+}+y)=-\mathrm{i}tb'f_{1+}$. Thus, for $0< y\leq 1/|k|$,
 we have $|b'(y)|=|\sin y|\leq|y|$ and 
  \begin{align*}
      &f_{1+}(y)+y-f_{1+}(0)=-\mathrm{i}t\int_0^yb'f_{1+}(y')\mathrm{d}y',\\
      &|f_{1+}(y)+y-f_{1+}(0)|^2\leq t^2\int_0^y|b'(y')|^2\mathrm{d}y'\int_0^y|f_{1+}(y')|^2\mathrm{d}y'\\ &\qquad\leq  t^2\int_0^y|y'|^2\mathrm{d}y'\int_0^{1/|k|}|f_{1+}(y')|^2\mathrm{d}y'\leq t^2(y^3/3)\cdot4|f_{1+}(0)|=(4/3)ty^3|f_{1+}(0)|,\\
      &|f_{1+}(y)+y-f_{1+}(0)|\leq 2t^{1/2}y^{3/2}|f_{1+}(0)|^{1/2},
       \end{align*}
 which shows
  \begin{align*}
     y^2/2=&\int_0^yy'\mathrm{d}y'\leq \int_0^y(|f_{1+}(y')+y'-f_{1+}(0)|+|f_{1+}({y'})|+|f_{1+}(0)|)\mathrm{d}y'\\ \leq& \int_0^y(2t^{1/2}y'^{3/2}|f_{1+}(0)|^{1/2}+|f_{1+}(0)|)\mathrm{d}y'+y^{1/2}\bigg|\int_0^{1/|k|}|f_{1+}(y')|^2\mathrm{d}y'\bigg|^{1/2}\\ \leq& t^{1/2}y^{5/2}|f_{1+}(0)|^{1/2}+y|f_{1+}(0)|+y^{1/2}(4|f_{1+}(0)|/t)^{1/2}. 
     \end{align*} 
 Thus, we obtain
  \begin{align*}
    1\leq 2t^{1/2}y^{1/2}|f_{1+}(0)|^{1/2}+2y^{-1}|f_{1+}(0)|+4t^{-1/2}y^{-3/2}|f_{1+}(0)|^{1/2},
 \end{align*}  
 which gives by taking $y=t^{-1/2}\leq 1/|k|$ that 
 \begin{align*}
      &1\leq 2t^{1/4}|f_{1+}(0)|^{1/2}+2t^{1/2}|f_{1+}(0)|+4t^{1/4}|f_{1+}(0)|^{1/2}=6(t^{1/2}|f_{1+}(0)|)^{1/2}+2t^{1/2}|f_{1+}(0)|.
 \end{align*} 
 Thus, $t^{1/2}|f_{1+}(0)|\geq C^{-1}$. Recall that $|W_1|=2|f_{1+}(0)|$,  we have $|W_1|\geq C^{-1}t^{-1/2}$. \smallskip
 
 \def\Ca{C_3}\def\A{a}
 {\it Step 4.}  Thanks to $ f_{1+}+f_{1-}=\mathrm{e}^{-\mathrm{i}tb(y)}W_1$, $I=[-1/|k|,1/|k|]$, we get 
 \begin{align*}
      &\|f_{1+}+f_{1-}\|_{L^2(I)}=\|W_1\|_{L^2(I)}=|2/k|^{1/2}|W_1|\geq  C^{-1}|k|^{-1/2}t^{-1/2}=C^{-1}|tk|^{-1/2}.\end{align*}
  As   $|f_{1+}(y)|\leq Ct^{-1}(t^{-1/2}+y_+)^{-1}$, we have $|f_{1+}(y)|\leq Ct^{-1}|k|$ for $y\in I_+:=[1/|2k|,1/|k|]$, and then 
  \begin{align*}
      \|f_{1+}-f_{1-}\|_{L^2(I)}\geq \|f_{1+}-f_{1-}\|_{L^2(I_+)}\geq& \|f_{1+}+f_{1-}\|_{L^2(I_+)}-2\|f_{1+}\|_{L^2(I_+)}\\ \geq& \|W_1\|_{L^2(I_+)}-C\|t^{-1}|k|\|_{L^2(I_+)}\\=&|2k|^{-1/2}|W_1|-C|2k|^{-1/2}t^{-1}|k|\\
      \geq&  \Ca^{-1}|tk|^{-1/2}-\Ca t^{-1}|k|^{1/2}.
      \end{align*}
   Thus, for $t\geq 4\Ca^4k^2$, then we have 
   \beno
   \|f_{1+}-f_{1-}\|_{L^2(I)}\geq  (2\Ca)^{-1}|tk|^{-1/2}.
   \eeno
    For $k^2\leq t\leq 4\Ca^4k^2$, recall that $f_{1+}(y)=
   \mathrm{e}^{-\mathrm{i}tb(y)}\int_y^{1/|k|}\mathrm{e}^{\mathrm{i}tb(y')}\mathrm{d}y'$, we have $|f_{1+}(y)|\leq 1/|k|-y$, and then
    for $\A\in(0,1/|k|]$,
    \begin{align*}
      \|f_{1+}-f_{1-}\|_{L^2(I)}\geq& \|f_{1+}-f_{1-}\|_{L^2(1/|k|-\A,1/|k|)}\\ \geq& \|f_{1+}+f_{1-}\|_{L^2(1/|k|-\A,1/|k|)}-2\|f_{1+}\|_{L^2(1/|k|-\A,1/|k|)}\\ \geq& \|W_1\|_{L^2(1/|k|-\A,1/|k|)}-2\|1/|k|-y\|_{L^2(1/|k|-\A,1/|k|)}\\=&\A^{1/2}|W_1|-2\A^{3/2}/3^{1/2}.
      \end{align*}
      Now we take $\A=\min(|W_1|/3,1/|k|)$. Due to $|W_1|\geq C^{-1}t^{-1/2}$ and $t\leq 4\Ca^4k^2$, we get 
    \begin{align*}
      &\A\geq \min(C^{-1}t^{-1/2},1/|k|)\geq 1/ |Ck|,\quad |W_1|\ge 3\A\geq4\A^{3/2}/3^{1/2},\\
      &\|f_{1+}-f_{1-}\|_{L^2(I)}\geq \A^{1/2}|W_1|/2=|Ck|^{-1/2}C^{-1}t^{-1/2}=C^{-1}|tk|^{-1/2}.\end{align*}
     
     {Combining two cases}, we conclude $\|f_{1+}-f_{1-}\|_{L^2(I)}\geq C^{-1}|tk|^{-1/2}$.
      \end{proof}

      Let  $f_2=b-1+1/(2\mathrm{i}t)$ and  $h=Lf_2/f_2$.  Next, we construct two linearly {independent} solutions to $(L-h)f=0$.

\begin{lemma}\label{lemf2}
For fixed $t\geq k^2$, let  
\beno
&&f_{2+}(y)=f_2(y)\int_y^{1/|k|}\mathrm{e}^{-\mathrm{i}tb}f_2^{-2}(y')\mathrm{d}y',\\
 &&f_{2-}(y)=f_{2+}(-y),\quad 
 W_2=\int_{-1/|k|}^{1/|k|}\mathrm{e}^{-\mathrm{i}tb}f_2^{-2}(y')\mathrm{d}y'.
  \eeno
   Then for $y\in I=[-1/|k|,1/|k|]$, we have
   \begin{align*}
      &Lf_{2+}=hf_{2+},\quad Lf_{2-}=hf_{2-},\\
      & W(f_{2+},f_{2-})=f_{2+}f_{2-}'-f_{2+}'f_{2-}=\mathrm{e}^{-\mathrm{i}tb(y)}W_2,
      \end{align*}
      and 
       \begin{align*}
    &|f_{2+}(y)|\leq C(t^{-1/2}+y_-)^{2}(t^{-1/2}+y_+)^{-3},\\ &|f_{2-}(y)|\leq C(t^{-1/2}+y_+)^{2}(t^{-1/2}+y_-)^{-3},\\
      &|W_2|\geq C^{-1}t^{3/2},\quad \|f_{2+}+f_{2-}\|_{L^2(I)}\geq C^{-1}t^{3/2}|k|^{-5/2},\\& 
      \|f_{2+}-f_{2-}\|_{L^2(I)}\geq C^{-1}t^{3/2}|k|^{-5/2}.\end{align*}

      \end{lemma}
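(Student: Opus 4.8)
The plan is to mimic the structure of Lemma \ref{lemf1} exactly, since the two solutions are built by the same reduction-of-order device but with the "base" solution $\mathrm{e}^{-\mathrm{i}tb}$ replaced by $f_2=b-1+1/(2\mathrm{i}t)$. First I would verify the ODE identities: since $L(f_2 g)=f_2 Lg + 2(\partial_y+\mathrm{i}tb')f_2\,\partial_y g + (Lf_2)g$ and $f_{2+}=f_2\int_y^{1/|k|}\mathrm{e}^{-\mathrm{i}tb}f_2^{-2}$, a direct computation using $(\partial_y+\mathrm{i}tb')(\mathrm{e}^{-\mathrm{i}tb})=0$ (so $(\partial_y+\mathrm{i}tb')(\mathrm{e}^{-\mathrm{i}tb}f_2^{-2})=\mathrm{e}^{-\mathrm{i}tb}\partial_y(f_2^{-2})$, and the Wronskian-type cancellation) gives $(\partial_y+\mathrm{i}tb')\partial_y f_{2+}=h f_{2+}$ with $h=Lf_2/f_2$; the computation is the classical "second solution via Wronskian" identity and I would present it compactly. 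The Wronskian formula $W(f_{2+},f_{2-})=\mathrm{e}^{-\mathrm{i}tb}W_2$ follows because both $f_{2+},f_{2-}$ solve $(\partial_y+\mathrm{i}tb')\partial_y f = hf$, whose Wronskian $W$ satisfies $\partial_y W = -\mathrm{i}tb' W$, hence $W = c\,\mathrm{e}^{-\mathrm{i}tb}$, and evaluating near $y=1/|k|$ where $f_{2+}=0$ pins down $c=W_2$ up to sign bookkeeping.

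The pointwise bounds on $f_{2\pm}$ are the heart of the estimate. Near the critical point $b(y)=\cos y$, we have $f_2(y)=\cos y-1+1/(2\mathrm{i}t)\sim -(y^2+\mathrm{i}t^{-1})/2$ up to constants, so $|f_2(y)|\sim t^{-1}+y^2 \sim (t^{-1/2}+|y|)^2$, uniformly on $I$. Thus $f_2^{-2}\sim (t^{-1/2}+|y|)^{-4}$, and $\mathrm{e}^{-\mathrm{i}tb}f_2^{-2}$ has modulus $\sim (t^{-1/2}+|y|)^{-4}$. I would estimate $|f_{2+}(y)|\le |f_2(y)|\int_y^{1/|k|}|f_2(y')|^{-2}dy'$ by splitting the integral at the scale $t^{-1/2}$: for $y\ge t^{-1/2}$ the integrand is $\lesssim y'^{-4}$ so the integral is $\lesssim y^{-3}$, giving $|f_{2+}|\lesssim y^2\cdot y^{-3}=y^{-1}$ — wait, that's not matching, so more care is needed: actually $|f_2(y)|\sim (t^{-1/2}+y_+)^2$ while $(t^{-1/2}+y_-)^2$ enters because for $y>0$ we have $y_-=0$ and $|f_2|\sim (t^{-1/2})^2 + \ldots$. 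Let me restate: for $y>0$, $|f_2(y)|\sim t^{-1}+y^2$ and $\int_y^{1/|k|}(t^{-1}+y'^2)^{-2}dy' \lesssim (t^{-1}+y^2)^{-3/2}$ by scaling, so $|f_{2+}(y)|\lesssim (t^{-1}+y^2)(t^{-1}+y^2)^{-3/2}=(t^{-1}+y^2)^{-1/2}=(t^{-1/2}+y_+)^{-1}$. That still isn't $(t^{-1/2}+y_+)^{-3}$. The resolution is that the oscillation $\mathrm{e}^{-\mathrm{i}tb}$ must be exploited via integration by parts (as in Step 2 of Lemma \ref{lemf1}): writing $\mathrm{e}^{-\mathrm{i}tb}f_2^{-2}$ and integrating by parts against $\mathrm{e}^{\mathrm{i}tb}$ picks up factors of $(tb')^{-1}\sim (ty)^{-1}$, and iterating this gains the extra powers. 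The factor $(t^{-1/2}+y_-)^2$ in the numerator reflects that when integrating $f_{2+}$ from $y<0$ across the critical region, $f_2$ nearly vanishes, producing decay in $y_-$. I would carry out the stationary-phase / integration-by-parts argument carefully on the three regions $y\le -t^{-1/2}$, $|y|\le t^{-1/2}$, $y\ge t^{-1/2}$, just as in Lemma \ref{lemf1}, and use the symmetry $f_{2-}(y)=f_{2+}(-y)$, $b(-y)=b(y)$, $f_2(-y)=f_2(y)$ to transfer bounds. This oscillatory-integral bookkeeping, matching the exponents $2$ and $-3$, is the main obstacle.

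For the lower bounds on $|W_2|$, $\|f_{2+}\pm f_{2-}\|_{L^2(I)}$, I would follow Steps 3 and 4 of Lemma \ref{lemf1} nearly verbatim. For $|W_2|$: since $W_2 = \int_{-1/|k|}^{1/|k|}\mathrm{e}^{-\mathrm{i}tb}f_2^{-2}$ and $|f_2|^{-2}\sim (t^{-1/2}+|y|)^{-4}$ with the main contribution from the region $|y|\lesssim t^{-1/2}$ of length $\sim t^{-1/2}$ where $|f_2^{-2}|\sim t^2$ and the phase $\mathrm{e}^{-\mathrm{i}tb}$ is essentially constant (since $tb$ varies by $O(1)$ there), we get $|W_2|\gtrsim t^{-1/2}\cdot t^2 = t^{3/2}$; a clean way is to relate $W_2$ to $f_{2+}(0)/f_2(0)$ plus a boundary term and run an energy identity for $f_{2+}$ (multiply $Lf_{2+}=hf_{2+}$ by $\overline{f_{2+}}$, take imaginary part, integrate) to bootstrap a lower bound on $|f_{2+}(0)|$, exactly paralleling Step 3. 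Then $\|f_{2+}+f_{2-}\|_{L^2(I)}=\|\mathrm{e}^{-\mathrm{i}tb}W_2/W(\ldots)\|$ — more precisely from the Wronskian identity $f_{2+}f_{2-}'-f_{2+}'f_{2-}=\mathrm{e}^{-\mathrm{i}tb}W_2$ one does not directly get $f_{2+}+f_{2-}$, so instead I would note that $W_2$ times an appropriate combination controls the $L^2$ norm via the explicit near-constant behavior on $I_+=[1/2|k|,1/|k|]$ where $|f_{2+}|$ is small (bounded by $t^{-1}\cdot$ powers of $|k|$) while $|f_{2+}+f_{2-}|$ or $|f_{2+}-f_{2-}|$ inherits the size $\sim |W_2|\cdot t^{-1}$ (from $f_2^{-2}\cdot\ldots$) — the same two-case split on $t\ge C k^2$ versus $k^2\le t\le Ck^2$ as in Step 4 of Lemma \ref{lemf1}. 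I expect these lower-bound steps to be routine once the pointwise upper bounds are in hand; the genuinely delicate part remains pinning down the $(t^{-1/2}+y_\pm)$ exponents in the oscillatory integral defining $f_{2\pm}$.
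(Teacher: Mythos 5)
Your skeleton is the paper's: Lemma \ref{lemf2} is proved there by transplanting the four steps of Lemma \ref{lemf1}, and your Step‑1 verification, the pointwise bounds (crude estimate on $|y|\leq t^{-1/2}$, integration by parts against the phase on $t^{-1/2}\leq y\leq 1/|k|$, symmetry for $y<0$), and the energy‑identity route to $|W_2|$ are all the same plan. Two places, however, need repair before the argument closes.

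First, your leading heuristic for $|W_2|\gtrsim t^{3/2}$ — that the region $|y|\lesssim t^{-1/2}$ dominates — cannot be made into a proof: since $|f_2|\sim t^{-1}+y^2$, the tail $t^{-1/2}\leq |y|\leq 1/|k|$ also contributes $O(t^{3/2})$, and even after one integration by parts the boundary term at $y=t^{-1/2}$ is of size $t^{-1}(t^{-1/2})^{-5}=t^{3/2}$, so there is no separation of scales and no smallness to beat possible cancellation. The ``clean way'' you mention is therefore the only way, and it is not a verbatim copy of Step 3 of Lemma \ref{lemf1}: here $(\partial_y+\mathrm{i}tb')f_{2+}\neq -1$, so the bootstrap must run through the normalization $f_{2+}'f_2-f_{2+}f_2'=-\mathrm{e}^{-\mathrm{i}tb}$ (one integrates $\partial_y\big(f_{2+}f_2\mathrm{e}^{\mathrm{i}tb}+y\big)=f_{2+}(2f_2'+\mathrm{i}tb'f_2)\mathrm{e}^{\mathrm{i}tb}$), and the energy identity needs the sign information $-\mathrm{Im}(h)-tb''/2\geq 5tb/2$, which comes from $h=-\mathrm{i}t(b+1)+(1-b)/(2f_2)$ and $|f_2|\geq 1/(2t)$; note also $W_2=2f_{2+}(0)/f_2(0)$ exactly, so $|W_2|=4t|f_{2+}(0)|$ with no boundary term. (On the pointwise bounds, a single integration by parts already gives $|f_{2+}/f_2|\lesssim \big(t|b'|(1-b)^2\big)^{-1}\lesssim t^{-1}y^{-5}$; no iteration is needed, and the bound on $f_{2-}$ for $y>0$ follows from the crude estimate $\int_I|f_2|^{-2}\lesssim t^{3/2}$ together with $|f_{2-}/f_2|\leq |W_2|+|f_{2+}/f_2|$, not from oscillation.)

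Second, for the $L^2$ lower bounds you missed the identity $f_{2+}(y)+f_{2-}(y)=f_2(y)W_2$, which is immediate from the definitions (the two integrals over $[y,1/|k|]$ and $[-1/|k|,y]$ sum to $W_2$) and is exactly what the paper uses: on $I_+=[1/(2|k|),1/|k|]$ one has $|f_2|\sim |k|^{-2}$, hence $\|f_{2+}+f_{2-}\|_{L^2(I_+)}=|W_2|\,\|f_2\|_{L^2(I_+)}\gtrsim t^{3/2}|k|^{-5/2}$. Your substitute claim that the sum/difference ``inherits the size $\sim |W_2|\,t^{-1}$'' on $I_+$ is quantitatively wrong: it would only yield a lower bound $\sim t^{1/2}|k|^{-1/2}$, which, because $t\geq k^2$, is weaker than and does not imply the stated bound $t^{3/2}|k|^{-5/2}$; the correct pointwise size is $|W_2|\,|f_2|\sim |W_2|\,|k|^{-2}$. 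With the identity in hand, the difference $f_{2+}-f_{2-}$ is then handled as you indicate, by comparing with $f_{2+}+f_{2-}$ on $I_+$ (or on a shorter interval near $y=1/|k|$ when $t\sim k^2$, using $|f_{2+}(y)|\lesssim |k|^2(1/|k|-y)$ there), i.e. the two-case split of Step 4 of Lemma \ref{lemf1}.
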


\begin{proof}{\it Step 1.} It is easy to verify that 
\begin{align*}
      &(f_{2+}/f_2)'=-\mathrm{e}^{-\mathrm{i}tb}f_2^{-2},\quad f_{2+}'f_2-f_{2+}f_2'=-\mathrm{e}^{-\mathrm{i}tb},\quad f_{2+}''f_2-f_{2+}f_2''=\mathrm{i}tb'\mathrm{e}^{-\mathrm{i}tb},\\
      &0=f_{2+}''f_2-f_{2+}f_2''+\mathrm{i}tb'(f_{2+}'f_2-f_{2+}f_2')=Lf_{2+}f_2-f_{2+}Lf_2,
      \end{align*}
   which gives $Lf_{2+}=f_{2+}Lf_2/f_2=hf_{2+}$. Thanks to  $f_{2-}(y)=f_{2+}(-y)$, $b(y)=b(-y)$ and $f_2(y)=f_2(-y)$, we obtain
 \beno
f_{2-}(y)=f_2(y)\int_{-1/|k|}^y\mathrm{e}^{-\mathrm{i}tb}f_2^{-2}(y')\mathrm{d}y',\quad  f_{2-}'f_2-f_{2-}f_2'=\mathrm{e}^{-\mathrm{i}tb}, \quad  Lf_{2-}=hf_{2-},
\eeno 
and 
\begin{align*}
&f_{2+}(y)+f_{2-}(y)=f_2(y)\int_{-1/|k|}^{1/|k|}\mathrm{e}^{-\mathrm{i}tb}f_2^{-2}(y')\mathrm{d}y'=f_2(y)W_2,\\
     W(f_{2+},f_{2-})&=f_{2+}f_{2-}'-f_{2+}'f_{2-}=\big[f_{2+}(f_{2-}'f_2-f_{2-}f_2')-(f_{2+}'f_2-f_{2+}f_2')f_{2-}\big]/f_2
      \\&=(f_{2+}+f_{2-})\mathrm{e}^{-\mathrm{i}tb}/f_2=\mathrm{e}^{-\mathrm{i}tb(y)}W_2.
      \end{align*}

{\it Step 2.}  For $t\geq k^2$, and $t^{-1/2}\leq y\leq 1/|k|$, we have $f_2'(y)=b'(y)=-\sin y<0$, $b''(y)=-\cos y<0$, 
 $|f_2|\geq |b-1|=1-b$, and then
 \begin{align*}
     &f_{2+}(y)/f_2(y)=\int_y^{1/|k|}\mathrm{e}^{\mathrm{i}tb(y')}f_2^{-2}(y')\mathrm{d}y'
     =\frac{\mathrm{e}^{\mathrm{i}tb}}{\mathrm{i}tb'f_2^2}\bigg|_{y}^{1/|k|}+
     \int_y^{1/|k|}\frac{\mathrm{e}^{\mathrm{i}tb}(b''f_2+2b'f_2')}{\mathrm{i}tb'^2f_2^3}\mathrm{d}y',\\
     &|f_{2+}(y)/f_2(y)|\leq \frac{1}{t|b'f_2^2(y)|}+\frac{1}{t|b'f_2^2(1/k)|}+
     \int_y^{1/|k|}\frac{|b''f_2+2b'^2|}{t|b'^2f_2^3|}\mathrm{d}y'\\
     &\leq \frac{1}{t|b'f_2^2(y)|}+\frac{1}{t|b'f_2^2(1/k)|}+
     \int_y^{1/|k|}\left(\frac{|b''|}{t|b'^2f_2^2|}+\frac{2}{t|f_2^3|}\right)\mathrm{d}y'\\
     &\leq \frac{1}{t|b'(1-b)^2(y)|}+\frac{1}{t|b'(1-b)^2(1/k)|}+
     \int_y^{1/|k|}\left(\frac{-b''}{tb'^2(1-b)^2}+\frac{2}{t(1-b)^3}\right)\mathrm{d}y'\\
     &=\frac{1}{t|b'(1-b)^2(y)|}+\frac{1}{t|b'(1-b)^2(1/k)|}+\frac{1}{tb'(1-b)^2}\bigg|_{y}^{1/|k|}=\frac{2}{t|b'(1-b)^2(y)|}\\
     &=\frac{2}{t\sin y(1-\cos y)^2}\leq \frac{C}{ty^5}\leq \frac{C}{t(t^{-1/2}+y_+)^5}.
 \end{align*} 
 
 For $0\leq y\leq t^{-1/2}$, we have $|f_2(y)|\geq 1/(2t) $ and
 \begin{align*}
     &(f_{2+}/f_2)\big|_{y}^{t^{-1/2}}=\int_y^{t^{-1/2}}\mathrm{e}^{\mathrm{i}tb(y')}f_2^{-2}(y')\mathrm{d}y',\\ 
     &\left|(f_{2+}/f_2)\big|_{y}^{t^{-1/2}}\right|\leq \int_y^{t^{-1/2}}|f_2(y')|^{-2}\mathrm{d}y'\leq \int_y^{t^{-1/2}}(2t)^{2}\mathrm{d}y'=(t^{-1/2}-y)(2t)^{2}\leq 4t^{3/2},\\
     &|f_{2+}(y)/f_2(y)|
     \le |(f_{2+}/f_2)|_{t^{-1/2}}|+|(f_{2+}/f_2)|_{y}^{t^{-1/2}}|\\
     &\leq \frac{C}{t(t^{-1/2})^5}+4t^{3/2}\leq{C}{t^{3/2}}\le\frac{C}{t(t^{-1/2}+y_+)^5}.
 \end{align*}   
 Thus, $|f_{2+}/f_2(y)|\leq {C}{t^{-1}(t^{-1/2}+y_+)^{-5}}\leq Ct^{3/2}$ for $0\leq y\leq 1/|k|$. 
 As\\ $W_2=2\int_{0}^{1/|k|}\mathrm{e}^{\mathrm{i}tb(y')}f_2^{-2}(y')\mathrm{d}y'$, we have
 $|W_2|=2|f_{2+}(0)/f_2(0)|\leq Ct^{3/2}.$
 Due to\\ $ f_{2+}+f_{2-}=f_2W_2$, we get
 \begin{align*}
     &|f_{2-}/f_2(y)|\leq |W_2|+|f_{2+}/f_2(y)|\leq Ct^{3/2}= Ct^{-1}(t^{-1/2}+y_-)^{-5},\quad \forall\ 0\leq y\leq 1/|k|.
 \end{align*}
 As $f_{2-}(y)=f_{2+}(-y)$, $f_{2}(y)=f_{2}(-y)$, for $ -1/|k|\leq t\leq 0$, we have $0\leq -y\leq 1/|k|$ and 
 \begin{align*}
     &|f_{2+}/f_2(y)|=|f_{2-}/f_2(-y)|\leq Ct^{3/2}= Ct^{-1}(t^{-1/2}+y_+)^{-5},\\ 
     &|f_{2-}/f_2(y)|=|f_{2+}/f_2(-y)|\leq Ct^{-1}(t^{-1/2}-y)^{-5}= Ct^{-1}(t^{-1/2}+y_-)^{-5}.
 \end{align*}
 
 In summary, for $y\in I=[-1/|k|,1/|k|]$, there holds
 \begin{align*}
     &|f_{2+}/f_2(y)|\leq Ct^{-1}(t^{-1/2}+y_+)^{-5},\quad |f_{2-}/f_2(y)|\leq Ct^{-1}(t^{-1/2}+y_-)^{-5}.
 \end{align*} As $f_2=b-1+1/(2\mathrm{i}t)$, we have $|f_2|\sim|b-1|+1/(2t)\sim|\cos y-1|+1/t\sim y^2+1/t\sim(t^{-1/2}+|y|)^{2}
 =t(t^{-1/2}+y_+)^{2}(t^{-1/2}+y_-)^{2} $, and then for $y\in I=[-1/|k|,1/|k|]$,
 \begin{align*}
     |f_{2+}(y)|&\leq Ct^{-1}(t^{-1/2}+y_+)^{-5}|f_2(y)|\leq Ct^{-1}(t^{-1/2}+y_+)^{-5}(t^{-1/2}+|y|)^{2}\\
     &= C(t^{-1/2}+y_-)^{2}(t^{-1/2}+y_+)^{-3},\\
     |f_{2-}(y)|&\leq Ct^{-1}(t^{-1/2}+y_-)^{-5}|f_2(y)|\leq Ct^{-1}(t^{-1/2}+y_-)^{-5}(t^{-1/2}+|y|)^{2}\\
     &= C(t^{-1/2}+y_+)^{2}(t^{-1/2}+y_-)^{-3}.
 \end{align*}

 {\it Step 3.}
  As $Lf_{2+}=(\partial_y+\mathrm{i}tb')\partial_yf_{2+}=hf_{2+}$, $f_{2+}(1/|k|)=0$, $b'(0)=0$, we have
 \begin{align*}
      &\mathrm{Im}(h)|f_{2+}|^2=\mathrm{Im}\big[(\partial_y+\mathrm{i}tb')\partial_yf_{2+}\overline{f_{2+}}\big]
      =\mathrm{Im}\partial_y(\partial_yf_{2+}\overline{f_{2+}})+tb'\mathrm{Re}(\partial_yf_{2+}\overline{f_{2+}})\\
      &=\partial_y\mathrm{Im}(\partial_yf_{2+}\overline{f_{2+}})+tb'\partial_y|f_{2+}|^2/2=
      \partial_y\mathrm{Im}(\partial_yf_{2+}\overline{f_{2+}})+\partial_y(tb'|f_{2+}|^2/2)-tb''|f_{2+}|^2/2,\\
      &\partial_y\big[\mathrm{Im}(\partial_yf_{2+}\overline{f_{2+}})+tb'|f_{2+}|^2/2\big]=\mathrm{Im}(h)|f_{2+}|^2+tb''|f_{2+}|^2/2,
 \end{align*}
 which gives
  \begin{align*}
      \int_0^{1/|k|}[-\mathrm{Im}(h)-tb''/2]|f_{2+}|^2\mathrm{d}y&=-\big[\mathrm{Im}(\partial_yf_{2+}\overline{f_{2+}})+tb'|f_{2+}|^2/2\big]\big|_0^{1/|k|}
     \\ &=\mathrm{Im}(\partial_yf_{2+}\overline{f_{2+}})(0)\leq|f_{2+}'(0)||f_{2+}(0)|=2t|f_{2+}(0)|.
 \end{align*} 
 Here we used $f_{2+}'(0)=-\mathrm{e}^{-\mathrm{i}tb}f_2(0)^{-1}=-2\mathrm{i}t\mathrm{e}^{-\mathrm{i}t}$. 
 
 Recall that $ (L+\mathrm{i}t(b+1))f_2=(1-b)/2$, $h=Lf_2/f_2$, we have $h=-\mathrm{i}t(b+1)+(1-b)/(2f_2)$, $|f_2|\geq 1/(2t)$ and\begin{align*}
      &\mathrm{Im}(h)\leq -t(b+1)+(1-b)/|2f_2|\leq -t(b+1)+(1-b)t=-2tb,\\ & -\mathrm{Im}(h)-tb''/2=-\mathrm{Im}(h)+tb/2\geq5tb/2.
 \end{align*}
 As $b(y)=\cos y>1/2>2/5$ for $0\leq y\leq 1/|k|\leq 1<\pi/3$, we obtain 
 \begin{align*}
      \int_0^{1/|k|}|f_{2+}(y)|^2\mathrm{d}y&\le {t}^{-1}\int_0^{1/|k|}(5tb/2)|f_{2+}|^2\mathrm{d}y\le {t}^{-1}\int_0^{1/|k|}[-\mathrm{Im}(h)-tb''/2]|f_{2+}|^2\mathrm{d}y\\
      &\leq {t}^{-1}\cdot 2t|f_{2+}(0)|=2|f_{2+}(0)|.
 \end{align*}
 Recall that $f_{2+}'f_2-f_{2+}f_2'=-\mathrm{e}^{-\mathrm{i}tb}$, we have 
 $\partial_y(f_{2+}f_2\mathrm{e}^{\mathrm{i}tb})-f_{2+}(2f_2'+\mathrm{i}tb'f_2)\mathrm{e}^{\mathrm{i}tb}=-1$, and
 \begin{align*}
      &\partial_y(f_{2+}f_2\mathrm{e}^{\mathrm{i}tb}+y)=f_{2+}(2f_2'+\mathrm{i}tb'f_2)\mathrm{e}^{\mathrm{i}tb},\\
      &2f_2'+\mathrm{i}tb'f_2=2b'+\mathrm{i}tb'(b-1+1/(2\mathrm{i}t))=5b'/2+\mathrm{i}tb'(b-1),\\
      &f_{2+}f_2\mathrm{e}^{\mathrm{i}tb}|_0^y+y=\int_0^yf_{2+}(2f_2'+\mathrm{i}tb'f_2)\mathrm{e}^{\mathrm{i}tb}\mathrm{d}y'.
      \end{align*}
 Thus, we infer that
      \begin{align*}
      &|f_{2+}f_2\mathrm{e}^{\mathrm{i}tb}|_0^y+y|^2\leq\int_0^y|f_{2+}|^2\mathrm{d}y'
      \int_0^y|2f_2'+\mathrm{i}tb'f_2|^2\mathrm{d}y'\\ &\leq \int_0^{1/|k|}|f_{2+}|^2\mathrm{d}y'
      \int_0^{y}|b'(y')|^2((5/2)^2+t^2|b(y')-1|^2)\mathrm{d}y'\\ &\leq 2|f_{2+}(0)|
      \int_0^{y}|y'|^2((5/2)^2+t^2|y'^2/2|^2)\mathrm{d}y'=|f_{2+}(0)|(25y^3/6+t^2y^7/14),
      \end{align*}      
      which gives $|f_{2+}f_2\mathrm{e}^{\mathrm{i}tb}|_0^y+y|\leq|f_{2+}(0)|^{1/2}(5y^{3/2}/2+ty^{7/2}).$
       Then by $f_2(0)=1/(2\mathrm{i}t)$ and $|f_2(y)|^2=|b(y)-1|^2+1/(2t)^2{\leq}|y^2/2|^2+1/(2t)^2$, we obtain
       \begin{align*}
      &y^2/2=\int_0^yy'\mathrm{d}y'\leq \int_0^y\big(|f_{2+}f_2\mathrm{e}^{\mathrm{i}tb}|_0^{y'}+y'|+|f_{2+}f_2(y')|+|f_{2+}f_2(0)|\big)\mathrm{d}y'\\ &\leq \int_0^y\big(|f_{2+}(0)|^{1/2}(5y'^{3/2}/2+ty'^{7/2})+|f_{2+}(0)|/(2t)\big)\mathrm{d}y'\\
      &\qquad+\bigg|\int_0^{y}|f_{2+}(y')|^2\mathrm{d}y'\bigg|^{1/2}\bigg|\int_0^{y}|f_{2}(y')|^2\mathrm{d}y'\bigg|^{1/2}\\ &\leq |f_{2+}(0)|^{1/2}(y^{5/2}+ty^{9/2})+|f_{2+}(0)|y/t\\
      &\quad+\bigg|\int_0^{1/|k|}|f_{2+}(y')|^2\mathrm{d}y'\bigg|^{1/2}\bigg|\int_0^{y}(|y'^2/2|^2+1/(2t)^2)\mathrm{d}y'\bigg|^{1/2}\\ &\leq |f_{2+}(0)|^{1/2}(y^{5/2}+ty^{9/2})+|f_{2+}(0)|y/t+\big|2|f_{2+}(0)|\big|^{1/2}\big|y^5/20+y/(2t)^2\big|^{1/2}\\ &\leq |f_{2+}(0)|^{1/2}(y^{5/2}+ty^{9/2})+|f_{2+}(0)|y/t+|f_{2+}(0)|^{1/2}(y^{5/2}+t^{-1}y^{1/2}),
       \end{align*}
 which gives
 \beno
 1/2\le|f_{2+}(0)|^{1/2}(2y^{1/2}+ty^{5/2}+t^{-1}y^{-3/2})+|f_{2+}(0)|y^{-1}t^{-1}.
  \eeno
Letting $y=t^{-1/2}\leq 1/|k|$, we obtain
 \begin{align*}
   1/2\leq 4t^{-1/4}|f_{2+}(0)|^{1/2}+t^{-1/2}|f_{1+}(0)|=4(t^{-1/2}|f_{1+}(0)|)^{1/2}+t^{-1/2}|f_{1+}(0)|.
 \end{align*} 
 Thus, $t^{-1/2}|f_{2+}(0)|\geq C^{-1}$.  Thanks to $|W_2|=2|f_{2+}(0)/f_2(0)|$ and $|f_2(0)|=1/(2t)$, we arrive at $|W_2|=4t|f_{2+}(0)|\geq C^{-1}t^{3/2}$. \smallskip
 
 \def\Cb{C_4}\def\Cc{C_5}\def\A{a}
 {\it Step 4.}  As $ f_{2+}+f_{2-}=f_2W_2$, $I=[-1/|k|,1/|k|]$, $|f_2(y)|\sim y^2+1/t$, let $I_+:=[1/|2k|,1/|k|]$, then we have
 \begin{align*}
 &\|f_{2}\|_{L^2(I)}\geq \|f_{2}\|_{L^2(I_+)}\geq C^{-1}\|y^2+1/t\|_{L^2(I_+)}\geq C^{-1}|k|^{-5/2},\\
      &\|f_{2+}+f_{2-}\|_{L^2(I)}\geq\|f_{2+}+f_{2-}\|_{L^2(I_+)}=|W_2|\|f_2\|_{L^2(I_+)}\geq  C^{-1}t^{3/2}|k|^{-5/2}.
      \end{align*}
  As    $|f_{2+}(y)|\leq C(t^{-1/2}+y_-)^{2}(t^{-1/2}+y_+)^{-3}$, we have $|f_{2+}(y)|\leq Ct^{-1}|k|^{3}$ for $y\in I_+$ and
  \begin{align*}
      &\|f_{2+}-f_{2-}\|_{L^2(I)}\geq \|f_{2+}-f_{2-}\|_{L^2(I_+)}\geq \|f_{2+}+f_{2-}\|_{L^2(I_+)}-2\|f_{2+}\|_{L^2(I_+)}\\ &\geq C^{-1}t^{3/2}|k|^{-5/2}-C\|t^{-1}|k|^{3}\|_{L^2(I_+)}=C^{-1}t^{3/2}|k|^{-5/2}-C|2k|^{-1/2}t^{-1}|k|^{3}
      \\ &\geq  \Cb^{-1}t^{3/2}|k|^{-5/2}-\Cb t^{-1}|k|^{5/2}.\end{align*}
 Thus, for $t\geq 2\Cb^{4/5}k^2$ then $\|f_{2+}-f_{2-}\|_{L^2(I)}\geq  (2\Cb)^{-1}t^{3/2}|k|^{-5/2}$. For $k^2\leq t\leq 2\Cb^{4/5}k^2$, recall that $f_{2+}(y)=f_2(y)\int_y^{1/|k|}\mathrm{e}^{-\mathrm{i}tb}f_2^{-2}(y')\mathrm{d}y'$, 
      $|f_2(y)|\sim y^2+1/t\sim |k|^{-2}$ for $y\in I_+$, we have $|f_{2+}(y)|\leq C|k|^{-2}\int_y^{1/|k|}|k|^{4}\mathrm{d}y'=C|k|^{2}(1/|k|-y)$ for $y\in I_+$. Then for $\A\in(0,1/|2k|]$, 
    \begin{align*}
      &\|f_{2+}-f_{2-}\|_{L^2(I)}\geq \|f_{2+}-f_{2-}\|_{L^2(1/|k|-\A,1/|k|)}\\ &\geq \|f_{2+}+f_{2-}\|_{L^2(1/|k|-\A,1/|k|)}-2\|f_{2+}\|_{L^2(1/|k|-\A,1/|k|)}\\ &\geq |W_2|\|f_2\|_{L^2(1/|k|-\A,1/|k|)}-C|k|^{2}\|1/|k|-y\|_{L^2(1/|k|-\A,1/|k|)}\\ &\geq C^{-1}t^{3/2}\||k|^{-2}\|_{L^2(1/|k|-\A,1/|k|)}-C|k|^{2}\A^{3/2}=\Cc^{-1}t^{3/2}|k|^{-2}\A^{1/2}-\Cc|k|^{2}\A^{3/2}.\end{align*}
   Taking $\A=[|2k|(\Cc^2+1)]^{-1}$, we infer that  for $t\geq k^2$, $\A\sim |1/k|$,
    \begin{align*}
      &\Cc^{-1}t^{3/2}|k|^{-2}\geq \Cc^{-1}|k|^{3}|k|^{-2}=2\Cc^{-1}|k|^{2}(\Cc^2+1)\A\geq 2|k|^{2}\Cc\A,
      \end{align*}
    which gives 
        \begin{align*}
          &\|f_{2+}-f_{2-}\|_{L^2(I)}\geq (2\Cc)^{-1}t^{3/2}|k|^{-2}\A^{1/2}\geq C^{-1}t^{3/2}|k|^{-2}|1/k|^{1/2}=C^{-1}t^{3/2}|k|^{-5/2}.
       \end{align*}
    
{Combining two cases}, we conclude   
   $\|f_{2+}-f_{2-}\|_{L^2(I)}\geq C^{-1}t^{3/2}|k|^{-5/2}$. 
\end{proof}
      
Now we are in a position to prove Lemma \ref{lem5a}.
      
\begin{proof}
{\it Step 1.}
As $Lf_{1+}=\mathrm{i}tbf_{1+},$ $Lf_{1-}=\mathrm{i}tbf_{1-}$, $W(f_{1+},f_{1-})=f_1W_1=\mathrm{e}^{-\mathrm{i}tb}W_1\neq0$, the solution to
 $(L-\mathrm{i}tb)\phi=F_1$ can be given by      
 \begin{align*}
      \phi(y)=&\frac{f_{1+}(y)}{-W_1}\int_{-1/|k|}^y\mathrm{e}^{\mathrm{i}tb}F_1f_{1-}(y')\mathrm{d}y'+
      \frac{f_{1-}(y)}{-W_1}\int_y^{1/|k|}\mathrm{e}^{\mathrm{i}tb}F_1f_{1+}(y')\mathrm{d}y'\\&+c_1f_{1+}(y)+c_2f_{1-}(y)\\
      =&\phi_1(y)+c_1f_{1+}(y)+c_2f_{1-}(y),
      \end{align*}
 where $c_1$, $c_2$ are constants and 
 \beno
 &&\phi_1(y)=\int_{-1/|k|}^{1/|k|}\mathrm{e}^{\mathrm{i}tb}F_1(y')K_{1}(y,y')\mathrm{d}y',\\
 &&K_1(y,y')=-W_1^{-1}{f_{1+}(y)f_{1-}(y')}{}\mathbf{1}_{y>y'}-W_1^{-1}{f_{1+}(y')f_{1-}(y)}\mathbf{1}_{y<y'}. 
  \eeno
 By Lemma \ref{lemf1}, we have(using $y_+\geq y'_+$ for $y>y'$ and $y_-\geq y'_-$ for $y<y'$)
      \begin{align*}
     |K_1(y,y')|\leq&|W_1|^{-1}{|f_{1+}(y)f_{1-}(y')|}{}\mathbf{1}_{y>y'}+|W_1|^{-1}{|f_{1+}(y')f_{1-}(y)|}\mathbf{1}_{y<y'}\\
      \leq& Ct^{1/2}t^{-1}(t^{-1/2}+y_+)^{-1}t^{-1}(t^{-1/2}+y_-')^{-1}\mathbf{1}_{y>y'}\\&+
      Ct^{1/2}t^{-1}(t^{-1/2}+y_+')^{-1}t^{-1}(t^{-1/2}+y_-)^{-1}\mathbf{1}_{y<y'}\\
      \leq& Ct^{-3/2}(t^{-1/2}+y'_+)^{-1}(t^{-1/2}+y'_-)^{-1}
      =Ct^{-1}(t^{-1/2}+|y'|)^{-1}.\end{align*}
       Thus, for $y\in I=[-1/|k|,1/|k|]$,
       \begin{align*}
     |\phi_1(y)|\leq\int_{-1/|k|}^{1/|k|}|F_1(y')K_1(y,y')|\mathrm{d}y'
      =\int_{-1/|k|}^{1/|k|}Ct^{-1}(t^{-1/2}+|y'|)^{-1}|F_1(y')|\mathrm{d}y'.\end{align*}
 Notice that
 $f_{1+}'(0)=-1$, $f_{1-}'(0)=1$, and  \begin{align*}
\phi_1'(y)=\frac{f_{1+}'(y)}{-W_1}\int_{-1/|k|}^y\mathrm{e}^{\mathrm{i}tb}F_1f_{1-}(y')\mathrm{d}y'+
      \frac{f_{1-}'(y)}{-W_1}\int_y^{1/|k|}\mathrm{e}^{\mathrm{i}tb}F_1f_{1+}(y')\mathrm{d}y',\end{align*}
  We get by  Lemma \ref{lemf1} and $f_{1+}'(0)=-1$, $f_{1-}'(0)=1$  that
  \begin{align*}
&|\phi_1'(0)|\le|W_1|^{-1}\int_{-1/|k|}^0|F_1f_{1-}(y')|\mathrm{d}y'+
      |W_1|^{-1}\int_0^{1/|k|}|F_1f_{1+}(y')|\mathrm{d}y'\\
      &\leq Ct^{1/2}\int_{-1/|k|}^0|F_1(y')|t^{-1}(t^{-1/2}+y_-')^{-1}\mathrm{d}y'+
      Ct^{1/2}\int_0^{1/|k|}|F_1(y')|t^{-1}(t^{-1/2}+y_+')^{-1}\mathrm{d}y'\\
      &=Ct^{-1/2}\int_{-1/|k|}^{1/|k|}|F_1(y')|(t^{-1/2}+|y'|)^{-1}\mathrm{d}y'.\end{align*}
Summing up, we arrive at 
\begin{align*}
      &t\|\phi_1\|_{L^{\infty}(I)}+t^{1/2}|\phi_1'(0)|\leq C\int_{-1/|k|}^{1/|k|}|F_1(y')|(t^{-1/2}+|y'|)^{-1}\mathrm{d}y'
      =C\|F_1/(t^{-1/2}+|y|)\|_{L^1(I)}.
      \end{align*}
   Recall that   $\phi(y)=\phi_1(y)+c_1f_{1+}(y)+c_2f_{1-}(y)$ and $I=[-1/|k|,1/|k|]$, we have 
   \begin{align*}
      \|c_1f_{1+}+c_2f_{1-}\|_{L^{2}(I)}&=\|\phi-\phi_1\|_{L^{2}(I)}\leq\|\phi\|_{L^{2}(I)}+\|\phi_1\|_{L^{2}(I)}\\ &\leq
      \|\phi\|_{L^{2}(I)}+|2/k|^{1/2}\|\phi_1\|_{L^{\infty}(I)}\\&\leq \|\phi\|_{L^{2}(I)}+C|k|^{-1/2}t^{-1}\|F_1/(t^{-1/2}+|y|)\|_{L^1(I)}.
      \end{align*}
      As $f_{1-}(y)=f_{1+}(-y)$, we have $\|f_{1+}\|_{L^{2}(I)}=\|f_{1-}\|_{L^{2}(I)} $. Then we get  by Lemma \ref{lemf1} that
     \begin{align*}
      &\|c_1f_{1+}+c_2f_{1-}\|_{L^{2}(I)}^2=\|(c_1+c_2)(f_{1+}+f_{1-})/2+(c_1-c_2)(f_{1+}-f_{1-})/2\|_{L^{2}(I)}^2\\
      &=|(c_1+c_2)/2|^2\|f_{1+}+f_{1-}\|_{L^{2}(I)}^2+|(c_1-c_2)/2|^2\|f_{1+}-f_{1-}\|_{L^{2}(I)}^2\\ &\geq  
      |(c_1+c_2)/2|^2C^{-1}|tk|^{-1}+|(c_1-c_2)/2|^2C^{-1}|tk|^{-1},
      \end{align*}
      which implies 
      \begin{align*}
      |c_1|+ |c_2|&\leq2(|(c_1+c_2)/2|+|(c_1-c_2)/2|)\leq C|tk|^{1/2}\|c_1f_{1+}+c_2f_{1-}\|_{L^{2}(I)}\\ &\leq C|tk|^{1/2}\|\phi\|_{L^{2}(I)}+Ct^{-1/2}\|F_1/(t^{-1/2}+|y|)\|_{L^1(I)}.
      \end{align*}
By Lemma \ref{lemf1} again, we have $\|f_{1+}\|_{L^{\infty}(I)}+\|f_{1-}\|_{L^{\infty}(I)}\leq Ct^{-1}(t^{-1/2})^{-1}=Ct^{-1/2}$. Then 
(recall $f_{1+}'(0)=-1, f_{1-}'(0)=1$)
\begin{align*}
      &t\|\phi\|_{L^{\infty}(I)}+t^{1/2}|\phi'(0)|\leq t\|\phi_1\|_{L^{\infty}(I)}+t^{1/2}|\phi_1'(0)|\\&\qquad+|c_1|\big(t\|f_{1+}\|_{L^{\infty}(I)}+t^{1/2}|f_{1+}'(0)|\big)
      +|c_2|\big(t\|f_{1-}\|_{L^{\infty}(I)}+t^{1/2}|f_{1-}'(0)|\big)\\
      &\leq C\|F_1/(t^{-1/2}+|y|)\|_{L^1(I)}+Ct^{1/2}(|c_1|+ |c_2|)\\
      &\leq C\|F_1/(t^{-1/2}+|y|)\|_{L^1(I)}+Ct^{1/2}\big(|tk|^{1/2}\|\phi\|_{L^{2}(I)}+t^{-1/2}\|F_1/(t^{-1/2}+|y|)\|_{L^1(I)}\big)\\
      &\leq C\|F_1/(t^{-1/2}+|y|)\|_{L^1(I)}+Ct|k|^{1/2}\|\phi\|_{L^{2}(I)}.
      \end{align*}
      This proves the first part of the lemma. \smallskip

      {\it Step 2.}  As $Lf_{2+}=hf_{2+},$ $Lf_{2-}=hf_{2-}$, $W(f_{2+},f_{2-})=\mathrm{e}^{-\mathrm{i}tb}W_2\neq0$, the solution to
 $(L-h)\phi=F_2$ can be given by   
 \begin{align*}
      \phi(y)&=\frac{f_{2+}(y)}{-W_2}\int_{-1/|k|}^y\mathrm{e}^{\mathrm{i}tb}F_2f_{2-}(y')\mathrm{d}y'+
      \frac{f_{2-}(y)}{-W_2}\int_y^{1/|k|}\mathrm{e}^{\mathrm{i}tb}F_2f_{2+}(y')\mathrm{d}y'\\&
      \qquad+c_1'f_{2+}(y)+c_2'f_{2-}(y)\\
      &=\phi_2(y)+c_1'f_{2+}(y)+c_2'f_{2-}(y),
      \end{align*} 
    where  $c_1'$, $c_2'$ are constants and 
      \begin{align*}
     &\phi_2(y)=\int_{-1/|k|}^{1/|k|}\mathrm{e}^{\mathrm{i}tb}F_2(y')K_2(y,y')\mathrm{d}y',\\& K_2(y,y')=-W_2^{-1}{f_{2+}(y)f_{2-}(y')}{}\mathbf{1}_{y>y'}-W_2^{-1}{f_{2+}(y')f_{2-}(y)}\mathbf{1}_{y<y'}.
      \end{align*}      
      
      By Lemma \ref{lemf2}, we have(using $y_+\geq y'_+$ for $y>y'$ and $y_-\geq y'_-$ for $y<y'$)
      \begin{align*}
     |K_2(y,y')|\leq&|W_2|^{-1}{|f_{2+}(y)f_{2-}(y')|}{}\mathbf{1}_{y>y'}+|W_2|^{-1}{|f_{2+}(y')f_{2-}(y)|}\mathbf{1}_{y<y'}\\
      \leq& Ct^{-3/2}(t^{-1/2}+y_-)^{2}(t^{-1/2}+y_+)^{-3}(t^{-1/2}+y_+')^{2}(t^{-1/2}+y_-')^{-3}\mathbf{1}_{y>y'}\\&+
      Ct^{-3/2}(t^{-1/2}+y_-')^{2}(t^{-1/2}+y_+')^{-3}(t^{-1/2}+y_+)^{2}(t^{-1/2}+y_-)^{-3}\mathbf{1}_{y<y'}\\
      \leq& Ct^{-3/2}(t^{-1/2}+y_-')^{2}(t^{-1/2}+y_+')^{-3}(t^{-1/2}+y_+')^{2}(t^{-1/2}+y_-')^{-3}\\
      =&Ct^{-1}(t^{-1/2}+|y'|)^{-1}.\end{align*}Here we  also used $(t^{-1/2}+y_-')(t^{-1/2}+y_+')=t^{-1/2}(t^{-1/2}+|y'|)$. 
      Thus, for $y\in I$,
      \begin{align*}
     |\phi_2(y)|\leq\int_{-1/|k|}^{1/|k|}|F_2(y')K_2(y,y')|\mathrm{d}y'
      =\int_{-1/|k|}^{1/|k|}Ct^{-1}(t^{-1/2}+|y'|)^{-1}|F_2(y')|\mathrm{d}y'.\end{align*}
   Notice that  $f_2=b-1+1/(2\mathrm{i}t)$, $f_{2+}(y)=f_2(y)\int_y^{1/|k|}\mathrm{e}^{-\mathrm{i}tb}f_2^{-2}(y')\mathrm{d}y'$,
   $f_{2-}(y)=f_{2+}(-y)$, then $f_2(0)=1/(2\mathrm{i}t)$, $f_2'(0)=b'(0)=0$, $f_{2+}'(0)=-\mathrm{e}^{-\mathrm{i}tb}f_2^{-1}(0)=-2\mathrm{i}t\mathrm{e}^{-\mathrm{i}t}$, $f_{2-}'(0)=2\mathrm{i}t\mathrm{e}^{-\mathrm{i}t}$, and 
  \begin{align*}
\phi_2'(y)=\frac{f_{2+}'(y)}{-W_2}\int_{-1/|k|}^y\mathrm{e}^{\mathrm{i}tb}F_2f_{2-}(y')\mathrm{d}y'+
      \frac{f_{2-}'(y)}{-W_2}\int_y^{1/|k|}\mathrm{e}^{\mathrm{i}tb}F_2f_{2+}(y')\mathrm{d}y'.
      \end{align*}
   Then we get by  Lemma \ref{lemf2} that
   \begin{align*}
&|\phi_2'(0)|=2t|W_2|^{-1}\int_{-1/|k|}^0|F_2f_{2-}(y')|\mathrm{d}y'+
      2t|W_2|^{-1}\int_0^{1/|k|}|F_2f_{2+}(y')|\mathrm{d}y'\\
      &\leq Ctt^{-3/2}\int_{-1/|k|}^0|F_2(y')|t^{-1}(t^{-1/2}+y_-')^{-3}\mathrm{d}y'+
      Ctt^{-3/2}\int_0^{1/|k|}|F_2(y')|t^{-1}(t^{-1/2}+y_+')^{-3}\mathrm{d}y'\\
      &= Ct^{-3/2}\int_{-1/|k|}^{1/|k|}|F_{2}(y')|(t^{-1/2}+|y'|)^{-3}\mathrm{d}y'\le Ct^{-1/2}\int_{-1/|k|}^{1/|k|}|F_{2}(y')|(t^{-1/2}+|y'|)^{-1}\mathrm{d}y'.
      \end{align*}
      
       Summing up, we conclude 
\begin{align*}
      &t\|\phi_2\|_{L^{\infty}(I)}+t^{1/2}|\phi_2'(0)|\leq C\int_{-1/|k|}^{1/|k|}|F_2(y')|(t^{-1/2}+|y'|)^{-1}\mathrm{d}y'
      =C\|F_2/(t^{-1/2}+|y|)\|_{L^1(I)}.
      \end{align*}
   Recall that   $\phi(y)=\phi_2(y)+c_1'f_{2+}(y)+c_2'f_{2-}(y)$, $I=[-1/|k|,1/|k|]$, we have 
   \begin{align*}
      &\|c_1'f_{2+}+c_2'f_{2-}\|_{L^{2}(I)}=\|\phi-\phi_2\|_{L^{2}(I)}\leq\|\phi\|_{L^{2}(I)}+\|\phi_2\|_{L^{2}(I)}\\ &\leq
      \|\phi\|_{L^{2}(I)}+|2/k|^{1/2}\|\phi_2\|_{L^{\infty}(I)}\leq \|\phi\|_{L^{2}(I)}+C|k|^{-1/2}t^{-1}\|F_2/(t^{-1/2}+|y|)\|_{L^1(I)}.
      \end{align*}
      As $f_{2-}(y)=f_{2+}(-y)$, we have $\|f_{2+}\|_{L^{2}(I)}=\|f_{2-}\|_{L^{2}(I)}$, and then by Lemma \ref{lemf2},
      \begin{align*}
      &\|c_1'f_{2+}+c_2'f_{2-}\|_{L^{2}(I)}^2=\|(c_1'+c_2')(f_{2+}+f_{2-})/2+(c_1'-c_2')(f_{2+}-f_{2-})/2\|_{L^{2}(I)}^2\\
      &=|(c_1'+c_2')/2|^2\|f_{2+}+f_{2-}\|_{L^{2}(I)}^2+|(c_1'-c_2')/2|^2\|f_{2+}-f_{2-}\|_{L^{2}(I)}^2\\ &\geq  
      |(c_1'+c_2')/2|^2C^{-1}t^{3}|k|^{-5}+|(c_1'-c_2')/2|^2C^{-1}t^{3}|k|^{-5},
      \end{align*}
      which implies 
      \begin{align*}
      |c_1'|+ |c_2'|&\leq2(|(c_1'+c_2')/2|+|(c_1'-c_2')/2|)\leq Ct^{-3/2}|k|^{5/2}\|c_1'f_{2+}+c_2'f_{2-}\|_{L^{2}(I)}\\ &\leq Ct^{-3/2}|k|^{5/2}\|\phi\|_{L^{2}(I)}+Ct^{-5/2}|k|^{2}\|F_2/(t^{-1/2}+|y|)\|_{L^1(I)}.
      \end{align*}
By Lemma \ref{lemf2}, we have 
\beno
\|f_{2+}\|_{L^{\infty}(I)}+\|f_{2-}\|_{L^{\infty}(I)}\leq C(t^{-1/2}+1/|k|)^2(t^{-1/2})^{-3}\leq Ct^{3/2}|k|^{-2}.
\eeno 
Recalling $t\geq k^2$, $f_{2+}'(0)=-2\mathrm{i}t\mathrm{e}^{-\mathrm{i}t}$, $f_{2-}'(0)=2\mathrm{i}t\mathrm{e}^{-\mathrm{i}t}$, 
$t^{5/2}|k|^{-2}\geq t^{3/2}$, we obtain
\begin{align*}
      &t\|\phi\|_{L^{\infty}(I)}+t^{1/2}|\phi'(0)|\leq t\|\phi_2\|_{L^{\infty}(I)}+t^{1/2}|\phi_2'(0)|\\&\qquad+|c_1'|\big(t\|f_{2+}\|_{L^{\infty}(I)}+t^{1/2}|f_{2+}'(0)|\big)
      +|c_2'|\big(t\|f_{2-}\|_{L^{\infty}(I)}+t^{1/2}|f_{2-}'(0)|\big)\\
      &\leq C\|F_2/(t^{-1/2}+|y|)\|_{L^1(I)}+C(t^{5/2}|k|^{-2}+t^{3/2})(|c_1'|+ |c_2'|)\\
      &\leq C\|F_2/(t^{-1/2}+|y|)\|_{L^1(I)}\\&+Ct^{5/2}|k|^{-2}\big(t^{-3/2}|k|^{5/2}\|\phi\|_{L^{2}(I)}+t^{-5/2}|k|^{2}\|F_2/(t^{-1/2}+|y|)\|_{L^1(I)}\big)\\
      &\leq C\|F_2/(t^{-1/2}+|y|)\|_{L^1(I)}+Ct|k|^{1/2}\|\phi\|_{L^{2}(I)}.
      \end{align*}
      
      This proves the second part of the lemma.  
       \end{proof}

  \section*{Acknowledgments}
  
Q. Chen is supported by NSF of China under Grant12288201. H. Jia is supported in part by NSF DMS-2453270 and NSF DMS-2245021. Z. Zhang is partially supported by NSF of China under Grant 12288101.

\bigskip

\end{document}